\renewcommand{\sectionmark}[1]{%
	\markright{\textbf{Characterization of constant sign Green's function of a two point boundary value problem by  means of spectral theory.}}}
\numberwithin{section}{chapter}
\numberwithin{equation}{chapter}
\newtheorem{theorem}{Theorem}[chapter]
\newtheorem{proposition}[theorem]{Proposition}
\newtheorem{corollary}[theorem]{Corollary}
\newtheorem{exemplo}[theorem]{Example}
\newtheorem{notation}[theorem]{Notation}
\newtheorem{lemma}[theorem]{Lemma}
\newtheorem{definition}[theorem]{Definition}
\newtheorem{remark}[theorem]{Remark}
\newcommand{\R}{{{\mathbb R}}}
\begin{document}
	
	\frontmatter
	
	\title{\textbf{Characterization of constant sign Green's function of a two point boundary value problem by  means of spectral theory.\footnote{Partially supported by Ministerio de Economía y competividad, Spain and FEDER, project MTM2013-43014-P}}}
	
	%    Remove any unused author tags.
	
	%    author one information
	\author{Alberto Cabada\footnote{alberto.cabada@usc.es}, Lorena Saavedra\footnote{lorena.saavedra@usc.es}\footnote{Supported by FPU scholarship, Ministerio de Educaci\'on, Cultura y Deporte, Spain.}\\Instituto de Matem\'aticas\\ Facultade de Matem\'aticas\\Universidade de Santiago de Compostela\\ Santiago de Compostela \\ Galicia \\	Spain}
%	\address{}
%	\curraddr{}
%	\email{alberto.cabada@usc.es}
%	\thanks{}
%	
%	%    author two information
%	\author{Lorena Saavedra}
%%	\address{Instituto de Matem\'aticas\\ Facultade de Matem\'aticas\\Universidade de Santiago de Compostela\\ Santiago de Compostela \\ Galicia \\	Spain}
%%	\curraddr{}
%%	\email{lorena.saavedra@usc.es}
%	\thanks{}
%	
%	%    \date is required; it is the date received by the editor.
\date{}

	\maketitle

\chapter*{Abstract}\addcontentsline{toc}{chapter}{Abstract}
	This paper is devoted to the study of the parameter's set where the Green's function related to a general linear $n^{\rm th}$-order operator, depending on a real parameter, $T_n[M]$, coupled with many different two point boundary value conditions, is of constant sign. This constant sign is equivalent to the strongly inverse positive (negative) character of the related operator on suitable spaces related to the boundary conditions.
		
	This characterization is based on spectral theory, in fact the extremes of the obtained interval are given by suitable eigenvalues of the differential operator with different boundary conditions.
	
	Moreover, we also obtain a characterization of the strongly inverse positive (negative) character on some sets, where non homogeneous boundary conditions are considered.

	In order to see the applicability of the obtained results, some examples are given along the paper. This method avoids the explicit calculation of the related Green's function.\\[1cm]
%\footnote{2010 Mathematics Subject Classification. Primary  34B05, 34B08, 34B27, 34L05. Secondary  34B15, 34B18, 47A05}
%%	%    Recognition of the 2010 edition of the Mathematics Subject
%%	%    Classification requires a version of amsbook.cls from July 2009
%%	%    or later.  If "2010" is not recognized, please upgrade.
%%	
%\footnote{\textbf{Keywords}: Green's functions, spectral theory, boundary value problems}

\noindent \textbf{2010 Mathematics Subject Classification:\\}
\textbf{ Primary:}  34B05, 34B08, 34B27, 34L05.\\
\textbf{Secondary:}  34B15, 34B18, 47A05\\

\noindent \textbf{Keywords}: Green's functions, spectral theory, boundary value problems

	\tableofcontents
\chapter*{Introduction}\addcontentsline{toc}{chapter}{Introduction}
\markboth{Introduction}{Introduction}
The study of the qualitative properties of the solution of a nonlinear two-point  differential equation has been widely developed in the literature. 

This work is devoted to the study of the strongly inverse positive (negative) character of the general $n^{\mathrm{th}}$- order operator coupled with different boundary conditions. In order to do that, we  characterize the parameter's set where the related Green's function is of constant sign. Our characterization is based on the spectral theory, actually, the extremes of the parameter's interval, where the Green's function is of constant sign, are just the eigenvalues of the given operator with suitable related boundary conditions.

This result avoids the requirement of obtaining the explicit expression of the Green's function, which can be very complicate to work with. In a wider class of situations, specially in the non constant coefficients case, it is not possible to obtain such an expression. Moreover, a slight change on the operator or in the boundary conditions may produce a big change on the expression of the related Green's function and its behavior. So, it is very useful to give a direct and easy way to characterize its sign.

It is well-known that the constant sign of the Green's function related to the linear part of a nonlinear problem is equivalent to  the validity of the method of lower and upper solutions, coupled with monotone iterative techniques, that allows to deduce the existence of solution of such a problem, see for instance \cite{Cab,Cab2,CoHa,LaLaVa}.

Moreover, by using the constant sign of the related Green's function, nonexistence, existence and multiplicity results for nonlinear boundary value problems are derived, by means of the well-known Kranosel'ski\u{\i} contraction/expansion fixed point theorem \cite{Kra}, from the construction of suitable cones on Banach spaces \cite{AnHo,CaCi,CaPrSaTe,GrKoWa,To}.

The combination of these two methods has also been proved as a useful tool to ensure the existence of solution \cite{CaCi2,CaCiIn,CiFrMi,FrInPe,Pe}.

It is important to point out that the study of the constant sign of the related Green's function has been widely developed along the literature, by means of studying its expression \cite{CaCiSa,CaEn,CaFe,MaLu}. In all of them the expression of the Green's function has been obtained in order to prove the optimality of the previously obtained bounds. This work generalizes the ones given in \cite{CabSaa} for the problems with the so-called $(k,n-k)$ boundary conditions and in \cite{CabSaa2} for a fourth order problem with the simply supported beam boundary conditions. 

In this paper, we study a huge number of different boundary conditions including the previously mentioned.

 First, we introduce two sets of index which  describe de boundary conditions in each case.

Let $k\in\{1,\dots, n-1\}$ and consider the following sets of index $\{\sigma_1,\dots, \sigma_k\}\subset\{0,\dots,n-1\}$ and  $\{\varepsilon_1,\dots, \varepsilon_{n-k}\}\subset\{0,\dots,n-1\}$, such that
\[ 0\leq \sigma_1<\sigma_2<\cdots<\sigma_k\leq n-1\,,\quad  0\leq \varepsilon_1<\varepsilon_2<\cdots<\varepsilon_{n-k}\leq n-1\,.\]

\begin{definition}\label{D::Na}
	Let us say that $\{\sigma_1,\dots, \sigma_k\}-\{\varepsilon_1,\dots, \varepsilon_{n-k}\}$ satisfy property $(N_a)$ if, 
\begin{equation}\label{Ec::Na}
	\sum_{\sigma_j< h}1+\sum_{\varepsilon_j< h}1\geq h\,,\quad \forall h\in\{1,\dots, n-1\}\,.\end{equation}	
\end{definition}
	
\begin{notation}
	
	Let us denote $\alpha$, $\beta\in\{0,\dots, n-1\}$, such that
	\begin{align}
	\label{Ec::alpha} &\alpha\notin \{\sigma_1,\dots, \sigma_k\}\,,\quad \text{and if } \alpha\neq 0\,,\quad \{0,\dots, \alpha-1\}\subset\{\sigma_1,\dots, \sigma_k\}\,,\\\label{Ec::beta}
	 &\beta\notin \{\varepsilon_1,\dots, \varepsilon_{n-k}\}\,,\quad \text{and if } \beta\neq 0\,,\quad \{0,\dots, \beta-1\}\subset\{\varepsilon_1,\dots, \varepsilon_{n-k}\}\,.
	\end{align}
	\end{notation}
	
	Realize that $\alpha\leq k$ and $\beta\leq n-k$.
	
	Let us define the following family of $n^{\rm th}$-order linear differential equations
	\begin{equation}\label{Ec::T_n[M]}
	T_n[M]\,u(t)=u^{(n)}(t)+p_1(t)\,u^{(n-1)}(t)+\cdots+(p_n(t)+M)\,u(t)=0\,,\ t\in I\,,
	\end{equation}
	where $I=[a,b]$ is a real fixed interval, $M\in\mathbb{R}$ a parameter and $p_j\in C^{n-j}(I)$ are given functions.
	
Realize that this equation represents a general $n$ order equation, this is due to the fact that if we denote as $\tilde p_n$ the coefficient of $u$, we have
$$\tilde p_n(t)=p_n(t)+\displaystyle \frac{1}{b-a} \int_a^b{\tilde p_n(s)\, ds} \equiv p_n(t)+M,\quad t \in I.$$

So, if $p_n$ is a function of average equals to zero, the parameter $M$ represents the average of the coefficient of $u$ and, as consequence, the problem of finding the values of $M$ for which the Green's function has constant sign is equivalent to look for the values of the average of such a coefficient.

We study \eqref{Ec::T_n[M]}, coupled with the following boundary conditions:
\begin{eqnarray}
\label{Ec::cfa} u^{(\sigma_1)}(a)=\cdots=u^{(\sigma_k)}(a)&=&0\,,\\
\label{Ec::cfb} u^{(\varepsilon_1)}(b)=\cdots=u^{(\varepsilon_{n-k})}(b)&=&0\,.
\end{eqnarray}

This boundary conditions cover many different problems. As an example, we can consider $n=4$,  $\{\sigma_1,\sigma_2\}=\{0,2\}$ and $\{\varepsilon_1,\varepsilon_2\}=\{0,2\}$ which correspond to the simply supported beam boundary conditions. 

Realize that, in the second order case, the Neumman conditions do not satisfy property $(N_a)$. However, Dirichlet and Mixed conditions are included.

Along the paper, we will illustrate the different obtained results with an example based on the choice $\{\sigma_1,\sigma_2\}=\{0,2\}$ and $\{\varepsilon_1,\varepsilon_2\}=\{1,2\}$.

	We consider the following space of  definition related to the boundary conditions \eqref{Ec::cfa}-\eqref{Ec::cfb}:
	{\normalsize \begin{equation}	\label{Ec::X_se}\begin{split}
	X_{\{\sigma_1,\dots, \sigma_k\}}^{\{\varepsilon_1,\dots, \varepsilon_{n-k}\}}=&\left\lbrace u\in C^n(I)\ \mid\ u^{(\sigma_1)}(a)=\cdots=u^{(\sigma_k)}(a)=u^{(\varepsilon_1)}(b)\right. \\&\left. =\cdots=u^{(\varepsilon_{n-k})}(b)=0\right\rbrace.\end{split}
	\end{equation}}

\begin{remark}
	Along the paper we consider different choices of boundary conditions. Sometimes, we do not know the relative position of the given index which define the spaces of definition. In particular, if we consider the following boundary conditions
	\[\begin{split}
	u^{(\sigma_1)}(a)=\cdots=u^{(\sigma_{k-1})}(a)&=0\,,\\
	u^{(\alpha)}(a)&=0\,,\\
	u^{(\varepsilon_1)}(b)=\cdots=u^{(\varepsilon_{n-k})}(b)&=0\,,
	\end{split}\]
	with $\alpha$ defined in \eqref{Ec::alpha}.
	 
	 In order to point out this setting of the index we use the following notation:
	 	{\normalsize \begin{equation*}	\begin{split}
	 	X_{\{\sigma_1,\dots, \sigma_{k-1}|\alpha\}}^{\{\varepsilon_1,\dots, \varepsilon_{n-k}\}}=&\left\lbrace u\in C^n(I)\ \mid\ u^{(\sigma_1)}(a)=\cdots=u^{(\sigma_{k-1})}(a)=0\,,\ u^{(\alpha)}(a)=0\,,\right. \\&\left. u^{(\varepsilon_1)}(b)=\cdots=u^{(\varepsilon_{n-k})}(b)=0\right\rbrace.\end{split}
	 	\end{equation*}}
 	
 	For instance, if $n=4$, $\sigma_1=0$, $\sigma_2=2$, $\varepsilon_1=0$ and $\varepsilon_2=1$, then $X_{\{\sigma_1|\alpha\}}^{\{\varepsilon_1,\varepsilon_2\}}=X_{\{0,1\}}^{\{0,1\}}$, where $\sigma_1=0<\alpha=1$. On another hand, if $\sigma_1=2$, $\sigma_2=3$, $\varepsilon_1=0$ and $\varepsilon_2=1$, then $X_{\{\sigma_1|\alpha\}}^{\{\varepsilon_1,\varepsilon_2\}}=X_{\{0,2\}}^{\{0,1\}}$ where $\alpha=0<\sigma_1=2$.
\end{remark}
So, we are interested into characterize the parameter's set for which the operator $T_n[M]$ is either strongly inverse positive or negative on $X_{\{\sigma_1,\dots, \sigma_k\}}^{\{\varepsilon_1,\dots, \varepsilon_{n-k}\}}$.

Moreover, once we have obtained such a characterization with the homogeneous boundary conditions \eqref{Ec::cfa}-\eqref{Ec::cfb}, we study its strongly inverse positive (negative) character on related spaces with non homogeneous boundary conditions.

The work is structured as follows, at first, in order to make the paper more readable, we introduce a preliminary chapter where some previous known results are shown. After that, in the next chapter, we introduce the hypotheses that both the operator $T_n[M]$ and the boundary conditions should satisfy to our results be applied. In Chapter 3 we obtain the expression of the related adjoint operator and boundary conditions. We deduce suitable properties of them. Next chapter is devoted to the study of operator $T_n[M]$ for a given $M=\bar M$ that  verifies  some suitable previously introduced hypothesis. After that, in the two next chapters, we study the existence and properties of the related eigenvalues of the operator and its adjoint, respectively, together to additional properties of the associated eigenfunctions. Chapter 7 is devoted to prove the main result of the work, where the characterization of the interval of parameters, where the Green's function has constant sign, is attained. At the end of such a chapter, some examples are shown. In Chapter 8, we obtain a necessary condition that $M$ should verify,  in order to allow $T_n[M]$ to be strongly inverse negative (positive) on the non considered cases on Chapter 7. At the end of such a chapter, we prove that this necessary condition can give an optimal interval in some cases. Once  we have worked with the homogeneous boundary conditions, we obtain a characterization for a particular case of non homogeneous boundary conditions. Finally, we study a class of operators that satisfy the imposed hypotheses. Moreover, for this kind of operators, we obtain a  characterization for more general non homogeneous boundary conditions. The chapter finishes by showing some examples of this class of operators.

\mainmatter
\chapter{Preliminaries}

In this chapter, for the convenience of the reader, we introduce the fundamental tools in the theory of disconjugacy and Green's functions that will be used in the development of further sections.

\begin{definition}
	Let $p_k\in C^{n-k}(I)$ for $k=1,\dots,n$. The $n^{\rm th}$-order linear differential equation (\ref{Ec::T_n[M]}) is said to be disconjugate on  $I$ if every non trivial solution has less than $n$ zeros on $I$, multiple zeros being counted according to their multiplicity.
\end{definition}
\begin{definition}
	The functions $u_1,\dots, u_n \in C^n(I)$ are said to form a Markov system on the interval $I$ if the $n$ Wronskians
	\begin{equation}
	W(u_1,\dots,u_k)=\left| \begin{array}{ccc}
	u_1&\cdots&  u_k\\
	\vdots&\cdots&\vdots\\
	u_1^{(k-1)}&\cdots&u_k^{(k-1)}\end{array}\right| \,,\quad k=1,\dots,n \,,
	\end{equation}
	are positive throughout $I$.
\end{definition}

The following results about this concept are collected on \cite[Chapter 3]{Cop}.

\begin{theorem}\label{T::4}
	The linear differential equation (\ref{Ec::T_n[M]}) has a Markov fundamental system of solutions on the compact interval $I$ if, and only if, it is disconjugate on $I$.
	
\end{theorem}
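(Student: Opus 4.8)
The plan is to prove the two implications separately. That a Markov fundamental system forces disconjugacy is the easy half, and it is essentially P\'olya's classical argument; the converse --- manufacturing a Markov system out of disconjugacy --- is the substantial one.

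\emph{Markov $\Rightarrow$ disconjugate.} Let $u_1,\dots,u_n$ be a Markov system for \eqref{Ec::T_n[M]} and set $W_k=W(u_1,\dots,u_k)$, $W_0\equiv 1$. Since every $W_k$ is positive on $I$, the operator admits the factorization
\[
T_n[M]\,u=\rho_n\,(\rho_{n-1}\,(\cdots(\rho_1\,(\rho_0\,u)')'\cdots)')'\,,
\]
with $\rho_0=1/W_1$, $\rho_k=W_k^2/(W_{k-1}W_{k+1})$ for $1\le k\le n-1$, and $\rho_n=W_n/W_{n-1}$, all continuous and strictly positive on $I$. If some nontrivial solution $u$ had $n$ zeros on $I$ counted with multiplicity, then putting $w_0=\rho_0u$, $w_j=\rho_jw_{j-1}'$ and applying Rolle's theorem successively, $w_j$ would have at least $n-j$ zeros; in particular $w_{n-1}$ has a zero, while $w_{n-1}'=T_n[M]u/\rho_n\equiv 0$ forces $w_{n-1}$ to be a nonzero constant or $\equiv 0$ --- the former is impossible, so $w_{n-1}\equiv 0$, and running the factorization backwards yields $u\equiv 0$, a contradiction. (One may also argue by induction on $n$, passing from $u_1,\dots,u_n$ to $(u_2/u_1)',\dots,(u_n/u_1)'$ and using $W(u_1,\dots,u_k)=u_1^{\,k}\,W((u_2/u_1)',\dots,(u_k/u_1)')$.)

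\emph{Disconjugate $\Rightarrow$ Markov.} First I invoke the standard fact (see \cite[Chapter 3]{Cop}) that disconjugacy on the compact interval $I=[a,b]$ persists on a slightly larger one, so \eqref{Ec::T_n[M]} is disconjugate on $J=[a-\delta,b]$ for some $\delta>0$. Fix $a-\delta<t_1<\cdots<t_{n-1}<a$, and for $k=1,\dots,n$ let $u_k$ be a nontrivial solution of \eqref{Ec::T_n[M]} with $u_k^{(j)}(a-\delta)=0$ for $0\le j\le k-2$ and $u_k(t_i)=0$ for $1\le i\le n-k$; thus $u_1$ merely vanishes at $t_1,\dots,t_{n-1}$, while $u_n$ has a zero of order $n-1$ at $a-\delta$. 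These are $n-1$ linear conditions, and disconjugacy on $J$ forces each $u_k$ to be unique up to a nonzero scalar (a two-dimensional solution space would, after one further vanishing condition in $(a-\delta,a)$, produce a nontrivial solution with $n$ zeros on $J$), makes the prescribed zeros of $u_k$ have exactly the stated multiplicities, and shows $u_1,\dots,u_n$ are linearly independent (if $\sum c_iu_i\equiv 0$ and $m=\min\{i:c_i\ne 0\}$, the left-hand side vanishes to order exactly $m-1$ --- hence not identically --- at $a-\delta$).

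It remains to prove $W(u_1,\dots,u_k)>0$ on $I$ for every $k$. As each such Wronskian is continuous and $I$ is connected, it suffices to show it has no zero on $I$ and then fix the signs greedily: normalize $u_1>0$, and having fixed $u_1,\dots,u_{k-1}$ replace $u_k$ by $-u_k$ if necessary, which changes only $W(u_1,\dots,u_j)$ for $j\ge k$. For the non-vanishing, suppose $W(u_1,\dots,u_k)(s)=0$ with $s\in I$; then some nontrivial combination $v=\sum_{i=1}^kc_iu_i$ has a zero of order $\ge k$ at $s$, and if $m=\min\{i:c_i\ne 0\}$ the defining conditions of the $u_i$ force $v$ to vanish to order $\ge m-1$ at $a-\delta$ and at each of $t_1,\dots,t_{n-k}$. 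Since $s>t_{n-k}$ and $s>a-\delta$, all these zeros are distinct, so $v$ has at least $(m-1)+(n-k)+k=n-1+m\ge n$ zeros on $J$, contradicting disconjugacy there. This converse is the hard part, and its two delicate points are: the extension of disconjugacy to $J$, which is exactly what allows the $n-1$ structural zeros of every $u_k$ to be pushed strictly to the left of $I$ (so that $W(u_1)=u_1>0$ on $I$ and every zero inside $I$ is genuinely additional in the count), and the precise placement of those structural zeros so that $(m-1)+(n-k)+k\ge n$ holds for all admissible $k$ and $m$.
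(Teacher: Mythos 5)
Your argument is correct, and it is essentially the classical proof from \cite[Chapter 3]{Cop} that the paper merely cites rather than proves: Markov implies disconjugate via the P\'olya-type factorization and repeated Rolle, and disconjugate implies Markov by extending disconjugacy slightly beyond the compact interval and choosing solutions whose $n-1$ prescribed zeros lie outside $I$, so that a vanishing Wronskian on $I$ would force a nontrivial solution with $n$ zeros (Coppel concentrates all the structural zeros at one exterior point, you distribute them, which is an immaterial difference). The only step worth making explicit is that the coefficients $p_j$ are defined only on $I$, so they must first be extended continuously to $[a-\delta,b]$ before the extension-of-disconjugacy lemma can be invoked; this is routine and does not affect the argument.
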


\begin{theorem}\label{T::3}
	The linear differential equation (\ref{Ec::T_n[M]}) has a Markov system of solutions if, and only if, the operator $T_n[M]$ has a representation
	\begin{equation}
	\label{e-descomp}
	T_n[M]\,y\equiv v_1 \,v_2\,\dots\,v_n \dfrac{d}{dt}\left( \dfrac{1}{v_n}\,\dfrac{d}{dt}\left( \cdots \dfrac{d}{dt}\left( \dfrac{1}{v_2} \dfrac{d}{dt}\left( \dfrac{1}{v_1}\,y\right) \right) \right) \right),	
	\end{equation}
	where $v_k>0$  on $I$ and $v_k\in C^{n-k+1}(I)$ for $k=1,\dots,n$.
\end{theorem}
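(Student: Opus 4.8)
The plan is to establish both implications together by induction on the order $n$, using classical reduction of order, and to extract explicit formulas for the factors (resp.\ for a Markov fundamental system) along the way. First I would record two observations. Since $T_n[M]$ is monic of order $n$, any $n$ linearly independent solutions span its full (necessarily $n$-dimensional) kernel, and $T_n[M]$ is the unique monic order-$n$ linear operator with that kernel; hence, to prove \eqref{e-descomp} it suffices to produce \emph{some} operator of the stated factorized shape (which is automatically monic of order $n$, since $v_1\cdots v_n\cdot v_1^{-1}\cdots v_n^{-1}=1$) with the same kernel, and conversely a factorization need only be converted into a Markov system. Moreover, in either direction one reads off the innermost factor first, by setting $v_1:=u_1$, where $u_1$ is the first member of the Markov system.

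\textbf{From a Markov system to the factorization.} Let $u_1,\dots,u_n$ be a Markov system, so $W_k:=W(u_1,\dots,u_k)>0$ on $I$ for $k=1,\dots,n$; set $W_0:=1$ and $v_1:=u_1$. Consider $L_1y:=\frac{d}{dt}\bigl(\tfrac1{v_1}y\bigr)$, so that $L_1u_1=0$. I would then define $\widetilde T$ by $\widetilde T w:=v_1^{-1}\,T_n[M](v_1V)$, where $V'=w$ and $V(a)=0$: this is well defined because $T_n[M]u_1=0$, and the same identity makes the potential nonlocal term proportional to $V$ cancel, so $\widetilde T$ is a genuine differential operator; it is monic of order $n-1$, and a bookkeeping of derivatives using $p_j\in C^{n-j}(I)$ and $u_1\in C^n(I)$ shows its coefficients satisfy the hypotheses of the statement in order $n-1$. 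By construction $T_n[M]=v_1\,\widetilde T\circ L_1$, and the solutions of $\widetilde T$ are $\tilde u_j:=L_1u_{j+1}=W(u_1,u_{j+1})/u_1^2$, $j=1,\dots,n-1$; the classical Wronskian identity
\[
W(u_1,\dots,u_k)=u_1^{k}\;W\!\left(\bigl(\tfrac{u_2}{u_1}\bigr)',\dots,\bigl(\tfrac{u_k}{u_1}\bigr)'\right)
\]
then shows $\tilde u_1,\dots,\tilde u_{n-1}$ is again a Markov system. By the inductive hypothesis $\widetilde T$ admits a factorization with strictly positive factors $\tilde v_1,\dots,\tilde v_{n-1}$, $\tilde v_i\in C^{n-i}(I)$; putting $v_{i+1}:=\tilde v_i$ and composing back through $v_1\,\widetilde T\circ L_1$ produces an operator of the form \eqref{e-descomp} with the same kernel and leading coefficient as $T_n[M]$, hence equal to it, and with each $v_k\in C^{n-k+1}(I)$. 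Unwinding the recursion gives the closed forms $v_1=W_1$ and $v_k=W_{k-2}W_k/W_{k-1}^2$ for $k\geq 2$.

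\textbf{From the factorization to a Markov system.} Conversely, suppose \eqref{e-descomp} holds with $v_k>0$, $v_k\in C^{n-k+1}(I)$. I would construct the Markov system explicitly: put $u_1:=v_1$ and, for $j=2,\dots,n$,
\[
u_j(t):=v_1(t)\int_a^{t}v_2(s_1)\int_a^{s_1}v_3(s_2)\cdots\int_a^{s_{j-2}}v_j(s_{j-1})\,ds_{j-1}\cdots ds_1 .
\]
Unwinding \eqref{e-descomp} (peeling off one factor $\tfrac1{v_i}$ and one differentiation at a time) shows that, applied to $u_j$, the argument of the outermost multiplication has become constant by the time the $j$th of the $n$ differentiations acts on it, so $T_n[M]u_j=0$; and an induction on $k$, again via the displayed Wronskian identity, gives $W(u_1,\dots,u_k)=v_1^{k}v_2^{k-1}\cdots v_k>0$, so $u_1,\dots,u_n$ is a Markov system. (Alternatively one simply reverses the reduction-of-order induction.) The base case $n=1$ is immediate in both directions: a Markov system is a single positive solution $u_1$, and $u_1'+p_1u_1=0$ gives $T_1[M]y=u_1\,\frac{d}{dt}\bigl(\tfrac1{u_1}y\bigr)$, which is \eqref{e-descomp} with $v_1=u_1\in C^1(I)$.

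\textbf{The main obstacle.} The composition and normalization of first-order factors and the base case are routine; the real work lies, first, in proving and then propagating through the induction the Wronskian identity $W(u_1,\dots,u_k)=u_1^{k}W\bigl((u_2/u_1)',\dots,(u_k/u_1)'\bigr)$ --- this is exactly what lets the Markov property survive order reduction and what yields both closed forms above --- and, second, in the smoothness bookkeeping, so that the reduced operator $\widetilde T$ genuinely meets the hypotheses of the order-$(n-1)$ statement and each factor lands in precisely $C^{n-k+1}(I)$. The one conceptual point to get right is that $\widetilde T$ is a differential, not merely an integro-differential, operator, which rests on the cancellation forced by $T_n[M]u_1=0$. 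Combined with Theorem~\ref{T::4}, the statement also records that, on a compact interval, disconjugacy of \eqref{Ec::T_n[M]} is equivalent to the existence of such a P\'olya-type factorization.
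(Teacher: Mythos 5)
Your argument is correct and is essentially the classical proof the paper relies on: Theorem~\ref{T::3} is quoted from \cite[Chapter 3, Theorem 2]{Cop}, and Coppel's proof proceeds exactly by your reduction-of-order induction via the Wronskian identity, yielding the same closed forms $v_1=W_1$, $v_k=W_{k-2}W_k/W_{k-1}^2$ that the authors themselves recall later when constructing $v_1,\dots,v_4$ for the example $T_4^{n^3}[0]$. So there is nothing to flag beyond noting that your smoothness bookkeeping and the cancellation making $\widetilde T$ a genuine differential operator are handled correctly.
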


In order to introduce the concept of Green's function related to the $n^{\rm th}$ - order scalar problem \eqref{Ec::T_n[M]}--\eqref{Ec::cfb}, we consider the following equivalent first order vectorial problem:
\begin{equation}\label{Ec::vec}
x'(t)=A(t)\, x(t)\,,\ t\in I\,,\quad 
B\,x(a)+C\,x(b)=0,
\end{equation}
with $x(t) \in \R^n$, $A(t), \, B,\,\ C\in \mathcal{M}_{n\times n}$, defined by
\[x(t)=\left( \begin{array}{c} 
u(t)\\
u'(t)\\
\vdots\\
u^{(n-1)}(t) \end{array}\right),\, \quad A(t)=\left( \begin{array}{c|c}
&\\
0&\quad I_{n-1}\quad\\
&\\
\hline
-(p_n(t)+M)&-p_{n-1}(t)\cdots-p_1(t) \end{array}\right), \]

\begin{equation}\label{Ec::Cf}
B=\left( \begin{array}{ccc} b_{11}&\cdots&b_{1n}\\
\vdots&\ddots&\vdots\\
b_{n1}&\cdots&b_{nn}\end{array}\right), \;\quad C=\left( \begin{array}{ccc} c_{11}&\cdots&c_{1n}\\
\vdots&\ddots&\vdots\\
c_{n1}&\cdots&c_{nn}\end{array}\right),
\end{equation}
where $b_{j,1+\sigma_j}=1$ for $j=1,\dots,k$ and $c_{j+k,1+\varepsilon_j}=1$ for $j=1,\dots,n-k$; otherwise, $b_{ij}=0$ and $c_{ij}=0$.

\begin{definition} \label{Def::G}
	We say that $G$ is a Green's function for problem \eqref{Ec::vec} if it satisfies the following properties:
	\begin{itemize}
\item[$\mathrm{(G1)}$] $G\equiv (G_{i,j})_{i,j\in\{1,\dots,n\}}\colon (I\times I)\backslash \left\lbrace (t,t)\,,\ t\in I\right\rbrace \rightarrow \mathcal{M}_{n\times n}$.

\item[$\mathrm{(G2)}$] $G$ is a $C^{1}$ function on the triangles $\left\lbrace (t,s)\in \mathbb{R}^2\,,\ a\leq s<t\leq b\right\rbrace $ and $\left\lbrace (t,s)\in \mathbb{R}^2\,,\ a\leq t < s\leq b\right\rbrace $.

\item[$\mathrm{(G3)}$] For all $i\neq j$ the scalar functions $G_{i,j}$ have a continuous extension to $I\times I$.

\item[$\mathrm{(G4)}$] For all $s\in(a,b)$, the following equality holds:
\[\dfrac{\partial }{\partial t}\, G(t,s)=A(t)\,G(t,s),\quad \text{for all } t\in I\backslash \left\lbrace s\right\rbrace .\]

\item[$\mathrm{(G5)}$] For all $s\in(a,b)$ and $i\in\left\lbrace 1,\dots, n\right\rbrace $, the following equalities are fulfilled:
\[\lim_{t\rightarrow s^+}G_{i,i}(t,s)=\lim_{t\rightarrow s^-}G_{i,i}(s,t)=1+\lim_{t\rightarrow s^+}G_{i,i}(s,t)=1+\lim_{t\rightarrow s^-}G_{i,i}(t,s).\]

	\item[$\mathrm{(G6)}$] For all $s\in(a,b)$, the function $t\rightarrow G(t,s)$ satisfies the boundary conditions
	\[B\,G(a,s)+C\,G(b,s)=0.\]
	\end{itemize}
\end{definition}

\begin{remark}\label{R:2.5}
	On the previous definition, item $\mathrm{(G5)}$ can be modified to obtain the characterization of the lateral limits for $s=a$ and $s=b$ as follows:	
	\[\lim_{t\rightarrow a^+}G_{i,i}(t,a)=1+\lim_{t\rightarrow a^+}G_{i,i}(a,t),\quad\text{and}\quad \lim_{t\rightarrow b^-}G_{i,i}(b,t)=1+\lim_{t\rightarrow b^-}G_{i,i}(t,b).\]
\end{remark}

It is very well known that Green's function related to this problem is given by the following expression \cite[Section 1.4]{Cab}
{\scriptsize \begin{equation}\label{Ec:MG} G(t,s)=\left( \begin{array}{ccccc}
g_1(t,s)&g_2(t,s)&\cdots&g_{n-1}(t,s)&g_M(t,s)\\&&&&\\
\dfrac{\partial }{\partial t}\,g_1(t,s)& \dfrac{\partial }{\partial t}\,g_2(t,s)&\cdots&\dfrac{\partial }{\partial t}\,g_{n-1}(t,s)& \dfrac{\partial }{\partial t}\,g_M(t,s)\\
\vdots&\vdots&\cdots&\vdots&\vdots\\
\dfrac{\partial^{n-1} }{\partial t^{n-1}}\,g_1(t,s)&\dfrac{\partial^{n-1} }{\partial t^{n-1}}\,g_2(t,s)&\cdots&\dfrac{\partial^{n-1} }{\partial t^{n-1}}\,g_{n-1}(t,s)&\dfrac{\partial^{n-1}} {\partial t^{n-1}}\,g_M(t,s)\end{array} \right) ,\end{equation}}where $g_M(t,s)$ is the scalar Green's function related to operator $T_n[M]$ in $X_{\{\sigma_1,\dots,\sigma_k\}}^{\{\varepsilon_1,\dots,\varepsilon_{n-k}\}}$. 

Using  Definition \ref{Def::G} we can deduce the properties fulfilled by $g_M(t,s)$.
In particular, $g_M\in C^{n-2}(I \times I)$ and it is a $C^n$ function on the triangles $a\le s < t \le b$ and $a\le t < s \le b$. Moreover it satisfies, as a function of $t$, the two-point boundary value conditions \eqref{Ec::cfa}-\eqref{Ec::cfb} and solves equation \eqref{Ec::T_n[M]} whenever $t \neq s$.

In \cite{CabSaa} $g_{n-j}(t,s)$ are expressed as functions of $g_M(t,s)$ for all $j=1,\dots,n-1$ as follows:\begin{equation}
\label{Ec::gj} g_{n-j}(t,s)=(-1)^j\dfrac{\partial^j}{\partial\,s^j}\,g_M(t,s)+\sum_{i=0}^{j-1}\alpha_i^j(s)\,\dfrac{\partial^i}{\partial s^i}g_M(t,s)\,,
\end{equation}where $\alpha_i^j(s)$ are functions of $p_1(s)\,,\dots,\ p_j(s)$ and of its derivatives until order $(j-1)$ and  follow the recurrence formula
\begin{align}
\label{r2}
\alpha_0^0(s)&=0\,,\\
\alpha_i^{j+1}(s)&=0\,,\quad i\geq j+1\geq1 ,\\
\label{r1}
\alpha_0^{j+1}(s)&=p_{j+1}(s)-\left( \alpha_0^j\right) '(s), \quad j \geq 0,\\\label{r4}
\alpha_i^{j+1}(s)&=-\left( \alpha_{i-1}^j(s)+\left( \alpha_i^j\right) '(s)\right), \quad 1\leq i\leq j.
\end{align}

The adjoint of the operator $T_n[M]$ is given by the following expression, see for details \cite[Section 1.4]{Cab} or \cite[Chapter 3, Section 5]{Cop},
\begin{equation}\label{EC::Ad}
T_n^*[M]v (t)\equiv (-1)^n \,v^{(n)}(t)+\sum_{j=1}^{n-1}(-1)^j\,\left(p_{n-j}\,v\right)^{(j)}(t)+(p_n(t)+M)\,v(t)\,,
\end{equation}
and its domain of definition is
{\footnotesize \begin{align} \nonumber
 D(T_n^*[M])=&\left\lbrace v\in C^n(I)\ \mid \sum_{j=1}^{n}\sum_{i=0}^{j-1} (-1)^{j-1-i} (p_{n-j}\,v)^{(j-1-i)}(b)\,u^{(i)}(b)\right. \\\label{Ec::cfad}
& \left. \ =\sum_{j=1}^{n}\sum_{i=0}^{j-1} (-1)^{j-1-i} (p_{n-j}\,v)^{(j-1-i)}(a)\,u^{(i)}(a) \ \text{ (with $p_0=1$)}\,, \forall u\in D(T_n[M])\right\rbrace .
\end{align}}

Next result appears in \cite[Chapter 3, Theorem 9]{Cop}
\begin{theorem}\label{T::2}
	The equation (\ref{Ec::T_n[M]}) is disconjugate on an interval $I$ if, and only if, the adjoint equation, $T_n^*[M]\,y(t)=0$ is disconjugate on $I$.
\end{theorem}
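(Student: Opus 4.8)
The plan is to use the factorization characterization of disconjugacy (Theorems~\ref{T::3} and \ref{T::4}) together with the elementary behaviour of formal adjoints under factorization into first order factors. Since the formal adjoint of $T_n^*[M]$ is again $T_n[M]$ (a routine verification in the present regularity), it suffices to prove one implication, say that disconjugacy of $T_n[M]\,u=0$ on $I$ forces disconjugacy of $T_n^*[M]\,y=0$ on $I$. Assume then that $T_n[M]\,u=0$ is disconjugate on $I$. By Theorem~\ref{T::4} it possesses a Markov fundamental system on $I$, hence by Theorem~\ref{T::3} the operator admits the representation \eqref{e-descomp} with $v_k>0$ on $I$ and $v_k$ of the regularity stated there.

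Next I would take the formal adjoint of the right-hand side of \eqref{e-descomp}. Writing
\[
T_n[M]=(v_1v_2\cdots v_n)\circ\frac{d}{dt}\circ\frac{1}{v_n}\circ\frac{d}{dt}\circ\cdots\circ\frac{1}{v_2}\circ\frac{d}{dt}\circ\frac{1}{v_1}
\]
as a composition, and using that a multiplication operator is its own formal adjoint, that $(d/dt)^{*}=-\,d/dt$, and that $(A\circ B)^{*}=B^{*}\circ A^{*}$, one obtains, after collecting the $n$ sign changes,
\[
T_n^*[M]\,y=(-1)^n\,\frac{1}{v_1}\,\frac{d}{dt}\!\left(\frac{1}{v_2}\,\frac{d}{dt}\!\left(\cdots\frac{d}{dt}\!\left(\frac{1}{v_n}\,\frac{d}{dt}\bigl(v_1v_2\cdots v_n\,y\bigr)\right)\cdots\right)\right).
\]
The leading coefficient of the right-hand side is $(-1)^n$, which agrees with \eqref{EC::Ad} and serves as a consistency check. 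Since $(-1)^n$ is a nonzero constant, $T_n^*[M]\,y=0$ and $(-1)^nT_n^*[M]\,y=0$ have exactly the same solutions, so it is enough to prove that the latter equation is disconjugate.

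I would then recognise $(-1)^nT_n^*[M]$ as an operator of the form \eqref{e-descomp}. Putting $u_1:=\bigl(v_1v_2\cdots v_n\bigr)^{-1}$ and $u_j:=v_{\,n-j+2}$ for $j=2,\dots,n$, which are all positive on $I$, one checks term by term that $u_1u_2\cdots u_n=1/v_1$, $1/u_n=1/v_2,\ \dots,\ 1/u_2=1/v_n$ and $1/u_1=v_1v_2\cdots v_n$, so that
\[
(-1)^n\,T_n^*[M]\,y=u_1u_2\cdots u_n\,\frac{d}{dt}\!\left(\frac{1}{u_n}\,\frac{d}{dt}\!\left(\cdots\frac{d}{dt}\!\left(\frac{1}{u_1}\,y\right)\cdots\right)\right).
\]
By Theorem~\ref{T::3} applied to this operator, $(-1)^nT_n^*[M]\,y=0$ then has a Markov fundamental system on $I$; one may even exhibit it explicitly as $y_1=u_1$ and $y_k=u_1\int_a^t u_2\int_a^{s_1}u_3\cdots\int_a^{s_{k-2}}u_k$ for $k=2,\dots,n$, and verify directly that all the Wronskians $W(y_1,\dots,y_k)$ are positive on $I$. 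Hence, by Theorem~\ref{T::4}, this equation is disconjugate on $I$, and therefore so is $T_n^*[M]\,y=0$.

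The main obstacle is the regularity bookkeeping hidden in the phrase ``of the form \eqref{e-descomp}'': to invoke Theorem~\ref{T::3} for $(-1)^nT_n^*[M]$ one needs $u_k\in C^{\,n-k+1}(I)$, in particular $u_1=(v_1\cdots v_n)^{-1}\in C^n(I)$, which is not immediate from the bare bounds $v_k\in C^{\,n-k+1}(I)$. This is handled by exploiting the hypothesis $p_j\in C^{\,n-j}(I)$: it forces the factors $v_k$ produced in \eqref{e-descomp} to be smoother than the generic bound (for instance, the identity expressing $p_1$ through the logarithmic derivatives $v_k'/v_k$, together with $p_1\in C^{\,n-1}(I)$, gains regularity on the $v_k$), so that $v_1\cdots v_n$, and hence $u_1$, lies in $C^n(I)$; equivalently, one may circumvent this entirely and build the Markov system for $T_n^*[M]$ directly from that of $T_n[M]$ via the classical determinant (Jacobi--Sylvester) identities relating their Wronskians, as in \cite[Chapter~3]{Cop}. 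Apart from this bookkeeping the argument is routine: the conceptual point is simply that reversing the order of the first order factors of a disconjugate operator, with alternating signs, yields a disconjugate operator again.
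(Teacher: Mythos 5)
The paper itself offers no proof of Theorem~\ref{T::2}: it is quoted directly from \cite[Chapter~3, Theorem~9]{Cop}. Your argument is essentially the classical one from that reference. In particular, the reversed factorization you compute for $T_n^*[M]$ is exactly the formula \eqref{Ec::Td*2} that the paper later imports from \cite[Chapter~3, Theorem~10]{Cop} in the proof of Lemma~\ref{L::0Ad}, and your reduction to a single implication via the double adjoint is legitimate, since the coefficient of $v^{(n-j)}$ in $(-1)^nT_n^*[M]$ is built from derivatives $p_i^{(j-i)}$ with $i\le j$ and hence again lies in $C^{n-j}(I)$.

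The only substantive issue is the regularity point you flag yourself, and your first proposed repair does not work as stated. To invoke Theorem~\ref{T::3} for the reversed factorization you need $u_k\in C^{n-k+1}(I)$, in particular $u_1=(v_1\cdots v_n)^{-1}\in C^n(I)$. Writing the $v_k$ as the usual Wronskian quotients gives $v_1\cdots v_n=W(y_1,\dots,y_n)/W(y_1,\dots,y_{n-1})$; Abel's formula and $p_1\in C^{n-1}(I)$ do put the full Wronskian in $C^n(I)$, but $W(y_1,\dots,y_{n-1})$ involves $(n-2)$-nd derivatives of $C^n$ solutions and is generically only $C^2(I)$, so the product is not $C^n$. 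Likewise, the identity $p_1=-\sum_k(n-k+1)v_k'/v_k$ only smooths that particular weighted combination of logarithmic derivatives, not $v_1\cdots v_n$. The correct resolution is precisely your stated fallback: take a Markov fundamental system $y_1,\dots,y_n$ of $T_n[M]u=0$ (Theorem~\ref{T::4}), form the classical adjoint solutions $z_k=\pm W(y_1,\dots,y_{k-1},y_{k+1},\dots,y_n)/W(y_1,\dots,y_n)$, and use Sylvester's determinant identity to see that the leading Wronskians of $z_n,z_{n-1},\dots,z_{n-k+1}$ are positive multiples (powers of the full Wronskian times $W(y_1,\dots,y_{n-k})$) of quantities already known to be positive; this yields a Markov system for the adjoint equation, whose normalized coefficients do lie in $C^{n-j}(I)$, and Theorem~\ref{T::4} then gives disconjugacy with no delicate regularity bookkeeping. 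If you promote that route from fallback to the actual proof, the argument is complete.
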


We denote $g_M^*(t,s)$ as the Green's function related to the adjoint operator, $T_n^*[M]$.

In \cite[Section 1.4]{Cab} it is proved the following relationship
\begin{equation} \label{Ec::gg}
g^*_M(t,s)=g_M(s,t)\,.
\end{equation}

Now, let us define the following operator
\begin{equation}\label{Ec::Tg}
\widehat{T}_n[(-1)^n\,M]:=(-1)^n T_n^*[M] \,,
\end{equation}
we deduce, from the previous expressions, that
\begin{equation}\label{Ec::gg1}
\widehat{g}_{(-1)^n\,M}(t,s)=(-1)^n\,g_M^*(t,s)=(-1)^n\,g_{M}(s,t)\,,
\end{equation}
where $\widehat g_{(-1)^nM}(t,s)$ is the scalar Green's function related to operator $\widehat T_n[(-1)^nM]$ in $D\left( T_n^*[M]\right)$. 

Obviously, Theorem \ref{T::2} remains true for operator $\widehat{T}_n[(-1)^n\,M]$.

\begin{definition}
\label{d-IP}
Operator $T_n[M]$ is said to be inverse positive (negative) on $X_{\{\sigma_1,\dots,\sigma_k\}}^{\{\varepsilon_1,\dots,\varepsilon_{n-k}\}}$ if every function $u \in X_{\{\sigma_1,\dots,\sigma_k\}}^{\{\varepsilon_1,\dots,\varepsilon_{n-k}\}}$ such that $T_n[M]\, u \ge 0$ on $I$, satisfies $u\geq 0$ ($u\leq 0$) on $I$.
\end{definition}

Next results are consequence of the ones proved on \cite[Section 1.6, Section 1.8]{Cab} for several two-point $n-$order operators.

\begin{theorem}\label{T::in1}
	Operator $T_n[M]$ is inverse positive (negative) in $X_{\{\sigma_1,\dots,\sigma_k\}}^{\{\varepsilon_1,\dots,\varepsilon_{n-k}\}}$ if, and only if, Green's function related to problem \eqref{Ec::T_n[M]}--\eqref{Ec::cfb} is non-negative (non-positive) on its square of definition.
\end{theorem}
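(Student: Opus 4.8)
The plan is to derive both implications from the integral representation of solutions through $g_M$. We shall freely use the standard facts recalled above and in \cite[Section 1.4]{Cab}: whenever $g_M$ exists, for each $h\in C(I)$ the boundary value problem $T_n[M]\,u=h$ on $I$, $u\in X_{\{\sigma_1,\dots,\sigma_k\}}^{\{\varepsilon_1,\dots,\varepsilon_{n-k}\}}$, has the unique solution
\[
u_h(t)=\int_a^b g_M(t,s)\,h(s)\,ds\,,\qquad t\in I\,,
\]
and $g_M\in C^{n-2}(I\times I)$, so (as $n\ge 2$) $g_M$ is jointly continuous and, for each fixed $t$, the map $s\mapsto g_M(t,s)$ is continuous on $I$. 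We may also assume $g_M$ exists in the ``only if'' direction: if $T_n[M]$ is inverse positive and $\varphi\in X_{\{\sigma_1,\dots,\sigma_k\}}^{\{\varepsilon_1,\dots,\varepsilon_{n-k}\}}$ solves the homogeneous problem \eqref{Ec::T_n[M]}--\eqref{Ec::cfb}, then $T_n[M](\pm\varphi)=0\ge 0$ forces $\pm\varphi\ge 0$, i.e. $\varphi\equiv 0$; hence $T_n[M]$ is injective on that space and $g_M$ is well defined.

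\emph{Sufficiency.} Suppose $g_M\ge 0$ on $(I\times I)\setminus\{(t,t):t\in I\}$, hence on $I\times I$ by continuity. Let $u\in X_{\{\sigma_1,\dots,\sigma_k\}}^{\{\varepsilon_1,\dots,\varepsilon_{n-k}\}}$ satisfy $h:=T_n[M]\,u\ge 0$ on $I$. By uniqueness $u=u_h$, whence $u(t)=\int_a^b g_M(t,s)\,h(s)\,ds\ge 0$ for all $t\in I$, the integrand being a product of non-negative functions. So $T_n[M]$ is inverse positive; reversing every inequality gives the inverse negative statement when $g_M\le 0$.

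\emph{Necessity.} Suppose $T_n[M]$ is inverse positive on $X_{\{\sigma_1,\dots,\sigma_k\}}^{\{\varepsilon_1,\dots,\varepsilon_{n-k}\}}$. Given any $h\in C(I)$ with $h\ge 0$, the function $u_h$ lies in $X_{\{\sigma_1,\dots,\sigma_k\}}^{\{\varepsilon_1,\dots,\varepsilon_{n-k}\}}$ and satisfies $T_n[M]\,u_h=h\ge 0$, so by Definition \ref{d-IP} we get $u_h\ge 0$ on $I$; that is, $\int_a^b g_M(t,s)\,h(s)\,ds\ge 0$ for every $t\in I$ and every non-negative $h\in C(I)$. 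Fix $t\in I$; since $s\mapsto g_M(t,s)$ is continuous on $I$, the elementary fact that a continuous function whose integral against every non-negative continuous test function is non-negative must itself be non-negative (otherwise pick $h$ supported in a small interval around a point where it is negative) yields $g_M(t,s)\ge 0$ for all $s\in I$. As $t$ is arbitrary, $g_M\ge 0$ on $I\times I$, in particular on its square of definition. The inverse negative case follows by applying the above to $-T_n[M]$, whose related Green's function is $-g_M$.

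The argument is essentially routine; the only points requiring care are the invocation of the representation formula and of the regularity $u_h\in C^n(I)\cap X_{\{\sigma_1,\dots,\sigma_k\}}^{\{\varepsilon_1,\dots,\varepsilon_{n-k}\}}$ (so that $u_h$ is an admissible competitor in Definition \ref{d-IP}), both standard properties of the scalar Green's function recalled earlier, together with the uniqueness that identifies $u$ with $u_h$. If one prefers to bypass the lemma on non-negative functionals, one may instead take $h=h_m$ an approximate identity concentrated near a point $s_0\in(a,b)$, obtain $u_{h_m}\ge 0$ for all $m$, and let $m\to\infty$, using the continuity of $g_M$ to conclude $g_M(t,s_0)=\lim_m u_{h_m}(t)\ge 0$ for every $t\in I$.
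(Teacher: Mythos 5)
Your argument is correct and is exactly the standard representation-formula proof (sufficiency from $u(t)=\int_a^b g_M(t,s)h(s)\,ds$ with a non-negative kernel, necessity by testing with non-negative $h$ concentrated near a point, plus the observation that inverse positivity forces injectivity so that $g_M$ exists). The paper itself gives no proof of Theorem \ref{T::in1} but quotes it from \cite[Sections 1.6 and 1.8]{Cab}, where this same argument is carried out, so nothing further is needed.
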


\begin{theorem}\label{T::d1}
	Let $M_1$, $M_2\in\mathbb{R}$ and suppose that operators $T_n[M_j]$, $j=1,2$, are invertible in $X_{\{\sigma_1,\dots,\sigma_k\}}^{\{\varepsilon_1,\dots,\varepsilon_{n-k}\}}.$
	Let $g_j$, $j=1,2$, be Green's functions related to  operators $T_n[M_j]$ and suppose that both functions have the same constant sign on $I \times I$. Then, if $M_1<M_2$, it is satisfied that $g_2\leq g_1$ on $I \times I$.
\end{theorem}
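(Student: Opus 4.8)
The plan is to derive a resolvent-type identity linking $g_1$ and $g_2$, and then to read the inequality off the common-sign hypothesis. Write $T_n[M_2]=T_n[M_1]+(M_2-M_1)\,\mathrm{Id}$, which is legitimate because, by \eqref{Ec::T_n[M]}, $T_n[M_2]-T_n[M_1]$ is exactly multiplication by $M_2-M_1$. Abbreviate $X:=X_{\{\sigma_1,\dots,\sigma_k\}}^{\{\varepsilon_1,\dots,\varepsilon_{n-k}\}}$. Since both operators are invertible on $X$, for any continuous $h$ on $I$ let $u_j\in X$ be the unique solution of $T_n[M_j]\,u_j=h$; by the defining property of the Green's function, $u_j(t)=\int_a^b g_j(t,r)\,h(r)\,dr$. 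From $T_n[M_1]\,u_2=h-(M_2-M_1)\,u_2$ and $u_2\in X$ we obtain $T_n[M_1](u_2-u_1)=-(M_2-M_1)\,u_2$, and inverting $T_n[M_1]$ yields
\[
u_2(t)-u_1(t)=-(M_2-M_1)\int_a^b g_1(t,r)\,u_2(r)\,dr,\qquad t\in I.
\]

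Next I would substitute the integral representations of $u_1$ and $u_2$ into this relation and apply Fubini, obtaining $\int_a^b\bigl(g_2(t,s)-g_1(t,s)\bigr)h(s)\,ds=-(M_2-M_1)\int_a^b\Bigl(\int_a^b g_1(t,r)\,g_2(r,s)\,dr\Bigr)h(s)\,ds$ for every continuous $h$. Since $h$ is arbitrary and all the kernels are continuous (recall $g_M\in C^{n-2}(I\times I)$), we may identify the kernels to get
\[
g_2(t,s)-g_1(t,s)=-(M_2-M_1)\int_a^b g_1(t,r)\,g_2(r,s)\,dr,\qquad (t,s)\in I\times I.
\]

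Now the sign analysis is immediate. If $g_1,g_2\ge 0$ on $I\times I$, then $g_1(t,r)\,g_2(r,s)\ge 0$ for every $r$, so the integral on the right is nonnegative; since $M_1<M_2$, the right-hand side is $\le 0$, that is $g_2\le g_1$. If instead $g_1,g_2\le 0$ on $I\times I$, then $g_1(t,r)\,g_2(r,s)\ge 0$ again (a product of two nonpositive numbers), the integral is nonnegative, and once more $g_2(t,s)-g_1(t,s)\le 0$. In either case $g_2\le g_1$ on $I\times I$, which is the claim; note that it is precisely the \emph{common} sign of $g_1$ and $g_2$ that makes the integrand nonnegative. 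The only point requiring care is the passage from the operator identity to the pointwise kernel identity — namely that $T_n[M_j]^{-1}$ is represented by integration against $g_j$ on a class of functions rich enough to separate continuous kernels, and that the composition $T_n[M_1]^{-1}\circ T_n[M_2]^{-1}$ has kernel $\int_a^b g_1(t,r)\,g_2(r,s)\,dr$; both are standard consequences of \eqref{Ec:MG}, Theorem~\ref{T::in1}, and the regularity of $g_M$ recorded above, so no genuine difficulty is anticipated beyond this bookkeeping.
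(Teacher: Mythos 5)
Your proof is correct, and it is essentially the standard argument: the paper does not prove Theorem \ref{T::d1} itself but cites \cite[Sections 1.6, 1.8]{Cab}, where the result is obtained from precisely the resolvent-type identity $g_2(t,s)-g_1(t,s)=-(M_2-M_1)\int_a^b g_1(t,r)\,g_2(r,s)\,dr$ that you derive, followed by the same common-sign argument. The only point to keep explicit is the kernel identification step you already flag, which is justified since $g_M\in C^{n-2}(I\times I)$ is continuous and the identity holds against all continuous $h$.
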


	\begin{theorem}\label{T::int}
		Let $M_1<\bar{M}<M_2$ be three real constants. Suppose that operator $T_n[M]$ is invertible in $X_{\{\sigma_1,\dots,\sigma_k\}}^{\{\varepsilon_1,\dots,\varepsilon_{n-k}\}}$  for $M=M_j$, $j=1,2$ and that the corresponding Green's function satisfies $g_2\leq g_1\leq 0$ (resp. $0\leq g_2\leq g_1$) on $I\times I$. Then the operator $T_n[\bar{M}]$ is invertible in $X_{\{\sigma_1,\dots,\sigma_k\}}^{\{\varepsilon_1,\dots,\varepsilon_{n-k}\}}$  and the related Green's function $\bar{g}$ satisfies $g_2\leq \bar{g}\leq g_1\leq 0$ ($0\leq g_2\leq \bar{g}\leq g_1$) on $I\times I$.
	\end{theorem}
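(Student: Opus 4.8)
The plan is to exhibit the Green's function $\bar g$ explicitly as the limit of a monotone iterative scheme governed by the second resolvent identity; the construction will at the same time force $T_n[\bar M]$ to be invertible and deliver the two‑sided bound $g_2\le\bar g\le g_1\le 0$. I treat the case $g_2\le g_1\le 0$ first; the other case will follow formally.

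Write $X$ for the space $X_{\{\sigma_1,\dots,\sigma_k\}}^{\{\varepsilon_1,\dots,\varepsilon_{n-k}\}}$ and $\mathcal{G}_jh(t)=\int_a^b g_j(t,s)h(s)\,ds$ for $j=1,2$, the Green's operators attached to $T_n[M_1]$, $T_n[M_2]$. Since $T_n[M_2]=T_n[M_1]+(M_2-M_1)\,\mathrm{Id}$ and both operators invert on $X$, one gets at once the resolvent identity
\[
g_1(t,s)-g_2(t,s)=(M_2-M_1)\int_a^b g_1(t,r)\,g_2(r,s)\,dr ,
\]
whose right‑hand side is nonnegative because $g_1,g_2\le 0$. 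I then set $\bar g^{(0)}:=g_1$ and
\[
\bar g^{(m+1)}(t,s):=g_1(t,s)-(\bar M-M_1)\int_a^b g_1(t,r)\,\bar g^{(m)}(r,s)\,dr ,\qquad m\ge 0,
\]
and prove by induction that $g_2\le \bar g^{(m+1)}\le \bar g^{(m)}\le\cdots\le \bar g^{(0)}=g_1\le 0$ on $I\times I$. The monotonicity $\bar g^{(m+1)}\le\bar g^{(m)}$ uses only $\bar g^{(m)}\le\bar g^{(m-1)}$, $g_1\le 0$ and $\bar M-M_1>0$. The lower bound $\bar g^{(m+1)}\ge g_2$ is the delicate step: from $g_2\le\bar g^{(m)}$ and $g_1\le 0$ one gets $\int g_1(t,r)\bar g^{(m)}(r,s)\,dr\le\int g_1(t,r)g_2(r,s)\,dr$, hence $\bar g^{(m+1)}\ge g_1-(\bar M-M_1)\int g_1(t,r)g_2(r,s)\,dr$, and this is $\ge g_1-(M_2-M_1)\int g_1(t,r)g_2(r,s)\,dr=g_2$ precisely because $\int g_1g_2\ge 0$ and $0<\bar M-M_1<M_2-M_1$. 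Thus the strict interlacing of the parameters is consumed exactly here.

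Having the monotone, order‑bounded sequence, I pass to the pointwise limit $\bar g:=\lim_m\bar g^{(m)}$, which satisfies $g_2\le\bar g\le g_1\le 0$; since every iterate is dominated by $|g_2|$, dominated convergence shows $\bar g$ solves $\bar g(\cdot,s)=g_1(\cdot,s)-(\bar M-M_1)\,\mathcal{G}_1[\bar g(\cdot,s)]$. From this integral equation I read off that $\bar g$ is the Green's function of $T_n[\bar M]$ in $X$: applying $T_n[M_1]$ in the variable $t$, using $T_n[M_1]\mathcal{G}_1=\mathrm{Id}$ and that $g_1(\cdot,s)$ solves the homogeneous equation for $t\neq s$, one obtains $T_n[\bar M]\bar g(\cdot,s)(t)=0$ for $t\neq s$; the correction term $\mathcal{G}_1[\bar g(\cdot,s)]$ being smooth, $\bar g(\cdot,s)$ inherits from $g_1(\cdot,s)$ both the $C^{n-2}$ regularity and the unit jump of the $(n-1)$‑st derivative across $t=s$, and it lies in $X$ because $g_1(\cdot,s)$ and $\mathcal{G}_1[\bar g(\cdot,s)]$ do. A routine bootstrap through the integral equation (equicontinuity of the iterates plus the monotone limit) upgrades $\bar g$ to the required smoothness and makes the convergence uniform; in particular the existence of this Green's function gives the invertibility of $T_n[\bar M]$ on $X$. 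This proves the statement in the case $g_2\le g_1\le 0$. The case $0\le g_2\le g_1$ reduces to it by replacing $T_n[M]$ with $-T_n[M]$, i.e. $M$ with $-M$: then $-g_M$ is the Green's function of $-T_n[M]$, the hypotheses become $-g_2\le-g_1\le 0$ with $-M_2<-\bar M<-M_1$, and translating the conclusion back yields $0\le g_2\le\bar g\le g_1$.

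The main obstacle is the design of the iteration rather than its verification: one must choose the base point ($g_1$ here, $g_2$ after the sign flip) and the sign of the increment so that monotonicity and the two‑sided estimate hold together, and it is the lower bound $\bar g^{(m+1)}\ge g_2$ — which alone needs the slack $M_2-M_1>\bar M-M_1$ and the nonnegative kernel $\int g_1g_2$ — that fails for a wrong choice. The only further point requiring care, and it is entirely standard, is checking that the monotone $L^\infty$‑limit is an honest Green's function, which the limiting integral equation supplies.
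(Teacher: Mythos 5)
Your argument is correct, but note that the paper does not actually prove Theorem \ref{T::int}: together with Theorems \ref{T::in1} and \ref{T::d1} it is quoted from \cite[Sections 1.6 and 1.8]{Cab}, where the proof is carried out at the level of solutions by a monotone iterative (lower and upper solutions) scheme --- essentially the same device this paper later uses itself in the proof of Theorem \ref{T::IPNH2}, with the sequences $\alpha_n,\beta_n$ generated through $\tilde T_n[M^*]$ and shown to be monotone, order-bounded and convergent to the sandwiched solution. You run the same idea one level up, directly on the kernels: the resolvent identity $g_1-g_2=(M_2-M_1)\int_a^b g_1(t,r)\,g_2(r,s)\,dr$ together with the iteration $\bar g^{(m+1)}=g_1-(\bar M-M_1)\,\mathcal{G}_1\bar g^{(m)}$ delivers in one stroke the monotone sequence bounded below by $g_2$, its limit, the integral equation identifying that limit as the Green's function of $T_n[\bar M]$, and the two-sided estimate; the strict interlacing $M_1<\bar M<M_2$ is consumed exactly where it must be, in the lower bound $\bar g^{(m+1)}\ge g_2$. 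What the kernel-level iteration buys is that invertibility and the estimate $g_2\le\bar g\le g_1\le 0$ arrive simultaneously and without ever fixing a right-hand side; what the solution-level scheme of \cite{Cab} buys is that one never has to discuss convergence and regularity of a sequence of kernels.

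Two small points. First, in the reduction of the case $0\le g_2\le g_1$ the transformed hypothesis should read $-g_1\le -g_2\le 0$, not $-g_2\le -g_1\le 0$: under $M\mapsto-\mu$ the order of the two base parameters reverses, so the roles of the indices swap. This is only a labelling slip --- the flipped family $-T_n[-\mu]=S[0]+\mu\,\mathrm{Id}$ is again affine in the parameter, which is all your first-case argument uses, and your final translation $0\le g_2\le\bar g\le g_1$ is the correct one. Second, the step ``existence of this Green's function gives the invertibility of $T_n[\bar M]$'' tacitly invokes the Fredholm alternative for two-point boundary value problems (solvability of $T_n[\bar M]u=h$ in $X_{\{\sigma_1,\dots,\sigma_k\}}^{\{\varepsilon_1,\dots,\varepsilon_{n-k}\}}$ for every continuous $h$ forces the homogeneous problem to have only the trivial solution); this is standard, and consistent with what the paper takes from \cite{Cab}, but it deserves an explicit sentence.
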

Now, we introduce a stronger concept of inverse positive (negative) character.

\begin{definition}
	\label{d-SIP}
	Operator $T_n[M]$ is said to be strongly inverse  positive  in $X_{\{\sigma_1,\dots,\sigma_k\}}^{\{\varepsilon_1,\dots,\varepsilon_{n-k}\}}$ if every function $u \in X_{\{\sigma_1,\dots,\sigma_k\}}^{\{\varepsilon_1,\dots,\varepsilon_{n-k}\}}$ such that $T_n[M]\, u \gneqq 0$ on $I$, must verify $u> 0$  on $(a,b)$ and, moreover, $u^{(\alpha)}(a)>0$ and $u^{(\beta)}(b)>0$ if $\beta$ is even, $u^{(\beta)}(b)<0$ if $\beta$ is odd, where $\alpha$ and $\beta$ are defined in \eqref{Ec::alpha} and \eqref{Ec::beta}, respectively.
\end{definition}
\begin{definition}
	\label{d-SIN}
	Operator $T_n[M]$ is said to be strongly inverse  negative in $X_{\{\sigma_1,\dots,\sigma_k\}}^{\{\varepsilon_1,\dots,\varepsilon_{n-k}\}}$ if every function $u \in X_{\{\sigma_1,\dots,\sigma_k\}}^{\{\varepsilon_1,\dots,\varepsilon_{n-k}\}}$ such that $T_n[M]\, u \gneqq 0$ on $I$, must verify $u<0$ on $(a,b)$ and, moreover, $u^{(\alpha)}(a)<0$ and $u^{(\beta)}(b)<0$ if $\beta$ is even, $u^{(\beta)}(b)>0$ if $\beta$ is odd, where $\alpha$ and $\beta$ are defined in \eqref{Ec::alpha} and \eqref{Ec::beta}, respectively.
\end{definition}

Analogously to Theorem \ref{T::in1}, the following ones can be shown:
\begin{theorem}\label{T::in2}
	Operator $T_n[M]$ is strongly inverse positive  in $X_{\{\sigma_1,\dots,\sigma_k\}}^{\{\varepsilon_1,\dots,\varepsilon_{n-k}\}}$ if, and only if, Green's function related to problem \eqref{Ec::T_n[M]}--\eqref{Ec::cfb}, $g_M(t,s)$, satisfies the following properties:
	\begin{itemize}
		\item $g_M(t,s)>0$  a.e. on $(a,b)\times (a,b)$.
		\item $\dfrac{\partial^\alpha}{\partial t^\alpha}g_M(t,s)_{\mid t=a}>0$  for a.e. $s\in(a,b)$.
		\item  $\dfrac{\partial^\beta}{\partial t^\beta}g_M(t,s)_{\mid t=b}>0$   if $\beta$ is even and $\dfrac{\partial^\beta}{\partial t^\beta}g_M(t,s)_{\mid t=b}<0$  if $\beta$ is odd for a.e. $s\in(a,b)$.
	\end{itemize}
\end{theorem}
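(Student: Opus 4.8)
The plan is to follow the proof of Theorem \ref{T::in1} almost verbatim, the only new ingredient being the two conditions on the endpoint derivatives of $g_M$. Everything rests on the representation formula: when $T_n[M]$ is invertible in $X_{\{\sigma_1,\dots,\sigma_k\}}^{\{\varepsilon_1,\dots,\varepsilon_{n-k}\}}$ (which is implicit in the statement, since otherwise $g_M$ is not defined), the unique solution of $T_n[M]\,u=h$ in that space is $u(t)=\int_a^b g_M(t,s)\,h(s)\,ds$. The first thing to check is that $\partial_t^{\alpha}g_M$ and $\partial_t^{\beta}g_M$ extend continuously to each of the closed triangles $\{a\le s\le t\le b\}$ and $\{a\le t\le s\le b\}$; this uses $g_M\in C^{n-2}(I\times I)$, its $C^n$-regularity off the diagonal, the bounds $\alpha\le k$, $\beta\le n-k$, and the boundary conditions \eqref{Ec::cfa}--\eqref{Ec::cfb}. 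Granting this, one differentiates under the integral sign and evaluates at the endpoints to get
\[
u^{(\alpha)}(a)=\int_a^b \big(\partial_t^{\alpha}g_M\big)(a,s)\,h(s)\,ds,
\qquad
u^{(\beta)}(b)=\int_a^b \big(\partial_t^{\beta}g_M\big)(b,s)\,h(s)\,ds .
\]

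For the \emph{sufficiency} direction I would assume the three sign conditions on $g_M$. Given $u$ in the space with $h:=T_n[M]\,u\gneqq0$, note that $h$ is continuous, hence $h\ge0$ on $I$ and $h>0$ on a nondegenerate subinterval; substituting into the representation formula and into the two formulas above, each integrand equals $h$ times a kernel which is, respectively, a.e.\ positive, a.e.\ positive at $t=a$, and a.e.\ of sign $(-1)^{\beta}$ at $t=b$, so $u>0$ on $(a,b)$, $u^{(\alpha)}(a)>0$, and $u^{(\beta)}(b)>0$ (resp.\ $<0$) when $\beta$ is even (resp.\ odd). That is exactly strong inverse positivity.

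For the \emph{necessity} direction I would assume $T_n[M]$ strongly inverse positive; then it is in particular inverse positive and invertible, so $g_M$ exists and $g_M\ge0$ by Theorem \ref{T::in1}. Fixing $s_0\in(a,b)$, I would take continuous $h_m\gneqq0$ concentrating at $s_0$ and pass to the limit in $u_m(t)=\int_a^b g_M(t,s)\,h_m(s)\,ds$ and in the two endpoint formulas; by the continuity of $g_M$ and of the extended partials off the diagonal this gives $u_m(t)\to g_M(t,s_0)$ for $t\ne s_0$, $u_m^{(\alpha)}(a)\to(\partial_t^{\alpha}g_M)(a,s_0)$ and $u_m^{(\beta)}(b)\to(\partial_t^{\beta}g_M)(b,s_0)$, and since $u_m>0$ on $(a,b)$, $u_m^{(\alpha)}(a)>0$ and $(-1)^{\beta}u_m^{(\beta)}(b)>0$, the limits satisfy the corresponding non-strict inequalities.

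The main obstacle is to upgrade these weak inequalities to the strict, a.e.\ statements claimed. Here I would argue as in the scalar theory of Green's functions of disconjugate operators (Theorems \ref{T::4} and \ref{T::3}, together with \cite[Sections 1.6, 1.8]{Cab}): for fixed $s_0$, a nonnegative solution of the disconjugate equation $T_n[M]\,u=0$ that realized an interior zero would, in view of the homogeneous conditions and of the jump relation $\mathrm{(G5)}$ at $t=s_0$, be forced to vanish identically on one side of $s_0$, which is impossible; and the endpoint-derivative versions would follow by running the same reasoning on the adjoint side through $g_M^*(t,s)=g_M(s,t)$ and the disconjugacy of the adjoint equation (Theorem \ref{T::2}), using the comparison Theorem \ref{T::d1} to discard the borderline cases. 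I expect precisely this strictness step — most delicately for the two endpoint derivatives — to be the crux, with the continuous-extension claim of the first paragraph the remaining, more routine, point to be pinned down.
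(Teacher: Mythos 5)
Your plan has two genuine gaps, one in each direction. (For reference, the paper itself gives no argument here: Theorem \ref{T::in2} is quoted as provable ``analogously to Theorem \ref{T::in1}'', which in turn is attributed to \cite[Sections 1.6 and 1.8]{Cab}; so what matters is whether your reconstruction is complete.) In the sufficiency direction, the step ``the kernel is a.e.\ positive, so $u(t)=\int_a^b g_M(t,s)h(s)\,ds>0$ on $(a,b)$'' is not valid as written: positivity a.e.\ on the product $(a,b)\times(a,b)$ only controls the sections $g_M(t,\cdot)$ for a.e.\ $t$, whereas Definition \ref{d-SIP} requires $u(t)>0$ for \emph{every} $t\in(a,b)$. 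For a fixed $t_0$ the section $s\mapsto g_M(t_0,s)$ could a priori vanish on a set of positive $s$-measure (even on a whole subinterval) without contradicting the hypothesis, since $\{t_0\}\times(a,b)$ is null in the square; some additional argument is needed, and your text supplies none. In the necessity direction you obtain, by your concentration argument, only the non-strict inequalities, you acknowledge that upgrading them is ``the crux'', and you propose to close it with disconjugacy (Theorems \ref{T::4}, \ref{T::3}, \ref{T::2}) and the comparison Theorem \ref{T::d1}. That route is not available: Theorem \ref{T::in2} is a preliminary characterization assuming nothing beyond invertibility of $T_n[M]$ in $X_{\{\sigma_1,\dots,\sigma_k\}}^{\{\varepsilon_1,\dots,\varepsilon_{n-k}\}}$ --- indeed the paper applies it for values of $M$ where disconjugacy and property $(T_d)$ may fail --- and Theorem \ref{T::d1} presupposes two constant-sign Green's functions, which is what one is trying to establish. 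So the crux is left both unproved and pointed at tools that do not apply.

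The mechanism that actually yields the strict a.e.\ statements needs no disconjugacy, only \eqref{Ec::gg} and the elementary fact that a nontrivial solution of an $n^{\mathrm{th}}$-order linear homogeneous equation with continuous coefficients has isolated zeros. By $g_M^*(s,t)=g_M(t,s)$, for each fixed $t$ the section $s\mapsto g_M(t,s)$ solves $T_n^*[M]v=0$ on $[a,t)$ and on $(t,b]$, and the endpoint kernels $s\mapsto \frac{\partial^\alpha}{\partial t^\alpha}g_M(t,s)_{\mid t=a}$ and $s\mapsto \frac{\partial^\beta}{\partial t^\beta}g_M(t,s)_{\mid t=b}$ solve it on all of $(a,b)$; hence each of these functions either has at most finitely many zeros or vanishes identically on a whole component. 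For necessity one excludes the identical vanishing directly from Definition \ref{d-SIP}: if, say, $g_M(t_0,\cdot)\equiv 0$ on $(a,t_0)$, choose $h\gneqq 0$ continuous supported in $(a,t_0)$ and get $u(t_0)=\int_a^{t_0}g_M(t_0,s)h(s)\,ds=0$, contradicting $u>0$ on $(a,b)$; if an endpoint kernel vanished identically one would get $u^{(\alpha)}(a)=0$ (resp.\ the wrong sign statement at $b$) for every admissible $h$, again a contradiction. Combined with $g_M\ge 0$ from Theorem \ref{T::in1} and Fubini, this gives exactly the three strict a.e.\ properties. A symmetric dichotomy argument is also what is needed to repair the fixed-$t$ step in your sufficiency direction. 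Without these ingredients your proposal does not establish the theorem.
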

 \begin{theorem}\label{T::in21}
 	Operator $T_n[M]$ is strongly inverse  negative in $X_{\{\sigma_1,\dots,\sigma_k\}}^{\{\varepsilon_1,\dots,\varepsilon_{n-k}\}}$ if, and only if, Green's function related to problem \eqref{Ec::T_n[M]}--\eqref{Ec::cfb}, $g_M(t,s)$, satisfies the following properties:
 	\begin{itemize}
 		\item $g_M(t,s)<0$ a.e. on $(a,b)\times (a,b)$.
 		\item $ \dfrac{\partial^\alpha}{\partial t^\alpha}g_M(t,s)_{\mid t=a}<0$ for a.e. $s\in(a,b)$.
 		\item  $ \dfrac{\partial^\beta}{\partial t^\beta}g_M(t,s)_{\mid t=b}<0$ if $\beta$ is even and $ \dfrac{\partial^\beta}{\partial t^\beta}g_M(t,s)_{\mid t=b}>0$ if $\beta$ is odd for a.e. $s\in(a,b)$.
 	\end{itemize}
 \end{theorem}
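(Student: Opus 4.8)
The strategy is to mirror the argument already available for Theorem \ref{T::in2}, exploiting the fact that strong inverse negativity of $T_n[M]$ is precisely strong inverse positivity of $-T_n[M]$, together with the representation \eqref{Ec:MG} of the matrix Green's function in terms of the scalar Green's function $g_M(t,s)$ and its $t$-derivatives. Concretely, if $u\in X_{\{\sigma_1,\dots,\sigma_k\}}^{\{\varepsilon_1,\dots,\varepsilon_{n-k}\}}$ and $h:=T_n[M]\,u$, then $u$ is recovered by $u(t)=\int_a^b g_M(t,s)\,h(s)\,ds$, and likewise $u^{(\alpha)}(a)=\int_a^b \frac{\partial^\alpha}{\partial t^\alpha}g_M(t,s)\big|_{t=a}\,h(s)\,ds$ and $u^{(\beta)}(b)=\int_a^b \frac{\partial^\beta}{\partial t^\beta}g_M(t,s)\big|_{t=b}\,h(s)\,ds$. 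All three integral identities are legitimate because $g_M\in C^{n-2}(I\times I)$ and is $C^n$ off the diagonal, so the traces on $t=a$ and $t=b$ are continuous in $s$ (here $\alpha\le k\le n-1$ and $\beta\le n-k\le n-1$, so no boundary jump is involved), exactly as recorded after \eqref{Ec:MG}.

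\medskip

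\noindent\emph{Sufficiency.} Assume the three sign conditions on $g_M$. Take any $u$ in the space with $h=T_n[M]\,u\gneqq 0$, i.e. $h\ge 0$ on $I$ and $h\not\equiv 0$. Since $g_M(t,s)<0$ a.e. on $(a,b)\times(a,b)$, the integral $u(t)=\int_a^b g_M(t,s)\,h(s)\,ds$ is $<0$ for every $t\in(a,b)$: the integrand is $\le 0$ and is strictly negative on a set of positive measure (because $h$ is, say, continuous and not identically zero, it is bounded away from $0$ on some subinterval). The same reasoning applied to the trace conditions gives $u^{(\alpha)}(a)<0$, and $u^{(\beta)}(b)<0$ when $\beta$ is even, $u^{(\beta)}(b)>0$ when $\beta$ is odd. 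Hence $T_n[M]$ is strongly inverse negative.

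\medskip

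\noindent\emph{Necessity.} Suppose $T_n[M]$ is strongly inverse negative. First note that a strongly inverse negative operator is in particular inverse negative, so by Theorem \ref{T::in1} the operator is invertible and $g_M\le 0$ on $I\times I$; in particular the Green's function exists and the integral representations above hold for all $h\in C(I)$. Now fix $s_0\in(a,b)$ and feed in a nonnegative approximate identity $h_\varepsilon\gneqq 0$ concentrating at $s_0$ (a standard bump construction, legitimate since the conditions are required for every admissible right-hand side). Passing to the limit $\varepsilon\to 0$ and using continuity of $g_M(t,\cdot)$, of $\frac{\partial^\alpha}{\partial t^\alpha}g_M(t,\cdot)|_{t=a}$ and of $\frac{\partial^\beta}{\partial t^\beta}g_M(t,\cdot)|_{t=b}$ in the second variable, the inequalities $u<0$ on $(a,b)$, $u^{(\alpha)}(a)<0$, and the sign of $u^{(\beta)}(b)$ dictated by the parity of $\beta$ translate into $g_M(t,s_0)\le 0$ for all $t$, $\frac{\partial^\alpha}{\partial t^\alpha}g_M(t,s_0)|_{t=a}\le 0$, and the corresponding sign of $\frac{\partial^\beta}{\partial t^\beta}g_M(t,s_0)|_{t=b}$. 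Upgrading these to the strict, a.e. statements requires one extra observation: if $g_M(\cdot,s_0)\equiv 0$ on a nondegenerate $t$-interval, then, being a solution of the $n^{\rm th}$-order equation \eqref{Ec::T_n[M]} there, it would vanish identically, contradicting the jump condition (G5); and a continuity/openness argument rules out $g_M(t_0,s_0)=0$ at interior points for a dense set of $s_0$. The main obstacle is precisely this last step — turning the non-strict inequalities obtained from the limiting argument into the strict ``a.e.'' statements — and it is handled exactly as in the proof of Theorem \ref{T::in2}, which is why the theorem is stated as ``analogously''.
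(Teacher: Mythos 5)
Your outline follows the same standard route that the paper implicitly relies on (the paper gives no written proof: Theorems \ref{T::in2} and \ref{T::in21} are declared to follow ``analogously to Theorem \ref{T::in1}'', i.e.\ by the arguments of the cited reference): the integral representation $u(t)=\int_a^b g_M(t,s)h(s)\,ds$ together with its traces at $t=a$ and $t=b$, bump functions concentrating at $s_0$, and a limiting argument. The sufficiency half is essentially right, but even there you skip a point: for a \emph{fixed} $t_0$ the two--dimensional ``a.e.'' negativity does not guarantee that $s\mapsto g_M(t_0,s)$ is negative on a set of positive measure inside the support of $h$; a priori this section could vanish on a whole interval, making $u(t_0)=0$. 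Excluding this requires using that, by \eqref{Ec::gg}, $s\mapsto g_M(t_0,s)$ solves the adjoint equation off $s=t_0$, so it either has isolated zeros or vanishes identically on one side of $t_0$, and the latter degenerate case has to be ruled out explicitly.

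The real gap is in the necessity half, precisely at the step you flag as ``handled exactly as in the proof of Theorem \ref{T::in2}'' --- a proof the paper does not contain, so the deferral closes nothing. Two of your concrete claims there do not work as stated. First, if $g_M(\cdot,s_0)$ vanished on a nondegenerate $t$-interval it would vanish identically only on \emph{one side} of the diagonal, and that is perfectly compatible with the jump condition $\mathrm{(G5)}$/Remark \ref{R:2.5}; no contradiction with $\mathrm{(G5)}$ arises. Second, ``a continuity/openness argument rules out $g_M(t_0,s_0)=0$ at interior points for a dense set of $s_0$'' is both vague and aiming at the wrong target: interior (double) zeros of $g_M(\cdot,s_0)$ are possible in general --- that is exactly why the theorem is stated with ``a.e.'' --- and what must be excluded is a zero set of \emph{positive measure}. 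The argument that actually does this is: $g_M\le 0$ everywhere by Theorem \ref{T::in1}; the $t$-sections solve \eqref{Ec::T_n[M]} off the diagonal and the boundary traces $s\mapsto \frac{\partial^\alpha}{\partial t^\alpha}g_M(t,s)_{\mid t=a}$, $s\mapsto \frac{\partial^\beta}{\partial t^\beta}g_M(t,s)_{\mid t=b}$ solve an $n^{\rm th}$-order linear equation in $s$ (this is exploited later in the proof of Theorem \ref{T::IPN}); a nontrivial solution of such an equation has isolated zeros, so by Fubini a positive-measure zero set forces some section or trace to vanish identically, and \emph{that} contradicts strong inverse negativity (e.g.\ a trace $\equiv 0$ would give $u^{(\alpha)}(a)=0$ for every admissible $h$). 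Without this mechanism spelled out, the passage from the non-strict limiting inequalities to the three strict a.e.\ statements --- the whole content of the theorem beyond Theorem \ref{T::in1} --- is missing.
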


In the sequel, we introduce two conditions on $g_M(t,s)$ that will be used along the paper.
\begin{itemize}
	\item[$(P_g$)] Suppose that there is a continuous function $\phi(t)>0$ for all $t\in (a,b)$ and $k_1,\ k_2\in \mathcal{L}^1(I)$, such that $0<k_1(s)<k_2(s)$ for a.e. $s\in I$, satisfying
	\[\phi(t)\,k_1(s)\leq g_M(t,s)\leq \phi(t)\, k_2(s)\,,\quad \text{for a.e. } (t,s)\in I \times I \,.\]
	\item[($N_g$)] Suppose that there is a continuous function $\phi(t)>0$ for all $t\in (a,b)$ and $k_1,\ k_2\in \mathcal{L}^1(I)$, such that $k_1(s)<k_2(s)<0$ for a.e. $s\in I$, satisfying
	\[\phi(t)\,k_1(s)\leq g_M(t,s)\leq \phi(t)\, k_2(s)\,,\quad \text{for a.e. }(t,s)\in I \times I\,.\]
\end{itemize}

Finally, we introduce the following sets that characterize where the Green's function is of constant sign,
\begin{align}
P_T&= \left\lbrace M\in \mathbb{R}\,,\ \mid \quad g_M(t,s)\geq 0\quad \forall (t,s)\in I\times I\right\rbrace, \\
N_T&= \left\lbrace M\in \mathbb{R}\,,\ \mid \quad g_M(t,s)\leq 0\quad \forall (t,s)\in I\times I\right\rbrace.
\end{align}

Realize that, using Theorem \ref{T::d1}, we can affirm that the two previous sets are real intervals (which can be empty in some situations).

Next results describe one of the extremes of the two previous intervals (see \cite[Theorems 1.8.31 and 1.8.23]{Cab}).

\begin{theorem}\label{T::6}
	Let $\bar{M}\in \mathbb{R}$ be fixed. If  $T_n[\bar{M}]$ is an invertible operator in $X_{\{\sigma_1,\dots,\sigma_k\}}^{\{\varepsilon_1,\dots,\varepsilon_{n-k}\}}$ and its related Green's function satisfies condition $(P_g)$, then the following statements hold:
	\begin{itemize}
		
		\item There is $\lambda_1>0$, the least eigenvalue in absolute value of operator $T_n[\bar{M}]$ in $X_{\{\sigma_1,\dots,\sigma_k\}}^{\{\varepsilon_1,\dots,\varepsilon_{n-k}\}}.$ Moreover, there exists a nontrivial constant sign eigenfunction corresponding to the eigenvalue $\lambda_1$.
		\item Green's function related to operator $T_n[M]$ is nonnegative on $I\times I$ for all $M\in(\bar{M}-\lambda_1,\bar{M}]$.
		
		\item Green's function related to operator $T_n[M]$ cannot be nonnegative on $I\times I$ for all $M<\bar{M}-\lambda_1$.
		\item If there is $M\in \mathbb{R}$ for which Green's function related to operator $T_n[M]$ is non-positive on $I\times I$, then $M<\bar{M}-\lambda_1$.
	\end{itemize}
\end{theorem}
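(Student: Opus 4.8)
The plan is to pass from $T_n[\bar M]$ to its associated integral operator and then apply Krein--Rutman and Krasnosel'ski\u{\i} theory for positive compact operators. Write $X:=X_{\{\sigma_1,\dots,\sigma_k\}}^{\{\varepsilon_1,\dots,\varepsilon_{n-k}\}}$ and let $\mathcal{G}\colon C(I)\to C(I)$ be defined by $(\mathcal{G}\,u)(t)=\int_a^b g_{\bar M}(t,s)\,u(s)\,ds$. Since $n\ge2$, $g_{\bar M}\in C^{n-2}(I\times I)$ is continuous, so $\mathcal{G}$ is compact; and since $T_n[\bar M]$ is invertible on $X$, $\mathcal{G}f$ is, for each $f\in C(I)$, the unique element of $X$ solving $T_n[\bar M]u=f$, whence $\mathcal{G}u=\mu\,u$ (with $\mu\ne0$, $u\ne0$) is equivalent to $u\in X$ and $T_n[\bar M]u=\mu^{-1}u$. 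The key step, and the heart of the matter, is to check that $(P_g)$ makes $\mathcal{G}$ a $u_0$-positive operator with $u_0=\phi$: for every nonnegative $u\not\equiv0$, $(P_g)$ yields $\big(\int_a^b k_1(s)\,u(s)\,ds\big)\,\phi(t)\le(\mathcal{G}u)(t)\le\big(\int_a^b k_2(s)\,u(s)\,ds\big)\,\phi(t)$ on $I$, and both integrals are strictly positive because $0<k_1<k_2$ a.e. In particular $r(\mathcal{G})>0$, so the Krein--Rutman theorem gives that $\mu_1:=r(\mathcal{G})$ is an eigenvalue of $\mathcal{G}$ with an eigenfunction $\varphi_1$ in the cone of nonnegative functions, while Krasnosel'ski\u{\i}'s theorem on $u_0$-positive operators shows $\mu_1$ is the only eigenvalue of $\mathcal{G}$ with a constant sign eigenfunction, that eigenfunction being unique up to a multiplicative constant; applying $(P_g)$ to $\varphi_1=\mu_1^{-1}\mathcal{G}\varphi_1$ we get $\varphi_1>0$ on $(a,b)$. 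Setting $\lambda_1:=\mu_1^{-1}>0$, the equivalence noted above identifies $\lambda_1$ as an eigenvalue of $T_n[\bar M]$ on $X$ with the constant sign eigenfunction $\varphi_1$, while any eigenvalue $\lambda$ of $T_n[\bar M]$ gives the eigenvalue $\mu=\lambda^{-1}$ of $\mathcal{G}$, so that $|\lambda|=|\mu|^{-1}\ge r(\mathcal{G})^{-1}=\lambda_1$; this proves the first item.

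For the second item, fix $M\in(\bar M-\lambda_1,\bar M]$ and write $M=\bar M-\lambda$ with $\lambda\in[0,\lambda_1)$. Since $\lambda\,r(\mathcal{G})<1$, the operator $I-\lambda\,\mathcal{G}$ is invertible on $C(I)$, with $(I-\lambda\,\mathcal{G})^{-1}=\sum_{j\ge0}\lambda^{j}\,\mathcal{G}^{j}$; as $T_n[M]u=f$ is equivalent to $(I-\lambda\,\mathcal{G})u=\mathcal{G}f$, the operator $T_n[M]$ is invertible on $X$ and $T_n[M]^{-1}=\sum_{j\ge0}\lambda^{j}\,\mathcal{G}^{j+1}$. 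Because $g_{\bar M}\ge0$ on $I\times I$ (the a.e.\ lower bound in $(P_g)$ together with the continuity of $g_{\bar M}$), $\mathcal{G}$ and hence every $\mathcal{G}^{j+1}$ maps nonnegative functions to nonnegative ones, and $\lambda\ge0$; thus $T_n[M]^{-1}$ is a positive operator, i.e.\ $T_n[M]$ is inverse positive, and Theorem \ref{T::in1} gives $g_M\ge0$ on $I\times I$.

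For the third and fourth items I would use that $\varphi_1$ is an eigenfunction of $T_n[M]$ as well: since $T_n[M]u=T_n[\bar M]u+(M-\bar M)\,u$ for all $u$, we get $T_n[M]\varphi_1=(\lambda_1+M-\bar M)\,\varphi_1$ with $\varphi_1\in X$. Hence, whenever $T_n[M]$ is invertible (so that $g_M$ exists), necessarily $\lambda_1+M-\bar M\ne0$ and the integral operator with kernel $g_M$ sends $\varphi_1$ to $(\lambda_1+M-\bar M)^{-1}\varphi_1$, a function that on $(a,b)$ has the sign of $(\lambda_1+M-\bar M)^{-1}$. If $M<\bar M-\lambda_1$ this sign is negative, so $\int_a^b g_M(t,s)\,\varphi_1(s)\,ds<0$ on $(a,b)$ although $\varphi_1\ge0$, which is incompatible with $g_M\ge0$; and $M=\bar M-\lambda_1$ itself carries no Green's function since $T_n[\bar M-\lambda_1]\varphi_1=0$ makes the operator non-invertible --- this is the third item. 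If instead $g_M\le0$ for some $M$, the same computation forces $(\lambda_1+M-\bar M)^{-1}\le0$, hence (equality excluded by non-invertibility) $M<\bar M-\lambda_1$, which is the fourth item.

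The principal obstacle is the first paragraph: one has to verify carefully that condition $(P_g)$ really places $\mathcal{G}$ within the scope of the strong form of Krein--Rutman / Krasnosel'ski\u{\i} theory --- a compact, $u_0$-positive operator on an appropriate Banach space with a total cone --- so that the spectral radius $r(\mathcal{G})$ is genuinely attained as an eigenvalue and the corresponding eigenfunction is of constant sign and essentially unique. Once that spectral information is secured, the remaining items follow quickly from the Neumann series, Theorem \ref{T::in1}, and the elementary eigenfunction identity $T_n[M]\varphi_1=(\lambda_1+M-\bar M)\varphi_1$.
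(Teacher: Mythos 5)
Your proposal is correct, but there is nothing internal to compare it against: the paper never proves Theorem \ref{T::6} --- it is listed among the preliminaries and imported directly from \cite[Theorems 1.8.31 and 1.8.23]{Cab}. What you have written is essentially a self-contained reconstruction of the standard argument behind that citation, and all the steps check out: the equivalence between eigenvalues of $T_n[\bar M]$ on $X_{\{\sigma_1,\dots,\sigma_k\}}^{\{\varepsilon_1,\dots,\varepsilon_{n-k}\}}$ and reciprocals of nonzero eigenvalues of the compact integral operator $\mathcal G$ is sound and your eigenvalue convention ($T_n[\bar M]u=\lambda u$, i.e.\ $T_n[\bar M-\lambda]u=0$) agrees with the one used throughout the paper; condition $(P_g)$ does make $\mathcal G$ a $u_0$-positive operator with $u_0=\phi$, so Krein--Rutman/Krasnosel'ski\u{\i} theory applies and yields $\lambda_1=1/r(\mathcal G)$ together with the constant sign eigenfunction; the Neumann series argument for $M\in(\bar M-\lambda_1,\bar M]$ combined with Theorem \ref{T::in1} gives the second item; and the identity $T_n[M]\varphi_1=(\lambda_1+M-\bar M)\,\varphi_1$ disposes of the last two items exactly as you say. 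The only place where you assert rather than argue is the claim $r(\mathcal G)>0$: make it explicit by applying your $(P_g)$ estimate to $u=\phi$ itself, which gives $\mathcal G\phi\ge\bigl(\int_a^b k_1(s)\,\phi(s)\,ds\bigr)\phi$ with a strictly positive constant, hence $r(\mathcal G)\ge\int_a^b k_1(s)\,\phi(s)\,ds>0$; this is a one-line addition and everything else stands.
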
	

\begin{theorem}\label{T::7}
	Let $\bar{M}\in \mathbb{R}$ be fixed. If $T_n[\bar{M}]$ is an invertible operator in $X_{\{\sigma_1,\dots,\sigma_k\}}^{\{\varepsilon_1,\dots,\varepsilon_{n-k}\}}$ and its related Green's function satisfies condition $(N_g)$, then the following statements hold:
	\begin{itemize}
		\item There is $\lambda_2<0$, the least eigenvalue in absolute value of operator $T_n[\bar{M}]$ in $X_{\{\sigma_1,\dots,\sigma_k\}}^{\{\varepsilon_1,\dots,\varepsilon_{n-k}\}}.$ Moreover, there exists a nontrivial constant sign eigenfunction corresponding to the eigenvalue $\lambda_2$.
		\item Green's function related to operator $T_n[M]$ is non-positive on $I\times I$ for all $M\in[\bar{M},\bar{M}-\lambda_2)$.
		\item Green's function related to operator $T_n[M]$ cannot be non-positive on $I\times I$ for all $M>\bar{M}-\lambda_2$.
		\item If there is $M\in \mathbb{R}$ for which Green's function related to operator $T_n[M]$ is nonnegative on $I\times I$, then $M>\bar{M}-\lambda_2$.
	\end{itemize}
\end{theorem}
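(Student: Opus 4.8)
The plan is to derive all four assertions from the spectral theory of the compact solution operator $K:=-\,T_n[\bar M]^{-1}$, which, thanks to hypothesis $(N_g)$, is a strongly positive integral operator. First I would note that, since $T_n[\bar M]$ is invertible on $X_{\{\sigma_1,\dots,\sigma_k\}}^{\{\varepsilon_1,\dots,\varepsilon_{n-k}\}}$, the operator $T_n[\bar M]^{-1}$ acts on $C(I)$ by $\bigl(T_n[\bar M]^{-1}f\bigr)(t)=\int_a^b g_{\bar M}(t,s)\,f(s)\,ds$ and is compact by the regularity of $g_{\bar M}$ recalled above; hence $K$ has kernel $-g_{\bar M}(t,s)$, and $(N_g)$ gives
\[ \phi(t)\,\bigl(-k_2(s)\bigr)\ \le\ -g_{\bar M}(t,s)\ \le\ \phi(t)\,\bigl(-k_1(s)\bigr),\qquad 0<-k_2(s)<-k_1(s)\ \ \text{a.e.}\,, \]
so that $K$ sends every $h\gneqq0$ to a function lying between two positive multiples of $\phi$; that is, $K$ is $u_0$-positive with $u_0=\phi$. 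Working in the ordered Banach space of functions dominated by $\phi$ --- where $K$ becomes strongly positive --- I would then invoke the strong form of the Krein--Rutman theorem to obtain a simple eigenvalue $\mu_1:=r(K)>0$ with eigenfunction $\varphi\gneqq0$, every other eigenvalue $\mu$ of $K$ satisfying $|\mu|<\mu_1$. Putting $\lambda_2:=-1/\mu_1<0$, the relation $K\varphi=\mu_1\varphi$ becomes $T_n[\bar M]\varphi=\lambda_2\varphi$, so $\lambda_2$ is an eigenvalue of $T_n[\bar M]$ on that space with the constant-sign eigenfunction $\varphi$; and since the nonzero eigenvalues of $T_n[\bar M]$ are precisely the numbers $-1/\mu$ with $\mu$ an eigenvalue of $K$, while $0$ is excluded by invertibility, every eigenvalue $\lambda$ of $T_n[\bar M]$ satisfies $|\lambda|\ge|\lambda_2|$, with equality only for $\lambda=\lambda_2$. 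This would establish the first item.

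For the second item I would write $T_n[M]=T_n[\bar M]\bigl(I-(M-\bar M)K\bigr)$, so that $-\,T_n[M]^{-1}=\bigl(I-(M-\bar M)K\bigr)^{-1}K$ whenever $I-(M-\bar M)K$ is invertible. For $M\in[\bar M,\bar M-\lambda_2)$ one has $0\le M-\bar M<1/\mu_1$, hence $r\bigl((M-\bar M)K\bigr)<1$, and the Neumann series converges in operator norm:
\[ -\,T_n[M]^{-1}\ =\ \sum_{j\ge1}(M-\bar M)^{\,j-1}\,K^{\,j}. \]
Since each $K^{j}$ has a non-negative kernel, $-\,T_n[M]^{-1}$ is a positive operator, which at the level of kernels means $-g_M(t,s)\ge0$, i.e. $g_M\le0$ on $I\times I$. (In passing, $T_n[M]$ is invertible throughout $[\bar M,\bar M-\lambda_2)$, because $\bar M-M\in(\lambda_2,0]$ contains no eigenvalue of $T_n[\bar M]$ by the first item.)

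For the third and fourth items I would use $\varphi$ once more. Being an eigenfunction, $\varphi\in X_{\{\sigma_1,\dots,\sigma_k\}}^{\{\varepsilon_1,\dots,\varepsilon_{n-k}\}}$, and for every $M$
\[ T_n[M]\,\varphi\ =\ T_n[\bar M]\,\varphi+(M-\bar M)\,\varphi\ =\ \bigl(M-(\bar M-\lambda_2)\bigr)\,\varphi. \]
If $M>\bar M-\lambda_2$ and $T_n[M]$ had a Green's function with $g_M\le0$ on $I\times I$, the right-hand side above would be $\gneqq0$, and applying $T_n[M]^{-1}$ --- then a negative operator --- would give $\varphi\le0$, contradicting $\varphi\gneqq0$; this is the third item. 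If instead $M\le\bar M-\lambda_2$ and $g_M\ge0$ on $I\times I$, then for $M=\bar M-\lambda_2$ one gets $T_n[M]\varphi=0$ with $\varphi\ne0$, so $T_n[M]$ is not invertible and has no Green's function, while for $M<\bar M-\lambda_2$ one has $T_n[M](-\varphi)=\bigl((\bar M-\lambda_2)-M\bigr)\varphi\gneqq0$, and applying the positive operator $T_n[M]^{-1}$ forces $-\varphi\ge0$, again contradicting $\varphi\gneqq0$. Hence $g_M\ge0$ on $I\times I$ is possible only for $M>\bar M-\lambda_2$, which is the fourth item.

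The main obstacle is the first step: one must verify that $(N_g)$ genuinely places $K$ within the scope of the strong Krein--Rutman/Perron theory --- compactness, $u_0$-positivity with $u_0=\phi$, simplicity and strict dominance of $\mu_1$ --- which is precisely the reason $(N_g)$ is formulated as a two-sided comparison of $g_{\bar M}$ with the rank-one kernel $\phi(t)\,k_i(s)$. Once the principal eigenpair $(\mu_1,\varphi)$ and the spectral gap are available, the remaining steps are soft; this argument mirrors the proof of Theorem \ref{T::6} with the roles of $P_g$/$N_g$ and of the signs interchanged, and Theorems \ref{T::d1} and \ref{T::int} could alternatively be used to organise the monotonicity content of the second item.
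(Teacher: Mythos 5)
Your argument is correct. Note that the paper itself does not prove Theorem \ref{T::7}: it is quoted as a known result from \cite[Theorems 1.8.31 and 1.8.23]{Cab}, so there is no in-paper proof to compare against; your sketch reconstructs essentially the standard argument behind that reference. The structure is sound: condition $(N_g)$ makes $K=-T_n[\bar M]^{-1}$ a compact, $u_0$-positive operator with $u_0=\phi$ (and every eigenfunction of $K$ for a nonzero eigenvalue is automatically $\phi$-dominated, so passing to the $\phi$-ordered space does not change the relevant spectrum), Krein--Rutman/Krasnosel'ski\u{\i} theory then yields the simple dominant eigenvalue $\mu_1=r(K)$ and hence $\lambda_2=-1/\mu_1$ with a constant-sign eigenfunction; the Neumann-series factorization $T_n[M]=T_n[\bar M]\bigl(I-(M-\bar M)K\bigr)$ gives both invertibility and $g_M\le 0$ on $[\bar M,\bar M-\lambda_2)$; and testing against the eigenfunction $\varphi$ gives the last two items. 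The only points you flag as needing care (compactness, the passage from operator positivity of $-T_n[M]^{-1}$ to pointwise non-positivity of the continuous kernel $g_M$, and the spectral dominance in the $\phi$-norm space) are indeed the technical content, and they go through under $(N_g)$ exactly as you indicate; alternatively, as you remark, the monotonicity content of the second item can be organised via Theorems \ref{T::d1} and \ref{T::int}.
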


Next results give some relevant  properties of the intervals $N_T$ and $P_T$:

\begin{theorem}\label{T::8}
	Let $\bar{M}\in \mathbb{R}$ be fixed. If $T_n[\bar{M}]$ is an  invertible  operator in $X_{\{\sigma_1,\dots,\sigma_k\}}^{\{\varepsilon_1,\dots,\varepsilon_{n-k}\}}$ and its related Green's function satisfies condition $(P_g)$; then if the interval $N_T\neq \emptyset$, then $\sup(N_T)=\inf(P_T)$.
\end{theorem}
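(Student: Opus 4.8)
The plan is to show that both $\inf(P_T)$ and $\sup(N_T)$ equal $\bar M-\lambda_1$, where $\lambda_1>0$ is the eigenvalue furnished by Theorem~\ref{T::6}. First, since $T_n[\bar M]$ is invertible and $g_{\bar M}$ satisfies $(P_g)$ — in particular $g_{\bar M}\ge0$, so $\bar M\in P_T$ — Theorem~\ref{T::6} applies and provides $\lambda_1>0$, the least eigenvalue in absolute value of $T_n[\bar M]$, together with a constant-sign eigenfunction $\varphi_1$; it also gives $(\bar M-\lambda_1,\bar M]\subseteq P_T$ and that $g_M$ cannot be nonnegative for $M<\bar M-\lambda_1$. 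Moreover $\bar M-\lambda_1\notin P_T$, because $T_n[\bar M]\,\varphi_1=\lambda_1\varphi_1$ forces $T_n[\bar M-\lambda_1]\,\varphi_1=0$ with $\varphi_1$ nontrivial, so $T_n[\bar M-\lambda_1]$ is not invertible and possesses no Green's function. Since $P_T$ is an interval (Theorem~\ref{T::d1}), this yields $\inf(P_T)=\bar M-\lambda_1$.

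Next, the fourth assertion of Theorem~\ref{T::6} states that any $M$ for which $g_M\le0$ on $I\times I$ satisfies $M<\bar M-\lambda_1$; hence $N_T\subseteq(-\infty,\bar M-\lambda_1)$ and $\sup(N_T)\le\bar M-\lambda_1=\inf(P_T)$. For the reverse inequality I would argue by contradiction, supposing $M^*:=\sup(N_T)<\bar M-\lambda_1$ (by hypothesis $N_T\neq\emptyset$, so $M^*$ is finite). The goal is to check that $T_n[M^*]$ is invertible with $g_{M^*}\lneqq0$, so that Theorem~\ref{T::7} may be invoked with base point $M^*$: it would then produce $\lambda_2<0$, the least eigenvalue in absolute value of $T_n[M^*]$, and guarantee $g_M\le0$ for all $M\in[M^*,M^*-\lambda_2)$, a nondegenerate interval lying to the right of $M^*$ — contradicting $M^*=\sup(N_T)$. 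Hence no such $M^*$ exists and $\sup(N_T)=\bar M-\lambda_1=\inf(P_T)$, as claimed.

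Two points remain to be justified in that argument. The first is that $T_n[M^*]$ is invertible with $g_{M^*}\le0$: since $N_T$ is an interval with supremum $M^*$, all the functions $g_M$ with $M\nearrow M^*$ are $\le0$, so if $T_n[M^*]$ is invertible then, by the continuous dependence of the Green's function on the parameter, $g_{M^*}=\lim_{M\to M^{*-}}g_M\le0$ and $M^*\in N_T$; the alternative that $T_n[M^*]$ is not invertible is excluded because then $\bar M-M^*$ would be an eigenvalue of $T_n[\bar M]$ distinct from $\lambda_1$, whose eigenfunction necessarily changes sign (by the Krein--Rutman-type reasoning behind Theorem~\ref{T::6}, $\varphi_1$ is, up to a scalar, the only constant-sign eigenfunction), so $g_M$ would acquire a sign-changing singular part as $M\to M^{*-}$, against $g_M\le0$ nearby. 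The second point — and this is the \emph{main obstacle} — is that $g_{M^*}$, being a constant-sign Green's function of a two-point problem, satisfies condition $(N_g)$, namely the separated estimate $\phi(t)\,k_1(s)\le g_{M^*}(t,s)\le\phi(t)\,k_2(s)$ with $k_1<k_2<0$; this is the refinement that upgrades ``constant sign'' to ``$(N_g)$'' for these boundary conditions, and establishing it amounts to a boundary-behaviour analysis of $g_{M^*}$ of the same nature as the one underlying $(P_g)$ for $g_{\bar M}$. Granting these two facts, the contradiction in the preceding paragraph completes the proof.
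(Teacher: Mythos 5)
Your reduction of the problem is sound as far as it goes: the identification $\inf(P_T)=\bar M-\lambda_1$ (via the second and third assertions of Theorem~\ref{T::6} together with the non-invertibility of $T_n[\bar M-\lambda_1]$) and the bound $\sup(N_T)\le \bar M-\lambda_1$ (fourth assertion of Theorem~\ref{T::6}) are correct. The paper itself does not prove this statement — it is quoted from \cite{Cab} — so the issue is whether your completion of the remaining inequality stands on its own, and it does not. Two steps are left unproved, and the second is the heart of the matter. First, your exclusion of non-invertibility at $M^*=\sup(N_T)$ rests on the assertion that a sign-changing eigenfunction forces ``a sign-changing singular part'' of $g_M$ as $M\to M^{*-}$; this can be made rigorous (via the Laurent expansion of the resolvent at the pole and the uniqueness, under $(P_g)$, of the constant-sign eigenfunction), but as written it is a heuristic. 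Second, and decisively, your invocation of Theorem~\ref{T::7} at $M^*$ requires that $g_{M^*}$ satisfy $(N_g)$, and you explicitly grant this rather than prove it. Condition $(N_g)$ is a quantitative two-sided comparison $\phi(t)k_1(s)\le g_{M^*}(t,s)\le \phi(t)k_2(s)$ with $k_1<k_2<0$; it does not follow from $g_{M^*}\le 0$ under the hypotheses of the theorem (in this paper it is only obtained for concrete operators through property $(T_d)$, Theorem~\ref{L::5}), so the central step of your contradiction argument is missing, not merely deferred.

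The gap can be closed without ever looking at $\sup(N_T)$ or at $(N_g)$ away from $\bar M$. Take any $M_1\in N_T$ and let $A$ be the integral operator with kernel $-g_{M_1}\ge 0$. Since $\lambda_1-(\bar M-M_1)<0$ is an eigenvalue of $T_n[M_1]$, $A$ has the positive eigenvalue $\left(\bar M-\lambda_1-M_1\right)^{-1}$, so $r(A)>0$ and Krein--Rutman yields an eigenfunction $u_1\gneqq 0$ of $A$ for the eigenvalue $r(A)$; then $u_1$ is a nonnegative eigenfunction of $T_n[\bar M]$ for the eigenvalue $\bar M-M_1-1/r(A)$, and the $u_0$-positivity of $T_n[\bar M]^{-1}$ furnished by $(P_g)$ forces this eigenvalue to be $\lambda_1$, i.e.\ $r(A)=\left(\bar M-\lambda_1-M_1\right)^{-1}$. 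Consequently, for every $M\in(M_1,\bar M-\lambda_1)$ the Neumann series
\begin{equation*}
T_n[M]^{-1}=\bigl(T_n[M_1]+(M-M_1)I\bigr)^{-1}=-\sum_{k\ge 0}(M-M_1)^k A^{k+1}
\end{equation*}
converges and is a non-positive operator, so $T_n[M]$ is invertible and inverse negative, hence $g_M\le 0$ by Theorem~\ref{T::in1}. Thus $[M_1,\bar M-\lambda_1)\subseteq N_T$ for every $M_1\in N_T$, which gives $\sup(N_T)=\bar M-\lambda_1=\inf(P_T)$ directly. This is the kind of argument your proposal needs in place of the unproven $(N_g)$ step.
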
	

\begin{theorem}\label{T::9}
	Let $\bar{M}\in \mathbb{R}$ be fixed. If $T_n[\bar{M}]$ is an invertible  operator  in $X_{\{\sigma_1,\dots,\sigma_k\}}^{\{\varepsilon_1,\dots,\varepsilon_{n-k}\}}$ and its related Green's function satisfies condition $(N_g)$; then if the interval $P_T\neq \emptyset$, then $\sup(N_T)=\inf(P_T)$.
\end{theorem}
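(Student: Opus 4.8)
The plan is to run the proof of Theorem~\ref{T::8} in dual form, interchanging $(P_g)\leftrightarrow(N_g)$, $P_T\leftrightarrow N_T$ and Theorem~\ref{T::6}$\leftrightarrow$Theorem~\ref{T::7}. Put $M_2:=\bar M-\lambda_2$, where $\lambda_2<0$ is the eigenvalue supplied by Theorem~\ref{T::7}; since $\lambda_2<0$ we have $M_2>\bar M$. Condition $(N_g)$ forces $g_{\bar M}\le 0$ (indeed $g_{\bar M}(t,s)\le\phi(t)k_2(s)<0$ a.e., and continuity off the diagonal upgrades this), so $\bar M\in N_T$ and $N_T\neq\emptyset$. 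Applying Theorem~\ref{T::7} at $\bar M$, the second and third bullets give $[\bar M,M_2)\subseteq N_T$ and $N_T\cap(M_2,\infty)=\emptyset$, hence $\sup(N_T)=M_2$; the fourth bullet, together with $P_T\neq\emptyset$, gives $P_T\subseteq(M_2,\infty)$, hence $\inf(P_T)\geq M_2=\sup(N_T)$. So everything reduces to the reverse inequality $\inf(P_T)\leq M_2$: one must show that $g_M\geq 0$ for parameters arbitrarily close to $M_2$ from the right.

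To obtain this I would analyse the operator near the singular value $M_2$. Theorem~\ref{T::7} yields a nontrivial constant-sign eigenfunction for $\lambda_2$, which can be normalised to $u>0$ on $(a,b)$ (this is compatible with $(N_g)$: $u=\lambda_2\int_a^b g_{\bar M}(\cdot,s)\,u(s)\,ds$ is the fixed point of a positive integral operator, since $\lambda_2<0$ and $g_{\bar M}<0$), and $u$ solves $T_n[M_2]\,u=0$. Running the same reasoning for the adjoint, via $g_{\bar M}^*(t,s)=g_{\bar M}(s,t)$ from~\eqref{Ec::gg}, produces a positive eigenfunction $u^*$ of $T_n^*[M_2]$ with $\langle u,u^*\rangle>0$, where $\langle\,\cdot\,,\,\cdot\,\rangle$ is the $L^2(I)$ pairing; a Krein--Rutman type argument makes this principal eigenvalue simple, so the residue below is rank one. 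For $M>M_2$ close to $M_2$, $T_n[M]$ differs from $T_n[M_2]$ only by the multiplication by $M-M_2$, hence its eigenvalue nearest $0$ is the isolated simple $M-M_2>0$, $T_n[M]$ is invertible, and the resolvent has the expansion
\[
g_M(t,s)=\frac{u(t)\,u^*(s)}{(M-M_2)\,\langle u,u^*\rangle}+R_M(t,s),\qquad R_M\ \text{bounded as}\ M\downarrow M_2 .
\]
Since $u(t)u^*(s)>0$ on $(a,b)\times(a,b)$, for $M$ close enough to $M_2$ the principal term dominates and $g_M\geq 0$ on $I\times I$; therefore $M\in P_T$ and $\inf(P_T)\leq M_2$. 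Combined with the first paragraph this gives $\sup(N_T)=M_2=\inf(P_T)$.

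The main obstacle is making the last dominance rigorous: $g_M$, the principal term and $R_M$ all vanish on the boundary of $I\times I$ at the rates imposed by the conditions~\eqref{Ec::cfa}--\eqref{Ec::cfb} (the rates recorded by $\alpha,\beta$ in~\eqref{Ec::alpha}--\eqref{Ec::beta}), so the sign must be controlled \emph{uniformly up to that boundary} and not merely on interior compacta; Theorems~\ref{T::d1} and~\ref{T::int} are the tools that let one pass from the pointwise perturbative estimate to a uniform one on $I\times I$ and reconcile it continuously with the non-positivity of $g_M$ already known for $M$ just below $M_2$. A variant, structurally identical to the proof of Theorem~\ref{T::8} and avoiding an explicit resolvent expansion, is: fix $M_0\in P_T$, combine the resolvent identity with the two-sided product bound $(N_g)$ for $g_{\bar M}$ to show that $g_{M_0}$ itself satisfies $(P_g)$, and then apply Theorem~\ref{T::6} at the base point $M_0$ (using $N_T\neq\emptyset$); in that route the delicate ingredient is the strict lower bound $g_{M_0}(t,s)\geq\widetilde\phi(t)\,\widetilde k_1(s)$ with $\widetilde k_1>0$, which is again a statement about the boundary behaviour of $g_{M_0}$.
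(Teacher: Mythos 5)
Your opening reduction is correct: Theorem \ref{T::7} applied at $\bar M$ gives $\sup(N_T)=\bar M-\lambda_2=:M_2$ and $P_T\subseteq(M_2,+\infty)$, so the entire content of the statement is the inequality $\inf(P_T)\le M_2$. (Note also that the paper itself states Theorem \ref{T::9} without proof, as a preliminary imported from \cite{Cab}, so there is no in-paper argument to compare with; your attempt has to stand on its own.) The genuine problem is that your main route never uses the hypothesis $P_T\neq\emptyset$: if the rank-one dominance you assert were correct, it would prove the unconditional statement that $g_M\ge 0$ on $I\times I$ for every $M$ slightly larger than $M_2$, and that statement is false. Take $n=2$, $T_2^0[M]\,u=u''+M\,u$ on $[0,1]$ with Dirichlet conditions $u(0)=u(1)=0$ and $\bar M=0$: condition $(N_g)$ holds (with $\phi(t)=t(1-t)$, $k_1\equiv-1$, $k_2(s)=-s(1-s)$), $M_2=\pi^2$, yet no $M$ whatsoever makes the Green's function nonnegative, i.e. $P_T=\emptyset$; this is exactly the situation recorded by Theorem \ref{T::cv} (here $\sigma_1=0=k-1$ and $n-k=1$ is odd) and by the low order examples of Section \ref{SS::Ex}. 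Thus the failure of $u(t)\,u^*(s)/\bigl((M-M_2)\,\langle u,u^*\rangle\bigr)$ to dominate $R_M$ up to the boundary of $I\times I$ is not a technical point that Theorems \ref{T::d1} and \ref{T::int} can repair: in general the Green's function keeps its old sign in a boundary strip for every $M>M_2$, and the hypothesis $P_T\neq\emptyset$ must be the engine of any proof of $\inf(P_T)\le M_2$, not an afterthought.

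Your ``variant'' does use the hypothesis and has the right shape (in effect it reduces Theorem \ref{T::9} to Theorem \ref{T::8} applied at a base point $M_0\in P_T$), but both of its decisive steps are precisely what is missing. First, $(P_g)$ at $M_0$ does not follow from $g_{M_0}\ge 0$ and the resolvent identity: writing $g_{M_0}(t,s)=g_{\bar M}(t,s)-(M_0-\bar M)\int_a^b g_{\bar M}(t,r)\,g_{M_0}(r,s)\,dr$ and using $(N_g)$ does give an upper bound of the form $\phi(t)\,\tilde k_2(s)$, but in the lower bound the term $g_{\bar M}(t,s)\ge\phi(t)\,k_1(s)$ enters with the wrong sign, so the strictly positive product bound $g_{M_0}(t,s)\ge\widetilde\phi(t)\,\widetilde k_1(s)$ --- which encodes exactly the boundary behaviour at issue --- is not obtained this way. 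Second, even granting $(P_g)$ at $M_0$, Theorem \ref{T::6} applied at $M_0$ only yields $\inf(P_T)=M_0-\lambda_1^{M_0}$ and $\sup(N_T)\le\inf(P_T)$; to conclude equality you must still rule out a gap, i.e. show $\bar M-\lambda_2=M_0-\lambda_1^{M_0}$, for instance by noting that both of these parameters admit constant sign eigenfunctions and that two distinct parameters cannot both do so (pair a nonnegative eigenfunction at one value against a positive eigenfunction of the adjoint at the other, in the spirit of the final argument in the proof of Proposition \ref{P::6.5}). Neither ingredient is established in the proposal, so as written it does not prove the theorem.
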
	

By using these results, we know that one of the extremes of the interval of constant sign of the Green's function, if it is not empty, is characterized by its first eigenvalue. So, the rest of the paper is devoted to characterize the other extreme of the interval, provided that it is bounded.

\chapter{Hypotheses on operator $T_n[M]$}
As we have mentioned at the introduction, the aim of this work is to generalize the results given in \cite{CabSaa} and \cite{CabSaa2}.

\vspace{0.3cm}

In \cite{CabSaa}, the problems studied are the so-called $(k,n-k)$ boundary conditions which correspond to $\{\sigma_1,\dots,\sigma_k\}=\{0,\dots,k-1\}$ and $\{\varepsilon_1,\dots,\varepsilon_{n-k}\}=\{0,\dots,n-k-1\}$. We will characterize the parameter's set where the Green's function has constant sign, by assuming that the boundary conditions satisfy $(N_a)$, which, clearly, holds for $(k,n-k)$.

By using Theorems \ref{T::4} and \ref{T::3}, under the hypothesis that \eqref{Ec::T_n[M]} is disconjugate on $[a,b]$, it is proved in \cite{CabSaa} the existence of a decomposition as follows:
\begin{equation*}
 T_0\,u(t)=u(t)\,,\quad T_k\,u(t)=\dfrac{d}{dt}\left( \dfrac{T_{k-1}\,u(t)}{v_k(t)}\right) \,,\ k = 1,\dots, n\,,
\end{equation*}
where $v_k>0$, $v_k\in C^{n}(I)$ such that
\begin{equation*} T_n[M]\,u(t)=v_1(t)\,\dots\,v_n(t)\,T_n\,u(t)\,,\ t\in I\,,\end{equation*}
and, moreover, this decomposition satisfies, for every $u\in  X_{\{0,\dots,k-1\}}^{\{0,\dots,n-k-1\}}$:
\begin{eqnarray}
\nonumber T_0\,u(a)=\cdots=T_{k-1}\,u(a)&=&0\,,\\
\nonumber T_0\,u(b)=\cdots=T_{n-k-1}\,u(b)&=&0\,.
\end{eqnarray}

In \cite{CabSaa2}, it is studied a fourth order problem coupled with the simply supported beam boundary conditions, that is, $\{\sigma_1,\sigma_2\}=\{\varepsilon_1,\varepsilon_2\}=\{0,2\}$. It is also obtained a decomposition as follows:
\begin{equation*}
T_0\,u(t)=u(t)\,,\quad T_k\,u(t)=\dfrac{d}{dt}\left( \dfrac{T_{k-1}\,u(t)}{v_k(t)}\right) \,,\ k = 1,\dots, 4\,,
\end{equation*}
where $v_k>0$, $v_k\in C^{4}(I)$ such that
\begin{equation*} T_4[M]\,u(t)=v_1(t)\,\dots\,v_4(t)\,T_4\,u(t)\,,\ t\in I\,,\end{equation*}
and, moreover, this decomposition satisfies, for every $u\in  X_{\{0,2\}}^{\{0,2\}}$:
\begin{eqnarray}
\nonumber T_0\,u(a)=T_{2}\,u(a)&=&0\,,\\
\nonumber T_0\,u(b)=T_{2}\,u(b)&=&0\,.
\end{eqnarray}

Furthermore, the simplest $n^{th}$-order operator which we can study is $T_n[0]\,u(t)=u^{(n)}(t)$. It is obvious that such an operator satisfies:
\begin{equation*}
T_0\,u(t)=u(t)\,,\quad T_k\,u(t)=\dfrac{d}{dt}\left( \dfrac{T_{k-1}\,u(t)}{v_k(t)}\right) \,,\ k = 1,\dots, n\,,
\end{equation*}
where $v_k\equiv 1$ on $I$ and
\begin{equation*}T_n[0]\,u(t)=v_1(t)\,\dots\,v_n(t)\,T_n\,u(t)\,,\ t\in I\,,\end{equation*}
and, moreover, this decomposition satisfies, for every $u\in  X_{\{\sigma_1,\dots,\sigma_k\}}^{\{\varepsilon_1,\dots,\varepsilon_{n-k}\}}$:
\begin{align}
\nonumber T_{\sigma_1}\,u(a)=u^{(\sigma_1)}(a)=0\,,&\dots\,, T_{\sigma_k}\,u(a)=u^{(\sigma_k)}=0\,,\\
\nonumber T_{\varepsilon_1}\,u(b)=u^{(\varepsilon_1)}(b)=0\,,&\dots\,,T_{\varepsilon_{n-k}}\,u(b)=u^{(\varepsilon_{n-k})}=0\,.
\end{align}

Thus, it is natural to impose that the operator $T_n[M]$ satisfies the following property.
\begin{definition}	
	Let us say that the operator $T_n[M]$ satisfies the property $(T_d)$ in $X_{\{\sigma_1,\dots,\sigma_k\}}^{\{\varepsilon_1,\dots,\varepsilon_{n-k}\}}$ if, and only if, there exists the following decomposition:
\begin{equation}
\label{Ec::Td1} T_0\,u(t)=u(t)\,,\quad T_k\,u(t)=\dfrac{d}{dt}\left( \dfrac{T_{k-1}\,u(t)}{v_k(t)}\right) \,,\ k = 1,\dots, n\,,
\end{equation}
where $v_k>0$, $v_k\in C^{n}(I)$ such that
\begin{equation}\label{Ec::Td2} T_n[M]\,u(t)=v_1(t)\,\dots\,v_n(t)\,T_n\,u(t)\,,\ t\in I\,,\end{equation}
and, moreover, such a decomposition satisfies, for every $u\in  X_{\{\sigma_1,\dots,\sigma_k\}}^{\{\varepsilon_1,\dots,\varepsilon_{n-k}\}}$:
\begin{eqnarray}
\nonumber T_{\sigma_1}\,u(a)=\cdots=T_{\sigma_k}\,u(a)&=&0\,,\\
\nonumber T_{\varepsilon_1}\,u(b)=\cdots=T_{\varepsilon_{n-k}}\,u(b)&=&0\,.
\end{eqnarray}
\end{definition}

As we have shown above, the operator $T_n[M]\,u(t)\equiv u^{(n)}(t)+M\,u(t)$ satisfies property $(T_d)$ for $M=0$. Indeed, the existence of such a decomposition for $M=\bar M$ allows to express the operator $T_n[\bar M]$ as a composition of operators of order $1$ verifying the boundary conditions given on \eqref{Ec::cfa}-\eqref{Ec::cfb}. That is, in order to study the oscillation, we can think that the operator   $T_n[\bar M]\,u(t)$ has an analogous behavior to $u^{(n)}(t)$.

\begin{remark}
	Realize that, due to Theorems \ref{T::4} and \ref{T::3}, the disconjugacy of the linear differential equation \eqref{Ec::T_n[M]} on $I$ is a necessary condition for the operator $T_n[M]$ to satisfy property $(T_d)$ on $X_{\{\sigma_1,\dots,\sigma_k\}}^{\{\varepsilon_1,\dots,\varepsilon_{n-k}\}}$.
	
	Furthermore, as it has been proved in \cite{CabSaa}, the disconjugacy hypothesis is also a sufficient condition for the operator $T_n[M]$ to satisfy property $(T_d)$ on $X_{\{0,\dots,k-1\}}^{\{0,\dots,n-k-1\}}$.
\end{remark}
	
	\begin{remark}
		Realize that there may exist different decompositions \eqref{Ec::Td1} depending on the choice of $v_k$ for $k=1,\dots,n$.
		
		Moreover, even if we are not able to obtain such a decomposition, we cannot ensure that it does not exist, unless we prove that the linear differential equation \eqref{Ec::T_n[M]} is not disconjugate.
	\end{remark}
	
	In \cite{CabSaa}, it is shown that 
	\begin{equation}\label{Ec::Tl}T_\ell u(t)=\dfrac{1}{v_1(t)\dots v_\ell(t)}\,u^{(\ell)}(t)+p_{\ell_1}(t)\,u^{(\ell-1)}(t)+\cdots p_{\ell_\ell}(t)\,u(t)\,,\end{equation}
	where $p_{\ell_i}\in C^{n-\ell}(I)$, for every $i=1,\dots,\ell$, and $\ell=0,\dots,n$.
	
	Now, let us see that 
	\begin{align}
	\label{Ec::pl1}
	p_{\ell_1}(t)=&\,a_1^1(v_1(t),\dots,v_\ell(t))\,v_1'(t)+\cdots+a_1^\ell(v_1(t),\dots,v_\ell(t))\,v_\ell'(t)\,,\\\nonumber\\\label{Ec::pl2} p_{\ell_2}(t)=&\,a_2^1(v_1(t),\dots,v_\ell(t))\,v_1''(t)+\cdots+a_2^{\ell-1}(v_1(t),\dots,v_\ell(t))\,v_{\ell-1}''(t) \\\nonumber &+f_2(v_1(t),\dots,v_\ell(t),v_1'(t),\dots,v_\ell'(t))\,,\\\nonumber\\\label{Ec::pl3} p_{\ell_3}(t)=&\,a_3^1(v_1(t),\dots,v_\ell(t))\,v_1'''(t)+\cdots+a_3^{\ell-2}(v_1(t),\dots,v_\ell(t))\,v_{\ell-2}'''(t) \\\nonumber &+f_3(v_1(t),\dots,v_\ell(t),v_1'(t),\dots,v_\ell'(t),v_1''(t),\dots,v_{\ell-1}''(t))\,,\\\nonumber
	&\vdots\\\label{Ec::pll} p_{\ell_\ell}(t)=&\,a_\ell^1(v_1(t),\dots,v_\ell(t))\,v_1^{(\ell)}(t)+ \\\nonumber &+f_\ell(v_1(t),\dots,v_\ell(t),v_1'(t),\dots,v_\ell'(t),\dots,v_1^{(\ell-1)}(t),v_2^{(\ell-1)}(t))\,,
	\end{align}
	where $a_i^j\in C^{\infty}\left( (0,+\infty)^\ell\right)$, $f_i\in C^{\infty}\left((0,\infty)^\ell\times \mathbb{R}^{\frac{\ell-i+2}{2}}\right) $ for all $\ell=0,\dots,n$, $i=1,\dots,\ell$ and $j=1,\dots,\ell-i+1$.
	
	We can see that for $\ell=1$ the result is true:
	\begin{equation}\label{Ec::T1}T_1\,u(t)=\dfrac{d}{dt}\left( \dfrac{u(t)}{v_1(t)}\right) =\dfrac{u'(t)}{v_1(t)}-\dfrac{v_1'(t)}{v_1^2(t)}\,u(t)\,,\end{equation}
	hence $a_1^1(x)=-\dfrac{1}{x^2}$.
	
	Suppose, by induction hypothesis, that the result is true for a given $\ell\geq 1$. Then, let us see what happens for $\ell+1$.
	\[T_{\ell+1}u(t)=\dfrac{d}{dt}\left(\dfrac{1}{v_1(t)\dots v_{\ell+1}(t)}\,u^{(\ell)}(t)+\dfrac{p_{\ell_1}(t)}{v_{\ell+1}(t)}\,u^{(\ell-1)}(t)+\cdots +\dfrac{p_{\ell_\ell}(t)}{v_{\ell+1}(t)}\,u(t)\right)\,,\]
	or, which is the same,
	\[T_{\ell+1}u(t)=\dfrac{1}{v_1(t)\dots v_{\ell+1}(t)}\,u^{(\ell+1)}(t)+p_{{\ell+1}_1}(t)\,u^{(\ell)}(t)+\cdots+ p_{{\ell+1}_{\ell+1}}(t)\,u(t)\,,\]
	where
	\[\begin{split}
p_{\ell+1_1}(t)=&\,\dfrac{d}{dt}\left( \dfrac{1}{v_1(t)\dots v_{\ell+1}(t)}\right) +\dfrac{p_{\ell_1}(t)}{v_{\ell+1}(t)}\,,\\\nonumber\\\nonumber
	p_{\ell+1_j}(t)=&\,\dfrac{d}{dt}\left( \dfrac{p_{\ell_{j-1}}(t)}{v_{\ell+1}(t)}\right) +\dfrac{p_{\ell_j}(t)}{v_{\ell+1}(t)}\,,\quad 2\leq j\leq \ell\,,\\\nonumber\\\nonumber
	p_{\ell+1_{\ell+1}}(t)=&\,\dfrac{d}{dt}\left( \dfrac{p_{\ell_\ell}(t)}{v_{\ell+1}(t)}\right) \,,
	\end{split}\]
	which clearly satisfy \eqref{Ec::pl1}--\eqref{Ec::pll} for $\ell+1$.
	
	\begin{exemplo}\label{Ex::1}
		Now, let us show, as an example, the expression of $T_2u(t)$:
		{\small \begin{equation*}
				\begin{split}
			T_2u(t)=\dfrac{d}{dt}\left( \dfrac{T_1u(t)}{v_2(t)}\right) =&\dfrac{u''(t)}{v_1(t)v_2(t)}-u'(t)\dfrac{2v_2(t)v_1'(t)+v_1(t)v_2'(t)}{v_1^2(t)v_2^2(t)}\\
			&+u(t)\dfrac{v_1(t)v_1'(t)v_2'(t)+v_2(t)\left( 2{v_1'}^2(t)-v_1(t)v_1''(t)\right) }{v_1^3(t)v_2^2(t)}\,.
			\end{split}
			\end{equation*}}
		
		Thus, in this case, $a_1^1(x,y)=-\dfrac{2}{x^2\,y}$, $a_1^2(x,y)=-\dfrac{1}{x\,y^2}$, $a_2^1(x,y)=-\dfrac{1}{x^2\,y}$ and $f(x,y,z,t)=\dfrac{x\,z\,t+2\,y\,z^2}{x^3\,y^2}$.
	\end{exemplo}
	
	\begin{remark}\label{R::1}
		Realize that, from the arbitrariness of the choice of $u\in X_{\{\sigma_1,\dots,\sigma_k\}}^{\{\varepsilon_1,\dots,\varepsilon_{n-k}\}}$, if the operator $T_n[\bar M]$ verifies the property $(T_d)$ on $X_{\{\sigma_1,\dots,\sigma_k\}}^{\{\varepsilon_1,\dots,\varepsilon_{n-k}\}}$ then, for each $\ell\in\{\sigma_1,\dots,\sigma_k\}$, we have that
		\[T_\ell u(a)=\dfrac{1}{v_1(a)\dots v_\ell(a)}\,u^{(\ell)}(a)+p_{\ell_1}(a)\,u^{(\ell-1)}(a)+\cdots p_{\ell_\ell}(a)\,u(a)=0\,,\]
		implies that $p_{\ell_h}(a)=0$ for each $h\in\{1,\dots,\ell\}$ such that $\ell-h\notin\{\sigma_1,\dots,\sigma_k\}$.
		
		Analogously, for each
		$\ell\in\{\varepsilon_1,\dots,\varepsilon_{n-k}\}$
		\[T_\ell u(b)=\dfrac{1}{v_1(b)\dots v_\ell(b)}\,u^{(\ell)}(b)+p_{\ell_1}(b)\,u^{(\ell-1)}(b)+\cdots p_{\ell_\ell}(b)\,u(b)=0\,,\]
		implies that $p_{\ell_h}(b)=0$ for each $h\in\{1,\dots,\ell\}$ such that $\ell-h\notin\{\varepsilon_1,\dots,\varepsilon_{n-k}\}$.
		\end{remark}
		
		Now, we deduce two results which are a straight consequence of property $(T_d)$ and previous Remark.
		
		\begin{lemma}
			\label{L::1}
			Let $\bar M\in\mathbb{R}$ be such that $T_n[\bar M]$ satisfies the property $(T_d)$ in $X_{\{\sigma_1,\dots,\sigma_k\}}^{\{\varepsilon_1,\dots,\varepsilon_{n-k}\}}$. If $u\in C^n([a,c))$, where $c>a$, is a function that satisfies $u^{(\sigma_1)}(a)=\cdots=u^{(\sigma_{\ell-1})}(a)=0$, for $\ell=1,\dots,k$, then
			\[T_{\sigma_1}\,u(a)=\cdots=T_{\sigma_{\ell-1}}\,u(a)=0\,,\]
			and 
			\[T_{\sigma_\ell}\,u(a)=f(a)\,u^{(\sigma_\ell)}(a)\,,\]
			where 	
			\begin{equation*}
			f(t)=\dfrac{1}{v_1(t)\,\dots\,v_{\sigma_\ell}(t)}>0\,,\ t\in I\,.
			\end{equation*}
			
			In particular, $u^{(\sigma_\ell)}(a)=0$ if, and only if, $T_{\sigma_\ell}u(a)=0$.
		\end{lemma}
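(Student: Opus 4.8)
The plan is to reduce everything to the explicit triangular expansion \eqref{Ec::Tl} of the operators $T_\ell$ together with the vanishing of the coefficients $p_{\ell_h}$ at $t=a$ recorded in Remark \ref{R::1}. The first observation I would make is that the functions $v_1,\dots,v_n$, and hence all the coefficients $p_{\ell_h}$, are fixed once the decomposition \eqref{Ec::Td1}--\eqref{Ec::Td2} is chosen; they do not depend on the particular $u$ under consideration, so the conclusion of Remark \ref{R::1} is available with no hypothesis on $u$. Since $u\in C^n([a,c))$, all one-sided derivatives $u^{(m)}(a)$ with $m\le n$ exist, so $T_{\sigma_j}u(a)$ is well defined for each $j$, and \eqref{Ec::Tl} gives
\[ T_{\sigma_j}u(a)=\frac{1}{v_1(a)\cdots v_{\sigma_j}(a)}\,u^{(\sigma_j)}(a)+\sum_{h=1}^{\sigma_j}p_{{\sigma_j}_h}(a)\,u^{(\sigma_j-h)}(a)\,. \]

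Next I would fix $j\in\{1,\dots,\ell\}$ and show that every summand above vanishes, splitting into two cases according to the order $\sigma_j-h$ of the derivative it multiplies. If $\sigma_j-h\notin\{\sigma_1,\dots,\sigma_k\}$, then Remark \ref{R::1}, applied with index $\sigma_j\in\{\sigma_1,\dots,\sigma_k\}$, yields $p_{{\sigma_j}_h}(a)=0$. If instead $\sigma_j-h\in\{\sigma_1,\dots,\sigma_k\}$, then $\sigma_j-h=\sigma_i$ for some $i<j$, because $\sigma_j-h<\sigma_j$; since $j\le\ell$, this forces $i\le\ell-1$, so $u^{(\sigma_i)}(a)=0$ by hypothesis and the summand vanishes again. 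Hence, for every $j\le\ell$,
\[ T_{\sigma_j}u(a)=\frac{1}{v_1(a)\cdots v_{\sigma_j}(a)}\,u^{(\sigma_j)}(a)\,. \]

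To conclude, for $j\in\{1,\dots,\ell-1\}$ the hypothesis $u^{(\sigma_j)}(a)=0$ gives $T_{\sigma_j}u(a)=0$, and for $j=\ell$ we get exactly $T_{\sigma_\ell}u(a)=f(a)\,u^{(\sigma_\ell)}(a)$ with $f(t)=(v_1(t)\cdots v_{\sigma_\ell}(t))^{-1}$, which is positive on $I$ since each $v_k>0$; the ``in particular'' clause is then immediate because $f(a)\neq0$. The only point genuinely requiring care — and the one I would double-check — is the index bookkeeping in the second case: one must combine $j\le\ell$ with $i<j$ to land in $\{1,\dots,\ell-1\}$, where the hypothesis applies. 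I would also verify the boundary case $\ell=1$, where the hypothesis list is empty and $\sigma_1$ may be $0$: there every summand is killed purely by Remark \ref{R::1} (as $\sigma_1-h<\sigma_1$ is never in $\{\sigma_1,\dots,\sigma_k\}$), and $f\equiv$ the relevant reciprocal product, consistent with $T_0u=u$ when $\sigma_1=0$.
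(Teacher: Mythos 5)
Your proof is correct and follows exactly the route the paper intends: its one-line proof simply invokes the expansion \eqref{Ec::Tl} together with Remark \ref{R::1}, and your argument is a careful spelling-out of that reduction, including the index bookkeeping ($\sigma_j-h=\sigma_i$ with $i<j\le\ell$ forces $i\le\ell-1$) that makes it work.
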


		\begin{proof}
			We only have to take into account   expression \eqref{Ec::Tl} and Remark \ref{R::1} to deduce the result directly.			
%			Moreover, from \eqref{Ec::Tl}, we have	
%			\[T_{\sigma_k} u(a)=\dfrac{1}{v_1(a)\dots v_{\sigma_k}(a)}\,u^{(\sigma_k)}(a)+p_{{\sigma_k}_1}(a)\,u^{({\sigma_k}-1)}(a)+\cdots p_{{\sigma_k}_{\sigma_k}}(a)\,u(a)\,.\]
%			
%			From condition $(T_d)$, we know that if $u^{(\sigma_k)}(a)=0$, then $T_{\sigma_k}\,u(a)=0$, thus	
%			\[p_{{\sigma_k}_1}(a)\,u^{({\sigma_k}-1)}(a)+\cdots p_{{\sigma_k}_{\sigma_k}}(a)\,u(a)=0\,,\]
%			and the result is true.	
		\end{proof}
		
		We have an analogous result for $t=b$.
		
		\begin{lemma}
			\label{L::2}
			Let $\bar M\in\mathbb{R}$ be such that $T_n[\bar M]$ satisfies the property $(T_d)$ in $X_{\{\sigma_1,\dots,\sigma_k\}}^{\{\varepsilon_1,\dots,\varepsilon_{n-k}\}}$. If $u\in C^n((c,b])$, where $c<b$, is a function that satisfies $u^{(\varepsilon_1)}(b)=\cdots=u^{(\varepsilon_{\ell-1})}(b)=0$, for $\ell \in\{1,\dots,n-k\}$, then
			\[T_{\varepsilon_1}\,u(b)=\cdots=T_{\varepsilon_{\ell-1}}\,u(b)=0\,,\]
			and 
			\[T_{\varepsilon_{\ell}}\,u(b)=g(b)\,u^{(\varepsilon_{\ell})}(b)\,,\]
			where 
			\begin{equation*}
			g(t)=\dfrac{1}{v_1(t)\dots v_{\varepsilon_{\ell}}(t)}>0\,,\ t\in I\,.
			\end{equation*}
			
			In particular, if $u^{(\varepsilon_{\ell})}(b)=0$, then $T_{\varepsilon_{\ell}}u(b)=0$.
		\end{lemma}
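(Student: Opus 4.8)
The plan is to proceed exactly as in the proof of Lemma \ref{L::1}, simply transferring the argument from the endpoint $t=a$ and the index set $\{\sigma_1,\dots,\sigma_k\}$ to the endpoint $t=b$ and the index set $\{\varepsilon_1,\dots,\varepsilon_{n-k}\}$. First I would invoke the representation \eqref{Ec::Tl}, which gives, for each $\ell\in\{0,\dots,n\}$,
\[
T_\ell u(b)=\dfrac{1}{v_1(b)\dots v_\ell(b)}\,u^{(\ell)}(b)+p_{\ell_1}(b)\,u^{(\ell-1)}(b)+\cdots+p_{\ell_\ell}(b)\,u(b)\,.
\]
This formula only uses the local behaviour of $u$ near $b$, so it applies verbatim to a function $u\in C^n((c,b])$; the hypothesis that $u$ need not be defined on all of $I$ is harmless because $T_\ell u(b)$ depends only on the derivatives $u^{(0)}(b),\dots,u^{(\ell)}(b)$.

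Next I would use the analogue of Remark \ref{R::1} for the endpoint $b$: since $T_n[\bar M]$ satisfies $(T_d)$ on $X_{\{\sigma_1,\dots,\sigma_k\}}^{\{\varepsilon_1,\dots,\varepsilon_{n-k}\}}$, for each $\varepsilon\in\{\varepsilon_1,\dots,\varepsilon_{n-k}\}$ the condition $T_\varepsilon u(b)=0$ holds for every $u$ in that space, and by the arbitrariness of such $u$ one gets $p_{\varepsilon_h}(b)=0$ for every $h\in\{1,\dots,\varepsilon\}$ with $\varepsilon-h\notin\{\varepsilon_1,\dots,\varepsilon_{n-k}\}$. Then I would argue by induction on $\ell\in\{1,\dots,n-k\}$. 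Assume $u^{(\varepsilon_1)}(b)=\cdots=u^{(\varepsilon_{\ell-1})}(b)=0$. For any $\varepsilon_m$ with $m\le\ell-1$, expand $T_{\varepsilon_m}u(b)$ via the displayed formula: the coefficient of $u^{(\varepsilon_m)}(b)$ is $1/(v_1(b)\cdots v_{\varepsilon_m}(b))$, while every other term $p_{{\varepsilon_m}_h}(b)\,u^{(\varepsilon_m-h)}(b)$ either has $\varepsilon_m-h\in\{\varepsilon_1,\dots,\varepsilon_{\ell-1}\}$ (so the derivative factor vanishes by hypothesis) or has $\varepsilon_m-h\notin\{\varepsilon_1,\dots,\varepsilon_{n-k}\}$ (so the coefficient $p_{{\varepsilon_m}_h}(b)$ vanishes by the remark). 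Hence $T_{\varepsilon_m}u(b)=u^{(\varepsilon_m)}(b)/(v_1(b)\cdots v_{\varepsilon_m}(b))=0$, giving the first chain of equalities. For $\varepsilon_\ell$ itself, the same term-by-term analysis shows every term except the leading one vanishes, leaving $T_{\varepsilon_\ell}u(b)=g(b)\,u^{(\varepsilon_\ell)}(b)$ with $g(t)=1/(v_1(t)\cdots v_{\varepsilon_\ell}(t))>0$ on $I$ (positivity being immediate from $v_k>0$).

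There is no real obstacle here; the statement is genuinely the mirror image of Lemma \ref{L::1}, and the only point requiring a word of care is noting that the conclusion of Remark \ref{R::1} is still available even though the $u$ in the present statement lies only in $C^n((c,b])$ rather than in $X_{\{\sigma_1,\dots,\sigma_k\}}^{\{\varepsilon_1,\dots,\varepsilon_{n-k}\}}$ — this is fine because the vanishing of the coefficients $p_{{\varepsilon_m}_h}(b)$ is a property of the decomposition itself, established once and for all using test functions from the full space, and does not depend on the particular $u$ under consideration. Finally, the ``in particular'' clause is immediate: since $g(b)>0$, the identity $T_{\varepsilon_\ell}u(b)=g(b)\,u^{(\varepsilon_\ell)}(b)$ shows $u^{(\varepsilon_\ell)}(b)=0$ forces $T_{\varepsilon_\ell}u(b)=0$. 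I would simply write ``The proof is analogous to that of Lemma \ref{L::1}, interchanging the roles of $a$ and $b$ and of the index sets $\{\sigma_j\}$ and $\{\varepsilon_j\}$'' after recording the above expansion.
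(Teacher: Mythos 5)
Your proof is correct and follows essentially the same route as the paper, which simply notes that the result follows from the expansion \eqref{Ec::Tl} together with Remark \ref{R::1} (whose second half already covers the endpoint $b$), exactly as in Lemma \ref{L::1}. Your additional observations — that $T_{\varepsilon_m}u(b)$ only involves derivatives of $u$ at $b$, so $u\in C^n((c,b])$ suffices, and that the vanishing of the coefficients $p_{{\varepsilon_m}_h}(b)$ is a property of the decomposition established via test functions in the full space — are exactly the implicit points the paper leaves to the reader.
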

		\begin{proof}
			The proof follows, as in Lemma \ref{L::1}, from \eqref{Ec::Tl} and Remark \ref{R::1}.
		\end{proof}
	
	Now, we prove a preliminary result, which ensures that Green's function is well-defined for the operator $T_n[M]$ in $X_{\{\sigma_1,\dots,\sigma_k\}}^{\{\varepsilon_1,\dots,\varepsilon_{n-k}\}}$, provided that it verifies the property $(T_d)$ on $X_{\{\sigma_1,\dots,\sigma_k\}}^{\{\varepsilon_1,\dots,\varepsilon_{n-k}\}}$.
	
	\begin{lemma}\label{L::11}
		Let $\bar{M}\in\mathbb{R}$ be such that $T_n[\bar{M}]$ satisfies the property  $(T_d)$ in $X_{\{\sigma_1,\dots,\sigma_k\}}^{\{\varepsilon_1,\dots,\varepsilon_{n-k}\}}$. Then  $\{\sigma_1,\dots,\sigma_k\}-\{\varepsilon_1,\dots,\varepsilon_{n-k}\}$ satisfy $(N_a)$ if, and only if, $M=0$ is not an eigenvalue of $T_n[\bar{M}]$ in $X_{\{\sigma_1,\dots,\sigma_k\}}^{\{\varepsilon_1,\dots,\varepsilon_{n-k}\}}$.
	\end{lemma}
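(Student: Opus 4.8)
The plan is to translate ``$M=0$ is an eigenvalue'' into an algebraic condition on a determinant built from the decomposition \eqref{Ec::Td1}, and then to match that condition with $(N_a)$.

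\emph{Step 1: reduction to a determinant.} The value $M=0$ is an eigenvalue of $T_n[\bar M]$ in $X_{\{\sigma_1,\dots,\sigma_k\}}^{\{\varepsilon_1,\dots,\varepsilon_{n-k}\}}$ if and only if there is a nontrivial $w$ in this space with $T_n[\bar M]\,w=0$. Since property $(T_d)$ gives $T_n[\bar M]\,w=v_1\cdots v_n\,T_n w$ with all $v_i>0$, this is equivalent to $T_n w=0$; and running Lemmas~\ref{L::1} and~\ref{L::2} for $\ell=1,\dots,k$ and $\ell=1,\dots,n-k$ respectively (using their ``in particular'' equivalences $w^{(\sigma_\ell)}(a)=0\Leftrightarrow T_{\sigma_\ell}w(a)=0$ and $w^{(\varepsilon_\ell)}(b)=0\Leftrightarrow T_{\varepsilon_\ell}w(b)=0$), a solution $w$ of $T_n w=0$ belongs to the space if and only if
\[ T_{\sigma_1}w(a)=\cdots=T_{\sigma_k}w(a)=0\,,\qquad T_{\varepsilon_1}w(b)=\cdots=T_{\varepsilon_{n-k}}w(b)=0\,. \]
Integrating $T_n w=(T_{n-1}w/v_n)'=0$ repeatedly, every solution has the form $T_m w(t)=v_{m+1}(t)\sum_{l=m}^{n-1}d_l\,I_{m,l}(t)$, where $d_0,\dots,d_{n-1}\in\R$ are arbitrary, $I_{m,m}\equiv 1$ and $I_{m,l}(t)=\int_a^t v_{m+2}(s)\,I_{m+1,l}(s)\,ds$ for $l>m$; note $I_{m,l}(b)>0$ for all $l\ge m$, and $T_j w(a)=v_{j+1}(a)\,d_j$, so $T_j w(a)=0\Leftrightarrow d_j=0$. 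Put $\{\mu_1<\cdots<\mu_{n-k}\}:=\{0,\dots,n-1\}\setminus\{\sigma_1,\dots,\sigma_k\}$. Then the conditions at $a$ are $d_{\sigma_1}=\cdots=d_{\sigma_k}=0$, and, after dividing by $v_{\varepsilon_j+1}(b)>0$, the conditions at $b$ read $\sum_{\mu_i\ge\varepsilon_j}d_{\mu_i}\,I_{\varepsilon_j,\mu_i}(b)=0$, $j=1,\dots,n-k$, a homogeneous system in $d_{\mu_1},\dots,d_{\mu_{n-k}}$. Hence $M=0$ is an eigenvalue if and only if $\Delta:=\det(c_{j,i})_{j,i=1}^{n-k}=0$, where $c_{j,i}=I_{\varepsilon_j,\mu_i}(b)$ when $\mu_i\ge\varepsilon_j$ and $c_{j,i}=0$ otherwise.

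\emph{Step 2: $\Delta\ne 0$ if and only if $\mu_j\ge\varepsilon_j$ for all $j$.} The sign pattern of $(c_{j,i})$ is of staircase type ($c_{j,i}\ne 0\Leftrightarrow\mu_i\ge\varepsilon_j$, with both $(\varepsilon_j)$ and $(\mu_i)$ increasing), so by Hall's theorem it admits a system of distinct representatives, i.e.\ a permutation $\tau$ with $\mu_{\tau(j)}\ge\varepsilon_j$ for all $j$, precisely when $\mu_j\ge\varepsilon_j$ for all $j$. If this fails, say $\mu_{j_0}<\varepsilon_{j_0}$, then every row $j\ge j_0$ satisfies $c_{j,i}\ne 0\Rightarrow\mu_i\ge\varepsilon_j\ge\varepsilon_{j_0}>\mu_{j_0}\Rightarrow i>j_0$, so the $n-k-j_0+1$ rows $j_0,\dots,n-k$ lie in the span of the $n-k-j_0$ coordinate vectors indexed by $j_0+1,\dots,n-k$ and are dependent; thus $\Delta=0$. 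Conversely, assume $\mu_j\ge\varepsilon_j$ for all $j$. By Theorems~\ref{T::4}--\ref{T::3} the functions $v_1 I_{0,0},\dots,v_1 I_{0,n-1}$ form a Markov fundamental system of $T_n y=0$, and $T_{\varepsilon_j}\!\left(v_1 I_{0,\mu_i}\right)(b)=v_{\varepsilon_j+1}(b)\,I_{\varepsilon_j,\mu_i}(b)$; hence $\Delta$ equals, up to a positive factor, the collocation determinant of the $n-k$ members $v_1 I_{0,\mu_1},\dots,v_1 I_{0,\mu_{n-k}}$ of this Chebyshev system under the quasi-derivative functionals $T_{\varepsilon_1}(\cdot)(b),\dots,T_{\varepsilon_{n-k}}(\cdot)(b)$ at the single node $b$. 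For Markov systems such a determinant is nonzero as soon as the corresponding P\'olya condition holds, which for one node is exactly $\mu_j\ge\varepsilon_j$ for all $j$ (this is classical, cf.\ \cite[Chapter 3]{Cop}; alternatively it follows by induction on $n-k$, repeatedly using $I_{m,l}(b)=I_{m,l}(t)+\int_t^b v_{m+2}\,I_{m+1,l}$ to lower the order). In the particular case $\{\sigma_1,\dots,\sigma_k\}=\{0,\dots,k-1\}$, $\{\varepsilon_1,\dots,\varepsilon_{n-k}\}=\{0,\dots,n-k-1\}$ the matrix $(c_{j,i})$ is triangular with positive diagonal, which recovers the corresponding fact of \cite{CabSaa}.

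\emph{Step 3: the matching condition is $(N_a)$.} Since $\{\sigma_1,\dots,\sigma_k\}$ and $\{\mu_1,\dots,\mu_{n-k}\}$ partition $\{0,\dots,n-1\}$, for every $h$ we have $\#\{l:\sigma_l<h\}=h-\#\{i:\mu_i<h\}$, so \eqref{Ec::Na} is equivalent to $\#\{i:\mu_i<h\}\le\#\{l:\varepsilon_l<h\}$ for all $h\in\{1,\dots,n-1\}$. This, in turn, is equivalent to $\mu_j\ge\varepsilon_j$ for all $j$: if $\mu_j\ge\varepsilon_j$ for all $j$ then $\mu_i<h$ implies $\varepsilon_i\le\mu_i<h$, so $\{i:\mu_i<h\}\subseteq\{i:\varepsilon_i<h\}$; while if $\mu_j<\varepsilon_j$ for some $j$, take $h=\mu_j+1$ (which lies in $\{1,\dots,n-1\}$, since $\mu_j\le n-2$ because $\varepsilon_j\le n-1$) and observe $\#\{i:\mu_i<h\}=j>j-1\ge\#\{l:\varepsilon_l<h\}$. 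Combining Steps 1--3 yields the statement.

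The main obstacle is the ``conversely'' part of Step 2: a plain cofactor expansion of $\Delta$ produces only an alternating sum of positive numbers and is inconclusive, so one genuinely needs the total positivity of the iterated-integral kernels $I_{m,l}$ (equivalently, the collocation theory of Markov systems). Everything else is elementary bookkeeping together with the equivalences already contained in Lemmas~\ref{L::1}--\ref{L::2}.
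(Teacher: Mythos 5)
Your Steps 1 and 3 are sound and your overall strategy (reduce the eigenvalue question to the nonvanishing of the determinant $\Delta=\det\bigl(I_{\varepsilon_j,\mu_i}(b)\bigr)$ of iterated integrals built from the $(T_d)$ factorization, then identify the staircase condition $\mu_j\ge\varepsilon_j$ with $(N_a)$) is genuinely different from the paper, which argues by counting the oscillations of $T_\ell u$ that are lost at the endpoints. The reduction correctly uses $(T_d)$ only through Lemmas \ref{L::1}--\ref{L::2} (whose displayed identities do give the two-way equivalence between $u^{(\sigma_\ell)}(a)=0$ and $T_{\sigma_\ell}u(a)=0$, inductively, and likewise at $b$), the parametrization $T_m w=v_{m+1}\sum_{l\ge m}d_l I_{m,l}$ of the kernel of $T_n[\bar M]$ is right, the ``$\Delta=0$ when $(N_a)$ fails'' direction is complete and in fact cleaner than the paper's construction, and the combinatorial equivalence in Step 3 is correct.

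The gap is the converse half of Step 2, which is the substantive half of the lemma: the claim that $\mu_j\ge\varepsilon_j$ for all $j$ forces $\Delta\neq 0$. You dispose of it by calling it ``classical, cf.\ \cite[Chapter 3]{Cop}'' or ``by induction using $I_{m,l}(b)=I_{m,l}(t)+\int_t^b v_{m+2}I_{m+1,l}$'', but neither justification holds up. Coppel's Chapter 3 contains the P\'olya/Markov factorization, de la Vall\'ee Poussin interpolation and the $(k,n-k)$ Green's function, not a one-node collocation (Birkhoff-type) theorem for a sub-family $u_{\mu_1},\dots,u_{\mu_{n-k}}$ of a Markov system evaluated under skipped quasi-derivative functionals $T_{\varepsilon_1},\dots,T_{\varepsilon_{n-k}}$; note also that such a sub-family need not itself be a Markov system, so no off-the-shelf Chebyshev-system collocation statement applies verbatim. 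The one-line induction hint is not a proof either: the quoted identity merely splits an integral at an interior point, and, as you yourself observe, positivity of $\Delta$ cannot be read off from positivity of the entries (a generic upper-triangular matrix with positive admissible entries can have vanishing staircase minors). What is really needed is the total nonnegativity of the matrix $\bigl(I_{j,i}(b)\bigr)$ together with the characterization of its strictly positive minors by the staircase condition -- provable, e.g., by a continuous Lindstr\"om--Gessel--Viennot argument on the simplex representation $I_{j,i}(b)=\int_{a<s_{i-j}<\cdots<s_1<b}v_{j+2}(s_1)\cdots v_{i+1}(s_{i-j})\,ds$, or via Karlin's composition formula, or by noting that $\bigl(I_{j,i}(t)\bigr)$ is the fundamental matrix of a bidiagonal system with nonnegative off-diagonal coefficients. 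None of this is set up in the paper or in the cited source, and it is precisely the content that the paper's oscillation-counting proof supplies by other means; until you prove (or properly source) this total-positivity statement, the ``$(N_a)\Rightarrow 0$ is not an eigenvalue'' direction is unproven.
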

	\begin{proof}
	In order to prove the sufficient condition, let us consider $u\in X_{\{\sigma_1,\dots,\sigma_k\}}^{\{\varepsilon_1,\dots,\varepsilon_{n-k}\}}$, such that 
		\[T_n[\bar{M}]\,u(t)=0\,,t\in I\,.\]
		
		We will see that necessarily $u\equiv 0$ in $I$.
		
		Since the operator $T_n[\bar{M}]$ satisfies the property $(T_d)$ in $X_{\{\sigma_1,\dots,\sigma_k\}}^{\{\varepsilon_1,\dots,\varepsilon_{n-k}\}}$, we can use the decomposition given in \eqref{Ec::Td1}; so, we have
		\[0=T_n[\bar{M}]\,u(t)=v_1(t)\,\dots\,v_n(t)\,T_n\,u(t)\,,\ t\in I\,,\]
		which, since $v_1\,,\dots,v_n>0$, implies that
		\[T_n\,u(t)=\dfrac{d}{dt}\left( \dfrac{T_{n-1}\,u(t)}{v_n(t)}\right) =0\,, t\in I\,,\]
		hence $\dfrac{T_{n-1}\,u(t)}{v_n(t)}$ is a constant function on $I$. So, since $v_n>0$ on $I$, $T_{n-1}\,u(t)$ is of constant sign on $I$.
		
		Hence  $\dfrac{T_{n-2}\,u(t)}{v_{n-2}(t)}$ is a monotone function, with at most one zero on $I$. As before, since $v_{n-2}(t)>0$ on $I$, we can conclude that $T_{n-2}\,u(t)$ can have at most one zero on $I$.
		
		Proceeding  analogously, we conclude that $u$ can have at most $n-1$ zeros on $I$.
		
%		However, each time that $T_\ell\, u(a)=0$ or $T_\ell\,u(b)=0$ for $\ell=0,\dots,n-1$, a possible oscillation is lost. 
		
		If $T_\ell u\neq 0$ for all $\ell=1,\dots,n-1$, then each time that $T_\ell u(a)=0$ or $T_\ell u(b)=0$ a possible oscillation is lost. Indeed, if the maximum number of zeros for $T_\ell\,u$ on $I$ is $h$ and one of them is found in either $t=a$ or $t=b$, then $T_\ell\,u$ can have at most $h-1$ sign changes on $I$ ($h-2$ if both $T_\ell\,u(a)=T_\ell u(b)=0$).
		
		Since $T_n[\bar{M}]$ satisfies property $(T_d)$ in  $X_{\{\sigma_1,\dots,\sigma_k\}}^{\{\varepsilon_1,\dots,\varepsilon_{n-k}\}}$, $T_{\ell}\,u(a)=0$ or $T_\ell\,u(b)=0$, at least $n$ times.

		If $T_\ell\,u(t)\neq 0$ for all $\ell=1,\dots, n-1$, $n$ possible oscillations are lost, since $u$ can have $n-1$ zeros with maximal oscillation, this implies that necessarily $u\equiv 0$.
		
		If there exists some $\ell\in \{1,\dots,n-1\}$, such that $T_\ell\,u(t)\equiv 0$ on $I$, let us choose the least $\ell$  that satisfy this property. With the same arguments as before, we can conclude that the maximum number of zeros which $u$ can have is $\ell-1$.
		
		Using the fact that $\{\sigma_1,\dots,\sigma_k\}-\{\varepsilon_1,\dots,\varepsilon_{n-k}\}$ satisfy $(N_a)$, we know that $T_h\,u(a)=0$ or $T_h\,u(b)=0$ at least $\ell$ times from $h=0$ to $\ell$. Therefore, we loose $\ell$ possible oscillations, hence $u\equiv 0$. And, we can conclude that $0$ is not an eigenvalue of $T_n[\bar{M}]$ in $X_{\{\sigma_1,\dots,\sigma_k\}}^{\{\varepsilon_1,\dots,\varepsilon_{n-k}\}}$.	
		
		\vspace{0.5cm}
		
	Reciprocally, to prove the necessary condition, let us assume that $\{\sigma_1,\dots,\sigma_k\}-\{\varepsilon_1,\dots,\varepsilon_{n-k}\}$ do not satisfy ($N_a$). Then, there exists $h_0\in\{1,\dots,n-1\}$ such that 
	\[\sum_{\sigma_j<h_0}1+\sum_{\varepsilon_j<h_0}1<h_0\,.\]
	
	Thus, there always exists a nontrivial function verifying the boundary conditions \eqref{Ec::cfa}-\eqref{Ec::cfb} for $\sigma_\ell<h_0$ and $\varepsilon_\ell<h_0$ such that $T_hu(t)=0$.
	
	Trivially, $T_{\sigma_\ell}\,u(a)=0$ and $T_{\varepsilon_\ell}\,u(b)=0$ for either $\sigma_\ell>h_0$ or $\varepsilon_\ell>h_0$. Thus, by applying Lemmas \ref{L::1} and \ref{L::2} inductively, we conclude that  $u\in X_{\{\sigma_1,\dots,\sigma_k\}}^{\{\varepsilon_1,\dots,\varepsilon_{n-k}\}}$.
	
	As a consequence, it is obvious that $M=0$ is an eigenvalue of $T_n[\bar M]$ in  $X_{\{\sigma_1,\dots,\sigma_k\}}^{\{\varepsilon_1,\dots,\varepsilon_{n-k}\}}$.
	\end{proof}

	\chapter{Study of the adjoint operator, $T_n^*[M]$}\label{S::ad}
	
	In order to obtain the characterization of the Green's function sign, as it has been done in \cite{CabSaa} and \cite{CabSaa2}, it is necessary to study the adjoint operator, $T_n^*[M]$, defined in \eqref{EC::Ad}. So, this chapter is devoted to make an analysis of such an operator and some of its properties in relation with the hypotheses on operator $T_n[M]$ given in the previous chapter.
	
	So, we describe the space $D(T_n^*[M])$, defined in \eqref{Ec::cfad} by taking into account that, in our case, $D(T_n[M])=X_{\{\sigma_1,\dots,\sigma_k\}}^{\{\varepsilon_1,\dots,\varepsilon_{n-k}\}}$. 
	
	Let us  denote $D(T_n^*[M])=X_{\ \, \{\sigma_1,\dots,\sigma_k\}}^{*\{\varepsilon_1,\dots,\varepsilon_{n-k}\}}$.
	
	Let us consider the following sets $\{\delta_1,\dots,\delta_k\}\,,\{\tau_1,\dots,\tau_{n-k}\}\subset\{0,\dots,n-1\}$, such that
\[\begin{split}	\{\sigma_1,\dots,\sigma_k,n-1-\tau_1,\dots,\,n-1-\tau_{n-k}\}=&\{0,\dots,n-1\}\\
\{\varepsilon_1,\dots,\varepsilon_{n-k},n-1-\delta_1,\dots,\,n-1-\delta_k\}=&\{0,\dots,n-1\}
	\end{split}\]
	
	\begin{remark}\label{R::ab}
		Realize that, by the definition of $\alpha$ and $\beta$ given in \eqref{Ec::alpha} and \eqref{Ec::beta}, respectively, we have that $\alpha=n-1-\tau_{n-k}$ and $\beta=n-1-\delta_k$.
	\end{remark}
	Hence we choose $u\in X_{\{\sigma_1,\dots,\sigma_k\}}^{\{\varepsilon_1,\dots,\varepsilon_{n-k}\}}$, such that 
	\begin{eqnarray}
	\nonumber u^{(n-1-\tau_1)}(a)&=&1\,,\\\nonumber
	u^{(i)}(a)&=&0\,,\quad \forall i=0,\dots,n-1\,,\quad i \neq n-1-\tau_1\,,\\\nonumber
	u^{(i)}(b)&=&0\,,\quad \forall i=0,\dots,n-1\,.
	\end{eqnarray}
	
	Thus, from \eqref{Ec::cfad} we can conclude that every $v\in X_{\ \, \{\sigma_1,\dots,\sigma_k\}}^{*\{\varepsilon_1,\dots,\varepsilon_{n-k}\}}$, satisfies
	\[v^{(\tau_1)}(a)+\sum_{j=n-\tau_1}^{n-1}(-1)^{n-j}\,(p_{n-j}\,v)^{(\tau_1+j-n)}(a)=0\,.\]
	
	Proceeding analogously for $\tau_2,\dots,\tau_{n-k}$, we can obtain the boundary conditions for the adjoint operator at $t=a$, and working at $t=b$ for $\delta_1,\dots,\delta_k$ we are able to complete the boundary conditions related to the adjoint operator.
	
	 So, we conclude that every $v\in X_{\ \, \{\sigma_1,\dots,\sigma_k\}}^{*\{\varepsilon_1,\dots,\varepsilon_{n-k}\}}$ is a $C^n(I)$ function  that satisfies the following conditions
	\begin{eqnarray}
	\label{Cf::ad1}
	v^{(\tau_1)}(a)+\sum_{j=n-\tau_1}^{n-1}(-1)^{n-j}\,(p_{n-j}\,v)^{(\tau_1+j-n)}(a)&=&0\,,\\
	\nonumber \vdots&&\\	\label{Cf::ad11}v^{(\tau_{n-k-1})}(a)+\sum_{j=n-\tau_{n-k-1}}^{n-1}(-1)^{n-j}\,(p_{n-j}\,v)^{(\tau_{n-k-1}+j-n)}(a)&=&0\,,\end{eqnarray}
		\begin{eqnarray}
		\label{Cf::ad2}
		v^{(\tau_{n-k})}(a)+\sum_{j=n-\tau_{n-k}}^{n-1}(-1)^{n-j}\,(p_{n-j}\,v)^{(\tau_{n-k}+j-n)}(a)&=&0\,,\\
			\label{Cf::ad3}
			v^{(\delta_1)}(b)+\sum_{j=n-\delta_1}^{n-1}(-1)^{n-j}\,(p_{n-j}\,v)^{(\delta_1+j-n)}(b)&=&0\,,\\\nonumber \vdots&&\\	\label{Cf::ad31}
			v^{(\delta_{k-1})}(b)+\sum_{j=n-\delta_{k-1}}^{n-1}(-1)^{n-j}\,(p_{n-j}\,v)^{(\delta_{k-1}+j-n)}(b)&=&0\,,\\
		\label{Cf::ad4}
		v^{(\delta_k)}(b)+\sum_{j=n-\delta_k}^{n-1}(-1)^{n-j}\,(p_{n-j}\,v)^{(\delta_k+j-n)}(b)&=&0\,.	
	\end{eqnarray}
	
\begin{notation}
		Let us denote $\eta$, $\gamma\in\{0,\dots,n-1\}$ as follows
		\begin{align}
		\label{Ec::eta} &\eta\notin \{\tau_1,\dots, \tau_{n-k}\}\,,\ \text{and if } \eta\neq 0\,,\ \{0,\dots, \eta-1\}\subset\{\tau_1,\dots, \tau_{n-k}\}\,,\\\label{Ec::gamma}
		&\gamma\notin \{\delta_1,\dots, \delta_{k}\}\,,\ \text{and if } \gamma\neq 0\,,\ \{0,\dots, \gamma-1\}\subset\{\delta_1,\dots, \delta_{k}\}\,.
		\end{align}
\end{notation}

\begin{remark}
	As in Remark \ref{R::ab}, we have that $\eta=n-1-\sigma_k$ and $\gamma=n-1-\varepsilon_{n-k}$.
\end{remark}
		
		From the boundary conditions \eqref{Cf::ad1}--\eqref{Cf::ad4}, since $p_{n-j}\in C^{n-j}(I)$,  the following assertions are fulfilled:
		\begin{itemize}
			\item	If $\eta\neq 0$, for all $v\in X_{\ \, \{\sigma_1,\dots,\sigma_k\}}^{*\{\varepsilon_1,\dots,\varepsilon_{n-k}\}}$ it is satisfied $v(a)=\dots=v^{(\eta-1)}(a)=0$.
			\item	If $\gamma\neq 0$, for all $v\in X_{\ \, \{\sigma_1,\dots,\sigma_k\}}^{*\{\varepsilon_1,\dots,\varepsilon_{n-k}\}}$ it is satisfied $v(b)=\dots=v^{(\gamma-1)}(b)=0$.
			\end{itemize}
			
			\begin{exemplo}\label{Ex::2}
				Let us consider the fourth order operator $T_4[M]$ coupled with the boundary conditions
				\begin{equation}
				\label{Ec::CfEx} u(a)=u''(a)=u'(b)=u''(b)=0\,.
				\end{equation}
				
				Now, we  describe the domain of definition of the adjoint operator, $T_4^*[M]$. 
				
				In this case, $\{\tau_1,\tau_2\}=\{0,2\}$ and $\{\delta_1,\delta_2\}=\{0,3\}$. Thus, from \eqref{Cf::ad1}--\eqref{Cf::ad4}, we deduce that:
				\begin{equation}
				\label{Ec::SExAd} \begin{split} X_{\,\ \{0,2\}}^{*\{1,2\}}=\left\lbrace\right. & v\in C^4(I)\ \mid\ v(a)=v''(a)-p_1(a)\,v'(a)=v(b)=0\,, \\& v^{(3)}(b)-p_1(b)\,v''(b)+(p_2(b)-2p_1'(b))v'(b)=0\left. \right\rbrace. \end{split}
				\end{equation}
			\end{exemplo}
			
		\begin{definition}
			Let us say that the operator $T^*_n[M]$ verifies the property $(T_d^*)$ in $X_{\ \, \{\sigma_1,\dots,\sigma_k\}}^{*\{\varepsilon_1,\dots,\varepsilon_{n-k}\}}$ if, and only if, there exists a decomposition:
			\begin{equation}
			\label{Ec::Td*} T_0^*\,v(t)=w_0(t)\,v(t)\,,\quad T_k^*\,v(t)=\dfrac{-1}{w_k(t)}\dfrac{d}{dt}\left( T_{k-1}^*\,v(t)\right) \,,\ k = 1,\dots, n\,,
			\end{equation}
			where $w_k>0$, $w_k\in C^{n}(I)$ and
			\[T_n^*[M]\,v(t)=T_n^*\,v(t)\,,\ t\in I\,.\]
			
			Moreover, this decomposition satisfies that for every $v\in  X_{\ \,\{ \sigma_1,\dots,\sigma_k\}}^{*\{\varepsilon_1,\dots,\varepsilon_{n-k}\}}$:
			\begin{eqnarray}
			\label{Ec::cfT*1} T^*_{\tau_1}\,v(a)=\cdots=T_{\tau_{n-k}}^*\,v(a)&=&0\,,\\
			\label{Ec::cfT*2} T^*_{\delta_1}\,v(b)=\cdots=T^*_{\delta_{k}}\,v(b)&=&0\,.
			\end{eqnarray}
		 \end{definition}	
		 
			We have the following result.
			
			\begin{lemma}\label{L::0Ad}
				Let $\bar{M}\in \mathbb{R}$ be such that $T_n[\bar{M}]$ satisfies the property $(T_d)$ on  $X_{\{\sigma_1,\dots,\sigma_k\}}^{\{\varepsilon_1,\dots,\varepsilon_{n-k}\}}$, then the adjoint operator $T_n^*[\bar{M}]$ also satisfies the property $(T_d^*)$ on  $X_{\ \, \{\sigma_1,\dots,\sigma_k\}}^{*\{\varepsilon_1,\dots,\varepsilon_{n-k}\}}$.
			\end{lemma}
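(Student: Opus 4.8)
The plan is to carry the factorization provided by $(T_d)$ over to $T_n^*[\bar M]$ and then to read off the boundary conditions defining $X^{*\{\varepsilon_1,\dots,\varepsilon_{n-k}\}}_{\{\sigma_1,\dots,\sigma_k\}}$ from a Lagrange identity written in terms of the quasi-derivatives $T_j$ and $T_j^*$. \textbf{Step 1 (the adjoint factorization).} Writing $M_g$ for multiplication by a function $g$ and $D=\frac{d}{dt}$, property $(T_d)$ says $T_n[\bar M]=M_{v_1\cdots v_n}\circ R_n\circ\cdots\circ R_1$ with $R_j=D\circ M_{1/v_j}$. Since $D^*=-D$, $M_g^*=M_g$, and the formal adjoint of a composition is the composition of the adjoints in reverse order, $T_n^*[\bar M]=R_1^*\circ\cdots\circ R_n^*\circ M_{v_1\cdots v_n}$ with $R_j^*=-M_{1/v_j}\circ D$. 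Put $w_0:=v_1\cdots v_n$ and $w_k:=v_{n+1-k}$, $k=1,\dots,n$; these are positive and of class $C^n(I)$. Applying the factors of $T_n^*[\bar M]$ to $v$ one at a time produces successively $w_0v=T_0^*v$, then $-\frac{1}{w_1}(T_0^*v)'=T_1^*v$, \dots, and finally $-\frac{1}{w_n}(T_{n-1}^*v)'=T_n^*v$, so that $T_n^*[\bar M]v=T_n^*v$ on $I$ for the decomposition \eqref{Ec::Td*} built from these $w_k$. This already gives every ingredient of $(T_d^*)$ except the boundary conditions \eqref{Ec::cfT*1}--\eqref{Ec::cfT*2}.

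\textbf{Step 2 (Lagrange identity).} For $j=1,\dots,n$ and $\phi,\psi\in C^1(I)$ a one-line computation gives $\psi\,R_j\phi-\phi\,R_j^*\psi=\frac{d}{dt}\big(\psi\phi/v_j\big)$, while the corresponding expression for the factor $M_{v_1\cdots v_n}$ vanishes identically. Telescoping these along the factorization, with $\phi_0=u$, $\phi_j=R_j\phi_{j-1}$ (so $\phi_j=T_ju$ for $j\le n$, $\phi_{n+1}=T_n[\bar M]u$), and $\psi_{n+1}=v$, $\psi_{j-1}=R_j^*\psi_j$ (so $\psi_j=T_{n-j}^*v$ by Step 1), gives
\[v\,T_n[\bar M]u-u\,T_n^*[\bar M]v=\frac{d}{dt}\sum_{j=0}^{n-1}\frac{T_ju(t)\,T_{n-1-j}^*v(t)}{v_{j+1}(t)},\qquad u,v\in C^n(I).\]
The double sum in \eqref{Ec::cfad} is precisely the boundary term arising when $\int_a^b\big(v\,T_n[\bar M]u-u\,T_n^*[\bar M]v\big)\,dt$ is computed by $n$ integrations by parts, so $v\in X^{*\{\varepsilon_1,\dots,\varepsilon_{n-k}\}}_{\{\sigma_1,\dots,\sigma_k\}}$ if and only if that integral vanishes for every $u\in X_{\{\sigma_1,\dots,\sigma_k\}}^{\{\varepsilon_1,\dots,\varepsilon_{n-k}\}}$. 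Integrating the identity above over $I$ then yields, for such $v$,
\[\sum_{j=0}^{n-1}\left(\frac{T_ju(b)\,T_{n-1-j}^*v(b)}{v_{j+1}(b)}-\frac{T_ju(a)\,T_{n-1-j}^*v(a)}{v_{j+1}(a)}\right)=0\qquad\text{for all }u\in X_{\{\sigma_1,\dots,\sigma_k\}}^{\{\varepsilon_1,\dots,\varepsilon_{n-k}\}}.\]

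\textbf{Step 3 (reading off the boundary conditions).} Since $T_n[\bar M]$ satisfies $(T_d)$, every $u$ in the space has $T_{\sigma_i}u(a)=0$ and $T_{\varepsilon_i}u(b)=0$, so only the indices $j\notin\{\sigma_1,\dots,\sigma_k\}$ survive at $a$ and only $j\notin\{\varepsilon_1,\dots,\varepsilon_{n-k}\}$ at $b$. Ordering the indices $j\notin\{\sigma_i\}$ increasingly and using \eqref{Ec::Tl} — noting that for $u$ in the space every derivative of order strictly below the smallest non-$\sigma$ index already vanishes at $a$, and similarly at $b$ — one checks that $u\mapsto\big((T_ju(a))_{j\notin\{\sigma_i\}},\,(T_ju(b))_{j\notin\{\varepsilon_i\}}\big)$ maps $X_{\{\sigma_1,\dots,\sigma_k\}}^{\{\varepsilon_1,\dots,\varepsilon_{n-k}\}}$ onto $\R^n$ (in these coordinates its matrix is triangular with nonzero diagonal entries $1/(v_1\cdots v_j)(a)$, resp.\ at $b$). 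Hence every coefficient in the last display vanishes: $T_{n-1-j}^*v(a)=0$ for $j\notin\{\sigma_i\}$ and $T_{n-1-j}^*v(b)=0$ for $j\notin\{\varepsilon_i\}$. Finally, by the definition of the $\tau$'s and $\delta$'s, $\{0,\dots,n-1\}\setminus\{\sigma_1,\dots,\sigma_k\}=\{n-1-\tau_1,\dots,n-1-\tau_{n-k}\}$ and $\{0,\dots,n-1\}\setminus\{\varepsilon_1,\dots,\varepsilon_{n-k}\}=\{n-1-\delta_1,\dots,n-1-\delta_k\}$; writing $j=n-1-\tau_m$ and $j=n-1-\delta_m$ turns these into $T_{\tau_m}^*v(a)=0$ and $T_{\delta_m}^*v(b)=0$, which are precisely \eqref{Ec::cfT*1}--\eqref{Ec::cfT*2}.

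I expect Step 2 to be the one requiring care: one must check that the bilinear concomitant of the Lagrange identity pairs $T_ju$ with $T_{n-1-j}^*v$, with weight $1/v_{j+1}$ and no spurious signs. This follows from telescoping the elementary first-order Green's formulas $\psi\,R_j\phi-\phi\,R_j^*\psi=\frac{d}{dt}(\psi\phi/v_j)$ displayed above (alternatively, by induction on $n$, comparing with the factorization \eqref{e-descomp} and \cite[Chapter 3]{Cop}). Everything else reduces to bookkeeping with the index sets $\{\sigma_i\},\{\varepsilon_i\},\{\tau_i\},\{\delta_i\}$.
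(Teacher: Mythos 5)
Your proof is correct and follows essentially the same route as the paper: you use the same decomposition $w_0=v_1\cdots v_n$, $w_k=v_{n+1-k}$ (rederived via formal adjoints of the first-order factors instead of citing \cite[Chapter 3, Theorem 10]{Cop}), and your bilinear concomitant $\sum_{j=0}^{n-1}T_ju\,T^*_{n-1-j}v/v_{j+1}$ is exactly the scalar product $\left\langle U_u,Z_v\right\rangle$ that the paper uses through the equivalent first-order vectorial problems, with the integrated Lagrange identity playing the role of $\left\langle U_u(a),Z_v(a)\right\rangle=\left\langle U_u(b),Z_v(b)\right\rangle$. The only substantive addition is that your Step 3 spells out, via the triangular change from ordinary to quasi-derivatives, the surjectivity of the boundary-data map that the paper leaves implicit when it passes from the pairing identity to the vanishing of $T^*_{\tau_j}v(a)$ and $T^*_{\delta_j}v(b)$.
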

			
			\begin{proof}
				From  \cite[Chapter 3, Theorem 10]{Cop}, it is fulfilled that if $T_n[\bar M]\,v$ satisfies equation \eqref{Ec::Td2}, then $T_n^*[\bar{M}]$ can be decomposed as:
				\begin{equation}
				\label{Ec::Td*2}
				T_n^*[\bar M]\,v(t)=\dfrac{(-1)^n}{v_1(t)}\,\dfrac{d}{dt}\left( \dfrac{1}{v_2(t)}\dfrac{d}{dt}\left( \cdots\dfrac{d}{dt}\left( \dfrac{1}{v_n(t)}\dfrac{d}{dt}\left( v_1(t)\dots v_n(t)\,v(t)\right) \right) \right) \right) \,.
				\end{equation}
				
				Hence,			
			\[T_0^*\,v(t)=v_1(t)\dots v_n(t)\,v(t)\,,\quad \text{and } T_k^*v(t)=\dfrac{-1}{v_{n+1-k}(t)}\dfrac{d}{dt}\left( T_{k-1}^*\,v(t)\right) \,,\]
			so, the existence of the decomposition given in \eqref{Ec::Td*} is proved by taking $w_0(t)=v_1(t)\,\dots\,v_n(t)$ and $ w_k(t)=v_{n+1-k}(t)$ for $k=1,\dots,n$.
			
			Let us see that for every $v\in  X_{\ \, \{\sigma_1,\dots,\sigma_k\}}^{*\{\varepsilon_1,\dots,\varepsilon_{n-k}\}}$, the boundary conditions \eqref{Ec::cfT*1}-\eqref{Ec::cfT*2} are satisfied.
			
			Obviously, the expression of the $n^{\mathrm{th}}$- order scalar problem \eqref{Ec::T_n[M]}--\eqref{Ec::cfb} as a first order vectorial problem, given in \eqref{Ec::vec}, does not depend on the property $(T_d)$ of $T_n[\bar M]$.
			
			In our case, using the decomposition given by $(T_d)$, we can transform the $n^{\mathrm{th}}$-order problem $T_n[\bar M]\,u(t)=0$ into a first order vectorial problem  in an alternative way as follows			
			\begin{equation}
			\label{Ec::TVec}
			U_u'(t)=A_1(t)\,U_u(t)\,,\quad t\in I\,,\quad  B\,U_u(a)+C\,U_u(b)=0\,,
			\end{equation}
			with  $B$, $C\in \mathcal{M}_{n\times n}$ defined in \eqref{Ec::Cf} and $U_u(t)\in\mathbb{R}^n$, $A_1(t)\in \mathcal{M}_{n\times n}$, defined by
			\begin{equation}\label{Ec::UA1} U_u(t)=\left( \begin{array}{c}
			{u_1}_u(t)\\{u_2}_u(t)\\\vdots\\{u_n}_u(t)\end{array}\right) \,,\quad A_1(t)=\left( \begin{array}{ccccc}
			0&v_2(t)&0&\dots&0\\0&0&v_3(t)&\dots&0\\\vdots&\vdots&\vdots&\ddots&\vdots\\0&0&0&\dots&v_n(t)\\0&0&0&\dots&0\end{array}\right)\,,\end{equation}		 
			 where ${u_\ell}_u(t):=\dfrac{T_{\ell-1}\,u(t)}{v_\ell(t)}$ for $\ell=1,\dots, n$ and $u\in  X_{\{\sigma_1,\dots,\sigma_k\}}^{\{\varepsilon_1,\dots,\varepsilon_{n-k}\}}$.
			 
			 Indeed, if $1\leq \ell\leq n-1$:
			 \[{u_\ell}_u'(t)=\dfrac{d}{dt}\left( \dfrac{T_{\ell-1}\,u(t)}{v_\ell(t)}\right) = \dfrac{T_\ell\,u(t)}{v_{\ell+1}(t)}\,v_{\ell+1}(t)=v_{\ell+1}(t)\,{u_{\ell+1}}_u(t)\,,\]
			 and, if $\ell=n$
			 \[u_n'(t)=\dfrac{d}{dt}\left( \dfrac{T_{n-1}\,u(t)}{v_{n-1}(t)}\right) =T_n\,u(t)=\dfrac{T_n[\bar M]\,u(t)}{v_1(t)\dots v_n(t)}=0\,.\]
			 
			 Taking into account that $T_n[\bar M]$ satisfies property $(T_d)$ on $ X_{ \{\sigma_1,\dots,\sigma_k\}}^{\{\varepsilon_1,\dots,\varepsilon_{n-k}\}}$, we have that
			 \begin{equation}
			 \label{Ec::Cfvec}
			 {u_{\sigma_1+1}}_u(a)=\cdots={u_{\sigma_k+1}}_u(a)={u_{\varepsilon_1+1}}_u(b)=\cdots={u_{\varepsilon_{n-k}+1}}_u(b)=0\,.
			 \end{equation}
			 
			 Moreover, using similar arguments, by means of the decomposition \eqref{Ec::Td*2}, we can transform the $n^{\mathrm{th}}$- order scalar problem \begin{equation}
			 \label{Ec::T*}T_n^*[\bar M]\,v(t)=0\,,\quad t\in I\,,
			 \end{equation}
			 coupled with the boundary conditions \eqref{Cf::ad1}--\eqref{Cf::ad4} on the following equivalent first order vectorial problem
			 \begin{equation}
			 \label{Ec::T*vec}
			 Z_v'(t)=-A_1^T (t)\,Z_v(t)\,,\quad t\in I\,,
			 \end{equation}
			 where $A_1(t)\in \mathcal{M}_{n\times n}$ is defined in \eqref{Ec::UA1} and $Z_v(t)\in\mathbb{R}^n$ is given by
			 \[Z_v(t)=\left( \begin{array}{c}{z_1}_v(t)\\{z_2}_v(t)\\\vdots\\{z_n}_v(t)\end{array}\right)\,,\]
			 with ${z_\ell}_v(t):=T_{n-\ell}^*\,v(t)$ for $\ell =0, \dots,n-1$ and $v\in X_{\ \, \{\sigma_1,\dots,\sigma_k\}}^{*\{\varepsilon_1,\dots,\varepsilon_{n-k}\}}$. 
			 
			  Indeed, if $2\leq \ell\leq n$:
			  \[{z_\ell}_v'(t)=\dfrac{d}{dt}\left( {T_{n-\ell}^*v(t)}\right) =T_{n-\ell+1}^*v(t)\,(-v_{n+1-(n-\ell+1)}(t))=-v_{\ell}(t)\,{z_{\ell-1}}_v(t)\,,\]
			  
			  and, if $\ell=1$:
			  \[{z_1}_v'(t)=\dfrac{d}{dt}\left({ T_{n-1}^*\,v(t)}\right) =- v_1(t)\,T_n^*v(t)=-v_1(t)\,T_n^*[\bar M]v(t)=0\,.\]
			 
			Let us consider the $n^{\mathrm{th}}$-order linear differential operators $T_n[\bar M]$ and $T_n^*[\bar M]$ in a vectorial way as follows:
				\begin{align}
				\nonumber {T_n}^v[\bar M]\,U_u(t)&=U_u'(t)-A_1(t)\,U_u(t)\,,\\
				\nonumber {T_n^*}^v[\bar M]\,Z_v(t)&=-Z_v'(t)-A_1^T(t)\,Z_v(t)\,,
				\end{align}
			 with $U_u(t)$, $Z_v(t)\in\mathbb{R}$ and $A_1(t)\in\mathcal{M}_{n\times n}$ previously defined.
			 
			 As it can be seen in \cite[Section 1.3]{Cab}, ${T_n^*}^v[\bar M]$ is the adjoint operator of ${T_n}^v[\bar M]$ and vice-versa As consequence, by definition of adjoint operator, we have that for every $u \in X_{ \{\sigma_1,\dots,\sigma_k\}}^{\{\varepsilon_1,\dots,\varepsilon_{n-k}\}}$ and $v\in X_{\ \, \{\sigma_1,\dots,\sigma_k\}}^{*\{\varepsilon_1,\dots,\varepsilon_{n-k}\}}$, the following equality is fulfilled
			 \[\left\langle  {T_n}^v[\bar M]\,U_u(t), Z_v(t)\right\rangle = \left\langle U_u(t), {T_n^*}^v[\bar M]\,Z_v(t)\right\rangle \,,\]
			 where $\left\langle \cdot,\cdot \right\rangle$ is the scalar product in $\mathcal{L}^2(I,\mathbb{R}^n)$. 
			 
			 Moreover, from \cite[Section 1.3]{Cab}, we have that
			 \[\left\langle U_u(a),Z_v(a)\right\rangle =\left\langle U_u(b),Z_v(b)\right\rangle \,,\quad \forall u\in X_{ \{\sigma_1,\dots,\sigma_k\}}^{\{\varepsilon_1,\dots,\varepsilon_{n-k}\}} \text{ and } v\in X_{\ \, \{\sigma_1,\dots,\sigma_k\}}^{*\{\varepsilon_1,\dots,\varepsilon_{n-k}\}}\,.\]
			 
			 Taking into account the boundary conditions \eqref{Ec::Cfvec}, we conclude that for every   $v\in X_{\ \, \{\sigma_1,\dots,\sigma_k\}}^{*\{\varepsilon_1,\dots,\varepsilon_{n-k}\}}$ it is satisfied:
			 \[{z_{n-\tau_1}}_v(a)=\cdots={z_{n-\tau_n}}_v(a)={z_{n-\delta_1}}_v(b)=\cdots={z_{n-\delta_k}}_v(b)=0\,,\]
			 which implies that
			 \[T_{\tau_1}^*v(a)=\cdots=T_{\tau_{n-k}}^*v(a)=T_{\delta_1}^*v(b)=\cdots=T_{\delta_k}^*v(b)=0.\]
			 
			 Or, which is the same, $T_n^*[\bar M]$ satisfies the property $(T_d^*)$ on  $ X_{\ \, \{\sigma_1,\dots,\sigma_k\}}^{*\{\varepsilon_1,\dots,\varepsilon_{n-k}\}}$.
			\end{proof}
			\begin{exemplo}\label{Ex::3}
				Let us consider the fourth order operator $T_4[M]$. Moreover, let us assume that $T_4[M]$ verifies  property $(T_d)$ in $X_{\{0,2\}}^{\{1,2\}}$. That is, $T_0u(a)=T_2u(a)=T_1u(b)=T_2u(b)=0$ for all $u\in X_{\{0,2\}}^{\{1,2\}}$.
				
				From  \eqref{Ec::T1} and Example \ref{Ex::2}, taking into account the boundary conditions \eqref{Ec::CfEx},  we obtain that the following equalities are fulfilled for every $u\in X_{\{0,2\}}^{\{1,2\}}$.
				\[\begin{split}
				T_0u(a)&=0\,,\\
				T_2u(a)&=-u'(a)\dfrac{2v_2(a)v_1'(a)+v_1(a)v_2'(a)}{v_1^2(a)\,v_2^2(a)}\,,\\
				T_1u(b)&=-u(b)\dfrac{v_1'(b)}{v_1^2(b)}\,,\\
				T_2u(b)&=u(b) \dfrac{v_1(b)v_1'(b)v_2'(b)+v_2(b)\left( 2{v_1'}^2(b)-v_1(b)v_1''(b)\right) }{v_1^3(b)\,v_2^2(b)}\,.
				\end{split}\]
				
				So, $T_4[M]$ satisfies the property $(T_d)$ in $X_{\{0,2\}}^{\{1,2\}}$  if, and only if, there exists a decomposition as \eqref{Ec::Td1}-\eqref{Ec::Td2}, where $v_1\,,\ v_2\in C^4(I)$ are such that:
				\begin{align}
				\label{Ec::Ex31} \dfrac{2v_1'(a)}{v_1(a)}&=-\dfrac{v_2'(a)}{v_2(a)}\,,\\
				\label{Ec::Ex32} v_1'(b)&=v_1''(b)=0\,.
				\end{align}
				
				Let us verify that in such a case, the operator $T_4^*[M]$ satisfies the property $(T_d^*)$ in $X_{\,\ \{0,2\}}^{*\{1,2\}}$.
				
				In order to do that, we express $p_1$ and $p_2$ as functions of $v_1$, $v_2$, $v_3$ and $v_4$.
				
				Expanding the related expression \eqref{e-descomp} for $n=4$, we obtain that
				\[
				p_1\equiv -\dfrac{4v_1'}{v_1}-\dfrac{3v_2'}{v_2}-\dfrac{2v_3'}{v_3}-\dfrac{v_4'}{v_4}\,,\]
				and
				\[\begin{split}p_2\equiv&\dfrac{12 {v_1'}^2}{v_1^2}+\dfrac{6{v_2'}^2}{v_2}+\dfrac{2{v_3'}^2}{v_3^2}+\dfrac{9v_1'\,v_2'}{v_1\,v_2}+\dfrac{6v_1'\,v_3'}{v_1\,v_3}+\dfrac{4v_2'\,v_3'}{v_2\,v_3}+\dfrac{3v_1'\,v_4'}{v_1\,v_4}+\dfrac{2v_2'\,v_4'}{v_2\,v_4}\\
				&+\dfrac{v_3'\,v_4'}{v_3\,v_4}-\dfrac{6v_1''}{v_1}-\dfrac{3v_2''}{v_2}-\dfrac{v_3''}{v_3}\,.
				\end{split}\]
				
				Moreover,
				\[p_1'\equiv \dfrac{4{v_1'}^2}{v_1^2}+\dfrac{3{v_2'}^2}{v_2^2}+\dfrac{2{v_3'}^2}{v_3^2}+\dfrac{{v_4'}^2}{v_4^2}-\dfrac{4v_1''}{v_1}-\dfrac{3v_2''}{v_2}-\dfrac{2v_3''}{v_3}-\dfrac{v_4''}{v_4}\,.\]
				
				Taking into account \eqref{Ec::Ex31}-\eqref{Ec::Ex32}, the boundary conditions for the adjoint operator, given in Example \ref{Ex::2}, can be expressed in terms of $v_1$, $v_2$, $v_3$ and $v_4$ as follows:
			{\small 	\begin{align}
				\label{Ec::Ex33}v(a)=v''(a)+\left( \dfrac{v_2'(a)}{v_2(a)}+\dfrac{2 v_3'(a)}{v_3(a)}+\dfrac{v_4'(a)}{v_4(a)}\right) v'(a)=v(b)&=0,\\
				\label{Ec::Ex34} v^{(3)}(b)+\left( \dfrac{3v_2'(b)}{v_2(b)}+\dfrac{2 v_3'(b)}{v_3(b)}+\dfrac{v_4'(b)}{v_4(b)}\right) v''(b)+\left( \dfrac{4v_2'(b)v_3'(b)}{v_2(b)\,v_3(b)}-\dfrac{2{v_3'}^2(b)}{v_3^2(b)}\right.&\\\nonumber \left. +\dfrac{2\,v_2'(b)v_4'(b)}{v_2(b)\,v_4(b)}+\dfrac{v_3'(b)v_4'(b)}{v_3(b)\,v_4(b)}-\dfrac{2{v_4'}^2(b)}{v_4^2(b)}+\dfrac{3v_2''(b)}{v_2(b)}+\dfrac{3v_3''(b)}{v_3(b)}+\dfrac{2v_4''(b)}{v_4(b)}\right) v'(b)&=0\,.
				\end{align}}

			Now, let us see that $T_0^*v(a)=T_2^*v(a)=T_0^*v(b)=T_3^*v(b)=0$ for all $v\in X_{\ \,\{0,2\}}^{*\{1,2\}}$.
			
			Trivially, $T_0^*v(a)=v(a)=0$ and $T_0^*v(b)=v(b)=0$.
			
			Using the decomposition \eqref{Ec::Td*}, we have:
			\[T_2^*v(t)=-\dfrac{1}{v_3(t)}\dfrac{d}{dt}\left( \dfrac{-1}{v_4(t)}\dfrac{d}{dt}\left( v_1(t)\,v_2(t)\,v_3(t)\,v_4(t)\,v(t)\right)\right)\,,\]
			from which, considering \eqref{Ec::Ex31} and \eqref{Ec::Ex33}, we obtain
			\[T_2^*v(a)=v_1(a)\,v_2(a)\left( v''(a)+\left( \dfrac{v_2'(a)}{v_2(a)}+\dfrac{2v_3'(a)}{v_3(a)}+\dfrac{v_4'(a)}{v_4(a)}\right) v'(a)\right) =0\,.\]
			
			Finally,
			\[T_3^*v(t)=-\dfrac{1}{v_2(t)}\dfrac{d}{dt}\left( \dfrac{-1}{v_3(t)}\dfrac{d}{dt}\left( \dfrac{-1}{v_4(t)}\dfrac{d}{dt}\left( v_1(t)\,v_2(t)\,v_3(t)\,v_4(t)\,v(t)\right)\right)\right) \,,\]
			combining the previous expression with \eqref{Ec::Ex32} --\eqref{Ec::Ex34}, we obtain:
				{\footnotesize	\begin{align}\nonumber
					T_3^*v(b)=&-v_1(b)\left(  v^{(3)}(b)+\left( \dfrac{3v_2'(b)}{v_2(b)}+\dfrac{2 v_3'(b)}{v_3(b)}+\dfrac{v_4'(b)}{v_4(b)}\right) v''(b)+\left( \dfrac{4v_2'(b)v_3'(b)}{v_2(b)\,v_3(b)}-\dfrac{2{v_3'}^2(b)}{v_3^2(b)}\right.\right. \\\nonumber &\left. \left. +\dfrac{2\,v_2'(b)v_4'(b)}{v_2(b)\,v_4(b)}+\dfrac{v_3'(b)v_4'(b)}{v_3(b)\,v_4(b)}-\dfrac{2{v_4'}^2(b)}{v_4^2(b)}+\dfrac{3v_2''(b)}{v_2(b)}+\dfrac{3v_3''(b)}{v_3(b)}+\dfrac{2v_4''(b)}{v_4(b)}\right) v'(b)\right)=0\,.
					\end{align}}
				\vspace{-0.3cm}	
							
				 As a particular case of Lemma \ref{L::0Ad}, we have proved that if $T_4[M]$ satisfies the property $(T_d)$ in $X_{\{0,2\}}^{\{1,2\}}$, then $T_4^*[M]$ satisfies the property $(T_d^*)$ in $X_{\ \,\{0,2\}}^{*\{1,2\}}$.
			\end{exemplo}
			It is obvious that we can enunciate an analogous result to Lemma \ref{L::0Ad} referring to operator $\widehat T_n[(-1)^n\bar M]$ defined in \eqref{Ec::Tg}.
				\begin{lemma}\label{L::0}
					Let $\bar{M}\in \mathbb{R}$ be such that $T_n[\bar{M}]$ satisfies the property $(T_d)$ on  $X_{\{\sigma_1,\dots,\sigma_k\}}^{\{\varepsilon_1,\dots,\varepsilon_{n-k}\}}$, then the operator $\widehat T_n[(-1)^n\bar{M}]$ also satisfies the property $(T_d^*)$ on  $X_{\ \, \{\sigma_1,\dots,\sigma_k\}}^{*\{\varepsilon_1,\dots,\varepsilon_{n-k}\}}$.
				\end{lemma}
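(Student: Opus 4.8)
The plan is to obtain the statement as a verbatim transcription of the proof of Lemma \ref{L::0Ad}, based on the single identity $\widehat T_n[(-1)^n\bar M]=(-1)^n\,T_n^*[\bar M]$ together with two elementary observations. First I would note that the domain of definition of $\widehat T_n[(-1)^n\bar M]$ coincides with $D(T_n^*[\bar M])=X_{\ \, \{\sigma_1,\dots,\sigma_k\}}^{*\{\varepsilon_1,\dots,\varepsilon_{n-k}\}}$: multiplying an operator by the nonzero constant $(-1)^n$ leaves the bilinear boundary form in \eqref{Ec::cfad} (hence the conditions \eqref{Cf::ad1}--\eqref{Cf::ad4}) untouched, which is already implicit in the discussion following \eqref{Ec::Tg}--\eqref{Ec::gg1}.

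Next I would produce a decomposition of the type \eqref{Ec::Td*} for $\widehat T_n[(-1)^n\bar M]$. Since $T_n[\bar M]$ satisfies $(T_d)$, it admits a factorization \eqref{Ec::Td1}--\eqref{Ec::Td2} with $v_k>0$, $v_k\in C^n(I)$; by \cite[Chapter 3, Theorem 10]{Cop} the adjoint then factorizes as in \eqref{Ec::Td*2}. Multiplying \eqref{Ec::Td*2} by $(-1)^n$ cancels the leading sign and gives
\[
\widehat T_n[(-1)^n\bar M]\,v(t)=\dfrac{1}{v_1(t)}\,\dfrac{d}{dt}\left(\dfrac{1}{v_2(t)}\dfrac{d}{dt}\left(\cdots\dfrac{d}{dt}\left(\dfrac{1}{v_n(t)}\dfrac{d}{dt}\left(v_1(t)\cdots v_n(t)\,v(t)\right)\right)\right)\right),
\]
and, after rewriting each factor $\tfrac{1}{v_j}$ as $-\tfrac{-1}{v_j}$ and collecting the $n$ signs thus produced, this is exactly of the form \eqref{Ec::Td*} with $w_0=v_1\cdots v_n$ and $w_k=v_{n+1-k}$ for $k=1,\dots,n$, all strictly positive and in $C^n(I)$.

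It then remains to verify the boundary conditions \eqref{Ec::cfT*1}--\eqref{Ec::cfT*2}, and here I would copy the argument of Lemma \ref{L::0Ad} line by line: via the $(T_d)$-decomposition the scalar problem $T_n[\bar M]u=0$ on $X_{\{\sigma_1,\dots,\sigma_k\}}^{\{\varepsilon_1,\dots,\varepsilon_{n-k}\}}$ becomes the first order system \eqref{Ec::TVec}--\eqref{Ec::UA1} with the vanishing conditions \eqref{Ec::Cfvec}; the decomposition just obtained (which differs from \eqref{Ec::Td*2} only by the factor $(-1)^n$) turns $\widehat T_n[(-1)^n\bar M]v=0$ into the transposed system $Z_v'=-A_1^T\,Z_v$, whose components are the terms $T^*_{n-\ell}v$ of \eqref{Ec::Td*}; and the adjoint (Lagrange) relation $\langle U_u(a),Z_v(a)\rangle=\langle U_u(b),Z_v(b)\rangle$, together with \eqref{Ec::Cfvec}, forces $T^*_{\tau_1}v(a)=\cdots=T^*_{\tau_{n-k}}v(a)=T^*_{\delta_1}v(b)=\cdots=T^*_{\delta_k}v(b)=0$ for every $v$ in the domain.

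The only step that deserves a moment's care is the sign bookkeeping above — distributing the global factor $(-1)^n$ among the $n$ first-order factors so as to reproduce the exact shape of \eqref{Ec::Td*}; once this is arranged, no other part of the proof of Lemma \ref{L::0Ad} uses the particular value of the leading coefficient, so the transcription goes through without further change.
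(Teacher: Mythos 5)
Your route is essentially the paper's own (the paper proves this lemma by citing Lemma \ref{L::0Ad} together with the sign relation $\widehat T_\ell v=(-1)^\ell T^*_\ell v$), and the Lagrange-identity verification of the boundary conditions that you transcribe is fine. However, the one step you yourself flag as delicate is the one that does not go through as claimed. Rewriting each factor $\frac{1}{v_j}$ as $-\frac{-1}{v_j}$ produces $n$ minus signs, and collecting them leaves a global factor $(-1)^n$: what your manipulation actually yields is $\widehat T_n[(-1)^n\bar M]\,v=(-1)^n\,T^*_n v$, where $T^*_n$ is the chain \eqref{Ec::Td*} built from $w_0=v_1\cdots v_n$, $w_k=v_{n+1-k}$. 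This coincides with the exact shape \eqref{Ec::Td*} only when $n$ is even. For odd $n$ no redistribution of signs can help: the nested form \eqref{Ec::Td*} with positive weights has leading coefficient $(-1)^n\,w_0/(w_1\cdots w_n)$, hence negative for odd $n$, whereas $\widehat T_n[(-1)^n\bar M]v=v^{(n)}+\cdots$ has leading coefficient $+1$.

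The repair is precisely the device the paper uses: do not force the $-1/w_k$ pattern, but define the chain attached to $\widehat T_n$ by $\widehat T_\ell v:=(-1)^\ell T^*_\ell v$, $\ell=0,\dots,n$, with $T^*_\ell$ the decomposition furnished by Lemma \ref{L::0Ad}. Then $\widehat T_0 v=w_0\,v$, $\widehat T_k v=\frac{1}{w_k}\frac{d}{dt}\bigl(\widehat T_{k-1}v\bigr)$ with the same positive $w_k\in C^n(I)$, $\widehat T_n v=(-1)^n T^*_n v=\widehat T_n[(-1)^n\bar M]v$, and the boundary conditions \eqref{Ec::cfT*1}--\eqref{Ec::cfT*2} transfer immediately, since each $\widehat T_\ell$ differs from $T^*_\ell$ by the nonzero constant $(-1)^\ell$, so $T^*_{\tau_i}v(a)=0$ if and only if $\widehat T_{\tau_i}v(a)=0$, and likewise at $t=b$. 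This sign-adjusted chain (with $+1/w_k$ rather than $-1/w_k$) is in fact the object the paper works with afterwards, see \eqref{Ec::Tgg} and Lemmas \ref{L::3} and \ref{L::4}; with that substitution, the rest of your transcription of Lemma \ref{L::0Ad} needs no change, because the adjoint-system argument only establishes the vanishing of the $T^*_\ell$ at the endpoints, which is all that is required.
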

				\begin{proof}
					We only have to consider $\widehat{T}_\ell v(t)=(-1)^\ell T_\ell^*v(t)$, $\ell=0,\dots, 1$ and the result follows directly from Lemma \ref{L::0Ad}.
				\end{proof}
				
		Arguing as in \cite{CabSaa} to deduce the equality \eqref{Ec::Tl}, let us see that the expression of $\widehat T_\ell v(t)$ is given by
				\begin{equation}\label{Ec::Tgg}\widehat T_\ell v(t)=v_1(t)\,\dots\,v_{n-\ell}\,v^{(\ell)}(t)+\widehat p_{\ell_1}(t)\,v^{(\ell-1)}(t)+\cdots+\widehat p_{\ell_\ell}(t)\,v(t)\,,\end{equation}
				where $\widehat p_{\ell_i}\in C^{n-\ell}(I)$.
				
				For $\ell=0$, we have that $\widehat T_0v(t)=v_1(t)\dots v_n(t)\,v(t)$.
				
				Let us assume that \eqref{Ec::Tgg} is true for a given $\ell\geq 0$, then, by \eqref{Ec::Td*}, we have				
				\[\widehat{T}_{\ell+1}v(t)=\dfrac{1}{v_{n-\ell}(t)}\dfrac{d}{dt}\left( \widehat T_{\ell}v(t)\right)\,.\]
			
			Thus, using the induction hypothesis,
					\[\widehat{T}_{\ell+1}v(t)=\dfrac{1}{v_{n-\ell}(t)}\dfrac{d}{dt}\left( v_1(t)\,\dots\,v_{n-\ell}\,v^{(\ell)}(t)+\widehat p_{\ell_1}(t)\,v^{(\ell-1)}(t)+\cdots+\widehat p_{\ell_\ell}(t)\,v(t)\right)\,,\]
				which follows the expression \eqref{Ec::Tgg} for $\ell+1$.
				
			As a consequence of the previous results, we are able to obtain analogous results to Lemmas \ref{L::1} and \ref{L::2} for $\widehat{T}_n[(-1)^n M]$.
				 \begin{lemma}
				 	\label{L::3}
				 	Let $\bar M\in\mathbb{R}$ be such that $T_n[(-1)^n\bar M]$ satisfies the property $(T_d^*)$ in $X_{\ \, \{\sigma_1,\dots,\sigma_k\}}^{*\{\varepsilon_1,\dots,\varepsilon_{n-k}\}}$. If $v\in C^n([a,c))$, with $c>a$, is a function that satisfies \eqref{Cf::ad1}--\eqref{Cf::ad11}, then
				 	\[\widehat T_{\tau_1}\,v(a)=\cdots=\widehat T_{\tau_{n-k-1}}\,v(a)=0\,,\]
				 	and 
				 	\[\widehat T_{\tau_{n-k}}\,v(b)=\widehat f(a)\left( 	v^{(\tau_{n-k})}(a)+\sum_{j=n-\tau_{n-k}}^{n-1}(-1)^{n-j}\,(p_{n-j}\,v)^{(\tau_{n-k}+j-n)}(a)\right) \,,\]
				 	where  	$\widehat f(t)= v_1(t)\dots v_{n-{\tau_{n-k}}}(t)>0$ on $I$
				 	
				 	In particular, if $v$ satisfies \eqref{Cf::ad2}, then $\widehat T_{\tau_{n-k}}v(a)=0$.
				 \end{lemma}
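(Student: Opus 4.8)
The plan is to mirror the proof of Lemma~\ref{L::1}, with $T_\ell$ and \eqref{Ec::Tl} replaced by $\widehat T_\ell$ and \eqref{Ec::Tgg}, and with Remark~\ref{R::1} replaced by its adjoint counterpart. Every quantity appearing in $\widehat T_{\tau_j}\,v(a)$ depends only on the jet $j_a(v):=(v(a),v'(a),\dots,v^{(n-1)}(a))\in\mathbb{R}^n$, the coefficients $v_k$ and $p_{n-j}$ being fixed functions; so I would regard, for $j=1,\dots,n-k$, both $v\mapsto\widehat T_{\tau_j}\,v(a)$ and the left-hand side of the $j$-th boundary condition in \eqref{Cf::ad1}--\eqref{Cf::ad2} as linear functionals on $\mathbb{R}^n$, denoted $L_j$ and $B_j^a$, respectively. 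The hypothesis that $\widehat T_n[(-1)^n\bar M]$ satisfies $(T_d^*)$ on $X_{\ \,\{\sigma_1,\dots,\sigma_k\}}^{*\{\varepsilon_1,\dots,\varepsilon_{n-k}\}}$ then enters through the identities \eqref{Ec::cfT*1}.

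The first step is to record the echelon structure of these two families. By \eqref{Ec::Tgg}, $L_j$ involves only $v(a),\dots,v^{(\tau_j)}(a)$ and the coefficient of $v^{(\tau_j)}(a)$ in $L_j$ is $v_1(a)\cdots v_{n-\tau_j}(a)>0$. By \eqref{Cf::ad1}--\eqref{Cf::ad2}, and since $p_{n-j}\in C^{n-j}(I)$ forces the highest derivative of $v$ occurring in $(p_{n-j}\,v)^{(\tau_j+j-n)}$ to have order $\tau_j+j-n\le\tau_j-1$, the functional $B_j^a$ likewise involves only $v(a),\dots,v^{(\tau_j)}(a)$, with the coefficient of $v^{(\tau_j)}(a)$ equal to $1$. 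Since $\tau_1<\tau_2<\cdots<\tau_{n-k}$, both $\{L_j\}$ and $\{B_j^a\}$ are echelon systems with the same pivot positions $\tau_1,\dots,\tau_{n-k}$.

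The key step is the identification $L_j=\widehat f_j(a)\,B_j^a+\sum_{i<j}c_{ji}\,B_i^a$, where $\widehat f_j(t):=v_1(t)\cdots v_{n-\tau_j}(t)$ and $c_{ji}\in\mathbb{R}$. Indeed, by $(T_d^*)$ and $\widehat T_\ell=(-1)^\ell T_\ell^*$, for every $v\in X_{\ \,\{\sigma_1,\dots,\sigma_k\}}^{*\{\varepsilon_1,\dots,\varepsilon_{n-k}\}}$ one has $L_j(v)=(-1)^{\tau_j}T_{\tau_j}^*v(a)=0$, so $L_j$ vanishes on $j_a\big(X_{\ \,\{\sigma_1,\dots,\sigma_k\}}^{*\{\varepsilon_1,\dots,\varepsilon_{n-k}\}}\big)$. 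That image equals $\bigcap_{i=1}^{n-k}\ker B_i^a$: the inclusion ``$\subseteq$'' is the definition of the space, and ``$\supseteq$'' holds because the defining conditions \eqref{Cf::ad1}--\eqref{Cf::ad4} split into $n-k$ conditions at $a$ and $k$ at $b$, so any $x\in\mathbb{R}^n$ with $B_i^a(x)=0$ for all $i$ is the jet at $a$ of a $C^n(I)$ function carrying any admissible jet at $b$ (e.g.\ the null one). Hence $L_j\in\mathrm{span}\{B_1^a,\dots,B_{n-k}^a\}$; reading the echelon structure from the top (i.e.\ $L_j$ is supported on coordinates $\le\tau_j$ with a nonzero entry at $\tau_j$) forces only $B_1^a,\dots,B_j^a$ to occur, and matching the pivot entry at $\tau_j$ gives the coefficient of $B_j^a$ as $\widehat f_j(a)$.

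The conclusion is then a finite induction on $j$. If $v\in C^n([a,c))$ satisfies \eqref{Cf::ad1}--\eqref{Cf::ad11}, i.e.\ $B_1^a(v)=\cdots=B_{n-k-1}^a(v)=0$, then for $j=1,\dots,n-k-1$ the identity above gives $\widehat T_{\tau_j}\,v(a)=L_j(v)=\widehat f_j(a)B_j^a(v)+\sum_{i<j}c_{ji}B_i^a(v)=0$, while for $j=n-k$ only the pivot term survives, so
\[
\widehat T_{\tau_{n-k}}\,v(a)=\widehat f_{n-k}(a)\,B_{n-k}^a(v)=\widehat f(a)\Big(v^{(\tau_{n-k})}(a)+\sum_{j=n-\tau_{n-k}}^{n-1}(-1)^{n-j}(p_{n-j}\,v)^{(\tau_{n-k}+j-n)}(a)\Big),
\]
with $\widehat f(t)=\widehat f_{n-k}(t)=v_1(t)\cdots v_{n-\tau_{n-k}}(t)>0$ on $I$. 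Adding the hypothesis \eqref{Cf::ad2}, i.e.\ $B_{n-k}^a(v)=0$, yields $\widehat T_{\tau_{n-k}}\,v(a)=0$, which is the last assertion. I expect the only delicate point to be the surjectivity of $j_a$ onto $\bigcap_i\ker B_i^a$; if one prefers to avoid the cutoff-function argument, it can be extracted from the vectorial reformulation \eqref{Ec::T*vec} used in the proof of Lemma~\ref{L::0Ad}, where the boundary data at $a$ and at $b$ decouple.
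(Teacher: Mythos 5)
Your proof is correct and follows essentially the same route as the paper, which simply declares the argument ``analogous to Lemma~\ref{L::1}'' using the expression \eqref{Ec::Tgg} together with the property $(T_d^*)$ on the adjoint domain. Your echelon/annihilator formulation (identifying $\widehat T_{\tau_j}\,v(a)$ as a combination of the adjoint boundary functionals at $a$, with leading coefficient $v_1(a)\cdots v_{n-\tau_j}(a)$) just makes rigorous the ``arbitrariness of $v$ in the domain'' step that the paper, via the analogue of Remark~\ref{R::1}, leaves implicit.
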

				 \begin{proof}
				 	The proof is analogous to the one given in Lemma \ref{L::1}, but in this case we have that
			{\scriptsize 	 	\[ \widehat T_{\tau_{n-k}} v(a)=v_1(a)\,\dots\,v_{n-\tau_{n-k}}(a)\,v^{({\tau_{n-k}})}(a)+\widehat p_{{\tau_{n-k}}_1}(a)\,v^{(\tau_{n-k}-1)}(a)+\cdots+\widehat p_{{\tau_{n-k}}_{\tau_{n-k}}}(a)\,v(a)\,.\]}
			If \eqref{Cf::ad2} is satisfied, then $\widehat T_{\tau_{n-k}\,v(a)}=0$, and the result is true.				 	
				 \end{proof}
				
				 \begin{lemma}
				 	\label{L::4}
				 	Let $\bar M\in\mathbb{R}$ be such that $T_n[(-1)^n\bar M]$ satisfies the property $(T_d^*)$ in $X_{\ \, \{\sigma_1,\dots,\sigma_k\}}^{*\{\varepsilon_1,\dots,\varepsilon_{n-k}\}}$. If $v\in C^n((c,b])$, with $c<b$, is a function that satisfies \eqref{Cf::ad3}--\eqref{Cf::ad31}, then
				 	\[\widehat T_{\delta_1}\,v(b)=\cdots=\widehat T_{\delta_{k-1}}\,v(b)=0\,,\]
				 	and 
				 	\[\widehat T_{\delta_{k}}\,v(b)=\widehat g(a)\left( 	v^{(\delta_{k})}(b)+\sum_{j=n-\delta_{k}}^{n-1}(-1)^{n-j}\,(p_{n-j}\,v)^{(\delta_{k}+j-n)}(b)\right) \,,\]
				 	where $\widehat g(t)= v_1(t)\dots v_{n-\delta_k}(t)>0$ on $I$.
				 	
				 	In particular, if $v$ satisfies \eqref{Cf::ad4}, then $\widehat T_{\delta_{k}}v(b)=0$.
				 \end{lemma}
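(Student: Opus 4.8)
The plan is to transcribe the proof of Lemma~\ref{L::2} to the operators $\widehat T_\ell$ — equivalently, to run the argument of Lemma~\ref{L::3} at the endpoint $b$ — replacing the expansion \eqref{Ec::Tl} by \eqref{Ec::Tgg} and the plain conditions $u^{(\varepsilon_\ell)}(b)=0$ by the adjoint boundary functionals occurring in \eqref{Cf::ad3}--\eqref{Cf::ad4}. For $j=1,\dots,k$ set
\[
L_j(v):=v^{(\delta_j)}(b)+\sum_{m=n-\delta_j}^{n-1}(-1)^{n-m}\,(p_{n-m}\,v)^{(\delta_j+m-n)}(b);
\]
by Leibniz's rule $L_j(v)$ is a linear functional of the jet $\bigl(v(b),\dots,v^{(\delta_j)}(b)\bigr)$ whose coefficient of $v^{(\delta_j)}(b)$ equals $1$. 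Since the $\delta_j$ are strictly increasing, $L_1,\dots,L_k$ are linearly independent, and $\bigcap_{j=1}^{k}\ker L_j$ coincides with the set of jets at $b$ of functions of $X_{\ \,\{\sigma_1,\dots,\sigma_k\}}^{*\{\varepsilon_1,\dots,\varepsilon_{n-k}\}}$: any prescribed jet in this intersection is realized by a $C^n(I)$ function that vanishes to order $n$ at $a$, which then automatically satisfies \eqref{Cf::ad1}--\eqref{Cf::ad2}.

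The decisive step, and the one I expect to cost the most work, is the $\widehat T$-analogue of Remark~\ref{R::1}. By hypothesis $\widehat T_n[(-1)^n\bar M]$ satisfies $(T_d^*)$, so \eqref{Ec::cfT*2} gives $\widehat T_{\delta_1}v(b)=\cdots=\widehat T_{\delta_k}v(b)=0$ for every $v\in X_{\ \,\{\sigma_1,\dots,\sigma_k\}}^{*\{\varepsilon_1,\dots,\varepsilon_{n-k}\}}$; equivalently, each functional $v\mapsto\widehat T_{\delta_j}v(b)$ vanishes on $\bigcap_{i}\ker L_i$, hence lies in $\mathrm{span}(L_1,\dots,L_k)$. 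By \eqref{Ec::Tgg} the functional $\widehat T_{\delta_j}v(b)$ involves only $v(b),\dots,v^{(\delta_j)}(b)$ and has coefficient $v_1(b)\cdots v_{n-\delta_j}(b)>0$ in $v^{(\delta_j)}(b)$; comparing the orders of $L_1,\dots,L_k$ (the distinct numbers $\delta_1<\dots<\delta_k$) therefore forces
\[
\widehat T_{\delta_j}\,v(b)=\sum_{i=1}^{j}c_{j,i}\,L_i(v),\qquad c_{j,j}=v_1(b)\cdots v_{n-\delta_j}(b),
\]
for suitable constants $c_{j,i}$. Unlike in Lemma~\ref{L::2}, where the corresponding reduction is immediate because the boundary conditions are the bare derivatives, here one must track the Leibniz expansions of the $L_i$ to see both that the sum truncates at $i=j$ and that the leading coefficient is exactly the stated positive product; this is the heart of the argument.

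Finally I substitute the hypothesis. If $v\in C^n((c,b])$ satisfies \eqref{Cf::ad3}--\eqref{Cf::ad31}, i.e. $L_1(v)=\cdots=L_{k-1}(v)=0$, then for $j\le k-1$ every summand $c_{j,i}L_i(v)$ with $i\le j$ vanishes, so $\widehat T_{\delta_1}v(b)=\cdots=\widehat T_{\delta_{k-1}}v(b)=0$; and for $j=k$ only the term $c_{k,k}L_k(v)$ survives, giving
\[
\widehat T_{\delta_k}\,v(b)=v_1(b)\cdots v_{n-\delta_k}(b)\left(v^{(\delta_k)}(b)+\sum_{m=n-\delta_k}^{n-1}(-1)^{n-m}(p_{n-m}v)^{(\delta_k+m-n)}(b)\right)=\widehat g(b)\,L_k(v).
\]
In particular, if $v$ also satisfies \eqref{Cf::ad4}, i.e. $L_k(v)=0$, then $\widehat T_{\delta_k}v(b)=0$. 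This completes the plan: the argument is the exact mirror of Lemma~\ref{L::2} once the structural identity of the second paragraph is established, and I anticipate no difficulty there beyond the bookkeeping already indicated.
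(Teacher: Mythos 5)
Your proof is correct and takes essentially the paper's route: the paper settles this lemma by declaring it analogous to Lemmas \ref{L::1} and \ref{L::3}, i.e.\ by combining the expansion \eqref{Ec::Tgg} with the adjoint analogue of Remark \ref{R::1}, which is precisely the structural identity $\widehat T_{\delta_j}\,v(b)=\sum_{i\le j}c_{j,i}\,L_i(v)$, $c_{j,j}=v_1(b)\cdots v_{n-\delta_j}(b)$, that you derive from the arbitrariness of $v$ in the adjoint space (your realization of jets at $b$ by functions vanishing to order $n$ at $a$). Note only that your leading factor $\widehat g(b)$ is the correct one; the ``$\widehat g(a)$'' in the statement is a typo, just as the evaluation point on the left-hand side of Lemma \ref{L::3} is.
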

				 \begin{proof}
				 	The proof is analogous to the one given in Lemma \ref{L::3}.
				 \end{proof}
				
				\chapter{Strongly inverse positive (negative) character of operator $T_n[\bar M]$}
			
				In this section we  prove that if  operator $T_n[\bar M]$ satisfies the property $(T_d)$, then it is a strongly inverse positive (negative) operator on $X_{\{\sigma_1,\dots,\sigma_k\}}^{\{\varepsilon_1,\dots,\varepsilon_{n-k}\}}$. Moreover, its related Green's function satisfies a suitable condition, which allows us to apply either Theorem \ref{T::6} or Theorem \ref{T::7} and obtain one of the extremes of the interval where the related Green's function is of constant sign. The result is the following.
				
				\begin{theorem}
					\label{L::5}
					Let $\bar M\in \mathbb{R}$ be such that $T_n[\bar M]$ satisfies condition $(T_d)$ in $X_{\{\sigma_1,\dots,\sigma_k\}}^{\{\varepsilon_1,\dots,\varepsilon_{n-k}\}}$ and ${\{\sigma_1,\dots,\sigma_k\}}-{\{\varepsilon_1,\dots,\varepsilon_{n-k}\}}$ satisfy condition $(N_a)$. Then the following properties are fulfilled:
					\begin{itemize}
						\item If $n-k$ is even, then $T_n[\bar M]$ is strongly inverse positive in $X_{\{\sigma_1,\dots,\sigma_k\}}^{\{\varepsilon_1,\dots,\varepsilon_{n-k}\}}$ and, moreover, the related Green's function, $g_{\bar M}(t,s)$, satisfies $(P_g)$.
							\item If $n-k$ is odd, then $T_n[\bar M]$ is strongly inverse negative in $X_{\{\sigma_1,\dots,\sigma_k\}}^{\{\varepsilon_1,\dots,\varepsilon_{n-k}\}}$ and, moreover, the related Green's function, $g_{\bar M}(t,s)$, satisfies $(N_g)$.
					\end{itemize}
				\end{theorem}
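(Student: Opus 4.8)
The plan is to exploit the factorization granted by $(T_d)$ to solve $T_n[\bar M]\,u=h$ by successive quadratures, to use $(N_a)$ as the precise bookkeeping that keeps the signs of the successive primitives under control, and then to read off from the resulting description of $u$ both the strongly inverse positive (negative) character and the two-sided bounds $(P_g)$, $(N_g)$. I would begin by recording that the hypotheses already yield invertibility: by Lemma \ref{L::11}, since $T_n[\bar M]$ satisfies $(T_d)$ and $\{\sigma_1,\dots,\sigma_k\}-\{\varepsilon_1,\dots,\varepsilon_{n-k}\}$ satisfies $(N_a)$, the value $M=0$ is not an eigenvalue of $T_n[\bar M]$ in $X_{\{\sigma_1,\dots,\sigma_k\}}^{\{\varepsilon_1,\dots,\varepsilon_{n-k}\}}$, so the Green's function $g_{\bar M}$ exists and is unique. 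By Theorems \ref{T::in2} and \ref{T::in21} it then suffices to prove that $g_{\bar M}$ has the announced sign and is nonvanishing on $(a,b)\times(a,b)$, together with the prescribed sign conditions on $\partial_t^{\alpha}g_{\bar M}\big|_{t=a}$ and $\partial_t^{\beta}g_{\bar M}\big|_{t=b}$.

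For the sign, fix $h\gneqq 0$ on $I$ and let $u\in X_{\{\sigma_1,\dots,\sigma_k\}}^{\{\varepsilon_1,\dots,\varepsilon_{n-k}\}}$ solve $T_n[\bar M]\,u=h$. Setting $\tilde h:=h/(v_1\cdots v_n)\gneqq 0$, equation \eqref{Ec::Td2} gives $T_n\,u=\tilde h$, that is $\dfrac{d}{dt}\big(T_{n-1}u/v_n\big)=\tilde h$, and I would reconstruct $T_{n-1}u,\dots,T_1u,\,T_0u=u$ one at a time, each from the previous one by a single integration. At the step producing $T_\ell u$ the integration constant is fixed by whichever of $T_\ell u(a)=0$ (when $\ell\in\{\sigma_1,\dots,\sigma_k\}$) or $T_\ell u(b)=0$ (when $\ell\in\{\varepsilon_1,\dots,\varepsilon_{n-k}\}$) is available — these being exactly the identities provided by $(T_d)$ together with \eqref{Ec::cfa}-\eqref{Ec::cfb} — and $(N_a)$ is precisely the statement that, descending from $\ell=n$, enough of these conditions have been consumed to bound the number of sign changes of each $T_\ell u$, exactly as in the oscillation count used to prove Lemma \ref{L::11}. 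Tracking the signs through this cascade (each step anchored by a condition at $b$ reversing the running sign) yields that $u$ has the constant sign $(-1)^{n-k}$ throughout $(a,b)$, strictly since $\tilde h\gneqq 0$; moreover Lemmas \ref{L::1} and \ref{L::2} identify $u^{(\alpha)}(a)=\dfrac{T_\alpha u(a)}{v_1(a)\cdots v_\alpha(a)}$ and $u^{(\beta)}(b)=\dfrac{T_\beta u(b)}{v_1(b)\cdots v_\beta(b)}$, whose signs come out of the same bookkeeping as $(-1)^{n-k}$ and $(-1)^{n-k+\beta}$, which is exactly what Definitions \ref{d-SIP} and \ref{d-SIN} require. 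Since every such $u$ is recovered from $g_{\bar M}$ by $u(t)=\int_a^b g_{\bar M}(t,s)\,h(s)\,ds$, these facts force the corresponding sign, nonvanishing, and endpoint properties of $g_{\bar M}$, and hence (via Theorems \ref{T::in2}, \ref{T::in21}) the strongly inverse positive (negative) character of $T_n[\bar M]$.

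For the bounds $(P_g)$ (respectively $(N_g)$), I would use that for each fixed $s\in(a,b)$ the section $g_{\bar M}(\cdot,s)$ is built by the same quadratures, hence vanishes exactly to order $\alpha$ at $a$ and to order $\beta$ at $b$, with leading $t$-derivatives of the signs just found, and is of constant sign $(-1)^{n-k}$ and nonvanishing on $(a,b)$. Choosing a fixed positive profile $\phi$ with the same boundary vanishing orders — for instance $\phi(t)=|g_{\bar M}(t,s_0)|$ for a conveniently chosen $s_0$, or the explicit barrier $(t-a)^{\alpha}(b-t)^{\beta}$ — the quotient $g_{\bar M}(t,s)/\phi(t)$ extends continuously, and with constant sign, to $I\times I$; taking $k_1(s)$ and $k_2(s)$ to be its infimum and supremum over $t$ gives $\phi(t)\,k_1(s)\le g_{\bar M}(t,s)\le\phi(t)\,k_2(s)$, while the strict interior sign and compactness yield $0<k_1(s)<k_2(s)$ (respectively $k_1(s)<k_2(s)<0$) for a.e. $s\in I$, integrability being automatic by continuity.

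The step I expect to be the main obstacle is the sign and zero bookkeeping in the cascade: one has to check that $(N_a)$ is exactly strong enough that at every level the integration constant is determined by an already-available boundary condition and that no uncontrolled oscillation of any $T_\ell u$ can occur, so that the running sign is unambiguously $(-1)^{n-k}$; closely related is the uniform lower bound $k_1(s)>0$ in $(P_g)$ (respectively $k_2(s)<0$ in $(N_g)$), namely that $g_{\bar M}(t,s)$ does not degenerate in $t$ near the boundary faster than $\phi(t)$ uniformly in $s$, which is where the exact vanishing orders $\alpha$ and $\beta$, rather than mere inequalities, are needed.
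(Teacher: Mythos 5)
Your overall plan runs parallel to the paper's: exploit the $(T_d)$ factorization together with an oscillation count governed by the boundary conditions, and obtain $(P_g)$/$(N_g)$ by dividing by the barrier $\phi(t)=(t-a)^{\alpha}(b-t)^{\beta}$ and taking the extrema of the quotient in $t$. However, the concrete mechanism you propose for the first (and central) step has a genuine gap. You want to reconstruct $T_{n-1}u,\dots,T_0u=u$ by successive quadratures, ``the integration constant at the step producing $T_\ell u$ being fixed by whichever of $T_\ell u(a)=0$ or $T_\ell u(b)=0$ is available.'' But the $n$ conditions supplied by $(T_d)$ are attached to the levels $\sigma_1,\dots,\sigma_k$ (at $a$) and $\varepsilon_1,\dots,\varepsilon_{n-k}$ (at $b$), and these levels can repeat and can skip values: in the paper's running example $n=4$, $X_{\{0,2\}}^{\{1,2\}}$, level $3$ carries no condition at all while level $2$ carries one at each endpoint. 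Condition $(N_a)$ is only a cumulative count, not a guarantee of one condition per level, so the cascade with locally determined constants cannot be run; the problem is irreducibly a boundary value problem. What replaces it is a global argument: each vanishing of some $T_\ell u$ at $a$ or $b$ removes one possible oscillation of $u$, the $n$ conditions remove all $n$ possible zeros, hence $u$ has no zero in $(a,b)$; and the sign is then read off from the alternation pattern of $T_h u(a)$ compatible with maximal oscillation (cf. \eqref{Ec::Maxos}): between $h=\alpha$ and $h=n-1$ there are $n-\alpha$ steps of which $k-\alpha$ are forced zeros, leaving $n-k$ sign changes and so $T_\alpha u(a)>0$ exactly when $n-k$ is even. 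Your heuristic that the sign $(-1)^{n-k}$ comes from ``each step anchored by a condition at $b$ reversing the running sign'' misplaces where the parity enters; the conditions at $b$ keep $T_{n-\ell}u(b)>0$ at every level, and it is the alternation at $t=a$, corrected by the $k-\alpha$ vanishing values there, that produces $(-1)^{n-k}$.

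A second, smaller gap concerns the bound $(P_g)$: you assert that for each fixed $s$ the section $g_{\bar M}(\cdot,s)$ ``is built by the same quadratures, hence is of constant sign and nonvanishing on $(a,b)$.'' But $g_{\bar M}(\cdot,s)$ is only $C^{n-2}$ across $t=s$ and solves the equation only for $t\neq s$, so the zero count must be redone: $T_{n-1}u_s$ consists of two constant pieces of opposite sign, $T_{n-\ell}u_s$ can have up to $\ell$ zeros, so $u_s$ may a priori oscillate $n$ times, and one must again invoke the $n$ boundary conditions (via Lemmas \ref{L::1} and \ref{L::2}, since $u_s\notin C^n(I)$) to kill all of them and get strict positivity for \emph{every} $s\in(a,b)$, not merely a.e.\ $s$ as delivered by Theorem \ref{T::in2}. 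Once that is done, your construction of $k_1,k_2$ as the extrema of $g_{\bar M}(t,s)/\phi(t)$, positive (resp.\ negative) for a.e.\ $s$, is exactly the paper's conclusion of the argument.
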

				\begin{proof}
					Firstly, let us verify the strongly inverse positive (negative) character.
					
					To this end, we  use the decomposition of $T_n[\bar M]$ given on \eqref{Ec::Td1}.
					
					Since $v_1(t)\,\dots\,v_n(t)>0$; if $T_n[\bar M]\,u\gneqq 0$ on $I$, from \eqref{Ec::Td2}, we conclude that $T_n u\gneqq 0$ on $I$.
					
					Hence, from \eqref{Ec::Td1} we know that $\dfrac{T_{n-1}\,u}{v_n}$ is a nontrivial nondecreasing function, with at most a  sign change on $I$. Therefore, since $v_n>0$, we can affirm that $T_{n-1}u$ can have at most a sign change, being negative at $t=a$ and positive at $t=b$.
					
					Repeating this process for $T_{n-\ell}u$, with $\ell = 1,\dots n$, we can affirm that $T_0u=u$ can have at most $n$ zeros on $(a,b)$, whenever  the following inequalities are satisfied for every $\ell=1,\dots,n$:
					\begin{equation}\label{Ec::Maxos}
					\left\lbrace \begin{array}{cc}
					T_{n-\ell}\,u(a)>0\,,& \text{if $\ell$ is even,}\\\\
					T_{n-\ell}\,u(a)<0\,,& \text{if $\ell$ is odd,}\end{array}\right.\qquad\text{and}\qquad T_{n-\ell}\,u(b)>0\,.
					\end{equation}
					
					Repeating the same argument as in Lemma \ref{L::11}, we can affirm that each time that $T_{n-\ell}\,u(a)=0$ or $T_{n-\ell}\,u(b)=0$, we loose a possible oscillation and, therefore, a possible zero of $u$ in $(a,b)$.
					
					From the property $(T_d)$, we know that for all $u\in X_{\{\sigma_1,\dots,\sigma_k\}}^{\{\varepsilon_1,\dots,\varepsilon_{n-k}\}}$  \begin{equation}
					\label{Ec::CFT}
					T_{\sigma_1}\,u(a)=\cdots=T_{\sigma_k}\,u(a)=T_{\varepsilon_1}\,u(b)=\cdots=T_{\varepsilon_{n-k}}\,u(b)=0\,,\end{equation} i.e, we loose the $n$ possible zeros which $u$ could ever have. Thus, we can conclude that $u$ cannot have any zero on $(a,b)$.
					
						Let us see how is the sign of $u^{(\alpha)}(a)$ and $u^{(\beta)}(b)$ which gives the sign of $u$.
					
					Realize that, since $u(a)=\cdots=u^{(\alpha-1)}(a)=0$ and $u(b)=\cdots=u^{(\beta-1)}(b)=0$, from \eqref{Ec::Tl} we have
					\begin{equation}
					\label{Ec::AB}
					T_\alpha u(a)=\dfrac{u^{(\alpha)}(a)}{v_1(a)\dots v_\alpha(a)}\,,\quad T_\beta u(b)=\dfrac{u^{(\beta)}(b)}{v_1(b)\dots v_\beta(b)}\,,\end{equation}
					hence, $u^{(\alpha)}(a)$ and $T_\alpha u(a)$, and $u^{(\beta)}(b)$ and $T_\beta u(b)$  have the same sign, respectively.
					
					If either, $T_\ell u(a)=0$ for any $\ell \notin\{\sigma_1,\dots,\sigma_k\}$, or $T_\ell u(t)=0$ for any $\ell\notin \{\varepsilon_1,\dots,\varepsilon_{n-k}\}$, then we loose another possible oscillation and, necessarily, $u\equiv0$ on $I$ which is a contradiction with $T_n[\bar M]\,u\gneqq0$.

				Moreover, taking into account \eqref{Ec::CFT}, the sign of $T_\ell u(a)$ must allow the maximum number of oscillations for $T_\ell u$.  Otherwise  $u\equiv0$ on $I$ which is again a contradiction with $T_n[\bar M]\,u\gneqq0$.
				
					\begin{notation}\label{Not:maxos}
					Along this work, we understand for conditions of maximal oscillation those which allow $u$ to have the maximum number of zeros depending on the fixed boundary conditions without being a trivial solution.
				\end{notation}
			
			Hence $T_{n-\ell}$ must verify the conditions for maximal oscillation.	That is, $T_{n-\ell}u(a)$ must change its sign each time that it is not null, i.e., if $T_{n-\ell}u(a)>0$ for a given $\ell=1\,,\dots,\,n$, then $T_{n-\ell-1}u(a)\leq0$ and if  $T_{n-\ell-1}u(a)=0$, we consider $\tilde{\ell}\in \{\ell+1,\dots,n\}$\, such that $T_{n-\tilde{\ell}}u(a)\neq 0$ and $T_{n-h}\,u(a)=0$ for $h\in\{\ell+1,\dots,\tilde{\ell}-1\}$, then $T_{n-\tilde{\ell}}u(a)< 0$.
					
					From the property $(T_d)$, we know that $T_{n-\ell}u(a)$ vanishes $k-\alpha$ times for $\ell\in\{1,\dots,n-\alpha\}$. Hence, taking into account the previous argument and the conditions given in \eqref{Ec::Maxos}, we have
					\[\left\lbrace \begin{array}{cl}
					T_{\alpha}\,u(a)>0\,,& \text{if $n-\alpha-(k-\alpha)=n-k $ is even,}\\\\
					T_{\alpha}\,u(a)<0\,,& \text{if $n-k$ is odd.}\end{array}\right.\]
					
					Realize that, to obtain the previous inequalities, there are considered as many sign changes  for $T_h u(a)$ as times that it is non null from $h=\alpha$ to $h=n-1$. That is, the $n-\alpha$ steps minus the $k-\alpha$ zeros that are found. Thus, from \eqref{Ec::AB}
						\begin{equation}\label{Ec::ualpha}\left\lbrace \begin{array}{cc}
						u^{(\alpha)}(a)>0\,,& \text{if $n-k$ is even,}\\\\
						u^{(\alpha)}(a)<0\,,& \text{if $n-k$ is odd.}\end{array}\right.\end{equation}
						
						From this, since $u\neq 0$ on $(a,b)$, we already  conclude that
							\begin{equation}\label{Ec::uab}\left\lbrace \begin{array}{ccc}
							u(t)>0\,,&t\in(a,b)\,,& \text{if $n-k$ is even,}\\\\
							u(t)<0\,,&t\in(a,b)\,,& \text{if $n-k$ is odd.}\end{array}\right.\end{equation}
							
%							Let us see now which is the sign of $T_\beta u(b)$.
%							
%							From \eqref{Ec::Maxos}, we know that if $T_{n-\ell}\,u(b)\neq0$, it does not change its sign for the different $\ell=1,\dots,n$. However, each time that $T_{n-\ell}\,u(b)=0$, since a oscillation is lost, it has a sign change.
%							
%							 From $\ell=1$ to $n-\beta$, $T_{n-\ell}u(b)$ has $n-k-\beta$ zeros. Then, with maximal oscillation:
%							 	\[\left\lbrace \begin{array}{cl}
%							 	T_{\beta}\,u(b)>0& \text{if $n-k-\beta $ is even,}\\\\
%							 	T_{\beta}\,u(b)<0& \text{if $n-k-\beta$ is odd,}\end{array}\right.\]
%							 	since $T_\beta u(b)$ and $u^{(\beta)}(b)$ have the same sign, previous inequalities also hold for $u^{(\beta)}(b)$, and we can write them in the following way:

Taking into account that necessarily $T_\beta u(b)\neq 0$, since $\beta\notin\{\varepsilon_1,\dots,\varepsilon_{n-k}\}$, from \eqref{Ec::AB} and \eqref{Ec::uab} we have
							 	
							 	\begin{itemize}
							 		\item If $n-k$ is even
							 			\begin{equation}\label{Ec::ubetapar}\left\lbrace \begin{array}{cc}
							 			u^{(\beta)}(b)>0\,,& \text{if $\beta$ is even,}\\\\
							 			u^{(\beta)}(b)<0\,,& \text{if $\beta$ is odd.}\end{array}\right.\end{equation}
							 				\item If $n-k$ is odd
							 				\begin{equation}\label{Ec::ubetaimpar}\left\lbrace \begin{array}{cc}
							 				u^{(\beta)}(b)<0\,,& \text{if $\beta$ is even,}\\\\
							 				u^{(\beta)}(b)>0\,,& \text{if $\beta$ is odd.}\end{array}\right.\end{equation}
								\end{itemize}

							 		Hence, from \eqref{Ec::ualpha}--\eqref{Ec::ubetaimpar}, we conclude that if $n-k$ is even, then the operator  $T_n[\bar M]$ is a strongly inverse positive operator in $X_{\{\sigma_1,\dots,\sigma_k\}}^{\{\varepsilon_1,\dots,\varepsilon_{n-k}\}}$ and if $n-k$ is odd, then the operator $T_n[\bar M]$ is a strongly inverse negative operator in $X_{\{\sigma_1,\dots,\sigma_k\}}^{\{\varepsilon_1,\dots,\varepsilon_{n-k}\}}$.
							 		
							 		\vspace{0.5cm}

							 		Let us see that $g_{\bar M}(t,s)$ satisfies condition $(P_g)$ or $(N_g)$, respectively.
							 		
							 		Using  Theorem \ref{T::in2}, it is known that $(-1)^{n-k}\,g_{\bar M}(t,s)>0$ for a.e. $(t,s)\in(a,b)\times(a,b)$. Let us see that, in fact,  this inequality holds for all $(t,s)\in(a,b)\times(a,b)$.

							 		For each fixed $s\in (a,b)$, let us define $u_s(t)=(-1)^{n-k}\,g_{\bar M}(t,s)$, $u_s\in C^{n-2}(I)$ and  $u_s\in C^{n}([a,s)\cup(s,b])$.
							 		
							 		It is known that $u_s(t)\geq 0$ on $I$, and that it satisfies the boundary conditions \eqref{Ec::cfa}-\eqref{Ec::cfb}.
							 		
							 		Moreover, since $g_{\bar M}(t,s)$ is the Green's function related to the operator $T_n[\bar M]$ in $X_{\{\sigma_1,\dots,\sigma_k\}}^{\{\varepsilon_1,\dots,\varepsilon_{n-k}\}}$, we have
							 		\[T_n[\bar M]\,u_s(t)=v_1(t)\dots v_n(t)\,T_n\,u_s(t)=0\,,\quad t\neq s\,.\]
							 		
							 		Since $v_1\dots v_n>0$ on $I$, $T_n\,u_s(t) =0$ if $t\neq s$. Hence, 							 		
							 			\begin{equation}\left\lbrace \begin{array}{cc}
							 			\dfrac{1}{v_n(t)}\,T_{n-1}u_s(t)=c_1\,,& t<s\,,\\\\
							 			\dfrac{1}{v_n(t)}\,T_{n-1}u_s(t)=c_2\,,& t>s\,,\end{array}\right.\end{equation}
							 		where $c_1$, $c_2\in \mathbb{R}$ are of different sign to allow the maximal oscillation.
							 		
							 		Since $v_n>0$, $T_{n-1}\,u_s$ has the same sign as $c_1$ or $c_2$, if $t<s$ or $t>s$, respectively, i.e., in order to have maximal number of oscillations, it has two components of constant different sign.
							 		
							 		Then, since $\dfrac{1}{v_{n-1}}\,T_{n-2}\,u_s$ is a continuous function, it can have at most two sign changes and the same happens with $T_{n-2}\,u_s$.
							 		
							 		Proceeding in a similar way, we conclude that with maximal oscillation $T_{n-\ell}\,u_s$ can have at most $\ell$ zeros, for $\ell = 2,\dots,n$. In particular, $u_s$ has at most $n$ sign changes on $I$.
							 		
							 		Arguing as before, each time that $T_{n-\ell}\,u_s(a)=0$ or $T_{n-\ell}\,u_s(b)=0$ a possible oscillation is lost.

							 	 	 Taking into account that $T_n[\bar M]$ satisfies $(T_d)$ in $X_{\{\sigma_1,\dots,\sigma_k\}}^{\{\varepsilon_1,\dots,\varepsilon_{n-k}\}}$, we use  Lemmas \ref{L::1} and \ref{L::2} to affirm that $u_s$ verifies \eqref{Ec::CFT}.
%							 	 \begin{equation}
%							 	 \label{Ec::Tus}
%							 	 T_{\sigma_1}\,u_s(a)=\cdots = T_{\sigma_k}\,u_s(a)= T_{\varepsilon_1}\,u_s(b)=\cdots= T_{\varepsilon_{n-k}}\,u_s(b)=0\,,
%							 	 \end{equation}
							 	 Thus, $T_{n-\ell}\,u_s(a)$ or $T_{n-\ell}\,u_s(b)$ vanish $n$ times for $\ell =1,\dots, n$. So, we have lost the $n$ possibles zeros and we can affirm that $u_s>0$ on $(a,b)$. Or, which is the same, $(-1)^{n-k}\,g_{\bar M}(t,s)>0$ for all $(t,s)\in(a,b)\times(a,b)$.
							 	
							 		Moreover, for each $s\in (a,b)$, we obtain the following limits:
							 		\begin{eqnarray}\nonumber \ell_1(s)& = &\lim_{t\rightarrow a^+}\dfrac{(-1)^{n-k}\,g_{\bar M}(t,s)}{(t-a)^\alpha\,(b-t)^\beta} = \dfrac{(-1)^{n-k}\,\dfrac{\partial^\alpha }{\partial t^\alpha}\,g_{\bar M}(t,s)_{\mid t=a}}{\alpha!\, (b-a)^{\beta}}\,,\\\nonumber
							 		\ell_2(s)& = &\lim_{t\rightarrow b^-}\dfrac{(-1)^{n-k}\,g_{\bar M}(t,s)}{(t-a)^\alpha\,(b-t)^\beta} = \dfrac{(-1)^{n-k-\beta}\,\dfrac{\partial^\beta }{\partial t^\beta}\,g_{\bar M}(t,s)_{\mid t=a}}{\beta!\, (b-a)^{\alpha}}\,. \end{eqnarray}

							 		For each $s\in(a,b)$, let us construct the continuous extension on $I$ of $u_s$, as follows 
							 		\[\tilde{u}_s(t)=\dfrac{(-1)^{n-k}\,g_{\bar M}(t,s)}{(t-a)^\alpha\,(b-t)^\beta}\,.\]
							 		
							 		Since $u_s>0$ and $(t-a)^\alpha\,(b-t)^\beta>0$ on $(a,b)$, we have that $\tilde{u}_s>0$ on $(a,b)$. 
							 		
							 		Moreover, 	using Theorem \ref{T::in2}, we can affirm that $\ell_1(s)>0$ and $\ell_2(s)>0$ for a.e. $s\in(a,b)$. Hence, for a.e. $s\in(a,b)$, $\tilde{u}_s(a)>0$ and $\tilde{u}_s(b)>0$.
							 		
							 		Furthermore, since $g_{\bar M}(t,s)$ is the related Green's function of $T_n[\bar M]$ on $X_{\{\sigma_1,\dots,\sigma_k\}}^{\{\varepsilon_1,\dots,\varepsilon_{n-k}\}}$, we also can affirm that there exists $K>0$ such that $\tilde{u}_s\leq K$ for every $(t,s)\in I\times (a,b)$. Hence, we construct the following functions:							 		
							 		\begin{eqnarray}\nonumber 
							 		\tilde{k}_1(s)&=&\min_{t\in I} \tilde{u}_s(t)\,,\quad s\in (a,b)\,,\\\nonumber\\\nonumber
							 		\tilde{k}_2(s) &=& \max_{t\in I}\tilde{u}_s(t)\,,\quad s\in (a,b)\,, \end{eqnarray}
							 		which are continuous on $(a,b)$ and they are positive a.e. in $(a,b)$.
							 		
							 		Taking $\phi(t)=(t-a)^\alpha\,(b-t)^\beta>0$  on $(a,b)$, condition $(P_g)$ is trivially satisfied if $n-k$ is even with $k_1(s)=\tilde{k}_1(s)$ and $k_2(s)=\tilde{k}_2(s)$ and condition $(N_g)$  if $n-k$ is odd with $k_1(s)=-\tilde{k}_2(s)$ and $k_2(s)=-\tilde{k}_1(s)$.						
				\end{proof}
				
				\begin{remark}
					Realize that, from Theorem \ref{L::5}, if $T_n[\bar M]$ satisfies property $(T_d)$ on $X_{\{\sigma_1,\dots,\sigma_k\}}^{\{\varepsilon_1,\dots,\varepsilon{n-k}\}}$, then either Theorem \ref{T::6}, if $n-k$ is even, or Theorem \ref{T::7}, if $n-k$ is odd, can be applied to operator $T_n[\bar M]$ on such a space.
				\end{remark}
				 
				 \begin{exemplo}
				 	\label{Ex::4} 
				 	Let us continue the study of the fourth order operator given in Example \ref{Ex::2}. From Example \ref{Ex::3}, we can affirm that $T_4[M]$ satisfies condition $(T_d)$ if, and only if, there exists a decomposition \eqref{Ec::Td1}-\eqref{Ec::Td2} such that \eqref{Ec::Ex31}-\eqref{Ec::Ex32} are satisfied. These equalities are true, in particular, if we choose $v_1(t)=v_2(t)=v_3(t)=v_4(t)=1$ for all $t\in I$. That is, they are valid for the particular case of operator $T_4^0[0]\,u(t)=u^{(4)}(t)$. Such an choice has been done in order to simplify the calculations, the applicability of the results can be extended to a more complicated class of operators.
				 	
				 	Now, let us check directly that this operator verifies the thesis of Theorem \ref{L::5}. To do that, let us consider $I\equiv[0,1]$.
				 	
				 	In this case, $n-k=2$ is even, so let us study the strongly inverse positive character. If $u^{(4)}\gneqq0$, then $u''$ is a convex function. Since $u''(0)=u''(1)=0$, we have that $u''\lneqq0$ (if $u''\equiv0$, then $u^{(4)}\equiv0$ which is a contradiction).
				 	
				 	Hence, $u'$ is a decreasing function on $I$ verifying $u'(1)=0$, so $u'\gneqq0$. In particular, $u'(0)>0$. 
				 	
				 	Finally, taking into account that $u(0)=0$, $u$ is an increasing function on $I$ and it cannot have infinite zeros without being a trivial solution of $T_4^0[0]\,u(t)=0$, we have that $u(t)>0$ for all $t\in(0,1]$.
				 	
				 	Now, let us study the related Green's function, given by the expression:
				 	\[g_0(t,s)=\begin{cases}
				 	\dfrac{1}{6} s \left(t \left(t^2-3 t+3\right)-s^2\right)\,, & 0\leq s\leq t\leq 1\,, \\ \\
				 	\dfrac{1}{6} (s-1) t \left(t^2-3 s\right)\,, & 0<t<s\leq 1\,.
				 	\end{cases}\]
				 	
				 	Let us see that it verifies the condition $(P_g)$. 
				 	
				 First, it is obvious that $g_0(1,s)=\dfrac{1}{6}\,s\,\left( 2-s^2\right) >0$ for all $s\in (0,1)$.
				 	
				 	Moreover,
				 	\[\dfrac{\partial}{\partial t}g_0(t,s)=\begin{cases}
				 	\dfrac{1}{6} s \left(t^2+(2 t-3) t-3 t+3\right)\,, & 0\leq s\leq t\leq 1\,, \\ \\
				 	\dfrac{1}{3} (s-1) t^2+\frac{1}{6} (s-1) \left(t^2-3 s\right)\,, & 0<t<s\leq 1\,,
				 	\end{cases}\]
				 	in particular, $\dfrac{\partial}{\partial t}g_0(t,s)_{\mid t=0}=\dfrac{1}{2}\left( s-s^2\right) >0$ for all $s\in (0,1)$.
				 	
				 	Now, let us verify that $g_0(t,s)>0$ on $(0,1)\times (0,1)$.
				 	
				 	If $t<s$, we have that $s-1<0$ and $t^2-3s<-3s+s^2<0$ for all $s\in(0,1)$.
				 	
				 	If $t\geq s$, we have $t \left(t^2-3 t+3\right)-s^2\geq s \left(s^2-3 s+3\right)-s^2=3s-4s^2+s^3>0$ for all $s\in(0,1)$.
				 	
				 	Hence, $g_0(t,s)>0$ on $(0,1)\times (0,1)$.
				 	
				 	On the other hand,				 	
				 		\[\tilde{u}_s(t)=\dfrac{g_0(t,s)}{t}=\begin{cases}
				 		\dfrac{1}{6} \dfrac{s}{t} \left(t \left(t^2-3 t+3\right)-s^2\right)\,, & 0\leq s\leq t\leq 1\,, \\ \\
				 		\dfrac{1}{6} (s-1)  \left(t^2-3 s\right)\,, & 0<t<s\leq 1\,.
				 		\end{cases}\]
				 		
				 		Thus, condition $(P_g)$ is satisfied for the following functions:
				 		\[\begin{split}
				 		\phi(t)&=t\,,\\
				 		k_1(s)&=\tilde{k}_1(s)=\min_{t\in I}\tilde{u}_s(t)=\dfrac{1}{6}\,s\,(1-s^2)\,,\\
				 		k_2(s)&=\tilde{k}_2(s)=\max_{t\in I}\tilde{u}_s(t)=\dfrac{s}{2}\,(1-s)\,,\\
				 		\end{split}\]
				 		and
				 			\[t\,\dfrac{1}{6}\,s\,(1-s^2)\leq g_0(t,s)\leq t\,\dfrac{s}{2}\,(1-s)\,,\quad \text{for all } (t,s)\in I \times I \,.\]
				 \end{exemplo}
				 
				 \chapter[Existence and study of the eigenvalues]{Existence and study of the eigenvalues of operator $T_n[\bar M]$ in different spaces.}
				 In \cite{CabSaa} and \cite{CabSaa2}, the characterization of the parameters set for which the related Green's function is of constant sign has been done by means of spectral theory. In fact, the extremes of the interval are characterized by suitable eigenvalues of the operator associated to different boundary conditions. 
				 
				 The characterization here obtained follows the same structure. Thus, in this chapter we  study the existence of eigenvalues of  operator $T_n[\bar M]$ in the different spaces \[X_{\{\sigma_1,\dots,\sigma_k\}}^{\{\varepsilon_1,\dots,\varepsilon_{n-k}\}}\!,\  X_{\{\sigma_1,\dots,\sigma_k|\alpha\}}^{\{\varepsilon_1,\dots,\varepsilon_{n-k-1}\}}\!,\ X_{\{\sigma_1,\dots,\sigma_{k-1}\}}^{\{\varepsilon_1,\dots,\varepsilon_{n-k}|\beta\}}\!,\ X_{\{\sigma_1,\dots,\sigma_{k-1}|\alpha\}}^{\{\varepsilon_1,\dots,\varepsilon_{n-k}\}} \text{ and } X_{\{\sigma_1,\dots,\sigma_k\}}^{\{\varepsilon_1,\dots,\varepsilon_{n-k-1}|\beta\}}\!.\]
				 
				 Moreover, we study the constant sign of several solutions of the linear differential equation \eqref{Ec::T_n[M]} coupled with different $n-1$ additional boundary conditions.
				 
				 Firstly, let us see a result which allows us to affirm that, under the hypothesis that the property $(T_d)$ is fulfilled on $X_{\{\sigma_1,\dots,\sigma_k\}}^{\{\varepsilon_1,\dots,\varepsilon_{n-k}\}}$, the operator $T_n[\bar M]$ verifies such a property in all these spaces.
				 
				 \begin{lemma}
				 	\label{L::6}
				 	Let $\bar M\in \mathbb{R}$ be such that $T_n[\bar M]$ satisfies the property $(T_d)$ in $X_{\{\sigma_1,\dots,\sigma_k\}}^{\{\varepsilon_1,\dots,\varepsilon_{n-k}\}}$. Then the following properties are fulfilled:
				 	\begin{itemize}
				 		\item $T_n[\bar M]$ verifies the property $(T_d)$ in $X_{\{\sigma_1,\dots,\sigma_k|\alpha\}}^{\{\varepsilon_1,\dots,\varepsilon_{n-k-1}\}}.$
				 		\item $T_n[\bar M]$ verifies the property $(T_d)$ in $X_{\{\sigma_1,\dots,\sigma_{k-1}\}}^{\{\varepsilon_1,\dots,\varepsilon_{n-k}|\beta\}}.$
				 		\item If $\sigma_k\neq k-1$, $T_n[\bar M]$ verifies the property $(T_d)$ in $X_{\{\sigma_1,\dots,\sigma_{k-1}|\alpha\}}^{\{\varepsilon_1,\dots,\varepsilon_{n-k}\}}.$
				 		\item If $\varepsilon_{n-k}\neq n-k-1$, $T_n[\bar M]$ verifies the property $(T_d)$ in $X_{\{\sigma_1,\dots,\sigma_k\}}^{\{\varepsilon_1,\dots,\varepsilon_{n-k-1}|\beta\}}.$
				 	\end{itemize}
				 \end{lemma}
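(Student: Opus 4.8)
The plan is to exploit the fact that property $(T_d)$ splits into two logically independent requirements: the existence of a factorization $T_n[\bar M]\,u=v_1\cdots v_n\,T_n u$ with $v_k>0$, $v_k\in C^n(I)$, and the vanishing of the telescoped quantities $T_{(\cdot)}u$ at the relevant endpoint for every $u$ in the space. The factorization depends only on the operator, not on the boundary conditions, so in each of the four items I would simply keep the functions $v_1,\dots,v_n$ furnished by the hypothesis. Thus the proof reduces entirely to checking that, for a generic $u$ in each new space, all the prescribed boundary conditions are inherited by the operators $T_\ell$; the only tools needed are Lemmas \ref{L::1} and \ref{L::2} together with the explicit triangular form \eqref{Ec::Tl} of $T_\ell u$.

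For the first item I would take $u\in X_{\{\sigma_1,\dots,\sigma_k|\alpha\}}^{\{\varepsilon_1,\dots,\varepsilon_{n-k-1}\}}$. From $u^{(\sigma_1)}(a)=\cdots=u^{(\sigma_k)}(a)=0$, Lemma \ref{L::1} (applied with $\ell=k$, together with $u^{(\sigma_k)}(a)=0$) gives $T_{\sigma_1}u(a)=\cdots=T_{\sigma_k}u(a)=0$. For the extra condition at $a$, since $\{0,\dots,\alpha-1\}\subset\{\sigma_1,\dots,\sigma_k\}$ by \eqref{Ec::alpha} we have $u(a)=u'(a)=\cdots=u^{(\alpha-1)}(a)=0$, and together with $u^{(\alpha)}(a)=0$, substitution into \eqref{Ec::Tl} with $\ell=\alpha$ yields $T_\alpha u(a)=0$. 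At $b$, from $u^{(\varepsilon_1)}(b)=\cdots=u^{(\varepsilon_{n-k-1})}(b)=0$, Lemma \ref{L::2} (with $\ell=n-k$) gives $T_{\varepsilon_1}u(b)=\cdots=T_{\varepsilon_{n-k-1}}u(b)=0$. This is all that $(T_d)$ demands on this space. The second item is entirely symmetric, exchanging $a\leftrightarrow b$, $\{\sigma_j\}\leftrightarrow\{\varepsilon_j\}$ and $\alpha\leftrightarrow\beta$, using now Lemma \ref{L::2} for the side where an index is kept intact and \eqref{Ec::Tl} for the added index $\beta$, recalling that $\{0,\dots,\beta-1\}\subset\{\varepsilon_1,\dots,\varepsilon_{n-k}\}$ by \eqref{Ec::beta}.

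For the third and fourth items the argument is the same, but one needs the extra observation that when the condition $u^{(\sigma_k)}(a)=0$ is dropped one still has $\{0,\dots,\alpha-1\}\subset\{\sigma_1,\dots,\sigma_{k-1}\}$, i.e. $\alpha\le k-1$; this is exactly what $\sigma_k\neq k-1$ guarantees, since $\{0,\dots,\alpha-1\}\subset\{\sigma_1,\dots,\sigma_k\}$ forces $\alpha\le k$ with equality only if $\{\sigma_1,\dots,\sigma_k\}=\{0,\dots,k-1\}$, i.e. only if $\sigma_k=k-1$. With this in hand, for $u\in X_{\{\sigma_1,\dots,\sigma_{k-1}|\alpha\}}^{\{\varepsilon_1,\dots,\varepsilon_{n-k}\}}$ Lemma \ref{L::1} gives $T_{\sigma_1}u(a)=\cdots=T_{\sigma_{k-1}}u(a)=0$, formula \eqref{Ec::Tl} gives $T_\alpha u(a)=0$ exactly as above (all derivatives of $u$ up to order $\alpha$ vanish at $a$), and Lemma \ref{L::2} gives $T_{\varepsilon_1}u(b)=\cdots=T_{\varepsilon_{n-k}}u(b)=0$; the fourth item follows the same pattern with $\varepsilon_{n-k}\neq n-k-1$ in place of $\sigma_k\neq k-1$.

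I expect the only genuinely delicate point to be precisely this last one: without $\sigma_k\neq k-1$ (resp. $\varepsilon_{n-k}\neq n-k-1$) one would have $\alpha=k$ (resp. $\beta=n-k$), so $u^{(\alpha-1)}(a)$ (resp. $u^{(\beta-1)}(b)$) would be unconstrained and $T_\alpha u(a)$ (resp. $T_\beta u(b)$) need not vanish — indeed the statement genuinely fails in that case, which is why the hypothesis is imposed. Everything else is a matter of tracking which derivatives are prescribed to be zero and invoking Lemmas \ref{L::1}--\ref{L::2} and the form \eqref{Ec::Tl}.
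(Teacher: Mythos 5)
Your proposal is correct and follows essentially the same route as the paper: the factorization is kept unchanged and the verification reduces to Lemmas \ref{L::1} and \ref{L::2} together with the identity $T_\alpha u(a)=u^{(\alpha)}(a)/\bigl(v_1(a)\cdots v_\alpha(a)\bigr)$ (resp. $T_\beta u(b)=u^{(\beta)}(b)/\bigl(v_1(b)\cdots v_\beta(b)\bigr)$) coming from \eqref{Ec::Tl}, which is exactly the paper's argument. Your explicit justification that $\sigma_k\neq k-1$ (resp. $\varepsilon_{n-k}\neq n-k-1$) forces $\alpha<\sigma_k$ (resp. $\beta<\varepsilon_{n-k}$), so the lower-order derivatives still vanish after dropping the top boundary condition, is precisely the point the paper records in the remark following the lemma.
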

				 
				 \begin{proof}
				 	The proof follows trivially from Lemmas \ref{L::1} and \ref{L::2}, taking into account that under our hypothesis, from \eqref{Ec::Tl}, we have
				 	\begin{equation}\label{Ec::Tab}T_\alpha\,u(a)=\dfrac{u^{(\alpha)}(a)}{v_1(a)\dots v_\alpha(a)}\,,\quad T_\beta\,u(b)=\dfrac{u^{(\beta)}(b)}{v_1(b)\dots v_\beta(b)}\,.\end{equation}
				 				 \end{proof}
				 
				 \begin{remark}
				 	Realize that if $\sigma_k=k-1$ or $\varepsilon_{n-k}=n-k-1$, then $\alpha=k$ or $\beta=n-k$, respectively.
				 	 So, if either $u\in X_{\{\sigma_1,\dots,\sigma_{k-1}|\alpha\}}^{\{\varepsilon_1,\dots,\varepsilon_{n-k}\}}$ or $u\in X_{\{\sigma_1,\dots,\sigma_k\}}^{\{\varepsilon_1,\dots,\varepsilon_{n-k-1}|\beta\}}$, then \eqref{Ec::Tab} can be not true.
				 \end{remark}
				 \begin{exemplo}
				 	\label{Ex::5}
				 	Let us consider the fourth order operator $T_4[M]$. In Example \ref{Ex::3}, we have seen that if $T_4[M]$ verifies $(T_d)$ in $X_{\{0,2\}}^{\{1,2\}}$, then \eqref{Ec::Ex31}-\eqref{Ec::Ex32} are fulfilled. Let us see that, in such a case, $(T_d)$ also holds in $X_{\{0,1,2\}}^{\{1\}}$, $X_{\{0\}}^{\{0,1,2\}}$, $X_{\{0,1\}}^{\{1,2\}}$ and $X_{\{0,2\}}^{\{0,1\}}$.
				 	
				 	\begin{itemize}
				 		\item $X_{\{0,1,2\}}^{\{1\}}$:
				 		
				 		Trivially, since $T_\ell u(t)$ is a linear combination of $u(t)\,,\dots,u^{(\ell)}(t)$, $T_0u(a)=T_1u(a)=T_2u(a)=0$.
				 		
				 		Moreover, from \eqref{Ec::T1}, $T_1u(b)=-\dfrac{v_1'(b)}{v_1^2(b)}u(b)=0$.
				 		
				 		\item $X_{\{0\}}^{\{0,1,2\}}$:
				 		
				 		Obviously,  $T_0u(a)=T_0u(b)=T_1u(b)=T_2u(b)=0$.
				 		
				 		\item $X_{\{0,1\}}^{\{1,2\}}$:
				 		
				 		Directly, $T_0u(a)=T_1u(a)=0$.
				 		
				 		From \eqref{Ec::T1} and Example \ref{Ex::1}, $T_1u(b)=\dfrac{-v_1'(b)}{v_1^2(b)}u(b)=0$ and \[T_2u(b)=\dfrac{v_1(b)\,v_1'(b)\,v_2'(b)+v_2(b)\,\left( 2{v_1'}^2(b)-v_1(b)\,v_1''(b)\right) }{v_1^3(b)\,v_2^2(b)}u(b)=0\,.\]
				 		
				 		\item $X_{\{0,2\}}^{\{0,1\}}$:
				 		
				 		Trivially, $T_0u(a)=T_0u(b)=T_1u(b)=0$.
				 		
				 		Finally, from Example \ref{Ex::1}, $T_2u(a)=-\dfrac{2v_2(a)\,v_1'(a)+v_1(a)\,v_2'(a)}{v_1^2(a)\,v_2^2(a)}u'(a)=0$.
				 	\end{itemize}
				 \end{exemplo}
				As a consequence we can prove the following corollary.
				 
				 \begin{corollary}
							 		Let $\bar M\in \mathbb{R}$ be such that $T_n[\bar M]$ satisfies the property $(T_d)$ in $X_{\{\sigma_1,\dots,\sigma_k\}}^{\{\varepsilon_1,\dots,\varepsilon_{n-k}\}}$, and  ${\{\sigma_1,\dots,\sigma_k\}}-{\{\varepsilon_1,\dots,\varepsilon_{n-k}\}}$ satisfy $(N_a)$. Then
				 		\begin{itemize}
				 			\item If $n-k$ is even:
				 		 	\begin{itemize}
				 			\item $T_n[\bar M]$ is strongly inverse positive and verifies condition $(P_g)$ on $X_{\{\sigma_1,\dots,\sigma_k\}}^{\{\varepsilon_1,\dots,\varepsilon_{n-k}\}}$, $X_{\{\sigma_1,\dots,\sigma_{k-1}|\alpha\}}^{\{\varepsilon_1,\dots,\varepsilon_{n-k}\}}$ and $X_{\{\sigma_1,\dots,\sigma_k\}}^{\{\varepsilon_1,\dots,\varepsilon_{n-k-1}|\beta\}}$.
				 			\item $T_n[\bar M]$ is strongly inverse negative and verifies condition $(N_g)$ on  $X_{\{\sigma_1,\dots,\sigma_k|\alpha\}}^{\{\varepsilon_1,\dots,\varepsilon_{n-k-1}\}}$ and $X_{\{\sigma_1,\dots,\sigma_{k-1}\}}^{\{\varepsilon_1,\dots,\varepsilon_{n-k}|\beta\}}$.
				 		\end{itemize}
				 		
				 		\item If $n-k$ is odd:
				 		\begin{itemize}
				 			\item $T_n[\bar M]$ is strongly inverse negative and verifies condition $(N_g)$ on $X_{\{\sigma_1,\dots,\sigma_k\}}^{\{\varepsilon_1,\dots,\varepsilon_{n-k}\}}$, $X_{\{\sigma_1,\dots,\sigma_{k-1}|\alpha\}}^{\{\varepsilon_1,\dots,\varepsilon_{n-k}\}}$ and $X_{\{\sigma_1,\dots,\sigma_k\}}^{\{\varepsilon_1,\dots,\varepsilon_{n-k-1}|\beta\}}$.
				 			\item $T_n[\bar M]$ is strongly inverse positive and verifies condition $(P_g)$ on  $X_{\{\sigma_1,\dots,\sigma_k|\alpha\}}^{\{\varepsilon_1,\dots,\varepsilon_{n-k-1}\}}$ and $X_{\{\sigma_1,\dots,\sigma_{k-1}\}}^{\{\varepsilon_1,\dots,\varepsilon_{n-k}|\beta\}}$.
				 		\end{itemize}
				 		\end{itemize}
				 \end{corollary}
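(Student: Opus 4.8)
The plan is to reduce the corollary entirely to the two results already proved: Lemma~\ref{L::6}, which propagates property $(T_d)$ from $X_{\{\sigma_1,\dots,\sigma_k\}}^{\{\varepsilon_1,\dots,\varepsilon_{n-k}\}}$ to each of the four neighbouring spaces, and Theorem~\ref{L::5}, which for any space $X_{\{\rho_1,\dots,\rho_p\}}^{\{\mu_1,\dots,\mu_q\}}$ (with $p+q=n$) on which $(T_d)$ holds and whose index pair satisfies $(N_a)$ yields the strongly inverse positive character together with $(P_g)$ when $q$, the number of conditions imposed at $t=b$, is even, and the strongly inverse negative character together with $(N_g)$ when $q$ is odd. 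Thus for each of the five spaces appearing in the statement I would proceed in three steps: (i) invoke Lemma~\ref{L::6} to obtain $(T_d)$; (ii) check that the corresponding index pair still satisfies $(N_a)$; (iii) read off the conclusion from Theorem~\ref{L::5}, the sign being governed solely by the parity of the number of boundary conditions carried at $t=b$.

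Step (iii) is pure bookkeeping. The space $X_{\{\sigma_1,\dots,\sigma_k\}}^{\{\varepsilon_1,\dots,\varepsilon_{n-k}\}}$ itself, together with $X_{\{\sigma_1,\dots,\sigma_{k-1}|\alpha\}}^{\{\varepsilon_1,\dots,\varepsilon_{n-k}\}}$ and $X_{\{\sigma_1,\dots,\sigma_k\}}^{\{\varepsilon_1,\dots,\varepsilon_{n-k-1}|\beta\}}$, all carry exactly $n-k$ conditions at $b$, so Theorem~\ref{L::5} returns the same sign as for the original space. The two remaining spaces redistribute one condition across $I$: $X_{\{\sigma_1,\dots,\sigma_k|\alpha\}}^{\{\varepsilon_1,\dots,\varepsilon_{n-k-1}\}}$ carries $n-k-1$ conditions at $b$ and $X_{\{\sigma_1,\dots,\sigma_{k-1}\}}^{\{\varepsilon_1,\dots,\varepsilon_{n-k}|\beta\}}$ carries $n-k+1$, both of parity opposite to that of $n-k$, whence the sign is reversed. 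This matches exactly the dichotomy stated in the two cases ``$n-k$ even'' and ``$n-k$ odd''.

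The only step requiring an actual argument is (ii), the verification of $(N_a)$ for the modified index pairs. For the ``swap'' spaces one replaces $\sigma_k$ by $\alpha$ among the conditions at $a$ (resp. $\varepsilon_{n-k}$ by $\beta$ among those at $b$); for the ``redistribution'' spaces one adjoins $\alpha$ to the conditions at $a$ while dropping $\varepsilon_{n-k}$ at $b$ (resp. symmetrically). In every case one checks inequality~\eqref{Ec::Na} by splitting $h\in\{1,\dots,n-1\}$ into two ranges. When $h$ does not exceed the extremal index that was adjoined ($\alpha$ at $a$, or $\beta$ at $b$), the corresponding new endpoint index set contains $\{0,\dots,\alpha\}\supseteq\{0,\dots,h-1\}$ (resp. $\{0,\dots,\beta\}$), so it already supplies $h$ terms to the left-hand side of~\eqref{Ec::Na} and the inequality is immediate; in particular the removal of a single index at that stage is harmless. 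When $h$ exceeds that extremal index, the left-hand side of~\eqref{Ec::Na} differs from the one for the original pair by at most the single adjoined or removed index, and one falls back on $(N_a)$ for $\{\sigma_1,\dots,\sigma_k\}-\{\varepsilon_1,\dots,\varepsilon_{n-k}\}$; here the hypotheses $\sigma_k\neq k-1$ and $\varepsilon_{n-k}\neq n-k-1$ that accompany the ``swap'' cases in Lemma~\ref{L::6} are precisely what force $\alpha<\sigma_k$ (resp. $\beta<\varepsilon_{n-k}$), making the comparison in this range go through. I do not expect any genuine obstacle: the argument is an orchestration of Lemma~\ref{L::6} and Theorem~\ref{L::5}, the mild nuisance being the case split in (ii), which the two parities of $n-k$ let us present uniformly.
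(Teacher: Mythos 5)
Your proposal is correct and follows essentially the same route as the paper: propagate $(T_d)$ via Lemma~\ref{L::6}, note that the modified index pairs still satisfy $(N_a)$ (using $\alpha<\sigma_k$ when $\sigma_k\neq k-1$ and $\beta<\varepsilon_{n-k}$ when $\varepsilon_{n-k}\neq n-k-1$), and conclude from Theorem~\ref{L::5}, whose sign dichotomy is governed by the parity of the number of conditions at $t=b$. The paper simply states the $(N_a)$-preservation as obvious, whereas you spell out the case split in $h$; otherwise the two arguments coincide.
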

				 \begin{proof}
				 	It is obvious that if ${\{\sigma_1,\dots,\sigma_k\}}-{\{\varepsilon_1,\dots,\varepsilon_{n-k}\}}$ satisfy $(N_a)$, then ${\{\sigma_1,\dots,\sigma_k|\alpha\}}-{\{\varepsilon_1,\dots,\varepsilon_{n-k-1}\}}$, ${\{\sigma_1,\dots,\sigma_{k-1}\}}-{\{\varepsilon_1,\dots,\varepsilon_{n-k}|\beta\}}$ also do.
				 	
				 	Moreover, if $\sigma_k\neq k-1$, then $\alpha <\sigma_k$ and if $\varepsilon_{n-k}\neq n-k-1$, then $\beta<\varepsilon_{n-k}$. So, if ${\{\sigma_1,\dots,\sigma_k\}}-{\{\varepsilon_1,\dots,\varepsilon_{n-k}\}}$ satisfy $(N_a)$, then  ${\{\sigma_1,\dots,\sigma_{k-1}| \alpha\}}-{\{\varepsilon_1,\dots,\varepsilon_{n-k}\}}$ and ${\{\sigma_1,\dots,\sigma_k\}}-{\{\varepsilon_1,\dots,\varepsilon_{n-k-1}|\beta\}}$ also do.
				 	
				 	Thus, using Theorem \ref{L::5} and Lemma \ref{L::6},  the result is true.
					 \end{proof}
				 	
				 	 Now, from the previous Corollary and the first assertion on Theorems \ref{T::6} and \ref{T::7}, we obtain, as a direct consequence, the following result.
				 	 
				 	  \begin{corollary}
				 	 	\label{C::1}
				 	 	Let $\bar M\in \mathbb{R}$ be such that $T_n[\bar M]$ satisfies the property $(T_d)$ in $X_{\{\sigma_1,\dots,\sigma_k\}}^{\{\varepsilon_1,\dots,\varepsilon_{n-k}\}}$, and  ${\{\sigma_1,\dots,\sigma_k\}}-{\{\varepsilon_1,\dots,\varepsilon_{n-k}\}}$ satisfy $(N_a)$. Then
				 	 	\begin{itemize}
				 	 		\item If $n-k$ is even:
				 	 		\begin{itemize}
				 	 			\item There is $\lambda_1>0$, the least positive eigenvalue of $T_n[\bar M]$ in $X_{\{\sigma_1,\dots,\sigma_k\}}^{\{\varepsilon_1,\dots,\varepsilon_{n-k}\}}.$ Moreover, there exists a nontrivial constant sign eigenfunction corresponding to the eigenvalue $\lambda_1$.
				 	 			\item If $k>1$, there is $\lambda_2'<0$, the biggest negative eigenvalue of $T_n[\bar M]$ in $X_{\{\sigma_1,\dots,\sigma_{k-1}\}}^{\{\varepsilon_1,\dots,\varepsilon_{n-k}|\beta\}}.$ Moreover, there exists a nontrivial constant sign eigenfunction corresponding to the eigenvalue $\lambda_2'$.
				 	 			\item If $k<n-1$, there is $\lambda_2''<0$, the biggest negative eigenvalue of $T_n[\bar M]$ in $X_{\{\sigma_1,\dots,\sigma_k|\alpha\}}^{\{\varepsilon_1,\dots,\varepsilon_{n-k-1}\}}.$ Moreover, there exists a nontrivial constant sign eigenfunction corresponding to the eigenvalue $\lambda_2''$.
				 	 			\item If $\sigma_k\neq k-1$, there is $\lambda_3'>0$, the least positive eigenvalue of $T_n[\bar M]$ in $X_{\{\sigma_1,\dots,\sigma_{k-1}|\alpha\}}^{\{\varepsilon_1,\dots,\varepsilon_{n-k}\}}.$ Moreover, there exists a nontrivial constant sign eigenfunction corresponding to the eigenvalue $\lambda_3'$.	
				 	 			\item If $\varepsilon_{n-k}\neq n-k-1$, there is $\lambda_3''>0$, the least positive eigenvalue of $T_n[\bar M]$ in $X_{\{\sigma_1,\dots,\sigma_k\}}^{\{\varepsilon_1,\dots,\varepsilon_{n-k-1}|\beta\}}.$ Moreover, there exists a nontrivial constant sign eigenfunction corresponding to the eigenvalue $\lambda_3''$.		
				 	 		\end{itemize}
				 	 		\item If $n-k$ is odd:
				 	 		\begin{itemize}
				 	 			\item There exists $\lambda_1<0$, the biggest negative eigenvalue of $T_n[\bar M]$ in $X_{\{\sigma_1,\dots,\sigma_k\}}^{\{\varepsilon_1,\dots,\varepsilon_{n-k}\}}.$ Moreover, there is a nontrivial constant sign eigenfunction corresponding to the eigenvalue $\lambda_1$.
				 	 			\item If $k>1$, there is $\lambda_2'>0$, the least positive eigenvalue of $T_n[\bar M]$ in $X_{\{\sigma_1,\dots,\sigma_{k-1}\}}^{\{\varepsilon_1,\dots,\varepsilon_{n-k}|\beta\}}.$ Moreover, there exists a nontrivial constant sign eigenfunction corresponding to the eigenvalue $\lambda_2'$.
				 	 			\item If $k<n-1$, there is $\lambda_2''>0$, the least positive eigenvalue of $T_n[\bar M]$ in $X_{\{\sigma_1,\dots,\sigma_k|\alpha\}}^{\{\varepsilon_1,\dots,\varepsilon_{n-k-1}\}}.$ Moreover, there exists a nontrivial constant sign eigenfunction corresponding to the eigenvalue $\lambda_2''$.
				 	 			\item If $\sigma_k\neq k-1$, there is $\lambda_3'<0$, the biggest negative eigenvalue of $T_n[\bar M]$ in $X_{\{\sigma_1,\dots,\sigma_{k-1}|\alpha\}}^{\{\varepsilon_1,\dots,\varepsilon_{n-k}\}}.$ Moreover, there exists a nontrivial constant sign eigenfunction corresponding to the eigenvalue $\lambda_3'$.	
				 	 			\item If $\varepsilon_{n-k}\neq n-k-1$, there is $\lambda_3''<0$, the biggest negative eigenvalue of $T_n[\bar M]$ in $X_{\{\sigma_1,\dots,\sigma_k\}}^{\{\varepsilon_1,\dots,\varepsilon_{n-k-1}|\beta\}}.$ Moreover, there exists a nontrivial constant sign eigenfunction corresponding to the eigenvalue $\lambda_3''$.		
				 	 		\end{itemize}
				 	 	\end{itemize}
				 	 \end{corollary}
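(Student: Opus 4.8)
The plan is to read off the conclusion from the previous corollary together with the first item of Theorems \ref{T::6} and \ref{T::7}, applying them separately on each of the five spaces. By the previous corollary, on each of these spaces $T_n[\bar M]$ is invertible (so its Green's function is well defined) and is either strongly inverse positive with Green's function satisfying $(P_g)$, or strongly inverse negative with Green's function satisfying $(N_g)$; which of the two occurs is governed by the parity of the number of boundary conditions prescribed at $b$ in the space in question. Whenever $(P_g)$ holds, Theorem \ref{T::6} applies and provides a positive eigenvalue together with a nontrivial constant sign eigenfunction, and this eigenvalue is in particular the least positive one; whenever $(N_g)$ holds, Theorem \ref{T::7} applies and provides the analogous biggest negative eigenvalue with a constant sign eigenfunction.

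Concretely, on $X_{\{\sigma_1,\dots,\sigma_k\}}^{\{\varepsilon_1,\dots,\varepsilon_{n-k}\}}$ there are $n-k$ conditions at $b$: if $n-k$ is even the previous corollary gives $(P_g)$ and Theorem \ref{T::6} yields $\lambda_1>0$, while if $n-k$ is odd it gives $(N_g)$ and Theorem \ref{T::7} yields $\lambda_1<0$. The spaces $X_{\{\sigma_1,\dots,\sigma_{k-1}\}}^{\{\varepsilon_1,\dots,\varepsilon_{n-k}|\beta\}}$ (admissible for $k>1$) and $X_{\{\sigma_1,\dots,\sigma_k|\alpha\}}^{\{\varepsilon_1,\dots,\varepsilon_{n-k-1}\}}$ (admissible for $k<n-1$) carry $n-k+1$ and $n-k-1$ conditions at $b$ respectively, so their parity at $b$ is the opposite of that of $X_{\{\sigma_1,\dots,\sigma_k\}}^{\{\varepsilon_1,\dots,\varepsilon_{n-k}\}}$; the previous corollary therefore delivers on them the opposite one of $(P_g)$/$(N_g)$, and Theorem \ref{T::7} (resp.\ Theorem \ref{T::6}) yields $\lambda_2'$ (resp.\ $\lambda_2''$) with the sign claimed in the statement. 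Finally, $X_{\{\sigma_1,\dots,\sigma_{k-1}|\alpha\}}^{\{\varepsilon_1,\dots,\varepsilon_{n-k}\}}$ (admissible when $\sigma_k\neq k-1$, so that $\alpha<\sigma_k$ and Lemma \ref{L::6} applies) and $X_{\{\sigma_1,\dots,\sigma_k\}}^{\{\varepsilon_1,\dots,\varepsilon_{n-k-1}|\beta\}}$ (admissible when $\varepsilon_{n-k}\neq n-k-1$) still carry $n-k$ conditions at $b$, so their parity at $b$ coincides with that of $X_{\{\sigma_1,\dots,\sigma_k\}}^{\{\varepsilon_1,\dots,\varepsilon_{n-k}\}}$; the previous corollary then gives the same one of $(P_g)$/$(N_g)$, and Theorem \ref{T::6} (resp.\ Theorem \ref{T::7}) yields $\lambda_3'$ and $\lambda_3''$ with the same sign as $\lambda_1$.

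I do not expect any genuine obstacle here; the main point to be careful about is the bookkeeping of these parities together with the side conditions $k>1$, $k<n-1$, $\sigma_k\neq k-1$ and $\varepsilon_{n-k}\neq n-k-1$, which are exactly the hypotheses under which Lemma \ref{L::6} guarantees property $(T_d)$ and under which condition $(N_a)$ is inherited by the corresponding pair of index sets, so that the previous corollary is indeed applicable on each of the five spaces. Everything else is a direct invocation of that corollary and of the first item of Theorems \ref{T::6} and \ref{T::7}.
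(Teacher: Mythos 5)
Your proposal is correct and follows exactly the paper's route: the statement is deduced as a direct consequence of the preceding corollary (strongly inverse positive/negative character with $(P_g)$ or $(N_g)$ on each of the five spaces, according to the parity of the number of conditions at $b$) combined with the first assertions of Theorems \ref{T::6} and \ref{T::7}. Your parity bookkeeping and the side conditions $k>1$, $k<n-1$, $\sigma_k\neq k-1$, $\varepsilon_{n-k}\neq n-k-1$ match the paper; only note that, for a fixed parity of $n-k$, both $\lambda_2'$ and $\lambda_2''$ come from the \emph{same} theorem (Theorem \ref{T::7} when $n-k$ is even, Theorem \ref{T::6} when it is odd), so the ``resp.''\ pairing in your second paragraph should be read per parity case rather than per space.
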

				 	 
				 	\begin{exemplo}
				 		\label{Ex::6}
				 		Continuing the study of the particular operator $T_4^0[M]u(t)=u^{(4)}+M\,u(t)$ introduced in Example \ref{Ex::4}, we can affirm the existence of the eigenvalues of $T_4^0[0]$ in the different spaces introduced in Example \ref{Ex::5} and the related constant sign eigenfunctions.
				 		
				 		In the sequel, we  obtain those eigenvalues and related eigenfunctions.
				 		
				 		\begin{itemize}
				 			\item The eigenvalues of $T_4^0[0]$ in $X_{\{0,2\}}^{\{1,2\}}$ are given by $\lambda=m^4$, where $m$ is a positive solution of the following equation:
				 			\begin{equation}
				 			\label{Ec::Ex61}
				 			\tan(m)+\tanh(m)=0\,.\end{equation}
				 			
				 			The least positive eigenvalue is $\lambda_1=m_1^4\approxeq 2,36502^4$, where $m_1$ is the least positive solution of \eqref{Ec::Ex61}. The related constant sign eigenfunctions are given by:
				 			
				 			\[u(t)=K\,\left( \dfrac{\sinh(m_1\,t)}{\cosh(m_1)}+\dfrac{\sin(m_1\,t)}{\cos(m_1)}\right) \,,\]
				 			where $K\in \mathbb{R}$.
				 			
				 			\item The biggest negative eigenvalue of $T_4^0[0]$ in $X_{\{0,1,2\}}^{\{1\}}$ is $\lambda_2''=-4\,\pi^4$. The related constant sign eigenfunctions are given by:				 			
				 			\[u(t)=K\,\left(\cosh(\pi\,t)\,\sin(\pi\,t)-\cos(\pi\,t)\,\sinh(\pi\,t)\right) \,,\]
				 			where $K\in \mathbb{R}$.
				 			
				 			\item The eigenvalues of $T_4^0[0]$ in $X_{\{0\}}^{\{0,1,2\}}$ are given by $\lambda=-m^4$, where $m$ is a positive solution of the following equation:
				 			\begin{equation}
				 			\label{Ec::Ex62}
				 			\tan\left( \dfrac{m}{\sqrt{2}}\right) -\tanh\left( \dfrac{m}{\sqrt{2}}\right)=0\,.\end{equation}
				 			
				 			The biggest negative eigenvalue is $\lambda_2'=-m_2^4\approxeq -5,550305^4$, where $m_2$ is the least positive solution of \eqref{Ec::Ex62}. The related constant sign eigenfunctions are given by:				 			
				 				\[u(t)=K\,\left(\cosh\left( \dfrac{m_2}{\sqrt{2}}\,t\right) \,\sin\left( \dfrac{m_2}{\sqrt{2}}\,t\right)-\cos\left( \dfrac{m_2}{\sqrt{2}}\,t\right)\,\sinh\left( \dfrac{m_2}{\sqrt{2}}\,t\right)\right) \,,\]
				 			where $K\in \mathbb{R}$.
				 			
				 			\item The eigenvalues of $T_4^0[0]$ in $X_{\{0,2\}}^{\{0,1\}}$ are given by $\lambda=m^4$, where $m$ is a positive solution of the following equation:
				 			\begin{equation}
				 			\label{Ec::Ex63}
				 			\tan(m)-\tanh(m)=0\,.\end{equation}
				 			
				 			The least positive eigenvalue is $\lambda_3''=m_3^4\approxeq 3,9266^4$, where $m_3$ is the least positive solution of \eqref{Ec::Ex63}. The related constant sign eigenfunctions are given by:				 			
				 			\[u(t)=K\,\left( \dfrac{\sinh(m_3\,t)}{\cosh(m_3)}-\dfrac{\sin(m_3\,t)}{\cos(m_3)}\right) \,,\]
				 			where $K\in \mathbb{R}$.
				 			
				 			\item The least positive eigenvalue of $T_4^0[0]$ in $X_{\{0,1\}}^{\{1,2\}}$ is $\lambda_3'=\pi^4$. The related constant sign eigenfunctions are given by:
				 			\[u(t)=K\,e^{-\pi  (t+1)} \left(e^{2 \pi  t}+e^{\pi  t} \left(\left(e^{\pi }-1\right) \sin (\pi  t)+\left(-1-e^{\pi }\right) \cos (\pi  t)\right)+e^{\pi }\right)\,,\]
				 			where $K\in \mathbb{R}$.
				 		\end{itemize}
				 	\end{exemplo}
			 	
				 	Now, we  introduce some results that provide sufficient conditions to ensure that suitable solutions of \eqref{Ec::T_n[M]} are of constant sign.
				 	
				 	\begin{proposition}
				 		\label{P::1}
				 		Let $\bar M\in \mathbb{R}$ be such that $T_n[\bar M]$ satisfies property $(T_d)$ on $X_{\{\sigma_1,\dots,\sigma_k\}}^{\{\varepsilon_1,\dots,\varepsilon_{n-k}\}}$  and ${\{\sigma_1,\dots,\sigma_k\}}-{\{\varepsilon_1,\dots,\varepsilon_{n-k}\}}$ satisfy $(N_a)$. If $u\in C^n(I)$ is a solution of \eqref{Ec::T_n[M]} on $(a,b)$, satisfying the boundary conditions:
				 		\begin{eqnarray}
				 		\label{Ec::cfaa} u^{(\sigma_1)}(a)=\cdots=u^{(\sigma_{k-1})}(a)&=&0\,,\\
				 		\label{Ec::cfbb} u^{(\varepsilon_1)}(b)=\cdots=u^{(\varepsilon_{n-k})}(b)&=&0\,,
				 		\end{eqnarray}
				 		then it does not have any zero on $(a,b)$ provided that one of the following assertions is satisfied:
				 		\begin{itemize}
				 			\item Let $n-k$ be even:
				 			\begin{itemize}
				 				\item If $k>1$, $\sigma_k\neq k-1$ and $M\in[\bar{M}-\lambda_3',\bar{M}-\lambda_2']$, where:
				 				\begin{itemize}
				 					\item $\lambda_3'>0$ is the least positive eigenvalue of $T_n[\bar M]$ in $X_{\{\sigma_1,\dots,\sigma_{k-1}|\alpha\}}^{\{\varepsilon_1,\dots,\varepsilon_{n-k}\}}.$
				 					\item $\lambda_2'<0$ is the biggest negative eigenvalue of $T_n[\bar M]$ in $X_{\{\sigma_1,\dots,\sigma_{k-1}\}}^{\{\varepsilon_1,\dots,\varepsilon_{n-k}|\beta\}}.$	
				 				\end{itemize}
				 				\item If $k=1$, $\sigma_1\neq 0$ and $M\in[\bar{M}-\lambda_3',+\infty)$, where:
				 				\begin{itemize}
				 					\item $\lambda_3'>0$ is the least positive eigenvalue of $T_n[\bar M]$ in $X_{\{\alpha\}}^{\{\varepsilon_1,\dots,\varepsilon_{n-k}\}}$, where $\alpha=0$.
				 				\end{itemize}
				 					\item If $k>1$, $\sigma_k = k-1$ and $M\in[\bar{M}-\lambda_1,\bar{M}-\lambda_2']$, where:
				 					\begin{itemize}
				 						\item $\lambda_1>0$ is the least positive eigenvalue of $T_n[\bar M]$ in $X_{\{0,\dots,k-1\}}^{\{\varepsilon_1,\dots,\varepsilon_{n-k}\}}.$
				 						\item $\lambda_2'<0$ is the biggest negative eigenvalue of $T_n[\bar M]$ in $X_{\{0,\dots,k-2\}}^{\{\varepsilon_1,\dots,\varepsilon_{n-k}|\beta\}}.$	
				 					\end{itemize}	
				 				\item If $k=1$ and  $\sigma_1=0$ and $M\in[\bar{M}-\lambda_1,+\infty)$, where:	
				 					\begin{itemize}
				 						\item $\lambda_1>0$ is the least positive eigenvalue of $T_n[\bar M]$ in $X_{\{0\}}^{\{\varepsilon_1,\dots,\varepsilon_{n-1}\}}.$
				 					\end{itemize}	
				 				
				 			\end{itemize}
				 				\item Let $n-k$ be odd:
				 				\begin{itemize}
				 					\item If $k>1$, $\sigma_k\neq k-1$ and $M\in[\bar{M}-\lambda_2',\bar{M}-\lambda_3']$, where:
				 					\begin{itemize}
				 						\item $\lambda_3'<0$ is the biggest negative eigenvalue of $T_n[\bar M]$ in $X_{\{\sigma_1,\dots,\sigma_{k-1}|\alpha\}}^{\{\varepsilon_1,\dots,\varepsilon_{n-k}\}}.$
				 						\item $\lambda_2'>0$ is the least positive eigenvalue of $T_n[\bar M]$ in $X_{\{\sigma_1,\dots,\sigma_{k-1}\}}^{\{\varepsilon_1,\dots,\varepsilon_{n-k}|\beta\}}.$	
				 					\end{itemize}
				 					\item If $k=1$, $\sigma_1\neq 0$ and $M\in(-\infty,\bar{M}-\lambda_3']$, where:
				 					\begin{itemize}
				 						\item $\lambda_3'<0$ is the biggest negative eigenvalue of $T_n[\bar M]$ in $X_{\{\alpha\}}^{\{\varepsilon_1,\dots,\varepsilon_{n-1}\}}$, where $\alpha=0$.
				 					\end{itemize}
				 					\item If $k>1$, $\sigma_k = k-1$ and $M\in[\bar{M}-\lambda_2',\bar{M}-\lambda_1]$, where:
				 					\begin{itemize}
				 						\item $\lambda_1<0$ is the biggest negative eigenvalue of $T_n[\bar M]$ in $X_{\{0,\dots,k-1\}}^{\{\varepsilon_1,\dots,\varepsilon_{n-k}\}}.$
				 						\item $\lambda_2'>0$ is the least positive eigenvalue of $T_n[\bar M]$ in $X_{\{0,\dots,k-2\}}^{\{\varepsilon_1,\dots,\varepsilon_{n-k}|\beta\}}.$	
				 					\end{itemize}	
				 					\item If $k=1$ and  $\sigma_1=0$ and $M\in(-\infty,\bar{M}-\lambda_1]$, where:	
				 					\begin{itemize}
				 						\item $\lambda_1<0$ is the biggest negative eigenvalue of $T_n[\bar M]$ in $X_{\{0\}}^{\{\varepsilon_1,\dots,\varepsilon_{n-k}\}}.$
				 					\end{itemize}	
				 					
				 				\end{itemize}
				 		\end{itemize}
				 	\end{proposition}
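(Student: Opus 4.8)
The plan is to reformulate the statement as a spectral question and then run a continuity argument on the zeros of a one-parameter family of solutions. Writing $\lambda=\bar M-M$, a function $u\in C^n(I)$ solves \eqref{Ec::T_n[M]} on $(a,b)$ if and only if $T_n[\bar M]\,u=\lambda\,u$; hence the hypotheses \eqref{Ec::cfaa}--\eqref{Ec::cfbb} say that $u$ is, up to a scalar, a solution $u_\lambda$ of this equation satisfying the $n-1$ boundary conditions formed by $\sigma_1,\dots,\sigma_{k-1}$ at $a$ and $\varepsilon_1,\dots,\varepsilon_{n-k}$ at $b$. I would first record the two ``augmented'' eigenvalue problems obtained by adding one further condition: adding the missing condition at $a$ (namely $u^{(\alpha)}(a)=0$ when $\sigma_k\neq k-1$, or $u^{(\sigma_k)}(a)=0$ when $\sigma_k=k-1$) places $u_\lambda$ in a space with a $(k,n-k)$-type splitting, while adding $u^{(\beta)}(b)=0$ places it in a space with a $(k-1,n-k+1)$-type splitting. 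By Lemma~\ref{L::6} the property $(T_d)$ is inherited by $T_n[\bar M]$ on both of these spaces, so Corollary~\ref{C::1} (equivalently Theorems~\ref{T::6}--\ref{T::7}) applies: for $n-k$ even the first problem has least positive eigenvalue $\lambda_3'$ (or $\lambda_1$ when $\sigma_k=k-1$) and the second has biggest negative eigenvalue $\lambda_2'$, each with a constant-sign eigenfunction, and for $n-k$ odd the two problems (and the signs) are interchanged. The case distinctions in the statement are precisely these two problems, and the half-lines appear exactly when $k=1$, where the ``$b$-augmented'' problem carries $n$ conditions concentrated at $b$ and thus admits only the trivial solution, contributing no obstruction.

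With this set-up the claim is immediate at the two endpoints of each interval: there $\lambda$ is the relevant eigenvalue of one of the augmented problems, so $u_M$ coincides up to a scalar with the corresponding eigenfunction supplied by Corollary~\ref{C::1}, which has constant sign on $(a,b)$ and in particular no zero there. For $M$ in the interior, $\lambda$ is an eigenvalue of neither augmented problem, so the solution space of the $n-1$ conditions is one-dimensional and $u_\lambda$ can be chosen depending continuously on $\lambda$ in $C^n(I)$; in particular it vanishes at $a$ to the order dictated by the $\sigma$-conditions, with nonzero leading derivative there, and likewise at $b$, and its zeros in $(a,b)$ move continuously with $\lambda$. At $\lambda=0$, i.e. $M=\bar M$, which lies in the interior since $\lambda_2'$ and $\lambda_3'$ have opposite signs, $u_0$ solves $T_n[\bar M]\,u=0$; using the decomposition \eqref{Ec::Td1}--\eqref{Ec::Td2} together with Lemmas~\ref{L::1}--\ref{L::2}, exactly as in the proof of Theorem~\ref{L::5} and Lemma~\ref{L::11}, the identity $T_n u_0=0$ already limits $u_0$ to at most $n-1$ zeros on $I$, while each of the $n-1$ boundary conditions destroys one of these possible oscillations, so $u_0$ has no zero on $(a,b)$.

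It remains to propagate ``no zero in $(a,b)$'' from $\lambda=0$ to the whole interval. I would argue that, as $\lambda$ varies, a zero of $u_\lambda$ can enter or leave the open interval $(a,b)$ only by crossing an endpoint, and a zero can reach $a$ (respectively $b$) only when $u_\lambda^{(\alpha)}(a)=0$ (respectively $u_\lambda^{(\beta)}(b)=0$), that is, only when $\lambda$ is an eigenvalue of the corresponding augmented problem; since the interior of the interval contains no such eigenvalue --- the nearest ones on each side being exactly $\lambda_3'$ (or $\lambda_1$) and $\lambda_2'$ --- no zero can appear there. The genuinely delicate point is excluding the birth of a multiple interior zero, which a priori could alter the zero count without any zero touching the boundary; here one must again lean on the $(T_d)$ structure, showing as in Lemma~\ref{L::11} that the ``oscillation count'' of $u_\lambda$ governed by the decomposition is monotone in $|\lambda|$ and jumps only when $\lambda$ passes an eigenvalue of a successively augmented problem, the smallest in modulus of which are $\lambda_3'$ ($\lambda_1$) and $\lambda_2'$. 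Granting this, $u_M$ has no zero on $(a,b)$ throughout the closed interval; the $n-k$ odd case and the $k=1$ half-line cases follow by the same argument after the obvious sign and indexing adjustments. The main obstacle is precisely this uniform control of interior multiple zeros, i.e. making the informal ``lost oscillation'' bookkeeping of Theorem~\ref{L::5} stable under the spectral parameter.
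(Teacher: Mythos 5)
There is a genuine gap, and you have in fact flagged it yourself: your argument hinges on the step you introduce with ``granting this'', namely that no multiple zero can be born in the interior of $(a,b)$ as the spectral parameter moves, and you never prove it. This is not a technical afterthought but the core of the paper's proof, and it is filled there by a concrete counting argument that your proposal does not reproduce: as long as $u_M$ keeps a constant sign, $T_n[\bar M]\,u_M=(\bar M-M)\,u_M$ also has a fixed sign, so the decomposition \eqref{Ec::Td1}--\eqref{Ec::Td2} forces $T_{n-\ell}\,u_M$ to have at most $\ell$ zeros and hence $u_M$ at most $n$; since, by Lemmas \ref{L::1} and \ref{L::2}, the $n-1$ boundary conditions \eqref{Ec::cfaa}--\eqref{Ec::cfbb} give $T_{\sigma_1}u_M(a)=\cdots=T_{\sigma_{k-1}}u_M(a)=T_{\varepsilon_1}u_M(b)=\cdots=T_{\varepsilon_{n-k}}u_M(b)=0$, only one simple zero remains possible, and a simple zero is incompatible with constant sign. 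That single computation is what rules out interior double zeros; asserting that the ``oscillation count is monotone in $|\lambda|$ and jumps only at eigenvalues of augmented problems'' is a restatement of the claim, not a proof.

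A second, related gap concerns the endpoints. You conclude by saying that no zero can cross the boundary inside the interval because ``the nearest eigenvalues on each side are exactly $\lambda_3'$ (or $\lambda_1$) and $\lambda_2'$'', but nothing you cite establishes this: a priori the $\beta$-augmented problem could have a positive eigenvalue smaller than $\lambda_3'$, or the $\alpha$-augmented one a negative eigenvalue larger than $\lambda_2'$, and your argument would then only yield a strictly smaller interval than the one stated. The paper does not need such a spectral gap statement; instead it performs the parity bookkeeping comparing the signs forced by $u_M\ge 0$ (see \eqref{Ec::Suab}) with the signs required for maximal oscillation (see \eqref{Ec::Maxos1}--\eqref{Ec::Maxos2} for $M<\bar M$ and \eqref{Ec::Maxos4}--\eqref{Ec::Maxos5} for $M>\bar M$), which shows that for $n-k$ even a sign change below $\bar M$ can only be triggered by $u_M^{(\alpha)}(a)=0$ and above $\bar M$ only by $u_M^{(\beta)}(b)=0$; this directional analysis is exactly what pins the two endpoints to $\lambda_3'$ and $\lambda_2'$ (and, in the cases $\sigma_k=k-1$ and $k=1$, to $\lambda_1$ and to half-lines, which you do treat correctly in spirit). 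Until you supply both the double-zero exclusion and this one-sided identification of the relevant boundary degeneration, the proposal does not prove the stated intervals.
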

				 	\begin{proof}
				 		Firstly, let us see that for $M=\bar M$ every solution of \eqref{Ec::T_n[M]} on $(a,b)$ satisfying the boundary conditions \eqref{Ec::cfaa}-\eqref{Ec::cfbb} does not have any zero on $(a,b)$. On the proof of Lemma \ref{L::11} we have seen that, without taking into account the boundary conditions, every solution of \eqref{Ec::T_n[M]} for $M=\bar M$ has at most $n-1$ zeros on $(a,b)$. Let us prove that this $n-1$ possible oscillations are not attained because of the boundary conditions.

				 		Let us denote, $u_M\in C^n(I)$ a solution of \eqref{Ec::T_n[M]} verifying the boundary conditions \eqref{Ec::cfaa}-\eqref{Ec::cfbb}. 
				 		
				 		Each time that $T_{n-\ell}\,u_M(a)=0$ or $T_{n-\ell}\,u_M(b)=0$ for $\ell= 1,\dots n$ a possible oscillation is lost.

				 		 Since $T_n[\bar M]$ verifies property $(T_d)$ in $X_{\{\sigma_1,\dots,\sigma_k\}}^{\{\varepsilon_1,\dots,\varepsilon_{n-k}\}}$, by applying Lemmas \ref{L::1} and \ref{L::2} we conclude that for every $M\in\mathbb{R}$
				 		\begin{eqnarray}
				 		\label{Ec::cfTaa} T_{\sigma_1}\,u_{ M}(a)=\cdots=T_{\sigma_{k-1}}\,u_M(a)&=&0\,,\\
				 		\label{Ec::cfTbb} T_{\varepsilon_1}\,u_M(b)=\cdots=T_{\varepsilon_{n-k}}\,u_M(b)&=&0\,.
				 		\end{eqnarray}
				 		
				 		In particular, this property holds for $M=\bar M$. Hence, we loose the  $n-1$ possible oscillations and we can affirm that $u_{\bar M}$ does not have any zero on $(a,b)$.
				 		
				 		%\vspace{0.5cm}
				 		
				 		Now, in order to prove the result, let us move $u_M$ in a continuous way with $M$ in a neighborhood of $\bar M$.  We have that $u_M$ is a solution of \eqref{Ec::T_n[M]} on $(a,b)$, hence				 		
				 		\begin{equation}\label{Ec::TNM}T_n[\bar M]\,u_{M}(t)=(\bar M-M)\,u_M(t)\,,\quad  t\in (a,b)\,.\end{equation}
				 		
				 		First, let us see that while $u_M$ is of constant sign it cannot have any double zero on $(a,b)$.
				 		
				 		Let us assume that $u_{\bar M}>0$ on $(a,b)$ (if $u_{\bar M}<0$ on $(a,b)$ the arguments are valid by multiplying by $-1$). Thus, in equation \eqref{Ec::TNM} we have	 		
				 			\begin{equation}\left\lbrace \begin{array}{ccc}
				 			T_n[\bar M]\,u_M(t)\geq 0\,,&t\in (a,b)\,,& \text{ if $M<\bar M$,}\\&&\\
				 				T_n[\bar M]\,u_M(t)\leq 0\,,&t\in (a,b)\,,& \text{ if $M>\bar M$.}\end{array}\right.\end{equation}
				 				
				 		In both cases, $T_n[\bar M]\,u_M$ is a constant sign function. Then, since $v_1\dots v_n>0$, $T_{n-1}\,u_M$ is a monotone function with, at most, one zero.
				 		
				 		Under analogous arguments, we conclude that $T_{n-\ell}\,u_M$ has at most $\ell$ zeros, for $\ell=1,\dots,n$. In particular, $u_M$ can have $n$ zeros at most.
				 		
				 		But, $u_M$ satisfies \eqref{Ec::cfTaa}-\eqref{Ec::cfTbb}, i.e., $n-1$ possible oscillations are lost. Thus $u_M$ is only allowed to have a simple zero on $(a,b)$, but this is not possible while it is of constant sign.	
				 		
				 		Let us assume that $k>1$ and $\sigma_k\neq k-1$. In such a case, we can affirm that $u_M$ is of constant sign until one of the two following boundary conditions is satisfied:
				 		\[u^{(\alpha)}_M(a)=0\quad \text{or}\quad u^{(\beta)}_M(b)=0\,.\]
				 		
				 		Now, in order to see when the sign change begins, let us study the problem with different signs of $M$. Since we are considering $u_M\geq0$, it is obvious that
				 		\begin{equation}\label{Ec::Suab} u^{(\alpha)}_M(a)\geq0\,,\quad \text{and}\quad \left\lbrace \begin{array}{cc}
				 	u^{(\beta)}_M(b)\geq 0\,,&\text{ if $\beta$ is even,}\\&\\
				 		u^{(\beta)}_M(b)\leq 0\,,&\text{ if $\beta$ is odd.}\end{array}\right.\end{equation}

				 		Let us study the behavior of $u^{(\alpha)}_M(a)$ and $u^{(\beta)}_M(b)$, to keep the maximal oscillation, considered as in Notation \ref{Not:maxos}, in each case.  In this case, the maximum number of zeros which $u$ can have, taking into account the boundary conditions \eqref{Ec::cfaa}-\eqref{Ec::cfbb} is $1$. Then, a zero on the boundary is allowed without implying that $u\equiv 0$.  If $T_{n-\ell}\,u_M(a)=0$ for $\ell\neq n-\alpha$  and $n-\ell\notin\{\sigma_1,\dots,\sigma_{k-1}\}$ or  $T_{n-\ell}\,u_M(b)=0$ for $\ell\neq n-\beta$ and $n-\ell\notin\{\varepsilon_1,\dots,\varepsilon_{n-k}\}$, then the maximum number of zeros which $u$ can have is $0$ and we cannot have more zeros on the boundary for any  nontrivial solution of \eqref{Ec::T_n[M]}. Therefore, let us assume that the only zero which is allowed is found at $T_{\alpha}\,u_M(a)$ or $T_{\beta}\,u_M(b)$.
				 		
				 		At first, consider $M<\bar M$, we have that $T_n[\bar M]\,u_M\geq 0$, hence, with maximal oscillation, if $T_{n-\ell}u_M(a)\neq 0$ and $T_{n-\ell}u_M(b)\neq 0$ for all $\ell=1,\dots n$ \eqref{Ec::Maxos} is satisfied.
				 		
				 		However each time that  $T_{n-\ell}u_M(a)= 0$, the sign change come on the next $\tilde \ell$ for which  $T_{n-\tilde \ell}u_M(a)\neq 0$. And, if  $T_{n-\ell}u_M(b)= 0$, it changes its sign on the next $\tilde\ell$ for which $T_{n-\tilde\ell}u_M(b)\neq 0$  many times as it has vanished. From $\ell=1$ to $n-\alpha$ there are $k-1-\alpha$ zeros for $T_{n-\ell}u_M(a)$ and from $\ell=1$ to $n-\beta$ there are $n-k-\beta$ zeros for $T_{n-\ell}u_M(b)$. Hence, to allow the maximal oscillation	it is necessary that			 		
				 			\begin{equation}\label{Ec::Maxos1}
				 			\left\lbrace \begin{array}{cc}
				 			T_{\alpha}\,u_M(a)\geq0\,,& \text{if $n-\alpha-(k-\alpha-1)=n-k-1$ is even,}\\&\\
				 			T_{\alpha}\,u_M(a)\leq0\,,& \text{if $n-k-1$ is odd,}\end{array}\right. 	
				 			\end{equation}
				 		and \begin{equation}
				 		\label{Ec::Maxos2}
				 		\left\lbrace \begin{array}{cc}
				 		T_{\beta}\,u_M(b)\geq0\,,& \text{if $n-k-\beta$ is even,}\\&\\
				 		T_{\beta}\,u_M(b)\leq 0\,,& \text{if $n-k-\beta$ is odd.}\end{array}\right.
				 		\end{equation}
				 			
				 		From \eqref{Ec::Tab}, we can affirm that with maximal oscillation
				 		
				 			\begin{equation*}
				 			\left\lbrace \begin{array}{cc}
				 		u^{(\alpha)}_M(a)\geq0\,,& \text{if $n-k$ is odd,}\\&\\
				 		u^{(\alpha)}_M(a)\leq0\,,& \text{if $n-k$ is even,}\end{array}\right. 	
				 			\end{equation*}
				 			and,
				 			\begin{itemize}
				 				\item If $n-k$ is even				 			
				 		\begin{equation*}\left\lbrace \begin{array}{cc}
				 	u^{(\beta)}_M(b)\geq0\,,& \text{if $\beta$ is even,}\\&\\
				 		u^{(\beta)}_M(b)\leq 0\,,& \text{if $\beta$ is odd.}\end{array}\right.
				 		\end{equation*}
				 			\item If $n-k$ is odd				 			
				 			\begin{equation*}
				 			\left\lbrace \begin{array}{cc}
				 			u^{(\beta)}_M(b)\leq0\,,& \text{if $\beta$ is even,}\\&\\
				 			u^{(\beta)}_M(b)\geq 0\,,& \text{if $\beta$ is odd.}\end{array}\right.
				 			\end{equation*}				 		
				 		\end{itemize}
				 		
				 		Hence, we arrive at the following conclusions, taking into account \eqref{Ec::Suab}:
				 		\begin{itemize}
				 			\item If $n-k$ is even, the maximal oscillation is not allowed until $u^{(\alpha)}_M(a)=0$, which implies that $u_M>0$ on $(a,b)$ for $[\bar M-\lambda_3',\bar M]$
				 		\item If $n-k$ is odd, the maximal oscillation is not allowed until $u^{(\beta)}(b)_M=0$, which implies that $u_M>0$ on $(a,b)$ for $[\bar M -\lambda_2',\bar M]$.
				 		\end{itemize}

					\vspace{0.5 cm}
						Now, considering $M>\bar M$, we have that $T_n[\bar M]\,u_M\leq 0$, hence with maximal oscillation, if $T_{n-\ell}u_M(a)\neq 0$ and $T_{n-\ell}u_M(b)\neq 0$, for all $\ell=1,\dots n$, the following inequalities are satisfied:
						
							\begin{equation}\label{Ec::Maxos3}
							\left\lbrace \begin{array}{cc}
							T_{n-\ell}\,u_M(a)<0\,,& \text{if $\ell$ is even,}\\&\\
							T_{n-\ell}\,u_M(a)>0\,,& \text{if $\ell$ is odd,}\end{array}\right.\qquad\qquad T_{n-\ell}\,u_M(b)<0\,.
							\end{equation}
							
					In this case, since we have contrary signs from the previous case where $M<\bar M$,  to allow the maximal oscillation, the following inequalities must be satisfied:
						
						\begin{equation}\label{Ec::Maxos4}
						\left\lbrace \begin{array}{cc}
						T_{\alpha}\,u_M(a)\leq0\,,& \text{if $n-k-1$ is even,}\\&\\
						T_{\alpha}\,u_M(a)\geq0\,,& \text{if $n-k-1$ is odd,}\end{array}\right. 	
						\end{equation}
						and \begin{equation}
						\label{Ec::Maxos5}
						\left\lbrace \begin{array}{cc}
						T_{\beta}\,u_M(b)\leq0\,,& \text{if $n-k-\beta$ is even,}\\&\\
						T_{\beta}\,u_M(b)\geq 0\,,& \text{if $n-k-\beta$ is odd.}\end{array}\right.
						\end{equation}
						
						Hence, from \eqref{Ec::Tab}, we can affirm that with maximal oscillation
						
						\begin{equation*}
						\left\lbrace \begin{array}{cc}
						u^{(\alpha)}_M(a)\leq0\,,& \text{if $n-k$ is odd,}\\&\\
						u^{(\alpha)}_M(a)\geq0\,,& \text{if $n-k$ is even,}\end{array}\right. 	
						\end{equation*}
						and,
						\begin{itemize}
							\item If $n-k$ is even				 			
							\begin{equation*}\left\lbrace \begin{array}{cc}
							u^{(\beta)}_M(b)\leq0\,,& \text{if $\beta$ is even,}\\&\\
							u^{(\beta)}_M(b)\geq 0\,,& \text{if $\beta$ is odd.}\end{array}\right.
							\end{equation*}
							\item If $n-k$ is odd				 			
							\begin{equation*}
							\left\lbrace \begin{array}{cc}
							u^{(\beta)}_M(b)\geq0\,,& \text{if $\beta$ is even,}\\&\\
							u^{(\beta)}_M(b)\leq 0\,,& \text{if $\beta$ is odd.}\end{array}\right.
							\end{equation*}				 		
						\end{itemize}
						
						Hence, we arrive at the following conclusions, taking into account \eqref{Ec::Suab}:
						\begin{itemize}
							\item If $n-k$ is even, the maximal oscillation is not allowed until $u^{(\beta)}_M(b)=0$, i.e., $u_M>0$ on $(a,b)$ for $[\bar M,\bar M-\lambda_2']$
							\item If $n-k$ is odd, the maximal oscillation is not allowed until $u^{(\alpha)}_M(a)=0$, i.e. $u_M>0$ on $(a,b)$ for $[\bar M ,\bar M-\lambda_3']$.
						\end{itemize}
						
						The proof   is complete since if $k=1$, $u^{(\beta)}_M(b)\neq0$ for every $M\neq \bar M$, because the contrary will imply that $u_M$ is a nontrivial solution of the linear differential equation \eqref{Ec::T_n[M]} with a zero of multiplicity $n$ at $t=b$ and this is not possible.
						
						And, if $\sigma_k=k-1$, consider $u^{(k-1)}_M(a)$ instead of $u^{(\alpha)}_M(a)=u^{(k)}(a)$, since it is the first non null derivative at $t=a$. Since $u_M\geq 0$, then $u_M^{(k-1)}(a)\geq 0$. But, with maximal oscillation, $T_{k-1}\,u_M(a)$ follows \eqref{Ec::Maxos} if $M<\bar M$ and \eqref{Ec::Maxos3} if $M>\bar M$ for $\ell=n-k-1$. Hence, from \eqref{Ec::Tab}, we can affirm that, with maximal oscillation, the following inequalities must be fulfilled:

						\begin{itemize}
							\item If $M<\bar M$ 			 			
							\begin{equation*}\left\lbrace \begin{array}{cc}
							u^{(k-1)}_M(a)\geq0\,,& \text{if $n-k$ is odd,}\\&\\
							u^{(k-1)}_M(a)\leq 0\,,& \text{if $n-k$ is even.}\end{array}\right.
							\end{equation*}
							\item If $M>\bar M$ 				 			
								\begin{equation*}\left\lbrace \begin{array}{cc}
								u^{(k-1)}_M(a)\leq0\,,& \text{if $n-k$ is odd,}\\&\\
								u^{(k-1)}_M(a)\geq 0\,,& \text{if $n-k$ is even.}\end{array}\right.
								\end{equation*}	 		
						\end{itemize}
						 And, we can conclude  the proof:
						 \begin{itemize}
						 	\item If $n-k$ is even and $M<\bar M$, the maximal oscillation is not allowed until $u^{(k-1)}_M(a)=0$, i.e., $u_M>0$ on $(a,b)$ for $[\bar M-\lambda_1,\bar M]$
						 	\item If $n-k$ is odd and $M>\bar M$, the maximal oscillation is not allowed until $u^{(k-1)}_M(a)=0$, i.e. $u_M>0$ on $(a,b)$ for $[\bar M ,\bar M-\lambda_1]$.
						 \end{itemize}				
				 	\end{proof}
				 	
				 	\begin{exemplo}
				 		\label{Ex::7}
				 		From Proposition \ref{P::1} and Example \ref{Ex::6}, we can affirm that any nontrivial solution of $T_4^0[M]\equiv u^{(4)}(t)+M\,u(t)=0$ on $[0,1]$, verifying the boundary conditions:
				 		\[u(0)=u'(1)=u''(1)=0\,,\]
				 		does not have any zero on $(0,1)$ for $M\in[-\pi^4,m_2^4]$, where $m_2^4=-\lambda_1$ with $\lambda_1$ the first negative eigenvalue of $T_4^0[0]$ in $X_{\{0\}}^{\{0,1,2\}}$ and $m_2$ has been introduced in Example \ref{Ex::6} as the least positive solution of \eqref{Ec::Ex62}.
				 		
				 		 Such functions are given as multiples of the following  expression:
				 	{\footnotesize	\[\left\lbrace  \begin{array}{cc}
				 		\begin{split}
				 		\cos (m-m t) (\sin (m)-\sinh (m))+\sin (m-m t) (-\cos (m)-\cosh (m))\\+\sinh (m-m t) (\cos (m)+\cosh (m))+\cosh (m-m t) (\sin (m)-\sinh (m))\,,\end{split}&M=-m^4<0\,,\\\\t^3-3t^2+t\,,&M=0\,,\\\\\begin{split}
				 	e^{-\frac{m t}{\sqrt{2}}} \left(-\left(e^{\sqrt{2} m (t-1)}+e^{\sqrt{2} m t}+e^{\sqrt{2} m}+1\right) \sin \left(\frac{m t}{\sqrt{2}}\right)\right. \\\left.+\left(e^{\sqrt{2} m
				 		t}-1\right) \cos \left(\frac{m (t-2)}{\sqrt{2}}\right) +\left(e^{\sqrt{2} m t}-1\right) \cos \left(\frac{m t}{\sqrt{2}}\right)\right)\,,	
				 		\end{split}&M=m^4>0\,.\end{array}\right. \]}				
				 	\end{exemplo}
			 	
				 	Now, we enunciate a similar result, which refers to the eigenvalues in $X_{\{\sigma_1,\dots,\sigma_k|\alpha\}}^{\{\varepsilon_1,\dots,\varepsilon_{n-k-1}\}}$ and $X_{\{\sigma_1,\dots,\sigma_k\}}^{\{\varepsilon_1,\dots,\varepsilon_{n-k-1}|\beta\}}$.
				 	
				 		 	\begin{proposition}
				 		 		\label{P::2}
				 		 		Let $\bar M\in \mathbb{R}$ be such that $T_n[\bar M]$ satisfies property $(T_d)$ on $X_{\{\sigma_1,\dots,\sigma_k\}}^{\{\varepsilon_1,\dots,\varepsilon_{n-k}\}}$  and ${\{\sigma_1,\dots,\sigma_k\}}-{\{\varepsilon_1,\dots,\varepsilon_{n-k}\}}$ satisfy $(N_a)$. If $u\in C^n(I)$ is a  solution of \eqref{Ec::T_n[M]} on $(a,b)$ satisfying the boundary conditions:
				 		 		\begin{eqnarray}
				 		 		\label{Ec::cfaaa} u^{(\sigma_1)}(a)=\cdots=u^{(\sigma_{k})}(a)&=&0\,,\\
				 		 		\label{Ec::cfbbb} u^{(\varepsilon_1)}(b)=\cdots=u^{(\varepsilon_{n-k-1})}(b)&=&0\,,
				 		 		\end{eqnarray}
				 		 		then it does not have any zero on $(a,b)$ provided that one of the following assertions is satisfied:
				 		 		\begin{itemize}
				 		 			\item Let $n-k$ be even:
				 		 			\begin{itemize}
				 		 				\item If  $\varepsilon_{n-k}\neq n-k-1$ and $M\in[\bar{M}-\lambda_3'',\bar{M}-\lambda_2'']$, where:
				 		 				\begin{itemize}
				 		 					\item $\lambda_3''>0$ is the least positive eigenvalue of $T_n[\bar M]$ in $X_{\{\sigma_1,\dots,\sigma_{k}\}}^{\{\varepsilon_1,\dots,\varepsilon_{n-k-1}|\beta\}}.$
				 		 					\item $\lambda_2''<0$ is the biggest negative eigenvalue of $T_n[\bar M]$ in $X_{\{\sigma_1,\dots,\sigma_k|\alpha\}}^{\{\varepsilon_1,\dots,\varepsilon_{n-k-1}\}}.$	
				 		 				\end{itemize}
				 
				 		 				\item If  $\varepsilon_{n-k} =n- k-1$ and $M\in[\bar{M}-\lambda_1,\bar{M}-\lambda_2'']$, where:
				 		 				\begin{itemize}
				 		 					\item $\lambda_1>0$ is the least positive eigenvalue of $T_n[\bar M]$ in $X_{\{\sigma_1,\dots,\sigma_{k}\}}^{\{0,\dots,n-k-1 \}}.$
				 		 					\item $\lambda_2''<0$ is the biggest negative eigenvalue of $T_n[\bar M]$ in $X_{\{\sigma_1,\dots,\sigma_k|\alpha\}}^{\{0,\dots,k-2\}}.$	
				 		 				\end{itemize}

				 		 			\end{itemize}
				 		 			\item Let $n-k$ be odd:
				 		 			\begin{itemize}
				 		 				\item If $k<n-1$, $\varepsilon_{n-k}\neq n- k-1$ and $M\in[\bar{M}-\lambda_2'',\bar{M}-\lambda_3'']$, where:
				 		 				\begin{itemize}
				 		 					\item $\lambda_3''<0$ is the biggest negative eigenvalue of $T_n[\bar M]$ in $X_{\{\sigma_1,\dots,\sigma_{k}\}}^{\{\varepsilon_1,\dots,\varepsilon_{n-k-1}|\beta\}}.$
				 		 					\item $\lambda_2''>0$ is the least positive eigenvalue of $T_n[\bar M]$ in $X_{\{\sigma_1,\dots,\sigma_k|\alpha\}}^{\{\varepsilon_1,\dots,\varepsilon_{n-k-1}\}}.$	
				 		 				\end{itemize}
				 		 				\item If $k=n-1$, $\varepsilon_{1}\neq0$ and $M\in(-\infty,\bar{M}-\lambda_3'']$, where:
				 		 				\begin{itemize}
				 		 					\item $\lambda_3''<0$ is the biggest negative eigenvalue of $T_n[\bar M]$ in $X_{\{\sigma_1,\dots,\sigma_{n-1}\}}^{\{\beta\}}$, where $\beta=0$.
				 		 				\end{itemize}
				 		 				\item If $k<n-1$, $\varepsilon_{n-k}=n-k-1$ and $M\in[\bar{M}-\lambda_2'',\bar{M}-\lambda_1]$, where:
				 		 				\begin{itemize}
				 		 					\item $\lambda_1<0$ is the biggest negative eigenvalue of $T_n[\bar M]$ in $X_{\{\sigma_1,\dots,\sigma_{k}\}}^{\{0,1,\dots,n-k-1\}}.$
				 		 					\item $\lambda_2''>0$ is the least positive eigenvalue of $T_n[\bar M]$ in $X_{\{\sigma_1,\dots,\sigma_k|\alpha\}}^{\{\varepsilon_1,\dots,\varepsilon_{n-k-1}\}}.$
				 		 				\end{itemize}	
				 		 				\item If $k=n-1$ and  $\varepsilon_{n-k}=0$ and $M\in(-\infty,\bar{M}-\lambda_1]$, where:	
				 		 				\begin{itemize}
				 		 					\item $\lambda_1<0$ is the biggest negative eigenvalue of $T_n[\bar M]$ in $X_{\{\sigma_1,\dots,\sigma_{n-1}\}}^{\{0\}}.$
				 		 				\end{itemize}	
				 		 				
				 		 			\end{itemize}
				 		 		\end{itemize}
				 		 	\end{proposition}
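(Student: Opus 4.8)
The plan is to mimic the proof of Proposition \ref{P::1}, interchanging the roles of the endpoints $a$ and $b$ and of the index families $\{\sigma_1,\dots,\sigma_k\}$ and $\{\varepsilon_1,\dots,\varepsilon_{n-k}\}$, so that $k$ is replaced by $n-k$ throughout the oscillation counting. First I would handle $M=\bar M$. As in the proof of Lemma \ref{L::11}, every solution of \eqref{Ec::T_n[M]} with $M=\bar M$ has at most $n-1$ zeros on $(a,b)$, multiplicities counted; and since $T_n[\bar M]$ satisfies $(T_d)$, Lemmas \ref{L::1} and \ref{L::2} applied to a function $u\in C^n(I)$ satisfying \eqref{Ec::cfaaa}--\eqref{Ec::cfbbb} give
\[T_{\sigma_1}\,u(a)=\cdots=T_{\sigma_k}\,u(a)=T_{\varepsilon_1}\,u(b)=\cdots=T_{\varepsilon_{n-k-1}}\,u(b)=0\,,\]
i.e. $n-1$ of the functions $T_\ell\,u$ vanish at an endpoint. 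Each such vanishing kills one possible oscillation, so $u_{\bar M}$ has no zero on $(a,b)$.

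Next I would let $M$ vary. If $u_M$ solves \eqref{Ec::T_n[M]} and satisfies \eqref{Ec::cfaaa}--\eqref{Ec::cfbbb}, then $T_n[\bar M]\,u_M=(\bar M-M)\,u_M$ on $(a,b)$; while $u_M$ is of constant sign so is $T_n[\bar M]\,u_M$, and climbing up the decomposition \eqref{Ec::Td1} shows that $T_{n-\ell}\,u_M$ has at most $\ell$ zeros. Combined with the $n-1$ vanishings above, which hold for every $M$ by Lemmas \ref{L::1}--\ref{L::2}, this forces $u_M$ to have at most one zero on $(a,b)$; a single simple zero is impossible while $u_M$ keeps one sign, so $u_M$ stays of constant sign until one of the two free boundary quantities $u_M^{(\alpha)}(a)$ and $u_M^{(\beta)}(b)$ vanishes, and the vanishing of the first (resp.\ second) is exactly an eigenvalue of $T_n[\bar M]$ in $X_{\{\sigma_1,\dots,\sigma_k|\alpha\}}^{\{\varepsilon_1,\dots,\varepsilon_{n-k-1}\}}$ (resp.\ $X_{\{\sigma_1,\dots,\sigma_k\}}^{\{\varepsilon_1,\dots,\varepsilon_{n-k-1}|\beta\}}$), whose existence and constant-sign eigenfunction are guaranteed by Corollary \ref{C::1}. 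When $\varepsilon_{n-k}=n-k-1$ one has $\beta=n-k$ and the relevant quantity at $b$ is $u_M^{(n-k-1)}(b)$ instead, attached to the space $X_{\{\sigma_1,\dots,\sigma_k\}}^{\{0,\dots,n-k-1\}}$.

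It remains to decide which of the two quantities reaches $0$ first as $M$ moves off $\bar M$. For $M<\bar M$ we have $T_n[\bar M]\,u_M\ge 0$, so with maximal oscillation the signs of $T_{n-\ell}\,u_M(a)$ and $T_{n-\ell}\,u_M(b)$ alternate as in \eqref{Ec::Maxos}; subtracting the $k-\alpha$ vanishings forced at $a$ and the $(n-k-1)-\beta$ vanishings forced at $b$ pins down the sign of $T_\alpha\,u_M(a)$ and $T_\beta\,u_M(b)$, hence, via \eqref{Ec::Tab}, of $u_M^{(\alpha)}(a)$ and $u_M^{(\beta)}(b)$; comparing with the signs actually imposed by $u_M\ge 0$ tells us which derivative is driven to $0$ first, and therefore which endpoint of the claimed interval is attained, according to the parity of $n-k$. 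The case $M>\bar M$ is identical with \eqref{Ec::Maxos3} in place of \eqref{Ec::Maxos}. The special cases mirror those of Proposition \ref{P::1}: if $k=n-1$ there is no condition at $b$ and $u_M^{(\beta)}(b)\neq 0$ for all $M\neq\bar M$ (otherwise $u_M$ would be a nontrivial solution of \eqref{Ec::T_n[M]} with a zero of multiplicity $n$ at $b$), which accounts for the half-lines; and if $\varepsilon_{n-k}=n-k-1$ one argues with $u_M^{(n-k-1)}(b)$ in place of $u_M^{(\beta)}(b)$.

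The main obstacle is the parity bookkeeping in the last step: after the $a\leftrightarrow b$ and $k\leftrightarrow n-k$ swap, one must track carefully how the numbers $k-\alpha$ and $n-k-1-\beta$ of imposed vanishings shift the sign alternation, and hence determine the forced sign of $T_\alpha\,u_M(a)$ relative to $T_\beta\,u_M(b)$, as well as check that the two endpoint special cases correspond to the correct eigenvalue problems. All remaining details are a transcription of the proof of Proposition \ref{P::1}.
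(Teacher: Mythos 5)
Your overall plan coincides with the paper's: the published proof of Proposition \ref{P::2} is literally ``analogous to Proposition \ref{P::1}'', with the roles of $t=a$ and $t=b$ interchanged, and your identification of the two relevant eigenvalue spaces $X_{\{\sigma_1,\dots,\sigma_k|\alpha\}}^{\{\varepsilon_1,\dots,\varepsilon_{n-k-1}\}}$ (for $\lambda_2''$) and $X_{\{\sigma_1,\dots,\sigma_k\}}^{\{\varepsilon_1,\dots,\varepsilon_{n-k-1}|\beta\}}$ (for $\lambda_3''$), the oscillation counts $k-\alpha$ at $a$ and $(n-k-1)-\beta$ at $b$, and the replacement of $u_M^{(\beta)}(b)$ by $u_M^{(n-k-1)}(b)$ when $\varepsilon_{n-k}=n-k-1$ are all the correct transcriptions.

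However, your handling of the half-line case $k=n-1$ is wrong, and it fails exactly at the point you flagged as delicate. When $k=n-1$ the conditions \eqref{Ec::cfbbb} are empty: all $n-1$ imposed conditions sit at $t=a$. The quantity that can never vanish is therefore $u_M^{(\alpha)}(a)$, since $\{\sigma_1,\dots,\sigma_{n-1}\}\cup\{\alpha\}=\{0,\dots,n-1\}$, so $u_M^{(\alpha)}(a)=0$ would force a zero of multiplicity $n$ at $t=a$ and hence $u_M\equiv 0$. Your claim that $u_M^{(\beta)}(b)\neq 0$ for all $M\neq\bar M$, justified by ``a zero of multiplicity $n$ at $b$'', does not hold: with no conditions imposed at $b$, the single equation $u_M^{(\beta)}(b)=0$ gives nothing like multiplicity $n$ there. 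It also contradicts the statement you are proving: the finite endpoint $\bar M-\lambda_3''$ (resp.\ $\bar M-\lambda_1$) is precisely the value of $M$ at which $u_M^{(\beta)}(b)$ (resp.\ $u_M^{(n-k-1)}(b)$) does vanish, i.e.\ an eigenvalue of $T_n[\bar M]$ in $X_{\{\sigma_1,\dots,\sigma_{n-1}\}}^{\{\beta\}}$; if that derivative never vanished there would be no finite endpoint at all. The correct mirror of the $k=1$ argument in Proposition \ref{P::1} is that for $k=n-1$ the constraint at $a$ can never activate, which is why the interval is unbounded on that side ($M\to-\infty$, since $n-k=1$ is odd), while the bound $\bar M-\lambda_3''$ comes from the vanishing at $b$. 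With this correction, the rest of your transcription of the proof of Proposition \ref{P::1} goes through.
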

				 		 	\begin{proof}
				 		 		The proof is analogous to the one of Proposition \ref{P::1}.
				 		 	\end{proof}
				 
				 \begin{exemplo}
				 	\label{Ex::8}
				 	Now, consider the fourth order differential equation $u^{(4)}(t)+M\,u(t)=0$ coupled with the boundary conditions $u(0)=u''(0)=u'(1)=0$.  Using Proposition \ref{P::2} and Example \ref{Ex::6}, we conclude that such functions do not have any zero on $(0,1)$ if $M\in[-m_3^4,4\,\pi^4]$, where $m_3^3=-\lambda_1$, with $\lambda_1$ the first negative eigenvalue of $T_4^0[0]$ in $X_{0,1,2}^{1}$ and $m_3$ has been introduced in Example \ref{Ex::6} as the least positive solution of \eqref{Ec::Ex63}.
				 	
				 	It is not difficult to verify that the solutions of this problem are given as multiples of the following expression:
				 	
				 	{\scriptsize 	\[\left\lbrace  \begin{array}{cc}
				 		\dfrac{\sin(m\,t)}{\cos(m)}-\dfrac{\sinh(m\,t)}{\cosh(m)}\,,&M=-m^4<0\,,\\\\t^3-3t\,,&M=0\,,\\\\\begin{split}
				 		e^{-\frac{m t}{\sqrt{2}}} \left(\left(e^{\sqrt{2} m (t+1)}+1\right) \sin \left(\frac{m (t-1)}{\sqrt{2}}\right)+\left(e^{\sqrt{2} m t}+e^{\sqrt{2} m}\right) \sin
				 		\left(\frac{m (t+1)}{\sqrt{2}}\right)\right. \\\left. +\left(1-e^{\sqrt{2} m (t+1)}\right) \cos \left(\frac{m (t-1)}{\sqrt{2}}\right)+\left(e^{\sqrt{2} m}-e^{\sqrt{2} m t}\right)
				 		\cos \left(\frac{m (t+1)}{\sqrt{2}}\right)\right)\,,
				 		\end{split}&M=m^4>0\,.\end{array}\right. \]}			 
				 \end{exemplo}
				 
				 To finish this section, we  show a result which gives an order on the previously obtained eigenvalues $\lambda_1$, $\lambda_3'$ and $\lambda_3''$. First, let us introduce some notation.
				 
				 	\begin{notation}\label{Not::alpha1}
				 	Let us denote  $\alpha_1\in \{1,\dots,n-1\}$  such that $\alpha_1\notin \{\sigma_1,\dots,\sigma_{k-1}|\alpha\}$ and $\{0,\dots,\alpha_1-1\}\subset\{\sigma_1,\dots,\sigma_{k-1}|\alpha\}$.
				 	
				 	 Let $\beta_1\in \{1,\dots,n-1\}$ be such that $\beta_1\notin \{\varepsilon_1,\dots,\varepsilon_{n-k-1}|\beta\}$ and $\{0,\dots,\beta_1-1\}\subset\{\varepsilon_1,\dots,\varepsilon_{n-k-1}|\beta\}$.
				 \end{notation}

				 \begin{proposition}\label{P::6.5}
				 	Let $\bar M\in \mathbb{R}$ be such that $T_n[\bar M]$ satisfies property $(T_d)$ on $X_{\{\sigma_1,\dots,\sigma_k\}}^{\{\varepsilon_1,\dots,\varepsilon_{n-k}\}}$ and  $\{\sigma_1,\dots,\sigma_k\}-\{\varepsilon_1,\dots,\varepsilon_{n-k}\}$ satisfy $(N_a)$. Then the following assertions are fulfilled:
				 	\begin{itemize}
				 		\item Let $n-k$ be even, we have:
				 		\begin{itemize}
				 			\item If $\sigma_k\neq k-1$, then  $\lambda_3'>\lambda_1>0$, where
				 			\begin{itemize}
				 				\item $\lambda_3'>0$ is the least positive eigenvalue of $T_n[\bar M]$ in $X_{\{\sigma_1,\dots,\sigma_{k-1}|\alpha\}}^{\{\varepsilon_1,\dots,\varepsilon_{n-k}\}}.$
				 				\item $\lambda_1>0$ is the least positive eigenvalue of $T_n[\bar M]$ in $X_{\{\sigma_1,\dots,\sigma_k\}}^{\{\varepsilon_1,\dots,\varepsilon_{n-k}\}}.$			 			 			
				 			\end{itemize}
				 				Moreover if there exists $\lambda_1'>\lambda_1$ another eigenvalue of  of $T_n[\bar M]$ in $X_{\{\sigma_1,\dots,\sigma_k\}}^{\{\varepsilon_1,\dots,\varepsilon_{n-k}\}}$, then $\lambda_1'>\lambda_3'$.
				 		
				 				\item If $\varepsilon_{n-k}\neq n-k-1$, then  $\lambda_3''>\lambda_1>0$, where
				 				\begin{itemize}
				 					\item $\lambda_3''>0$ is the least positive eigenvalue of $T_n[\bar M]$ in $X_{\{\sigma_1,\dots,\sigma_{k}\}}^{\{\varepsilon_1,\dots,\varepsilon_{n-k-1}|\beta\}}.$ 
				 					\item $\lambda_1>0$ is the least positive eigenvalue of $T_n[\bar M]$ in $X_{\{\sigma_1,\dots,\sigma_k\}}^{\{\varepsilon_1,\dots,\varepsilon_{n-k}\}}.$
				 				\end{itemize}
				 					Moreover if there exists $\lambda_1'>\lambda_1$ another eigenvalue of  of $T_n[\bar M]$ in $X_{\{\sigma_1,\dots,\sigma_k\}}^{\{\varepsilon_1,\dots,\varepsilon_{n-k}\}}$, then $\lambda_1'>\lambda_3''$.
				 
				 		\end{itemize}

				 			\item Let $n-k$ be odd, we have:
				 			\begin{itemize}
				 				\item If $\sigma_k\neq k-1$, then  $\lambda_3'<\lambda_1<0$, where
				 				\begin{itemize}
				 					\item $\lambda_3'<0$ is the biggest negative eigenvalue of $T_n[\bar M]$ in $X_{\{\sigma_1,\dots,\sigma_{k-1}|\alpha\}}^{\{\varepsilon_1,\dots,\varepsilon_{n-k}\}}.$
				 					\item $\lambda_1<0$ is the biggest negative eigenvalue of $T_n[\bar M]$ in $X_{\{\sigma_1,\dots,\sigma_k\}}^{\{\varepsilon_1,\dots,\varepsilon_{n-k}\}}.$
				 					\end{itemize}
				 						Moreover if there exists $\lambda_1'<\lambda_1$ another eigenvalue of  of $T_n[\bar M]$ in $X_{\{\sigma_1,\dots,\sigma_k\}}^{\{\varepsilon_1,\dots,\varepsilon_{n-k}\}}$, then $\lambda_1'<\lambda_3'$.
				 				\item If $\varepsilon_{n-k}\neq n-k-1$, then  $\lambda_3''<\lambda_1<0$, where
				 				\begin{itemize}
				 					\item $\lambda_3''<0$ is the biggest negative eigenvalue of $T_n[\bar M]$ in $X_{\{\sigma_1,\dots,\sigma_{k}\}}^{\{\varepsilon_1,\dots,\varepsilon_{n-k-1}|\beta\}}.$ 
				 					\item $\lambda_1<0$ is the biggest negative eigenvalue of $T_n[\bar M]$ in $X_{\{\sigma_1,\dots,\sigma_k\}}^{\{\varepsilon_1,\dots,\varepsilon_{n-k}\}}.$
				 				\end{itemize}
				 					Moreover if there exists $\lambda_1'<\lambda_1$ another eigenvalue of  of $T_n[\bar M]$ in $X_{\{\sigma_1,\dots,\sigma_k\}}^{\{\varepsilon_1,\dots,\varepsilon_{n-k}\}}$, then $\lambda_1'<\lambda_3''$.
				 			\end{itemize}
				 			
				 	\end{itemize} 
				 \end{proposition}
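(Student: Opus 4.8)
The plan is to prove the statement in the representative case $n-k$ even and $\sigma_k\neq k-1$, namely $\lambda_3'>\lambda_1>0$ together with $\lambda_1'>\lambda_3'$ for any further eigenvalue $\lambda_1'>\lambda_1$ in the full space. The case $\varepsilon_{n-k}\neq n-k-1$ is obtained verbatim by carrying out the same argument at the endpoint $b$, with $\beta$ playing the role of $\alpha$; the two cases with $n-k$ odd follow by applying the result to $\widehat T_n[(-1)^n\bar M]$, which satisfies $(T_d^*)$ by Lemma \ref{L::0} and for which all signs are reversed.

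The first step is to reformulate every eigenvalue in play as a zero of a single scalar function of $M$. For each $M\in\R$ let $u_M$ be a nontrivial solution of \eqref{Ec::T_n[M]} satisfying the $n-1$ conditions \eqref{Ec::cfaa}--\eqref{Ec::cfbb}; since $(N_a)$ holds, $M=0$ is not an eigenvalue in any of the relevant spaces (Lemma \ref{L::11}), so $u_M$ is unique up to a multiplicative constant and, once normalized, depends analytically on $M$. By Lemmas \ref{L::1} and \ref{L::2}, $u_M$ fulfils $T_{\sigma_j}u_M(a)=0$ for $j\le k-1$ and $T_{\varepsilon_j}u_M(b)=0$ for $j\le n-k$, while $T_\alpha u_M(a)$ is a positive multiple of $u_M^{(\alpha)}(a)$ and $T_{\sigma_k}u_M(a)$ a positive multiple of $u_M^{(\sigma_k)}(a)$. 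Hence $\lambda$ is an eigenvalue of $T_n[\bar M]$ in $X_{\{\sigma_1,\dots,\sigma_k\}}^{\{\varepsilon_1,\dots,\varepsilon_{n-k}\}}$ if and only if $u_{\bar M-\lambda}^{(\sigma_k)}(a)=0$, and an eigenvalue in $X_{\{\sigma_1,\dots,\sigma_{k-1}|\alpha\}}^{\{\varepsilon_1,\dots,\varepsilon_{n-k}\}}$ if and only if $u_{\bar M-\lambda}^{(\alpha)}(a)=0$. In particular $\bar M-\lambda_1$ is the largest $M<\bar M$ with $u_M^{(\sigma_k)}(a)=0$ and $\bar M-\lambda_3'$ is the largest $M<\bar M$ with $u_M^{(\alpha)}(a)=0$ (both exist by Corollary \ref{C::1}).

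The second step is the comparison. By Corollary \ref{C::1} the eigenfunction $\varphi_1$ of $\lambda_1$ may be taken positive on $(a,b)$; it lies in $X_{\{\sigma_1,\dots,\sigma_k\}}^{\{\varepsilon_1,\dots,\varepsilon_{n-k}\}}$, hence satisfies \eqref{Ec::cfaa}--\eqref{Ec::cfbb}, so $\varphi_1$ is a multiple of $u_{\bar M-\lambda_1}$, which is therefore of constant sign. Since $T_n[\bar M]\varphi_1=\lambda_1\varphi_1\gneqq 0$ and $T_n[\bar M]$ is strongly inverse positive on $X_{\{\sigma_1,\dots,\sigma_k\}}^{\{\varepsilon_1,\dots,\varepsilon_{n-k}\}}$ (Theorem \ref{L::5}), we also get $\varphi_1^{(\alpha)}(a)>0$; thus $u_{\bar M-\lambda_1}^{(\alpha)}(a)\neq 0$, so $\bar M-\lambda_1$ is not an eigenvalue of the modified space and $\lambda_1\neq\lambda_3'$. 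To upgrade this to $\lambda_1<\lambda_3'$ I would re-run the oscillation/maximal–oscillation bookkeeping from the proof of Proposition \ref{P::1} for the family $u_M$: that analysis shows that, as $M$ decreases from $\bar M$, the sign of $u_M^{(\alpha)}(a)$ becomes incompatible with the constant sign of $u_M$ only for $M$ just below $\bar M-\lambda_3'$, whereas $u_M$ stays of constant sign throughout $[\bar M-\lambda_3',\bar M]$. Combined with the constant sign of $u_{\bar M-\lambda_1}$ and $\bar M-\lambda_1<\bar M$, this yields $\bar M-\lambda_1\ge\bar M-\lambda_3'$, and hence $\lambda_3'>\lambda_1>0$.

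Finally, for the "moreover" assertion, suppose $\lambda_1'>\lambda_1$ is another eigenvalue of $T_n[\bar M]$ in $X_{\{\sigma_1,\dots,\sigma_k\}}^{\{\varepsilon_1,\dots,\varepsilon_{n-k}\}}$ with $\lambda_1'\le\lambda_3'$. Then $\bar M-\lambda_1'\in[\bar M-\lambda_3',\bar M)$, so by Proposition \ref{P::1} the associated eigenfunction $u_{\bar M-\lambda_1'}$ has no zero in $(a,b)$, i.e. is of constant sign. But $T_n[\bar M]$ is strongly inverse positive with a Green's function satisfying $(P_g)$ (Theorem \ref{L::5}), so its inverse is a positive compact operator whose spectral radius is $1/\lambda_1$ (Theorem \ref{T::6} identifies $\lambda_1$ as the eigenvalue of least absolute value), and the Krein--Rutman theorem then forces $\lambda_1$ to be the only eigenvalue carrying a constant–sign eigenfunction, contradicting $\lambda_1'\neq\lambda_1$. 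Hence $\lambda_1'>\lambda_3'$, and likewise $\lambda_1'>\lambda_3''$ in the companion case. The main obstacle is the bookkeeping invoked in the third paragraph: since $X_{\{\sigma_1,\dots,\sigma_k\}}^{\{\varepsilon_1,\dots,\varepsilon_{n-k}\}}$ and $X_{\{\sigma_1,\dots,\sigma_{k-1}|\alpha\}}^{\{\varepsilon_1,\dots,\varepsilon_{n-k}\}}$ are not nested, the strict inequality $\lambda_1<\lambda_3'$ is not a soft monotonicity fact and must be extracted by carefully tracking, along the family $u_M$, which of $u_M^{(\sigma_k)}(a)$ and $u_M^{(\alpha)}(a)$ vanishes first as $M$ decreases, using crucially that $\alpha<\sigma_k$.
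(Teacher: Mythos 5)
There is a genuine gap at the central step, and it is exactly the one you flag yourself at the end: the strict inequality $\lambda_1<\lambda_3'$ (resp. $\lambda_1<\lambda_3''$) is never actually established. Your paragraph deducing it reads ``Combined with the constant sign of $u_{\bar M-\lambda_1}$ and $\bar M-\lambda_1<\bar M$, this yields $\bar M-\lambda_1\ge \bar M-\lambda_3'$'', but this is a non sequitur: Proposition \ref{P::1} only asserts that $u_M$ has no interior zero \emph{when} $M\in[\bar M-\lambda_3',\bar M-\lambda_2']$; it says nothing about $M$ outside that interval, and the eigenfunction associated with $\lambda_1$ is of constant sign wherever $\lambda_1$ happens to lie (by Theorem \ref{T::6}/Corollary \ref{C::1}), so its constant sign cannot force $\bar M-\lambda_1$ into $[\bar M-\lambda_3',\bar M]$. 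Your preliminary observation $\lambda_1\neq\lambda_3'$ (via $\varphi_1^{(\alpha)}(a)>0$) is correct but does not order the two numbers. What the paper does at this point, and what is missing from your proposal, is a sign-flip plus intermediate-value argument on $u_M^{(\sigma_k)}(a)$ along the family $u_M$: at $M=\bar M$ the normalization $u_{\bar M}\geq 0$ gives $u_{\bar M}^{(\alpha)}(a)>0$, and the maximal-oscillation count (the $k-1-\alpha$ vanishing values of $T_h u(a)$ between $h=\alpha$ and $h=\sigma_k$, read through \eqref{Ec::Tl} and Lemma \ref{L::1}) pins down the sign of $u_{\bar M}^{(\sigma_k)}(a)$ as in \eqref{Ec::cfsigma1}; at $M=\bar M-\lambda_3'$ one has $u^{(\alpha)}(a)=0$, the first nonvanishing derivative at $a$ is $u^{(\alpha_1)}(a)>0$ (Notation \ref{Not::alpha1}), and the same count now has one extra zero, producing the \emph{opposite} sign \eqref{Ec::cfsigma2}. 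Continuity in $M$ then yields $\tilde M$ strictly between $\bar M$ and $\bar M-\lambda_3'$ with $u_{\tilde M}^{(\sigma_k)}(a)=0$, i.e.\ an eigenvalue of $T_n[\bar M]$ in $X_{\{\sigma_1,\dots,\sigma_k\}}^{\{\varepsilon_1,\dots,\varepsilon_{n-k}\}}$ lying strictly between $0$ and $\lambda_3'$, which is what forces $\lambda_1<\lambda_3'$ (and analogously $\lambda_1<\lambda_3''$ via Proposition \ref{P::2}). Without this computation your proof of the main inequality does not close.

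Two further remarks. First, your reduction of the case $n-k$ odd to the even case ``verbatim by applying the result to $\widehat T_n[(-1)^n\bar M]$'' is not immediate: $\widehat T_n$ acts on the spaces with the adjoint boundary indices $\tau_i,\delta_i$, and the relevant parity there is governed by $k$, not $n-k$; the paper instead runs the same sign-tracking argument for both parities simultaneously, which is the cleaner route. Second, your treatment of the ``moreover'' assertion is essentially the paper's: constant sign of the eigenfunction at $\lambda_1'$ via Proposition \ref{P::1} (or \ref{P::2}) contradicts the uniqueness of the constant-sign eigenvalue for a strongly inverse positive operator satisfying $(P_g)$; your appeal to Krein--Rutman plays the role of the paper's citation of Zeidler, and that part is fine, as is the implicit identification of eigenvalues with zeros of $M\mapsto u_M^{(\sigma_k)}(a)$ and $M\mapsto u_M^{(\alpha)}(a)$ (modulo the mild caveat, shared with the paper, that $u_M$ is only unique up to scalars away from eigenvalues of the full problem).
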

				 
				 \begin{proof}
				 	At the beginning, we  focus on the relation between $\lambda_1$ and $\lambda_3'$.
				 	
				 	We have seen in Proposition \ref{P::1} that a function $u_M$, solution of \eqref{Ec::T_n[M]}, satisfying the boundary conditions \eqref{Ec::cfaa}-\eqref{Ec::cfbb} cannot have any zero on $(a,b)$ for $M\in [\bar{M}-\lambda_3',\bar{M}]$ if $n-k$ is even and for $M\in [\bar{M},\bar{M} -\lambda_3']$ if $n-k$ is odd.

				 	Moreover, it is proved that for $M=\bar M$, without taking into account the boundary conditions, $u_{\bar M}$ has at most $n-1$ zeros, moreover, conditions \eqref{Ec::cfTaa}-\eqref{Ec::cfTbb} are satisfied by $u_{\bar M}$. Hence, we loose the $n-1$ possible oscillation. So, for $M=\bar M$ with the given boundary conditions, the maximal oscillation is achieved for the boundary conditions \eqref{Ec::cfaa}-\eqref{Ec::cfbb} .
				 		
				 		Let us assume that $u_{\bar M}\geq 0$ (if $u_{\bar M}\leq0 $ the arguments are valid by multiplying by $-1$), hence $T_\alpha\,u_{\bar M}(a)=\dfrac{u^{(\alpha)}_{\bar M}(a)}{v_1(a)\dots v_\alpha(a)}>0$. 
				 		
				 		As we have said before, $T_h\,u(a)$ changes its sign for every $h=0,\dots,n-1$ if it is non null. From $h=\alpha$ to $\sigma_k$, taking into account \eqref{Ec::cfTaa}, $k-1-\alpha$ zeros for $T_hu(a)$ are found. Hence, with maximal oscillation:				 		
				 			\begin{equation*}\left\lbrace \begin{array}{cc}
				 			T_{\sigma_k}\,u_{\bar M}(a)>0\,,& \text{if $(\sigma_k-\alpha)-(k-1-\alpha)=\sigma_k-k+1$ is even, }\\&\\
				 			T_{\sigma_k}\,u_{\bar M}(a)< 0\,,& \text{if $\sigma_k-k+1$ is odd.}\end{array}\right.
				 			\end{equation*}
				 			
				 			On the other hand, by means of Lemma \ref{L::1}, we have that:				 		
				 			\begin{equation}\label{Ec::cfsigma1}\left\lbrace \begin{array}{cc}
				 			u^{(\sigma_k)}_{\bar M}(a)>0\,,& \text{if $\sigma_k-k$ is odd, }\\&\\
				 			u^{(\sigma_k)}_{\bar M}(a)< 0\,,& \text{if $\sigma_k-k$ is even.}\end{array}\right.
				 			\end{equation}
				 			
				 			Let us move $u_M$ continuously on $M$ until $M=\bar M-\lambda_3'$. On Proposition \ref{P::1} we have proved that $u_M$ has at most $n$ zeros for every $M \in [\bar M-\lambda_3',\bar M]$ ($[\bar M, \bar M-\lambda_3]$ if $n-k$ is odd) if $u_M\geq0$, without taking into account the boundary conditions.
				 			
				 			Since $\lambda_3'$ is an eigenvalue of $T_n[\bar M]$ on $X_{\{\sigma_1,\dots,\sigma_{k-1}|\alpha\}}^{\{\varepsilon_1,\dots,\varepsilon_{n-k}\}}$, we have that $u^{(\alpha)}_{\bar{M}-\lambda_3'}(a)=0$. Thus, $T_\alpha\,u_{\bar M-\lambda_3'}(a)=0$. This fact, coupled with the boundary conditions \eqref{Ec::cfTaa}-\eqref{Ec::cfTbb}, allows us to affirm that $u_{\bar{M}-\lambda_3'}$ cannot loose more oscillations if it is a nontrivial solution. Hence, the maximal oscillation is verified.

				 				Since we have moved continuously from $\bar M$ to $\bar M-\lambda_3'$ and it was assumed $u_{\bar M}\geq 0$ on $I$, we conclude that $u_{\bar M-\lambda_3'}\geq 0$, hence $T_{\alpha_1}\,u_{\bar M}(a)=\dfrac{u^{(\alpha_1)}_{\bar M}(a)}{v_1(a)\dots v_{\alpha_1}(a)}>0$, where $\alpha_1$ has been introduced in Notation \ref{Not::alpha1}.

				 				As for $M=\bar M$, provided it is non null, $T_h\,u(a)$ changes its sign for every $h=0,\dots,n-1$ . From $h=\alpha_1$ to $\sigma_k$, taking into account \eqref{Ec::cfTaa}, $k-\alpha_1$ zeros are found. Hence, with maximal oscillation:				 				
				 				\begin{equation*}\left\lbrace \begin{array}{cc}
				 				T_{\sigma_k}\,u_{\bar M -\lambda_3'}(a)>0\,,& \text{if $(\sigma_k-\alpha_1)-(k-\alpha_1)=\sigma_k-k$ is even, }\\&\\
				 				T_{\sigma_k}\,u_{\bar M -\lambda_3'}(a)< 0\,,& \text{if $\sigma_k-k$ is odd.}\end{array}\right.
				 				\end{equation*}
				 				
				 				From Lemma \ref{L::1} again, we have:				 				
				 				\begin{equation}\label{Ec::cfsigma2}\left\lbrace \begin{array}{cc}
				 				u^{(\sigma_k)}_{\bar M -\lambda_3'}(a)>0\,,& \text{if $\sigma_k-k$ is even, }\\&\\
				 				u^{(\sigma_k)}_{\bar M -\lambda_3'}(a)< 0\,,& \text{if $\sigma_k-k$ is odd.}\end{array}\right.
				 				\end{equation}
				 			
				 			Hence, since we have been moving with continuity, from \eqref{Ec::cfsigma1} and \eqref{Ec::cfsigma2}, we can ensure the existence of a $\tilde M$ between $\bar M$ and $\bar M-\lambda_3'$ such that $u^{(\sigma_k)}_{\tilde M}(a)=0$. As consequence:
				 			\begin{itemize}
				 				\item If $n-k$ is even, $0<\lambda_1=\bar M-\tilde M<\lambda_3'$.
					 			\item If $n-k$ is odd, $0>\lambda_1=\bar M-\tilde M>\lambda_3'$.
				 			\end{itemize}
				 			
				 				The relation between $\lambda_1$ and $\lambda_3''$ is proved analogously by using Proposition \ref{P::2}.

				 				 	The assertion referring to $\lambda_1'$ is due to the fact that, if $0<\lambda_1<\lambda_1'<\lambda_3'$ on the case where $n-k$ is even, then, by Proposition \ref{P::1}, the eigenfunctions related to $\lambda_1$ and $\lambda_1'$ are of constant sign and this is not possible for an strongly inverse positive (negative) operator (see \cite[Corollary 7.27]{Zeid} and \cite[Section 1.8]{Cab}). The same happens when $n-k$ is odd and $0>\lambda_1>\lambda_1'>\lambda_3$.
				 				 	
				 				  Similarly, if either $n-k$ is even and $0<\lambda_1<\lambda_1'<\lambda_3''$ or $n-k$ is odd and $0>\lambda_1>\lambda_1'>\lambda_3''$, then, by Proposition \ref{P::2}, the eigenfunctions related to $\lambda_1$ and $\lambda_1'$ are of constant sign. 
				 				  
				 				  Thus, the result is proved.
				 					 \end{proof}
				 					 
				 					 \begin{exemplo}
				 					 	\label{Ex::9}
				 					 	Let us return to Example \ref{Ex::6}, where we have obtained the different eigenvalues for the operator $T_4^0[0]$. Let us see that the thesis of Proposition \ref{P::6.5} are fulfilled.
				 					 	
				 					 \begin{itemize}
				 					 	\item $\lambda_1=m_1^4\approxeq 2.36502^4<\lambda_3'=\pi^4$.
				 					 	\item $\lambda_1<\lambda_3''=m_3^4\approxeq 3.9266^4$.
				 					 \end{itemize}
				 					 
				 					 Moreover, we have seen in Example \ref{Ex::6} that the eigenvalues of $T_4^0[0]$ in $X_{\{0,2\}}^{\{1,2\}}$ are given as $\lambda=m^4$, where $m$ is a positive solution of \eqref{Ec::Ex61}. So $\lambda_1'\approxeq 5.497^4>\lambda_3'$ and $\lambda_1'>\lambda_3''$.
				 					 \end{exemplo}

 \chapter[Study of the eigenvalues of the  adjoint operator]{Study of the eigenvalues of the  adjoint operator $T^*_n[\bar M]$ and of $\widehat T_n[(-1)^n\bar M]$ in different spaces.}				 					 
				 					 
This chapter is devoted to the study of the eigenvalues of the adjoint operator $T^*_n[\bar M]$ in the different spaces $X_{\ \, \{\sigma_1,\dots,\sigma_k\}}^{*\{\varepsilon_1,\dots,\varepsilon_{n-k}\}}$, $X_{\ \,\{\sigma_1,\dots,\sigma_k|\alpha\}}^{*\{\varepsilon_1,\dots,\varepsilon_{n-k-1}\}}$, $X_{\ \,\{\sigma_1,\dots,\sigma_{k-1}\}}^{*\{\varepsilon_1,\dots,\varepsilon_{n-k}|\beta\}}$, $X_{\ \,\{\sigma_1,\dots,\sigma_{k-1}|\alpha\}}^{*\{\varepsilon_1,\dots,\varepsilon_{n-k}\}}$ and $X_{\ \,\{\sigma_1,\dots,\sigma_k\}}^{*\{\varepsilon_1,\dots,\varepsilon_{n-k-1}|\beta\}}$.

In Chapter \ref{S::ad} we have proved that  the boundary conditions satisfied for every $v\in X_{\ \, \{\sigma_1,\dots,\sigma_k\}}^{*\{\varepsilon_1,\dots,\varepsilon_{n-k}\}}$ are given by \eqref{Cf::ad1}--\eqref{Cf::ad4}. Proceeding analogously in the different spaces, taking into account that $\eta=n-1-\sigma_{k}$, $\gamma=n-1-\varepsilon_{n-k}$, $\alpha=n-1-\tau_{n-k}$ and $\beta=n-1-\delta_k$, we have the following assertions:
\begin{itemize}
	\item If $v\in X_{\ \,\{\sigma_1,\dots,\sigma_k|\alpha\}}^{*\{\varepsilon_1,\dots,\varepsilon_{n-k-1}\}}$, then it satisfies \eqref{Cf::ad1}--\eqref{Cf::ad11} and \eqref{Cf::ad3}--\eqref{Cf::ad4} coupled with $v^{(\gamma)}(b)=0$.
%		{\small \begin{eqnarray}
%		\label{Cf::adal1}
%		v^{(\tau_1)}(a)+\sum_{j=n-\tau_1}^{n-1}(-1)^{n-j}\,(p_{n-j}\,v)^{(\tau_1+j-n)}(a)&=&0\,,\\
%		\nonumber \vdots&&\\	\label{Cf::adal2}v^{(\tau_{n-k-1})}(a)+\sum_{j=n-\tau_{n-k-1}}^{n-1}(-1)^{n-j}\,(p_{n-j}\,v)^{(\tau_{n-k-1}+j-n)}(a)&=&0\,,\\
%		\label{Cf::adal3}
%		v^{(\delta_1)}(b)+\sum_{j=n-\delta_1}^{n-1}(-1)^{n-j}\,(p_{n-j}\,v)^{(\delta_1+j-n)}(b)&=&0\,,\\\nonumber \vdots&&\\
%		\label{Cf::adal4}
%		v^{(\delta_k)}(b)+\sum_{j=n-\delta_k}^{n-1}(-1)^{n-j}\,(p_{n-j}\,v)^{(\delta_k+j-n)}(b)&=&0\,,\\\label{Cf::adal5}v^{(\gamma)}(b)&=&0\,.	
%		\end{eqnarray}}
		\item If $v\in X_{\ \,\{\sigma_1,\dots,\sigma_{k-1}\}}^{*\{\varepsilon_1,\dots,\varepsilon_{n-k}|\beta\}}$, then it satisfies \eqref{Cf::ad1}--\eqref{Cf::ad2} and \eqref{Cf::ad3}--\eqref{Cf::ad31} coupled with $v^{(\eta)}(a)=0$.
%	{\small 	\begin{eqnarray}
%		\label{Cf::adbe1}
%		v^{(\tau_1)}(a)+\sum_{j=n-\tau_1}^{n-1}(-1)^{n-j}\,(p_{n-j}\,v)^{(\tau_1+j-n)}(a)&=&0\,,\\
%		\nonumber \vdots&&\\	\label{Cf::adbe2}v^{(\tau_{n-k})}(a)+\sum_{j=n-\tau_{n-k}}^{n-1}(-1)^{n-j}\,(p_{n-j}\,v)^{(\tau_{n-k}+j-n)}(a)&=&0\,,\\\label{Cf::adbe5}v^{(\eta)}(a)&=&0\,,\\
%		\label{Cf::adbe3}
%		v^{(\delta_1)}(b)+\sum_{j=n-\delta_1}^{n-1}(-1)^{n-j}\,(p_{n-j}\,v)^{(\delta_1+j-n)}(b)&=&0\,,\\\nonumber \vdots&&\\
%		\label{Cf::adbe4}
%		v^{(\delta_{k-1})}(b)+\sum_{j=n-\delta_{k-1}}^{n-1}(-1)^{n-j}\,(p_{n-j}\,v)^{(\delta_{k-1}+j-n)}(b)&=&0\,.	
%		\end{eqnarray}	}
			\item If $v\in X_{\ \,\{\sigma_1,\dots,\sigma_{k-1}|\alpha\}}^{*\{\varepsilon_1,\dots,\varepsilon_{n-k}\}}$, then it satisfies \eqref{Cf::ad1}--\eqref{Cf::ad11} and \eqref{Cf::ad3}--\eqref{Cf::ad4} coupled with $v^{(\eta)}(a)=0$.
%		{\small 	\begin{eqnarray}
%			\label{Cf::adal11}
%			v^{(\tau_1)}(a)+\sum_{j=n-\tau_1}^{n-1}(-1)^{n-j}\,(p_{n-j}\,v)^{(\tau_1+j-n)}(a)&=&0\,,\\
%			\nonumber \vdots&&\\	\label{Cf::adal12}v^{(\tau_{n-k-1})}(a)+\sum_{j=n-\tau_{n-k-1}}^{n-1}(-1)^{n-j}\,(p_{n-j}\,v)^{(\tau_{n-k-1}+j-n)}(a)&=&0\,,\\\label{Cf::adal15}v^{(\eta)}(a)&=&0\,,\\
%			\label{Cf::adal13}
%			v^{(\delta_1)}(b)+\sum_{j=n-\delta_1}^{n-1}(-1)^{n-j}\,(p_{n-j}\,v)^{(\delta_1+j-n)}(b)&=&0\,,\\\nonumber \vdots&&\\
%			\label{Cf::adal14}
%			v^{(\delta_k)}(b)+\sum_{j=n-\delta_k}^{n-1}(-1)^{n-j}\,(p_{n-j}\,v)^{(\delta_k+j-n)}(b)&=&0\,.
%			\end{eqnarray}}
		\item If $v\in X_{\ \,\{\sigma_1,\dots,\sigma_{k}\}}^{*\{\varepsilon_1,\dots,\varepsilon_{n-k-1}|\beta\}}$, then it satisfies \eqref{Cf::ad1}--\eqref{Cf::ad2} and \eqref{Cf::ad3}--\eqref{Cf::ad31} coupled with $v^{(\gamma)}(b)=0$.
%{\small \begin{eqnarray}
%\label{Cf::adbe11}
%v^{(\tau_1)}(a)+\sum_{j=n-\tau_1}^{n-1}(-1)^{n-j}\,(p_{n-j}\,v)^{(\tau_1+j-n)}(a)&=&0\,,\\
%\nonumber \vdots&&\\	\label{Cf::adbe12}v^{(\tau_{n-k})}(a)+\sum_{j=n-\tau_{n-k}}^{n-1}(-1)^{n-j}\,(p_{n-j}\,v)^{(\tau_{n-k}+j-n)}(a)&=&0\,,\\
%\label{Cf::adbe13}
%v^{(\delta_1)}(b)+\sum_{j=n-\delta_1}^{n-1}(-1)^{n-j}\,(p_{n-j}\,v)^{(\delta_1+j-n)}(b)&=&0\,,\\\nonumber \vdots&&\\
%\label{Cf::adbe14}
%v^{(\delta_{k-1})}(b)+\sum_{j=n-\delta_{k-1}}^{n-1}(-1)^{n-j}\,(p_{n-j}\,v)^{(\delta_{k-1}+j-n)}(b)&=&0\,,\\\label{Cf::adbe15}v^{(\gamma)}(b)&=&0\,.	
%\end{eqnarray}}
\end{itemize}

\begin{exemplo}
	\label{Ex::10}
	Arguing in an analogous way to  Example \ref{Ex::2}, we obtain
	
		\begin{equation*}
	 \begin{split} X_{\,\ \{0,1,2\}}^{*\{1\}}=&\left\lbrace v\in C^4(I)\ \mid\ v(a)=v(b)=v'(b)= v^{(3)}(b)-p_1(b)\,v''(b)=0\right\rbrace \,,\\
	 X_{\,\ \{0\}}^{*\{0,1,2\}}=&\left\lbrace v\in C^4(I)\ \mid\ v(a)=v'(a)=v''(a)= v(b)=0\right\rbrace=X_{\{0,1,2\}}^{\{0\}} \,,\\
	 X_{\,\ \{0,2\}}^{*\{0,1\}}=&\left\lbrace v\in C^4(I)\ \mid\ v(a)=v''(a)-p_1(a)\,v'(a)=v(b)=v'(b)=0\right\rbrace \,,\\
	X_{\,\ \{0,1\}}^{*\{1,2\}}=&\left\lbrace v\in C^4(I) \right.  \mid\ v^{(3)}(b)-p_1(b)\,v''(b)+(p_2(b)-2p_1'(b))v'(b)=0 \,, \\& \left. \ v(a)=v'(a)=v(b)=0\right\rbrace \,.\end{split}
		\end{equation*}
\end{exemplo}

In the sequel, we  prove analogous results to those of the previous section referring to functions defined in these spaces.

\begin{remark}
	In this case, taking into account that the eigenvalues of one operator and those of its adjoint are the same, we do not need to prove the existence of the eigenvalues. Such existence follows from the one of the eigenvalues of $T_n[\bar M]$ in the correspondent spaces, 
\end{remark}

First, we prove two results which refer to the operator $T^*_n[M]$ and then we will be able to extrapolate them for $\widehat{T}_n[(-1)^n M]$.

	\begin{proposition}
		\label{P::3}
		Let $\bar M\in \mathbb{R}$ be such that $T_n[\bar M]$ satisfies property $(T_d)$ on $X_{\{\sigma_1,\dots,\sigma_k\}}^{\{\varepsilon_1,\dots,\varepsilon_{n-k}\}}$  and ${\{\sigma_1,\dots,\sigma_k\}}-{\{\varepsilon_1,\dots,\varepsilon_{n-k}\}}$ satisfy $(N_a)$. Then every solution of $T^*_n[M]\,v(t)=0$, for $t\in (a,b)$, satisfying the boundary conditions \eqref{Cf::ad1}--\eqref{Cf::ad2} and \eqref{Cf::ad3}--\eqref{Cf::ad31}	does not have any zero on $(a,b)$ provided that one of the following assertions is fulfilled:
		\begin{itemize}
			\item Let $n-k$ be even:
			\begin{itemize}
				\item If $k>1$, $\varepsilon_{n-k}\neq n-k-1$ and $M\in[\bar{M}-\lambda_3'',\bar{M}-\lambda_2']$, where:
				\begin{itemize}
					\item $\lambda_3''>0$ is the least positive eigenvalue of $T_n[\bar M]$ in $X_{\{\sigma_1,\dots,\sigma_{k}\}}^{\{\varepsilon_1,\dots,\varepsilon_{n-k-1}|\beta\}}.$
					\item $\lambda_2'<0$ is the biggest negative eigenvalue of $T_n[\bar M]$ in $X_{\{\sigma_1,\dots,\sigma_{k-1}\}}^{\{\varepsilon_1,\dots,\varepsilon_{n-k}|\beta\}}.$	
				\end{itemize}
				\item If $k=1$, $\varepsilon_{n-1}\neq n-2$ and $M\in[\bar{M}-\lambda_3'',+\infty)$, where:
				\begin{itemize}
					\item $\lambda_3''>0$ is the least positive eigenvalue of $T_n[\bar M]$ in $X_{\{\sigma_1\}}^{\{\varepsilon_1,\dots,\varepsilon_{n-2},\beta\}}.$
				\end{itemize}
				
				\item If $k>1$, $\varepsilon_{n-k} = n-k-1$ and $M\in[\bar{M}-\lambda_1,\bar{M}-\lambda_2']$, where:
				\begin{itemize}
					\item $\lambda_1>0$ is the least positive eigenvalue of $T_n[\bar M]$ in $X_{\{\sigma_1,\dots,\sigma_k\}}^{\{0,\dots,n-k-1\}}.$
					\item $\lambda_2'<0$ is the biggest negative eigenvalue of $T_n[\bar M]$ in $X_{\{\sigma_1,\dots,\sigma_{k-1}\}}^{\{0,\dots,n-k-1,n-k\}}.$
				\end{itemize}	
				\item If $k=1$, $\varepsilon_{n-1}=n-2$ and $M\in[\bar{M}-\lambda_1,+\infty)$, where:	
				\begin{itemize}
					\item $\lambda_1>0$ is the least positive eigenvalue of $T_n[\bar M]$ in $X_{\{\sigma_1\}}^{\{0,\dots,n-2\}}.$
				\end{itemize}	
				
			\end{itemize}
			\item Let $n-k$ be odd:
			\begin{itemize}
				\item If $1<k<n-1$, $\varepsilon_{n-k}\neq n-k-1$ and $M\in[\bar{M}-\lambda_2',\bar{M}-\lambda_3'']$, where:
				\begin{itemize}
					\item $\lambda_3''<0$ is the biggest negative eigenvalue of $T_n[\bar M]$ in $X_{\{\sigma_1,\dots,\sigma_{k}\}}^{\{\varepsilon_1,\dots,\varepsilon_{n-k-1}|\beta\}}.$
					\item $\lambda_2'>0$ is the least positive eigenvalue of $T_n[\bar M]$ in $X_{\{\sigma_1,\dots,\sigma_{k-1}\}}^{\{\varepsilon_1,\dots,\varepsilon_{n-k}|\beta\}}.$	
				\end{itemize}
				\item If $1<k=n-1$, $\varepsilon_{1}\neq 0$ and $M\in[\bar{M}-\lambda_2',\bar{M}-\lambda_3'']$, where:
				\begin{itemize}
					\item $\lambda_3''<0$ is the least biggest negative eigenvalue of $T_n[\bar M]$ in $X_{\{\sigma_1,\dots,\sigma_{n-1}\}}^{\{\beta\}}$, where $\beta=0$.
					\item $\lambda_2'>0$ is the least positive eigenvalue of $T_n[\bar M]$ in $X_{\{\sigma_1,\dots,\sigma_{n-2}\}}^{\{\varepsilon_1|\beta\}}.$
				\end{itemize}
				\item If $k=1<n-1$, $\varepsilon_{n-1}\neq n-2$ and $M\in(-\infty,\bar{M}-\lambda_3'']$, where:
				\begin{itemize}
					\item $\lambda_3''<0$ is the biggest negative eigenvalue of $T_n[\bar M]$ in $X_{\{\sigma_1\}}^{\{\varepsilon_1,\dots,\varepsilon_{n-2},\beta\}}.$
				\end{itemize}
				\item If $k=1$, $n=2$, $\varepsilon_1\neq 0$ and $M\in(-\infty,\bar{M}-\lambda_3'']$, where:
				\begin{itemize}
					\item $\lambda_3''<0$ is the biggest negative eigenvalue of $T_n[\bar M]$ in $X_{\{\sigma_1\}}^{\{\beta\}}=X_{\{0\}}^{\{0\}}.$
				\end{itemize}
				\item If $1<k$, $\varepsilon_{n-k} = n-k-1$ and $M\in[\bar{M}-\lambda_2',\bar{M}-\lambda_1]$, where:
				\begin{itemize}
					\item $\lambda_1<0$ is the biggest negative eigenvalue of $T_n[\bar M]$ in $X_{\{\sigma_1,\dots,\sigma_k\}}^{\{0,\dots,n-k-1\}}.$
					\item $\lambda_2'>0$ is the least positive eigenvalue of $T_n[\bar M]$ in $X_{\{\sigma_1,\dots,\sigma_{k-1}\}}^{\{0,\dots,n-k-1,n-k\}}.$
				\end{itemize}	
				\item If $k=1$,  $\varepsilon_{n-1}=n-2$ and $M\in(-\infty,\bar{M}-\lambda_1]$, where:	
				\begin{itemize}
					\item $\lambda_1<0$ is the biggest negative eigenvalue of $T_n[\bar M]$ in $X_{\{\sigma_1\}}^{\{0,\dots,n-2\}}.$
				\end{itemize}	
				
			\end{itemize}
		\end{itemize}
	\end{proposition}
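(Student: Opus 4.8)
The plan is to mirror, step by step, the proofs of Propositions \ref{P::1} and \ref{P::2}, with the operator $T_n[\bar M]$ and its decomposition $(T_d)$ replaced by $\widehat T_n[(-1)^n\bar M]=(-1)^n\,T_n^*[\bar M]$ and the decomposition $(T_d^*)$. By Lemma \ref{L::0}, since $T_n[\bar M]$ satisfies $(T_d)$ on $X_{\{\sigma_1,\dots,\sigma_k\}}^{\{\varepsilon_1,\dots,\varepsilon_{n-k}\}}$, the operator $\widehat T_n[(-1)^n\bar M]$ satisfies $(T_d^*)$ on $X_{\ \, \{\sigma_1,\dots,\sigma_k\}}^{*\{\varepsilon_1,\dots,\varepsilon_{n-k}\}}$, and repeating the proof of Lemma \ref{L::6} with Lemmas \ref{L::3} and \ref{L::4} in place of Lemmas \ref{L::1} and \ref{L::2} shows that it also satisfies $(T_d^*)$ on each enlarged adjoint space appearing in the statement. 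By \eqref{Ec::Tgg} the operators $\widehat T_\ell$ of this decomposition have positive leading coefficients, so the oscillation counting of Lemma \ref{L::11} carries over verbatim, with $\widehat T_{\tau_j}v(a)$ and $\widehat T_{\delta_j}v(b)$ playing the roles of $T_{\sigma_j}u(a)$ and $T_{\varepsilon_j}u(b)$. Finally, an operator and its adjoint have the same spectrum, so the eigenvalues of $\widehat T_n[(-1)^n\bar M]$ (equivalently of $T_n^*[\bar M]$) in the enlarged adjoint spaces coincide with those of $T_n[\bar M]$ in the corresponding original spaces; this is what lets us express the endpoints in terms of $\lambda_1$, $\lambda_2'$ and $\lambda_3''$ as named above, so that the intervals here combine the $\lambda_3''$ of Proposition \ref{P::2} with the $\lambda_2'$ of Proposition \ref{P::1}.

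First I would settle the case $M=\bar M$: if $v_{\bar M}$ solves $T_n^*[\bar M]\,v=0$ on $(a,b)$ and satisfies \eqref{Cf::ad1}--\eqref{Cf::ad2} together with \eqref{Cf::ad3}--\eqref{Cf::ad31}, then $\widehat T_n[(-1)^n\bar M]\,v_{\bar M}=0$, so $v_{\bar M}$ has at most $n-1$ zeros on $I$; by Lemmas \ref{L::3} and \ref{L::4} these $n-1$ boundary conditions force $\widehat T_{\tau_1}v_{\bar M}(a)=\cdots=\widehat T_{\tau_{n-k}}v_{\bar M}(a)=\widehat T_{\delta_1}v_{\bar M}(b)=\cdots=\widehat T_{\delta_{k-1}}v_{\bar M}(b)=0$, i.e.\ $n-1$ vanishings, each destroying one possible oscillation, so $v_{\bar M}$ has no zero on $(a,b)$. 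Then I would move $M$ continuously: a solution $v_M$ of $T_n^*[M]\,v=0$ satisfies $T_n^*[\bar M]\,v_M=(\bar M-M)\,v_M$, hence $\widehat T_n[(-1)^n\bar M]\,v_M=(-1)^n(\bar M-M)\,v_M$, a constant sign function while $v_M$ is of constant sign. Exactly as in Propositions \ref{P::1} and \ref{P::2}, while $v_M$ is of constant sign it cannot acquire a double zero and hence no zero on $(a,b)$; a zero can appear only when $v_M$ becomes an eigenfunction of $\widehat T_n[(-1)^n\bar M]$ in one of the enlarged adjoint spaces, namely when either $v_M^{(\eta)}(a)=0$ — which puts $v_M\in X_{\ \, \{\sigma_1,\dots,\sigma_{k-1}\}}^{*\{\varepsilon_1,\dots,\varepsilon_{n-k}|\beta\}}$, whose eigenvalues are those of $T_n[\bar M]$ in $X_{\{\sigma_1,\dots,\sigma_{k-1}\}}^{\{\varepsilon_1,\dots,\varepsilon_{n-k}|\beta\}}$, i.e.\ $\lambda_2'$ — or $v_M^{(\gamma)}(b)=0$ — which puts $v_M\in X_{\ \, \{\sigma_1,\dots,\sigma_k\}}^{*\{\varepsilon_1,\dots,\varepsilon_{n-k-1}|\beta\}}$, whose eigenvalues are those of $T_n[\bar M]$ in $X_{\{\sigma_1,\dots,\sigma_k\}}^{\{\varepsilon_1,\dots,\varepsilon_{n-k-1}|\beta\}}$, i.e.\ $\lambda_3''$ — where $\eta=n-1-\sigma_k$ and $\gamma=n-1-\varepsilon_{n-k}$. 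The sign conditions for maximal oscillation (Notation \ref{Not:maxos}) at $t=a$ and $t=b$, which now carry the extra factor $(-1)^n$ coming from $\widehat T_n=(-1)^n T_n^*$, decide which of the two events produces the left endpoint and which the right; moreover the dichotomy that the analogue of Propositions \ref{P::1} and \ref{P::2} would give for $\widehat T_n$ is governed by the parity of $k$ (the number of conditions at $b$ of the adjoint problem), and multiplying by $(-1)^n$ converts it into the stated dichotomy on the parity of $n-k$.

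It remains to handle the degenerate subcases ($k=1$, $k=n-1$, $\varepsilon_{n-k}=n-k-1$, $\varepsilon_1=0$, $n=2$), treated exactly as in Propositions \ref{P::1} and \ref{P::2}: when the index $\eta$ or $\gamma$ collapses onto an already prescribed one the corresponding endpoint becomes $\lambda_1$ (an eigenvalue of $T_n[\bar M]$ in one of the $(k,n-k)$-type spaces $X_{\{\sigma_1,\dots,\sigma_k\}}^{\{0,\dots,n-k-1\}}$, $X_{\{\sigma_1\}}^{\{0,\dots,n-2\}}$, etc.), and when one of the two events cannot occur at all — because it would force a nontrivial solution of \eqref{Ec::T_n[M]} to have a zero of multiplicity $n$, which is impossible — the corresponding side of the interval becomes a half-line. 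I expect the only real difficulty to be the bookkeeping: keeping track of the correspondences $\sigma\leftrightarrow\tau$, $\varepsilon\leftrightarrow\delta$, $\alpha\leftrightarrow\eta$, $\beta\leftrightarrow\gamma$ dictated by \eqref{Cf::ad1}--\eqref{Cf::ad4} and the remarks after \eqref{Ec::eta}--\eqref{Ec::gamma}, and propagating the factor $(-1)^n$ correctly through the maximal-oscillation inequalities so that the endpoints come out in terms of $\lambda_1$, $\lambda_2'$ and $\lambda_3''$ precisely as stated; the analytic content (oscillation count, continuation in $M$, absence of double zeros of a constant sign solution) needs no new idea beyond Propositions \ref{P::1} and \ref{P::2}.
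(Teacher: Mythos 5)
Your proposal is correct and takes essentially the same route as the paper's own proof: settle $M=\bar M$ by the oscillation count coming from the adjoint decomposition (Lemmas \ref{L::0Ad}, \ref{L::3}, \ref{L::4}), continue in $M$ using $T_n^*[\bar M]v_M=(\bar M-M)v_M$, rule out double zeros while $v_M$ keeps constant sign, detect the loss of sign only through $v_M^{(\eta)}(a)=0$ or $v_M^{(\gamma)}(b)=0$, and translate these events into the eigenvalues $\lambda_1$, $\lambda_2'$, $\lambda_3''$ via the coincidence of the spectra of $T_n[\bar M]$ and its adjoint in the enlarged spaces. The differences are only organizational — the paper performs the maximal-oscillation parity bookkeeping directly on $T_n^*$ through $T_\ell^*=(-1)^\ell\widehat T_\ell$ (see \eqref{Ec::Talbe}), obtaining the $n-k$ dichotomy at once, whereas you pass through the $k$-parity statement for $\widehat T_n$ with parameter $(-1)^nM$ and convert back, which amounts to the same computation — apart from one small slip worth fixing: in the degenerate ($k=1$) cases the impossibility of a zero of multiplicity $n$ must be invoked for a nontrivial solution of the adjoint equation \eqref{Ec::Ad} (this is why $v_M^{(\eta)}(a)\neq 0$ for all $M$), not of \eqref{Ec::T_n[M]} as written.
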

	\begin{proof}
		
The proof follows the same steps as Proposition \ref{P::1}.

Let us denote $v_M\in C^n(I)$ a solution of
\begin{equation}
\label{Ec::Ad} T^*_n[M]\,v(t)=0\,,\quad t \in (a,b)\,,
\end{equation}
satisfying the boundary conditions \eqref{Cf::ad1}--\eqref{Cf::ad2} and \eqref{Cf::ad3}--\eqref{Cf::ad31}.

At the beginning, let us see that $v_{\bar M}$ 	does not have any zero on $(a,b)$.  In order to see that, we consider the decomposition \eqref{Ec::Td*} whose existence is guaranteed by Lemma \ref{L::0Ad}.

Analogously to the proof of Lemma \ref{L::11}, since $w_0,\,\dots,\,w_n>0$, we can conclude that, without taking into account the boundary conditions, a solution of \eqref{Ec::Ad} for $M=\bar M$ can have at most $n-1$ zeros. However, as we have said before, each time that either $T^*_\ell v_M(a)=0$ or $T^*_\ell v_M(b)=0$, a possible oscillation is lost. From the boundary conditions and Lemmas \ref{L::3} and \ref{L::4}, taking into account that $T_\ell^*v_M(t)=(-1)^\ell \widehat T_\ell v_M(t)$, we can affirm that for every $M\in \mathbb{R}$:
 		\begin{eqnarray}
 		\label{Ec::cfT*aa} T^*_{\tau_1}\,v_{M}(a)=\cdots=T^*_{\tau_{n-k}}\,v_M(a)&=&0\,,\\\nonumber\\
 		\label{Ec::cfT*bb} T^*_{\delta_1}\,v_M(b)=\cdots=T^*_{\delta_{k-1}}\,v_M(b)&=&0\,.
 		\end{eqnarray}

 Thus, every nontrivial solution of \eqref{Ec::Ad} for $M=\bar M$ does not have any zero on $(a,b)$.
 
 \vspace{0.5cm}

Now, let us move $v_M$ continuously as a function of $M$ on a neighborhood of $M=\bar M$. We have that $v_M$ is a solution of \eqref{Ec::Ad}, hence:
\begin{equation}
\label{Ec::T*NM} T^*_n[\bar M]v_M(t)=(\bar M-M)\,v_M\,,\ t\in (a,b)\,.
\end{equation}

Analogously to the proof of Proposition \ref{P::1}, we will see that, while $v_M$ is of constant sign, it cannot have any double zero on $(a,b)$.

We can assume that $v_{\bar M}>0$ on $I$ (if $v_{\bar M}<0$, then the arguments are valid by multiplying by $-1$). So, in equation \eqref{Ec::T*NM} we have:		
		\begin{equation}\label{Ec::115}\left\lbrace \begin{array}{ccc}
		T_n^*[\bar M]\,v_M(t)\geq 0\,,&t\in I\,,& \text{ if $M<\bar M$,}\\&&\\
		T_n^*[\bar M]\,v_M(t)\leq 0\,,&t\in I\,,& \text{ if $M>\bar M$.}\end{array}\right.\end{equation}
		
		In both cases, since $\frac{-1}{w_n}<0$, $T_{n-1}^*v_M$ is a monotone function, with at most one zero. Studying the maximal oscillation of $T_{n-\ell}^*v_M$ for $\ell=2,\dots,n$, we conclude that $T_{n-\ell}^*v_M$ has at most $\ell$ zeros. 
		
		In particular, $T_0^*\,v_M$ has no more than $n$ zeros. Since $w_0>0$, we can affirm that $v_M$ has at most $n$ zeros.
		
		However, $v_M$ verifies \eqref{Ec::cfT*aa}-\eqref{Ec::cfT*bb}, hence $n-1$ possible oscillation are lost. Thus, $v_M$ can have at most a simple zero on $(a,b)$ which is not possible if it is of constant sign.
		
		Let us assume that $k\neq 1$ and that $\delta_k\neq k-1$ (this is equivalent to $\varepsilon_{n-k}\neq n-k-1$). Under these assumptions, we can affirm that $v_M$ is of constant sign until one of the following boundary conditions is fulfilled:
		\[v_M^{(\eta)}(a)=0\quad \text{or}\quad v_M^{(\gamma)}(b)=0\,.\]
		
		Let us study what happens by moving $M$. Since we are considering $v_M\geq 0$, we have:		
			\begin{equation}\label{Ec::Suab*} v^{(\eta)}_M(a)\geq0\,,\quad \text{and}\quad \left\lbrace \begin{array}{cc}
			v^{(\gamma)}_M(b)\geq 0\,,&\text{ if $\gamma$ is even,}\\&\\
			v^{(\gamma)}_M(b)\leq 0\,,&\text{ if $\gamma$ is odd.}\end{array}\right.\end{equation}
		
		Now, let us see how  $v_M^{(\eta)}(a)$ and $v_M^{(\gamma)}(b)$ are with maximal oscillation.
		
		As before, with maximal oscillation only one zero on the boundary is allowed. If $T^*_{\ell}v_M(a)=0$ for $\ell\notin\{\tau_1,\dots,\tau_{n-k},\eta\}$ or $T^*_{\ell}v_M(b)=0$ for $\ell\notin \{\delta_1,\dots,\delta_{k-1},\gamma\}$, we have that $T^*_{\eta}v_M(a)\neq 0$ and $T^*_{\gamma}v_M(b)\neq 0$.  Because, otherwise, $v_M\equiv 0$ on $I$ and we are looking for nontrivial solutions.
		
			From \eqref{Ec::Tgg}, taking into account that $T^*_\ell v_M(t)=(-1)^\ell\widehat{T}_\ell\,v_M(t)$, we obtain
		\begin{equation}\label{Ec::Talbe}\begin{split} T^*_\eta v_M(a)&=(-1)^\eta v_1(a)\,\dots\,v_{n-\eta}(a)\,v^{(\eta)}(a)\,,\\T^*_\gamma v_M(b)&=(-1)^\gamma v_1(b)\,\dots\,v_{n-\gamma}(b)\,v^{(\gamma)}(b)\,, \end{split}\end{equation}
		where $v_1\,\dots,v_n>0$ are given in \eqref{Ec::Td1}. 
		
		Hence, if $T^*_{\eta}v_M(a)\neq 0$ and $T^*_{\gamma}v_M(b)\neq 0$, then $v_M^{(\eta)}(a)\neq 0$ and $v_M^{(\gamma)}(b)\neq 0$, thus the function $v_M$ remains of constant sign.
		
		Thus, we can assume that the unique zero, which is allowed with maximal oscillation, is found either in $T_{\eta}^*v_M(a)$ or $T^*_{\gamma}v_M(b)$.
	
		\vspace{0.5cm}

	In this case, since $T_k^*v_M=\frac{-1}{w_k}\frac{d}{dt}\left( T_{k-1}^*v_M\right)$  with $w_k>0$, to allow the maximal oscillation, $T_{n-\ell}^*v_M(a)$ remains of constant sign, each time that it does not vanish and, if it vanishes, then it changes its sign the number of times that it has vanished on the next $\ell$ where it is non null. And $T_{n-\ell}^*v_M(b)$ changes its sign each time that it is non null.
	 
	At first, let us focus on the case $M<\bar M$, we have that $T_n^*[\bar M]v_M=T_n^*v_M\geq 0$ on $I$.

 In particular, $T_n^*v_M(a)\geq 0$ and $T_n^*v_M(b)\geq 0$. Using \eqref{Ec::cfT*aa}-\eqref{Ec::cfT*bb}, from $\ell =0$ to $n-\eta$, we have that $T_{n-\ell}v_M(a)$ vanishes $n-k-\eta$ times and from $\ell =0$ to $n-\gamma$, $T_{n-\ell}v_M(b)=0$ $k-1-\gamma$ times.
	
	Hence, to allow the maximal oscillation:
			\begin{equation}\label{Ec::Maxos*1}
			\left\lbrace \begin{array}{cc}
			T_{\eta}^*\,v_M(a)\geq0\,,& \text{if $n-k-\eta$ is even,}\\&\\
			T_{\eta}^*\,v_M(a)\leq0\,,& \text{if $n-k-\eta$ is odd,}\end{array}\right. 	
			\end{equation}
			and \begin{equation}
			\label{Ec::Maxos*2}
			\left\lbrace \begin{array}{cc}
			T_{\gamma}^*\,v_M(b)\geq0\,,& \text{if $n-\gamma-(k-1-\gamma)=n-k+1$ is even,}\\&\\
			T_{\gamma}^*\,v_M(b)\leq 0\,,& \text{if $n-k+1$ is odd.}\end{array}\right.
			\end{equation}
			
	 Using \eqref{Ec::Talbe} and \eqref{Ec::Maxos*1}-\eqref{Ec::Maxos*2}, we can affirm that to set maximal oscillation:
			
			\begin{equation}
			\left\lbrace \begin{array}{cc}
			v^{(\eta)}_M(a)\geq0\,,& \text{if $n-k$ is even,}\\&\\
			v^{(\eta)}_M(a)\leq0\,,& \text{if $n-k$ is odd,}\end{array}\right. 	
			\end{equation}
			and,
			\begin{itemize}
				\item If $n-k$ is even				 			
				\begin{equation}\left\lbrace \begin{array}{cc}
				v^{(\gamma)}_M(b)\geq0\,,& \text{if $\gamma$ is odd,}\\&\\
				v^{(\gamma)}_M(b)\leq 0\,,& \text{if $\gamma$ is even.}\end{array}\right.
				\end{equation}
				\item If $n-k$ is odd				 			
				\begin{equation}
				\left\lbrace \begin{array}{cc}
				v^{(\gamma)}_M(b)\leq0\,,& \text{if $\gamma$ is odd,}\\&\\
				v^{(\gamma)}_M(b)\geq 0\,,& \text{if $\gamma$ is even.}\end{array}\right.
				\end{equation}				 		
			\end{itemize}
			
			Hence, taking into account \eqref{Ec::Suab*}, we arrive at the following conclusions:
			\begin{itemize}
				\item If $n-k$ is even, the maximal oscillation is not allowed until $v^{(\gamma)}_M(b)=0$, i.e., $u_M>0$ on $(a,b)$ for $[\bar M-{\lambda_3^*}'',\bar M]$, where ${\lambda_3^*}''>0$ is the least positive eigenvalue of $T^*_n[\bar M]$ in $X_{\ \,\{\sigma_1,\dots,\sigma_{k}\}}^{*\{\varepsilon_1,\dots,\varepsilon_{n-k-1}|\beta\}}$.
				\item If $n-k$ is odd, the maximal oscillation is not allowed until $u^{(\eta)}(a)_M=0$, i.e. $u_M>0$ on $(a,b)$ for $[\bar M -{\lambda_2^*}',\bar M]$, where  ${\lambda_2^*}'<0$ is the biggest negative eigenvalue of $T^*_n[\bar M]$ in $X_{\ \,\{\sigma_1,\dots,\sigma_{k-1}\}}^{*\{\varepsilon_1,\dots,\varepsilon_{n-k}|\beta\}}$.
			\end{itemize}
			
			Moreover, since the eigenvalues of an operator and its adjoint are the same, we can affirm that $\lambda_3''={\lambda_3^*}''$ and $\lambda_2'={\lambda_2^*}'$.
			
			\vspace{0.5 cm}
		Consider now the other case, i.e. $M>\bar M$. From \eqref{Ec::115}, we have that $T_n^*v_M\leq0$. Thus, to obtain the maximal oscillation, the inequalities \eqref{Ec::Maxos*1}-\eqref{Ec::Maxos*2} must be reversed. So, taking into account \eqref{Ec::Talbe}, we can affirm that to get maximal oscillation:
		\begin{equation}
			\left\lbrace \begin{array}{cc}
				v^{(\eta)}_M(a)\leq0\,,& \text{if $n-k$ is even,}\\&\\
				v^{(\eta)}_M(a)\geq0\,,& \text{if $n-k$ is odd,}\end{array}\right. 	
		\end{equation}
		and,
		\begin{itemize}
			\item If $n-k$ is even				 			
			\begin{equation}\left\lbrace \begin{array}{cc}
			v^{(\gamma)}_M(b)\leq0\,,& \text{if $\gamma$ is odd,}\\&\\
			v^{(\gamma)}_M(b)\geq 0\,,& \text{if $\gamma$ is even.}\end{array}\right.
			\end{equation}
			\item If $n-k$ is odd				 			
			\begin{equation}
			\left\lbrace \begin{array}{cc}
			v^{(\gamma)}_M(b)\geq0\,,& \text{if $\gamma$ is odd,}\\&\\
			v^{(\gamma)}_M(b)\leq 0\,,& \text{if $\gamma$ is even.}\end{array}\right.
			\end{equation}				 		
		\end{itemize}
		
		Hence, we arrive at the following conclusions, taking into account \eqref{Ec::Suab*}:
		\begin{itemize}
			\item If $n-k$ is even, the maximal oscillation is not allowed until $v^{(\eta)}_M(a)=0$, i.e., $v_M>0$ on $(a,b)$ for $[\bar M,\bar M-{\lambda_2^*}']=[\bar M,\bar M-{\lambda_2}']$.
			\item If $n-k$ is odd, the maximal oscillation is not allowed until $v_M^{(\gamma)}(b)=0$, i.e. $v_M>0$ on $(a,b)$ for$[\bar M,\bar M-{\lambda_3^*}'']=[\bar M,\bar M-{\lambda_3}'']$.
		\end{itemize}
		
		\vspace{0.5cm}
		
		Now, we realize that if $k=1$, $v_M^{(\eta)}(a)\neq0$ for all $M\in \mathbb{R}$, since the contrary implies that a nontrivial solution of the homogeneous linear differential equation \eqref{Ec::Ad} has a zero at $t=a$ of multiplicity $n$, which is not possible.
		
		Finally, if $\varepsilon_{n-k}=n-k-1$ or, which is the same, $\delta_k=k-1$, we consider $v_M^{(k-1)}(b)$ instead of $v_M^{(\gamma)}(b)=v_M^{(k)}(b)$ and, taking into account that, from $\ell=0$ to $n-(k-1)$,  $T_{n-\ell}v_M(b)\neq 0$, we obtain that to allow maximal oscillation the following properties hold:
		
		\begin{itemize}
			\item If $M<\bar M$ 			 			
			\begin{equation}\left\lbrace \begin{array}{cc}
			T_{k-1}^*v_M(b)\geq0\,,& \text{if $n-k$ is odd,}\\&\\
			T_{k-1}^*v_M(b)\leq 0\,,& \text{if $n-k$ is even.}\end{array}\right.
			\end{equation}
			\item If $M>\bar M$ 				 			
			\begin{equation}\left\lbrace \begin{array}{cc}
			T_{k-1}^*v_M(b)\leq0\,,& \text{if $n-k$ is odd,}\\&\\
			T_{k-1}^*v_M(b)\geq 0\,,& \text{if $n-k$ is even.}\end{array}\right.
			\end{equation}	 		
		\end{itemize}
		
		From \eqref{Ec::Tgg}, since $T_\ell^*v_M(t)=(-1)^\ell \widehat T_\ell v_M(t)$, we have that \[T_{k-1}^*v_M(b)=(-1)^{k-1}v_1(b)\,\cdots v_{n-k-1}(b)\,v^{(k-1)}_M(b)\,.\]
		
		 So,  we obtain
		\begin{itemize}
			\item If $M<\bar M$
		
			\begin{itemize}
				\item If $n-k$ is even				 			
				\begin{equation}\left\lbrace \begin{array}{cc}
				v^{(k-1)}_M(b)\geq0\,,& \text{if $k-1$ is odd,}\\&\\
				v^{(k-1)}_M(b)\leq 0\,,& \text{if $k-1$ is even.}\end{array}\right.
				\end{equation}
				\item If $n-k$ is odd				 			
				\begin{equation}
				\left\lbrace \begin{array}{cc}
				v^{(k-1)}_M(b)\leq0\,,& \text{if $k-1$ is odd,}\\&\\
				v^{(k-1)}_M(b)\geq 0\,,& \text{if $k-1$ is even.}\end{array}\right.
				\end{equation}				 		
			\end{itemize}
			\item If $M>\bar M$
			\begin{itemize}
				\item If $n-k$ is even				 			
				\begin{equation}\left\lbrace \begin{array}{cc}
				v^{(k-1)}_M(b)\leq0\,,& \text{if $k-1$ is odd,}\\&\\
				v^{(k-1)}_M(b)\geq 0\,,& \text{if $k-1$ is even.}\end{array}\right.
				\end{equation}
				\item If $n-k$ is odd				 			
				\begin{equation}
				\left\lbrace \begin{array}{cc}
				v^{(k-1)}_M(b)\geq0\,,& \text{if $k-1$ is odd,}\\&\\
				v^{(k-1)}_M(b)\leq 0\,,& \text{if $k-1$ is even.}\end{array}\right.
				\end{equation}				 		
			\end{itemize}
			\end{itemize}	
			
		And, from \eqref{Ec::Suab*} we are able to finish the proof:
		\begin{itemize}
			\item If $n-k$ is even and $M<\bar M$, the maximal oscillation is not allowed until $v^{(k-1)}_M(b)=0$, i.e., $v_M>0$ on $(a,b)$ for $[\bar M-\lambda_1^*,\bar M]$, where $\lambda_1^*$ is the least positive eigenvalue of $T^*_n[\bar M]$ in $X_{\ \,\{\sigma_1,\dots,\sigma_k\}}^{*\{\varepsilon_1,\dots,\varepsilon_{n-k}\}}$.
			\item If $n-k$ is odd and $M>\bar M$, the maximal oscillation is not allowed until $v^{(k-1)}_M(b)=0$, i.e. $v_M>0$ on $(a,b)$ for $[\bar M ,\bar M-\lambda_1^*]$.
		\end{itemize}		
		
		Due to the coincidence of the eigenvalues of an operator and the ones of its adjoint, we can affirm that $\lambda_1=\lambda_1^*$ and the proof is complete.
	\end{proof}
	
	Now, we obtain an analogous result for different boundary conditions:
		\begin{proposition}
			\label{P::4}
			Let $\bar M\in \mathbb{R}$ be such that $T_n[\bar M]$ satisfies property $(T_d)$ on $X_{\{\sigma_1,\dots,\sigma_k\}}^{\{\varepsilon_1,\dots,\varepsilon_{n-k}\}}$  and ${\{\sigma_1,\dots,\sigma_k\}}-{\{\varepsilon_1,\dots,\varepsilon_{n-k}\}}$ satisfy $(N_a)$. Then every solution of $T^*_n[M]\,v(t)=0$ for $t\in(a,b)$, satisfying the boundary conditions \eqref{Cf::ad1}--\eqref{Cf::ad11} and \eqref{Cf::ad3}--\eqref{Cf::ad4},	does not have any zero on $(a,b)$ provided that one of the following assertions is fulfilled:
				\begin{itemize}
					\item Let $n-k$ be even:
					\begin{itemize}
						\item If $k>1$, $\sigma_k\neq k-1$ and $M\in[\bar{M}-\lambda_3',\bar{M}-\lambda_2'']$, where:
						\begin{itemize}
							\item $\lambda_3'>0$ is the least positive eigenvalue of $T_n[\bar M]$ in $X_{\{\sigma_1,\dots,\sigma_{k-1}|\alpha\}}^{\{\varepsilon_1,\dots,\varepsilon_{n-k}\}}.$
							\item $\lambda_2''<0$ is the biggest negative eigenvalue of $T_n[\bar M]$ in $X_{\{\sigma_1,\dots,\sigma_k|\alpha\}}^{\{\varepsilon_1,\dots,\varepsilon_{n-k-1}\}}.$	
						\end{itemize}
						\item If $k=1$, $\sigma_1\neq 0$ and $M\in[\bar{M}-\lambda_3',\bar{M}-\lambda_2'']$, where:
						\begin{itemize}
							\item $\lambda_3'>0$ is the least positive eigenvalue of $T_n[\bar M]$ in $X_{\{\alpha\}}^{\{\varepsilon_1,\dots,\varepsilon_{n-1}\}},$ where $\alpha=0$.
							\item $\lambda_2''<0$ is the biggest negative eigenvalue of $T_n[\bar M]$ in $X_{\{\sigma_1,0\}}^{\{\varepsilon_1,\dots,\varepsilon_{n-2}\}}.$	
						\end{itemize}
						
						\item If  $\sigma_{k} = k-1$ and $M\in[\bar{M}-\lambda_1,\bar{M}-\lambda_2'']$, where:
						\begin{itemize}
							\item $\lambda_1>0$ is the least positive eigenvalue of $T_n[\bar M]$ in $X_{\{1,\dots,k-1\}}^{\{\varepsilon_1,\dots,\varepsilon_{n-k}\}}.$
							\item $\lambda_2''<0$ is the biggest negative eigenvalue of $T_n[\bar M]$ in $X_{\{0,\dots,k-1,k\}}^{\{\varepsilon_1,\dots,\varepsilon_{n-k-1}\}}.$	
						\end{itemize}

					\end{itemize}
					\item Let $n-k$ be odd:
						\begin{itemize}
							\item If $1<k<n-1$, $\sigma_k\neq k-1$ and $M\in[\bar{M}-\lambda_2'',\bar{M}-\lambda_3']$, where:
							\begin{itemize}
								\item $\lambda_3'<0$ is the biggest negative eigenvalue of $T_n[\bar M]$ in $X_{\{\sigma_1,\dots,\sigma_{k-1}|\alpha\}}^{\{\varepsilon_1,\dots,\varepsilon_{n-k}\}}.$
								\item $\lambda_2''>0$ is the least positive eigenvalue of $T_n[\bar M]$ in $X_{\{\sigma_1,\dots,\sigma_k|\alpha\}}^{\{\varepsilon_1,\dots,\varepsilon_{n-k-1}\}}.$	
							\end{itemize}
							\item If $1=k<n-1$, $\sigma_1\neq 0$ and $M\in[\bar{M}-\lambda_2'',\bar{M}-\lambda_3']$, where:
							\begin{itemize}
								\item $\lambda_3'<0$ is the biggest negative eigenvalue of $T_n[\bar M]$ in $X_{\{\alpha\}}^{\{\varepsilon_1,\dots,\varepsilon_{n-1}\}}$, where $\alpha=0$.
								\item $\lambda_2''>0$ is the least positive eigenvalue of $T_n[\bar M]$ in $X_{\{\sigma_1,0\}}^{\{\varepsilon_1,\dots,\varepsilon_{n-2}\}}.$	
							\end{itemize}
							\item If $1<k=n-1$, $\sigma_k\neq n-2$ and $M\in(-\infty,\bar{M}-\lambda_3']$, where:
							\begin{itemize}
								\item $\lambda_3'<0$ is the biggest negative eigenvalue of $T_n[\bar M]$ in $X_{\{\sigma_1,\dots,\sigma_{n-2},\alpha\}}^{\{\varepsilon_1\}}.$
							\end{itemize}
							\item If $k=1$, $n=2$, $\sigma_{1}\neq 0$ and $M\in(-\infty,\bar{M}-\lambda_3']$, where:
							\begin{itemize}
								\item $\lambda_3'<0$ is the biggest negative eigenvalue of $T_n[\bar M]$ in $X_{\{\alpha\}}^{\{\varepsilon_1\}}=X_{\{0\}}^{\{0\}}.$
							\end{itemize}
							\item If $k<n-1$, $\sigma_{k} = k-1$ and $M\in[\bar{M}-\lambda_2'',\bar{M}-\lambda_1]$, where:
							\begin{itemize}
								\item $\lambda_1<0$ is the biggest negative eigenvalue of $T_n[\bar M]$ in $X_{\{0,\dots,k-1\}}^{\{\varepsilon_1,\dots,\varepsilon_{n-k}\}}.$
								\item $\lambda_2''>0$ is the least positive eigenvalue of $T_n[\bar M]$ in $X_{\{0,\dots,k-1,k\}}^{\{\varepsilon_1,\dots,\varepsilon_{n-k-1}\}}.$	
							\end{itemize}	
							\item If $k=n-1$ and  $\sigma_{n-1}=n-2$ and $M\in(-\infty,\bar{M}-\lambda_1]$, where:	
							\begin{itemize}
								\item $\lambda_1<0$ is the biggest negative eigenvalue of $T_n[\bar M]$ in $X_{\{0,\dots,n-2\}}^{\{\varepsilon_1\}}.$
							\end{itemize}	
							
						\end{itemize}
					\end{itemize}
		\end{proposition}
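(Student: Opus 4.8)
The plan is to run the argument of Proposition~\ref{P::3} (the adjoint counterpart of Proposition~\ref{P::1}) with the roles of the two endpoints interchanged, so that it becomes the adjoint counterpart of Proposition~\ref{P::2}. Denote by $v_M\in C^n(I)$ a nontrivial solution of $T^*_n[M]\,v(t)=0$ on $(a,b)$ satisfying the $n-1$ conditions \eqref{Cf::ad1}--\eqref{Cf::ad11} at $t=a$ and \eqref{Cf::ad3}--\eqref{Cf::ad4} at $t=b$. Since $T_n[\bar M]$ satisfies $(T_d)$, Lemma~\ref{L::0Ad} provides the factorisation \eqref{Ec::Td*} of $T^*_n[\bar M]$ with every $w_k>0$; and, writing $T^*_\ell v=(-1)^\ell\widehat T_\ell v$, Lemmas~\ref{L::3} and~\ref{L::4} turn the imposed data into $T^*_{\tau_1}v_M(a)=\cdots=T^*_{\tau_{n-k-1}}v_M(a)=0$ and $T^*_{\delta_1}v_M(b)=\cdots=T^*_{\delta_k}v_M(b)=0$ for every $M\in\mathbb{R}$.

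First I would dispose of the case $M=\bar M$. As in the proof of Lemma~\ref{L::11}, the factorisation \eqref{Ec::Td*} shows that a nontrivial solution of $T^*_n[\bar M]v=0$ has at most $n-1$ zeros on $I$ in the absence of boundary data, and each vanishing of some $T^*_\ell v_{\bar M}$ at an endpoint destroys one admissible oscillation; the $n-1$ vanishings recorded above therefore force $v_{\bar M}$ to have no zero on $(a,b)$. Next I would deform $M$ continuously starting from $\bar M$, using $T^*_n[\bar M]v_M=(\bar M-M)\,v_M$ on $(a,b)$: while $v_M$ keeps a constant sign, $T^*_n[\bar M]v_M$ has a constant sign, so running the factorisation downwards from $T^*_n$ bounds the oscillations of $v_M$ by one simple zero, which is incompatible with constant sign. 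Hence $v_M$ stays of one sign until one of the two ``free'' conditions $v_M^{(\eta)}(a)=0$ or $v_M^{(\gamma)}(b)=0$ is attained, where $\eta=n-1-\sigma_k$ and $\gamma=n-1-\varepsilon_{n-k}$; when $\sigma_k=k-1$ the derivative $v_M^{(k-1)}(a)$ takes over the role of $v_M^{(\eta)}(a)$, and when $k=1$ the condition at $a$ cannot be reached, because it would give $v_M$ a zero of multiplicity $n$ at $t=a$.

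The core step is to pin down, through the maximal-oscillation count, the first value of $M$ at which each of these conditions is reached. Assuming $v_{\bar M}>0$ (the case $v_{\bar M}<0$ following by multiplying by $-1$), I would split according to the sign of $M-\bar M$: for $M<\bar M$ one has $T^*_n v_M\ge0$ and for $M>\bar M$ one has $T^*_n v_M\le0$. In each situation I would count, from $\ell=0$ upwards, how many of the intermediate expressions $T^*_{n-\ell}v_M(a)$ and $T^*_{n-\ell}v_M(b)$ are forced to vanish by Lemmas~\ref{L::3}--\ref{L::4}, apply the alternating-sign rule for maximal oscillation (a zero at $a$ postpones the change of sign, a zero at $b$ does not), and convert the resulting sign requirements on $T^*_\eta v_M(a)$ and $T^*_\gamma v_M(b)$ into sign requirements on $v_M^{(\eta)}(a)$ and $v_M^{(\gamma)}(b)$ via \eqref{Ec::Tgg}, in the form $T^*_\eta v_M(a)=(-1)^\eta v_1(a)\cdots v_{n-\eta}(a)\,v^{(\eta)}(a)$ and $T^*_\gamma v_M(b)=(-1)^\gamma v_1(b)\cdots v_{n-\gamma}(b)\,v^{(\gamma)}(b)$. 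Comparing these with the trivial information $v_M^{(\eta)}(a)\ge0$ and the $\gamma$-parity-dependent sign of $v_M^{(\gamma)}(b)$ forced by $v_M\ge0$, one reads off, in each parity of $n-k$, which of the two conditions is the obstruction and on which side of $\bar M$ it lies. When $v_M^{(\eta)}(a)=0$ is the obstruction, $v_M$ belongs to $X_{\ \,\{\sigma_1,\dots,\sigma_{k-1}|\alpha\}}^{*\{\varepsilon_1,\dots,\varepsilon_{n-k}\}}$, so $M$ is an eigenvalue of $T^*_n[\bar M]$ there; when $v_M^{(\gamma)}(b)=0$ is the obstruction, $v_M$ belongs to $X_{\ \,\{\sigma_1,\dots,\sigma_k|\alpha\}}^{*\{\varepsilon_1,\dots,\varepsilon_{n-k-1}\}}$; and in the degenerate case $\sigma_k=k-1$ the condition $v_M^{(k-1)}(a)=0$ puts $v_M$ in the corresponding adjoint space. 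Since the spectrum of an operator equals that of its adjoint, these eigenvalues are exactly $\lambda_3'$, $\lambda_2''$ and (in the $\sigma_k=k-1$ case) $\lambda_1$ in the spaces of the statement, and one obtains the announced $M$-intervals after treating the sub-cases $k=1$, $1<k=n-1$, $n=2$ and $\sigma_k=k-1$ as in Propositions~\ref{P::1} and~\ref{P::3}.

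I expect the main obstacle to be exactly this sign/parity bookkeeping: keeping an accurate tally of how many of the $T^*_{n-\ell}v_M$ are forced to vanish at each endpoint, handling how ``a vanishing at $a$ delays the next sign change'' interacts with the parity of the number of such vanishings, and matching the resulting intervals with the correct adjoint eigenvalue spaces in each parity of $n-k$ and in each degenerate sub-case. Conceptually nothing goes beyond Propositions~\ref{P::1}--\ref{P::3}, but this is where an error is most likely.
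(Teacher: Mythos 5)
Your overall strategy is exactly the paper's: its proof of this proposition is simply declared ``analogous to Proposition \ref{P::3}'', and your main-case plan --- reducing the data through Lemmas \ref{L::3} and \ref{L::4} to $T^*_{\tau_1}v_M(a)=\cdots=T^*_{\tau_{n-k-1}}v_M(a)=0$ and $T^*_{\delta_1}v_M(b)=\cdots=T^*_{\delta_k}v_M(b)=0$, excluding zeros at $M=\bar M$ by the oscillation count, and then detecting the first value of $M$ at which $v_M^{(\eta)}(a)=0$ or $v_M^{(\gamma)}(b)=0$ is attained, identified via the coincidence of the spectra of $T_n[\bar M]$ and $T_n^*[\bar M]$ with $\lambda_3'$ (space $X_{\ \,\{\sigma_1,\dots,\sigma_{k-1}|\alpha\}}^{*\{\varepsilon_1,\dots,\varepsilon_{n-k}\}}$) and $\lambda_2''$ (space $X_{\ \,\{\sigma_1,\dots,\sigma_k|\alpha\}}^{*\{\varepsilon_1,\dots,\varepsilon_{n-k-1}\}}$) --- is the correct mirror of that argument.

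However, you mirror the two special cases incorrectly, and as written these steps fail. First, your claim that for $k=1$ the condition at $a$ cannot be reached is false here: in this proposition only the $n-k-1$ conditions \eqref{Cf::ad1}--\eqref{Cf::ad11} are imposed at $t=a$, so for $k=1$ adding $v_M^{(\eta)}(a)=0$ gives only $n-1$ conditions at $a$, not a zero of multiplicity $n$; accordingly the statement gives intervals bounded on both sides for $k=1$ when $n>2$. The genuinely impossible obstruction is the mirror one: when $k=n-1$ all $k$ conditions \eqref{Cf::ad3}--\eqref{Cf::ad4} are imposed at $t=b$, so $v_M^{(\gamma)}(b)=0$ would force $n$ vanishing derivatives at $b$ and hence $v_M\equiv 0$; this is what produces the unbounded intervals $(-\infty,\bar M-\lambda_3']$ and $(-\infty,\bar M-\lambda_1]$ in the statement, which your proposal does not account for. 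Second, in the degenerate case $\sigma_k=k-1$ (equivalently $\tau_{n-k}=n-k-1$) the imposed conditions at $a$ reduce to $v_M(a)=\cdots=v_M^{(n-k-2)}(a)=0$, so the derivative that takes over from $v_M^{(\eta)}(a)=v_M^{(n-k)}(a)$ is $v_M^{(n-k-1)}(a)$, not $v_M^{(k-1)}(a)$; only with this choice does the obstruction coincide with condition \eqref{Cf::ad2}, placing $v_M$ in the full adjoint space and yielding $\lambda_1$. With these two corrections (and after actually carrying out the parity bookkeeping you defer), your argument coincides with the paper's intended proof.
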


		\begin{proof}
		The proof is analogous to Proposition \ref{P::3}.
	\end{proof}
	
	\begin{exemplo}
		\label{Ex::11}
Returning to our problem, introduced in Example \ref{Ex::6}, we have that operator ${T_4^0}^*[M]\,v(t)=v^{(4)}(t)+M\,v(t)=T_4^0[M]\,v(t)$ is defined in $$X_{\ \, \{0,2\}}^{*\{1,2\}}=\left\lbrace v\in C^4([0,1])\ \mid\ v(0)=v''(0)=v(1)=v^{(3)}(1)=0\right\rbrace \,,$$  as it is proved in \eqref{Ec::SExAd} because, in this case,  $p_1(t)=p_2(t)=p_3(t)=p_4(t)=0$ for all $t\in[0,1]$.

From Proposition \ref{P::3}, we conclude that each solution of $v^{(4)}(t)+M\,v(t)=0$ on $[0,1]$ satisfying the boundary conditions $v(0)=v''(0)=v(1)=0$ does not have any zero on $(0,1)$ for $M\in[-m_3^4,m_2^4]$, where $m_3$ and $m_4$ have been introduced in Example \ref{Ex::6}.

We note that such functions follow the expressions:
	{\footnotesize \[\left\lbrace  \begin{array}{cc}
	K\left( \dfrac{\sin(m\,t)}{\cos(m)}-\dfrac{\sinh(m\,t)}{\cosh(m)}\right) \,,&M=-m^4<0\,,\\\\K\left( t-t^3\right) \,,&M=0\,,\\\\K\,e^{-\frac{m t}{\sqrt{2}}} \left(\left(e^{\sqrt{2} m (t+1)}-1\right) \sin \left(\frac{m
		(t-1)}{\sqrt{2}}\right)+\left(e^{\sqrt{2} m}-e^{\sqrt{2} m t}\right) \sin \left(\frac{m
		(t+1)}{\sqrt{2}}\right)\right)\,,&M=m^4>0\,,\end{array}\right. \]}
where $K\in\mathbb{R}$.

Moreover, from Proposition \ref{P::4}, we can affirm that any solution of $v^{(4)}(t)+M\,v(t)=0$ on $[0,1]$, satisfying the boundary conditions $v(0)=v(1)=v^{(3)}(1)=0$, does not have any zero on $(0,1)$ for $M\in[-\pi^4,4\,\pi^4]$. One can show that such solutions are given as multiples of:
{\footnotesize \[\left\lbrace  \begin{array}{cc}
\begin{split}\cos (m-m t) (\sin (m)+\sinh (m))+\sin (m- t) (\cosh (m)-\cos (m))\\+\sinh (m-m t) (\cosh
(m)-\cos (m))-\cosh (m-m t) (\sin (m)+\sinh (m))\,,\end{split}&M=-m^4<0\,,\\\\t-t^2\,,&M=0\,,\\\\\begin{split}e^{-\frac{m t}{\sqrt{2}}} \left(-\left(e^{\sqrt{2} m (t-1)}-e^{\sqrt{2} m t}+e^{\sqrt{2}
	m}-1\right) \sin \left(\frac{m t}{\sqrt{2}}\right)\right. \\\left.+\left(e^{\sqrt{2} m t}-1\right) \cos
\left(\frac{m (t-2)}{\sqrt{2}}\right) -\left(e^{\sqrt{2} m t}-1\right) \cos \left(\frac{m
	t}{\sqrt{2}}\right)\right)\,,\end{split}&M=m^4>0\,,\end{array}\right. \]}

	\end{exemplo}
		
		Taking into account that if $v_M$ is a solution of \eqref{Ec::Ad}, then $(-1)^nv_M$ is  a solution of $\widehat{T}_n[(-1)^nM]v(t)=0$ for all $t\in I$, we obtain the analogous results for $\widehat{T}_n$.
		
			\begin{proposition}
				\label{P::5}
				Let $\bar M\in \mathbb{R}$ be such that $T_n[\bar M]$ satisfies property $(T_d)$ on $X_{\{\sigma_1,\dots,\sigma_k\}}^{\{\varepsilon_1,\dots,\varepsilon_{n-k}\}}$  and ${\{\sigma_1,\dots,\sigma_k\}}-{\{\varepsilon_1,\dots,\varepsilon_{n-k}\}}$ satisfy $(N_a)$. Then every solution of $\widehat{T}_n[(-1)^n M]\,v(t)=0$ for $t\in(a,b)$, satisfying the boundary conditions \eqref{Cf::ad1}--\eqref{Cf::ad2} and \eqref{Cf::ad3}--\eqref{Cf::ad31},	does not have any zero on $(a,b)$ provided that one of the following assertions is fulfilled:
				\begin{itemize}
					\item Let $k$ be even:
					\begin{itemize}
						\item If $k<1$, $\varepsilon_{n-k}\neq n-k-1$ and $M\in[\bar{M}-\lambda_3'',\bar{M}-\lambda_2']$, where:
						\begin{itemize}
							\item $\lambda_3''>0$ is the least positive eigenvalue of $T_n[\bar M]$ in $X_{\{\sigma_1,\dots,\sigma_{k}\}}^{\{\varepsilon_1,\dots,\varepsilon_{n-k-1}|\beta\}}.$
							\item $\lambda_2'<0$ is the biggest negative eigenvalue of $T_n[\bar M]$ in $X_{\{\sigma_1,\dots,\sigma_{k-1}\}}^{\{\varepsilon_1,\dots,\varepsilon_{n-k}|\beta\}}.$	
						\end{itemize}
					
						\item If $k=1$, $\varepsilon_{n-1}\neq n-2$ and $M\in[\bar{M}-\lambda_3'',+\infty)$, where:
						\begin{itemize}
							\item $\lambda_3''>0$ is the least positive eigenvalue of $T_n[\bar M]$ in $X_{\{\sigma_1\}}^{\{\varepsilon_1,\dots,\varepsilon_{n-2},\beta\}}.$
						\end{itemize}
					
						\item If $1<k$, $\varepsilon_{n-k} = n-k-1$ and $M\in[\bar{M}-\lambda_1,\bar{M}-\lambda_2']$, where:
						\begin{itemize}
							\item $\lambda_1>0$ is the least positive eigenvalue of $T_n[\bar M]$ in $X_{\{\sigma_1,\dots,\sigma_k\}}^{\{0,\dots,n-k-1\}}.$
							\item $\lambda_2'<0$ is the biggest negative eigenvalue of $T_n[\bar M]$ in $X_{\{\sigma_1,\dots,\sigma_{k-1}\}}^{\{0,\dots,n-k-1,n-k\}}.$	
						\end{itemize}	
						\item If $k=1$ and  $\varepsilon_{n-1}=n-2$ and $M\in[\bar{M}-\lambda_1,+\infty)$, where:	
						\begin{itemize}
							\item $\lambda_1>0$ is the least positive eigenvalue of $T_n[\bar M]$ in $X_{\{\sigma_1\}}^{\{0,\dots,n-2\}}.$
						\end{itemize}	
						
					\end{itemize}
					\item Let $k$ be odd:
					\begin{itemize}
						\item If $1<k<n-1$, $\varepsilon_{n-k}\neq n-k-1$ and $M\in[\bar{M}-\lambda_2',\bar{M}-\lambda_3'']$, where:
						\begin{itemize}
							\item $\lambda_3''<0$ is the biggest negative eigenvalue of $T_n[\bar M]$ in $X_{\{\sigma_1,\dots,\sigma_{k}\}}^{\{\varepsilon_1,\dots,\varepsilon_{n-k-1}|\beta\}}.$
							\item $\lambda_2'>0$ is the least positive eigenvalue of $T_n[\bar M]$ in $X_{\{\sigma_1,\dots,\sigma_{k-1}\}}^{\{\varepsilon_1,\dots,\varepsilon_{n-k}|\beta\}}.$	
						\end{itemize}
						\item If $1<k=n-1$, $\varepsilon_{1}\neq 0$ and $M\in[\bar{M}-\lambda_2',\bar{M}-\lambda_3'']$, where:
						\begin{itemize}
							\item $\lambda_3''<0$ is the least biggest negative eigenvalue of $T_n[\bar M]$ in $X_{\{\sigma_1,\dots,\sigma_{n-1}\}}^{\{\beta\}}$, where $\beta=0$.
							\item $\lambda_2'>0$ is the least positive eigenvalue of $T_n[\bar M]$ in $X_{\{\sigma_1,\dots,\sigma_{n-2}\}}^{\{\varepsilon_1|\beta\}}.$	
						\end{itemize}
						\item If $k=1<n-1$, $\varepsilon_{n-1}\neq n-2$ and $M\in(-\infty,\bar{M}-\lambda_3'']$, where:
						\begin{itemize}
							\item $\lambda_3''<0$ is the biggest negative eigenvalue of $T_n[\bar M]$ in $X_{\{\sigma_1\}}^{\{\varepsilon_1,\dots,\varepsilon_{n-2},\beta\}}.$
						\end{itemize}
						\item If $k=1$, $n=2$, $\varepsilon_1\neq 0$ and $M\in(-\infty,\bar{M}-\lambda_3'']$, where:
						\begin{itemize}
							\item $\lambda_3''<0$ is the biggest negative eigenvalue of $T_n[\bar M]$ in $X_{\{\sigma_1\}}^{\{\beta\}}=X_{\{0\}}^{\{0\}}.$
						\end{itemize}
						\item If $1<k$, $\varepsilon_{n-k} = n-k-1$ and $M\in[\bar{M}-\lambda_2',\bar{M}-\lambda_1]$, where:
						\begin{itemize}
							\item $\lambda_1<0$ is the biggest negative eigenvalue of $T_n[\bar M]$ in $X_{\{\sigma_1,\dots,\sigma_k\}}^{\{0,\dots,n-k-1\}}.$
							\item $\lambda_2'>0$ is the least positive eigenvalue of $T_n[\bar M]$ in $X_{\{\sigma_1,\dots,\sigma_{k-1}\}}^{\{0,\dots,n-k-1,n-k\}}.$
						\end{itemize}	
						\item If $k=1$ and  $\varepsilon_{n-1}=n-2$ and $M\in(-\infty,\bar{M}-\lambda_1]$, where:	
						\begin{itemize}
							\item $\lambda_1<0$ is the biggest negative eigenvalue of $T_n[\bar M]$ in $X_{\{\sigma_1\}}^{\{0,\dots,n-2\}}.$
						\end{itemize}	
						
					\end{itemize}
				\end{itemize}
			\end{proposition}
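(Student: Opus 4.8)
The plan is to re-run, essentially verbatim, the oscillation-counting argument of Propositions \ref{P::1} and \ref{P::3}, now carried out for the operator $\widehat T_n[(-1)^n\bar M]$ and its own factorization instead of for $T_n^*[\bar M]$. The starting point is Lemma \ref{L::0}: since $T_n[\bar M]$ satisfies $(T_d)$, the operator $\widehat T_n[(-1)^n\bar M]$ satisfies $(T_d^*)$, and by the computation leading to \eqref{Ec::Tgg} it factors as a composition of first order operators $\widehat T_{\ell+1}v=\tfrac1{v_{n-\ell}}\tfrac{d}{dt}\bigl(\widehat T_\ell v\bigr)$, whose coefficients $v_1,\dots,v_n>0$ are the very same functions that appear in the decomposition $(T_d)$ of $T_n[\bar M]$. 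Thus, for the purpose of counting sign changes, $\widehat T_n[(-1)^n\bar M]$ behaves exactly like $T_n[\bar M]$ did in Proposition \ref{P::1}, with one difference: the $n-1$ admissible boundary conditions are now distributed as $n-k$ conditions at $t=a$ (those indexed by $\tau_1,\dots,\tau_{n-k}$) and $k$ at $t=b$ (those indexed by $\delta_1,\dots,\delta_k$), i.e.\ the endpoints exchange their roles. This interchange is exactly why the parity dichotomy in the statement is expressed through $k$ rather than through $n-k$.

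First I would record that, by \eqref{Ec::Tg}, a function $v$ solves $\widehat T_n[(-1)^nM]\,v=0$ on $(a,b)$ if and only if it solves $T_n^*[M]\,v=0$ on $(a,b)$, the boundary conditions \eqref{Cf::ad1}--\eqref{Cf::ad2}, \eqref{Cf::ad3}--\eqref{Cf::ad31} being literally the same in both formulations; so the object under study is simply a solution $v_M$ of $T_n^*[M]v=0$ subject to those $n-1$ conditions. Next, applying Lemmas \ref{L::3} and \ref{L::4} in place of Lemmas \ref{L::1} and \ref{L::2}, one checks that for every $M$ these conditions force $\widehat T_{\tau_1}v_M(a)=\cdots=\widehat T_{\tau_{n-k}}v_M(a)=\widehat T_{\delta_1}v_M(b)=\cdots=\widehat T_{\delta_{k-1}}v_M(b)=0$. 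In particular, for $M=\bar M$ the solution loses all $n-1$ possible oscillations, so $v_{\bar M}$ has no zero on $(a,b)$, exactly as in the opening step of the proof of Proposition \ref{P::1}.

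The core is then the continuation in $M$. Writing $\widehat T_n[(-1)^n\bar M]\,v_M=(-1)^n(\bar M-M)\,v_M$ on $(a,b)$, the right-hand side is of constant sign while $v_M$ is of constant sign, so (arguing as in Propositions \ref{P::1} and \ref{P::3}) $v_M$ cannot acquire a double interior zero, and a simple interior zero can appear only once one of the free boundary derivatives — $v^{(\eta)}_M(a)$, $v^{(\gamma)}_M(b)$, or $v^{(k-1)}_M(b)$ in the case $\delta_k=k-1$ — is permitted to vanish in a way compatible with maximal oscillation. Translating each such vanishing through \eqref{Ec::Talbe} and reading off the sign it must carry from the factors $\tfrac1{v_{n-\ell}}$ of the decomposition produces, case by case, the endpoints $\bar M-\lambda_1$, $\bar M-\lambda_2'$, $\bar M-\lambda_3''$ listed in the statement; the degenerate sub-cases ($k=1$, $n=2$, $\varepsilon_1\neq0$, $\varepsilon_{n-k}=n-k-1$, etc.) are disposed of by the same two remarks used at the ends of the proofs of Propositions \ref{P::1} and \ref{P::3} — a nontrivial solution of the homogeneous equation cannot have an $n$-fold zero at an endpoint, and when $\delta_k=k-1$ one works with the first non-vanishing derivative $v^{(k-1)}(b)$ instead of $v^{(\gamma)}(b)$. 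Finally, since an operator and its adjoint share their eigenvalues and $\widehat T_n[(-1)^nM]$ and $T_n^*[M]$ have the same kernel, the starred eigenvalues arising in the argument coincide with $\lambda_1,\lambda_2',\lambda_3''$ of $T_n[\bar M]$ in the spaces named in the statement, which completes the proof.

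The main obstacle, as already in Proposition \ref{P::3}, is purely the sign bookkeeping: one must keep track of how many zeros each $\widehat T_{n-\ell}v_M$ has accumulated at $a$ and at $b$, insist on maximal oscillation at every step, and check that after the $a\leftrightarrow b$ interchange and the extra factor $(-1)^n$ coming from $\widehat T_n=(-1)^nT_n^*$ the parity works out so that the relevant derivative vanishes precisely at $\bar M-\lambda_j$ and nowhere else on the claimed interval. No idea beyond those in Propositions \ref{P::1}--\ref{P::4} is required; it is a careful, if lengthy, case analysis, which is why the proof can legitimately be presented as \emph{``analogous to that of Proposition \ref{P::3}''}.
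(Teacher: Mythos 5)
The kernel identification you make at the beginning of your second paragraph --- by \eqref{Ec::Tg}, $\widehat T_n[(-1)^nM]\,v=0$ on $(a,b)$ if and only if $T_n^*[M]\,v=0$ there, while the boundary conditions \eqref{Cf::ad1}--\eqref{Cf::ad2} and \eqref{Cf::ad3}--\eqref{Cf::ad31} are literally unchanged --- is already the paper's entire proof: the proposition is obtained as an immediate corollary of Proposition \ref{P::3} (this is exactly the remark preceding the statement), and no new oscillation analysis is carried out. Up to that point your proposal coincides with the paper; the subsequent plan to re-run the machinery of Propositions \ref{P::1} and \ref{P::3} for $\widehat T_n$ is redundant rather than wrong in itself.

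The genuine problem is the step by which you explain the dichotomy ``by parity of $k$''. Relative to Proposition \ref{P::3} nothing is interchanged: that proposition already treats the same $n-k$ conditions at $t=a$ and $k-1$ conditions at $t=b$, and multiplying the operator by $(-1)^n$ changes neither the solution set nor the number or location of zeros of its elements. Consequently any correct re-derivation must reproduce the conclusions of Proposition \ref{P::3} verbatim, organized by the parity of $n-k$; the parity-of-$k$ labelling can only be a relabelling of the same content, and the two labellings agree case by case precisely when $n$ is even (which is why, as observed after Proposition \ref{P::6}, the recurring fourth-order example illustrates both). If you actually carried out the sign bookkeeping under your guiding principle that ``the endpoints exchange their roles, so $k$ governs the parity'', then for odd $n$ you would be led to intervals and eigenvalue signs different from those Proposition \ref{P::3} assigns to the identical boundary value problem --- for instance, when $n-k$ is odd the eigenvalue of $T_n[\bar M]$ in $X_{\{\sigma_1,\dots,\sigma_{k}\}}^{\{\varepsilon_1,\dots,\varepsilon_{n-k-1}|\beta\}}$ that the continuation argument reaches is the negative one $\lambda_3''<0$ of Corollary \ref{C::1}, not a positive one --- and this is impossible, since for every $M$ the two problems have the same solutions. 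So the argument should be closed the way the paper closes it: cite Proposition \ref{P::3} (and Proposition \ref{P::4} for Proposition \ref{P::6}) together with the coincidence of the spectra of $T_n[\bar M]$ and $T_n^*[\bar M]$, and drop the endpoint-exchange/parity claim, which as stated would fail.
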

			
				\begin{proposition}
					\label{P::6}
					Let $\bar M\in \mathbb{R}$ be such that $T_n[\bar M]$ satisfies property $(T_d)$ on $X_{\{\sigma_1,\dots,\sigma_k\}}^{\{\varepsilon_1,\dots,\varepsilon_{n-k}\}}$  and ${\{\sigma_1,\dots,\sigma_k\}}-{\{\varepsilon_1,\dots,\varepsilon_{n-k}\}}$ satisfy $(N_a)$. Then every solution of $\widehat{T}_n[(-1)^nM]\,v(t)=0$ for $t\in(a,b)$, satisfying the boundary conditions \eqref{Cf::ad1}--\eqref{Cf::ad11} and \eqref{Cf::ad3}--\eqref{Cf::ad4},	does not have any zero on $(a,b)$ provided that one of the following assertions is fulfilled:
					\begin{itemize}
						\item Let $k$ be even:
						\begin{itemize}
							\item If $k>1$, $\sigma_k\neq k-1$ and $M\in[\bar{M}-\lambda_3',\bar{M}-\lambda_2'']$, where:
							\begin{itemize}
								\item $\lambda_3'>0$ is the least positive eigenvalue of $T_n[\bar M]$ in $X_{\{\sigma_1,\dots,\sigma_{k-1}|\alpha\}}^{\{\varepsilon_1,\dots,\varepsilon_{n-k}\}}.$
								\item $\lambda_2''<0$ is the biggest negative eigenvalue of $T_n[\bar M]$ in $X_{\{\sigma_1,\dots,\sigma_k|\alpha\}}^{\{\varepsilon_1,\dots,\varepsilon_{n-k-1}\}}.$	
							\end{itemize}
							\item If $k=1$, $\sigma_1\neq 0$ and $M\in[\bar{M}-\lambda_3',\bar{M}-\lambda_2'']$, where:
							\begin{itemize}
								\item $\lambda_3'>0$ is the least positive eigenvalue of $T_n[\bar M]$ in $X_{\{\alpha\}}^{\{\varepsilon_1,\dots,\varepsilon_{n-1}\}}$, where $\alpha=0$.
								\item $\lambda_2''<0$ is the biggest negative eigenvalue of $T_n[\bar M]$ in $X_{\{\sigma_1,0\}}^{\{\varepsilon_1,\dots,\varepsilon_{n-2}\}}.$	
							\end{itemize}
							
							\item If  $\sigma_{k} = k-1$ and $M\in[\bar{M}-\lambda_1,\bar{M}-\lambda_2'']$, where:
							\begin{itemize}
								\item $\lambda_1>0$ is the least positive eigenvalue of $T_n[\bar M]$ in $X_{\{1,\dots,k-1\}}^{\{\varepsilon_1,\dots,\varepsilon_{n-k}\}}.$
								\item $\lambda_2''<0$ is the biggest negative eigenvalue of $T_n[\bar M]$ in $X_{\{0,\dots,k-1,k\}}^{\{\varepsilon_1,\dots,\varepsilon_{n-k-1}\}}.$	
							\end{itemize}

						\end{itemize}
						\item Let $k$ be odd:
						\begin{itemize}
							\item If $1<k<n-1$, $\sigma_k\neq k-1$ and $M\in[\bar{M}-\lambda_2'',\bar{M}-\lambda_3']$, where:
							\begin{itemize}
								\item $\lambda_3'<0$ is the biggest negative eigenvalue of $T_n[\bar M]$ in $X_{\{\sigma_1,\dots,\sigma_{k-1}|\alpha\}}^{\{\varepsilon_1,\dots,\varepsilon_{n-k}\}}.$
								\item $\lambda_2''>0$ is the least positive eigenvalue of $T_n[\bar M]$ in $X_{\{\sigma_1,\dots,\sigma_k|\alpha\}}^{\{\varepsilon_1,\dots,\varepsilon_{n-k-1}\}}.$	
							\end{itemize}
							\item If $1=k<n-1$, $\sigma_1\neq 0$ and $M\in[\bar{M}-\lambda_2'',\bar{M}-\lambda_3']$, where:
							\begin{itemize}
								\item $\lambda_3'<0$ is the biggest negative eigenvalue of $T_n[\bar M]$ in $X_{\{\alpha\}}^{\{\varepsilon_1,\dots,\varepsilon_{n-1}\}}$, where $\alpha=0$.
								\item $\lambda_2''>0$ is the least positive eigenvalue of $T_n[\bar M]$ in $X_{\{\sigma_1,0\}}^{\{\varepsilon_1,\dots,\varepsilon_{n-2}\}}.$	
							\end{itemize}
							\item If $1<k=n-1$, $\sigma_{n-1}\neq n-2$ and $M\in(-\infty,\bar{M}-\lambda_3']$, where:
							\begin{itemize}
								\item $\lambda_3'<0$ is the biggest negative eigenvalue of $T_n[\bar M]$ in $X_{\{\sigma_1,\dots,\sigma_{n-2},\alpha\}}^{\{\varepsilon_1\}}.$
							\end{itemize}
							\item If $k=1$, $n=2$, $\sigma_{1}\neq 0$ and $M\in(-\infty,\bar{M}-\lambda_3']$, where:
							\begin{itemize}
								\item $\lambda_3'<0$ is the biggest negative eigenvalue of $T_n[\bar M]$ in $X_{\{\alpha\}}^{\{\varepsilon_1\}}=X_{\{0\}}^{\{0\}}.$
							\end{itemize}
							\item If $k<n-1$, $\sigma_{k} = k-1$ and $M\in[\bar{M}-\lambda_2'',\bar{M}-\lambda_1]$, where:
							\begin{itemize}
								\item $\lambda_1<0$ is the biggest negative eigenvalue of $T_n[\bar M]$ in $X_{\{0,\dots,k-1\}}^{\{\varepsilon_1,\dots,\varepsilon_{n-k}\}}.$
								\item $\lambda_2''>0$ is the least positive eigenvalue of $T_n[\bar M]$ in $X_{\{0,\dots,k-1,k\}}^{\{\varepsilon_1,\dots,\varepsilon_{n-k-1}\}}.$
							\end{itemize}	
							\item If $k=n-1$ and  $\sigma_{n-1}=n-2$ and $M\in(-\infty,\bar{M}-\lambda_1]$, where:	
							\begin{itemize}
								\item $\lambda_1<0$ is the biggest negative eigenvalue of $T_n[\bar M]$ in $X_{\{0,\dots,n-2\}}^{\{\varepsilon_1\}}.$
							\end{itemize}	
							
						\end{itemize}
					\end{itemize}
				\end{proposition}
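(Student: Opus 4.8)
The plan is to obtain Proposition~\ref{P::6} as the $\widehat T_n$-counterpart of Proposition~\ref{P::4}, in the same way that Proposition~\ref{P::5} is the counterpart of Proposition~\ref{P::3}. The conceptual shortcut is the identity $\widehat T_n[(-1)^nM]=(-1)^nT_n^*[M]$ from \eqref{Ec::Tg}: multiplying the equation $T_n^*[M]\,v=0$ by the nonzero constant $(-1)^n$ alters neither the solution set nor the zero set of any solution, so a function $v\in C^n(I)$ obeying \eqref{Cf::ad1}--\eqref{Cf::ad11} and \eqref{Cf::ad3}--\eqref{Cf::ad4} solves $\widehat T_n[(-1)^nM]\,v=0$ on $(a,b)$ exactly when it solves $T_n^*[M]\,v=0$ there. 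The only thing to re-examine is the organisation of the case distinction, which in Proposition~\ref{P::4} is indexed by the parity of $n-k$ and here by the parity of $k$.

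To write a self-contained argument rather than a one-line reduction, I would repeat the oscillation count of Proposition~\ref{P::3} directly for $\widehat T_n$. By Lemma~\ref{L::0} the operator $\widehat T_n[(-1)^n\bar M]$ has the decomposition \eqref{Ec::Td*}, and by \eqref{Ec::Tgg} each $\widehat T_\ell v$ is a combination of $v,\dots,v^{(\ell)}$ with positive leading coefficient $v_1\cdots v_{n-\ell}$; Lemmas~\ref{L::3} and~\ref{L::4} play, for $\widehat T_n$ at $t=a$ and $t=b$, the role Lemmas~\ref{L::1} and~\ref{L::2} play for $T_n$. First I would check, as in the proof of Lemma~\ref{L::11}, that for $M=\bar M$ a solution of $\widehat T_n[(-1)^n\bar M]\,v=0$ has at most $n-1$ zeros in $(a,b)$ when the boundary data are ignored, that each vanishing of $\widehat T_{n-\ell}v$ at $a$ or $b$ destroys one possible oscillation, and that the $n-1$ prescribed boundary conditions force $n-1$ such vanishings; hence $v_{\bar M}$ has no zero in $(a,b)$.

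Then I would move $v_M$ continuously in $M$. Since $v_M$ solves $\widehat T_n[(-1)^nM]\,v=0$, one has $\widehat T_n[(-1)^n\bar M]\,v_M=(-1)^n(\bar M-M)\,v_M$ on $(a,b)$, and it is precisely the factor $(-1)^n$ here — absent from the identity $T_n^*[\bar M]\,v_M=(\bar M-M)\,v_M$ used in Proposition~\ref{P::3} — that converts the parity of $n-k$ into the parity of $k$ once one runs the sign bookkeeping of $\widehat T_{n-\ell}v_M(a)$ and $\widehat T_{n-\ell}v_M(b)$ under the conditions of maximal oscillation (Notation~\ref{Not:maxos}). Arguing as in Proposition~\ref{P::3} (and Proposition~\ref{P::1}) I would show that $v_M$ cannot acquire a double zero in $(a,b)$ while it keeps a constant sign, that for $M$ on one side of $\bar M$ maximal oscillation is first reached when $v_M^{(\gamma)}(b)=0$ and on the other side when $v_M^{(\eta)}(a)=0$ — or, when $\varepsilon_{n-k}=n-k-1$ (equivalently $\delta_k=k-1$), when $v_M^{(k-1)}(b)=0$ — and that each such equality occurs exactly at $M=\bar M-\lambda$ for the eigenvalue $\lambda$ of the operator on the matching space. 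The degenerate sub-cases ($k=1$, $n=2$, the values $\alpha=0$ or $\beta=0$) I would dispose of as in Propositions~\ref{P::3} and~\ref{P::4}, by noting that $v_M^{(\eta)}(a)$ or $v_M^{(\gamma)}(b)$ cannot vanish for $M\neq\bar M$ without forcing a zero of multiplicity $n$; and I would use the coincidence of the eigenvalues of an operator with those of its adjoint (hence of $T_n[\bar M]$, $T_n^*[\bar M]$ and $\widehat T_n[(-1)^n\bar M]$ in the corresponding spaces) to express every $\lambda_1,\lambda_2',\lambda_3'$ in terms of $T_n[\bar M]$, exactly as in the statement.

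The main obstacle is not conceptual — the whole argument is the one already carried out for Proposition~\ref{P::3} — but bookkeeping: tracking the parity shift induced by the $(-1)^n$ of \eqref{Ec::Tg} through every sub-case and matching each endpoint with the right eigenvalue space among $X_{\{\sigma_1,\dots,\sigma_{k-1}|\alpha\}}^{\{\varepsilon_1,\dots,\varepsilon_{n-k}\}}$, $X_{\{\sigma_1,\dots,\sigma_k|\alpha\}}^{\{\varepsilon_1,\dots,\varepsilon_{n-k-1}\}}$, $X_{\{1,\dots,k-1\}}^{\{\varepsilon_1,\dots,\varepsilon_{n-k}\}}$ and the remaining ones appearing in the statement.
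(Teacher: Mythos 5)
Your opening reduction is precisely the paper's proof: Propositions \ref{P::5} and \ref{P::6} carry no separate argument in the text, only the remark preceding them that a solution of \eqref{Ec::Ad} and a solution of $\widehat{T}_n[(-1)^nM]\,v(t)=0$ differ by the factor $(-1)^n$, so that the conclusions of Propositions \ref{P::3} and \ref{P::4} transfer directly. In that respect your proposal takes the same approach as the paper, and the first paragraph alone already meets the paper's own standard of proof.

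The mechanism you invoke in the self-contained version, however, is not sound. Since $\widehat{T}_n[(-1)^nM]=(-1)^nT_n^*[M]$ by \eqref{Ec::Tg}, the two equations have identical solution sets for each $M$, and on a solution the identity $\widehat{T}_n[(-1)^n\bar M]\,v_M=(-1)^n(\bar M-M)\,v_M$ contains no information beyond $T_n^*[\bar M]\,v_M=(\bar M-M)\,v_M$: in any oscillation bookkeeping the factor $(-1)^n$ is exactly compensated by the sign relations $\widehat{T}_\ell=(-1)^\ell T_\ell^*$ and by the absence of the per-step minus sign in the recurrence $\widehat{T}_{\ell+1}v=\frac{1}{v_{n-\ell}}\frac{d}{dt}\bigl(\widehat{T}_\ell v\bigr)$, as opposed to $T_\ell^*v=\frac{-1}{w_\ell}\frac{d}{dt}\bigl(T_{\ell-1}^*v\bigr)$. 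Consequently a correct re-derivation must reproduce the conclusions of Proposition \ref{P::4} verbatim, that is, organised by the parity of $n-k$; the factor $(-1)^n$ cannot ``convert'' that parity into the parity of $k$. The two labellings coincide only when $n$ is even (the situation of the recurrent example, cf.\ the remark following Proposition \ref{P::6}), and the paper itself later applies Propositions \ref{P::5} and \ref{P::6} according to the parity of $n-k$ (proof of Theorem \ref{T::IPN2}). So either keep the one-line reduction and quote the conclusions of Proposition \ref{P::4} as they stand, or, if you insist on a self-contained argument, be aware that the sign bookkeeping will not and should not produce the change of parity you describe.
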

				
				\begin{remark}
					In our recurrent example, we have that $n=4$ is even, so $\tilde T_4[(-1)^4M]\equiv T_4^*[M]$. Then, Example \ref{Ex::11} is also valid to illustrate Propositions \ref{P::5} and \ref{P::6}.
				\end{remark}
	\chapter[Characterization of the strongly positive (negative) character.]{Characterization of the strongly inverse positive (negative) character of $T_n[M]$ in 	 $X_{\{\sigma_1,\dots,\sigma_{k}\}}^{\{\varepsilon_1,\dots,\varepsilon_{n-k}\}}$.}	
	
	This chapter is devoted to obtain the main result of this work, such a result gives the characterization of the parameter's set where $T_n[M]$ is either strongly inverse positive or strongly inverse negative in  $X_{\{\sigma_1,\dots,\sigma_{k}\}}^{\{\varepsilon_1,\dots,\varepsilon_{n-k}\}}$. Such a characterization is obtained under the hypotheses that there exists $\bar M\in \mathbb{R}$ such that the operator $T_n[\bar M]$ satisfies property $(T_d)$ and, moreover, ${\{\sigma_1,\dots,\sigma_k\}}-{\{\varepsilon_1,\dots,\varepsilon_{n-k}\}}$ satisfy $(N_a)$. In such a case, from Theorem \ref{L::5}, it is known that if $n-k$ is even, then $T_n[\bar M]$ is strongly inverse positive in $X_{\{\sigma_1,\dots,\sigma_{k}\}}^{\{\varepsilon_1,\dots,\varepsilon_{n-k}\}}$ and, if $n-k$ is odd, then $T_n[\bar M]$ is strongly inverse negative in $X_{\{\sigma_1,\dots,\sigma_{k}\}}^{\{\varepsilon_1,\dots,\varepsilon_{n-k}\}}$.
	
The characterization here obtained is related to the parameter's set which contains $\bar M$. That is, if $n-k$ is even we characterize the parameter's set where $T_n[M]$ is strongly inverse positive in $X_{\{\sigma_1,\dots,\sigma_{k}\}}^{\{\varepsilon_1,\dots,\varepsilon_{n-k}\}}$ and, if $n-k$ is odd we characterize the parameter's set where $T_n[M]$ is strongly inverse negative in $X_{\{\sigma_1,\dots,\sigma_{k}\}}^{\{\varepsilon_1,\dots,\varepsilon_{n-k}\}}$. In particular, $\bar M$ belongs to those intervals.

\begin{theorem}\label{T::IPN}
		Let $\bar M\in \mathbb{R}$ be such that $T_n[\bar M]$ satisfies property $(T_d)$ on $X_{\{\sigma_1,\dots,\sigma_k\}}^{\{\varepsilon_1,\dots,\varepsilon_{n-k}\}}$  and ${\{\sigma_1,\dots,\sigma_k\}}-{\{\varepsilon_1,\dots,\varepsilon_{n-k}\}}$ satisfy $(N_a)$. The following properties are fulfilled:
		\begin{itemize}
			\item If $n-k$ is even and $2\leq k\leq n-1$, then $T_n[M]$ is strongly inverse positive in $X_{\{\sigma_1,\dots,\sigma_{k}\}}^{\{\varepsilon_1,\dots,\varepsilon_{n-k}\}}$ if, and only if, $M\in(\bar M-\lambda_1,\bar M-\lambda_2]$, where
			\begin{itemize}
				\item[*] $\lambda_1>0$ is the least positive eigenvalue of $T_n[\bar M]$ in $X_{\{\sigma_1,\dots,\sigma_{k}\}}^{\{\varepsilon_1,\dots,\varepsilon_{n-k}\}}.$ 
				\item [*]$\lambda_2<0$ is the maximum between:
				\begin{itemize}
					\item [·]$\lambda_2'<0$, the biggest negative eigenvalue of $T_n[\bar M]$ in $X_{\{\sigma_1,\dots,\sigma_{k-1}\}}^{\{\varepsilon_1,\dots,\varepsilon_{n-k}|\beta\}}.$
					\item [·]$\lambda_2''<0$ is the biggest negative eigenvalue of $T_n[\bar M]$ in $X_{\{\sigma_1,\dots,\sigma_k|\alpha\}}^{\{\varepsilon_1,\dots,\varepsilon_{n-k-1}\}}.$ 
				
				\end{itemize}
			\end{itemize}
			\item If $k=1$ and $n$ is odd, then $T_n[M]$ is strongly inverse positive in $X_{\{\sigma_1\}}^{\{\varepsilon_1,\dots,\varepsilon_{n-1}\}}$ if, and only if, $M\in(\bar M-\lambda_1,\bar M-\lambda_2]$, where
			\begin{itemize}
				\item[*] $\lambda_1>0$ is the least positive eigenvalue of $T_n[\bar M]$ in $X_{\{\sigma_1\}}^{\{\varepsilon_1,\dots,\varepsilon_{n-1}\}}.$
				\item[*] $\lambda_2<0$ is the biggest negative eigenvalue of $T_n[\bar M]$ in $X_{\{\sigma_1|\alpha\}}^{\{\varepsilon_1,\dots,\varepsilon_{n-2}\}}.$  
			\end{itemize}
			\item If $n-k$ is odd and $2\leq k\leq n-2$, then $T_n[M]$ is strongly inverse negative in $X_{\{\sigma_1,\dots,\sigma_{k}\}}^{\{\varepsilon_1,\dots,\varepsilon_{n-k}\}}$ if, and only if, $M\in[\bar M-\lambda_2,\bar M-\lambda_1)$, where
			\begin{itemize}
				\item[*] $\lambda_1<0$ is the biggest negative eigenvalue of $T_n[\bar M]$ in $X_{\{\sigma_1,\dots,\sigma_{k}\}}^{\{\varepsilon_1,\dots,\varepsilon_{n-k}\}}.$ 
				\item[*] $\lambda_2>0$ is the minimum between:
				\begin{itemize}
					\item[·] $\lambda_2'>0$, the least positive eigenvalue of $T_n[\bar M]$ in $X_{\{\sigma_1,\dots,\sigma_{k-1}\}}^{\{\varepsilon_1,\dots,\varepsilon_{n-k}|\beta\}}.$
					\item[·] $\lambda_2''>0$ is the least positive eigenvalue of $T_n[\bar M]$ in $X_{\{\sigma_1,\dots,\sigma_k|\alpha\}}^{\{\varepsilon_1,\dots,\varepsilon_{n-k-1}\}}.$  
				\end{itemize}
			\end{itemize}	
				\item If $k=1$ and $n>2$ is even, then $T_n[M]$ is strongly inverse negative in $X_{\{\sigma_1\}}^{\{\varepsilon_1,\dots,\varepsilon_{n-1}\}}$ if, and only if, $M\in[\bar M-\lambda_2,\bar M-\lambda_1)$, where
				\begin{itemize}
					\item [*]$\lambda_1<0$ is the biggest negative eigenvalue of $T_n[\bar M]$ in $X_{\{\sigma_1\}}^{\{\varepsilon_1,\dots,\varepsilon_{n-1}\}}.$
					\item [*]$\lambda_2>0$ is the least positive eigenvalue of $T_n[\bar M]$ in $X_{\{\sigma_1|\alpha\}}^{\{\varepsilon_1,\dots,\varepsilon_{n-2}\}}.$
				\end{itemize}
				\item If $k=n-1$ and $n>2$, then $T_n[M]$ is strongly inverse negative in $X_{\{\sigma_1\}}^{\{\varepsilon_1,\dots,\varepsilon_{n-1}\}}$ if, and only if, $M\in[\bar M-\lambda_2,\bar M-\lambda_1)$, where
				\begin{itemize}
					\item[*]$\lambda_1<0$ is the biggest negative eigenvalue of $T_n[\bar M]$ in $X_{\{\sigma_1,\dots,\sigma_{n-1}\}}^{\{\varepsilon_1\}}.$
					\item[*] $\lambda_2>0$ is the least positive eigenvalue of $T_n[\bar M]$ in $X_{\{\sigma_1,\dots,\sigma_{n-2}\}}^{\{\varepsilon_1|\beta\}}.$  
				\end{itemize}
				\item If $n=2$, then $T_n[M]$ is strongly inverse negative in $X_{\{\sigma_1\}}^{\{\varepsilon_1\}}$ if, and only if, $M\in(-\infty,\bar M-\lambda_1)$, where
				\begin{itemize}
					\item[*] $\lambda_1<0$ is the biggest negative eigenvalue of $T_n[\bar M]$ in $X_{\{\sigma_1\}}^{\{\varepsilon_1\}}.$
				\end{itemize} 
		\end{itemize}
\end{theorem}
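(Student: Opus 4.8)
Throughout, assume (as in the hypotheses) that $T_n[\bar M]$ satisfies $(T_d)$ on $X_{\{\sigma_1,\dots,\sigma_k\}}^{\{\varepsilon_1,\dots,\varepsilon_{n-k}\}}$ and that ${\{\sigma_1,\dots,\sigma_k\}}-{\{\varepsilon_1,\dots,\varepsilon_{n-k}\}}$ satisfies $(N_a)$. The plan is to carry out the generic case $n-k$ even with $2\le k\le n-1$ in full (so, by Theorem~\ref{L::5}, $T_n[\bar M]$ is strongly inverse positive and $g_{\bar M}$ satisfies $(P_g)$), and then to reduce the remaining cases to it. The case $n-k$ odd is obtained verbatim by replacing $g_M$ by $-g_M$ and working with $\widehat T_n[(-1)^nM]$, using Theorem~\ref{T::7} and Propositions~\ref{P::5}--\ref{P::6} in place of Theorem~\ref{T::6} and Propositions~\ref{P::3}--\ref{P::4}. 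The degenerate cases $k=1$, $k=n-1$ and $n=2$ follow because one (or, for $n=2$, both) of the auxiliary spaces $X_{\{\sigma_1,\dots,\sigma_{k-1}\}}^{\{\varepsilon_1,\dots,\varepsilon_{n-k}|\beta\}}$ and $X_{\{\sigma_1,\dots,\sigma_k|\alpha\}}^{\{\varepsilon_1,\dots,\varepsilon_{n-k-1}\}}$ becomes trivial (it would carry $n$ boundary conditions at a single endpoint), so the corresponding $\lambda_2'$ or $\lambda_2''$ is absent and the associated constraint drops out, leaving a half-line when both drop out.

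The left endpoint is immediate. By Theorem~\ref{L::5}, $g_{\bar M}$ satisfies $(P_g)$ on $X_{\{\sigma_1,\dots,\sigma_k\}}^{\{\varepsilon_1,\dots,\varepsilon_{n-k}\}}$, so Theorem~\ref{T::6} supplies the least positive eigenvalue $\lambda_1$ of $T_n[\bar M]$ in this space, guarantees $g_M\ge 0$ on $I\times I$ for $M\in(\bar M-\lambda_1,\bar M]$ and forbids $g_M\ge 0$ for $M<\bar M-\lambda_1$. A strongly inverse positive operator is inverse positive, so by Theorem~\ref{T::in1} strong inverse positivity forces $M\ge \bar M-\lambda_1$; and $M=\bar M-\lambda_1$ is excluded since $\lambda_1$ being an eigenvalue means $T_n[\bar M-\lambda_1]$ is not invertible in the space, so its Green's function does not exist. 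Hence strong inverse positivity requires $M>\bar M-\lambda_1$, the open left endpoint.

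The right endpoint is the substance of the proof. By Theorem~\ref{T::in2}, strong inverse positivity of $T_n[M]$ is equivalent to: (i) $g_M(t,s)>0$ on $(a,b)\times(a,b)$; (ii) $\dfrac{\partial^\alpha}{\partial t^\alpha}g_M(t,s)_{\mid t=a}>0$ for a.e.\ $s$; and (iii) $\dfrac{\partial^\beta}{\partial t^\beta}g_M(t,s)_{\mid t=b}$ has sign $(-1)^\beta$ for a.e.\ $s$. Using $g_M^*(t,s)=g_M(s,t)$ from \eqref{Ec::gg}, the map $s\mapsto g_M(t,s)$ solves $T_n^*[M]\,v=0$ for $s\ne t$ together with the adjoint conditions \eqref{Cf::ad1}--\eqref{Cf::ad4}; letting $s\to a^+$ and $s\to b^-$ and extracting the leading Taylor coefficient of $g_M(t,s)$ at the endpoint the source merges with, one recovers, up to a positive normalisation, exactly the solutions of the homogeneous equation with one released boundary condition analysed in Propositions~\ref{P::1}--\ref{P::2}; and the $t$-derivatives in (ii)--(iii), read as functions of $s$, are the source-derivatives of the adjoint Green's function controlled by Propositions~\ref{P::3}--\ref{P::4}. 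Deforming $M$ continuously, using that $g_M$ is nonincreasing in $M$ where it has constant sign (Theorem~\ref{T::d1}), that $g_M\ge0$ propagates to the whole segment once it holds at its upper endpoint (Theorem~\ref{T::int}), and that $\lambda_1<\lambda_3'$ and $\lambda_1<\lambda_3''$ by Proposition~\ref{P::6.5}, one shows that for $M>\bar M-\lambda_1$: condition (ii) holds precisely for $M\le \bar M-\lambda_2''$, condition (iii) holds precisely for $M\le \bar M-\lambda_2'$, and condition (i) is never the first to fail (the eigenvalues of $X_{\{\sigma_1,\dots,\sigma_k\}}^{\{\varepsilon_1,\dots,\varepsilon_{n-k}\}}$ that would create an interior zero lie strictly beyond $\bar M-\lambda_2$, by the argument used in the proof of Proposition~\ref{P::6.5} that a strongly inverse positive operator admits no two constant sign eigenfunctions, together with the fact that a strongly inverse positive operator is invertible). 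Since all three conditions are required, strong inverse positivity holds if and only if $\bar M-\lambda_1<M\le\min\{\bar M-\lambda_2',\bar M-\lambda_2''\}=\bar M-\lambda_2$; the right endpoint is attained because at $M=\bar M-\lambda_2$ the limiting profiles of Propositions~\ref{P::1}--\ref{P::2} are still strictly positive on $(a,b)$ (those propositions give closed $M$-intervals) so (i)--(iii) still hold a.e., and $\bar M-\lambda_2$ is not an eigenvalue of $T_n[\bar M]$ in $X_{\{\sigma_1,\dots,\sigma_k\}}^{\{\varepsilon_1,\dots,\varepsilon_{n-k}\}}$, since otherwise $X_{\{\sigma_1,\dots,\sigma_k\}}^{\{\varepsilon_1,\dots,\varepsilon_{n-k}\}}$ and one of the auxiliary spaces would share a constant sign eigenfunction, which is impossible.

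The main obstacle is precisely the matching in the previous paragraph: identifying, along the whole deformation in $M$, the boundary behaviour of $g_M$ (conditions (ii)--(iii) and the $s\to a^+$, $s\to b^-$ limits of $g_M$) with the auxiliary boundary value problems of Propositions~\ref{P::1}--\ref{P::6}, and verifying that interior positivity (i) never degenerates before (ii)--(iii). This requires carrying the maximal-oscillation bookkeeping of Lemma~\ref{L::11} and Propositions~\ref{P::1}--\ref{P::6} through every intermediate value of $M$, keeping exact track of how many sign changes each $T_{n-\ell}u$ and $T_{n-\ell}^*v$ may have once the boundary conditions are imposed, and of which intermediate derivative vanishes first. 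The rest — continuity in $M$ of $g_M$ and of its relevant boundary derivatives, and the reduction of the five non-generic cases — is routine given the preliminary chapters.
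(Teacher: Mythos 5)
Your overall strategy mirrors the paper's: the left endpoint via Theorem \ref{L::5} and Theorem \ref{T::6}, and the right endpoint by tying the boundary behaviour of $g_M$ and of the adjoint Green's function to the auxiliary problems of Propositions \ref{P::1}--\ref{P::4}, deforming continuously in $M$, invoking Theorems \ref{T::d1} and \ref{T::int} plus discreteness of the spectrum for the necessity, and checking that interior positivity cannot be the first condition to fail; your pairing of $\partial^\alpha_t g_M{}_{\mid t=a}$ with $\lambda_2''$ and of $\partial^\beta_t g_M{}_{\mid t=b}$ with $\lambda_2'$ agrees with Steps 3 and 4 of the paper's proof. However, there is a genuine gap at the interior step: your parenthetical justification that condition (i) is never the first to fail is not valid. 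Losing interior positivity of $g_M(\cdot,s)$ for some fixed $s$ does not require crossing an eigenvalue of $T_n[\bar M]$ in $X_{\{\sigma_1,\dots,\sigma_k\}}^{\{\varepsilon_1,\dots,\varepsilon_{n-k}\}}$: a priori the Green's function could develop an interior double zero while the operator remains invertible, and the argument of Proposition \ref{P::6.5} about two constant sign eigenfunctions says nothing about this. The paper devotes Step 5 of its proof precisely to excluding that double zero: for fixed $s$ it applies the $(T_d)$ decomposition to $u_M^s=g_M(\cdot,s)$, counts that the jump at $t=s$ allows $T_{n-\ell}u_M^s$ at most $\ell+2$ zeros, uses Lemmas \ref{L::1}--\ref{L::2} to see that the $n$ boundary conditions absorb $n$ oscillations, and then shows that the sign of $(u_M^s)^{(\alpha)}(a)$ is incompatible with the maximal oscillation needed for a double zero; none of this follows from your eigenvalue remark.

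Moreover, what you defer as ``the main obstacle'' is in fact the bulk of the proof and is only asserted: one must verify that $w_M=\partial^\eta_s g_M^1(\cdot,s)_{\mid s=a}$, $y_M=\partial^\gamma_s g_M^2(\cdot,s)_{\mid s=b}$ and the adjoint-side functions $\widehat w_M$, $\widehat y_M$ solve the homogeneous equation and satisfy exactly the boundary conditions \eqref{Ec::cfaa}--\eqref{Ec::cfbb}, \eqref{Ec::cfaaa}--\eqref{Ec::cfbbb} and \eqref{Cf::ad1}--\eqref{Cf::ad4} with a nonzero normalisation at the released condition; in the paper this is carried out through the vector Green's matrices \eqref{Ec:MG} and \eqref{Ec:MGh}, the relations \eqref{Ec::gj}, and the diagonal-jump bookkeeping of Remark \ref{R:2.5}, and it is what makes Propositions \ref{P::1}--\ref{P::4} applicable at all. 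Finally, your claims that (ii) and (iii) hold ``precisely'' up to $\bar M-\lambda_2''$ and $\bar M-\lambda_2'$ are stronger than what is needed or what the cited tools give: the necessity is obtained in the paper by assuming inverse positivity at some $\widehat M$ beyond the threshold, sandwiching $w_M^{(\beta)}(b)$ (resp.\ $y_M^{(\alpha)}(a)$) via Theorems \ref{T::d1} and \ref{T::int}, and contradicting the discreteness of the spectrum in the corresponding auxiliary space; you gesture at these theorems but the contradiction argument is not carried out. So the skeleton is the right one, but the decisive steps are asserted rather than proved, and the one shortcut offered for the interior step is incorrect.
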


\begin{proof}
	From Lemma \ref{L::5}, we know that operator $T_n[\bar M]$ satisfies property $(P_g)$ and is strongly inverse positive in $X_{\{\sigma_1,\dots,\sigma_k\}}^{\{\varepsilon_1,\dots,\varepsilon_{n-k}\}}$ if $n-k$ is even. Moreover, it satisfies $(N_g)$ and is strongly inverse negative in $X_{\{\sigma_1,\dots,\sigma_k\}}^{\{\varepsilon_1,\dots,\varepsilon_{n-k}\}}$  if $n-k$ is odd.
	
	Then, using Theorems \ref{T::d1}, \ref{T::in2}, \ref{T::6} and \ref{T::7}, we conclude that
	\begin{itemize}
		\item If $n-k$ is even and $M\leq\bar M$, then $T_n[M]$ is strongly inverse positive in $X_{\{\sigma_1,\dots,\sigma_k\}}^{\{\varepsilon_1,\dots,\varepsilon_{n-k}\}}$ if, and only if, $M\in(\bar M-\lambda_1,\bar M]$.
		\item  If $n-k$ is odd and $M\geq \bar M$, then $T_n[M]$ is strongly inverse negative in $X_{\{\sigma_1,\dots,\sigma_k\}}^{\{\varepsilon_1,\dots,\varepsilon_{n-k}\}}$ if, and only if, $M\in[\bar M,\bar M-\lambda_1)$.
	\end{itemize}
	
To obtain the other extreme of the interval we use the characterization of the strongly inverse positive (negative) character given in Theorems \ref{T::in2} and \ref{T::in21}. 

The proof follows several steps. 

In order to make the paper more readable, we indicate the different steps for the case with $n-k$ even. For the case with $n-k$ odd the proof is analogous.

	\begin{itemize}
		\item[Step 1.] Study of the related Green's function at $s=a$.
		\item[Step 2.] Study of  the related Green's function at $s=b$.
		\item[Step 3.] Study of the related Green's function at $t=a$.
		\item[Step 4.] Study of  the related Green's function at $t=b$.
		\item[Step 5.] Study of  the related Green's function on $(a,b)\times(a,b)$.
	\end{itemize}
	 
	 Let us denote
	 \[g_M(t,s)=\left\lbrace \begin{array}{cc}
	 g_M^1(t,s)\,,&a\leq s\leq t\leq b\,,\\\\
	 g_M^2(t,s)\,,&a<t<s<b\,,\end{array}\right. \]
	as the related Green's function of $T_n[M]$ in $X_{\{\sigma_1,\dots,\sigma_k\}}^{\{\varepsilon_1,\dots,\varepsilon_{n-k}\}}$.
	
	\begin{itemize}
		\item[Step 1.]Study of the related Green's function at $s=a$.
	\end{itemize}
	Let us consider $w_M(t)=\dfrac{\partial^\eta}{\partial s^\eta}g_M^1(t,s)_{\mid s=a}$, where $\eta$ has been defined in \eqref{Ec::eta}.
	
	Using \eqref{Ec::gg} and the boundary conditions of the adjoint operator given in \eqref{Cf::ad1}--\eqref{Cf::ad4}, if $\eta>0$, we obtain that :
	\[g_M^1(t,a)=\dfrac{\partial}{\partial s}g_M^1(t,s)_{\mid s=a}=\cdots=\dfrac{\partial^{\eta-1}}{\partial s^{\eta-1}}g_M^1(t,s)_{\mid s=a}=0\,.\]
	
	Realize that a necessary condition to ensure the inverse positive  character is that $w_M\geq 0$. Indeed, if there exists $t^*\in [a,b]$, such that $w_M(t^*)<0$, then there exists $\rho(t^*)>0$ such that $g_M(t^*,s)<0$ for all $s\in(0,\rho(t^*))$, which contradicts the  inverse positive  character. 
	
		Hence from Lemma \ref{L::5}, we have $w_{\bar M}\geq 0$ if $n-k$ is even.
	
	Moreover, since $g_M(t,s)$ is the related Green's function of $T_n[M]$ in $X_{\{\sigma_1,\dots,\sigma_k\}}^{\{\varepsilon_1,\dots,\varepsilon_{n-k}\}}$, we have that $T_n[M]\,g_M(t,a)=0$ for all $t\in(a,b]$. Hence
	\[\dfrac{\partial^\eta}{\partial s^\eta}\left( T_n[M]\,g_M(t,s)\right)_{\mid s=a}=T_n[M]\,w_M(t)=0\,,\quad t\in(a,b]\,. \]
	
	Now, let us see which boundary conditions are satisfied by $w_M$.
	
	To this end, we  use the Green's matrix for the vectorial problem \eqref{Ec::vec}-\eqref{Ec::Cf}, introduced in \eqref{Ec:MG}, where the expression of $g_{n-j}(t,s)$ is given in \eqref{Ec::gj} for $j=1,\dots,n-1$.
	
	If $k>1$, considering the first row of \eqref{Ec::Cf}, we have 
	\[\left\lbrace \begin{array}{r}
	\dfrac{\partial^{\sigma_1}}{\partial t^{\sigma_1}}g_M^2(t,s)_{\mid t=a}=0\,,\\\\
	-\dfrac{\partial^{\sigma_1+1}}{\partial t^{\sigma_1}\partial s}g_M^2(t,s)_{\mid t=a}+\alpha_0^1(s)\dfrac{\partial^{\sigma_1}}{\partial t^{\sigma_1}}g_M^2(t,s)_{\mid t=a}=0\,,\\\\
	\vdots\hspace{0.5cm}\\\\
		(-1)^\eta\dfrac{\partial^{\sigma_1+\eta}}{\partial t^{\sigma_1}\partial s^\eta}g_M^2(t,s)_{\mid t=a}+\sum_{i=0}^{\eta-1}\alpha_i^\eta(s)\dfrac{\partial^{i+\sigma_1}}{\partial t^{\sigma_1}\partial s^i}g_M^2(t,s)_{\mid t=a}=0\,.\end{array}\right.\]
		
		This system is satisfied in particular for $s=a$. Since $\eta+\sigma_1<n-1$ we do not reach any diagonal element of $G(t,s)$, hence we obtain by continuity:		
			\[\left\lbrace \begin{array}{r}
			\dfrac{\partial^{\sigma_1}}{\partial t^{\sigma_1}}g_M^1(t,s)_{\mid (t,s)=(a,a)}=0\,,\\\\
			-\dfrac{\partial^{\sigma_1+1}}{\partial t^{\sigma_1}\partial s}g_M^1(t,s)_{\mid (t,s)=(a,a)}+\alpha_0^1(a)\dfrac{\partial^{\sigma_1}}{\partial t^{\sigma_1}}g_M^1(t,s)_{\mid (t,s)=(a,a)}=0\,,\\\\
			\vdots\hspace{0.5cm}\\\\
			(-1)^\eta\dfrac{\partial^{\sigma_1+\eta}}{\partial t^{\sigma_1}\partial s^\eta}g_M^1(t,s)_{\mid (t,s)=(a,a)}+\sum_{i=0}^{\eta-1}\alpha_i^\eta(a)\dfrac{\partial^{i+\sigma_1}}{\partial t^{\sigma_1}\partial s^i}g_M^1(t,s)_{\mid (t,s)=(a,a)}=0\,.\end{array}\right.\]
			
			Taking into account that $\alpha_i^j\in C(I)$, we have:
			\[w_M^{(\sigma_1)}(a)=\dfrac{\partial^{\sigma_1+\eta}}{\partial t^{\sigma_1}\partial s^\eta}g_M^1(t,s)_{\mid (t,s)=(a,a)}=0\,.\]
			
			Proceeding analogously for $\sigma_2,\dots,\sigma_{k-1}$, we obtain:
			\[w_M^{(\sigma_2)}(a)=\cdots=w_M^{(\sigma_{k-1})}(a)=0\,.\]
			
			Now, let us choose the row $\sigma_k$ of $G(t,s)$. From \eqref{Ec::Cf}, we have:
				\[\left\lbrace \begin{array}{r}
				\dfrac{\partial^{\sigma_k}}{\partial t^{\sigma_k}}g_M^2(t,s)_{\mid t=a}=0\,,\\\\
				-\dfrac{\partial^{\sigma_k+1}}{\partial t^{\sigma_k}\partial s}g_M^2(t,s)_{\mid t=a}+\alpha_0^1(s)\dfrac{\partial^{\sigma_k}}{\partial t^{\sigma_k}}g_M^2(t,s)_{\mid t=a}=0\,,\\\\
				\vdots\hspace{0.5cm}\\\\
				(-1)^\eta\dfrac{\partial^{\sigma_k+\eta}}{\partial t^{\sigma_k}\partial s^\eta}g_M^2(t,s)_{\mid t=a}+\sum_{i=0}^{\eta-1}\alpha_i^\eta(s)\dfrac{\partial^{i+\sigma_k}}{\partial t^{\sigma_k}\partial s^i}g_M^2(t,s)_{\mid t=a}=0\,.\end{array}\right.\]
				
				This system is satisfied in particular for $s=a$. However, since $\sigma_k+\eta=n-1$, we reach a diagonal element of $G(t,s)$. Hence, in order to express the previous system by means of $g^1_M(t,s)$, we have to take into account Remark \ref{R:2.5} to obtain:				
					\[\left\lbrace \begin{array}{r}
					\dfrac{\partial^{\sigma_k}}{\partial t^{\sigma_k}}g_M^1(t,s)_{\mid (t,s)=(a,a)}=0\,,\\\\
					-\dfrac{\partial^{\sigma_k+1}}{\partial t^{\sigma_k}\partial s}g_M^1(t,s)_{\mid (t,s)=(a,a)}+\alpha_0^1(a)\dfrac{\partial^{\sigma_k}}{\partial t^{\sigma_k}}g_M^1(t,s)_{\mid (t,s)=(a,a)}=0\,,\\\\
					\vdots\hspace{0.5cm}\\\\
					(-1)^\eta\dfrac{\partial^{\sigma_k+\eta}}{\partial t^{\sigma_k}\partial s^\eta}g_M^1(t,s)_{\mid (t,s)=(a,a)}+\sum_{i=0}^{\eta-1}\alpha_i^\eta(a)\dfrac{\partial^{i+\sigma_k}}{\partial t^{\sigma_k}\partial s^i}g_M^1(t,s)_{\mid (t,s)=(a,a)}=1\,.\end{array}\right.\]
					
					So, since $\alpha_i^j\in C(I)$, we have:
					\[w_M^{(\sigma_k)}(a)=\dfrac{\partial^{\sigma_k+\eta}}{\partial t^{\sigma_k}\partial s^\eta}g_M^1(t,s)_{\mid (t,s)=(a,a)}=(-1)^\eta=(-1)^{(n-1-\sigma_k)}\,.\]
					
					Analogously, if $k=1$, then $w_M^{(\sigma_1)}(a)=(-1)^{n-1-\sigma_1}$.
					
					Now, let us see what happens at $t=b$. If we consider the $(k+1)^\mathrm{th}$ row of \eqref{Ec::Cf}, we have					
						\[\left\lbrace \begin{array}{r}
						\dfrac{\partial^{\varepsilon_1}}{\partial t^{\varepsilon_1}}g_M^1(t,s)_{\mid t=b}=0\,,\\\\
						-\dfrac{\partial^{\varepsilon_1+1}}{\partial t^{\varepsilon_1}\partial s}g_M^1(t,s)_{\mid t=b}+\alpha_0^1(s)\dfrac{\partial^{\varepsilon_1}}{\partial t^{\varepsilon_1}}g_M^1(t,s)_{\mid t=b}=0\,,\\\\
						\vdots\hspace{0.5cm}\\\\
						(-1)^\eta\dfrac{\partial^{\varepsilon_1+\eta}}{\partial t^{\varepsilon_1}\partial s^\eta}g_M^1(t,s)_{\mid t=b}+\sum_{i=0}^{\eta-1}\alpha_i^\eta(s)\dfrac{\partial^{i+\varepsilon_1}}{\partial t^{\varepsilon_1}\partial s^i}g_M^1(t,s)_{\mid t=b}=0\,.\end{array}\right.\]
				
				Since $b\neq a$, this system is satisfied in particular at $s=a$. Thus, using that $\alpha_i^j\in C(I)$, we conclude:
				\[w_M^{(\varepsilon_1)}(b)=\dfrac{\partial^{\varepsilon_1+\eta}}{\partial t^{\varepsilon_1}\partial s^\eta}g_M^1(t,s)_{\mid (t,s)=(b,a)}=0\,.\]
				
				Proceeding analogously we obtain:
				\[w_M^{(\varepsilon_2)}(b)=\dots=w_M^{(\varepsilon_{n-k})}(b)=0\,.\]

				Hence, $w_M$ satisfies the boundary conditions \eqref{Ec::cfaa}-\eqref{Ec::cfbb}, so we can apply Proposition \ref{P::1} to affirm that:
				\begin{itemize}
					\item If $n-k$ is even and $k>1$, then $w_M> 0$ on $(a,b)$ for all $M\in [\bar M,\bar M-\lambda_2']$.
					\item If $k=1$ and $n$ is odd, then $w_M> 0$ on $(a,b)$ for all $M\geq \bar M$.
%					\item If $n-k$ is odd and $k>1$, then $w_M<0$ on $(a,b)$ for all $M\in [\bar M-\lambda_2',\bar M]$.
%					\item If $k=1$ and $n$ is even, then $w_M<0$ on $(a,b)$ for all $M\leq \bar M$.
				\end{itemize}
				
				\vspace{0.5cm}
				To finish this Step, let us see that if $n-k$ is even and $k>1$, then $T_n[M]$ cannot be inverse positive  for $M>\bar M-\lambda_2'$.

				Suppose that there exists $\widehat{M}>\bar M-\lambda_2'$ such that $T_n[\widehat{M}]$ is inverse positive in $X_{\{\sigma_1,\dots,\sigma_k\}}^{\{\varepsilon_1,\dots,\varepsilon_{n-k}\}}$, thus from Theorems \ref{T::d1} and \ref{T::int}, we can affirm that for every $M\in [\bar M-\lambda_2',\widehat{M}]$ operator $T_n[M]$ is inverse positive in $X_{\{\sigma_1,\dots,\sigma_k\}}^{\{\varepsilon_1,\dots,\varepsilon_{n-k}\}}$ and, moreover, $w_{\bar M-\lambda_2'}\geq w_M\geq w_{\widehat{M}}$.
				
				In particular, $0=w_{\bar M-\lambda_2'}^{(\beta)}(b)\leq w_M^{(\beta)}(b)\leq w_{\widehat{M}}^{(\beta)}(b)$ if $\beta$ is even and  $0=w_{\bar M-\lambda_2'}^{(\beta)}(b)\geq w_M^{(\beta)}(b)\geq w_{\widehat{M}}^{(\beta)}(b)$ if $\beta$ is odd.
				
				If $w_{\widehat{M}}^{(\beta)}(b)\neq 0$, then there exists $\rho>0$ such that $w_{\widehat{M}}(t)<0$ for all $t\in (b-\rho,b)$, which contradicts our assumption. So:
				\[0=w_{\bar M-\lambda_2'}^{(\beta)}(b)=w_M^{(\beta)}(b)= w_{\widehat{M}}^{(\beta)}(b)\,,\quad \forall M\in [\bar M-\lambda_2',\widehat{M}]\,,\]
				and this fact contradicts the discrete character of the spectrum of the operator $T_n[\bar M]$ in $X_{\{\sigma_1,\dots,\sigma_{k-1}\}}^{\{\varepsilon_1,\dots,\varepsilon_{n-k}|\beta\}}$.
				
				From this Step we obtain the following conclusions:
				\begin{itemize}
					\item If $n-k$ is even, $k>1$ and $M\in [\bar M,\bar M-\lambda_2']$:
					\[\forall t\in (a,b)\,,\quad \exists \rho(t)>0\ \mid \ g_M(t,s)>0\ \forall s\in(a,a+\rho(t))\,.\]
					
					Moreover, if $M>\bar M-\lambda_2'$; then $T_n[M]$ is not inverse positive in $X_{\{\sigma_1,\dots,\sigma_k\}}^{\{\varepsilon_1,\dots,\varepsilon_{n-k}\}}$.
					
					\item If $k=1$, $n$ is odd and $M\geq \bar M$:
					\[\forall t\in (a,b)\,,\quad \exists \rho(t)>0\ \mid \ g_M(t,s)>0\ \forall s\in(a,a+\rho(t))\,.\]					
%					\item If $n-k$ is odd, $k>1$ and  $M\in [\bar M-\lambda_2',\bar M]$:
%						\[\forall t\in (a,b)\,,\quad \exists \rho(t)>0\ \mid \ g_M(t,s)<0\ \forall s\in(a,a+\rho(t))\,.\]
%						
%						Moreover, if $M<\bar M-\lambda_2'$; then $T_n[M]$ is not inverse negative in $X_{\{\sigma_1,\dots,\sigma_k\}}^{\{\varepsilon_1,\dots,\varepsilon_{n-k}\}}$.
%					\item If $k=1$, $n$ is even and $M\leq \bar M$:
%						\[\forall t\in (a,b)\,,\quad \exists \rho(t)>0\ \mid \ g_M(t,s)<0\ \forall s\in(a,a+\rho(t))\,.\]
				\end{itemize}
				
					\begin{itemize}
						\item[Step 2.]Study of the related Green's function at $s=b$.
					\end{itemize}
					Analogously to Step 1, we consider the function
					\[y_M(t)=\dfrac{\partial ^\gamma}{\partial s^\gamma}g_M^2(t,s)_{\mid s=b}\,.\]
					
					In this case, from \eqref{Ec::gg} and the boundary conditions \eqref{Cf::ad1}--\eqref{Cf::ad4}, we obtain that if $\gamma>0$, then:
					\[g_M^2(t,b)=\dfrac{\partial}{\partial s}g_M^2(t,s)_{\mid s=b}=\cdots =\dfrac{\partial^{\gamma-1}}{\partial s^{\gamma-1}}g_M^2(t,s)_{\mid s=b}=0\,.\]
					
					We have the following assertions:
					
					\begin{itemize}
						\item If $\gamma$ is even and there exist $t^*\in(a,b)$ such that $y_M(t^*)<0$, then $T_n[M]$ cannot be inverse positive  in $X_{\{\sigma_1,\dots,\sigma_k\}}^{\{\varepsilon_1,\dots,\varepsilon_{n-k}\}}$.
						\item If $\gamma$ is odd and there exist $t^*\in(a,b)$ such that $y_M(t^*)>0$, then $T_n[M]$ cannot be inverse positive  in $X_{\{\sigma_1,\dots,\sigma_k\}}^{\{\varepsilon_1,\dots,\varepsilon_{n-k}\}}$.
					\end{itemize}
					
					The proof of these assertions is analogous to the proof in Step 1 for $w_M$.
%					 
%					 that if there exists $t^*\in(a,b)$ such that $w_M(t^*)<0$; then $T_n[M]$ cannot be inverse positive in $X_{\{\sigma_1,\dots,\sigma_k\}}^{\{\varepsilon_1,\dots,\varepsilon_{n-k}\}}$ and there exist $t^*\in(a,b)$ such that $y_M(t^*)<0$ ($y_M(t^*)>0$), then $T_n[M]$ cannot be inverse positive (negative) in $X_{\{\sigma_1,\dots,\sigma_k\}}^{\{\varepsilon_1,\dots,\varepsilon_{n-k}\}}$.
%					 
					So, from Lemma \ref{L::5}, we obtain that if $\gamma$ is even, then $y_{\bar M}\geq 0$ and if $\gamma$ is odd, then $y_{\bar M}\leq 0$.
					
					As in Step 1, it can be seen that 
					\[T_n[M]\,y_M(t)=0\,,\quad \forall\,t\in [a,b)\,.\]
					
				Moreover, we can obtain the boundary conditions which $y_M$ verifies. Studying the Green's function related to the first order vectorial problem given in \eqref{Ec:MG} and the boundary conditions \eqref{Ec::Cf}, we obtain that $y_M$ satisfies:
				\[\begin{split}
				y_M^{(\sigma_1)}(a)=\cdots=y_M^{(\sigma_k)}(a)=&0\,,\\
				y_M^{(\varepsilon_1)}(b)=\cdots=y_M^{(\varepsilon_{n-k-1})}(b)=&0\,,\\
				y_M^{(\varepsilon_{n-k})}(b)=&(-1)^{(n-\varepsilon_{n-k})}\,.
				\end{split}\]

						So, $y_M$ satisfies the boundary conditions \eqref{Ec::cfaaa}-\eqref{Ec::cfbbb}, then we can apply Proposition \ref{P::2} to conclude that 
						
							\begin{itemize}
								\item If $n-k$ is even and $k<n-1$, then $y_M> 0$ if $\gamma$ is even and $y_M<0$ if $\gamma$ is odd on $(a,b)$ for all $M\in [\bar M,\bar M-\lambda_2'']$.
%								\item If $n-k$ is odd and $k<n-1$, then $y_M<0$ if $\gamma$ is even and $y_M>0$ if $\gamma$ is odd on $(a,b)$ for all $M\in [\bar M-\lambda_2'',\bar M]$.
%								\item If $k=n-1$, then $y_M<0$  if $\gamma$ is even and $y_M>0$ if $\gamma$ is odd on $(a,b)$ for all $M\leq \bar M$.
							\end{itemize}
							
							\vspace{0.5cm}
							
							Analogously to Step 1, it can be seen that if $n-k$ is even, then $T_n[M]$ cannot be inverse positive for $M>\bar M-\lambda_2''$.
							
							So from Step 2, we obtain the following conclusions:
							
								\begin{itemize}
									\item If $n-k$ is even and $M\in [\bar M,\bar M-\lambda_2'']$:
									\[\forall t\in (a,b)\,,\quad \exists \rho(t)>0\ \mid \ g_M(t,s)>0\ \forall s\in(b-\rho(t),b)\,.\]
									
									Moreover, if $M>\bar M-\lambda_2''$; then $T_n[M]$ is not inverse positive in $X_{\{\sigma_1,\dots,\sigma_k\}}^{\{\varepsilon_1,\dots,\varepsilon_{n-k}\}}$.
									%									\item If $n-k$ is odd, $k<n-1$ and  $M\in [\bar M-\lambda_2'',\bar M]$:
%									\[\forall t\in (a,b)\,,\quad \exists \rho(t)>0\ \mid \ g_M(t,s)<0\ \forall s\in(b-\rho(t),b)\,.\]
%									
%									Moreover, if $M<\bar M-\lambda_2'$; then $T_n[M]$ is not inverse negative in $X_{\{\sigma_1,\dots,\sigma_k\}}^{\{\varepsilon_1,\dots,\varepsilon_{n-k}\}}$.
%									\item If $k=n-1$ and $M\leq \bar M$:
%									\[\forall t\in (a,b)\,,\quad \exists \rho(t)>0\ \mid \ g_M(t,s)<0\ \forall s\in(b-\rho(t),b)\,.\]
								\end{itemize}
								
								Realize that from these two Steps we can conclude that the intervals where $T_n[M]$ is  strongly inverse positive  cannot be increased. 
							
								The rest of the proof is focused into see that these intervals are the optimal ones.
								
			\begin{itemize}
				\item[Step 3.]Study of the related Green's function at $t=a$.
			\end{itemize}
			
			Let us denote 
			\[\widehat g_{(-1)^nM}(t,s)=\left\lbrace \begin{array}{cc}
			\widehat g_{(-1)^n M}^1(t,s)\,, &a\leq s\leq t\leq b\,,\\\\
			\widehat g_{(-1)^n M}^2(t,s)\,,&a<t<s<b\,,\end{array}\right. \]
			as the related Green's function of $\widehat T_n[(-1)^nM]$ in $X_{\ \,\{\sigma_1,\dots,\sigma_k\}}^{*\{\varepsilon_1,\dots,\varepsilon_{n-k}\}}$.
			
			To study the behavior at $t=a$, we consider the following function:
			\[\widehat{w}_M(t)=(-1)^n\dfrac{\partial^\alpha }{\partial\,s^\alpha}\widehat{g}_{(-1)^nM}^1(t,s)_{\mid s=a}\,,\quad t\in I\,.\]
			
			From \eqref{Ec::gg1}, it is satisfied that
			\begin{equation}\label{Ec::gs}\widehat w_M(s)=\dfrac{\partial^\alpha }{\partial\,t^\alpha}{g}_{M}^2(t,s)_{\mid t=a}\,,\quad s\in I\,, \end{equation}
			moreover, from the boundary conditions \eqref{Ec::cfa}-\eqref{Ec::cfb}, if $\alpha>0$ we obtain:
			\[g_M(a,s)=\dfrac{\partial}{\partial s}{g}_{M}(t,s)_{\mid t=a}=\cdots=\dfrac{\partial^{\alpha-1}}{\partial s^{\alpha-1}}{g}_{M}(t,s)_{\mid t=a}=0\,,\quad \forall s\in(a,b).\]
	
			Using the arguments of Step 1, we can affirm that if there exists $t^*\in(a,b)$ such that $\widehat w_M(t^*)<0$, then $T_n[M]$ is not inverse positive in $X_{\{\sigma_1,\dots,\sigma_k\}}^{\{\varepsilon_1,\dots,\varepsilon_{n-k}\}}$.
			
						Moreover, from Lemma \ref{L::5}, if $n-k$ is even, then $\widehat{w}_{\bar M}\geq 0$.
			
			From the expression of $\widehat{T}_n[(-1)^n\,M]$ given in \eqref{Ec::Tg} and \eqref{EC::Ad}, we construct the associated vectorial problem \eqref{Ec::vec} tacking, in this case			
{\footnotesize\[		\begin{split}	\nonumber	\widehat p_{n-j}(t)&= (-1)^{n+j}p_{n-j}(t)+(-1)^{(n+j+1)}(j+1)\,p_{n-j-1}'(t)+\cdots -\left(\begin{array}{c}n-1\\j\end{array} \right) \,p_1^{(n-j-1)}(t) \,,\\&j=1,\dots,n-1\,,\\		\nonumber	\widehat p_{n}(t)&=(-1)^n p_{n}(t)+(-1)^{n+1}p_{n-1}'(t)+(-1)^{n+2}\left(\begin{array}{c}2\\0\end{array} \right) \,p_{n-2}''(t)+\cdots - p_1^{(n-1)}(t)\,.\end{split}\]}

			Now, the related Green's function is given by:
		{\small\begin{equation}\label{Ec:MGh} \widehat G(t,s)=\left( \begin{array}{llll}
			\widehat g_1(t,s)&\cdots&\widehat g_{n-1}(t,s)&\widehat g_{(-1)^n\,M}(t,s)\\&&&\\
			\dfrac{\partial }{\partial t}\,\widehat g_1(t,s)& \cdots&\dfrac{\partial }{\partial t}\,\widehat g_{n-1}(t,s)& \dfrac{\partial }{\partial t}\,\widehat g_{(-1)^n\,M}(t,s)\\
		\qquad	\vdots&\cdots&\qquad\vdots&\qquad\vdots\\
			\dfrac{\partial^{n-1} }{\partial t^{n-1}}\,\widehat g_1(t,s)&\cdots&\dfrac{\partial^{n-1} }{\partial t^{n-1}}\,\widehat g_{n-1}(t,s)&\dfrac{\partial^{n-1}} {\partial t^{n-1}}\,\widehat g_{(-1)^n\,M}(t,s)\end{array} \right),\end{equation}}
			and, repeating the arguments done with $T_n[M]$, we obtain
			\begin{equation}
			\label{Ec::gjh} \widehat g_{n-j}(t,s)=(-1)^j\dfrac{\partial^j}{\partial\,s^j}\,\widehat g_{(-1)^nM}(t,s)+\sum_{i=0}^{j-1}\widehat \alpha_i^j(s)\,\dfrac{\partial^i}{\partial s^i}\widehat g_{(-1)^nM}(t,s)\,,
			\end{equation}
			where $\widehat\alpha_i^j(s)$  follow the recurrence formula \eqref{r2}--\eqref{r4} for this problem with the obvious notation.
			
		The correspondent boundary conditions \eqref{Ec::Cf} are given by the matrices $\widehat B$, $\widehat C\in \mathcal{M}_{n\times n}$, defined as follows:
		\[\begin{split}&\left. \begin{split}
				&\left( \widehat B\right)_{i\,\tau_i+1}=1\,,\\ 	&\left( \widehat B\right)_{i\,j}=0\,,\quad \tau_i+1<j\leq n\,,\quad\\	&\left( \widehat B\right)_{i\,\tau_i-h}=\widehat p_{h+1}(a)\,,\ h=0,\dots\tau_i-1\,,\quad \end{split} \right\rbrace\quad i=1,\dots,n-k\,,\\
		&\	\left( \widehat B\right)_{i\,j}=0\,,\quad  j=0,\dots,n\,,\quad i=n-k+1,\dots n\\
			&\	\left( \widehat C\right)_{i\,j}=0\,,\quad  j=0,\dots,n\,,\quad i=0,\dots,n-k\,,\\
			&\left. 	\begin{split}&
				\left( \widehat C\right)_{i\,\delta_{i-(n-k)}+1}=1\,,\\	&\left( \widehat C\right)_{i\,j}=0\,,\quad \delta_{i-(n-k)}+1<j\leq n\,,\quad\\	&\left( \widehat C\right)_{i\,\delta_{i-(n-k)}-h}=\widehat p_{h+1}(b)\,,\ h=0,\dots\delta_{i-(n-k)}-1\,,\quad \end{split}\right\rbrace\quad  i=n-k+1,\dots,n\,,
		\end{split}\]
		that is, for every $v\in C^n(I)$, we have:
		\[\widehat B\,\left( \begin{array}{c}
	v(a)\\\vdots  \\v^{(n-1)}(a) \end{array}\right) +\widehat C\,\left( \begin{array}{c}
	v(b)\\\vdots  \\v^{(n-1)}(b)\end{array}\right)  =\left( \begin{array}{c}
		W_1\\\vdots  \\W_n \end{array} \right)\,,\]
		where:		
		\begin{eqnarray}
		\nonumber W_1&=&v^{(\tau_1)}(a)+\sum_{j=n-\tau_1}^{n-1}(-1)^{n-j}\,(p_{n-j}\,v)^{(\tau_1+j-n)}(a)\,,\\
		\nonumber \vdots&&\\	
	\nonumber 	W_{n-k}&=&v^{(\tau_{n-k})}(a)+\sum_{j=n-\tau_{n-k}}^{n-1}(-1)^{n-j}\,(p_{n-j}\,v)^{(\tau_{n-k}+j-n)}(a)\,,\\
	\nonumber 	W_{n-k+1}&=&v^{(\delta_1)}(b)+\sum_{j=n-\delta_1}^{n-1}(-1)^{n-j}\,(p_{n-j}\,v)^{(\delta_1+j-n)}(b)\,,\\\nonumber \vdots&&\\	
	\nonumber 	W_n&=&v^{(\delta_k)}(b)+\sum_{j=n-\delta_k}^{n-1}(-1)^{n-j}\,(p_{n-j}\,v)^{(\delta_k+j-n)}(b)\,.	
		\end{eqnarray}	
		
		As in Steps 1 and 2, we can conclude that 
		
		\[\widehat T_n[(-1)^nM]\,\widehat{w}_M(t)=0\,,\quad t\in(a,b]\,.\]
		
	In the sequel, we obtain the boundary conditions for $\widehat{w}_M$. The used arguments are similar to the two previous steps.	
		
		By definition, $\widehat G(t,s)$ satisfies:
		\begin{equation}
		\label{Ec::CfGh} \widehat B\,\widehat G(a,s)+\widehat C\,\widehat G(b,s)=0\,,\quad  \forall s\in(a,b)\,.
		\end{equation}
		
		If $k<n-1$, let us consider the first row of \eqref{Ec::CfGh} to deduce:
		{\tiny 	\[\left\lbrace \begin{array}{rl}
			\dfrac{\partial^{\tau_1}}{\partial t^{\tau_1}}\widehat g_{(-1)^nM}^2(t,s)_{\mid t=a}+\sum_{j=n-\tau_1}^{n-1}(-1)^{n-j}	\dfrac{\partial^{\tau_1+j-n}}{\partial t^{\tau_1+j-n}}\left( p_{n-j}(t)\widehat g_{(-1)^nM}^2(t,s)\right) _{\mid t=a}&=0\,,\\\\
		-\left( \dfrac{\partial^{\tau_1+1}}{\partial t^{\tau_1}\partial s}\widehat g_{(-1)^nM}^2(t,s)_{\mid t=a}+\sum_{j=n-\tau_1}^{n-1}(-1)^{n-j}	\dfrac{\partial^{\tau_1+j-n+1}}{\partial t^{\tau_1+j-n}\partial s}\left( p_{n-j}(t)\widehat g_{(-1)^nM}^2(t,s)\right) _{\mid t=a}\right)& \\\\
		+\widehat\alpha_0^1(s)\left( \dfrac{\partial^{\tau_1}}{\partial t^{\tau_1}}\widehat g_{(-1)^nM}^2(t,s)_{\mid t=a}+\sum_{j=n-\tau_1}^{n-1}(-1)^{n-j}	\dfrac{\partial^{\tau_1+j-n}}{\partial t^{\tau_1+j-n}}\left( p_{n-j}(t)\widehat g_{(-1)^nM}^2(t,s)\right) _{\mid t=a}\right) &=0\,,\\\\
			\vdots\hspace{0.5cm}\\\\
				(-1)^\alpha\sum_{j=n-\tau_1}^{n-1}(-1)^{n-j}	\dfrac{\partial^{\tau_1+j-n+\alpha}}{\partial t^{\tau_1+j-n}\partial s^\alpha}\left( p_{n-j}(t)\widehat g_{(-1)^nM}^1(t,s)\right) _{\mid t=a}& \\\\
			+(-1)^\alpha\left( \dfrac{\partial^{\tau_1+\alpha}}{\partial t^{\tau_1}\partial s^\alpha}\widehat g_{(-1)^nM}^2(t,s)_{\mid t=a}\right) 	+\sum_{i=0}^{\alpha-1}\widehat \alpha_i^\alpha(s)\left( \dfrac{\partial^{\tau_1+i}}{\partial t^{\tau_1}\partial s^i}\widehat g_{(-1)^nM}^2(t,s)_{\mid t=a}\right. &\\\\\left. +\sum_{j=n-\tau_1}^{n-1}(-1)^{n-j}	\dfrac{\partial^{\tau_1+j-n+i}}{\partial t^{\tau_1+j-n}\partial s^i}\left( p_{n-j}(t)\widehat g_{(-1)^nM}^2(t,s)\right) _{\mid t=a}\right) &=0\,.\end{array}\right.\]}
		
	Since $\tau_1+\alpha<n-1$, we do not reach any diagonal element, hence the previous system is satisfied for $s=a$, and we obtain:
	
		{\tiny 	\[\left\lbrace \begin{array}{rl}
			\dfrac{\partial^{\tau_1}}{\partial t^{\tau_1}}\widehat g_{(-1)^nM}^1(t,s)_{\mid (t,s)=(a,a)}+\sum_{j=n-\tau_1}^{n-1}(-1)^{n-j}	\dfrac{\partial^{\tau_1+j-n}}{\partial t^{\tau_1+j-n}}\left( p_{n-j}(t)\widehat g_{(-1)^nM}^1(t,s)\right) _{\mid (t,s)=(a,a)}&=0\,,\\\\
			-\sum_{j=n-\tau_1}^{n-1}(-1)^{n-j}	\dfrac{\partial^{\tau_1+j-n+1}}{\partial t^{\tau_1+j-n}\partial s}\left( p_{n-j}(t)\widehat g_{(-1)^nM}^1(t,s)\right) _{\mid (t,s)=(a,a)}& \\\\- \dfrac{\partial^{\tau_1+1}}{\partial t^{\tau_1}\partial s}\widehat g_{(-1)^nM}^1(t,s)_{\mid (t,s)=(a,a)}
			+\widehat\alpha_0^1(a)\left( \dfrac{\partial^{\tau_1}}{\partial t^{\tau_1}}\widehat g_{(-1)^nM}^1(t,s)_{\mid (t,s)=(a,a)}\right. &\\\\\left. +\sum_{j=n-\tau_1}^{n-1}(-1)^{n-j}	\dfrac{\partial^{\tau_1+j-n}}{\partial t^{\tau_1+j-n}}\left( p_{n-j}(t)\widehat g_{(-1)^nM}^1(t,s)\right) _{\mid (t,s)=(a,a)}\right) &=0\,,\\\\
			\vdots\hspace{0.5cm}\\\\
			+(-1)^\alpha\sum_{j=n-\tau_1}^{n-1}(-1)^{n-j}	\dfrac{\partial^{\tau_1+j-n+\alpha}}{\partial t^{\tau_1+j-n}\partial s^\alpha}\left( p_{n-j}(t)\widehat g_{(-1)^nM}^1(t,s)\right) _{\mid (t,s)=(a,a)}& \\\\
			+(-1)^\alpha\left( \dfrac{\partial^{\tau_1+\alpha}}{\partial t^{\tau_1}\partial s^\alpha}\widehat g_{(-1)^nM}^1(t,s)_{\mid (t,s)=(a,a)}\right) +\sum_{i=0}^{\alpha-1}\widehat \alpha_i^\alpha(a)\left( \dfrac{\partial^{\tau_1+i}}{\partial t^{\tau_1}\partial s^i}\widehat g_{(-1)^nM}^1(t,s)_{\mid (t,s)=(a,a)}\right.& \\\\\left. +\sum_{j=n-\tau_1}^{n-1}(-1)^{n-j}	\dfrac{\partial^{\tau_1+j-n+i}}{\partial t^{\tau_1+j-n}\partial s^i}\left( p_{n-j}(t)\widehat g_{(-1)^nM}^1(t,s)\right) _{\mid (t,s)=(a,a)}\right) &=0\,.\end{array}\right.\]}
		
		Since $\widehat \alpha_i^j\in C(I)$,  we conclude that
		
		\[\widehat{w}_M^{(\tau_1)}(a)+\sum_{j=n-\tau_1}^{n-1}(-1)^{n-j}\,\left( p_{n-j}\widehat w_M\right) ^{(\tau_1+j-n)}(a)=0\,.\]
		
		Proceeding analogously with $\tau_2,\dots,\tau_{n-k-1}$, we can ensure that $\widehat w_M$ satisfies the boundary conditions \eqref{Cf::ad1}--\eqref{Cf::ad11}.
		
		Now, let us see what happens for $\tau_{n-k}$. From \eqref{Ec::CfGh}, we obtain that for all $s\in(a,b)$ the following equalities hold:
			{\tiny 	\[\left\lbrace \begin{array}{rl}
				\dfrac{\partial^{\tau_{n-k}}}{\partial t^{\tau_{n-k}}}\widehat g_{(-1)^nM}^2(t,s)_{\mid t=a}+\sum_{j=n-\tau_{n-k}}^{n-1}(-1)^{n-j}	\dfrac{\partial^{\tau_{n-k}+j-n}}{\partial t^{\tau_{n-k}+j-n}}\left( p_{n-j}(t)\widehat g_{(-1)^nM}^2(t,s)\right) _{\mid t=a}&=0\,,\\\\
		-\sum_{j=n-\tau_{n-k}}^{n-1}(-1)^{n-j}	\dfrac{\partial^{\tau_{n-k}+j-n+1}}{\partial t^{\tau_{n-k}+j-n}\partial s}\left( p_{n-j}(t)\widehat g_{(-1)^nM}^2(t,s)\right) _{\mid t=a}& \\\\-	\dfrac{\partial^{\tau_{n-k}+1}}{\partial t^{\tau_{n-k}}\partial s}\widehat g_{(-1)^nM}^2(t,s)_{\mid t=a}
				+\widehat\alpha_0^1(s)\left( \dfrac{\partial^{\tau_{n-k}}}{\partial t^{\tau_{n-k}}}\widehat g_{(-1)^nM}^2(t,s)_{\mid t=a}\right. &\\\\\left. +\sum_{j=n-\tau_{n-k}}^{n-1}(-1)^{n-j}	\dfrac{\partial^{\tau_{n-k}+j-n}}{\partial t^{\tau_{n-k}+j-n}}\left( p_{n-j}(t)\widehat g_{(-1)^nM}^2(t,s)\right) _{\mid t=a}\right) &=0\,,\\\\
				\vdots\hspace{0.5cm}\\\\
				(-1)^\alpha\sum_{j=n-\tau_{n-k}}^{n-1}(-1)^{n-j}	\dfrac{\partial^{\tau_{n-k}+j-n+\alpha}}{\partial t^{\tau_{n-k}+j-n}\partial s^\alpha}\left( p_{n-j}(t)\widehat g_{(-1)^nM}^1(t,s)\right) _{\mid t=a}& \\\\+(-1)^\alpha \dfrac{\partial^{\tau_{n-k}+\alpha}}{\partial t^{\tau_{n-k}}\partial s^\alpha}\widehat g_{(-1)^nM}^2(t,s)_{\mid t=a}
				+\sum_{i=0}^{\alpha-1}\widehat \alpha_i^\alpha(s)\left( \dfrac{\partial^{\tau_{n-k}+i}}{\partial t^{\tau_{n-k}}\partial s^i}\widehat g_{(-1)^nM}^2(t,s)_{\mid t=a}\right. &\\\\\left. +\sum_{j=n-\tau_{n-k}}^{n-1}(-1)^{n-j}	\dfrac{\partial^{\tau_{n-k}+j-n+i}}{\partial t^{\tau_{n-k}+j-n}\partial s^i}\left( p_{n-j}(t)\widehat g_{(-1)^nM}^2(t,s)\right) _{\mid t=a}\right) &=0\,.\end{array}\right.\]}
			
			In this case, since $\tau_{n-k}+\alpha=n-1$, we reach a diagonal element of $\widehat G(t,s)$, hence by Remark \ref{R:2.5}, we obtain the following system for $s=a$:
					{\tiny 	\[\left\lbrace \begin{array}{rl}
						\dfrac{\partial^{\tau_{n-k}}}{\partial t^{\tau_{n-k}}}\widehat g_{(-1)^nM}^1(t,s)_{\mid (t,s)=(a,a)}&\\\\+\sum_{j=n-\tau_{n-k}}^{n-1}(-1)^{n-j}	\dfrac{\partial^{\tau_{n-k}+j-n}}{\partial t^{\tau_{n-k}+j-n}}\left( p_{n-j}(t)\widehat g_{(-1)^nM}^1(t,s)\right) _{\mid (t,s)=(a,a)}&=0\,,\\\\
						- \sum_{j=n-\tau_{n-k}}^{n-1}(-1)^{n-j}	\dfrac{\partial^{\tau_1+j-n+1}}{\partial t^{\tau_{n-k}+j-n}\partial s}\left( p_{n-j}(t)\widehat g_{(-1)^nM}^1(t,s)\right) _{\mid (t,s)=(a,a)}& \\\\-\dfrac{\partial^{\tau_{n-k}+1}}{\partial t^{\tau_{n-k}}\partial s}\widehat g_{(-1)^nM}^1(t,s)_{\mid (t,s)=(a,a)}
						+\widehat\alpha_0^1(a)\left( \dfrac{\partial^{\tau_{n-k}}}{\partial t^{\tau_{n-k}}}\widehat g_{(-1)^nM}^1(t,s)_{\mid (t,s)=(a,a)}\right. &\\\\\left. +\sum_{j=n-\tau_{n-k}}^{n-1}(-1)^{n-j}	\dfrac{\partial^{\tau_{n-k}+j-n}}{\partial t^{\tau_{n-k}+j-n}}\left( p_{n-j}(t)\widehat g_{(-1)^nM}^1(t,s)\right) _{\mid (t,s)=(a,a)}\right) &=0\,,\\\\
						\vdots\hspace{0.5cm}\\\\
					(-1)^\alpha\sum_{j=n-\tau_{n-k}}^{n-1}(-1)^{n-j}	\dfrac{\partial^{\tau_{n-k}+j-n+\alpha}}{\partial t^{\tau_{n-k}+j-n}\partial s^\alpha}\left( p_{n-j}(t)\widehat g_{(-1)^nM}^1(t,s)\right) _{\mid (t,s)=(a,a)}& \\\\	(-1)^\alpha \dfrac{\partial^{\tau_{n-k}+\alpha}}{\partial t^{\tau_{n-k}}\partial s^\alpha}\widehat g_{(-1)^nM}^1(t,s)_{\mid (t,s)=(a,a)}
						+\sum_{i=0}^{\alpha-1}\widehat \alpha_i^\alpha(s)\left( \dfrac{\partial^{\tau_{n-k}+i}}{\partial t^{\tau_{n-k}}\partial s^i}\widehat g_{(-1)^nM}^1(t,s)_{\mid (t,s)=(a,a)}\right. &\\\\\left. +\sum_{j=n-\tau_{n-k}}^{n-1}(-1)^{n-j}	\dfrac{\partial^{\tau_{n-k}+j-n+i}}{\partial t^{\tau_{n-k}+j-n}\partial s^i}\left( p_{n-j}(t)\widehat g_{(-1)^nM}^1(t,s)\right) _{\mid (t,s)=(a,a)}\right) &=1\,.\end{array}\right.\]}
					
					Since $\widehat \alpha_i^j\in C(I)$, from the definition of $\widehat w_M$, we deduce that					
					\[\widehat{w}_M^{(\tau_{n-k})}(a)+\sum_{j=n-\tau_{n-k}}^{n-1}(-1)^{n-j}\,\left( p_{n-j}\widehat w_M\right) ^{(\tau_{n-k}+j-n)}(a)=(-1)^{n-\alpha}=(-1)^{1+\tau_{n-k}}\,.\]

					Now, let us study the behavior of $\widehat w_M$ at $t=b$. Studying the $(n-k+1)^{\mathrm{th}}$ row of \eqref{Ec::CfGh}, we have for all $s\in(a,b)$:						
							{\tiny 	\[\left\lbrace \begin{array}{rl}
								\dfrac{\partial^{\delta_1}}{\partial t^{\delta_1}}\widehat g_{(-1)^nM}^1(t,s)_{\mid t=b}+\sum_{j=n-\delta_1}^{n-1}(-1)^{n-j}	\dfrac{\partial^{\delta_1+j-n}}{\partial t^{\delta_1+j-n}}\left( p_{n-j}(t)\widehat g_{(-1)^nM}^1(t,s)\right) _{\mid t=b}&=0\,,\\\\
								-\left( \dfrac{\partial^{\delta_1+1}}{\partial t^{\delta_1}\partial s}\widehat g_{(-1)^nM}^1(t,s)_{\mid t=b}+\sum_{j=n-\delta_1}^{n-1}(-1)^{n-j}	\dfrac{\partial^{\delta_1+j-n+1}}{\partial t^{\delta_1+j-n}\partial s}\left( p_{n-j}(t)\widehat g_{(-1)^nM}^1(t,s)\right) _{\mid t=b}\right)& \\\\
								+\widehat\alpha_0^1(s)\left( \dfrac{\partial^{\delta_1}}{\partial t^{\delta_1}}\widehat g_{(-1)^nM}^1(t,s)_{\mid t=b}+\sum_{j=n-\delta_1}^{n-1}(-1)^{n-j}	\dfrac{\partial^{\delta_1+j-n}}{\partial t^{\delta_1+j-n}}\left( p_{n-j}(t)\widehat g_{(-1)^nM}^1(t,s)\right) _{\mid t=b}\right) &=0\,,\\\\
								\vdots\hspace{0.5cm}\\\\
								(-1)^\alpha\sum_{j=n-\delta_1}^{n-1}(-1)^{n-j}	\dfrac{\partial^{\delta_1+j-n+\alpha}}{\partial t^{\delta_1+j-n}\partial s^\alpha}\left( p_{n-j}(t)\widehat g_{(-1)^nM}^1(t,s)\right) _{\mid t=b}& \\\\
								(-1)^\alpha \dfrac{\partial^{\delta_1+\alpha}}{\partial t^{\delta_1}\partial s^\alpha}\widehat g_{(-1)^nM}^1(t,s)_{\mid t=b}+\sum_{i=0}^{\alpha-1}\widehat \alpha_i^\alpha(s)\left( \dfrac{\partial^{\delta_1+i}}{\partial t^{\delta_1}\partial s^i}\widehat g_{(-1)^nM}^1(t,s)_{\mid t=b}\right. &\\\\\left. +\sum_{j=n-\delta_1}^{n-1}(-1)^{n-j}	\dfrac{\partial^{\delta_1+j-n+i}}{\partial t^{\delta_1+j-n}\partial s^i}\left( p_{n-j}(t)\widehat g_{(-1)^nM}^1(t,s)\right) _{\mid t=b}\right) &=0\,.\end{array}\right.\]}
							
							Since $b\neq a$, this system is satisfied  for $s=a$. Taking into account that $\widehat \alpha_i^j\in C(I)$, we conclude:
							\[\widehat{w}_M^{(\delta_1)}(b)+\sum_{j=n-\delta_1}^{n-1}(-1)^{n-j}\,\left( p_{n-j}\widehat w_M\right) ^{(\delta_1+j-n)}(b)=0\,.\]
						
						Proceeding analogously with $\delta_2,\dots,\delta_k$, we can affirm that $\widehat{w}_M$ verifies the boundary conditions \eqref{Cf::ad3}--\eqref{Cf::ad4}.

						We have proved that $\widehat{w}_M$ satisfies the boundary conditions \eqref{Cf::ad1}--\eqref{Cf::ad11} and \eqref{Cf::ad3}--\eqref{Cf::ad4}, thus we can apply Proposition \ref{P::4} to conclude that 
							\begin{itemize}
								\item If $n-k$ is even and $k<n-1$, then $\widehat w_M> 0$ on $(a,b)$ for all $M\in [\bar M,\bar M-\lambda_2'']$.
%								\item If $n-k$ is odd and $k<n-1$, then $\widehat{w}_M<0$  on $(a,b)$ for all $M\in [\bar M-\lambda_2'',\bar M]$.
%								\item If $k=n-1$, then $\widehat{w}_M<0$ on $(a,b)$ for all $M\leq \bar M$.
							\end{itemize}
							
					From \eqref{Ec::gs}, we obtain the following conclusion:
							
							\begin{itemize}
								\item If $n-k$ is even and $M\in [\bar M,\bar M-\lambda_2'']$:
								\[\forall s\in (a,b)\,,\quad \exists \rho(s)>0\ \mid \ g_M(t,s)>0\ \forall t\in(a,a+\rho(s))\,.\]															
%								\item If $n-k$ is odd, $k<n-1$ and  $M\in [\bar M-\lambda_2'',\bar M]$:
%								\[\forall s\in (a,b)\,,\quad \exists \rho(s)>0\ \mid \ g_M(t,s)<0\ \forall t\in(a,a+\rho(s))\,.\]
%								\item If $k=n-1$ and $M\leq \bar M$:
%								\[\forall s\in (a,b)\,,\quad \exists \rho(s)>0\ \mid \ g_M(t,s)<0\ \forall t\in(a,a+\rho(s))\,.\]
							\end{itemize}

							\begin{itemize}
									\item[Step 4.] Study of  the related Green's function at $t=b$.
							\end{itemize}
							
								To study the behavior at $t=b$, we consider the following function:
								\[\widehat{y}_M(t)=(-1)^n\dfrac{\partial^\beta }{\partial\,s^\beta}\widehat{g}_{(-1)^nM}^2(t,s)_{\mid s=b}\,.\]
								
								From \eqref{Ec::gg1}, it is satisfied that
								\begin{equation}\label{Ec::gsb}\widehat y_M(s)=\dfrac{\partial^\beta }{\partial\,t^\beta}{g}_{M}^1(t,s)_{\mid t=b}\,,\end{equation}
								moreover, from the boundary conditions \eqref{Ec::cfa}-\eqref{Ec::cfb}, if $\beta>0$ we obtain:
								\[g_M(b,t)=\dfrac{\partial}{\partial s}{g}_{M}(t,s)_{\mid t=b}=\cdots=\dfrac{\partial^{\beta-1}}{\partial s^{\beta-1}}{g}_{M}(t,s)_{\mid t=b}=0\,.\]
								
								As in previous Steps, we can affirm that if there exists $t^*\in(a,b)$ such that either $\widehat y_M(t^*)<0$ and $\beta$ even or $\widehat y_M(t^*)>0$ and $\beta$ odd, then $T_n[M]$ is not inverse positive in $X_{\{\sigma_1,\dots,\sigma_k\}}^{\{\varepsilon_1,\dots,\varepsilon_{n-k}\}}$. 
								
%								Moreover, if there exists $t^*\in(a,b)$ such that either $\widehat y_M(t^*)>0$ and $\beta$ is even or $\widehat y_M(t^*)<0$ with $\beta$ odd, then $T_n[M]$ is not inverse negative in $X_{\{\sigma_1,\dots,\sigma_k\}}^{\{\varepsilon_1,\dots,\varepsilon_{n-k}\}}$.
%							

								From Lemma \ref{L::5}, if $n-k$ is even, then $\widehat{y}_{\bar M}\geq 0$ if $\beta$ is even and  $\widehat{y}_{\bar M}\leq 0$ if $\beta$ is odd. 
								
%								Moreover, if $n-k$ is odd, then $\widehat{y}_{\bar M}\leq 0$ if $\beta$ is even and $\widehat{y}_{\bar M}\geq 0$ if $\beta$ is odd.

								Furthermore, we have 								
								\[\widehat T_n[(-1)^nM]\,\widehat{y}_M(t)=0\,,\quad t\in[a,b)\,.\]
								
								Now, using similar arguments as before, we  obtain that $\widehat{y}_M$ satisfies the boundary conditions \eqref{Cf::ad1}--\eqref{Cf::ad2} and \eqref{Cf::ad3}--\eqref{Cf::ad31}. Moreover, it satisfies:	
									\[\widehat{y}_M^{(\delta_k)}(b)+\sum_{j=n-\delta_k}^{n-1}(-1)^{n-j}\,\left( p_{n-j}\widehat y_M\right) ^{(\delta_k+j-n)}(b)=(-1)^{n-\beta+1}=(-1)^{\delta_k}\,.\]							

							Thus, we can apply Proposition \ref{P::3} to conclude that 
								\begin{itemize}
									\item If $n-k$ is even and $k>1$, then $\widehat{y}_M> 0$ on $(a,b)$ if $\beta$ is even and $\widehat{y}_M< 0$ on $(a,b)$ if $\beta$ is odd  for all $M\in [\bar M,\bar M-\lambda_2']$.
									\item If $k=1$ and $n$ is odd,  then $\widehat{y}_M> 0$ on $(a,b)$ if $\beta$ is even and $\widehat{y}_M< 0$ on $(a,b)$ if $\beta$ is odd  for all $M\geq \bar M$.
%									\item If $n-k$ is odd and $k>1$,  then $\widehat{y}_M< 0$ on $(a,b)$ if $\beta$ is even and $\widehat{y}_M> 0$ on $(a,b)$ if $\beta$ is odd for all $M\in [\bar M-\lambda_2',\bar M]$.
%									\item If $k=1$ and $n$ is even,  then $\widehat{y}_M< 0$ on $(a,b)$ if $\beta$ is even and $\widehat{y}_M> 0$ on $(a,b)$ if $\beta$ is odd  for all $M\leq \bar M$.
								\end{itemize}

								So, from this Step, we obtain the following conclusions:
								\begin{itemize}
									\item If $n-k$ is even, $k>1$ and $M\in [\bar M,\bar M-\lambda_2']$:
									\[\forall s\in (a,b)\,,\quad \exists \rho(s)>0\ \mid \ g_M(t,s)>0\ \forall t\in(b-\rho(s),b)\,.\]

									\item If $k=1$, $n$ is odd and $M\geq \bar M$:
									\[\forall s\in (a,b)\,,\quad \exists \rho(s)>0\ \mid \ g_M(t,s)>0\ \forall t\in(b-\rho(s),b)\,.\]
									
%									\item If $n-k$ is odd, $k>1$ and  $M\in [\bar M-\lambda_2',\bar M]$:
%									\[\forall s\in (a,b)\,,\quad \exists \rho(s)>0\ \mid \ g_M(t,s)<0\ \forall t\in(b-\rho(t),b)\,.\]
%									
%								
%									\item If $k=1$, $n$ is even and $M\leq \bar M$:
%									\[\forall s\in (a,b)\,,\quad \exists \rho(s)>0\ \mid \ g_M(t,s)<0\ \forall t\in(b-\rho(t),b)\,.\]
								\end{itemize}
								
									\begin{itemize}
										\item[Step 5.] Study of  the related Green's function on $(a,b)\times(a,b)$.
									\end{itemize}
									To finish the proof we only need to verify that $(-1)^{n-k}\,g_M(t,s)>0$ for a.e.  $(t,s)\in I\times I$ if $M$ belongs to the given intervals.
									
									In fact, we will prove that $(-1)^{n-k}\,g_M(t,s)>0$ on $(a,b)\times(a,b)$ for those $M$. To this end, for all $s\in(a,b)$, let us denote $u_M^s(t)=g_M(t,s)$.
									
									By the definition of a Green's function it is known that for all $s\in(a,b)$:
									\begin{equation}\label{Ec::us}T_n[\bar M]\,u_M^s(t)=\left( \bar M-M\right) \,u_M^s(t)\,,\quad \forall t\neq s\,,\ t\in I\,.\end{equation}
									
									Moreover, $u_M^s\in C^{n-2}(I)$ and it satisfies the boundary conditions \eqref{Ec::cfa}-\eqref{Ec::cfb}.
									
									From Lemma \ref{L::5}, it is known that $(-1)^{n-k}u^s_{\bar M}\geq 0$ on $I$.
									
									Now, moving continuously with $M$, we will verify that while $u_M^s$ is of constant sign on $I$, it cannot have a double zero on $(a,b)$, which implies that the sign change must be either at $t=a$ or $t=b$ and then the result is proved.
									
									We study separately the cases where $n-k$ is even or odd.
									
									First, let us assume that $n-k$ is even. In this case, from Theorem \ref{T::6}, we only need to study the behavior for $M>\bar M$ and $u_M^s\geq0$. From \eqref{Ec::us}, we have that $T_n[\bar M]\,u_M^s\leq 0$; hence, since $v_1\,\dots\,v_n>0$, $\dfrac{1}{v_n}T_{n-1}\,u_M^s$ is a decreasing function, with two continuous components. Then, it has at most two zeros on $I$ (see Figure \ref{Fig::1}).										\begin{figure}[h]
											\centering
											\includegraphics[width=0.3\textwidth]{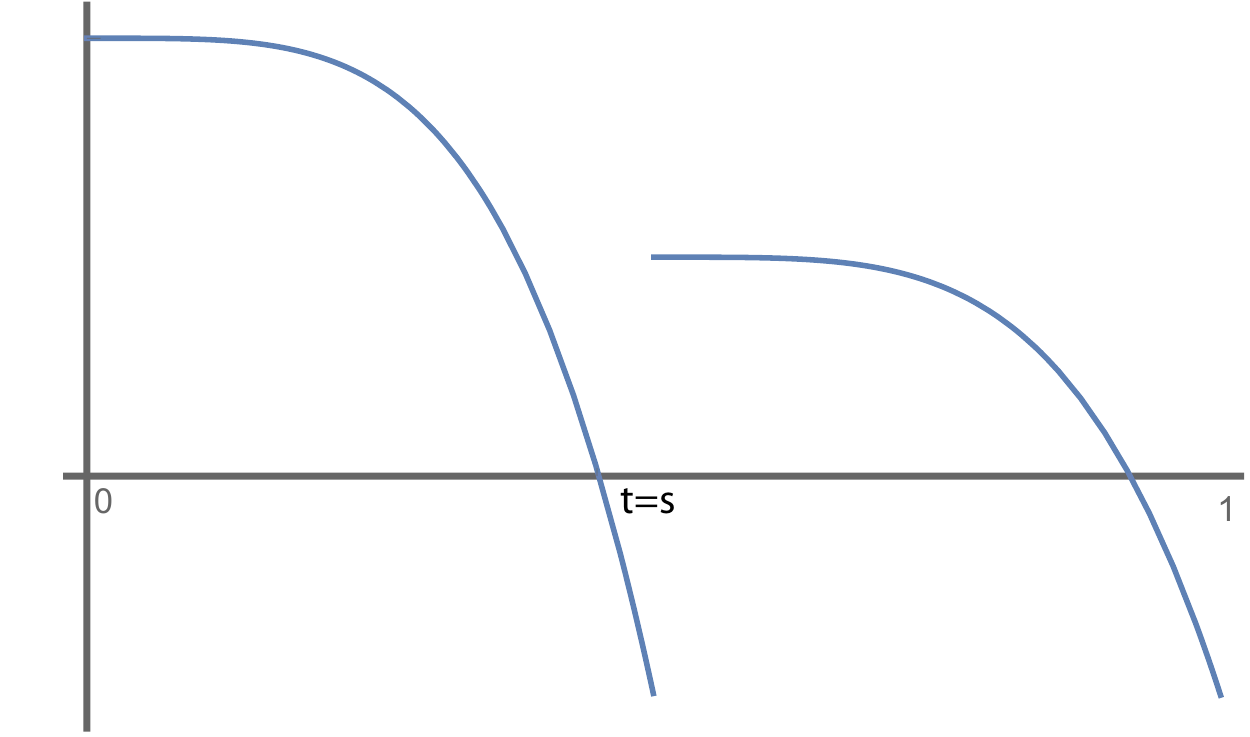}
											\caption{\scriptsize{$\dfrac{1}{v_n(t)}\,T_{n-1} u_M^s(t)$, maximal oscillation with $t\in I=[0,1]$}}\label{Fig::1}
										\end{figure}
									
									Although we cannot know the increasing or decreasing intervals of $T_{n-1}u_M^s$; since $v_n>0$, it has the same sign as  $\dfrac{1}{v_n}T_{n-1}\,u_M^s$. Thus, $T_{n-1}u_M^s$ has at most two zeros on $I$.
									
									So,  $\dfrac{1}{v_{n-1}}T_{n-2}\,u_M^s$	is a continuous function, with at most four zeros on $I$ (see Figure \ref{Fig::2}).
									
										\begin{figure}[h]
											\centering
											\includegraphics[width=0.3\textwidth]{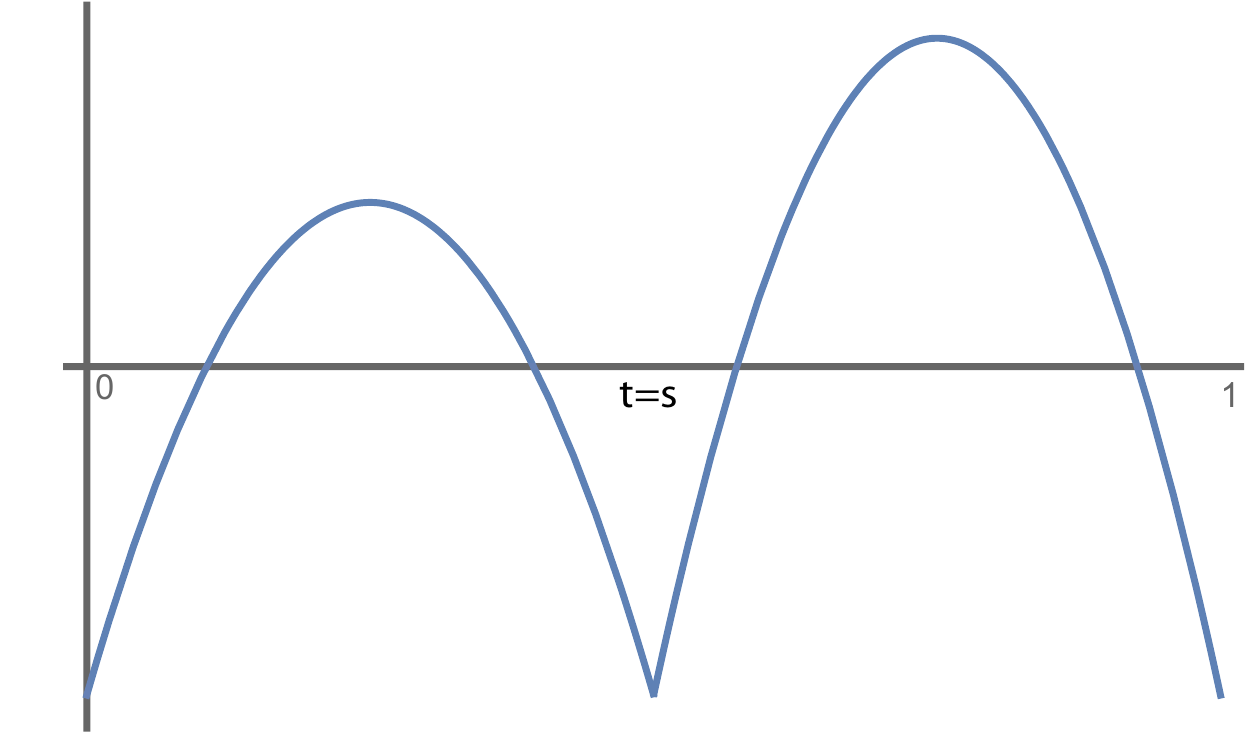}
											\caption{\scriptsize{$\dfrac{1}{v_{n-1}(t)}\,T_{n-2} u_M^s(t)$, maximal oscillation with $t\in I=[0,1]$}}\label{Fig::2}
										\end{figure}
										
										Again, since $v_{n-1}>0$, we conclude that $T_{n-2}u_M^s$ has the same sign as $\dfrac{1}{v_{n-1}}\,T_{n-2} u_M^s$. So, $T_{n-2}u_M^s$ has  at most fourth zeros on $I$. Hence,  $\dfrac{1}{v_{n-2}}\,T_{n-3} u_M^s$ has at most five zeros on $I$, the same as $T_{n-3}u_M^s$.
										
										By recurrence, we conclude that $T_{n-\ell}\,u_M^s$ has, with maximal oscillation, at most $\ell +2$ zeros on $I$.
										
										However, each time that either $T_{n-\ell}\,u_M^s(a)=0$ or $T_{n-\ell}\,u_M^s(b)=0$, a possible oscillation on $(a,b)$ is lost. From the boundary conditions \eqref{Ec::cfa}-\eqref{Ec::cfb}, coupled with Lemmas \ref{L::1} and \ref{L::2}, we can affirm that $n$ possible oscillations are lost. Hence, $u_M^s$ can have at most two zeros on $(a,b)$.

					Let us see that, despite this fact does not inhibit that, with maximal oscillation, $u_M^s$ has a double zero on $(a,b)$, this double zero is not possible. If $T_\ell u(a)=0$ for $\ell\notin \{\sigma_1,\dots,\sigma_k\}$, then $u_s$ can have only a simple zero and this is not possible while it is of constant sign.
					
					 Now, to allow this possible double zero, let us study which should be the sign of $u^{(\alpha)}(a)$. We have already said that $T_{n-\ell}u_M^s(a)$ changes its sign for two consecutive $\ell\in \{0,\dots,n\}$ if it does not vanish. Moreover, at every time that  $T_{n-\ell}u^s_M(a)= 0$ the sign change comes on the next $\tilde \ell$ for which  $T_{n-\tilde \ell}u^s_M(a)\neq 0$. Since, from $\ell=0$ to $n-\alpha$ there are $k-\alpha$ zeros of $T_{n-\ell}u^s_M(a)$, to allow the maximal oscillation it is necessary to have that						
						\begin{equation*}
						\left\lbrace \begin{array}{cc}
						T_{\alpha}\,u_M^s(a)\leq0\,,& \text{if $n-\alpha-(k-\alpha)=n-k$ is even,}\\&\\
						T_{\alpha}\,u_M^s(a)\geq0\,,& \text{if $n-k$ is odd.}\end{array}\right. 	
						\end{equation*}

						As a direct consequence of \eqref{Ec::Tab}, we can affirm that with maximal oscillation it must be verified				
						\begin{equation}\label{Ec::usal}
						\left\lbrace \begin{array}{cc}
						{u^s_M}^{(\alpha)}(a)\leq0\,,& \text{if $n-k$ is even,}\\&\\
						{u^s_M}^{(\alpha)}(a)\geq0\,,& \text{if $n-k$ is odd.}\end{array}\right. 	
						\end{equation}
										
						On another hand, since $u_M^s\geq 0$, it must be satisfied that ${u_M^s}^{(\alpha)}(a)\geq 0$. We can assume that ${u_M^s}^{(\alpha)}(a)>0$. Because, in other case, i.e. if ${u_M^s}^{(\alpha)}(a)=0$, then $T_{\alpha}\,u_M^s(a)=0$ and another possible oscillation is lost, so it only remains the possibility of having a simple zero on $(a,b)$, which is not possible when $u_M^s$ is of constant sign.
						
						If ${u_M^s}^{(\alpha)}(a)>0$, from \eqref{Ec::usal} the maximal oscillation is not allowed. So, we have again only the possibility of a simple zero on $(a,b)$. Hence we conclude:
						\begin{itemize}
							\item If  $n-k$ even and $M>\bar M$, if $u_M^s\geq 0$, then $u_M^s>0$ on $(a,b)$.							
						\end{itemize}
						
%						Now, let us see what happens if $n-k$ is odd. In this case, $u_M^s\leq 0$ on $I$ and, from Theorem \ref{T::7}, we only need to study the case $M<\bar M$. Thus from \eqref{Ec::us}, we have again that $T_n[\bar M]\,u_M^s\leq 0$. Hence, we can use previous arguments to conclude that with maximal oscillation $u_M^s$ satisfies \eqref{Ec::usal}.
%						
%						However, in this case, since $u_M^s\leq 0$, ${u_M^s}^{(\alpha)}(a)\leq 0$. Arguing as before, we can assume that ${u_M^s}^{(\alpha)}(a)<0$. Hence, the maximal oscillation is lost and we can affirm:
%							\begin{itemize}
%								\item If  $n-k$ odd and $M<\bar M$, if $u_M^s\leq 0$, then $u_M^s<0$ on $(a,b)$.							
%							\end{itemize}

					Thus, combining these assertions with the previous Steps the result is proved.						
\end{proof}
\begin{exemplo}
	\label{Ex::12}
	In Example \ref{Ex::6}  the  eigenvalues related to operator $T_4^0[0]$ the different sets, $X_{\{0,2\}}^{\{1,2\}}$, $X_{\{0,2\}}^{\{0,1\}}$ and $X_{\{0,1\}}^{\{1,2\}}$, have been obtained. They are denoted by $\lambda_1$, $\lambda_2'$ and $\lambda_2''$, respectively. We have that $\lambda_1=m_1^4$ and
	\[\lambda_2=\max\{\lambda_2',\lambda_2''\}=\max\{-m_2^4,-4\,\pi^4\}=-4\,\pi^4\,,\]
	where $m_1\approxeq 2.36502$ and $m_2\approxeq 5.550305$ have been introduced in Example \ref{Ex::6} as the least positive solutions of \eqref{Ec::Ex61} and \eqref{Ec::Ex62}, respectively.
	
	So, we can affirm that $T_4^0[M]$ is a strongly inverse positive operator in $X_{\{0,2\}}^{\{1,2\}}$ if, and only if, $M\in (-m_1^4,4\,\pi^4]$
\end{exemplo}

\begin{remark}
	Realize that in Steps 1 and 2, to obtain that $w_M$ and $y_M$ satisfy the boundary conditions \eqref{Ec::cfaa}-\eqref{Ec::cfbb} and \eqref{Ec::cfaaa}-\eqref{Ec::cfbbb}, respectively, we do not need to impose that the operator $T_n[\bar M]$ satisfies property $(T_d)$.
\end{remark}

Taking into account the previous Remark, we obtain the following result:
\begin{theorem} \label{T::cv}If either $\sigma_k=k-1$ or $\varepsilon_{n-k}=n-k-1$, we have the following properties:
	\begin{itemize}
		\item If $n-k$ is even, then there is not any $M\in \mathbb{R}$ such that $T_n[M]$ is inverse negative in $X_{\{\sigma_1,\dots,\sigma_k\}}^{\{\varepsilon_1,\dots,\varepsilon_{n-k}\}}$.
			\item If $n-k$ is odd, then there is not any $M\in \mathbb{R}$ such that $T_n[M]$ is inverse positive in $X_{\{\sigma_1,\dots,\sigma_k\}}^{\{\varepsilon_1,\dots,\varepsilon_{n-k}\}}$.
	\end{itemize}
\end{theorem}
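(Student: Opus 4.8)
The plan is to argue by contradiction, reusing the boundary-value data for the auxiliary functions $w_M$ and $y_M$ already obtained in Steps~1 and~2 of the proof of Theorem~\ref{T::IPN}; by the preceding Remark that computation uses only the Green's-matrix structure \eqref{Ec:MG}--\eqref{Ec::gj} and the boundary matrices \eqref{Ec::Cf}, so it is available here even though we assume neither $(T_d)$ nor $(N_a)$. First I would note that inverse positivity (resp.\ negativity) forces invertibility: a nontrivial $u_0$ with $T_n[M]\,u_0=0$ would satisfy $T_n[M](\pm u_0)\ge 0$, hence $\pm u_0\ge 0$ (resp.\ $\le 0$), so $u_0\equiv 0$. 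Thus under the contradiction hypothesis $g_M$ exists and, by Theorem~\ref{T::in1}, it is non-negative (resp.\ non-positive) on $I\times I$.

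Next I would treat the two degeneracies. Suppose $\sigma_k=k-1$; then $\{\sigma_1,\dots,\sigma_k\}=\{0,\dots,k-1\}$ and $\eta:=n-1-\sigma_k=n-k\ge 1$. Taking $w_M(t)=\dfrac{\partial^\eta}{\partial s^\eta}g_M^1(t,s)_{\mid s=a}$ as in Step~1, those computations give $w_M(a)=\cdots=w_M^{(k-2)}(a)=0$ and $w_M^{(k-1)}(a)=(-1)^\eta=(-1)^{n-k}$, so a Taylor expansion at $a$ shows $w_M$ has the sign $(-1)^{n-k}$ on a right neighbourhood of $a$. Since $\eta>0$, the adjoint boundary conditions \eqref{Cf::ad1}--\eqref{Cf::ad4} give $g_M^1(t,a)=\partial_s g_M^1(t,a)=\cdots=\partial_s^{\eta-1}g_M^1(t,a)=0$, hence near the corner $(a,a)$ with $a<s<t$ one has $g_M(t,s)=\dfrac{w_M(t)}{\eta!}\,(s-a)^\eta+o\bigl((s-a)^\eta\bigr)$, so $g_M$ itself has the sign $(-1)^{n-k}$ there. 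If $n-k$ is even this contradicts $g_M\le 0$, ruling out an inverse negative $M$; if $n-k$ is odd it contradicts $g_M\ge 0$, ruling out an inverse positive $M$.

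The case $\varepsilon_{n-k}=n-k-1$ I would handle symmetrically at $b$: here $\{\varepsilon_1,\dots,\varepsilon_{n-k}\}=\{0,\dots,n-k-1\}$, $\gamma:=n-1-\varepsilon_{n-k}=k\ge 1$, and with $y_M(t)=\dfrac{\partial^\gamma}{\partial s^\gamma}g_M^2(t,s)_{\mid s=b}$ the Step~2 data read $y_M(b)=\cdots=y_M^{(n-k-2)}(b)=0$ and $y_M^{(n-k-1)}(b)=(-1)^{n-\varepsilon_{n-k}}=(-1)^{k+1}$. Expanding at $b$ gives that $y_M$ has sign $(-1)^{k+1}(-1)^{n-k-1}=(-1)^n$ on a left neighbourhood of $b$; combining with $g_M^2(t,b)=\cdots=\partial_s^{\gamma-1}g_M^2(t,b)=0$ and $g_M(t,s)=\dfrac{y_M(t)}{\gamma!}\,(s-b)^\gamma+o\bigl((s-b)^\gamma\bigr)$ shows $g_M$ has sign $(-1)^n(-1)^\gamma=(-1)^{n-k}$ near $(b,b)$, and the same dichotomy closes the proof.

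The only step I expect to be genuinely delicate — the rest is immediate from the definitions of inverse positive/negative operator — is the sign bookkeeping: reading off $w_M^{(k-1)}(a)$ and $y_M^{(n-k-1)}(b)$ correctly from Steps~1--2 (in particular the $+1$ coming from the diagonal jump of the Green's matrix via Remark~\ref{R:2.5}), and then tracking the signs of $(s-a)^\eta$, $(b-s)^\gamma$, $(t-a)^{k-1}$ and $(b-t)^{n-k-1}$ so that both endpoint analyses deliver the uniform conclusion that $g_M$ has sign $(-1)^{n-k}$ near the corner. I would also point out explicitly that, in contrast with Theorem~\ref{T::IPN}, this argument invokes neither property $(T_d)$ nor condition $(N_a)$.
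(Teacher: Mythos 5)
Your proposal is correct and follows essentially the same route as the paper: it reuses the boundary data $w_M^{(\sigma_1)}(a)=\cdots=w_M^{(\sigma_{k-1})}(a)=0$, $w_M^{(\sigma_k)}(a)=(-1)^{n-1-\sigma_k}$ (resp.\ the analogous data for $y_M$ at $b$) from Steps~1--2 of Theorem~\ref{T::IPN}, valid for all $M$ without $(T_d)$ or $(N_a)$, and concludes via the sign of $g_M$ near the corner that the forbidden inverse positivity/negativity is impossible, with the sign bookkeeping matching the paper's case analysis. Your extra explicit steps (injectivity $\Rightarrow$ existence of $g_M$, and the Taylor expansions in $s$ using the vanishing of the first $\eta$, resp.\ $\gamma$, $s$-derivatives) only spell out what the paper leaves implicit in its appeal to the necessary sign condition on $w_M$ and $y_M$.
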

		\begin{proof}
			If $\sigma_k=k-1$, then $\{\sigma_1,\dots,\sigma_k\}=\{0,\dots,k-1\}$.
			
			We consider
			\[w_M(t)=\dfrac{\partial^\eta}{\partial s^\eta}g_M^1(t,s)_{\mid s=a}\,,\]
			defined in Step 1 of the proof of Theorem \ref{T::IPN}.
			
			 By the calculations done in the proof of the mentioned result, we conclude that for all $M\in \mathbb{R}$, $w_M$ satisfies the following boundary conditions:		
			\[w_M(a)=\cdots=w_M^{(k-2)}(a)=0\,,\quad w_M^{(k-1)}(a)=(-1)^{n-\sigma_k-1}=(-1)^{n-k}\,.\]
			
			Hence, if $n-k$ is even, then there exists $\rho>0$, such that $w_M(t)>0$ for all $t\in(a,a+\rho)$. So, $T_n[M]$ cannot be inverse negative for any real $M$.
			
			Now, if $n-k$ is odd, then there exists $\rho>0$, such that $w_M(t)<0$ for all $t\in(a,a+\rho)$. Thus, $T_n[M]$ cannot be inverse positive for any $M\in\mathbb{R}$.
			
			Analogously, if $\varepsilon_{n-k}=n-k-1$, then $\{\varepsilon_1,\dots,\varepsilon_{n-k}\}=\{0,\dots,n-k-1\}$ and $\gamma=n-\varepsilon_{n-k}-1=k$.
			
				We consider now
				\[y_M(t)=\dfrac{\partial^\gamma}{\partial s^\gamma}g_M^2(t,s)_{\mid s=b}\,,\]
				defined in Step 2 of the proof of Theorem \ref{T::IPN}. 
				
				By the previous calculations, we conclude that for all $M\in \mathbb{R}$, $y_M$ satisfies the following boundary conditions:				
				\[y_M(b)=\cdots=y_M^{(n-k-2)}(b)=0\,,\quad y_M^{(n-k-1)}(b)=(-1)^{n-\varepsilon_{n-k}}=(-1)^{k+1}\,.\]

					Hence, if $n-k$ and $k$ are even, then there exists $\rho>0$, such that $y_M(t)>0$ for all $t\in(b-\rho,b)$. So, $T_n[M]$ cannot be inverse negative for any real $M$.
					
					 Moreover, if $n-k$ is even and $k$ odd, then there exists $\rho>0$, such that $y_M(t)<0$ for all $t\in(b-\rho,b)$. So, $T_n[M]$ cannot be inverse negative for any real $M$.
					
						Now, if $n-k$ and $k$ are odd, then there exists $\rho>0$, such that $y_M(t)>0$ for all $t\in(b-\rho,b)$. So, $T_n[M]$ cannot be inverse positive for any real $M$.
						
						Finally, if $n-k$ is odd and $k$ even, then there exists $\rho>0$, such that $y_M(t)<0$ for all $t\in(b-\rho,b)$. As consequence, $T_n[M]$ cannot be inverse positive for any real $M$. 			
		\end{proof}
		
		\section{Particular Cases}\label{SS::Ex}
		This Section is devoted to show the applicability of Theorem \ref{T::IPN} to different particular situations. Along this Section, let us consider $I=[0,1]$.
		
		We have been showing every result for the particular case where $n=4$, $\{\sigma_1,\sigma_2\}=\{0,2\}$, $\{\varepsilon_1,\varepsilon_2\}=\{1,2\}$ and $T_4^0[M]\,u(t)=u^{(4)}(t)+M\,u(t)$, which satisfies the hypotheses of Theorem \ref{T::IPN} for $\bar M=0$.
		
		If we wanted  to study the strongly inverse positive character of $T_4^0[M]$ in $X_{\{0,2\}}^{\{1,2\}}$ without taking into account Theorem \ref{T::IPN}, we would have to study the related Green's function, which is given by the following expression for $M=m^4>0$, obtained by means of the Mathematica program developed in \cite{CaCiMa}.
	{\tiny \[
	\begin{cases}\begin{split}e^{-\sqrt{2} m (s+t-2)} \left(2 e^{\frac{m (s+t-4)}{\sqrt{2}}} \sin \left(\frac{m
			(s-t)}{\sqrt{2}}\right)-e^{\frac{m (3 s+t-2)}{\sqrt{2}}} \sin \left(\frac{m
			(s-t)}{\sqrt{2}}\right)\right. \\+e^{\frac{m (s+3 t-6)}{\sqrt{2}}} \sin \left(\frac{m
			(s-t)}{\sqrt{2}}\right) -2 e^{\frac{m (3 s+3 t-4)}{\sqrt{2}}} \sin \left(\frac{m
			(s-t)}{\sqrt{2}}\right)\\+e^{\frac{m (3 s+t-4)}{\sqrt{2}}} \sin \left(\frac{m
			(s-t+2)}{\sqrt{2}}\right)-e^{\frac{m (s+3 t-4)}{\sqrt{2}}} \sin \left(\frac{m
			(s-t+2)}{\sqrt{2}}\right)\\+e^{\frac{m (s+t-4)}{\sqrt{2}}} \sin \left(\frac{m
			(s+t-2)}{\sqrt{2}}\right)-e^{\frac{m (3 s+3 t-4)}{\sqrt{2}}} \sin \left(\frac{m
			(s+t-2)}{\sqrt{2}}\right)\\+\left(-e^{\frac{m (s+t-2)}{\sqrt{2}}}+e^{\frac{3 m
				(s+t-2)}{\sqrt{2}}}+2 e^{\frac{m (3 s+t-4)}{\sqrt{2}}}-2 e^{\frac{m (s+3
				t-4)}{\sqrt{2}}}\right) \sin \left(\frac{m (s+t)}{\sqrt{2}}\right)\\+\left(e^{\frac{m (3
				s+t-2)}{\sqrt{2}}}+e^{\frac{m (s+3 t-6)}{\sqrt{2}}}\right) \cos \left(\frac{m
			(s-t)}{\sqrt{2}}\right)-e^{\frac{m (3 s+3
				t-4)}{\sqrt{2}}} \cos \left(\frac{m (s+t-2)}{\sqrt{2}}\right)\\+\left(e^{\frac{m (3 s+t-4)}{\sqrt{2}}}+e^{\frac{m (s+3
				t-4)}{\sqrt{2}}}\right) \cos \left(\frac{m (s-t+2)}{\sqrt{2}}\right) -e^{\frac{m
				(s+t-4)}{\sqrt{2}}} \cos \left(\frac{m (s+t-2)}{\sqrt{2}}\right)\\\left. -e^{\frac{m
				(s+t-2)}{\sqrt{2}}} \left(e^{\sqrt{2} m (s+t-2)}+1\right) \cos \left(\frac{m
			(s+t)}{\sqrt{2}}\right)\right)/\left( 4 \sqrt{2} m^3 \left(\sin \left(\sqrt{2} m\right)+\sinh
		\left(\sqrt{2} m\right)\right)\right) \end{split}& 0\leq s\leq t\leq 1\,, \\\\\\\\
	\begin{split}e^{-\frac{m (3 s+t-6)}{\sqrt{2}}} \left(e^{2 \sqrt{2} m (s-1)} \sin \left(\frac{m
			(s-t-2)}{\sqrt{2}}\right)-e^{\sqrt{2} m (s+t-2)} \sin \left(\frac{m
			(s-t-2)}{\sqrt{2}}\right)\right. \\+2 e^{\sqrt{2} m (s-2)} \sin \left(\frac{m
			(s-t)}{\sqrt{2}}\right)-e^{\sqrt{2} m (2 s-3)} \sin \left(\frac{m
			(s-t)}{\sqrt{2}}\right)+e^{\sqrt{2} m (s+t-1)} \sin \left(\frac{m
			(s-t)}{\sqrt{2}}\right)\\-2 e^{\sqrt{2} m (2 s+t-2)} \sin \left(\frac{m
			(s-t)}{\sqrt{2}}\right)+e^{\sqrt{2} m (s-2)} \sin \left(\frac{m
			(s+t-2)}{\sqrt{2}}\right)\\-e^{\sqrt{2} m (2 s+t-2)} \sin \left(\frac{m
			(s+t-2)}{\sqrt{2}}\right)+\left(e^{\sqrt{2} m (s+t-2)}+e^{2 \sqrt{2} m (s-1)}\right) \cos
		\left(\frac{m (s-t-2)}{\sqrt{2}}\right)\\ +\left(-2 e^{\sqrt{2} m (s+t-2)}+e^{\sqrt{2} m (2
			s+t-3)}-e^{\sqrt{2} m (s-1)}+2 e^{2 \sqrt{2} m (s-1)}\right) \sin \left(\frac{m
			(s+t)}{\sqrt{2}}\right)\\+\left(e^{\sqrt{2} m (s+t-1)}+e^{\sqrt{2} m (2
			s-3)}\right) \cos \left(\frac{m (s-t)}{\sqrt{2}}\right)\\-e^{\sqrt{2} m (2 s+t-2)} \cos \left(\frac{m
			(s+t-2)}{\sqrt{2}}\right) -\left(e^{\sqrt{2} m (2 s+t-3)}+e^{\sqrt{2} m (s-1)}\right) \cos
		\left(\frac{m (s+t)}{\sqrt{2}}\right)\\\left.-e^{\sqrt{2} m (s-2)} \cos
		\left(\frac{m (s+t-2)}{\sqrt{2}}\right)\right)/\left( 2 \sqrt{2} m^3 \left(e^{2 \sqrt{2} m}+2
		e^{\sqrt{2} m} \sin \left(\sqrt{2} m\right)-1\right)\right) \end{split} & 0<t<s\leq 1\,.\end{cases}\]}
$\!\!$which gives an example where the direct applicability of Theorem \ref{T::IPN} is posted. 

To characterize the Green's function constant sign in much more complicated problems, its expression may be inapproachable. Moreover, in some cases, for instance in problems with non constant coefficients, we cannot even obtain its expression. So, Theorem \ref{T::IPN} is very useful because it allows us to see which is the sign of the related Green's function without knowing its expression. We point out that to calculate the corresponding eigenvalues is very simple in the constant coefficient case and can be numerically approached in the non constant case.

In the sequel, we  see different examples, where the applicability of Theorem \ref{T::IPN}  is shown.
			\begin{itemize}
				\item $n^{\mathrm{th}}-$order operators with $(k,n-k)$ boundary conditions.
			\end{itemize}
			
			In \cite{CabSaa}, there are studied the operators $T_n[M]$ in the spaces $X_{\{0,\dots,k-1\}}^{\{0,\dots,n-k-1\}}$ under the hypothesis that there exists $\bar M\in\mathbb{R}$ such that $T_n[\bar M]\,u(t)=0$ is a disconjugate equation on $I$.
			
			In fact, it is proved there that in such a case $T_n[\bar M]$ verifies the property $(T_d)$. The result there obtained is a particular case of Theorem \ref{T::IPN} when $\sigma_k=k-1$ and $\varepsilon_{n-k}=n-k-1$. Moreover, since in such a case both $\sigma_k=k-1$ and $\varepsilon_{n-k}=n-k-1$, it is proved the correspondent to Theorem \ref{T::cv}.
			
			In \cite{CabSaa}, there are shown several examples, some of them with operators of non constant coefficients.
			
				\begin{itemize}
					\item Operator $T_4(p_1,p_2)[M]\,u(t)=u^{(4)}(t)+p_1(t)\,u^{(3)}(t)+p_2(t)\,u^{(2)}(t)+M\,u(t)$ in $X_{\{0,2\}}^{\{0,2\}}$.
				\end{itemize}
				
				In \cite{CabSaa2}, there is studied a particular kind of fourth order operators $T_4(p_1,p_2)[M]\,u(t)=u^{(4)}(t)+p_1(t)\,u^{(3)}(t)+p_2(t)\,u^{(2)}(t)+M\,u(t)$ in $X_{\{0,2\}}^{\{0,2\}}$, under the hypothesis that the second order equation $u''(t)+p_1(t)\,u'(t)+p_2(t)\,u(t)=0$ is disconjugate on $I$.
				
				Indeed, this allows us to prove that the operator $T_4(p_1,p_2)[0]$ satisfies the property $(T_d)$ in $X_{\{0,2\}}^{\{0,2\}}$. Hence, the result there obtained for the strongly inverse positive character is  a particular case of Theorem \ref{T::IPN}.
				
				In \cite{CabSaa2}, there are also shown several examples of this kind of operators coupled with the well-known simply supported beam boundary conditions. Again, some of the examples shown have non-constant coefficients.
		\begin{itemize}
			\item Operator $T_n^0[M]\,u(t)=u^{(n)}(t)+M\,u(t)$.
		\end{itemize}
	
	Before giving some results for this kind of operator, we take into account the following remarks:
		\begin{remark}\label{R::10}
			Realize that if we choose $\{\sigma_1,\dots,\sigma_k\}-\{\varepsilon_1,\dots,\varepsilon_{n-k}\}$ verifying $(N_a)$, then the hypotheses of Theorem \ref{T::IPN} are fulfilled for $\bar M=0$ for the operator $T_n^0[\bar M]$ by choosing $v_1(t)=\cdots=v_n(t)=1$ for all $t\in I$.
		\end{remark}
		
		\begin{remark}\label{R::11}
			Realize that for this kind of operators the behavior in  $X^{\{\sigma_1,\dots,\sigma_k\}}_{\{\varepsilon_1,\dots,\varepsilon_{n-k}\}}$ can be known by studying the behavior in $X_{\{\sigma_1,\dots,\sigma_k\}}^{\{\varepsilon_1,\dots,\varepsilon_{n-k}\}}$. This is due to the fact that the eigenvalues are the same if $n$ is even or or the opposed if $n$ is odd. 
			
			Indeed, if $u$ is a nontrivial solution of $u^{(n)}(t)+M\,u(t)=0$ on $X^{\{\sigma_1,\dots,\sigma_k\}}_{\{\varepsilon_1,\dots,\varepsilon_{n-k}\}}$, then $y(t)=u(1-t)$ is a solution of $u^{(n)}(t)+(-1)^n\,M\,u(t)=0$ on $X_{\{\sigma_1,\dots,\sigma_k\}}^{\{\varepsilon_1,\dots,\varepsilon_{n-k}\}}$.
			
			So, we do not need to study all the cases to obtain conclusions about the strongly inverse positive (negative) character.
		\end{remark}
		In the sequel, we show different examples of this kind of operators.

			\begin{itemize}
				\item[-] Second order
			\end{itemize}
			In second order, the only possibility is to consider $k=1$. There are three options for the choice of $\{\sigma_1\}-\{\varepsilon_1\}$. First of them is $\sigma_1=\varepsilon_1=0$ which correspond to the Dirichlet case, that is the boundary conditions $(1,1)$. This case has been considered in \cite{CabSaa}, where it is obtained that the operator $T_2^0[M]$ is strongly inverse negative in $X_{\{0\}}^{\{0\}}$ if, and only if $M\in(-\infty,\pi^2)$.
			
			The other two choices correspond to the mixed boundary conditions and are equivalent, $\sigma_1=0$ and $\varepsilon_1=1$ or $\sigma_1=1$ and $\varepsilon_1=0$.
			
			The biggest negative eigenvalue of $T_2^0[0]$ in $X_{\{0\}}^{\{1\}}$ is $\lambda_1=-\dfrac{\pi^2}{4}$. So, using Theorem \ref{T::IPN}, we can affirm that $T_2^0[M]$ is a strongly inverse negative operator in $X_{\{0\}}^{\{1\}}$ or in $X_{\{1\}}^{\{0\}}$ if, and only if $M\in \left( -\infty,\dfrac{\pi^2}{4}\right)$.
			
			Moreover, from Theorem \ref{T::cv}, we can conclude that there is not any $M\in\mathbb{R}$ such that $T_2^0[M]$ is strongly inverse positive either in $X_{\{0\}}^{\{1\}}$ or $X_{\{1\}}^{\{0\}}$.
			
			\begin{itemize}
				\item[-] Third  order
			\end{itemize}
			In this case the number of possible cases increases until twelve, which can be reduced to six. The cases $\{\sigma_1,\sigma_2\}=\{0,1\}$, $\{\varepsilon_1\}=\{0\}$ and $\{\sigma_1\}=\{0\}$, $\{\varepsilon_1,\varepsilon_2\}=\{0,1\}$ have been considered in \cite{CabSaa}. Let us see some of the rest.
			
			First, let us consider $\{\sigma_1,\sigma_2\}=\{1,2\}$ and $\{\varepsilon_1\}=\{0\}$.
			
			The biggest negative eigenvalue of $T_3^0[0]$ in $X_{\{1,2\}}^{\{0\}}$ is given by $\lambda_1=-m_4^3$, where $m_4\approxeq 1.85$ is the least positive solution of
			\begin{equation}\label{Ec::m4}
			e^{-m}+2\,e^{m/2}\,\cos\left( \dfrac{\sqrt{3}}{2}m\right) =0\,.
			\end{equation}
			
			In order to apply Theorem \ref{T::IPN}, we need to obtain the least positive eigenvalue of $T_3^0[0]$ in $X_{\{1\}}^{\{0,1\}}$, which is given by $\lambda_2=m_5^3$, where $m_5\approxeq 3.017$ is the least positive solution of
			\begin{equation}
			\label{Ec::m5}
			e^{-m}-e^{m/2}\left(\cos\left( \dfrac{\sqrt{3}}{2}m\right) +\sqrt{3}\sin\left( \dfrac{\sqrt{3}}{2}m\right) \right) =0\,.
			\end{equation}
			
			Since, $k=2=n-1$, we can apply Theorem \ref{T::IPN} to affirm that $T_3^0[M]$ is strongly inverse negative in $X_{\{1,2\}}^{\{0\}}$ if, and only if, $M\in [-m_5^3,m_4^3)\approxeq[-3.017^3,1.85^3)$. 
			
			Moreover, from Theorem \ref{T::cv}, we can conclude that there is not any $M\in\mathbb{R}$ such that $T_3^0[M]$ is strongly inverse positive  in $X_{\{1,2\}}^{\{0\}}$.
			
			Now, from Remark \ref{R::11}, we can affirm that $T_3^0[M]$ is strongly inverse positive in  $X^{\{1,2\}}_{\{0\}}$ if, and only if, $M\in (-m_4^3,m_5^3]\approxeq(-1.85^3,3.017^3]$. 
			
			Moreover,  we can conclude that there is not any $M\in\mathbb{R}$ such that $T_3^0[M]$ is strongly inverse negative  in $X^{\{1,2\}}_{\{0\}}$.
			
			Now, let us consider $\{\sigma_1,\sigma_2\}=\{0,2\}$ and $\{\varepsilon_1\}=\{1\}$.
			
			The biggest negative eigenvalue of $T_3^0[0]$ in $X_{\{0,2\}}^{\{1\}}$ is $\lambda_1=-m_4^3$, where $m_4$ has been defined as the least positive solution of \eqref{Ec::m4}.
			
			Moreover, the least positive eigenvalue in $X_{\{0\}}^{\{0,1\}}$ is given by $\lambda_2=m_6^3$, where $m_6\approxeq 4.223$ is the least positive solution of 
			\begin{equation}
			\label{Ec::m6}
			e^{-m}-e^{m/2}\left(\cos\left( \dfrac{\sqrt{3}}{2}m\right) -\sqrt{3}\sin\left( \dfrac{\sqrt{3}}{2}m\right) \right) =0\,.
			\end{equation}
			
			Thus, from Theorem \ref{T::IPN}, we conclude that $T_3^0[M]$ is strongly inverse negative in $X_{\{0,2\}}^{\{1\}}$ if, and only if, $M\in[-m_6^3,m_4^3)\approxeq[-4.223^3,1.85^3)$.
			
			We note that in this case $\sigma_2=2>1$ and $\varepsilon_1=1>0$, thus we cannot apply Theorem \ref{T::cv} to obtain conclusions about the strongly inverse positive character of $T_3^0[M]$ in $X_{\{0,2\}}^{\{1\}}$.
			
			However, from Remark \ref{R::11}, we can affirm that $T_3^0[M]$ is strongly inverse positive in $X_{\{1\}}^{\{0,2\}}$ if, and only if, $M\in(-m_4^3,m_6^3]\approxeq(-1.85^3,4.223^3]$.
			
			\begin{itemize}
				\item Fourth order[-]
			\end{itemize}
			There are forty possibilities, which can be decreased, by using Remark \ref{R::11}, to twenty one. There are three possibilities which have been studied in \cite{CabSaa}, they are represented on the sets $X_{\{0,1,2\}}^{\{0\}}$, $X_{\{0\}}^{\{0,1,2\}}$ and $X_{\{0,1\}}^{\{0,1\}}$. The characterization in $X_{\{0,2\}}^{\{0,2\}}$ has been obtained in \cite{CabSaa2}. Moreover, along the paper we have studied the case $X_{\{0,2\}}^{\{1,2\}}$. From Remark \ref{R::11}, the obtained characterization remains valid for the set $X_{\{1,2\}}^{\{0,2\}}$.
			
			In the sequel, let us see a pair of different cases. For instance, $X_{\{1,2,3\}}^{\{0\}}$ (which also gives the characterization in $X_{\{0\}}^{\{1,2,3\}}$) and $X_{\{1,3\}}^{\{0,2\}}$ ($X_{\{0,2\}}^{\{1,3\}}$).
			
		Let us work on the space $X_{\{1,2,3\}}^{\{0\}}$.
				
			First, we obtain the necessary eigenvalues in order to apply Theorem \ref{T::IPN} to this case:
				
				The biggest negative eigenvalue of $T_4^0[0]$ in $X_{\{1,2,3\}}^{\{0\}}$ is $\lambda_1=-\dfrac{\pi^4}{4}$.
				
				The least positive eigenvalue of $T_4^0[0]$ in $X_{\{1,2\}}^{\{0,1\}}$ is $\lambda_2=\pi^4$.
				
				Then, $T_4^0[M]$ is strongly inverse negative in $X_{\{1,2,3\}}^{\{0\}}$ if, and only if, $M\in\left[-\pi^4,\dfrac{\pi^4}{4}\right)$.
				
				Since $\varepsilon_1=0 $, we can apply Theorem \ref{T::cv} to conclude that there is not any $M\in\mathbb{R}$ such that $T_4^0[M]$ is strongly inverse positive in $X_{\{1,2,3\}}^{\{0\}}$.
				
				 Concerning to the space $X_{\{0,2\}}^{\{1,3\}}$, we have the following eigenvalues:
				
				The least positive eigenvalue of $T_4^0[0]$ in $X_{\{0,2\}}^{\{1,3\}}$ is $\lambda_1=\dfrac{\pi^4}{16}$.
				
				The biggest negative eigenvalue of $T_4^0[0]$ in $X_{\{0,1,2\}}^{\{1\}}$ is $\lambda_2'=-4\,\pi^4$.
				
				The biggest negative eigenvalue of $T_4^0[0]$  in $X_{\{0\}}^{\{0,1,3\}}$ is $\lambda_2''=-4\,\pi^4$.
				
				So, $\lambda_2=\max\{-4\,\pi^4,-4\,\pi^4\}=-4\,\pi^4$.
				
				Hence, we conclude, from Theorem \ref{T::IPN}, that $T_4^0[M]$ is strongly inverse positive in $X_{\{0,2\}}^{\{1,3\}}$ if, and only if, $M\in \left( -\dfrac{\pi^4}{16},4\,\pi^4\right] $.
				
				Since $\sigma_2=2>1$ and $\varepsilon_2=3>1$, we cannot apply Theorem \ref{T::cv} to affirm that it cannot be strongly inverse negative for any $M\in\mathbb{R}$.

		\begin{itemize}
					\item[-] Higher order
				\end{itemize}
			
	  If we increase the order of the problem, due to the fact that the related Green's function gets more complex, the usefulness of Theorem \ref{T::IPN} also increases. Even if we cannot obtain the eigenvalues analytically, we can obtain them numerically, using different methods.
	  
	  Now, let us show an example of sixth order, where we can obtain the eigenvalues analytically.
	  
	  The biggest negative eigenvalue of $T_6^0[0]$ in $X_{\{0,2,4\}}^{\{0,2,4\}}$ is $\lambda_1=-\pi^6$.
	  
	  The least positive eigenvalue of $T_6^0[0]$ in $X_{\{0,1,2,4\}}^{\{0,2\}}$ and $X_{\{0,2\}}^{\{0,1,2,4\}}$ is $\lambda_2=\lambda_2'=\lambda_2''=m_7^6$, where $m_7\approxeq5.47916$ is the least positive solution of
	  \[ \cos \left(\sqrt{3} m\right)-\cosh (m)+8 \cos \left(\frac{\sqrt{3} m}{2}\right) \sinh
	  ^2\left(\frac{m}{2}\right) \cosh \left(\frac{m}{2}\right)=0\,.\]
	  
	  Hence, from Theorem \ref{T::IPN}, we conclude that $T_6^0[M]$ is a strongly inverse negative operator in $X_{\{0,2,4\}}^{\{0,2,4\}}$ if, and only if, $M\in [-m_7^7,\pi^6)\approxeq[-5.47916^6,\pi^6)$.
	  
	  Since $\sigma_3=4>2$ and $\varepsilon_3=4>2$, we cannot apply Theorem \ref{T::cv} to obtain any conclusion about the strongly inverse positive character.		
		
		\begin{itemize}
			\item Operator $T_4^N[M]u(t)=u^{(4)}(t)+N\,u'(t)+M\,u(t)$ in $X_{\{0,2\}}^{\{1,2\}}$.
		\end{itemize}
		
	Let us denote $N=n^3$. Now, let us consider the fourth order operator $T_4^{n^3}[M]\,u(t)=u^{(4)}(t)+n^3\,u'(t)+M\,u(t)$ in $X_{\{0,2\}}^{\{1,2\}}$. Realize that for $n=0$ this operator coincides with the example that we have been considering in the different examples along the paper.
		
		Let us see that for $n\in\left(-\dfrac{4\,\pi}{3\,\sqrt{3}},\dfrac{2\,\pi}{3\,\sqrt{3}}\right) $, $T_4^{n^3}[0]$ verifies the property $(T_d)$ in $X_{\{0,2\}}^{\{1,2\}}$.
		
		In order to do that, let us consider the following fundamental system of solutions:
		\[\begin{split}
		y_1^n(t)&=1\,,\\
		y_2^n(t)&=\sqrt{3}\,e^{\frac{n\,t}{2}}\,\cos\left( \dfrac{\sqrt{3}}{2}n\,t\right) +e^{\frac{n\,t}{2}}\,\sin \left( \dfrac{\sqrt{3}}{2}n\,t\right) \,,\\
		y_3^n(t)& =e^{\frac{n\,t}{2}}\,\sin \left( \dfrac{\sqrt{3}}{2}n\,t\right)\,,\\
		y_4^n(t)&=e^{-n\,t}\,,\end{split}\]
	and the correspondent Wronskians:
	\[\begin{split}
	W_1^n(t)&=1\,,\\
	W_2^n(t)&=\dfrac{1}{2}\,e^{\frac{n\,t}{2}}\,n^2\,\left( \cos \left( \dfrac{\sqrt{3}}{2}n\,t\right)-\sin \left( \dfrac{\sqrt{3}}{2}n\,t\right)\right) \,,\\
	W_3^n(t)&=\dfrac{3}{2}\,n^3\,e^{n\,t}\,,\\
	W_4^n(t)&=-\dfrac{9\,n^6}{2}\,.
	\end{split}\]
	
	If, $n\neq 0$, then $W_1^n$, $W_3^n$ and $W_4^n$ are non null in $[0,1]$.
	
	Moreover, it can be seen that if $n\in\left( -\dfrac{4\,\pi}{3\,\sqrt{3}},\dfrac{2\,\pi}{3\,\sqrt{3}}\right)$, then $W_2^n(t)\neq 0$ for all $t\in[0,1]$. So, we can obtain the representation given in \eqref{Ec::Td1}-\eqref{Ec::Td2}.
	
	We construct $v_1,\dots,v_4$ following the proof of Theorem \ref{T::3} (\cite[Theorem 2, Chapter 3]{Cop}):
	\[\begin{split}
	v_1^n(t)&=1\,,\\
	v_2^n(t)&=W_2^n(t)=\dfrac{1}{2}\,e^{\frac{n\,t}{2}}\,n^2\,\left( \cos \left( \dfrac{\sqrt{3}}{2}n\,t\right)-\sin \left( \dfrac{\sqrt{3}}{2}n\,t\right)\right) \,,\\
	v_3^n(t)&=\dfrac{W_3^n(t)}{{W_2^n}^2(t)}\,,\\
	v_4^n(t)&=\dfrac{W_2^n(t)\,W_4^n(t)}{{W_3^n}^2(t)}\,.	
	\end{split}\]
	
	In Example \ref{Ex::3}, we have proved that a fourth order operator satisfies property $(T_d)$ in $X_{\{0,2\}}^{\{1,2\}}$ if, and only if, there exists the decomposition \eqref{Ec::Td1}-\eqref{Ec::Td2} and \eqref{Ec::Ex31}-\eqref{Ec::Ex32} are fulfilled.
	
	 Let us check it. Obviously, \eqref{Ec::Ex32} is verified. Now, since ${v_1^n}'(0)=0$ and $v_2^n(0)\neq0$, we have to verify that ${v_2^n}'(0)=0$. But, from the fact that
	\[{v_2^n}'(t)=-2\,e^{\frac{n\,t}{2}}\,\sin \left( \dfrac{\sqrt{3}}{2}n\,t\right) \,,\]
	we deduce that it is trivially satisfied that ${v_2^n}'(0)=0$. So, as consequence, \eqref{Ec::Ex31} is fulfilled and we conclude that if $n\in\left( -\dfrac{4\,\pi}{3\,\sqrt{3}},\dfrac{2\,\pi}{3\,\sqrt{3}}\right) $, then $T_4^{n^3}[0]$ verifies the property $(T_d)$ in $X_{\{0,2\}}^{\{1,2\}}$.
	
	\begin{remark}
		Realize that the interval $\left( -\dfrac{4\,\pi}{3\,\sqrt{3}},\dfrac{2\,\pi}{3\,\sqrt{3}}\right)$ is not necessarily optimal.
		
		If we study the disconjugacy set of $T_4^{n^3}[0]\,u(t)=0$ on $[0,1]$, we obtain that such equation is disconjugate if, and only if, $n\in(-n_1,n_1)$, where $n_1\approxeq5.55$ is the least positive solution of 
		\[-3+e^{-n}+2^{n/2}\,\cos\left( \dfrac{\sqrt{3}\,n}{2}\right) =0\,.\]
	
	Then, it is possible that we may find different values of $n\in(-n_1^3,n_1^3)$ such that $T_4^{n^3}[0]$ satisfies property $(T_d)$ in $X_{\{0,2\}}^{\{1,2\}}$ with a suitable choice of the fundamental system of solutions. 
	
	For instance, repeating the previous arguments for the following fundamental system of solutions:
		\[\begin{split}
	y_1^n(t)&=1\,,\\
	y_2^n(t)&=\dfrac{2}{\sqrt{3}}\,e^{\frac{n\,t}{2}}\,\sin\left( \dfrac{\sqrt{3}}{2}n\,t\right) -e^{-n\,t}\,,\\
	y_3^n(t)&=e^{-n\,t}\,,\\
	y_4^n(t)& =e^{\frac{n\,t}{2}}\,\sin \left( \dfrac{\sqrt{3}}{2}n\,t\right)\,,\end{split}\]
	we obtain a decomposition to ensure that $T_4^{n^3}[0]$ verify property $(T_d)$ for $n\in\left( -\dfrac{\pi}{\sqrt{3}},\dfrac{\pi}{\sqrt{3}}\right)$. 
	
	Thus, we can say that for $n\in\left( -\dfrac{4\,\pi}{3\,\sqrt{3}},\dfrac{2\,\pi}{3\,\sqrt{3}}\right)\cup \left( -\dfrac{\pi}{\sqrt{3}},\dfrac{\pi}{\sqrt{3}}\right)=\left( -\dfrac{4\,\pi}{3\,\sqrt{3}},\dfrac{\pi}{\sqrt{3}}\right)\subset(-n_1,n_1)$, then $T_4^{n^3}[0]$ verifies property $(T_d)$. 
	
	However, we cannot even affirm that such an interval is the optimal one.
		\end{remark}
	
	Let us choose, for instance, $n=-\dfrac{\pi}{\sqrt{3}}\in\left( -\dfrac{4\,\pi}{3\,\sqrt{3}},\dfrac{\pi}{\sqrt{3}}\right) \subset(-n_1,n_1)$ and we obtain the different eigenvalues numerically, by using Mathematica.
	
	The least positive eigenvalue of $T_4^{-\frac{\pi^3}{3\sqrt{3}}}[0]$ in $X_{\{0,2\}}^{\{1,2\}}$ is $\lambda_1\approxeq2.21152^4$.
	
	The biggest negative eigenvalue of $T_4^{-\frac{\pi^3}{3\sqrt{3}}}[0]$ in $X_{\{0,1,2\}}^{\{1\}}$ is $\lambda_2'\approxeq-4.53073^4$.
	
		The biggest negative eigenvalue of $T_4^{-\frac{\pi^3}{3\sqrt{3}}}[0]$ in $X_{\{0\}}^{\{0,1,2\}}$ is $\lambda_2''\approxeq-5.5014^4$.
		
		So, $\lambda_2=\max\{\lambda_2',\lambda_2''\}=\lambda_2'\approxeq -4.53073^4$.
		
		From Theorem \ref{T::IPN}, we conclude that $T_4^{-\frac{\pi^3}{3\sqrt{3}}}[M]$ is strongly inverse positive in $X_{\{0,2\}}^{\{1,2\}}$ if, and only if, $M\in(-\lambda_1,-\lambda_2]\approxeq(-2.21152^4,4.53073^4]$.
		
		Since $\sigma_2=\varepsilon_2=2>1$, we cannot obtain any conclusion about the strongly inverse positive character from Theorem \ref{T::cv}.
		
		\chapter[Necessary condition for the inverse negative (positive) character]{Necessary condition for the strongly inverse negative (positive) character of $T_n[M]$ in 	 $X_{\{\sigma_1,\dots,\sigma_{k}\}}^{\{\varepsilon_1,\dots,\varepsilon_{n-k}\}}$.}
		
		In the previous chapter we have obtained a characterization of  the parameter's set where the operator $T_n[M]$ is either strongly inverse positive or negative in $X_{\{\sigma_1,\dots,\sigma_{k}\}}^{\{\varepsilon_1,\dots,\varepsilon_{n-k}\}}$ if $n-k$ is even or odd, respectively.
		
		 Moreover, in some cases, we can ensure that if $n-k$ is even, then there is not any $M\in\mathbb{R}$ such that $T_n[M]$ is strongly inverse negative in $X_{\{\sigma_1,\dots,\sigma_{k}\}}^{\{\varepsilon_1,\dots,\varepsilon_{n-k}\}}$ and if $n-k$ is odd, then there is not any $M\in\mathbb{R}$ such that $T_n[M]$ is strongly inverse positive in $X_{\{\sigma_1,\dots,\sigma_{k}\}}^{\{\varepsilon_1,\dots,\varepsilon_{n-k}\}}$. However, on the cases which do not fulfill the hypotheses of Theorem \ref{T::cv}, we have not said anything about the strongly inverse negative character if $n-k$ is even or about the strongly inverse positive character if $n-k$ is odd. 
		 
		 From Theorems \ref{T::6} and \ref{T::8}, if $n-k$ is even and there exists $\bar M\in\mathbb{R}$ such that $T_n[\bar M]$ is strongly inverse negative in $X_{\{\sigma_1,\dots,\sigma_{k}\}}^{\{\varepsilon_1,\dots,\varepsilon_{n-k}\}}$, then the parameter's set, for which such a property is fulfilled, is given by an interval whose supremum is given by $\bar M-\lambda_1$.
		
		Moreover, from Theorems \ref{T::7} and \ref{T::9}, if $n-k$ is odd and there exists $\bar M\in\mathbb{R}$ such that $T_n[\bar M]$ is strongly inverse positive in $X_{\{\sigma_1,\dots,\sigma_{k}\}}^{\{\varepsilon_1,\dots,\varepsilon_{n-k}\}}$, then the parameter's set, for which such a property is fulfilled, is given by an interval whose infimum is given by $\bar M-\lambda_1$.
		
		This chapter, is devoted to obtain a bound of the other extreme of the interval. Furthermore, we will see that, in such an interval, the Green's function satisfies a suitable property which allows to prove that the obtained interval is optimal if we prove that the Green's function cannot have any zero on $(a,b)\times (a,b)$.
		
	\begin{theorem}\label{T::IPN1}
		Let $\bar M\in \mathbb{R}$ be such that $T_n[\bar M]$ satisfies property $(T_d)$ on $X_{\{\sigma_1,\dots,\sigma_k\}}^{\{\varepsilon_1,\dots,\varepsilon_{n-k}\}}$  and ${\{\sigma_1,\dots,\sigma_k\}}-{\{\varepsilon_1,\dots,\varepsilon_{n-k}\}}$ fulfill $(N_a)$. Then, the following properties are satisfied:
		\begin{itemize}
			\item If $n-k$ is even and $T_n[M]$ is  inverse negative in $X_{\{\sigma_1,\dots,\sigma_{k}\}}^{\{\varepsilon_1,\dots,\varepsilon_{n-k}\}}$, then $M\in[\bar M-\lambda_3,\bar M-\lambda_1)$, where
			\begin{itemize}
				\item [*]$\lambda_1>0$ is the least positive eigenvalue of $T_n[\bar M]$ in $X_{\{\sigma_1,\dots,\sigma_{k}\}}^{\{\varepsilon_1,\dots,\varepsilon_{n-k}\}}.$
				\item [*]$\lambda_3>0$ is the minimum between:
				\begin{itemize}
					\item [·]$\lambda_3'>0$, the least positive eigenvalue of $T_n[\bar M]$ in $X_{\{\sigma_1,\dots,\sigma_{k-1}|\alpha\}}^{\{\varepsilon_1,\dots,\varepsilon_{n-k}\}}.$
					\item [·]$\lambda_3''>0$ is the least positive eigenvalue of $T_n[\bar M]$ in $X_{\{\sigma_1,\dots,\sigma_{k}\}}^{\{\varepsilon_1,\dots,\varepsilon_{n-k-1}|\beta\}}.$  
				\end{itemize}
			\end{itemize}
%			\item If $k=1$, $n$ is odd and $T_n[M]$ is strongly inverse negative in $X_{\{\sigma_1\}}^{\{\varepsilon_1,\dots,\varepsilon_{n-1}\}}$ then $M\in[\bar M-\lambda_1,\bar M-\lambda_2]$, where
%			\begin{itemize}
%				\item $\lambda_1>0$ is the least positive eigenvalue of $T_n[\bar M]$ in $X_{\{\sigma_1\}}^{\{\varepsilon_1,\dots,\varepsilon_{n-1}\}}$.
%				\item $\lambda_2<0$ is the biggest negative eigenvalue of $T_n[\bar M]$ in $X_{\{\sigma_1|\alpha\}}^{\{\varepsilon_1,\dots,\varepsilon_{n-2}\}}$  
%			\end{itemize}
		\item If $n-k$ is odd and $T_n[M]$ is  inverse positive in $X_{\{\sigma_1,\dots,\sigma_{k}\}}^{\{\varepsilon_1,\dots,\varepsilon_{n-k}\}}$, then $M\in(\bar M-\lambda_1,\bar M-\lambda_3]$, where
		\begin{itemize}
			\item [*]$\lambda_1<0$ is the biggest negative eigenvalue of $T_n[\bar M]$ in $X_{\{\sigma_1,\dots,\sigma_{k}\}}^{\{\varepsilon_1,\dots,\varepsilon_{n-k}\}}.$ 
			\item [*]$\lambda_3<0$ is the maximum between:
			\begin{itemize}
				\item[·] $\lambda_3'<0$, the biggest negative eigenvalue of $T_n[\bar M]$ in $X_{\{\sigma_1,\dots,\sigma_{k-1}|\alpha\}}^{\{\varepsilon_1,\dots,\varepsilon_{n-k}\}}.$
				\item [·]$\lambda_3''<0$ is the biggest negative eigenvalue of $T_n[\bar M]$ in $X_{\{\sigma_1,\dots,\sigma_{k}\}}^{\{\varepsilon_1,\dots,\varepsilon_{n-k-1}|\beta\}}.$
			\end{itemize}
			\end{itemize}
		\end{itemize}
	\end{theorem}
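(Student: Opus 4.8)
The plan is to deduce Theorem \ref{T::IPN1} from the interval structure of the sets $N_T$ and $P_T$ together with the auxiliary functions $w_M$ and $y_M$ introduced in Steps 1 and 2 of the proof of Theorem \ref{T::IPN}. I treat only the case $n-k$ even (inverse negative character); the case $n-k$ odd is entirely symmetric, exchanging the roles of $N_T$ and $P_T$ and reversing the signs of the eigenvalues. If $N_T=\emptyset$ the assertion is vacuous, so assume $N_T\neq\emptyset$; then by Theorem \ref{T::cv} we necessarily have $\sigma_k\neq k-1$ and $\varepsilon_{n-k}\neq n-k-1$, so that $\lambda_3'$ and $\lambda_3''$ are both well defined. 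Since the Green's function of $T_n[\bar M]$ satisfies $(P_g)$ (Theorem \ref{L::5}), Theorems \ref{T::6} and \ref{T::8} give $\sup(N_T)=\inf(P_T)=\bar M-\lambda_1$, and $\bar M-\lambda_1\notin N_T$ because $\lambda_1$ is an eigenvalue of $T_n[\bar M]$ in $X_{\{\sigma_1,\dots,\sigma_k\}}^{\{\varepsilon_1,\dots,\varepsilon_{n-k}\}}$; this yields the right endpoint $M<\bar M-\lambda_1$. It remains to prove $\inf(N_T)\geq \bar M-\lambda_3=\max\{\bar M-\lambda_3',\bar M-\lambda_3''\}$. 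Since $N_T$ is an interval and, by Proposition \ref{P::6.5}, both $\bar M-\lambda_3'$ and $\bar M-\lambda_3''$ lie strictly below $\bar M-\lambda_1=\sup(N_T)$ and are not eigenvalues of $T_n[\bar M]$ in the original space (so the corresponding Green's functions are well defined), it suffices to show that neither $\bar M-\lambda_3'$ nor $\bar M-\lambda_3''$ belongs to $N_T$.

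First I would rule out $\bar M-\lambda_3'$ using $w_M(t)=\frac{\partial^\eta}{\partial s^\eta}g_M^1(t,s)_{\mid s=a}$. As computed in Step 1 of the proof of Theorem \ref{T::IPN}, $w_M$ solves $T_n[M]\,w_M=0$ on $(a,b]$, satisfies the boundary conditions \eqref{Ec::cfaa}--\eqref{Ec::cfbb}, and $g_M^1(t,s)$ vanishes to order $\eta$ in $(s-a)$ at $s=a$ with leading coefficient $w_M(t)/\eta!$; moreover it was shown there that $w_{\bar M}>0$ on $(a,b)$. Since $\lambda_3'>0>\lambda_2'$, the point $\bar M-\lambda_3'$ is the left endpoint of the interval $[\bar M-\lambda_3',\bar M-\lambda_2']$ (or $[\bar M-\lambda_3',+\infty)$ when $k=1$) on which, by Proposition \ref{P::1}, $w_M$ has no zero in $(a,b)$; as this interval contains $\bar M$, a continuity argument in the parameter $M$ forces $w_M>0$ on $(a,b)$ throughout it, in particular $w_{\bar M-\lambda_3'}(t)>0$ for every $t\in(a,b)$. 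Fixing such a $t$ and taking $s$ slightly larger than $a$ with $s<t$, the value $g_{\bar M-\lambda_3'}(t,s)=g_{\bar M-\lambda_3'}^1(t,s)$ has the sign of $w_{\bar M-\lambda_3'}(t)$, hence is positive; so $g_{\bar M-\lambda_3'}$ is not non-positive and $\bar M-\lambda_3'\notin N_T$.

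The exclusion of $\bar M-\lambda_3''$ is obtained in the same way from $y_M(t)=\frac{\partial^\gamma}{\partial s^\gamma}g_M^2(t,s)_{\mid s=b}$ of Step 2 of the proof of Theorem \ref{T::IPN}, which solves $T_n[M]\,y_M=0$ on $[a,b)$, satisfies \eqref{Ec::cfaaa}--\eqref{Ec::cfbbb}, governs the leading term of $g_M$ in powers of $(s-b)$ at $s=b$, and has $y_{\bar M}$ of constant sign $(-1)^\gamma$ on $(a,b)$. By Proposition \ref{P::2}, $y_M$ has no zero in $(a,b)$ for $M\in[\bar M-\lambda_3'',\bar M-\lambda_2'']$, so the same continuity argument gives $y_{\bar M-\lambda_3''}$ of constant sign $(-1)^\gamma$ on $(a,b)$; taking $s$ slightly below $b$ with $t<s$, the value $g_{\bar M-\lambda_3''}(t,s)=g_{\bar M-\lambda_3''}^2(t,s)$ has the sign of $(-1)^\gamma y_{\bar M-\lambda_3''}(t)$, which is positive, so $\bar M-\lambda_3''\notin N_T$. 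Since $N_T$ is an interval with supremum $\bar M-\lambda_1$ missing both $\bar M-\lambda_3'$ and $\bar M-\lambda_3''$, it follows that $N_T\subseteq(\bar M-\lambda_3,\bar M-\lambda_1)\subseteq[\bar M-\lambda_3,\bar M-\lambda_1)$. The case $n-k$ odd runs identically with $P_T$ in place of $N_T$, $\inf(P_T)=\bar M-\lambda_1$ (Theorems \ref{T::7}, \ref{T::9}), and Propositions \ref{P::1}, \ref{P::2} now supplying intervals $[\bar M-\lambda_2',\bar M-\lambda_3']$, $[\bar M-\lambda_2'',\bar M-\lambda_3'']$ whose right endpoints, lying above $\bar M-\lambda_1$ by Proposition \ref{P::6.5}, are excluded from $P_T$ exactly as before, yielding $P_T\subseteq(\bar M-\lambda_1,\bar M-\lambda_3]$.

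The step I expect to be the main obstacle is verifying that $w_M$ and $y_M$ retain a strict sign on the open interval $(a,b)$ up to and including the endpoints $\bar M-\lambda_3'$ and $\bar M-\lambda_3''$: this requires that the interval of absence of interior zeros furnished by Propositions \ref{P::1} and \ref{P::2} be closed at that endpoint, combined with the already-established sign of $w_{\bar M}$ and $y_{\bar M}$ and a careful continuity-in-$M$ argument, and then the correct identification of the sign of $g_M$ near the relevant boundary piece from the leading coefficient of its boundary expansion. The remaining points — the degenerate sub-case $k=1$, matching the two auxiliary functions to $\lambda_3'$ and $\lambda_3''$, and the invertibility of $T_n[\bar M-\lambda_3']$ and $T_n[\bar M-\lambda_3'']$ — are routine once Proposition \ref{P::6.5} is available.
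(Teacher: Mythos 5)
Your reduction of the right endpoint to Theorems \ref{T::6} and \ref{T::8} is fine, but the left-endpoint argument contains a genuine error, not just the ``obstacle'' you flag. You claim that, since Proposition \ref{P::1} gives absence of interior zeros of $w_M$ for all $M\in[\bar M-\lambda_3',\bar M-\lambda_2']$ and $w_{\bar M}>0$, continuity in $M$ forces $w_M>0$ on $(a,b)$ throughout that interval, in particular $w_{\bar M-\lambda_3'}>0$. This fails because the parameter interval contains $\bar M-\lambda_1$ in its interior (recall $0<\lambda_1<\lambda_3'$ and $\lambda_2'<0$ by Proposition \ref{P::6.5}), and at $M=\bar M-\lambda_1$ the operator $T_n[M]$ is not invertible in $X_{\{\sigma_1,\dots,\sigma_k\}}^{\{\varepsilon_1,\dots,\varepsilon_{n-k}\}}$: the Green's function, and hence $w_M$ with its normalization $w_M^{(\sigma_k)}(a)=(-1)^{n-1-\sigma_k}$, does not exist there, and in fact the sign of $w_M$ flips as $M$ crosses $\bar M-\lambda_1$ (the resolvent pole changes sign; equivalently, for $M\in N_T$ one has $g_M\le 0$ and therefore $w_M\le 0$, not $w_M>0$). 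Consequently your conclusion that $\bar M-\lambda_3'$ and $\bar M-\lambda_3''$ cannot belong to $N_T$, and hence $N_T\subseteq(\bar M-\lambda_3,\bar M-\lambda_1)$, is false in general: in the paper's recurrent example (Examples \ref{Ex::6} and \ref{Ex::14}), $T_4^0[M]$ on $X_{\{0,2\}}^{\{1,2\}}$ is strongly inverse negative precisely for $M\in[-\pi^4,-m_1^4)$, so the left endpoint $\bar M-\lambda_3=-\pi^4$ \emph{does} lie in $N_T$; at that value $w_{\bar M-\lambda_3'}$ is a negative multiple of the constant-sign eigenfunction associated with $\lambda_3'$, with $w^{(\alpha)}(a)=0$, and your boundary-expansion step then gives no positivity of $g$ near $s=a$.

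The paper's proof avoids this entirely: it assumes, for contradiction, that some $M^*<\bar M-\lambda_3$ is inverse negative, uses Theorems \ref{T::8} and \ref{T::d1} to get $0\ge w_{M^*}\ge w_M\ge w_{\bar M-\lambda_3}$ (and the analogous chain for $y_M$) for all $M\in[M^*,\bar M-\lambda_3]$, observes that when $\lambda_3=\lambda_3'$ the function $w_{\bar M-\lambda_3}$ is proportional to the eigenfunction and so $w_{\bar M-\lambda_3}^{(\alpha)}(a)=0$, and then the squeeze (after dividing by $(t-a)^\alpha$) forces $w_M^{(\alpha)}(a)=0$ for a whole interval of $M$, contradicting the discreteness of the spectrum of $T_n[\bar M]$ in $X_{\{\sigma_1,\dots,\sigma_{k-1}|\alpha\}}^{\{\varepsilon_1,\dots,\varepsilon_{n-k}\}}$ (and symmetrically with $y_M$ and $\beta$ when $\lambda_3=\lambda_3''$). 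Note this argument never excludes $\bar M-\lambda_3$ itself from $N_T$, which is consistent with the closed left endpoint in the statement. To repair your proposal you would have to replace the exclusion of the endpoints by a contradiction argument of this type (or otherwise control the sign of $w_M$, $y_M$ only on the inverse-negative side, where they are non-positive), since the continuity-in-$M$ step across the singular value $\bar M-\lambda_1$ cannot be salvaged.
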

	
	\begin{proof}
		From Theorem \ref{T::cv}, we can affirm that $\sigma_k\neq k-1$ and $\varepsilon_{n-k}\neq n-k-1$. Hence, by Corollary \ref{C::1} the existence of $\lambda_3'$ and $\lambda_3''$ is ensured.
		
		First, let us focus on the case where $n-k$ is even.
		
		Let us assume that there exists $M^*\notin [\bar M-\lambda_3, \bar M-\lambda_1)$, such that $T_n[M^*]$ is inverse negative on $X_{\{\sigma_1,\dots,\sigma_{k}\}}^{\{\varepsilon_1,\dots,\varepsilon_{n-k}\}}$. From Theorem \ref{T::6}, we know that $M^*<\bar M-\lambda_3$.
		
		Moreover, using Theorem \ref{T::8}, we can affirm that for all $M\in[M^*,\bar M-\lambda_1)$ the operator $T_n[M]$ is inverse negative on $X_{\{\sigma_1,\dots,\sigma_{k}\}}^{\{\varepsilon_1,\dots,\varepsilon_{n-k}\}}$ and, by Theorem \ref{T::d1}, that $0\geq g_{M^*}(t,s)\geq g_M(t,s)\geq g_{\bar M-\lambda_3}(t,s)$.
		
		So, in particular \[0\geq w_{M^*}(t)\geq w_M(t)\geq w_{\bar M-\lambda_3}(t)\,,\]
		and
		\[\left\lbrace \begin{array}{cc}0\geq y_{M^*}(t)\geq y_M(t)\geq y_{\bar M-\lambda_ 3}(t)\,,&\text{if } \gamma \text{ is even,}\\\\
		0\leq y_{M^*}(t)\leq y_M(t)\leq y_{\bar M-\lambda_ 3}(t)\,,& \text{if } \gamma \text{ is odd.}
		\end{array} \right. \]
		
		If $\lambda_3=\lambda_3'$, then  $w_{\bar M-\lambda_3}^{(\alpha)}(a)=0$. So, we conclude that, for all $M\in[M^*,\bar M-\lambda_3)$, $w_M^{(\alpha)}(a)=0$, which contradicts the discrete character of the spectrum of $T_n[\bar M]$ in $X_{\{\sigma_1,\dots,\sigma_{k-1}|\alpha\}}^{\{\varepsilon_1,\dots,\varepsilon_{n-k}\}}$.
		
			If $\lambda_3=\lambda_3''$, then  $y_{\bar M-\lambda_3}^{(\beta)}(b)=0$. So, we conclude that, for all $M\in[M^*,\bar M-\lambda_3)$,  $y_M^{(\beta)}(b)=0$, which contradicts the discrete character of the spectrum of $T_n[\bar M]$ in $X_{\{\sigma_1,\dots,\sigma_{k}\}}^{\{\varepsilon_1,\dots,\varepsilon_{n-k-1}|\beta\}}$.
			
	Analogously, if $n-k$ is odd and we 		assume that there exists $M^*\notin (\bar M-\lambda_1, \bar M-\lambda_3]$, such that $T_n[M^*]$ is inverse positive on $X_{\{\sigma_1,\dots,\sigma_{k}\}}^{\{\varepsilon_1,\dots,\varepsilon_{n-k}\}}$. From Theorem \ref{T::7}, we know that $M^*>\bar M-\lambda_3$.
	
	Moreover, using Theorem \ref{T::9}, we can affirm that for all $M\in(\bar M-\lambda_1,M^*]$ $T_n[M]$ is inverse positive on $X_{\{\sigma_1,\dots,\sigma_{k}\}}^{\{\varepsilon_1,\dots,\varepsilon_{n-k}\}}$ and, by Theorem \ref{T::d1}, that $  g_{\bar M-\lambda_3}(t,s)\geq g_M(t,s)\geq g_{M^*}(t,s)\geq 0$.
	
	So, in particular \[w_{\bar M-\lambda_3}(t) \geq w_M(t)\geq w_{M^*}(t)\geq 0\,,\]
	and
	\[\left\lbrace \begin{array}{cc}y_{\bar M-\lambda 3}(t) \geq y_M(t)\geq y_{M^*}(t)\geq 0\,,&\text{if } \gamma \text{ is even,}\\\\
	y_{\bar M-\lambda 3}(t)\leq y_M(t)\leq y_{M^*}(t)\leq 0\,,& \text{if } \gamma \text{ is odd.}
	\end{array} \right. \]
	
	If $\lambda_3=\lambda_3'$, then  $w_{\bar M-\lambda_3}^{(\alpha)}(a)=0$. So, we conclude that, for all $M\in(\bar M-\lambda_3, M^*]$,  $w_M^{(\alpha)}(a)=0$, which contradicts the discrete character of the spectrum of $T_n[\bar M]$ in $X_{\{\sigma_1,\dots,\sigma_{k-1}|\alpha\}}^{\{\varepsilon_1,\dots,\varepsilon_{n-k}\}}$.
	
	If $\lambda_3=\lambda_3''$, then  $y_{\bar M-\lambda_3}^{(\beta)}(b)=0$. So, we conclude that, for all $M\in(\bar M-\lambda_3,M^*]$,  $y_M^{(\beta)}(b)=0$, which contradicts the discrete character of the spectrum of $T_n[\bar M]$ in $X_{\{\sigma_1,\dots,\sigma_{k}\}}^{\{\varepsilon_1,\dots,\varepsilon_{n-k-1}|\beta\}}$.
	
	In all cases we arrive to a contradiction, thus the result is proved.	
	\end{proof}
	
	\begin{exemplo}
		\label{Ex::13}
		Now, let us focus again on our recurrent example $T_4^0[M]\,u(t)=u^{(4)}(t)+M\,u(t)$. 
		
		In Example \ref{Ex::6}, the different eigenvalues of $T_4^0[0]$ on the sets $X_{\{0,2\}}^{\{1,2\}}$, $X_{\{0,1\}}^{\{1,2\}}$ and $X_{\{0,2\}}^{\{1,2\}}$ are obtained. In particular, $\lambda_1=m_1^4$ and
		\[\lambda_3=\min\{m_3^4,\pi^4\}=\pi^4\,,\]
		where $m_1\approxeq 2.36502$ and $m_3\approxeq 3.9266$ have been introduced in Example \ref{Ex::6} as the least positive solutions of \eqref{Ec::Ex61} and \eqref{Ec::Ex63}, respectively.
		
		So, using Theorem \ref{T::IPN1}, we can affirm that if $T_4^0[M]$ is strongly inverse negative in $X_{\{0,2\}}^{\{1,2\}}$, then $M\in[-\pi^4,-m_1^4)$.
	\end{exemplo}
	
	In Theorem \ref{T::IPN1}, we have established a necessary condition on operator $T_n[M]$ to be either inverse positive or inverse negative in $X_{\{\sigma_1,\dots,\sigma_{k}\}}^{\{\varepsilon_1,\dots,\varepsilon_{n-k}\}}$. Next result, shows that this condition also ensures that the related Green's function satisfies a suitable condition on the boundary of $I\times I$.
	
		\begin{theorem}\label{T::IPN2}
			Let $\bar M\in \mathbb{R}$ be such that $T_n[\bar M]$ verifies property $(T_d)$ on $X_{\{\sigma_1,\dots,\sigma_k\}}^{\{\varepsilon_1,\dots,\varepsilon_{n-k}\}}$  and ${\{\sigma_1,\dots,\sigma_k\}}-{\{\varepsilon_1,\dots,\varepsilon_{n-k}\}}$ fulfill $(N_a)$. Moreover, $\sigma_k\neq k-1$ and $\varepsilon_{n-k}\neq n-k-1$. Then, the following properties are satisfied:
			
			\begin{itemize}
				\item If $n-k$ is even and $M\in[\bar M-\lambda_3,\bar M-\lambda_1)$, where $\lambda_1$ and $\lambda_3$ are given in Theorem \ref{T::IPN1}.
%				\begin{itemize}
%					\item $\lambda_1>0$ is the least positive eigenvalue of $T_n[\bar M]$ in $X_{\{\sigma_1,\dots,\sigma_{k}\}}^{\{\varepsilon_1,\dots,\varepsilon_{n-k}\}}.$ 
%					\item $\lambda_3>0$ is the maximum between:
%					\begin{itemize}
%						\item $\lambda_3'>0$, the least positive eigenvalue of $T_n[\bar M]$ in $X_{\{\sigma_1,\dots,\sigma_{k-1}|\alpha\}}^{\{\varepsilon_1,\dots,\varepsilon_{n-k}\}}.$
%						\item $\lambda_3''>0$ is the least positive eigenvalue of $T_n[\bar M]$ in $X_{\{\sigma_1,\dots,\sigma_{k}\}}^{\{\varepsilon_1,\dots,\varepsilon_{n-k-1}|\beta\}},$ 
%					\end{itemize}
					Then:
					\[\forall t\in (a,b)\,,\exists \,\rho_1(t)>0\ \mid \ g_M(t,s)<0\ \forall s\in (a,a+\rho_1(t))\cup (b-\rho_1(t),b)\,,\]
					and
					\[\forall s\in (a,b)\,,\exists\, \rho_2(s)>0\ \mid \ g_M(t,s)<0\ \forall s\in (a,a+\rho_2(s))\cup (b-\rho_2(s),b)\,.\]
			%	\end{itemize}
				%			\item If $k=1$, $n$ is odd and $T_n[M]$ is strongly inverse negative in $X_{\{\sigma_1\}}^{\{\varepsilon_1,\dots,\varepsilon_{n-1}\}}$ then $M\in[\bar M-\lambda_1,\bar M-\lambda_2]$, where
				%			\begin{itemize}
				%				\item $\lambda_1>0$ is the least positive eigenvalue of $T_n[\bar M]$ in $X_{\{\sigma_1\}}^{\{\varepsilon_1,\dots,\varepsilon_{n-1}\}}$.
				%				\item $\lambda_2<0$ is the biggest negative eigenvalue of $T_n[\bar M]$ in $X_{\{\sigma_1|\alpha\}}^{\{\varepsilon_1,\dots,\varepsilon_{n-2}\}}$  
				%			\end{itemize}
				\item If $n-k$ is odd and $M\in(\bar M-\lambda_1,\bar M-\lambda_3]$, where $\lambda_1$ and $\lambda_3$ are given in Theorem \ref{T::IPN1}.
%				\begin{itemize}
%					\item $\lambda_1<0$ is the biggest negative eigenvalue of $T_n[\bar M]$ in $X_{\{\sigma_1,\dots,\sigma_{k}\}}^{\{\varepsilon_1,\dots,\varepsilon_{n-k}\}}.$ 
%					\item $\lambda_3>0$ is the maximum between:
%					\begin{itemize}
%						\item $\lambda_3'<0$, the biggest negative eigenvalue of $T_n[\bar M]$ in $X_{\{\sigma_1,\dots,\sigma_{k-1}|\alpha\}}^{\{\varepsilon_1,\dots,\varepsilon_{n-k}\}}.$
%						\item $\lambda_3''<0$ is the biggest negative eigenvalue of $T_n[\bar M]$ in $X_{\{\sigma_1,\dots,\sigma_{k}\}}^{\{\varepsilon_1,\dots,\varepsilon_{n-k-1}|\beta\}},$ 
%					\end{itemize}
%				\end{itemize}
					Then:
					\[\forall t\in (a,b)\,,\exists\, \rho_1(t)>0\ \mid \ g_M(t,s)>0\ \forall s\in (a,a+\rho_1(t))\cup (b-\rho_1(t),b)\,,\]
					and
					\[\forall s\in (a,b)\,,\exists\, \rho_2(s)>0\ \mid \ g_M(t,s)>0\ \forall s\in (a,a+\rho_2(s))\cup (b-\rho_2(s),b)\,.\]
			\end{itemize}
		\end{theorem}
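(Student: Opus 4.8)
The plan is to recycle the four auxiliary functions $w_M$, $y_M$, $\widehat w_M$ and $\widehat y_M$ built in Steps~1--4 of the proof of Theorem~\ref{T::IPN}. Up to the strictly positive factors $(s-a)^\eta$, $(b-s)^\gamma$, $(t-a)^\alpha$ and $(b-t)^\beta$, these are the leading Taylor coefficients of $g_M(t,s)$ along the four faces $\{s=a\}$, $\{s=b\}$, $\{t=a\}$, $\{t=b\}$ of $I\times I$: one has $g_M(t,s)=\frac{w_M(t)}{\eta!}(s-a)^\eta+o((s-a)^\eta)$ as $s\to a^+$, $g_M(t,s)=\frac{(-1)^\gamma y_M(t)}{\gamma!}(b-s)^\gamma+o((b-s)^\gamma)$ as $s\to b^-$, and the analogous expansions in $t$ via \eqref{Ec::gs} and \eqref{Ec::gsb}. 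So the statement reduces to identifying the sign of each of $w_M$, $y_M$, $\widehat w_M$, $\widehat y_M$ on $(a,b)$ for $M$ in the interval of the theorem. The first thing I would do is quote verbatim, from Steps~1--4 of the proof of Theorem~\ref{T::IPN}, that for \emph{every} admissible $M$ the function $w_M$ solves $T_n[M]w_M=0$ and satisfies \eqref{Ec::cfaa}--\eqref{Ec::cfbb} together with $w_M^{(\sigma_k)}(a)=(-1)^\eta$, that $y_M$ solves $T_n[M]y_M=0$ and satisfies \eqref{Ec::cfaaa}--\eqref{Ec::cfbbb} with $y_M^{(\varepsilon_{n-k})}(b)=(-1)^{n-\varepsilon_{n-k}}$, and the corresponding statements for $\widehat w_M$ and $\widehat y_M$ relative to $\widehat T_n[(-1)^nM]$ and the adjoint boundary conditions; in particular these are exactly the functions governed by Propositions~\ref{P::1}, \ref{P::2}, \ref{P::4} and \ref{P::3}, respectively.

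Since by hypothesis $\sigma_k\neq k-1$ and $\varepsilon_{n-k}\neq n-k-1$, Corollary~\ref{C::1} guarantees that the eigenvalues $\lambda_1,\lambda_3',\lambda_3'',\lambda_2',\lambda_2''$ below exist and the cited propositions apply in their ``$\sigma_k\neq k-1$ / $\varepsilon_{n-k}\neq n-k-1$'' branches. Applying them, each of $w_M,y_M,\widehat w_M,\widehat y_M$ has \emph{no zero on $(a,b)$}, hence is of constant sign there, for all $M$ in the corresponding (larger) interval supplied by the proposition --- e.g.\ $M\in[\bar M-\lambda_3',\bar M-\lambda_2']$ for $w_M$ when $n-k$ is even. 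The interval of the present theorem lies inside all of these: when $n-k$ is even, $[\bar M-\lambda_3,\bar M-\lambda_1)\subseteq[\bar M-\lambda_3',\bar M-\lambda_2']\cap[\bar M-\lambda_3'',\bar M-\lambda_2'']$ (and the two mixed ones), and symmetrically when $n-k$ is odd, simply because $\lambda_3=\min\{\lambda_3',\lambda_3''\}$ (resp.\ $\max$) and $\lambda_2',\lambda_2''<0<\lambda_1$ (resp.\ $\lambda_1<0<\lambda_2',\lambda_2''$). Thus on the whole interval claimed in the statement each of the four auxiliary functions keeps one and the same sign on $(a,b)$, and only its identification remains.

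That last step is the heart of the matter, and I would carry it out by analysing $M=\bar M-\lambda_1$. By Proposition~\ref{P::6.5}, $\lambda_3>\lambda_1>0$ (resp.\ $\lambda_3<\lambda_1<0$), and by the ``$\lambda_1'$-part'' of that proposition $\bar M-\lambda_1$ is the \emph{only} $M$ in $[\bar M-\lambda_3,\bar M-\lambda_1]$ (resp.\ $[\bar M-\lambda_1,\bar M-\lambda_3]$) for which $\bar M-M$ is an eigenvalue of $T_n[\bar M]$ in $X_{\{\sigma_1,\dots,\sigma_k\}}^{\{\varepsilon_1,\dots,\varepsilon_{n-k}\}}$ (the next one lying beyond $\bar M-\lambda_3$, and $0$ not being an eigenvalue by Lemma~\ref{L::11}). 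Consequently $g_M$, and with it $w_M,y_M,\widehat w_M,\widehat y_M$, depends continuously on $M$ separately on $[\bar M-\lambda_3,\bar M-\lambda_1)$ and on $(\bar M-\lambda_1,\bar M]$, while it develops a \emph{simple} pole at $M=\bar M-\lambda_1$ ($\bar M-\lambda_1$ being a simple eigenvalue with constant-sign principal eigenfunction, cf.\ Theorems~\ref{T::6} and \ref{T::7}); crossing a simple pole reverses the sign. On the side containing $\bar M$ the signs of $w_M,y_M,\widehat w_M,\widehat y_M$ are the ones forced by $(-1)^{n-k}g_{\bar M}>0$ (Theorem~\ref{L::5}) --- this is precisely what Steps~1--4 of the proof of Theorem~\ref{T::IPN} established on $[\bar M,\bar M-\lambda_2']$, $[\bar M,\bar M-\lambda_2'']$, etc. Hence on the interval of the statement the four functions carry the \emph{opposite} sign, i.e.\ $w_M,\ (-1)^\gamma y_M,\ \widehat w_M,\ (-1)^\beta\widehat y_M$ all have sign $-(-1)^{n-k}$ on $(a,b)$; plugging this into the four boundary expansions recalled above gives $(-1)^{n-k}g_M(t,s)<0$ near each of the four faces, with $\rho_1(t)$ and $\rho_2(s)$ the radii within which the leading term dominates. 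Combining the four statements yields the theorem.

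The main obstacle I expect is exactly this sign bookkeeping around $M=\bar M-\lambda_1$: one must be sure that the auxiliary functions, uniformly single-signed on $(a,b)$ for each individual admissible $M$, genuinely flip that sign as $M$ passes through $\bar M-\lambda_1$, which requires knowing that the pole is simple (simplicity of the principal eigenvalue, by Krein--Rutman-type arguments as in \cite[Corollary 7.27]{Zeid} and \cite[Section 1.8]{Cab}) and that its residue vanishes neither on $(a,b)$ nor at the relevant face --- the latter following again from the strict inequalities $\lambda_3'>\lambda_1$, $\lambda_3''>\lambda_1$ of Proposition~\ref{P::6.5}. Everything else is a transcription of Steps~1--4 of Theorem~\ref{T::IPN} together with the elementary interval inclusions.
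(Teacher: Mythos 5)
Your skeleton coincides with the paper's: the same four functions $w_M$, $y_M$, $\widehat w_M$, $\widehat y_M$, shown to solve the homogeneous equation with the boundary data of Propositions \ref{P::1}, \ref{P::2}, \ref{P::5}, \ref{P::6}, hence zero-free on $(a,b)$ for all $M$ in the stated interval, and the interval inclusions you invoke are correct. The gap is in your sign identification via a ``sign flip across a simple pole at $M=\bar M-\lambda_1$''. First, the nonvanishing of the residue is only justified by your argument for two of the four faces: the residue of $g_M$ at $\bar M-\lambda_1$ is proportional to $\phi(t)\,\psi(s)$, with $\phi$ the principal eigenfunction in $X_{\{\sigma_1,\dots,\sigma_k\}}^{\{\varepsilon_1,\dots,\varepsilon_{n-k}\}}$ and $\psi$ the adjoint one; the inequalities $\lambda_3'>\lambda_1$, $\lambda_3''>\lambda_1$ of Proposition \ref{P::6.5} do give $\phi^{(\alpha)}(a)\neq0$ and $\phi^{(\beta)}(b)\neq0$ (otherwise $\lambda_1$ would be an eigenvalue in the $\lambda_3'$- or $\lambda_3''$-space), so they control $\widehat w_M$ and $\widehat y_M$; but for $w_M$ and $y_M$ what is needed is $\psi^{(\eta)}(a)\neq0$ and $\psi^{(\gamma)}(b)\neq0$, i.e.\ that $\lambda_1$ is not an eigenvalue of $T_n[\bar M]$ in $X_{\{\sigma_1,\dots,\sigma_{k-1}\}}^{\{\varepsilon_1,\dots,\varepsilon_{n-k}|\beta\}}$ or in $X_{\{\sigma_1,\dots,\sigma_k|\alpha\}}^{\{\varepsilon_1,\dots,\varepsilon_{n-k-1}\}}$, and when $n-k$ is even Proposition \ref{P::6.5} says nothing about the \emph{positive} spectrum of these two spaces (only $\lambda_2',\lambda_2''<0$ are controlled), so this nonvanishing is not available from the paper's results as you cite them. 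Second, even granted blow-up, ``crossing a pole reverses the sign'' requires the pole to be of odd order, i.e.\ algebraic simplicity of $\lambda_1$ as a zero of the characteristic determinant; the Krein--Rutman-type reference you give is used in the paper only to exclude a second constant-sign eigenvalue, not to establish algebraic simplicity, so this is an unproven ingredient of your argument.

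The paper sidesteps both issues: it never argues through $M=\bar M-\lambda_1$. Instead it fixes the sign of each auxiliary function directly at $M=\bar M-\lambda_3'$ (for $w_M$, $\widehat w_M$) and $M=\bar M-\lambda_3''$ (for $y_M$, $\widehat y_M$), where the extra vanishing $w^{(\alpha)}(a)=0$, resp.\ $y^{(\beta)}(b)=0$, forces the maximal-oscillation configuration; combining this with the $M$-independent normalizations \eqref{Cf::wm}--\eqref{Cf::ymg} and the sign-change bookkeeping of the quasi-derivatives $T_\ell$ (via Lemmas \ref{L::1}--\ref{L::4} and \eqref{Ec::Tl}, \eqref{Ec::Tgg}) pins down the sign of $w_{\bar M-\lambda_3'}^{(\alpha_1)}(a)$, $y_{\bar M-\lambda_3''}^{(\beta_1)}(b)$, etc., hence of the functions on $(a,b)$; the sign then propagates over the whole interval because, by Proposition \ref{P::6.5}, no eigenvalue of $T_n[\bar M]$ in $X_{\{\sigma_1,\dots,\sigma_k\}}^{\{\varepsilon_1,\dots,\varepsilon_{n-k}\}}$ is met there, so the functions depend continuously on $M$ and remain zero-free. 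If you want to keep your route, you must either prove the two missing residue facts and the simplicity of $\lambda_1$, or replace the flip argument by this direct endpoint evaluation.
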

		\begin{proof}
	In order to prove this result we consider the following functions introduced in the proof of Theorem \ref{T::IPN}:
	\begin{eqnarray}
		\nonumber w_M(t)&=&\dfrac{\partial^{\eta}}{\partial s^\eta}g_M(t,s)_{\mid s=a}\,,\\\nonumber\\
		\nonumber y_M(t)&=&\dfrac{\partial^{\gamma}}{\partial s^\gamma}g_M(t,s)_{\mid s=b}\,,\\\nonumber\\
		\nonumber \widehat{w}_M(t)&=&(-1)^n\dfrac{\partial^{\alpha}}{\partial s^\alpha}\widehat g_M(t,s)_{\mid s=a}\,,
		\\\nonumber\\ \nonumber
		\widehat y _M(s)&=&(-1)^n\dfrac{\partial^{\beta}}{\partial s^\beta}\widehat g_M(t,s)_{\mid s=b}\,.\end{eqnarray}
		
		For these functions  we have obtained the following conclusions:
		\begin{itemize}
			\item $T_n[M]\,w_M(t)=0$ for all $t\in(a,b]$ and $w_M$ satisfies the boundary conditions \eqref{Ec::cfaa}-\eqref{Ec::cfbb}.
			\item $T_n[M]\,y_M(t)=0$ for all $t\in[a,b)$ and $y_M$ satisfies the boundary conditions \eqref{Ec::cfaaa}-\eqref{Ec::cfbbb}.
			\item $\widehat T_n[(-1)^n M]\,\widehat w_M(s)=0$ for all $s\in(a,b]$ and $\widehat w_M$ satisfies the boundary conditions \eqref{Cf::ad1}--\eqref{Cf::ad11} and \eqref{Cf::ad3}--\eqref{Cf::ad4}.
			\item $\widehat T_n[(-1)^n M]\,\widehat y_M(s)=0$ for all $s\in[a,b)$ and $\widehat y_M$ satisfies the boundary conditions \eqref{Cf::ad1}--\eqref{Cf::ad2} and \eqref{Cf::ad3}--\eqref{Cf::ad31}.
		\end{itemize}		
		
		Thus, by applying Propositions \ref{P::1}, \ref{P::2}, \ref{P::5} and \ref{P::6}, we know that $w_M$, $y_M$, $\widehat{w}_M$ and $\widehat y_M$ do not have any zero in $(a,b)$ for all $M\in[\bar M-\lambda_3,\bar M-\lambda_1)$ if $n-k$ is even and for all $M\in(\bar M-\lambda_1,\bar M-\lambda_3]$ when $n-k$ is odd.
		
		Moreover, by Proposition \ref{P::6.5},  since we do not reach any eigenvalue of $T_n[\bar M]$ in $X_{\{\sigma_1,\dots,\sigma_{k}\}}^{\{\varepsilon_1,\dots,\varepsilon_{n-k}\}}$, we have that the related Green's function is well-defined for every $M$ in those intervals. So, since we are moving continuously on $M$, we conclude that its sign is the same in all the interval.
		
		Let us, now, study the sign of these functions at a given $M$.
		
		We consider $w_M$ and $\widehat w_M$ at $M=\bar M-\lambda_3'$ and $y_M$ and $\widehat y_M$ at $M=\bar M-\lambda_3''$. As we have proved before, at this values of the real parameter the functions are of constant sign and satisfy the maximal oscillation, which means that verify the conditions at $t=a$ and $t=b$ to give the maximum number of zeros with the related boundary conditions, otherwise the function would be equivalent to zero and this is not true. Moreover, we know that they satisfy for all $M\in \mathbb{R}$ the following properties:
	{\small 	\begin{eqnarray}
	\label{Cf::wm}	w_M^{(\sigma_k)}(a)&=&(-1)^{(n-1-\sigma_k)}\,,\\\nonumber\\
		\label{Cf::ym}	y_M^{(\varepsilon_{n-k})}(b)&=&(-1)^{(n-\varepsilon_{n-k})}\,,\\\nonumber\\
		\label{Cf::wmg}	\widehat w_M^{(\tau_{n-k})}(a)+\sum_{j=n-\tau_{n-k}}^{n-1}(-1)^{n-j}\,(p_{n-j}\widehat w_M)^{(\tau_{n-k}+j-n)}(a)&=&(-1)^{1+\tau_{n-k}}\,,\\\nonumber \\
			\label{Cf::ymg}	\widehat y_M^{(\delta_{k})}(b)+\sum_{j=n-\delta_{k}}^{n-1}(-1)^{n-j}\,(p_{n-j}\widehat y_M)^{(\delta_{k}+j-n)}(b)&=&(-1)^{\delta_{k}}\,.
			\end{eqnarray}}
		
		\begin{itemize}
			\item Study of $w_{\bar M-\lambda_3'}$.
		\end{itemize}
		Let us consider $\alpha_1\in \{0,\dots,n-1\}$, previously introduced in Notation \ref{Not::alpha1}.
		
		Let us study the sign of $w_{\bar M-\lambda_3'}^{(\alpha_1)}(a)$ in order to obtain the sign of $w_{\bar M-\lambda_3'}$.
		
		From Lemma \ref{L::1}, coupled with \eqref{Cf::wm}, we conclude that
		\[\left\lbrace \begin{array}{cc}T_{\sigma_k}\,w_{\bar M-\lambda_3'}(a)>0\,,& \text{if $n-\sigma_k$ is odd,}\\\\T_{\sigma_k}\,w_{\bar M-\lambda_3'}(a)<0\,,& \text{if $n-\sigma_k$ is even.}\end{array}\right. \]
	
		As we have said before, to allow the maximal oscillation, $T_{n-\ell}w_{\bar M-\lambda_3'}$ must change its sign each time that it is different from zero. Thus, since from $\alpha_1$ to $\sigma_k$ we have $k-\alpha_1$ zeros, with maximal oscillation the following inequalities are fulfilled:
		
		If $n-\sigma_k$ is odd:
			\[\left\lbrace \begin{array}{cc}T_{\alpha_1}\,w_{\bar M-\lambda_3'}(a)>0\,,& \text{if $\sigma_k-\alpha_1-(k-\alpha_1)=\sigma_k-k$ is even,}\\\\T_{\alpha_1}\,w_{\bar M-\lambda_3'}(a)<0\,,& \text{if $\sigma_k-k$ is odd.}\end{array}\right. \]
			
			If $n-\sigma_k$ is even:
			\[\left\lbrace \begin{array}{cc}T_{\alpha_1}\,w_{\bar M-\lambda_3'}(a)<0\,,& \text{if $\sigma_k-k$ is even,}\\\\T_{\alpha_1}\,w_{\bar M-\lambda_3'}(a)>0\,,& \text{if $\sigma_k-k$ is odd.}\end{array}\right. \]
		
		From \eqref{Ec::Tl}, we conclude that
		\[T_{\alpha_1}\,w_{\bar M-\lambda_3'}(a)=\dfrac{1}{v_1(a)\dots v_{\alpha_1}(a)}w_{\bar M-\lambda_3'}^{(\alpha_1)}(a)\,,\]
		hence, we can affirm that 		
			\[\left\lbrace \begin{array}{cc}w_{\bar M-\lambda_3'}^{(\alpha_1)}(a)>0\,,& \text{if $n-k$ is odd,}\\\\w_{\bar M-\lambda_3'}^{(\alpha_1)}(a)<0º,,& \text{if $n-k$ is even.}\end{array}\right. \]
			
		Thus, we have proved that
		\begin{itemize}
			\item If $n-k$ is even and $M\in[\bar M-\lambda_3',\bar M-\lambda_1)$, then
				\[\forall t\in (a,b)\,,\exists \,{\rho_1}_1(t)>0\ \mid \ g_M(t,s)<0\ \forall s\in (a,a+{\rho_1}_1(t))\,.\]
				\item If $n-k$ is odd and $M\in(\bar M-\lambda_1,\bar M-\lambda_3']$, then
				\[\forall t\in (a,b)\,,\exists \,{\rho_1}_1(t)>0\ \mid \ g_M(t,s)>0\ \forall s\in (a,a+{\rho_1}_1(t))\,.\]
					\end{itemize}
					
					\begin{itemize}
						\item Study of $y_{\bar M-\lambda_3''}$.
					\end{itemize}
					Now, let us consider $\beta_1\in\{0,\dots,n-1\}$, introduced in Notation \ref{Not::alpha1}.
					
					 In order to obtain the sign of $y_{\bar M-\lambda_3''}$, let us study the sign of $y_{\bar M-\lambda_3''}^{(\beta_1)}(b)$.
						
						From Lemma \ref{L::2} coupled with \eqref{Cf::ym}, we conclude that
						\[\left\lbrace \begin{array}{cc}T_{\varepsilon_{n-k}}\,y_{\bar M-\lambda_3''}(b)>0\,,& \text{if $n-\varepsilon_{n-k}$ is even,}\\\\T_{\varepsilon_{n-k}}\,y_{\bar M-\lambda_3''}(b)<0\,,& \text{if $n-\varepsilon_{n-k}$ is odd.}\end{array}\right. \]
						
					In this case, as we have said on the proof of Theorem \ref{L::5},  to allow the maximal oscillation, $T_{n-\ell}w_{\bar M-\lambda_3'}(b)$ must change its sign each time that it vanishes. Thus, since from $\beta_1$ to $\varepsilon_{n-k}$ we have, with maximal oscillation, $n-k-\beta_1$ zeros, we deduce de following properties:
						
						If $n-\varepsilon_{n-k}$ is even:
						
						\[\left\lbrace \begin{array}{cc}T_{\beta_1}\,y_{\bar M-\lambda_3''}(b)>0\,,& \text{if $n-k-\beta_1$ is even,}\\\\T_{\beta_1}\,y_{\bar M-\lambda_3''}(b)<0\,,& \text{if $n-k-\beta_1$ is odd.}\end{array}\right. \]
						
						If $n-\varepsilon_{n-k}$ is odd:
						
						\[\left\lbrace \begin{array}{cc}T_{\beta_1}\,y_{\bar M-\lambda_3''}(b)<0\,,& \text{if $n-k-\beta_1$ is even,}\\\\T_{\beta_1}\,y_{\bar M-\lambda_3''}(b)>0\,,& \text{if $n-k-\beta_1$ is odd.}\end{array}\right. \]
						
						From \eqref{Ec::Tl}, we conclude that
						\[T_{\beta_1}\,y_{\bar M-\lambda_3''}(b)=\dfrac{1}{v_1(b)\dots v_{\beta_1}(b)}y_{\bar M-\lambda_3''}^{(\beta_1)}(b)\,,\]
						hence, we can affirm that 						
						\[\left\lbrace \begin{array}{cc}y_{\bar M-\lambda_3''}^{(\beta_1)}(b)>0\,,& \text{if $\varepsilon_{n-k}-k-\beta_1$ is even,}\\\\y_{\bar M-\lambda_3''}^{(\beta_1)}(b)<0\,,& \text{if $\varepsilon_{n-k}-k-\beta_1$ is odd.}\end{array}\right. \]
						
						Thus, we have proved that						
							\[\left\lbrace \begin{array}{ccc}y_{\bar M-\lambda_3''}(t)\geq 0\,,&t\in I\,,& \text{if $\varepsilon_{n-k}-k$ is even,}\\\\y_{\bar M-\lambda_3''}(t)\leq0\,,&t\in I\,,& \text{if $\varepsilon_{n-k}-k$ is odd.}\end{array}\right. \]
							
							Hence, since $\gamma=n-\varepsilon_{n-k}-1$, taking into account the fact that $y_{\bar M-\lambda_3''}$ cannot have any zero on $(a,b)$, we conclude
						\begin{itemize}
							\item If $n-k$ is even and $M\in[\bar M-\lambda_3'',\bar M-\lambda_1)$, then
							\[\forall t\in (a,b)\,,\exists \,{\rho_1}_2(t)>0\ \mid \ g_M(t,s)<0\ \forall s\in (b-{\rho_1}_2(t),b)\,.\]
							\item If $n-k$ is odd and $M\in(\bar M-\lambda_1,\bar M-\lambda_3'']$, then
							\[\forall t\in (a,b)\,,\exists \,{\rho_1}_2(t)>0\ \mid \ g_M(t,s)>0\ \forall s\in (b-{\rho_1}_2(t),b)\,.\]
						\end{itemize}
						
							\begin{itemize}
								\item Study of $\widehat w_{\bar M-\lambda_3'}$.
							\end{itemize}
						\begin{notation}
								Let us define $\eta_1\in \{0,\dots,n-1\}$ such that $\eta_1\notin \{\tau_1,\dots,\tau_{n-k-1},\eta\}$ and $\{0,\dots,\eta_1-1\}\subset\{\tau_1,\dots,\tau_{n-k-1},\eta\}$.
						\end{notation}
							
						 In order to obtain the sign of $\widehat w_{\bar M-\lambda_3'}$, let us study the sign of $\widehat w_{\bar M-\lambda_3'}^{(\eta_1)}(a)$.
							
							From Lemma \ref{L::3} coupled with \eqref{Cf::wmg}, we conclude that
							\[\left\lbrace \begin{array}{cc}\widehat T_{\tau_{n-k}}\,\widehat w_{\bar M-\lambda_3'}(a)>0\,,& \text{if $\tau_{n-k}$ is odd,}\\\\\widehat T_{\tau_{n-k}}\,\widehat{w}_{\bar M-\lambda_3'}(a)<0\,,& \text{if $\tau_{n-k}$ is even.}\end{array}\right. \]
							
							Analogously to $T_k$, to allow the maximal oscillation we conclude that $\widehat T_{n-\ell}\widehat w_{\bar M-\lambda_3'}$ must change its sign each time that it is non null. Thus, since from $\eta_1$ to $\tau_{n-k}$ we have $n-k-\eta_1$ zeros, with maximal oscillation, the following inequalities are fulfilled.
							
							If $\tau_{n-k}$ is odd:							
							\[\left\lbrace \begin{array}{cc}\widehat T_{\eta_1}\,\widehat w_{\bar M-\lambda_3'}(a)>0\,,& \text{if $\tau_{n-k}-\eta_1-(n-k-\eta_1)=\tau_{n-k}-n+k$ is even,}\\\\\widehat T_{\eta_1}\,\widehat w_{\bar M-\lambda_3'}(a)<0\,,& \text{if $\tau_{n-k}-n+k$ is odd.}\end{array}\right. \]
							
							If $\tau_{n-k}$ is even:							
								\[\left\lbrace \begin{array}{cc}\widehat T_{\eta_1}\,\widehat w_{\bar M-\lambda_3'}(a)<0\,,& \text{if $\tau_{n-k}-n+k$ is even,}\\\\\widehat T_{\eta_1}\,\widehat w_{\bar M-\lambda_3'}(a)>0\,,& \text{if $\tau_{n-k}-n+k$ is odd.}\end{array}\right. \]
							
							From \eqref{Ec::Tgg}, we conclude that
							\[\widehat T_{\eta_1}\,\widehat w_{\bar M-\lambda_3'}(a)={v_1(a)\dots v_{n-\eta_1}(a)}w_{\bar M-\lambda_3'}^{(\eta_1)}(a)\,,\]
							hence, we can affirm that 							
							\[\left\lbrace \begin{array}{cc}\widehat w_{\bar M-\lambda_3'}^{(\eta_1)}(a)>0\,,& \text{if $n-k$ is odd,}\\\\\widehat w_{\bar M-\lambda_3'}^{(\eta_1)}(a)<0\,,& \text{if $n-k$ is even.}\end{array}\right. \]

							Thus, we have proved that							
								\[\left\lbrace \begin{array}{cc}\widehat w_{\bar M-\lambda_3'}\geq 0\,,& \text{on $I$ if $n-k$ is odd,}\\\\\widehat w_{\bar M-\lambda_3'}\leq0\,,& \text{on $I$ if $n-k$ is even.}\end{array}\right. \]
								
								Hence, we conclude:
								
							\begin{itemize}
								\item If $n-k$ is even and $M\in[\bar M-\lambda_3',\bar M-\lambda_1)$, then
								\[\forall s\in (a,b)\,,\exists \,{\rho_2}_1(s)>0\ \mid \ g_M(t,s)<0\ \forall t\in (a,a+{\rho_2}_1(s))\,.\]
								\item If $n-k$ is odd and $M\in(\bar M-\lambda_1,\bar M-\lambda_3']$, then
								\[\forall s\in (a,b)\,,\exists \,{\rho_2}_2(t)>0\ \mid \ g_M(t,s)>0\ \forall t\in (a,a+{\rho_2}_1(s))\,.\]
							\end{itemize}
							
							\begin{itemize}
								\item Study of $\widehat y_{\bar M-\lambda_3''}$.
							\end{itemize}
							\begin{notation}
							Let us denote $\gamma_1\in\{0,\dots,n-1\}$, such that $\gamma_1\notin \{\delta_1,\dots,\delta_{k-1},\gamma\}$ and $\{0,\dots,\gamma_1-1\}\subset \{\delta_1,\dots,\delta_{k-1},\gamma\}$.
							\end{notation}
							
							 In order to obtain the sign of $\widehat y_{\bar M-\lambda_3''}$, let us study the sign of $\widehat y_{\bar M-\lambda_3''}^{(\beta_1)}(b)$.
							
							From Lemma \ref{L::4} coupled with \eqref{Cf::ymg}, we conclude that
							\[\left\lbrace \begin{array}{cc}\widehat T_{\delta_k}\,\widehat y_{\bar M-\lambda_3''}(b)>0\,,& \text{if $\delta_k$ is even,}\\\\\widehat T_{\delta_k}\,\widehat y_{\bar M-\lambda_3''}(b)<0\,,& \text{if $\delta_k$ is odd.}\end{array}\right. \]
							
							In this case, analogously to $T_k$, to allow the maximal oscillation $\widehat T_{n-\ell}w_{\bar M-\lambda_3'}(b)$ changes its sign each time that it vanishes  and it remains of constant sign if it does not vanish. Thus, with maximal oscillation, since from $\gamma_1$ to $\delta_k$ we have $k-\gamma_1$ zeros
							
							If $\delta_k$ is even:							
							\[\left\lbrace \begin{array}{cc}\widehat T_{\gamma_1}\,\widehat y_{\bar M-\lambda_3''}(b)>0\,,& \text{if $k-\gamma_1$ is even,}\\\\\widehat T_{\gamma_1}\,\widehat y_{\bar M-\lambda_3''}(b)<0\,,& \text{if $k-\gamma_1$ is odd.}\end{array}\right. \]
							
							If $\delta_k$ is odd:							
								\[\left\lbrace \begin{array}{cc}\widehat T_{\gamma_1}\,\widehat y_{\bar M-\lambda_3''}(b)<0\,,& \text{if $k-\gamma_1$ is even,}\\\\\widehat T_{\gamma_1}\,\widehat y_{\bar M-\lambda_3''}(b)>0\,,& \text{if $k-\gamma_1$ is odd.}\end{array}\right. \]
							
							From \eqref{Ec::Tgg}, we conclude that
							\[\widehat T_{\gamma_1}\,\widehat y_{\bar M-\lambda_3''}(b)={v_1(b)\dots v_{n-\gamma_1}(b)}\widehat y_{\bar M-\lambda_3''}^{(\gamma_1)}(b)\,,\]
							hence, we can affirm that 							
							\[\left\lbrace \begin{array}{cc}\widehat y_{\bar M-\lambda_3''}^{(\gamma_1)}(b)>0\,,& \text{if $k-\delta_k-\gamma_1$ is even,}\\\\\widehat y_{\bar M-\lambda_3'}^{(\gamma_1)}(b)<0\,,& \text{if $k-\delta_k-\gamma_1$ is odd.}\end{array}\right. \]
							
							Thus, we have proved that							
							\[\left\lbrace \begin{array}{cc}\widehat y_{\bar M-\lambda_3''}(t)\geq 0\,,& \text{if $k-\delta_k$ is even,}\\\\\widehat y_{\bar M-\lambda_3'}(t)\leq0\,,& \text{if $k-\delta_k$ is odd.}\end{array}\right. \]
							
							Hence, since $\beta=n-\delta_k-1$, we conclude
							\begin{itemize}
								\item If $n-k$ is even and $M\in[\bar M-\lambda_3'',\bar M-\lambda_1)$, then
								\[\forall s\in (a,b)\,,\exists \,{\rho_2}_2(s)>0\ \mid \ g_M(t,s)<0\ \forall t\in (b-{\rho_1}_2(s),b)\,.\]
								\item If $n-k$ is odd and $M\in(\bar M-\lambda_1,\bar M-\lambda_3'']$, then
								\[\forall t\in (a,b)\,,\exists \,{\rho_2}_2(s)>0\ \mid \ g_M(t,s)>0\ \forall t\in (b-{\rho_2}_2(s),b)\,.\]
							\end{itemize}
							
							So, the proof is complete taking $\rho_1(t)=\min\{{\rho_1}_1(t),{\rho_1}_2(t)\}$ and $\rho_2(s)=\min\{{\rho_2}_1(s),{\rho_2}_2(s)\}$.
		\end{proof}
		
		\begin{remark}
			\label{R::14}
			Realize that from Theorems \ref{T::IPN1} and \ref{T::IPN2}, if we are able to prove that the sign change of the related Green's function must begin on the boundary of $I\times I$, then the intervals obtained in Theorem \ref{T::IPN1} are optimal.
		\end{remark}
		\begin{exemplo}
			\label{Ex::14}
			Now, let us apply the Remark \ref{R::14} to our recurrent example, the operator $T_4^0[M]$.
			
			Let us assume that there exists $M^*\in[-\pi^4,m_1^4)$ such that $g_{M^*}(t,s)$ changes its sign. Then, from Theorem \ref{T::IPN2} it must exist  $s^*\in(0,1)$ such that $u^*(t)=g_{M^*}(t,s^*)$ has at least two zeros, $0<c_1<c_2<1$.
			
			By the definition of the Green's function $u^*\in C^2([0,1])$. So, there exists $c^*\in(c_1,c_2)$ such that ${u^*}'(c^*)=0$. There are two possibilities:
			
			\begin{itemize}
				\item $c^*\leq s^*$.
				
				In such a case, $u^*$ is a solution of $T_4^0[M^*]u^*(t)=0$ on $[0,c^*]$ verifying the boundary conditions $u^*(0)={u^*}''(0)={u^*}'(c^*)=0$. Moreover, it satisfies $u^*(c_1)=0$.
				
				The function $y^*(t)=u^*(c^*t)$ verifies $y^*(0)={y^*}''(0)={y^*}'(1)=0$ and $y^*\left( \frac{c_1}{c^*}\right) =0$. Moreover, it is a solution of $T_4^0[{c^*}^4M^*]y^*(t)=0$ on $[0,1]$, with $0>{c^*}^4\,M^*>M^*>-\pi^4>-m_3^4$, where $m_3^4$ has been introduced in Example \ref{Ex::6}. But, this is is a contradiction with Proposition \ref{P::2}.
			
			\item $c^*>s^*$.
			
			In this case, $u^*$ is a solution of $T_4^0[M^*]u^*(t)=0$ on $[c^*,1]$ verifying the boundary conditions ${u^*}'(c^*)={u^*}'(1)={u^*}''(1)=0$. Moreover, it satisfies $u^*(c_2)=0$.
			
				The function $y^*(t)=u^*((1-c^*)t+c^*)$ verifies 
				\begin{equation}
				\label{Ec::Ex14} {y^*}'(0)={y^*}'(1)={y^*}''(1)=0 \,.
				\end{equation}
				
				 Moreover, it is a solution of $T_4^0[(1-c^*)^4M^*]y^*(t)=0$ on $[0,1]$ and $y^*\left( \frac{c_2-c^*}{1-c^*}\right) =0$, with $0>(1-c^*)^4M^*>M^*>-\pi^4$.
			
			It can be seen that $\pi^4$ is the least positive eigenvalue of $T_4^0[0]$ in $X_{\{0,1\}}^{\{1,2\}}$. 
			
			Now, let us see that every solution of $u^{(4)}(t)+M\,u(t)=0$, verifying the given boundary conditions \eqref{Ec::Ex14}, cannot have any zero on $(0,1)$ whenever $M\in(-\pi^4,0)$. Which is a contradiction of supposing that there is a sign change on the Green's function.
			
			First, let us choose $M=-\left( \dfrac{\pi}{2}\right)^4$, the solution is given as a multiple of:
			\[u(t)=f_1(1-t)+f_2(1-t)\,,\]
			where, for $t\in[0,1]$,
			 {\footnotesize\[\qquad\begin{split} &f_1(t)=\left(1-\sinh \left(\frac{\pi }{2}\right)\right) \left(\sinh \left(\frac{\pi 
				t}{2}\right)-\sin \left(\frac{\pi  t}{2}\right)\right)\geq f_1(1)=-\left(\sinh \left(\frac{\pi }{2}\right)-1\right)^2\,,\\ &f_2(t)=\cosh \left(\frac{\pi }{2}\right)
			\left(\cos \left(\frac{\pi  t}{2}\right)+\cosh \left(\frac{\pi  t}{2}\right)\right)\geq f_2(0)=2 \cosh \left(\frac{\pi }{2}\right)\,.\end{split}\] }
		
		So, $u(t)\geq -\left(\sinh \left(\frac{\pi }{2}\right)-1\right)^2+2 \cosh \left(\frac{\pi }{2}\right)>0$ for all $t\in[0,1]$.
		
		Now, let us move continuously on $M$ to obtain the different solutions of $T_4^0[M]\,u(t)=0$, coupled with the boundary conditions \eqref{Ec::Ex14}. 
		
		Let us see that it is not possible that these solutions begin to change sign on $(0,1)$. If this was the case, we would have that there exist $\widehat{M}\in(-\pi^4,0)$ and $\widehat{t}\in(0,1)$ such that $\widehat{u}$ is a solution of $T_4^0[\widehat{M}]\widehat u(t)=0$ on $[\widehat{t},1]$, verifying $\widehat{u}(\widehat{t})=\widehat{u}'(\widehat{t})=\widehat{u}(1)=\widehat{u}''(1)=0$.
		
		Then, the function $\widehat{y}(t)=\widehat{u}((1-\widehat{t})t+\widehat{t})$ is an eigenfunction related to the eigenvalue $-(1-\widehat{t})^4\,\widehat{M}\in(0,\pi^4)$ of the operator $T_4^0[0]$ in $X_{\{0,1\}}^{\{1,2\}}$ which is a contradiction.
		
		Analogously, if there exists $\widehat{M}\in(-\pi^4,0)$, for which  there is a nontrivial solution of $T_4^0[\widehat{M}]u(t)=0$ on $[0,1]$, verifying $u(0)=0$, coupled with the boundary conditions \eqref{Ec::Ex14}, then there is an eigenvalue $-\widehat{M}\in (0,\pi^4)$ of the operator $T_4^0[0]$ in $X_{\{0,1\}}^{\{1,2\}}$, which is again a contradiction.
		
		Finally, since there is not any positive eigenvalue of $T_4^0[0]$ in $X_{\{1\}}^{\{0,1,2\}}$, we can affirm that it is not possible that the sign change  begins at $t=1$. 
		
		So, we have proved that every solution of $u^{(4)}(t)+M\,u(t)=0$ coupled with the boundary conditions \eqref{Ec::Ex14} does not not have any zero on $(0,1)$ for $M\in(-\pi^4,0)$. Thus, we also have arrived to a contradiction if $c^*>s$.
			\end{itemize}
			
			So, from Remark \ref{R::14}, coupled with Theorems \ref{T::IPN1} and \ref{T::IPN2}, we can affirm that $T_4^0[M]$ is a strongly inverse negative operator in $X_{\{0,2\}}^{\{1,2\}}$ if, and only if, $M\in[-\pi^4,-m_1^4)$, where $m_1$ has been introduced in Example \ref{Ex::6}.
		\end{exemplo}
		
		\begin{exemplo}
			Using a similar argument to Example \ref{Ex::14}, in \cite{CabSaa2} it is studied the strongly inverse negative character of the operator $T_4(p_1,p_2)[M]$ previously introduced in Section \ref{SS::Ex}. There, a characterization of the parameter's set where $T_4(p_1,p_2)$ is strongly inverse negative in $X_{\{0,2\}}^{\{0,2\}}$ is obtained and several particular examples are given.
		\end{exemplo}

		\chapter[Non homogeneous boundary conditions]{Characterization of strongly inverse positive (negative) character for non homogeneous boundary conditions.}
		This chapter is devoted to the study of the operator $T_n[M]$, coupled  with different non homogeneous boundary conditions.
		
		First, let us  consider the following set:
			{\scriptsize \begin{equation}\label{Ec::X_senh}
			\begin{split}
			\tilde X_{\{\sigma_1,\dots, \sigma_k\}}^{\{\varepsilon_1,\dots, \varepsilon_{n-k}\}}=&\left\lbrace u\in C^n(I)\ \mid\ u^{(\sigma_1)}(a)=\cdots=u^{(\sigma_k-1)}(a)=0\,,\ (-1)^{n-\sigma_k-1}u^{(\sigma_k)}(a)\geq0\,,\right. \\ &\ \left.  u^{(\varepsilon_1)}(b)=\cdots=u^{(\varepsilon_{n-k-1})}(b)=0\,,\ u^{(\varepsilon_{n-k})}(b)\leq 0\right\rbrace \,.
				\end{split}\end{equation}}
			
			That is, we consider a set where some of the boundary conditions do not have to be necessarily homogeneous. This information is very useful in order to apply the lower and upper solutions method and monotone iterative techniques for nonlinear boundary value problems, see for instance \cite{CaCiSa}.
			
			So, we are interested into characterize the parameter's set for which the operator $T_n[M]$ is strongly inverse positive or negative on $\tilde X_{\{\sigma_1,\dots, \sigma_k\}}^{\{\varepsilon_1,\dots, \varepsilon_{n-k}\}}$.
			
			We introduce the boundary conditions which a function $u\in \tilde X_{\{\sigma_1,\dots, \sigma_k\}}^{\{\varepsilon_1,\dots, \varepsilon_{n-k}\}}$ must satisfy:
			\begin{align}
			\label{Ec::cfanh} u^{(\sigma_1)}(a)=\cdots=u^{(\sigma_k-1)}(a)&=0\,,\quad u^{(\sigma_k)}(a)=c_1\,,\\			
			\label{Ec::cfbnh} u^{(\varepsilon_1)}(b)=\cdots=u^{(\varepsilon_{n-k-1})}(b)&=0\,,\quad u^{(\varepsilon_{n-k})}(b)=c_2\,,
			\end{align}
			where $(-1)^{n-\sigma_k-1}c_1\geq 0$ and $c_2\leq 0$.

			We can connect the problem \eqref{Ec::T_n[M]}, \eqref{Ec::cfanh}-\eqref{Ec::cfbnh} with the homogeneous problem \eqref{Ec::T_n[M]}--\eqref{Ec::cfb} by means of the following result.
			
			\begin{lemma}\label{L::14}
				If the problem \eqref{Ec::T_n[M]}--\eqref{Ec::cfb} has only the trivial solution.  Then the problem  $T_n[M]\,u(t)=h(t)$, $t\in I$, coupled with the boundary conditions \eqref{Ec::cfanh}-\eqref{Ec::cfbnh} has a unique solution given by:
				\begin{equation}
				\label{Ec::sol}
				u(t)=\int_a^bg_M(t,s)\,h(s)\,ds+c_1\,x_M(t)+d_1\,z_M(t)\,,
				\end{equation}
				where $g_M(t,s)$ is the related Green's function of $T_n[M]$ in $X_{\{\sigma_1,\dots, \sigma_k\}}^{\{\varepsilon_1,\dots, \varepsilon_{n-k}\}}$ and:
				\begin{itemize}
					\item $x_M$ is defined as the unique solution of
					\begin{equation}
					\label{Ec::xm}\left\lbrace \begin{array}{c}
					T_n[M]\,u(t)=0\,,\quad t\in I\,,\\\\
					u^{(\sigma_1)}(a)=\dots=u^{(\sigma_{k-1})}(a)=0\,,\quad u^{(\sigma_k)}(a)=1\,,\\\\ u^{(\varepsilon_1)}(b)=\cdots=u^{(\varepsilon_{n-k})}(b)=0\,.\end{array} \right. 
					\end{equation}
					\item $z_M$ is defined as the unique solution of
						\begin{equation}
						\label{Ec::ym}\left\lbrace \begin{array}{c}
						T_n[M]\,u(t)=0\,,\quad t\in I\\\\
						u^{(\sigma_1)}(a)=\dots=u^{(\sigma_{k})}(a)=0\,,\\\\ u^{(\varepsilon_1)}(b)=\cdots=u^{(\varepsilon_{n-k-1})}(b)=0\,,\quad u^{(\varepsilon_{n-k})}(b)=1\,.\end{array} \right. 
						\end{equation}
				\end{itemize}
			\end{lemma}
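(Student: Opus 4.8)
The plan is to reduce the non-homogeneous boundary value problem to the homogeneous one by superposition, which is the standard technique for linear boundary value problems. First I would observe that the hypothesis that \eqref{Ec::T_n[M]}--\eqref{Ec::cfb} has only the trivial solution means precisely that $M$ is not an eigenvalue of $T_n[M]$ in $X_{\{\sigma_1,\dots,\sigma_k\}}^{\{\varepsilon_1,\dots,\varepsilon_{n-k}\}}$, so the Green's function $g_M(t,s)$ is well defined (this is guaranteed, for instance, by Lemma \ref{L::11} under property $(T_d)$, but here it is simply assumed). Then I would argue that the functions $x_M$ and $z_M$ are themselves well defined: each of the problems \eqref{Ec::xm} and \eqref{Ec::ym} is a linear $n$-th order equation with $n$ two-point conditions, and the associated homogeneous problem (all $n$ conditions set to zero) is exactly \eqref{Ec::T_n[M]}--\eqref{Ec::cfb}, which by hypothesis has only the trivial solution; hence by the Fredholm alternative for two-point boundary value problems each of \eqref{Ec::xm} and \eqref{Ec::ym} has a unique solution.

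Next I would verify that the function $u$ given by \eqref{Ec::sol} solves the stated problem. Applying $T_n[M]$ term by term: the integral term $\int_a^b g_M(t,s)\,h(s)\,ds$ satisfies $T_n[M]\,u = h$ by the defining property of the Green's function, while $T_n[M]\,x_M = T_n[M]\,z_M = 0$; so $T_n[M]\,u = h$ on $I$. For the boundary conditions, I would use that $s\mapsto g_M(\cdot,s)$, as a function of $t$, lies in $X_{\{\sigma_1,\dots,\sigma_k\}}^{\{\varepsilon_1,\dots,\varepsilon_{n-k}\}}$ for each fixed $s$, so the integral term contributes $0$ to every one of the conditions $u^{(\sigma_i)}(a)$ and $u^{(\varepsilon_j)}(b)$ (one must check that differentiation in $t$ up to order $\max\{\sigma_k,\varepsilon_{n-k}\}\le n-1$ commutes with the integral, which is legitimate since $g_M\in C^{n-2}(I\times I)$ and is $C^n$ off the diagonal, and the relevant derivatives have the required one-sided continuity). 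It then remains to add the contributions of $c_1 x_M$ and $d_1 z_M$: from \eqref{Ec::xm}, $x_M$ kills all conditions except $x_M^{(\sigma_k)}(a)=1$, and from \eqref{Ec::ym}, $z_M$ kills all conditions except $z_M^{(\varepsilon_{n-k})}(b)=1$, so $u^{(\sigma_k)}(a)=c_1$ and $u^{(\varepsilon_{n-k})}(b)=d_1$, giving exactly \eqref{Ec::cfanh}-\eqref{Ec::cfbnh}. (I note a small notational discrepancy to be reconciled: the boundary data in \eqref{Ec::cfbnh} is called $c_2$, while the representation \eqref{Ec::sol} uses $d_1$; in the write-up I would simply set $d_1=c_2$, or adjust the formula to read $c_2\,z_M(t)$.)

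Finally I would establish uniqueness. If $u_1,u_2$ are two solutions of $T_n[M]\,u=h$ with \eqref{Ec::cfanh}-\eqref{Ec::cfbnh}, then $w=u_1-u_2$ satisfies $T_n[M]\,w=0$ together with all the conditions $w^{(\sigma_i)}(a)=0$, $i=1,\dots,k$, and $w^{(\varepsilon_j)}(b)=0$, $j=1,\dots,n-k$ (the non-homogeneous data $c_1$ and $c_2$ cancel). Hence $w\in X_{\{\sigma_1,\dots,\sigma_k\}}^{\{\varepsilon_1,\dots,\varepsilon_{n-k}\}}$ and solves \eqref{Ec::T_n[M]}--\eqref{Ec::cfb}, so by hypothesis $w\equiv 0$ on $I$, i.e. $u_1=u_2$.

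There is no serious obstacle here; this is a routine verification lemma. The only point requiring a little care — and the step I would treat most carefully — is the justification that differentiation under the integral sign in $t$ is valid up to the orders $\sigma_k$ and $\varepsilon_{n-k}$ needed to check the boundary conditions, given that $g_M(t,s)$ is only $C^{n-2}$ across the diagonal and has a jump in its $(n-1)$-st $t$-derivative there. Since $\sigma_k,\varepsilon_{n-k}\le n-1$ and the boundary points $t=a$, $t=b$ are never on the diagonal for $s\in(a,b)$, the differentiation is harmless at the endpoints; but I would state this explicitly rather than gloss over it. Everything else is immediate from the definition of the Green's function and linearity of $T_n[M]$.
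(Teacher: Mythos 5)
Your proof is correct and is exactly the standard superposition argument that the paper implicitly relies on (it states Lemma \ref{L::14} without proof, treating it as routine): well-posedness of $x_M$ and $z_M$ from the hypothesis on the homogeneous problem, verification of \eqref{Ec::sol} by linearity and the defining properties of $g_M$, and uniqueness by subtracting two solutions to land in $X_{\{\sigma_1,\dots,\sigma_k\}}^{\{\varepsilon_1,\dots,\varepsilon_{n-k}\}}$. Your remarks on identifying $d_1$ with $c_2$ and on differentiating under the integral at $t=a,b$ (off the diagonal) are appropriate and do not affect the argument.
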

			
			Using this Lemma we can obtain the following result which characterizes the strongly inverse positive (negative) character of $T_n[M]$ in $\tilde X_{\{\sigma_1,\dots,\sigma_k\}}^{\{\varepsilon_1,\dots,\varepsilon_{n-k}\}}$.

	\begin{theorem}\label{T::IPNH}
	$T_n[M]$ is strongly inverse positive (negative) in $\tilde X_{\{\sigma_1,\dots,\sigma_k\}}^{\{\varepsilon_1,\dots,\varepsilon_{n-k}\}}$ if, and only if, it is strongly inverse positive (negative) in $X_{\{\sigma_1,\dots,\sigma_k\}}^{\{\varepsilon_1,\dots,\varepsilon_{n-k}\}}$. 	
	\end{theorem}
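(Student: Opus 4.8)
The plan is to show the two characterizations coincide by relating the two notions through the integral representation of solutions given in Lemma \ref{L::14}. The key observation is that $\tilde X_{\{\sigma_1,\dots,\sigma_k\}}^{\{\varepsilon_1,\dots,\varepsilon_{n-k}\}}$ contains $X_{\{\sigma_1,\dots,\sigma_k\}}^{\{\varepsilon_1,\dots,\varepsilon_{n-k}\}}$ (take $c_1=c_2=0$), so strongly inverse positivity on the larger set trivially implies it on the smaller one; this gives one implication immediately. For the converse, I would assume $T_n[M]$ is strongly inverse positive in $X_{\{\sigma_1,\dots,\sigma_k\}}^{\{\varepsilon_1,\dots,\varepsilon_{n-k}\}}$ and take an arbitrary $u\in\tilde X_{\{\sigma_1,\dots,\sigma_k\}}^{\{\varepsilon_1,\dots,\varepsilon_{n-k}\}}$ with $T_n[M]\,u=h\gneqq 0$ on $I$, and use \eqref{Ec::sol} to write $u(t)=\int_a^b g_M(t,s)\,h(s)\,ds+c_1\,x_M(t)+c_2\,z_M(t)$, where $c_1=u^{(\sigma_k)}(a)$ with $(-1)^{n-\sigma_k-1}c_1\geq 0$ and $c_2=u^{(\varepsilon_{n-k})}(b)\leq 0$, while $x_M$ and $z_M$ solve the homogeneous problems \eqref{Ec::xm} and \eqref{Ec::ym}.

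The strategy is then to establish the required sign/positivity of each of the three summands separately, at every relevant point (on $(a,b)$, and in the $\alpha$-derivative at $a$ and the $\beta$-derivative at $b$, with the appropriate parity sign for $\beta$). For the first term: strongly inverse positivity of $T_n[M]$ in $X_{\{\sigma_1,\dots,\sigma_k\}}^{\{\varepsilon_1,\dots,\varepsilon_{n-k}\}}$ is, by Theorem \ref{T::in2}, equivalent to $g_M(t,s)>0$ a.e. on $(a,b)\times(a,b)$ together with the two boundary-derivative inequalities, so since $h\gneqq 0$ the integral $\int_a^b g_M(t,s)\,h(s)\,ds$ inherits all these strict sign properties on the appropriate sets. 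The remaining work is to identify the functions $x_M$ and $z_M$ with (signed multiples of) functions already analyzed in the paper: indeed $z_M$ satisfies the boundary conditions \eqref{Ec::cfaaa}-\eqref{Ec::cfbbb} with $u^{(\varepsilon_{n-k})}(b)=1$, which up to a positive factor is exactly the function $y_M$ of Step 2 in the proof of Theorem \ref{T::IPN}, shown there to be of constant sign $(-1)^{n-k}$-times-nonnegative (precisely, $y_M>0$ if $\gamma$ is even, $<0$ if $\gamma$ odd) for the relevant parameter range; since in \eqref{Ec::sol} the coefficient is $c_2\leq 0$ and $\varepsilon_{n-k}$, $\gamma=n-1-\varepsilon_{n-k}$ have opposite parities, the term $c_2\,z_M$ has the same constant sign as the Green's function term. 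Similarly $x_M$ satisfies \eqref{Ec::cfaa}-\eqref{Ec::cfbb} with $u^{(\sigma_k)}(a)=1$; by Lemma \ref{L::1}, up to a positive factor $T_{\sigma_k}x_M(a)>0$, and by Proposition \ref{P::1} $x_M$ has constant sign on $(a,b)$ — one checks using the parity bookkeeping of Proposition \ref{P::1} that the sign of $x_M$ matches $(-1)^{n-k}$ times that imposed on $u^{(\sigma_k)}(a)$, so that $c_1\,x_M$ with $(-1)^{n-\sigma_k-1}c_1\geq 0$ again contributes with the correct sign.

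Having shown all three pieces have the same (non-strict or strict) sign and that at least one is strict (the Green's function piece, because $h\gneqq 0$), I would conclude $u$ has the required strict sign on $(a,b)$ and the required strict boundary-derivative signs at $a$ and $b$, i.e. $T_n[M]$ is strongly inverse positive in $\tilde X_{\{\sigma_1,\dots,\sigma_k\}}^{\{\varepsilon_1,\dots,\varepsilon_{n-k}\}}$. The inverse negative case is entirely parallel, multiplying by $-1$, or equivalently applying the argument to $-T_n[M]$ / using the odd-$(n-k)$ versions of the cited results. The main obstacle I anticipate is the bookkeeping of parities: one must carefully verify, case by case in $n-k$ even/odd and in the parity of $\sigma_k$, $\varepsilon_{n-k}$ (equivalently of $\alpha$, $\beta$, $\eta$, $\gamma$), that the signs of $x_M$ and $z_M$ produced by Lemmas \ref{L::1}, \ref{L::2} and Propositions \ref{P::1}, \ref{P::2} align with the signs forced on $c_1$, $c_2$ in the definition of $\tilde X$, and that they also produce the correct sign of $u^{(\alpha)}(a)$ and $u^{(\beta)}(b)$; this is exactly the same kind of oscillation-counting argument used repeatedly in Chapters 5–7, so no new idea is needed, only care. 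A minor additional point is to note that the hypotheses (existence of $\bar M$ with property $(T_d)$ and $(N_a)$) that make \eqref{Ec::sol} and the auxiliary results available are already in force whenever the statement is non-vacuous, since strongly inverse positivity forces invertibility of $T_n[M]$, so Lemma \ref{L::14} applies.
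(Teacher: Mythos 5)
Your proposal is correct in outline and shares the paper's skeleton — the trivial direction by the inclusion $X_{\{\sigma_1,\dots,\sigma_k\}}^{\{\varepsilon_1,\dots,\varepsilon_{n-k}\}}\subset \tilde X_{\{\sigma_1,\dots,\sigma_k\}}^{\{\varepsilon_1,\dots,\varepsilon_{n-k}\}}$, the representation of Lemma \ref{L::14}, and the reduction to the signs of $x_M$ and $z_M$ — but it determines those signs by a genuinely heavier route. You invoke Lemmas \ref{L::1}, \ref{L::2} and Propositions \ref{P::1}, \ref{P::2} (oscillation counting on the eigenvalue intervals), which forces you to assume the standing hypotheses $(T_d)$ and $(N_a)$ and to know, via Theorem \ref{T::IPN} and Proposition \ref{P::6.5}, that $M$ lies inside the ranges where those propositions apply. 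The paper instead observes that $x_M(t)=(-1)^{n-1-\sigma_k}\,w_M(t)$ and $z_M(t)=(-1)^{n-\varepsilon_{n-k}}\,y_M(t)$, where $w_M=\partial_s^{\eta}g_M^1(t,s)_{\mid s=a}$ and $y_M=\partial_s^{\gamma}g_M^2(t,s)_{\mid s=b}$ are the first non-vanishing $s$-derivatives of the Green's function at $s=a$ and $s=b$; since the lower-order derivatives vanish there, strong inverse positivity (negativity) of $T_n[M]$ alone, via Theorem \ref{T::in2}, gives $w_M\geq 0$ ($\leq0$) and $(-1)^{\gamma}y_M\geq 0$ ($\leq0$), and then $c_1x_M$ and $c_2z_M$ have the right sign immediately from $(-1)^{n-\sigma_k-1}c_1\geq0$, $c_2\leq0$. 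What the paper's shortcut buys is a proof with no oscillation bookkeeping and no appeal to $(T_d)$, $(N_a)$ or the parameter-interval characterization — which matters because Theorem \ref{T::IPNH} is stated without those hypotheses; your closing remark that they "are already in force whenever the statement is non-vacuous, since strongly inverse positivity forces invertibility" is not a valid justification (invertibility does not yield $(T_d)$). Two further small points: the factor relating $z_M$ to $y_M$ is $(-1)^{n-\varepsilon_{n-k}}$, not a positive factor, and your parity claim that $\varepsilon_{n-k}$ and $\gamma=n-1-\varepsilon_{n-k}$ always have opposite parities is false when $n$ is odd; the correct bookkeeping is simply $z_M=(-1)^{\gamma+1}y_M$, so $(-1)^{\gamma}y_M\geq0$ together with $c_2\leq0$ gives $c_2z_M\geq0$ regardless of the parity of $n$. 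With these repairs your argument goes through, but the paper's direct identification is both shorter and more general.
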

			\begin{proof}
				Since $X_{\{\sigma_1,\dots,\sigma_k\}}^{\{\varepsilon_1,\dots,\varepsilon_{n-k}\}}\subset \tilde X_{\{\sigma_1,\dots,\sigma_k\}}^{\{\varepsilon_1,\dots,\varepsilon_{n-k}\}}$ the necessary condition is obvious.

				Now, let us see the sufficient one. From the strongly inverse positive (negative) character of $T_n[M]$ in $X_{\{\sigma_1,\dots,\sigma_k\}}^{\{\varepsilon_1,\dots,\varepsilon_{n-k}\}}$, using Theorem \ref{T::in2}, we conclude that $g_M>0$ ($<0$) a.e. on $I\times I$. Then, from Lemma \ref{L::14}, we only need to study the sign of $x_M$ and $z_M$.
				
				In order to do that, we  establish a relationship between these functions and some derivatives of $g_M(t,s)$.
				
				Taking into account the boundary conditions, it is clear that \[x_M(t)=(-1)^{n-1-\sigma_k}\,w_M(t)\text{ and } z_M(t)=(-1)^{n-\varepsilon_{n-k}}\,y_M(t)\,,\] where $w_M$ and $y_M$ have been defined in the proof of Theorem \ref{T::IPN} as follows:				
			\[w_M(t)=\dfrac{\partial^\eta}{\partial s^\eta}g_M^1(t,s)_{\mid s=a}\,,\quad 
			y_M(t)=\dfrac{\partial ^\gamma}{\partial s^\gamma}g_M^2(t,s)_{\mid s=b}\,.\]
				
				Now, if $T_n[M]$ is strongly inverse positive (negative) in $X_{\{\sigma_1,\dots,\sigma_k\}}^{\{\varepsilon_1,\dots,\varepsilon_{n-k}\}}$, then $w_M(t)\geq 0$ ($\leq 0$) and $(-1)^{\gamma}\,y_M(t)\geq 0$, so, since $\gamma=n-1-\varepsilon_{n-k}$, we have that $(-1)^{n-\varepsilon_{n-k}}\,y_M(t)\leq 0$ in both cases. 
				
				Thus, the result is proved.
			\end{proof}
		\section{Study of particular kind of operators}
		In this section we consider a particular kind of operators which satisfy property $(T_d)$ in $X_{\{\sigma_1,\dots,\sigma_k\}}^{\{\varepsilon_1,\dots,\varepsilon_{n-k}\}}$, thus we can apply previous results to these operators. 
		
		After that, we obtain some results which characterize  either the strongly inverse positive character or the strongly inverse negative character of $T_n[M]$ if $n-k$ is even or odd, respectively, in different sets where more general non homogeneous boundary conditions are considered.
		
		First, we introduce the following notation.
		
	\begin{notation}
			First, let us denote $\alpha_2\in\{-1,0,1,\dots,n-2\}$, such that $\alpha_2\notin \{\sigma_1,\dots,\sigma_k\}$ and $\{\alpha_2+1,\alpha_2+2,\dots,\sigma_k\}\subset\{\sigma_1,\dots,\sigma_k\}$; and  $\beta_2\in\{-1,0,1,\dots,n-2\}$, such that $\beta_2\notin \{\varepsilon_1,\dots,\varepsilon_{n-k}\}$ and $\{\beta_2+1,\beta_2+2,\dots,\varepsilon_{n-k}\}\subset\{\varepsilon_1,\dots,\varepsilon_{n-k}\}$ .

		We denote $\mu=\max\{\alpha_2,\beta_2\}$.
			\end{notation}
		\begin{remark}
			Realize that if $\sigma_k=k-1$ then $\alpha_2=-1$. Otherwise, $\alpha_2\geq \alpha\geq 0$.
			 
			 Moreover,  if $\varepsilon_{n-k}=n-k-1$ then $\beta_2=-1$. Otherwise, $\beta_2\geq \beta\geq 0$.
		\end{remark}
		
		Now, we introduce the following sufficient condition for an operator to satisfy property $(T_d)$.
		\begin{proposition}
			If the linear differential equation of  $(n-\mu-1)^\mathrm{th}-$order:
			\begin{equation}
			\label{Ec::mu} L_{n-\mu-1}\,u(t)\equiv u^{(n-\mu-1)}(t)+p_1(t)\,u^{(n-\mu-2)}(t)+\cdots+p_{n-\mu-1}(t)\,u(t)=0\,,
			\end{equation}
			with $p_{j}\in C^{n-j}(I)$, is disconjugate on $I$, then the operator:
			\[\tilde T_n[0]\,u(t)=u^{(n)}(t)+p_1(t)\,u^{(n-1)}(t)+\cdots+p_{n-\mu-1}(t)\,u^{(\mu+1)}(t)\,,\]
			satisfies property $(T_d)$ in $X_{\{\sigma_1,\dots,\sigma_k\}}^{\{\varepsilon_1,\dots,\varepsilon_{n-k}\}}$.
		\end{proposition}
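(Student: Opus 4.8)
The plan is to reduce the claim to the known result that disconjugacy of the full $n$-th order equation is sufficient for property $(T_d)$ on the $(k,n-k)$-type spaces (the result quoted from \cite{CabSaa} in the second Remark of Chapter 2), together with the trivial observation that the leading part of $\tilde T_n[0]$ is exactly $L_{n-\mu-1}$ applied to $u^{(\mu+1)}$. First I would set $\ell=n-\mu-1$ and note that $\mu\geq\alpha_2$ and $\mu\geq\beta_2$, so that $\{\sigma_1,\dots,\sigma_k\}\supset\{\mu+1,\mu+2,\dots,n-1\}\cap\{\sigma_j\}$ contains all indices above $\mu$ that are present, and likewise at $b$; the key combinatorial point is that the derivatives $u^{(\mu+1)},\dots,u^{(n-1)}$ that appear in $\tilde T_n[0]$ are, by the definition of $\mu=\max\{\alpha_2,\beta_2\}$, precisely controlled by the boundary conditions at both endpoints.

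Next I would produce the Markov decomposition. Since $L_{n-\mu-1}$ is disconjugate on $I$, Theorems \ref{T::4} and \ref{T::3} give a factorization
\[
L_{n-\mu-1}\,w(t)=v_1(t)\cdots v_{n-\mu-1}(t)\,\frac{d}{dt}\!\left(\frac{1}{v_{n-\mu-1}(t)}\frac{d}{dt}\!\left(\cdots\frac{d}{dt}\!\left(\frac{1}{v_1(t)}\,w(t)\right)\cdots\right)\right),
\]
with $v_k>0$, $v_k\in C^{n-\mu-k}(I)$. Applying this with $w=u^{(\mu+1)}$ and prepending $\mu+1$ plain differentiations (i.e. taking $v_{n-\mu}=\cdots=v_n\equiv 1$), one gets a decomposition of the form \eqref{Ec::Td1}--\eqref{Ec::Td2} for $\tilde T_n[0]$: explicitly $T_j u = u^{(j)}$ for $j\leq \mu+1$ up to a positive factor, and for $j>\mu+1$ the $T_j$ are the successive quotient-derivatives coming from the Markov system of $L_{n-\mu-1}$ acting on $u^{(\mu+1)}$. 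One must check the regularity bookkeeping ($v_k\in C^n(I)$ as required by $(T_d)$) — here the constant $v_k\equiv 1$ pieces are smooth and the nontrivial $v_k$ inherit enough smoothness from $p_j\in C^{n-j}(I)$; this is routine but needs to be stated.

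Then I would verify the boundary conditions in \eqref{Ec::Td1}. For $j\le \mu$, $T_j u(a)$ is a linear combination of $u(a),\dots,u^{(j)}(a)$, all of which vanish for $j$ among $\{\sigma_1,\dots,\sigma_k\}$ with $\sigma_j\le\mu$ — but by the definition of $\alpha_2$ and $\mu$, every $\sigma_j\le\mu$ forces $u^{(\sigma_j)}(a)=0$ since the block above $\alpha_2$ is full; more precisely one invokes Remark \ref{R::1} together with Lemma \ref{L::1} exactly as in Lemma \ref{L::6}. For $\mu< j$ the operator $T_j$ involves $u^{(\mu+1)},\dots,u^{(j)}$ evaluated at the endpoint, and again since all those derivative indices exceeding $\mu$ that lie in $\{\sigma_1,\dots,\sigma_k\}$ correspond to vanishing boundary data, $T_{\sigma_i}u(a)=0$ for all $i$ with $\sigma_i>\mu$; the symmetric argument at $t=b$ using $\beta_2$ and Lemma \ref{L::2} handles the $\varepsilon_i$. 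The main obstacle I anticipate is precisely this last verification: one has to be careful that the coefficients $p_{\ell_h}$ appearing in \eqref{Ec::Tl} vanish at the endpoints at the right orders, which is where Remark \ref{R::1} is essential, and to confirm that $\mu=\max\{\alpha_2,\beta_2\}$ (rather than a one-sided quantity) is exactly the order at which the leading part of $\tilde T_n[0]$ must start so that both endpoints' conditions are simultaneously respected. Once that is in hand, $\tilde T_n[0]$ satisfies $(T_d)$ in $X_{\{\sigma_1,\dots,\sigma_k\}}^{\{\varepsilon_1,\dots,\varepsilon_{n-k}\}}$ by definition, completing the proof.
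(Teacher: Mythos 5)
Your overall route is the same as the paper's: factor $L_{n-\mu-1}$ through Theorems \ref{T::4} and \ref{T::3}, take the first $\mu+1$ weights identically equal to one so that $\tilde T_j u=u^{(j)}$ for $j\le\mu+1$ and the remaining quotient-derivatives act on $u^{(\mu+1)}$, and then read the boundary conditions off the definition of $\mu$. (A small indexing slip: the constant weights are the \emph{innermost} ones, $\tilde v_1=\cdots=\tilde v_{\mu+1}\equiv 1$, not $v_{n-\mu},\dots,v_n$ as your parenthetical states; with the weights placed at the end one would decompose $\tfrac{d^{\mu+1}}{dt^{\mu+1}}\circ L_{n-\mu-1}$ rather than $L_{n-\mu-1}[u^{(\mu+1)}]$. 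Your explicit description of the resulting $T_j$ shows you intend the correct placement.)

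Two points need repair. First, the appeal to Remark \ref{R::1} and Lemma \ref{L::1} (``exactly as in Lemma \ref{L::6}'') is circular: both presuppose that the operator already satisfies $(T_d)$, so they cannot be used to establish it. Fortunately they are also unnecessary: in this decomposition $\tilde T_h u$, for $h>\mu+1$, involves only $u^{(\mu+1)},\dots,u^{(h)}$ --- the coefficients of $u,\dots,u^{(\mu)}$ are identically zero, not merely zero at the endpoints --- and whenever $\sigma_i>\mu\ge\alpha_2$ one has $\{\mu+1,\dots,\sigma_i\}\subset\{\sigma_1,\dots,\sigma_k\}$, so $\tilde T_{\sigma_i}u(a)=0$ directly; the symmetric argument with $\beta_2$ works at $t=b$, and for indices $\le\mu$ one simply has $\tilde T_{\sigma_i}u=u^{(\sigma_i)}$, $\tilde T_{\varepsilon_i}u=u^{(\varepsilon_i)}$. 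So the ``main obstacle'' you anticipate, endpoint vanishing of the coefficients $p_{\ell_h}$, does not arise. Second, the regularity of the weights is not the routine point you make of it: property $(T_d)$ requires $v_j\in C^n(I)$, whereas the Markov/Wronskian construction only yields $v_1\in C^n(I)$, $v_2\in C^{n-1}(I),\dots,v_{n-\mu-1}\in C^{\mu+2}(I)$. The paper devotes the first (and longest) step of its proof to bootstrapping this up to $C^n(I)$ by an induction that feeds the hypotheses $p_j\in C^{n-j}(I)$ back through the expressions \eqref{Ec::pl1}--\eqref{Ec::pll}. Your one-line claim that the $v_k$ ``inherit enough smoothness'' leaves this genuinely open; the proposal is complete only once that bootstrap is carried out.
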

		\begin{proof}
			From Theorems \ref{T::4} and \ref{T::3}, since the linear differential equation \eqref{Ec::mu} is disconjugate on $I$, there exist positive functions $v_1,\,\dots\,, v_{n-\mu-1}$ such that $v_k\in C^{n-\mu-k}(I)$ for $k=1,\dots, n-\mu-1$, and
			\[L_{n-\mu-1}\,u\equiv v_1\cdots v_{n-\mu-1}\,\dfrac{d}{dt}\left( \dfrac{1}{v_{n-\mu-1}}\dfrac{d}{dt}\left( \cdots \dfrac{d}{dt}\left( \dfrac{u}{v_1}\right) \right) \right) \,.\]
			
		\begin{itemize}
			\item [Step 1.] Let us see that, in fact, $v_k\in C^n(I)$ for $k=1,\dots,n-\mu-1$.
		\end{itemize}
			
			Since, $p_{j}\in C^{n-j}(I)$ for $j\in\{ \mu+1,\dots,n-1\}$, every solution of \eqref{Ec::mu} belongs to $C^n(I)$.
			
			If we look at the proof of Theorem \ref{T::3}, given in \cite[Chapter 3, Theorem 2]{Cop}, we observe that $v_k$ is given by the following recurrence formula:
			\[v_1=y_1\,,\quad v_2=\dfrac{W(y_1,y_2)}{y_1^2}\,,\quad v_k=\dfrac{W(y_1,\dots,y_k)\,W(y_1,\dots,y_{k-2})}{W(y_1,\dots,y_k-1)^2}\,,\ \text{for } k\geq 2\,,\]
			where $\{y_1,\dots,y_{n-\mu-1}\}$ is a Markov fundamental system of solutions of \eqref{Ec::mu} and $W$ the correspondent Wronskians.
			
			Thus, taking into account that $y_1,\dots,y_{n-\mu-1}\in C^n(I)$, we conclude that $v_1\in C^n(I)$, $v_2\in C^{n-1}(I)\,,\dots,v_{n-\mu-1}\in C^{\mu+2}(I)$.
			
			\vspace{0.5cm}
			
		 Now, let us consider the expression \eqref{Ec::Tl}, with $\ell =n-\mu-1$ and ${p_\ell}_{j}=p_{j}\in C^{n-j}(I)$, $j\in\{\mu+1,\dots,n-1\}$ given by expressions \eqref{Ec::pl1}--\eqref{Ec::pll}.
		 
		 First, let us see that $v_1\in C^n(I)$, $v_2\in C^n(I)$, $v_3\in C^{n-1}(I)\,,\dots,v_{n-\mu-1}\in C^{\mu+3}(I)$.
			
		If $\mu=n-2$, then $n-\mu-1=1$ and the result is proved
		
		Otherwise, $p_1\in C^{n-1}(I)\subset C^{\mu+2}(I)$, since $v_1\,,\dots\,,v_{n-\mu-2}\in C^{\mu+3}(I)$ and $v_{n-\mu-1}\in C^{\mu+2}(I)$, from \eqref{Ec::pl1} we obtain that $v_{n-\mu-1}'\in C^{\mu+2}(I)$, then $v_{n-\mu-1}\in C^{\mu+3}(I)$.
		
		Let us assume that $v_{k+1}\in C^{n-k}(I)$, $v_{k+2}\in C^{n-k-1}(I)\,,\dots\,,v_{n-\mu-1}\in C^{\mu +3}(I)$, then since $v_{k}\in C^{n-k}(I)$ considering the expression of $p_{n-\mu-k}$, given in \eqref{Ec::pll} for $\ell_\ell=n-\mu-k$, we obtain that $v_k^{(n-\mu-k)}\in C^{\mu+1}(I)$, hence $v_k\in C^{n-k+1}(I)$.
		
		Thus, we have proved by induction that  $v_1\in C^n(I)$, $v_2\in C^n(I)$, $v_3\in C^{n-1}(I)\,,\dots,v_{n-\mu-1}\in C^{\mu+2}$.
		
		If $\mu= n-3$, then the result is proved, since $v_1$, $v_2\in C^n(I)$.
		
		\vspace{0.5cm}
		
		Now, let us assume that $\mu<n-3$. Considering the expression of $p_{n-\mu-3}\in C^{\mu +3}(I)$, given in \eqref{Ec::pll} for $\ell_\ell=n-\mu-k$. Since $v_2\in C^n(I)$, we conclude that $v_3^{(n-\mu-3)}\in C^{\mu+3}(I)$; so, $v_3\in C^n(I)$.
		
		If we suppose that $v_1,\dots,v_{k-1}\in C^n(I)$, then by considering the expression of $p_{n-\mu-k}\in C^{\mu+k}(I)$, we conclude that $v_k^{(n-\mu-k)}\in C^{\mu+k}(I)$, thus $v_k\in C^n(I)$.
		
		Then, we have proved that $v_1,\dots,v_{n-\mu-1}\in C^n(I)$.
		
		\begin{itemize}
			\item[Step 2.] Construction of the decomposition verifying the property $(T_d)$.
		\end{itemize}
	
		Now, we consider the decomposition of $\tilde{T}[0]$ as follows:		
			\[\tilde{T}[0]\,u\equiv v_1\dots v_{n-\mu-1}\,\dfrac{d}{dt}\left( \dfrac{1}{v_{n-\mu-1}}\dfrac{d}{dt}\left( \cdots \dfrac{d}{dt}\left( \dfrac{u^{(\mu+1)}}{v_1}\right) \right) \right) \,.\]
			
			Hence, if we denote $\tilde v_1=\dots=\tilde v_{\mu+1}=1$ and $\tilde v_{\mu+2}=v_1,\dots,\tilde v_n=v_{n-\mu-1}$, we can decompose $\tilde T_n[0]$ in the following sense: 
			\[\tilde T_0\, u=u\,,\quad \tilde T_k\,u=\dfrac{d}{dt}\left( \dfrac{\tilde T_{k-1}\,u}{\tilde v_k}\right) \,,\ k=1,\dots,n\,.\]
			
			Trivially $\tilde T_n[0]\,u={\tilde{v}_1\dots\tilde v_n}\tilde T_n\,u$.
			
			Now, let us see that this decomposition satisfies the property $(T_d)$.

			We have that $\tilde T_0 u=u$, $\tilde{T}_1 u=u'\,,\dots,\tilde T_{\mu+1}u=u^{(\mu+1)}$.
			
			Hence, if $\sigma_i<\alpha_2\leq \mu$ then $\tilde T_{\sigma_i}u(a)=u^{(\sigma_i)}(a)=0$.
			
			Analogously, if $\varepsilon_i<\beta_2\leq \mu$, then $\tilde T_{\varepsilon_i}u(b)=u^{(\varepsilon_i)}(b)=0$.
			
			If $h>\mu+1$, then
			\[\tilde T_{h}u=\dfrac{u^{(h)}}{v_1\dots v_h}+{p_h}_1\,u^{(h-1)}+\cdots+ {p_h}_{h-\mu-1}\,u^{(\mu+1)}\,,\]
			where ${p_h}_i$ is given by equations \eqref{Ec::pl1}--\eqref{Ec::pll}.
			
			If $\sigma_i>\mu$, then by definition of $\mu$, $u^{(\mu+1)}(a)=u^{(\mu+2)}(a)=\cdots=u^{(\sigma_i)}(a)=0$. Hence $\tilde T_{\sigma_i}u(a)=0$.
			
			Analogously, if $\varepsilon_i>\mu$, then $u^{(\mu+1)}(b)=u^{(\mu+2)}(b)=\cdots=u^{(\varepsilon_i)}(b)=0$. Hence $\tilde T_{\varepsilon_i}u(b)=0$.
			
			Thus, the result is proved.		
		\end{proof}
		
		As consequence of this result, we can apply Theorems \ref{T::IPN}, \ref{T::IPN1}, \ref{T::IPN2} and \ref{T::IPNH} to operator $\tilde T_n[M]$. 
		
		Moreover, for this particular case, we will be able to obtain a characterization of strongly inverse positive (negative) character in different spaces with inhomogeneous boundary conditions.
		
	\begin{definition}\label{Def::SE}	Let us consider $\{\sigma_{\epsilon_1},\dots,\sigma_{\epsilon_\ell}\}\subset\{\sigma_1,\dots,\sigma_k\}$ such that $\sigma_{\epsilon_1}<\sigma_{\epsilon_2}<\cdots<\sigma_{\epsilon_\ell}=\sigma_k$, with $\sigma_{\epsilon_{\ell-1}}<\mu$.
		
		And $\{\varepsilon_{\kappa_1},\dots,\varepsilon_{\kappa_h}\}\subset\{\varepsilon_1,\dots,\varepsilon_{n-k}\}$ such that $\varepsilon_{\kappa_1}<\varepsilon_{\kappa_2}<\cdots<\varepsilon_{\kappa_h}=\varepsilon_{n-k}$, with $\varepsilon_{\kappa_{h-1}}<\mu$.
		
	\end{definition}
		Let us define the set of functions $X_{\{\sigma_1,\dots,\sigma_k\}_{\{\sigma_{\epsilon_1},\dots,\sigma_{\epsilon_\ell}\}}}^{\{\varepsilon_1,\dots,\varepsilon_{n-k}\}_{\{\varepsilon_{\kappa_1},\dots,\varepsilon_{\kappa_h}\}}}$ as follows:
	{\tiny	\begin{eqnarray}
		\nonumber
	&&\hspace{-0.6cm}	\left\lbrace u\in C^n (I)\quad \mid \quad u^{(\sigma_j)}(a)=\left\lbrace \begin{array}{cc}0&j\notin \{\epsilon_1,\dots,\epsilon_\ell\}\\\\(-1)^{n-\sigma_j-(k-j)+1}\varphi_j&j\in\{\epsilon_1,\dots,\epsilon_\ell\}\end{array}\right.\text{ for some }\varphi_j\geq 0,\ j=1,\dots,k\,,\right. \\\nonumber \\\label{Ec::xNH}
&&	\left. u^{(\varepsilon_i)}(b)=\left\lbrace \begin{array}{cc}0&i\notin \{\kappa_1,\dots,\kappa_h\}\\\\(-1)^{n-k+i-1}\psi_i&i\in\{\kappa_1,\dots,\kappa_h\}\end{array}\right. \text{ for some }\psi_i\geq 0\,,\ i=1,\dots,n-k\right\rbrace\,. 
		\end{eqnarray}}

	Now, we enunciate a similar result to Lemma \ref{L::14} for this more general case
		\begin{lemma}\label{L::14-1}
			If problem \eqref{Ec::T_n[M]}--\eqref{Ec::cfb} has only the trivial solution.  Then problem  $T_n[M]\,u(t)=h(t)$, $t\in I$, coupled with the boundary conditions 
			\begin{equation}
			\label{Ec::cfnh1}u^{(\sigma_j)}(a)=\left\lbrace \begin{array}{cc}0\,,&j\notin \{\epsilon_1,\dots,\epsilon_\ell\}\,,\\\\c_j\,,&j\in\{\epsilon_1,\dots,\epsilon_\ell\}\end{array}\right.\quad j=1,\dots,k\,,
			\end{equation}
			and
			\begin{equation}
			\label{Ec::cfnh2} u^{(\varepsilon_i)}(b)=\left\lbrace \begin{array}{cc}0\,,&i\notin \{\kappa_1,\dots,\kappa_h\}\,,\\\\d_i\,,&i\in\{\kappa_1,\dots,\kappa_h\}\end{array}\right.\quad i=1,\dots,n-k\,,
			\end{equation}			
			has a unique solution given by:
			\begin{equation}
			\label{Ec::solnh}
			u(t)=\int_a^bg_M(t,s)\,h(s)\,ds+\sum_{j=1}^\ell c_{\epsilon_j}\,x_M^{\sigma_{\epsilon_j}}(t)+\sum_{i=1}^hd_{\kappa_i}\,z_M^{\varepsilon_{\kappa_i}}(t)\,,
			\end{equation}
			where $g_M(t,s)$ is the related Green's function of $T_n[M]$ in $X_{\{\sigma_1,\dots, \sigma_k\}}^{\{\varepsilon_1,\dots, \varepsilon_{n-k}\}}$ and,
			\begin{itemize}
				\item $x_M^{\sigma_{\epsilon_j}}$ is defined as the unique solution of
				\begin{equation}
				\label{Ec::xm1}\left\lbrace \begin{array}{c}
				T_n[M]\,u(t)=0\,,\quad t\in I\\\\
				u^{(\sigma_{\epsilon_j})}(a)=1\,,\\\\
				u^{(\sigma_1)}(a)=\dots=u^{(\sigma_{\epsilon_j-1})}(a)=u^{(\sigma_{\epsilon_j+1})}(a)=\cdots=u^{(\sigma_k)}(a)=0\,,\\\\ u^{(\varepsilon_1)}(b)=\cdots=u^{(\varepsilon_{n-k})}(b)=0\,.\end{array} \right. 
				\end{equation}
				\item $z_M^{\varepsilon_{\kappa_i}}$ is defined as the unique solution of
				\begin{equation}
				\label{Ec::ym1}\left\lbrace \begin{array}{c}
				T_n[M]\,u(t)=0\,,\quad t\in I\,,\\\\
				u^{(\sigma_k)}(a)=\dots=u^{(\sigma_{k})}=0\,,\\\\
				u^{(\varepsilon_{\kappa_j})}(b)=1\,,\\\\ u^{(\varepsilon_1)}(b)=\cdots=u^{(\varepsilon_{\kappa_i-1})}(b)=u^{(\varepsilon_{\kappa_i+1})}(b)=\cdots=u^{(\varepsilon_{n-k})}(b)=0\,.\end{array} \right. 
				\end{equation}
			\end{itemize}
		\end{lemma}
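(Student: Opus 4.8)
The plan is to mimic the proof of Lemma \ref{L::14}, of which the present statement is the natural multi-parameter analogue, reducing everything to the superposition principle for linear boundary value problems together with the defining properties of the Green's function $g_M(t,s)$.

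First I would record that, since the homogeneous problem \eqref{Ec::T_n[M]}--\eqref{Ec::cfb} has only the trivial solution, the operator $T_n[M]$ is invertible in $X_{\{\sigma_1,\dots,\sigma_k\}}^{\{\varepsilon_1,\dots,\varepsilon_{n-k}\}}$ and its Green's function is well defined, so that $v(t):=\int_a^b g_M(t,s)\,h(s)\,ds$ is the unique element of $X_{\{\sigma_1,\dots,\sigma_k\}}^{\{\varepsilon_1,\dots,\varepsilon_{n-k}\}}$ satisfying $T_n[M]\,v=h$ on $I$. Next I would show that each auxiliary function $x_M^{\sigma_{\epsilon_j}}$ (resp. $z_M^{\varepsilon_{\kappa_i}}$) defined by \eqref{Ec::xm1} (resp. \eqref{Ec::ym1}) exists and is unique: viewing the $n$ prescribed boundary values there as a linear map $\Phi$ from the $n$-dimensional solution space of $T_n[M]\,u=0$ into $\mathbb{R}^n$, the hypothesis that only the trivial solution satisfies \eqref{Ec::cfa}--\eqref{Ec::cfb} says exactly that $\Phi$ has trivial kernel, hence is an isomorphism; this yields existence and uniqueness for every admissible choice of right-hand data, in particular for those appearing in \eqref{Ec::xm1}--\eqref{Ec::ym1}.

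With these pieces in hand I would set $u(t)=v(t)+\sum_{j=1}^\ell c_{\epsilon_j}\,x_M^{\sigma_{\epsilon_j}}(t)+\sum_{i=1}^h d_{\kappa_i}\,z_M^{\varepsilon_{\kappa_i}}(t)$ and verify it directly. Linearity of $T_n[M]$ and the fact that every $x_M^{\sigma_{\epsilon_j}}$ and $z_M^{\varepsilon_{\kappa_i}}$ lies in the kernel of $T_n[M]$ give $T_n[M]\,u=h$ on $I$. The boundary conditions \eqref{Ec::cfnh1}--\eqref{Ec::cfnh2} are checked term by term: at an index $\sigma_m$ with $m\notin\{\epsilon_1,\dots,\epsilon_\ell\}$ the term $v$ contributes $0$ because $v\in X_{\{\sigma_1,\dots,\sigma_k\}}^{\{\varepsilon_1,\dots,\varepsilon_{n-k}\}}$, each $x_M^{\sigma_{\epsilon_j}}$ contributes $0$ since $\sigma_m\neq\sigma_{\epsilon_j}$, and each $z_M^{\varepsilon_{\kappa_i}}$ contributes $0$ by \eqref{Ec::ym1}; at $\sigma_{\epsilon_j}$ only $x_M^{\sigma_{\epsilon_j}}$ survives, giving $c_{\epsilon_j}$; the conditions at $t=b$ follow symmetrically from \eqref{Ec::xm1}. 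Uniqueness is immediate: the difference of two solutions lies in $X_{\{\sigma_1,\dots,\sigma_k\}}^{\{\varepsilon_1,\dots,\varepsilon_{n-k}\}}$ and is annihilated by $T_n[M]$, hence vanishes by hypothesis.

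I do not expect a genuine obstacle here: the statement is essentially bookkeeping on top of Lemma \ref{L::14}. The only point requiring care is the invertibility argument for the auxiliary problems, where one must make sure that exactly $n$ scalar conditions are prescribed in \eqref{Ec::xm1} and \eqref{Ec::ym1} and that the associated homogeneous system is precisely the one excluded by hypothesis; once that is phrased correctly, formula \eqref{Ec::solnh} drops out by linear superposition exactly as in the case $\ell=h=1$ treated in Lemma \ref{L::14}.
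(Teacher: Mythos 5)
Your argument is correct and is exactly the superposition reasoning the paper has in mind: the paper states this lemma (like Lemma \ref{L::14}) without proof, treating it as immediate from linearity, the defining properties of $g_M(t,s)$, and the fact that the non-degeneracy of the homogeneous problem \eqref{Ec::T_n[M]}--\eqref{Ec::cfb} makes the boundary-value map on the $n$-dimensional kernel of $T_n[M]$ an isomorphism, which is precisely what you verify. The only cosmetic point is that the boundary conditions at $t=b$ are checked using both \eqref{Ec::xm1} and \eqref{Ec::ym1} (the $z_M^{\varepsilon_{\kappa_i}}$ supply the values $d_{\kappa_i}$), but your argument clearly covers this.
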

		
		We have the following results, which ensure the existence of the different eigenvalues.
		
		\begin{lemma}\label{L::Td}
		{\color{white}.}
			\begin{itemize}
				\item If $\sigma_{\epsilon_j}>\alpha$, then $\tilde T_n[0]$ satisfies the property $(T_d)$ in $X_{\{\sigma_1,\dots,\sigma_{\epsilon_j-1},\sigma_{\epsilon_j+1},\dots,\sigma_k|\alpha\}}^{\{\varepsilon_1,\dots,\varepsilon_{n-k}\}}$.
				\item 	Operator $\tilde T_n[0]$ satisfies the property $(T_d)$ in $X_{\{\sigma_1,\dots,\sigma_{\epsilon_j-1},\sigma_{\epsilon_j+1},\dots,\sigma_k\}}^{\{\varepsilon_1,\dots,\varepsilon_{n-k}|\beta\}}$.
				\item  If $\varepsilon_{\kappa_i}>\beta$, then $\tilde T_n[0]$ satisfies the property $(T_d)$ in $X_{\{\sigma_1,\dots,\sigma_k\}}^{\{\varepsilon_1,\dots,\varepsilon_{\kappa_i-1},\varepsilon_{\kappa_i+1},\dots,\varepsilon_{n-k}|\beta\}}.$ 
					\item Operator $\tilde T_n[0]$ satisfies the property $(T_d)$ in $X_{\{\sigma_1,\dots,\sigma_k|\alpha\}}^{\{\varepsilon_1,\dots,\varepsilon_{\kappa_i-1},\varepsilon_{\kappa_i+1},\dots,\varepsilon_{n-k}\}}$.
			\end{itemize}
		\end{lemma}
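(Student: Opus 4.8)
\textbf{Proof proposal for Lemma \ref{L::Td}.}

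The plan is to reduce each of the four assertions to the fact, already established, that $\tilde T_n[0]$ satisfies property $(T_d)$ in $X_{\{\sigma_1,\dots,\sigma_k\}}^{\{\varepsilon_1,\dots,\varepsilon_{n-k}\}}$ together with the structural information on the decomposition collected before the statement. Recall that by construction $\tilde v_1=\cdots=\tilde v_{\mu+1}=1$ and $\tilde v_{\mu+2}=v_1,\dots,\tilde v_n=v_{n-\mu-1}$, so that $\tilde T_\ell u(t)=u^{(\ell)}(t)$ for $\ell\le \mu+1$, while for $h>\mu+1$ one has the expansion
\[
\tilde T_h u(t)=\dfrac{u^{(h)}(t)}{v_1(t)\cdots v_h(t)}+{p_h}_1(t)\,u^{(h-1)}(t)+\cdots+{p_h}_{h-\mu-1}(t)\,u^{(\mu+1)}(t),
\]
where the coefficients ${p_h}_i$ obey the recurrences \eqref{Ec::pl1}--\eqref{Ec::pll}. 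The key point is that $\tilde T_h u(t)$ is a linear combination of $u^{(\mu+1)}(t),\dots,u^{(h)}(t)$ only; no derivative below order $\mu+1$ appears. Hence once the relevant derivatives of $u$ at $a$ (respectively $b$) up to some order $\ge\mu$ vanish, all the corresponding operators $\tilde T_\ell$ vanish there as well. This is exactly the mechanism used in the proof of the previous proposition and also the reason Lemmas \ref{L::1} and \ref{L::2} hold.

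Now I would treat the four items in turn. For the first, $X_{\{\sigma_1,\dots,\sigma_{\epsilon_j-1},\sigma_{\epsilon_j+1},\dots,\sigma_k|\alpha\}}^{\{\varepsilon_1,\dots,\varepsilon_{n-k}\}}$: by Definition \ref{Def::SE} we have $\sigma_{\epsilon_{\ell-1}}<\mu$, hence the only index we are dropping from the left-hand side, $\sigma_{\epsilon_j}$, is either $<\mu$ (when $\epsilon_j<\ell$) or equal to $\sigma_k$. Using the hypothesis $\sigma_{\epsilon_j}>\alpha$ we also know $\alpha\le\mu$ (indeed $\alpha\le\alpha_2\le\mu$ unless $\sigma_k=k-1$, and in that last situation $\alpha=k=\sigma_k+1$ forces $\sigma_{\epsilon_j}=\sigma_k<\alpha$, contradicting $\sigma_{\epsilon_j}>\alpha$, so this case does not occur). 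For $u\in X_{\{\sigma_1,\dots,\sigma_{\epsilon_j-1},\sigma_{\epsilon_j+1},\dots,\sigma_k|\alpha\}}^{\{\varepsilon_1,\dots,\varepsilon_{n-k}\}}$ the set of vanishing conditions at $a$ is $\{\sigma_1,\dots,\sigma_k\}\setminus\{\sigma_{\epsilon_j}\}\cup\{\alpha\}$; since $\alpha\notin\{\sigma_1,\dots,\sigma_k\}$ and $\{0,\dots,\alpha-1\}\subset\{\sigma_1,\dots,\sigma_k\}$, this new set again contains a full initial block of the form $\{0,\dots,\mu\}$ once $\alpha$ is inserted, so for every index $\sigma\in\{\sigma_1,\dots,\sigma_k\}\setminus\{\sigma_{\epsilon_j}\}\cup\{\alpha\}$ the derivative $u^{(\sigma)}(a)=0$ and, by the displayed structure of $\tilde T_\sigma$, also $\tilde T_\sigma u(a)=0$ (when $\sigma\le\mu+1$ this is immediate; when $\sigma>\mu+1$ all of $u^{(\mu+1)}(a),\dots,u^{(\sigma)}(a)$ vanish because $\{0,\dots,\sigma\}$ lies in the vanishing set). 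At $b$ nothing changes, so $\tilde T_{\varepsilon_i}u(b)=0$ for all $i$ by the same argument used in the previous proposition. This gives $(T_d)$ for the first space. Items two, three and four are handled identically, exchanging the roles of $a$ and $b$ and of $\alpha$ and $\beta$; in items two and four no extra hypothesis on $\sigma_{\epsilon_j}$ or $\varepsilon_{\kappa_i}$ is needed because we only drop one index from the $a$-block (resp. $b$-block) while the ends still contain $\{0,\dots,\mu\}$ by Definition \ref{Def::SE}, and in item three the condition $\varepsilon_{\kappa_i}>\beta$ plays the same role on the $b$-side that $\sigma_{\epsilon_j}>\alpha$ played on the $a$-side.

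The main obstacle I anticipate is purely bookkeeping: one must verify carefully in each case that the modified index set still contains the initial segment $\{0,1,\dots,\mu\}$, so that every surviving boundary index $\sigma$ (or $\varepsilon$) has all derivatives of orders $\mu+1,\dots,\sigma$ forced to vanish at the relevant endpoint, which is precisely what makes $\tilde T_\sigma u$ (resp.\ $\tilde T_\varepsilon u$) vanish there. The delicate sub-point is checking that dropping $\sigma_{\epsilon_j}$ cannot destroy this initial segment — this is where $\sigma_{\epsilon_{\ell-1}}<\mu$ from Definition \ref{Def::SE} and the hypothesis $\sigma_{\epsilon_j}>\alpha$ (resp. $\varepsilon_{\kappa_i}>\beta$) are genuinely used: they guarantee that the index being removed sits strictly above $\mu$ or is reinserted as $\alpha$ (resp. $\beta$), so the block below $\mu$ is untouched. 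Once that is confirmed the conclusion follows verbatim from the argument already given for the defining space, so I would present the proof of item one in full and then say the remaining three are analogous.
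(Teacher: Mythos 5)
Your overall route is the same as the paper's: you verify the $(T_d)$ boundary conditions for $\tilde T_n[0]$ directly from the decomposition $\tilde v_1=\cdots=\tilde v_{\mu+1}=1$, $\tilde v_{\mu+2}=v_1,\dots,\tilde v_n=v_{n-\mu-1}$, using that $\tilde T_\ell u=u^{(\ell)}$ for $\ell\le\mu+1$ and that $\tilde T_h u$ involves only $u^{(\mu+1)},\dots,u^{(h)}$ for $h>\mu+1$. However, the bookkeeping step you yourself single out as the crux is stated incorrectly. The modified vanishing set at $a$, namely $\bigl(\{\sigma_1,\dots,\sigma_k\}\setminus\{\sigma_{\epsilon_j}\}\bigr)\cup\{\alpha\}$, does \emph{not} in general contain the initial segment $\{0,\dots,\mu\}$, and a fortiori the assertion that ``$\{0,\dots,\sigma\}$ lies in the vanishing set'' is false: inserting $\alpha$ only fills the lowest gap of $\{\sigma_1,\dots,\sigma_k\}$, while further gaps may occur between $\alpha$ and $\alpha_2\le\mu$. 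For instance, with $n=7$, $\{\sigma_1,\dots,\sigma_4\}=\{0,2,5,6\}$, $\{\varepsilon_1,\varepsilon_2,\varepsilon_3\}=\{0,1,2\}$ one has $\alpha=1$, $\alpha_2=4$, $\beta_2=-1$, $\mu=4$; removing $\sigma_{\epsilon_j}=2$ (which is admissible, $2<\mu$ and $2>\alpha$) and adding $\alpha=1$ gives the set $\{0,1,5,6\}$, which misses $3$ and $4$. The conclusion $\tilde T_6u(a)=0$ is still true, but not for the reason you give.

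The correct justification --- which is what the paper's proof compresses into ``by the definition of $\mu$'' --- runs as follows: any surviving boundary index $\sigma>\mu$ lies in the final consecutive block $\{\alpha_2+1,\dots,\sigma_k\}\subset\{\sigma_1,\dots,\sigma_k\}$, because $\mu\ge\alpha_2$; by Definition \ref{Def::SE} the removed index $\sigma_{\epsilon_j}$ is either strictly below $\mu$ or equal to $\sigma_k$, so the block of conditions $\{\mu+1,\dots,\sigma\}$ survives intact in the new space, and since $\tilde T_\sigma u$ is a combination of $u^{(\mu+1)},\dots,u^{(\sigma)}$ only, it vanishes at $a$; the indices $\sigma\le\mu+1$, as well as the inserted index $\alpha$ (which under the hypothesis $\sigma_{\epsilon_j}>\alpha$ satisfies $\alpha\le\alpha_2\le\mu$), are handled immediately because there $\tilde T_\sigma u=u^{(\sigma)}$. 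Your closing remark that the removed index ``sits strictly above $\mu$ or is reinserted as $\alpha$'' has the situation backwards: by Definition \ref{Def::SE} it sits strictly \emph{below} $\mu$ or equals $\sigma_k$. The same block argument, with $\beta_2$ and the set $\{\beta_2+1,\dots,\varepsilon_{n-k}\}$ at $t=b$, disposes of the remaining three items. With this repair your argument coincides with the paper's proof; as written, the containment you rely on fails.
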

		
		\begin{proof} Let us see the different cases:
			\begin{itemize} 
				
			\item If $\sigma_j<\mu$, then $T_{\sigma_j}u(a)=u^{(\sigma_j)}(a)=0$.
			
			\item  If $\sigma_j=\sigma_k$, then $T_{\sigma_k}u(a)=0$, by the definition of $\mu$.
			
			\item If $\varepsilon_i<\mu$, then $T_{\varepsilon_i}u(b)=u^{(\varepsilon_i)}(b)=0$.
			
			\item  If $\varepsilon_i=\varepsilon_{n-k}$, then $T_{\varepsilon_{n-k}}u(b)=0$, by the definition of $\mu$.
			
			\item If $u\in X_{\{\sigma_1,\dots,\sigma_{\epsilon_j-1},\sigma_{\epsilon_j+1},\dots,\sigma_k\}}^{\{\varepsilon_1,\dots,\varepsilon_{n-k}|\beta\}}$ or $u\in X_{\{\sigma_1,\dots,\sigma_{\epsilon_j-1},\sigma_{\epsilon_j+1},\dots,\sigma_k|\alpha\}}^{\{\varepsilon_1,\dots,\varepsilon_{n-k}\}}$ and $\sigma_{\epsilon_j}>\alpha$, then $T_\alpha u(a)=\dfrac{1}{v_1(a)\,\dots\,v_\alpha(a)}u^{(\alpha)}(a)=0$.

			\item Analogously, if either $u\in X_{\{\sigma_1,\dots,\sigma_k|\alpha\}}^{\{\varepsilon_1,\dots,\varepsilon_{\kappa_i-1},\varepsilon_{\kappa_i+1},\dots,\varepsilon_{n-k}\}}$ or  $\varepsilon_{\kappa_i}>\beta$ and $u\in X_{\{\sigma_1,\dots,\sigma_k\}}^{\{\varepsilon_1,\dots,\varepsilon_{\kappa_i-1},\varepsilon_{\kappa_i+1},\dots,\varepsilon_{n-k}|\beta\}}$, then $T_\beta u(b)=\dfrac{1}{v_1(b)\,\dots\,v_\alpha(b)}u^{(\beta)}(b)=0$.		
			\end{itemize}
		\end{proof}
		
		\begin{remark}\label{R::16}
			Realize that if we can prove that either $T_{\sigma_j}u(a)=u^{(\sigma_j)}(a)$ or $T_{\varepsilon_i}u(b)=u^{(\varepsilon_i)}(b)$, we do not need the assumption that $\sigma_j<\mu$ or $\varepsilon_i<\mu$  given by the choice of $\{\epsilon_1,\dots,\epsilon_\ell\}$ and $\{\kappa_1,\dots,\kappa_h\}$ on Definition \ref{Def::SE}.
			
			This is true, in particular, if we can choose on decomposition \eqref{Ec::Td1}-\eqref{Ec::Td2}, $v_1\equiv\cdots\equiv v_{\sigma_j}\equiv 1$ or $v_1\equiv\cdots\equiv v_{\varepsilon_i}\equiv 1$. We note that such a choice is valid for the operator $T_n^0[M]=u^{(n)}(t)+M\,u(t)$, where we can choose $v_1\equiv\cdots\equiv v_n\equiv 1$.
			
			The following results are also true under the hypothesis of this remark.
		\end{remark}
		\begin{lemma}{\color{white}.}
			\begin{itemize}
				\item Let $n-k$ be even, then the following assertions are satisfied:
				\begin{itemize}
					\item If $\sigma_{\epsilon_j}>\alpha$, then there exists $\lambda_{\sigma_{\epsilon_j}}^1>0$, the least positive eigenvalue of $\tilde T_n[0]$ in $ X_{\{\sigma_1,\dots,\sigma_{\epsilon_j-1},\sigma_{\epsilon_j+1},\dots,\sigma_k|\alpha\}}^{\{\varepsilon_1,\dots,\varepsilon_{n-k}\}}$.
					\item There exists $\lambda_{\sigma_{\epsilon_j}}^2<0$ the biggest negative eigenvalue of $\tilde T_n[0]$ in $X_{\{\sigma_1,\dots,\sigma_{\epsilon_j-1},\sigma_{\epsilon_j+1},\dots,\sigma_k\}}^{\{\varepsilon_1,\dots,\varepsilon_{n-k}|\beta\}}$.
					\item If $\varepsilon_{\kappa_i}>\beta$, then there exists $\lambda_{\varepsilon_{\kappa_i}}^1>0$, the least positive eigenvalue of $\tilde T_n[0]$ in $X_{\{\sigma_1,\dots,\sigma_k\}}^{\{\varepsilon_1,\dots,\varepsilon_{\kappa_i-1},\varepsilon_{\kappa_i+1},\dots,\varepsilon_{n-k}|\beta\}}$.
					\item There exists $\lambda_{\varepsilon_{\kappa_i}}^2<0$ the biggest negative eigenvalue of $\tilde T_n[0]$ in $X_{\{\sigma_1,\dots,\sigma_k|\alpha\}}^{\{\varepsilon_1,\dots,\varepsilon_{\kappa_i-1},\varepsilon_{\kappa_i+1},\dots,\varepsilon_{n-k}\}}$.
				\end{itemize}
					\item Let $n-k$ be odd, then the following assertions are satisfied:
					\begin{itemize}
						\item If $\sigma_{\epsilon_j}>\alpha$, then there exists $\lambda_{\sigma_{\epsilon_j}}^1<0$, the biggest negative eigenvalue of $\tilde T_n[0]$ in $ X_{\{\sigma_1,\dots,\sigma_{\epsilon_j-1},\sigma_{\epsilon_j+1},\dots,\sigma_k|\alpha\}}^{\{\varepsilon_1,\dots,\varepsilon_{n-k}\}}$.
						\item There exists $\lambda_{\sigma_{\epsilon_j}}^2>0$, the least positive eigenvalue of operator $\tilde T_n[0]$ in $X_{\{\sigma_1,\dots,\sigma_{\epsilon_j-1},\sigma_{\epsilon_j+1},\dots,\sigma_k\}}^{\{\varepsilon_1,\dots,\varepsilon_{n-k}|\beta\}}$.
						\item If $\varepsilon_{\kappa_i}>\beta$, then there exists $\lambda_{\varepsilon_{\kappa_i}}^1<0$, the biggest negative eigenvalue of $\tilde T_n[0]$ in $X_{\{\sigma_1,\dots,\sigma_k\}}^{\{\varepsilon_1,\dots,\varepsilon_{\kappa_i-1},\varepsilon_{\kappa_i+1},\dots,\varepsilon_{n-k}|\beta\}}$.
						\item There exists $\lambda_{\varepsilon_{\kappa_i}}^2>0$, the least positive eigenvalue of operator $\tilde T [0]$ in $X_{\{\sigma_1,\dots,\sigma_k|\alpha\}}^{\{\varepsilon_1,\dots,\varepsilon_{\kappa_i-1},\varepsilon_{\kappa_i+1},\dots,\varepsilon_{n-k}\}}$.
					\end{itemize}
			\end{itemize}
		\end{lemma}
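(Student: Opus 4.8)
The plan is to combine the Proposition just proved (disconjugacy of $L_{n-\mu-1}$ implies that $\tilde T_n[0]$ satisfies $(T_d)$) with Lemma \ref{L::Td} and the first assertions of Theorems \ref{T::6} and \ref{T::7}, arguing exactly as in the proof of Corollary \ref{C::1}. First I would record that the Proposition above gives property $(T_d)$ for $\tilde T_n[0]$ in $X_{\{\sigma_1,\dots,\sigma_k\}}^{\{\varepsilon_1,\dots,\varepsilon_{n-k}\}}$, so that Lemma \ref{L::Td} transports $(T_d)$ to each of the four spaces listed in the statement (under the side hypotheses $\sigma_{\epsilon_j}>\alpha$, resp. $\varepsilon_{\kappa_i}>\beta$, where these are required). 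Next I would check that the index pair of each of these four spaces still satisfies property $(N_a)$: this uses that $\{\sigma_1,\dots,\sigma_k\}-\{\varepsilon_1,\dots,\varepsilon_{n-k}\}$ satisfy $(N_a)$, together with the fact that $\alpha$ and $\beta$ fill the first ``gaps'' of $\{\sigma_j\}$ and $\{\varepsilon_j\}$, and the restrictions $\sigma_{\epsilon_{\ell-1}}<\mu$, $\varepsilon_{\kappa_{h-1}}<\mu$ coming from Definition \ref{Def::SE}. By Lemma \ref{L::11}, $(N_a)$ in a space with $(T_d)$ forces $M=0$ not to be an eigenvalue there, hence $\tilde T_n[0]$ is invertible in each of the four spaces.

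Once $(T_d)$ and $(N_a)$ are available in a space $X_A^B$, Theorem \ref{L::5} applies with the roles of $k$ and $n-k$ played by $|A|$ and $|B|$: if $|B|$ (the number of conditions imposed at $t=b$) is even the related Green's function satisfies $(P_g)$, and if it is odd it satisfies $(N_g)$. Then Theorem \ref{T::6} in the $(P_g)$ case, or Theorem \ref{T::7} in the $(N_g)$ case, yields the least eigenvalue in absolute value of $\tilde T_n[0]$ in $X_A^B$, which is positive in the $(P_g)$ case and negative in the $(N_g)$ case; this is precisely the existence assertion we need.

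It then remains to match parities. For $X_{\{\sigma_1,\dots,\sigma_{\epsilon_j-1},\sigma_{\epsilon_j+1},\dots,\sigma_k|\alpha\}}^{\{\varepsilon_1,\dots,\varepsilon_{n-k}\}}$ and $X_{\{\sigma_1,\dots,\sigma_k\}}^{\{\varepsilon_1,\dots,\varepsilon_{\kappa_i-1},\varepsilon_{\kappa_i+1},\dots,\varepsilon_{n-k}|\beta\}}$ the set of conditions at $t=b$ has $n-k$ elements, so its parity is that of $n-k$; hence $\lambda^1_{\sigma_{\epsilon_j}}$ and $\lambda^1_{\varepsilon_{\kappa_i}}$ carry the sign of $(-1)^{n-k}$ (positive for $n-k$ even, negative for $n-k$ odd). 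For $X_{\{\sigma_1,\dots,\sigma_{\epsilon_j-1},\sigma_{\epsilon_j+1},\dots,\sigma_k\}}^{\{\varepsilon_1,\dots,\varepsilon_{n-k}|\beta\}}$ and $X_{\{\sigma_1,\dots,\sigma_k|\alpha\}}^{\{\varepsilon_1,\dots,\varepsilon_{\kappa_i-1},\varepsilon_{\kappa_i+1},\dots,\varepsilon_{n-k}\}}$ that set has $n-k+1$, resp. $n-k-1$, elements, i.e.\ the opposite parity, so $\lambda^2_{\sigma_{\epsilon_j}}$ and $\lambda^2_{\varepsilon_{\kappa_i}}$ carry the sign of $(-1)^{n-k-1}$. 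This reproduces exactly the sign pattern stated in the lemma in the two cases $n-k$ even and $n-k$ odd, and the proof is complete.

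The step I expect to be the main obstacle is the $(N_a)$ bookkeeping for the four modified index pairs. Conceptually it is the same as the corresponding step in Corollary \ref{C::1}, but here one must additionally exploit $\sigma_{\epsilon_{\ell-1}}<\mu$ and $\varepsilon_{\kappa_{h-1}}<\mu$ from Definition \ref{Def::SE} to be sure that deleting $\sigma_{\epsilon_j}$ or $\varepsilon_{\kappa_i}$ and inserting $\alpha$ or $\beta$ never violates the inequality \eqref{Ec::Na} at an intermediate level $h$; everything else is a routine application of the already-proven results.
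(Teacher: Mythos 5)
Your proposal is correct and follows essentially the same route as the paper: the paper's own (very terse) proof also observes that $(N_a)$ carries over to the four modified index sets, invokes Lemma \ref{L::Td} for property $(T_d)$, and then applies Theorem \ref{L::5} together with Theorems \ref{T::6} and \ref{T::7}, which is exactly your parity bookkeeping on the number of conditions at $t=b$. The $(N_a)$ step you flag as the main obstacle is in fact immediate (for levels $h\leq\beta$ the $b$-count alone already equals $h$, and elsewhere deleting $\sigma_{\epsilon_j}$ is compensated by inserting $\alpha$ or $\beta$), which is why the paper dispatches it in one line.
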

		\begin{proof}
			Since $\{\sigma_1,\dots,\sigma_k\}-\{\varepsilon_1,\dots,\varepsilon_{n-k}\}$ satisfy property $(N_a)$, this property is satisfied in all the spaces involved in the result. 
			
			Moreover, from Lemma \ref{L::Td}, the property $(T_d)$ is also satisfied. Then, by applying Theorems \ref{L::5}, \ref{T::6} and \ref{T::7}, the result is proved.			
		\end{proof}
		
		Now, let us see two results which allow us to ensure that functions $x_M^{\sigma_{\epsilon_j}}$ and $z_M^{\varepsilon_{\kappa_i}}$  are of constant sign for suitable values of $M$.
		
		\begin{proposition}\label{P::nh1}
			Let $u\in C^n(I)$ be a solution of $\tilde{T}[M]\,u(t)=0$ for  $t\in (a,b)$, which satisfies the boundary conditions 
				\begin{equation}
				\label{Ec::cfxm1}\left\lbrace \begin{array}{c}	
				u^{(\sigma_1)}(a)=\dots=u^{(\sigma_{\epsilon_j-1})}(a)=u^{(\sigma_{\epsilon_j+1})}(a)=\cdots=u^{(\sigma_k)}(a)=0\,,\\\\ u^{(\varepsilon_1)}(b)=\cdots=u^{(\varepsilon_{n-k})}(b)=0\,.\end{array} \right. 
				\end{equation}
				
				Then, the function $u$ does not have any zero on $(a,b)$ provided that one of the following assertions are fulfilled:
				\begin{itemize}
					\item If $n-k$ is even, $k>1$, $\sigma_{\epsilon_j}>\alpha$ and $M\in\left[-\lambda_{\sigma_{\epsilon_j}}^1,-\lambda_{\sigma_{\epsilon_j}}^2\right]$, where
					\begin{itemize}
						\item[*] $\lambda_{\sigma_{\epsilon_j}}^1>0$ is the least positive eigenvalue of $\tilde T_n[0]$ in $ X_{\{\sigma_1,\dots,\sigma_{\epsilon_j-1},\sigma_{\epsilon_j+1},\dots,\sigma_k|\alpha\}}^{\{\varepsilon_1,\dots,\varepsilon_{n-k}\}}$,
						\item[*] $\lambda_{\sigma_{\epsilon_j}}^2<0$ is the biggest negative eigenvalue of $\tilde T_n[0]$ in $X_{\{\sigma_1,\dots,\sigma_{\epsilon_j-1},\sigma_{\epsilon_j+1},\dots,\sigma_k\}}^{\{\varepsilon_1,\dots,\varepsilon_{n-k}|\beta\}}$.
					\end{itemize}
					
					\item If $n-k$ is even, $k>1$, $\sigma_{\epsilon_j}<\alpha$ and $M\in\left[-\lambda_1,-\lambda_{\sigma_{\epsilon_j}}^2\right]$, where
						\begin{itemize}
							\item[*] $\lambda_1>0$ is the least positive eigenvalue of $\tilde T_n[0]$ in $ X_{\{\sigma_1,\dots,\sigma_k\}}^{\{\varepsilon_1,\dots,\varepsilon_{n-k}\}}$,
							\item[*] $\lambda_{\sigma_{\epsilon_j}}^2<0$ is the biggest negative eigenvalue of $\tilde T_n[0]$ in $X_{\{\sigma_1,\dots,\sigma_{\epsilon_j-1},\sigma_{\epsilon_j+1},\dots,\sigma_k\}}^{\{\varepsilon_1,\dots,\varepsilon_{n-k}|\beta\}}$.
						\end{itemize}
						\item If $k=1$, $n$ odd, $\sigma_{\epsilon_j}>\alpha$ and $M\in\left[-\lambda_{\sigma_{\epsilon_j}}^1,+\infty\right)$, where
						\begin{itemize}
							\item[*] $\lambda_{\sigma_{\epsilon_j}}^1>0$ is the least positive eigenvalue of $\tilde T_n[0]$ in $ X_{\{\sigma_1,\dots,\sigma_{\epsilon_j-1},\sigma_{\epsilon_j+1},\dots,\sigma_k|\alpha\}}^{\{\varepsilon_1,\dots,\varepsilon_{n-k}\}}$.
						\end{itemize}
					\item If $k=1$, $n$ odd, $\sigma_{\epsilon_j}<\alpha$ and $M\in\left[-\lambda_1,+\infty\right)$, where
					\begin{itemize}
						\item[*] $\lambda_1>0$ is the least positive eigenvalue of $\tilde T_n[0]$ in $ X_{\{\sigma_1,\dots,\sigma_k\}}^{\{\varepsilon_1,\dots,\varepsilon_{n-k}\}}$.
				
					\end{itemize}
						\item If $n-k$ is odd, $k>1$, $\sigma_{\epsilon_j}>\alpha$ and $M\in\left[-\lambda_{\sigma_{\epsilon_j}}^2,-\lambda_{\sigma_{\epsilon_j}}^1\right]$, where
						\begin{itemize}
							\item[*] $\lambda_{\sigma_{\epsilon_j}}^2>0$ is the least positive eigenvalue of $\tilde T_n[0]$ in $X_{\{\sigma_1,\dots,\sigma_{\epsilon_j-1},\sigma_{\epsilon_j+1},\dots,\sigma_k\}}^{\{\varepsilon_1,\dots,\varepsilon_{n-k}|\beta\}}$,
							\item[*] $\lambda_{\sigma_{\epsilon_j}}^1<0$ is the biggest negative eigenvalue of $\tilde T_n[0]$ in $ X_{\{\sigma_1,\dots,\sigma_{\epsilon_j-1},\sigma_{\epsilon_j+1},\dots,\sigma_k|\alpha\}}^{\{\varepsilon_1,\dots,\varepsilon_{n-k}\}}$.
						\end{itemize}
						
						\item If $n-k$ is odd, $k>1$, $\sigma_{\epsilon_j}<\alpha$ and $M\in\left[-\lambda_{\sigma_{\epsilon_j}}^2,-\lambda_1\right]$, where
						\begin{itemize}
							\item[*] $\lambda_{\sigma_{\epsilon_j}}^2>0$ is the least positive eigenvalue of $\tilde T_n[0]$ in $X_{\{\sigma_1,\dots,\sigma_{\epsilon_j-1},\sigma_{\epsilon_j+1},\dots,\sigma_k\}}^{\{\varepsilon_1,\dots,\varepsilon_{n-k}|\beta\}}$,
							\item[*] $\lambda_1<0$ is the biggest negative eigenvalue of $\tilde T_n[0]$ in $ X_{\{\sigma_1,\dots,\sigma_k\}}^{\{\varepsilon_1,\dots,\varepsilon_{n-k}\}}$.
						\end{itemize}
						\item If $k=1$, $n$ odd, $\sigma_{\epsilon_j}>\alpha$ and $M\in\left(-\infty,-\lambda_{\sigma_{\epsilon_j}}^1\right]$, where
						\begin{itemize}
							\item[*] $\lambda_{\sigma_{\epsilon_j}}^1<0$ is the biggest negative eigenvalue of $\tilde T_n[0]$ in $ X_{\{\sigma_1,\dots,\sigma_{\epsilon_j-1},\sigma_{\epsilon_j+1},\dots,\sigma_k|\alpha\}}^{\{\varepsilon_1,\dots,\varepsilon_{n-k}\}}$.
						\end{itemize}
						\item If $k=1$, $n$ odd, $\sigma_{\epsilon_j}<\alpha$ and $M\in\left(-\infty,-\lambda_1\right]$, where
						\begin{itemize}
							\item[*] $\lambda_1<0$ is the biggest negative eigenvalue of $\tilde T_n[0]$ in $ X_{\{\sigma_1,\dots,\sigma_k\}}^{\{\varepsilon_1,\dots,\varepsilon_{n-k}\}}$.
							
						\end{itemize}
				\end{itemize}
		\end{proposition}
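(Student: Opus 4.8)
The plan is to adapt the proof of Proposition \ref{P::1} to the present situation, the only difference being that the ``freed'' boundary condition at $t=a$ is now the one associated with the index $\sigma_{\epsilon_j}$ rather than $\sigma_k$. What makes this adaptation possible is that, for the particular operator $\tilde T_n[0]$, the decomposition \eqref{Ec::Td1}--\eqref{Ec::Td2} may be chosen with $v_1\equiv\cdots\equiv v_{\mu+1}\equiv 1$, so that $T_\ell\,u(a)=u^{(\ell)}(a)$ for every $\ell\le\mu+1$ (see Remark \ref{R::16} and the proof of Lemma \ref{L::Td}). Hence, when $\sigma_{\epsilon_j}<\mu$ one has $T_{\sigma_{\epsilon_j}}\,u(a)=u^{(\sigma_{\epsilon_j})}(a)$, while the remaining conditions $u^{(\sigma_i)}(a)=0$ become $T_{\sigma_i}\,u(a)=0$: for $\sigma_i\le\mu$ this is immediate, and for $\sigma_i>\mu$ it follows from the expression \eqref{Ec::Tl} of $T_{\sigma_i}$, since then $\mu+1,\dots,\sigma_i$ all lie in $\{\alpha_2+1,\dots,\sigma_k\}\subset\{\sigma_1,\dots,\sigma_k\}$ and are distinct from $\sigma_{\epsilon_j}$, so that $u^{(\mu+1)}(a)=\cdots=u^{(\sigma_i)}(a)=0$; similarly $T_{\varepsilon_i}\,u(b)=0$ for all $i$. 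When $j=\ell$, i.e.\ $\sigma_{\epsilon_j}=\sigma_k$, the statement is exactly Proposition \ref{P::1} applied to $\tilde T_n[0]$ with $\bar M=0$, so it suffices to treat the case $j<\ell$.

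Granting this translation, I would first settle $M=0$: since $\tilde T_n[0]$ is disconjugate, a nontrivial solution of $\tilde T_n[0]\,u=0$ has at most $n-1$ zeros on $I$ (as in the proof of Lemma \ref{L::11}), and the $n-1$ boundary conditions \eqref{Ec::cfxm1}, translated as above, make $u$ lose all $n-1$ possible oscillations; hence $u$ has no zero on $(a,b)$. Next I would move $M$ continuously away from $0$, writing $\tilde T_n[0]\,u_M=-M\,u_M$ on $(a,b)$. Exactly as in Proposition \ref{P::1}, while $u_M$ keeps a constant sign $\tilde T_n[0]\,u_M$ has a constant sign, so the iterated decomposition yields that $T_{n-\ell}\,u_M$ has at most $\ell$ zeros on $I$, hence $u_M$ has at most $n$ zeros, and, combined with the $n-1$ conditions, $u_M$ may have at most a simple zero on $(a,b)$, which is impossible while $u_M$ is of constant sign. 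Consequently $u_M$ can only begin to change sign when a zero reaches an endpoint, and --- unless some $T_h\,u_M(a)$ or $T_h\,u_M(b)$ with $h$ outside the prescribed indices vanishes, which would force $u_M\equiv0$ --- this first happens when $u_M^{(\beta)}(b)=0$, or $u_M^{(\alpha)}(a)=0$ if $\sigma_{\epsilon_j}>\alpha$, or $u_M^{(\sigma_{\epsilon_j})}(a)=0$ if $\sigma_{\epsilon_j}<\alpha$ (note $\alpha$ is the smallest non-prescribed index at $a$ exactly when $\sigma_{\epsilon_j}>\alpha$, and $\sigma_{\epsilon_j}$ itself otherwise).

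The third ingredient is the sign bookkeeping, done separately for $M<0$ (then $\tilde T_n[0]\,u_M\ge0$) and $M>0$ (then $\tilde T_n[0]\,u_M\le0$), in the same way as in Proposition \ref{P::1}: tracking the sign changes of $T_{n-\ell}\,u_M(a)$ forced by maximal oscillation, counting the $k-\alpha-1$ (resp.\ $k-\epsilon_j$) prescribed zeros of $T_h\,u_M(a)$ for $h$ above $\alpha$ (resp.\ above $\sigma_{\epsilon_j}$), and then passing via \eqref{Ec::Tl}--\eqref{Ec::Tab} from $T_\alpha\,u_M(a)$, $T_{\sigma_{\epsilon_j}}\,u_M(a)$, $T_\beta\,u_M(b)$ to $u_M^{(\alpha)}(a)$, $u_M^{(\sigma_{\epsilon_j})}(a)$, $u_M^{(\beta)}(b)$. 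One then reads off that, when $\sigma_{\epsilon_j}>\alpha$, the vanishing of $u_M^{(\alpha)}(a)$ makes $u_M$ an eigenfunction of $\tilde T_n[0]$ in $X_{\{\sigma_1,\dots,\sigma_{\epsilon_j-1},\sigma_{\epsilon_j+1},\dots,\sigma_k|\alpha\}}^{\{\varepsilon_1,\dots,\varepsilon_{n-k}\}}$ and the vanishing of $u_M^{(\beta)}(b)$ makes it an eigenfunction in $X_{\{\sigma_1,\dots,\sigma_{\epsilon_j-1},\sigma_{\epsilon_j+1},\dots,\sigma_k\}}^{\{\varepsilon_1,\dots,\varepsilon_{n-k}|\beta\}}$, whereas when $\sigma_{\epsilon_j}<\alpha$ the vanishing of $u_M^{(\sigma_{\epsilon_j})}(a)$ makes it an eigenfunction in the full space $X_{\{\sigma_1,\dots,\sigma_k\}}^{\{\varepsilon_1,\dots,\varepsilon_{n-k}\}}$; these are precisely the endpoints $-\lambda_{\sigma_{\epsilon_j}}^1$, $-\lambda_{\sigma_{\epsilon_j}}^2$, $-\lambda_1$ appearing in the statement (with the obvious interchange of signs according to the parity of $n-k$, and with the endpoint pushed to $\pm\infty$ when $k=1$ so that no condition remains at $t=a$, exactly as in Proposition \ref{P::1}). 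The main obstacle is purely organizational --- keeping the parity/case analysis straight --- since the underlying mechanism is identical to that of Proposition \ref{P::1}; accordingly I would write the proof by pointing out the differences rather than reproducing the whole argument.
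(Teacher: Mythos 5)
Your proposal follows essentially the same route as the paper's own proof: translate the boundary conditions \eqref{Ec::cfxm1} into conditions on the quantities $\tilde T_\ell u$ at $a$ and $b$ (using that $v_1\equiv\cdots\equiv v_{\mu+1}\equiv1$ for $\tilde T_n[0]$), settle $M=0$ by the disconjugacy/oscillation count, then move $M$ continuously, rule out double zeros while $u_M$ has constant sign, and do the maximal-oscillation parity bookkeeping separately for $M\gtrless0$ to identify the first admissible endpoint vanishing ($u_M^{(\alpha)}(a)$, $u_M^{(\sigma_{\epsilon_j})}(a)$ or $u_M^{(\beta)}(b)$) with the eigenvalues $\lambda^1_{\sigma_{\epsilon_j}}$, $\lambda_1$, $\lambda^2_{\sigma_{\epsilon_j}}$. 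This is exactly the structure of the paper's argument (itself modeled on Proposition \ref{P::1}), so the proposal is correct and not materially different.
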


		\begin{proof}
			Firstly, let us see what happens for $M=0$. As we have seen in the previous results, without taking into account the boundary conditions, if $u$ is a solution of $\tilde T_n[0]\,u(t)=0$ on $(a,b)$, then $u$ has at most $n-1$ zeros.
			
			However, from the boundary conditions \eqref{Ec::cfxm1}, we conclude that $\tilde T_\ell u(a)=0$ or $\tilde T_\ell u(b)=0$ at least $n-1$ times from $\ell=0$ to $n-1$. Thus, we loose the $n-1$ possible oscillations and $u$ does not have any zero on $(a,b)$.
			
			Now, let us consider $u_M\in C^n(I)$ a solution of $\tilde T_n[0]\,u_M(t)=0$ on $(a,b)$.
			
			Assume that $u_0>0$ on $(a,b)$  (if $u_0<0$ on $(a,b)$ the arguments are valid by multiplying by $-1$) and we move continuously on $M$ to obtain $u_M$.
			
			We will see that while $u_M\geq 0$, it cannot have any double zero, which implies that it is positive on $(a,b)$.
			
			It is known that $\tilde T [0]\,u_M(t)=-M\,u_M(t)$, on $(a,b)$, hence $\tilde T_{n-1} u_M$ is a monotone function on $I$, with at most one zero. Then, arguing as before, we conclude, without taking into account the boundary conditions, that $u_M$ can have at most $n$ zeros. But, if we consider the boundary conditions \eqref{Ec::cfxm1}, we loose  $n-1$ possible oscillation and $u_M$ is only allowed to have a simple zero on $(a,b)$, which is not possible if it is of constant sign. Hence, we can affirm that $u_M>0$ on $(a,b)$ until one of the following assertions is satisfied:
				
			\begin{itemize}
				\item  $\sigma_{\epsilon_j}>\alpha$ and $u_M^{(\alpha)}(a)=0$.
				\item  $\sigma_{\epsilon_j}<\alpha$ and $u_M^{(\sigma_{\epsilon_j})}(a)=0$.
					\item  $u_M^{(\beta)}(b)=0$.
			\end{itemize}
			
			Now, let us study separately the cases where $M>0$ or $M<0$ to see with which of the previous assertions  the sign change begins in each case.
			
			\vspace{0.5cm}

If $M\geq 0$, then $\tilde{T}[0]\,u_M(t)=-M\,u_M(t)\leq 0$ for $t\in (a,b)$. Thus, $\tilde T_n u_M(a)\leq 0$ and $\tilde T_n u_M(b)\leq 0$.

With maximal oscillation $\tilde T_{n-\ell}\,u_M(a)$ changes its sign each time that it is not null and $\tilde T_{n-\ell}\,u_M(b)$ changes its sign as many times as it vanishes.	

	\begin{itemize}
		\item If $\sigma_{\epsilon_j}>\alpha$, from $\ell =0$ to $n-\alpha$,  $\tilde T_{n-\ell}u_M(a)$ vanishes $k-1-\alpha$ times. 
		
		 If $\tilde T_{n-\ell}\,u_M(a)=0$ for $\ell<n-\alpha$ and $n-\ell\notin \{\sigma_1,\dots,\sigma_{\epsilon_j-1},\sigma_{\epsilon_j+1},\dots,\sigma_k|\alpha\}$, then $\tilde T_\alpha u_M(a)\neq 0$ and $u_M$ remains positive on $(a,b)$. So, we can assume that this situation cannot be fulfilled. 
		 
		 Hence, with maximal oscillation, we have:
		\[\left\lbrace \begin{array}{cc}
		\tilde T_\alpha \,u_M(a)\geq 0\,,&\text{ if $n-\alpha-(k-1-\alpha)=n-k+1$ is odd,}\\\\
			\tilde T_\alpha \,u_M(a)\leq 0\,,&\text{ if $n-k+1$ is even.}\end{array}\right. \]
		Since  $\sigma_{\epsilon_j}>\alpha$, from \eqref{Ec::Tl}, we have that
		\[\tilde T_\alpha u_M(a)=\dfrac{u^{(\alpha)}(a)}{v_1(a)\dots v_{\alpha}(a)}\,,\]
		so, with maximal oscillation:
				\[\left\lbrace \begin{array}{cc}
				u_M^{(\alpha)}(a)\geq 0\,,&\text{ if $n-k$ is even,}\\\\
				u_M^{(\alpha)}(a)\leq 0\,,&\text{ if $n-k$ is odd.}\end{array}\right. \]
		\item If $\sigma_{\epsilon_j}<\alpha$, from $\ell =0$ to $n-\sigma_{\epsilon_j}$,  $\tilde T_{n-\ell}u_M(a)$ vanishes  $k-1-\sigma_{\epsilon_j}$ times. Again, let us assume that $\tilde T_{n-\ell}u_M(a)\neq 0$ for $\ell<n-\sigma_{\epsilon_j}$ if $n-\ell \notin\{\sigma_1,\dots,\sigma_k\}$. Then, with maximal oscillation, we have:
			\[\left\lbrace \begin{array}{cc}
			\tilde T_{\sigma_{\epsilon_j}} \,u_M(a)\geq 0&\text{ if $n-\sigma_{\epsilon_j}-(k-1-\sigma_{\epsilon_j})=n-k+1$ is odd,}\\\\
			\tilde T_{\sigma_{\epsilon_j}} \,u_M(a)\leq 0&\text{ if $n-k+1$ is even.}\end{array}\right. \]
			
			Since  $\sigma_{\epsilon_j}<\alpha$, from \eqref{Ec::Tl}, we have that
			\[\tilde T_{\sigma_{\epsilon_j}}u_M(a)=\dfrac{u^{(\sigma_{\epsilon_j})}(a)}{v_1(a)\dots v_{\sigma_{\epsilon_j}}(a)}\,.\]
			
			In particular, if $\sigma_{\epsilon_j}<\mu$, then $v_1(t)\dots v_{\sigma_{\epsilon_j}}(t)=1$.
			
		Thus, with maximal oscillation:
			\[\left\lbrace \begin{array}{cc}
			u_M^{(\sigma_{\epsilon_j})}(a)\geq 0\,,&\text{ if $n-k$ is even,}\\\\
			u_M^{(\sigma_{\epsilon_j})}(a)\leq 0\,,&\text{ if $n-k$ is odd.}\end{array}\right. \]
		
		\item On another hand, from $\ell=0$ to $n-\beta$,  $\tilde T_{n-\ell}u_M(b)$ vanishes $n-k-\beta$ times. We can also assume that $\tilde T_{n-\ell} u_M(b)\neq 0$ if $n-\ell\notin\{\varepsilon_1,\dots,\varepsilon_{n-k}|\beta\}$. Then, with maximal oscillation:
		\[\left\lbrace \begin{array}{cc}
		\tilde T_{\beta} \,u_M(b)\geq 0\,,&\text{ if $n-k-\beta$ is odd,}\\\\
		\tilde T_{\beta} \,u_M(b)\leq 0\,,&\text{ if $n-k-\beta$ is even.}\end{array}\right. \]
		
		From \eqref{Ec::Tl}, we have that
		\[\tilde T_\beta u_M(b)=\dfrac{u^{(\beta)}(b)}{v_1(b)\dots v_{\beta}(b)}\,.\]
		
		Thus:
		\begin{itemize}
			\item if $n-k$ is even, to set maximal oscillation, we need
			\[\left\lbrace \begin{array}{cc}
			u_M^{(\beta)}(b)\leq 0\,,&\text{ if $\beta$ is even,}\\\\
			u_M^{(\beta)}(b)\geq 0\,,&\text{ if $\beta$ is odd.}\end{array}\right. \]
			\item if $n-k$ is odd, to ensure maximal oscillation is necessary:
			\[\left\lbrace \begin{array}{cc}
			u_M^{(\beta)}(b)\geq 0\,,&\text{ if $\beta$ is even,}\\\\
			u_M^{(\beta)}(b)\leq 0\,,&\text{ if $\beta$ is odd.}\end{array}\right. \]
			\end{itemize}
		
	\end{itemize}	
			
	Since, we are considering $u_M\geq 0$, it is known that
	\begin{equation}\label{Ec::uap}\left\lbrace \begin{array}{cc}
	u_M^{(\alpha)}(a)\geq 0\,,&\text{ if $\sigma_{\epsilon_j}>\alpha$,}\\\\
u_M^{(\sigma_{\epsilon_j})}(a)\geq 0\,,&\text{ if $\sigma_{\epsilon_j}<\alpha$,}\end{array}\right. \end{equation}
	and
	\begin{equation}\label{Ec::ubp}\left\lbrace \begin{array}{cc}
	u_M^{(\beta)}(b)\geq 0\,,&\text{ if $\beta$ is even,}\\\\
	u_M^{(\beta)}(b)\leq 0\,,&\text{ if $\beta$ is odd,}\end{array}\right. \end{equation}
		
			Taking into account that if $k=1$, then $u^{(\beta)}_M(b)\neq 0$ for all $M\in \mathbb{R}$, we obtain the following conclusions for $M\geq 0$:
			\begin{itemize}
				\item If $n-k$ is odd and $\sigma_{\epsilon_j}>\alpha$, then $u_M\geq 0$ until $u_M^{(\alpha)}(a)=0$; i.e., until an eigenvalue of $\tilde T_n[0]$ in $X_{\{\sigma_1,\dots,\sigma_{\epsilon_j-1},\sigma_{\epsilon_j+1},\dots,\sigma_k|\alpha\}}^{\{\varepsilon_1,\dots,\varepsilon_{n-k}\}}$ is found.
				\item If $n-k$ is odd and $\sigma_{\epsilon_j}<\alpha$, then $u_M\geq 0$ until $u_M^{(\sigma_{\epsilon_j})}(a)=0$; i.e., until an eigenvalue of $\tilde T_n[0]$ in $X_{\{\sigma_1,\dots,\sigma_k\}}^{\{\varepsilon_1,\dots,\varepsilon_{n-k}\}}$ is found.
				\item If $n-k$ is even and $k>1$, then $u_M\geq 0$ until $u_M^{(\beta)}(b)=0$; i.e., until an eigenvalue of $\tilde T_n[0]$ in $X_{\{\sigma_1,\dots,\sigma_{\epsilon_j-1},\sigma_{\epsilon_j+1},\dots,\sigma_k\}}^{\{\varepsilon_1,\dots,\varepsilon_{n-k}|\beta\}}$ is found.
				\item If $k=1$ and $n$ is odd, then $u_M\geq 0$ for all $M\geq 0$.
			\end{itemize}
			
			\vspace{0.5cm}
			
			Now, let us see what happens for $M\leq 0$. In this case, we have that $\tilde T_n[0] u_M(t)=-M\,u_M(t)\geq 0$ for $t\in(a,b)$. Then, $\tilde T_nu_M(a)\geq 0$ and $\tilde T_n u_M(b)\geq0$. Hence, we conclude that with maximal oscillation, the inequalities are reversed from te case $M\geq 0$. So, we obtain that:
			
			\begin{itemize}
				\item If $\sigma_{\epsilon_j}>\alpha$, with maximal oscillation
							\[\left\lbrace \begin{array}{cc}
							u_M^{(\alpha)}(a)\leq 0\,,&\text{ if $n-k$ is even,}\\\\
							u_M^{(\alpha)}(a)\geq 0\,,&\text{ if $n-k$ is odd.}\end{array}\right. \]
				\item If $\sigma_{\epsilon_j}<\alpha$, with maximal oscillation
							\[\left\lbrace \begin{array}{cc}
							u_M^{(\sigma_{\epsilon_j})}(a)\leq 0\,,&\text{ if $n-k$ is even,}\\\\
							u_M^{(\sigma_{\epsilon_j})}(a)\geq 0\,,&\text{ if $n-k$ is odd,}\end{array}\right. \]			
			\end{itemize}
			and,
			\begin{itemize}
				\item if $n-k$ is even, with maximal oscillation:
				\[\left\lbrace \begin{array}{cc}
				u_M^{(\beta)}(b)\geq 0\,,&\text{ if $\beta$ is even,}\\\\
				u_M^{(\beta)}(b)\leq 0\,,&\text{ if $\beta$ is odd.}\end{array}\right. \]
				\item If $n-k$ is odd, with maximal oscillation:
				\[\left\lbrace \begin{array}{cc}
				u_M^{(\beta)}(b)\leq 0\,,&\text{ if $\beta$ is even,}\\\\
				u_M^{(\beta)}(b)\geq 0\,,&\text{ if $\beta$ is odd.}\end{array}\right. \]
			\end{itemize}
			Then, taking into account that $u_M\geq 0$, \eqref{Ec::uap} and \eqref{Ec::ubp} are also satisfied.
			
				Hence, using that if $k=1$, then $u^{(\beta)}_M(b)\neq 0$ for all $M\in \mathbb{R}$, we obtain the following conclusions for $M\leq 0$:
				\begin{itemize}
					\item If $n-k$ is even and $\sigma_{\epsilon_j}>\alpha$, then $u_M\geq 0$ until $u_M^{(\alpha)}(a)=0$; i.e., until an eigenvalue of $\tilde T_n[0]$ in $X_{\{\sigma_1,\dots,\sigma_{\epsilon_j-1},\sigma_{\epsilon_j+1},\dots,\sigma_k|\alpha\}}^{\{\varepsilon_1,\dots,\varepsilon_{n-k}\}}$ is found.
					\item If $n-k$ is even and $\sigma_{\epsilon_j}<\alpha$, then $u_M\geq 0$ until $u_M^{(\sigma_{\epsilon_j})}(a)=0$; i.e., until an eigenvalue of $\tilde T_n[0]$ in $X_{\{\sigma_1,\dots,\sigma_k\}}^{\{\varepsilon_1,\dots,\varepsilon_{n-k}\}}$ is found.
					\item If $n-k$ is odd and $k>1$, then $u_M\geq 0$ until $u_M^{(\beta)}(b)=0$; i.e., until an eigenvalue of $\tilde T_n[0]$ in $X_{\{\sigma_1,\dots,\sigma_{\epsilon_j-1},\sigma_{\epsilon_j+1},\dots,\sigma_k\}}^{\{\varepsilon_1,\dots,\varepsilon_{n-k}|\beta\}}$ is found.
					\item If $k=1$ and $n$ is even, then $u_M\geq 0$ for all $M\leq 0$.
				\end{itemize}
				The result is proved.
		\end{proof}

			\begin{proposition}\label{P::nh2}
				Let $u\in C^n(I)$ be a solution of $\tilde{T}[M]\,u(t)=0$ for $t\in (a,b)$, which satisfies the boundary conditions 
				\begin{equation}
				\label{Ec::cfym1}\left\lbrace \begin{array}{c}
								u^{(\sigma_1)}(a)=\dots=u^{(\sigma_{k})}(a)=0\,,\\\\
				u^{(\varepsilon_1)}(b)=\cdots=u^{(\varepsilon_{\kappa_i-1})}(b)=u^{(\varepsilon_{\kappa_i+1})}(b)=\cdots=u^{(\varepsilon_{n-k})}(b)=0\,.\end{array} \right. 
				\end{equation}
				
				Then, $u$ does not have any zero on $(a,b)$ provided that one of the following assertions is fulfilled:
				\begin{itemize}
					\item If $n-k$ is even, $\varepsilon_{\kappa_i}>\beta$ and $M\in\left[-\lambda_{\varepsilon_{\kappa_i}}^1,-\lambda_{\varepsilon_{\kappa_i}}^2\right]$, where
					\begin{itemize}
						\item[*] $\lambda_{\varepsilon_{\kappa_i}}^1>0$ is the least positive eigenvalue of $\tilde T_n[0]$ in $X_{\{\sigma_1,\dots,\sigma_k\}}^{\{\varepsilon_1,\dots,\varepsilon_{\kappa_i-1},\varepsilon_{\kappa_i+1},\dots,\varepsilon_{n-k}|\beta\}}$,
						\item[*] $\lambda_{\varepsilon_{\kappa_i}}^2<0$ is the biggest negative eigenvalue of $\tilde T_n[0]$ in $X_{\{\sigma_1,\dots,\sigma_k|\alpha\}}^{\{\varepsilon_1,\dots,\varepsilon_{\kappa_i-1},\varepsilon_{\kappa_i+1},\dots,\varepsilon_{n-k}\}}$.
					\end{itemize}
					
					\item If $n-k$ is even, ${\varepsilon_{\kappa_i}}<\alpha$ and $M\in\left[-\lambda_1,-\lambda_{\varepsilon_{\kappa_i}}^2\right]$, where
					\begin{itemize}
						\item[*] $\lambda_1>0$ is the least positive eigenvalue of $\tilde T_n[0]$ in $ X_{\{\sigma_1,\dots,\sigma_k\}}^{\{\varepsilon_1,\dots,\varepsilon_{n-k}\}}$,
						\item[*] $\lambda_{\varepsilon_{\kappa_i}}^2<0$ is the biggest negative eigenvalue of $\tilde T_n[0]$ in $X_{\{\sigma_1,\dots,\sigma_k|\alpha\}}^{\{\varepsilon_1,\dots,\varepsilon_{\kappa_i-1},\varepsilon_{\kappa_i+1},\dots,\varepsilon_{n-k}\}}$.
					\end{itemize}

						\item If $n-k$ is odd, $k<n-1$, $\varepsilon_{\kappa_i}>\beta$ and $M\in\left[-\lambda_{\varepsilon_{\kappa_i}}^2,-\lambda_{\varepsilon_{\kappa_i}}^1\right]$, where
						\begin{itemize}
							\item[*] $\lambda_{\varepsilon_{\kappa_i}}^2>0$ is the least positive eigenvalue of $\tilde T_n[0]$ in $X_{\{\sigma_1,\dots,\sigma_k|\alpha\}}^{\{\varepsilon_1,\dots,\varepsilon_{\kappa_i-1},\varepsilon_{\kappa_i+1},\dots,\varepsilon_{n-k}\}}$,
							\item[*] $\lambda_{\varepsilon_{\kappa_i}}^1<0$ is the biggest negative eigenvalue of $\tilde T_n[0]$ in $X_{\{\sigma_1,\dots,\sigma_k\}}^{\{\varepsilon_1,\dots,\varepsilon_{\kappa_i-1},\varepsilon_{\kappa_i+1},\dots,\varepsilon_{n-k}|\beta\}}$.
						\end{itemize}
						
						\item If $n-k$ is odd, $k<n-1$, ${\varepsilon_{\kappa_i}}<\alpha$ and $M\in\left[-\lambda_{\varepsilon_{\kappa_i}}^2,-\lambda_1\right]$, where
						\begin{itemize}
							\item[*] $\lambda_{\varepsilon_{\kappa_i}}^2>0$ is the least positive eigenvalue of $\tilde T_n[0]$ in $X_{\{\sigma_1,\dots,\sigma_k|\alpha\}}^{\{\varepsilon_1,\dots,\varepsilon_{\kappa_i-1},\varepsilon_{\kappa_i+1},\dots,\varepsilon_{n-k}\}}$, 
							\item[*] $\lambda_1<0$ is the biggest negative eigenvalue of $\tilde T_n[0]$ in $ X_{\{\sigma_1,\dots,\sigma_k\}}^{\{\varepsilon_1,\dots,\varepsilon_{n-k}\}}$ .
						\end{itemize}
						
						\item If $k=n-1$, ${\varepsilon_{\kappa_i}}>\alpha$ and $M\in\left(-\infty,-\lambda_{\sigma_{\epsilon_j}}^1\right]$, where
						\begin{itemize}
							\item[*] $\lambda_{\varepsilon_{\kappa_i}}^1<0$ is the biggest negative eigenvalue of $\tilde T_n[0]$ in $X_{\{\sigma_1,\dots,\sigma_k\}}^{\{\varepsilon_1,\dots,\varepsilon_{\kappa_i-1},\varepsilon_{\kappa_i+1},\dots,\varepsilon_{n-k}|\beta\}}$.
						\end{itemize}
						\item If $k=n-1$,  ${\varepsilon_{\kappa_i}}<\alpha$ and $M\in\left(-\infty,-\lambda_1\right]$, where
						\begin{itemize}
							\item[*] $\lambda_1<0$ is the biggest negative eigenvalue of $\tilde T_n[0]$ in $ X_{\{\sigma_1,\dots,\sigma_k\}}^{\{\varepsilon_1,\dots,\varepsilon_{n-k}\}}$.
							
						\end{itemize}
				\end{itemize}
			\end{proposition}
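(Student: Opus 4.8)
The plan is to follow, \emph{mutatis mutandis}, the argument of Proposition \ref{P::nh1}, exchanging the roles played by the two endpoints: now all $k$ conditions at $t=a$ are homogeneous and the only possibly inhomogeneous datum is $u^{(\varepsilon_{\kappa_i})}(b)$. First I would treat $M=0$. By Lemma \ref{L::Td}, $\tilde T_n[0]$ satisfies property $(T_d)$ in each of the spaces appearing in the statement, so it decomposes as in \eqref{Ec::Td1}, $\tilde T_n[0]\,u=\tilde v_1\cdots\tilde v_n\,\tilde T_n u$, with all $\tilde v_j>0$. Repeating the oscillation count used in the proof of Lemma \ref{L::11}, a solution of $\tilde T_n[0]\,u=0$ on $(a,b)$ has at most $n-1$ zeros there. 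The boundary conditions \eqref{Ec::cfym1} amount to $k$ conditions at $a$ and $n-k-1$ conditions at $b$, and by Lemmas \ref{L::1} and \ref{L::2} each of them forces $\tilde T_{\sigma_j}u(a)=0$ or $\tilde T_{\varepsilon_\ell}u(b)=0$; hence $n-1$ possible oscillations are lost and $u_0$ has no zero on $(a,b)$.

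Next I would move $u_M$ continuously with $M$. Since $\tilde T_n[0]\,u_M=-M\,u_M$, while $u_M$ is of constant sign on $(a,b)$ (say $u_M\geq 0$; otherwise multiply by $-1$) the function $\tilde T_n[0]\,u_M$ has constant sign, so $\tilde T_{n-1}u_M$ is monotone with at most one zero, and, going up the chain exactly as in Proposition \ref{P::nh1}, $u_M$ can have at most $n$ zeros on $(a,b)$ without the boundary conditions; imposing \eqref{Ec::cfym1} leaves room only for a single simple zero, which is impossible for a sign-definite function. Therefore $u_M>0$ on $(a,b)$ until the sign change is triggered by the vanishing of one of the free boundary data, namely $u_M^{(\alpha)}(a)=0$, $u_M^{(\beta)}(b)=0$ or (when relevant) $u_M^{(\varepsilon_{\kappa_i})}(b)=0$.

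The remaining step, which I expect to be the main obstacle, is the sign bookkeeping that decides which of these vanishings occurs first as $M$ increases or decreases. Splitting into $M\geq 0$ (so $\tilde T_n[0]\,u_M\leq 0$) and $M\leq 0$ (so $\tilde T_n[0]\,u_M\geq 0$), and using \eqref{Ec::Tl} to pass between $\tilde T_{\ell}u_M$ and the derivatives $u_M^{(\ell)}$ at the endpoints, I would reproduce the maximal-oscillation inequalities at $a$ and at $b$ exactly as in the proof of Proposition \ref{P::nh1}, now keeping the point $a$ fixed and tracking the index $\varepsilon_{\kappa_i}$ at $b$. The conclusion is that the first vanishing occurs at $u_M^{(\alpha)}(a)=0$ when $\varepsilon_{\kappa_i}>\beta$, at $u_M^{(\varepsilon_{\kappa_i})}(b)=0$ when $\varepsilon_{\kappa_i}<\alpha$, and at $u_M^{(\beta)}(b)=0$ on the complementary half-line of parameters.

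Finally, by definition of the objects involved such a vanishing means precisely that $M$ is an eigenvalue of $\tilde T_n[0]$ in $X_{\{\sigma_1,\dots,\sigma_k|\alpha\}}^{\{\varepsilon_1,\dots,\varepsilon_{\kappa_i-1},\varepsilon_{\kappa_i+1},\dots,\varepsilon_{n-k}\}}$, in $X_{\{\sigma_1,\dots,\sigma_k\}}^{\{\varepsilon_1,\dots,\varepsilon_{\kappa_i-1},\varepsilon_{\kappa_i+1},\dots,\varepsilon_{n-k}|\beta\}}$, or in $X_{\{\sigma_1,\dots,\sigma_k\}}^{\{\varepsilon_1,\dots,\varepsilon_{n-k}\}}$, respectively; the existence of the relevant first positive or first negative eigenvalue is guaranteed by the preceding lemma. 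Matching these with $\lambda_{\varepsilon_{\kappa_i}}^1$, $\lambda_{\varepsilon_{\kappa_i}}^2$ and $\lambda_1$, and taking the parity of $n-k$ into account, yields exactly the intervals listed. The borderline cases $k=n-1$ come, as in Proposition \ref{P::nh1}, from the fact that then one of the candidate vanishings is impossible for every real $M$ — a nontrivial solution cannot have a zero of multiplicity $n$ at an endpoint — so the corresponding two-sided interval degenerates to a half-line.
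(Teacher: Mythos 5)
Your proposal is correct and coincides with the paper's intention: the paper's own proof of this proposition consists precisely of the remark that it is analogous to Proposition \ref{P::nh1}, and what you write is exactly that mirrored argument, with the homogeneous data now concentrated at $t=a$, the free datum $u^{(\varepsilon_{\kappa_i})}(b)$ tracked at $t=b$, and the degenerate half-line case $k=n-1$ handled, as in Proposition \ref{P::nh1}, by the impossibility of a zero of multiplicity $n$ at an endpoint. No gap to report; the oscillation count for $M=0$, the continuity-in-$M$ argument excluding double zeros, and the maximal-oscillation sign bookkeeping identifying which eigenvalue space is reached first are the same ingredients used in the proof of Proposition \ref{P::nh1} that the paper invokes.
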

					\begin{proof}
						The proof is analogous to the proof of Proposition \ref{P::nh1}.
					\end{proof}
					
					Now, we are in a position to prove a result which gives a relationship on the eigenvalues of the different spaces $X_{\{\sigma_1,\dots,\sigma_{\epsilon_j-1},\sigma_{\epsilon_j+1},\dots,\sigma_k|\alpha\}}^{\{\varepsilon_1,\dots,\varepsilon_{n-k}\}}$  with the closest to zero eigenvalue of $\tilde T_n[0]$ in $X_{\{\sigma_1,\dots,\sigma_k\}}^{\{\varepsilon_1,\dots,\varepsilon_{n-k}\}}.$ The result is the following.
					
					\begin{proposition}\label{P::ordaut1}
					Let ${{j_1}}\in \{{\epsilon_1},\dots,{\epsilon_\ell}\}$ be such that $\alpha<\sigma_{{j_1}}$, then the following assertions are true:
					\begin{itemize}
						\item If $n-k$ is even, then $0<\lambda_1<\lambda_{\sigma_{j_1}}^1$, where
						\begin{itemize}
							\item[*] $\lambda_{\sigma_{j_1}}^1>0$ is the least positive eigenvalue of $\tilde T_n[0]$ in $X_{\{\sigma_1,\dots,\sigma_{j_1-1},\sigma_{j_1+1},\dots,\sigma_k|\alpha\}}^{\{\varepsilon_1,\dots,\varepsilon_{n-k}\}}$.
								\item[*] $\lambda_1>0$ is the least positive eigenvalue of $\tilde T_n[0]$ in $X_{\{\sigma_1,\dots,\sigma_k\}}^{\{\varepsilon_1,\dots,\varepsilon_{n-k}\}}$.
							\end{itemize}
								\item If $n-k$ is odd, then $\lambda_{\sigma_{j_1}}^1<\lambda_1<0$, where
								\begin{itemize}
									\item[*] $\lambda_{\sigma_{j_1}}^1<0$ is the biggest negative eigenvalue of $\tilde T_n[0]$ in $X_{\{\sigma_1,\dots,\sigma_{j_1-1},\sigma_{j_1+1},\dots,\sigma_k|\alpha\}}^{\{\varepsilon_1,\dots,\varepsilon_{n-k}\}}$.
									\item[*] $\lambda_1<0$ is the biggest negative eigenvalue of $\tilde T_n[0]$ in $X_{\{\sigma_1,\dots,\sigma_k\}}^{\{\varepsilon_1,\dots,\varepsilon_{n-k}\}}$.
								\end{itemize}
					\end{itemize}	
					\end{proposition}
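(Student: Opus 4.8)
The plan is to adapt, essentially line by line, the argument used to prove Proposition \ref{P::6.5}, replacing the role played there by the index $\sigma_k$ with the index $\sigma_{j_1}$, replacing Proposition \ref{P::1} by Proposition \ref{P::nh1}, and using that $\tilde T_n[0]$ satisfies property $(T_d)$ in all the spaces involved (Lemma \ref{L::Td}), so that the conclusions of Lemma \ref{L::1} remain available. Throughout one works with the parameter value $\bar M=0$, as in the statement.

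First I would fix a solution $u_M\in C^n(I)$ of $\tilde T_n[M]\,u(t)=0$ on $(a,b)$ satisfying the boundary conditions \eqref{Ec::cfxm1}, i.e. the homogeneous conditions with the $\sigma_{j_1}$-condition at $t=a$ deleted, and move $u_M$ continuously as a function of $M$. By Proposition \ref{P::nh1}, for $M$ running between $0$ and $-\lambda_{\sigma_{j_1}}^1$ (in the order dictated by the parity of $n-k$) the function $u_M$ has no zero on $(a,b)$, hence realizes the maximal oscillation compatible with \eqref{Ec::cfxm1}. Assuming $u_0\geq 0$ (the case $u_0\leq 0$ being handled by multiplying by $-1$), the sign-propagation of the iterated operators $\tilde T_{n-\ell}u_0(a)$ under the maximal-oscillation constraint, combined with Lemma \ref{L::1}, forces a definite sign of $u_0^{(\sigma_{j_1})}(a)$, determined by the parity of the number of vanishing $\tilde T_h u_0(a)$ with $h$ running from $\alpha_1$ (Notation \ref{Not::alpha1}, adapted to this space) down to $\sigma_{j_1}$, exactly as in the derivation of \eqref{Ec::cfsigma1}. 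At the endpoint $M=-\lambda_{\sigma_{j_1}}^1$ one has $u_M^{(\alpha)}(a)=0$, since by definition $\lambda_{\sigma_{j_1}}^1$ is an eigenvalue of $\tilde T_n[0]$ in $X_{\{\sigma_1,\dots,\sigma_{j_1-1},\sigma_{j_1+1},\dots,\sigma_k|\alpha\}}^{\{\varepsilon_1,\dots,\varepsilon_{n-k}\}}$; thus one further $\tilde T_h u_M(a)$ vanishes, the same count now involves one more zero, and $u_M^{(\sigma_{j_1})}(a)$ acquires the opposite sign. This is the step corresponding to the comparison of \eqref{Ec::cfsigma1} with \eqref{Ec::cfsigma2} in the proof of Proposition \ref{P::6.5}.

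Having the two opposite signs of $u_M^{(\sigma_{j_1})}(a)$ at the two endpoints and continuity in $M$, I would invoke the intermediate value theorem to obtain $\tilde M$ strictly between $0$ and $-\lambda_{\sigma_{j_1}}^1$ with $u_{\tilde M}^{(\sigma_{j_1})}(a)=0$; together with \eqref{Ec::cfxm1} this makes $u_{\tilde M}$ a nontrivial element of $X_{\{\sigma_1,\dots,\sigma_k\}}^{\{\varepsilon_1,\dots,\varepsilon_{n-k}\}}$, so $-\tilde M$ is an eigenvalue of $\tilde T_n[0]$ in that space. Since $\lambda_1$ is the eigenvalue of least absolute value there, and $\tilde M$ lies strictly between $0$ and $-\lambda_{\sigma_{j_1}}^1$, this yields $0<\lambda_1=-\tilde M<\lambda_{\sigma_{j_1}}^1$ when $n-k$ is even, and $\lambda_{\sigma_{j_1}}^1<\lambda_1=-\tilde M<0$ when $n-k$ is odd, which is the assertion.

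The main obstacle, as in Proposition \ref{P::6.5}, is the careful bookkeeping of the signs of the $\tilde T_{n-\ell}u_M(a)$ under maximal oscillation: one must verify that exactly one additional zero among $\{\tilde T_h u_M(a)\}$ is picked up when passing from $M=0$ to $M=-\lambda_{\sigma_{j_1}}^1$ — namely at $h=\alpha$ — and that it is genuinely interposed between $\alpha_1$ and $\sigma_{j_1}$ rather than coinciding with a zero already accounted for. It is precisely here that the hypothesis $\alpha<\sigma_{j_1}$ is used. The two parity cases are symmetric, and once this sign computation is in place the remainder of the proof is a routine transcription of the earlier arguments.
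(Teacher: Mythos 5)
Your proposal follows essentially the same route as the paper's proof: the paper likewise takes a solution of the equation with the $\sigma_{j_1}$-condition at $t=a$ deleted, uses Proposition \ref{P::nh1} to keep it of constant sign (hence maximally oscillating) for $M$ between $0$ and $-\lambda_{\sigma_{j_1}}^1$, compares the forced sign of the $\sigma_{j_1}$-th derivative at $t=a$ at the two endpoints — the zero count running from $\alpha$ (with $j_1-1-\alpha$ zeros) at $M=0$ and from $\alpha_1$ (with $j_1-\alpha_1$ zeros) at $M=-\lambda_{\sigma_{j_1}}^1$, i.e.\ exactly the one extra vanishing at $h=\alpha$ you describe — and concludes by the intermediate value theorem that an eigenvalue of $\tilde T_n[0]$ in $X_{\{\sigma_1,\dots,\sigma_k\}}^{\{\varepsilon_1,\dots,\varepsilon_{n-k}\}}$ lies strictly between $0$ and $\lambda_{\sigma_{j_1}}^1$. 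Apart from the minor slip of invoking $\alpha_1$ already at $M=0$ (where the relevant starting index is $\alpha$, with $\alpha_1$ entering only at the endpoint where $u_M^{(\alpha)}(a)=0$), your argument matches the paper's.
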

					
					\begin{proof}
						In order to prove this result, let us denote $v_M\in C^n(I)$ as a solution of $\tilde{T} [M]\,v_M(t)=0$ on$(a,b)$, coupled with the following boundary conditions:
						\begin{equation}\label{Ec::CFAB1}\left\lbrace \begin{array}{cc}
						v_M^{(\sigma_j)}(a)=0\,,&j=0,\dots,k,\quad j\neq j_1\,,\\\\
						v_M^{(\varepsilon_i)}(b)=0,&i=0,\dots,n-k\,.\end{array}\right. 
						\end{equation}
						
						Let us study $v_0$, with the arguments done before, we know that, without taking into account the boundary conditions, $v_0$ has at most $n-1$ zeros. However, from the boundary conditions \eqref{Ec::CFAB1}, we conclude that $n-1$ possible oscillations are lost. Hence, since $v_0$ is a nontrivial function, the boundary conditions for the maximal oscillation are verified.
						
						Let us choose $v_0\geq 0$ (if $v_0\leq 0$, the arguments are valid by multiplying by $-1$), then $v_0^{(\alpha)}(a)\geq 0$. From \eqref{Ec::Tl}  $T_{\alpha}v_0(a)$ also satisfies this inequality.
						
						Let us study the sign of $v_M^{(\sigma_{j_1})}(a)$. Realize,that, to achieve the maximal oscillation, $T_\ell v_M(a)$ must change its sign each time that it is non null.
						
						From $\ell=\alpha$ to $\sigma_{j_1}$,  $T_\ell v_0(a)$ vanishes  $j_1-1-\alpha$ times, then, with maximal oscillation:
						\[\left\lbrace \begin{array}{cc}
						T_{\sigma_{j_1}}v_0(a)> 0\,,&\text{if $\sigma_{j_1}-\alpha-(j_1-1-\alpha)=\sigma_{j_1}-j_1+1$ is even,}\\\\
						T_{\sigma_{j_1}}v_0(a)< 0\,,&\text{if $\sigma_{j_1}-j_1+1$ is odd.}\end{array}\right. \]
						
						From the choice of $j_1\in\{\epsilon_1,\dots,\epsilon_\ell\}$, we can affirm that 
						\begin{equation}\label{Ec::se1}\left\lbrace \begin{array}{cc}
						v_0^{(\sigma_{j_1})}(a)< 0\,,&\text{if $\sigma_{j_1}-j_1$ is even,}\\\\
						v_0^{(\sigma_{j_1})}(a)>0\,,&\text{if $\sigma_{j_1}-j_1$ is odd.}\end{array}\right. \end{equation}
						
						Now, let us move with continuity on $M$ until $-\lambda_{\sigma_{j_1}}$ and study the sign of $v_{-\lambda_{\sigma_{j_1}}^1}^{(\sigma_{j_1})}(a)$.
						
						 From Proposition \ref{P::nh1}, it is known that $v_{-\lambda_{\sigma_{j_1}}}>0$ on $(a,b)$. Moreover, $v_{-\lambda_{\sigma_{j_1}}}^{(\alpha)}(a)=0$. Thus, with the calculations done before, we conclude that the maximal oscillation is satisfied too.
						 
						  So, we can study in this case the sign of $v_{-\lambda_{\sigma_{j_1}}^1}^{(\sigma_{j_1})}(a)$.
						
						Let us consider $\alpha_1\in\{0,\dots,n-1\}$, previously introduced in the proof of Proposition 6.5. Since $v_{-\lambda_{\sigma_{j_1}}}\ge 0$ on $I$, we can affirm that $v_{-\lambda_{\sigma_{j_1}}}^{(\alpha_1)}(a)>0$.
						
						From $\ell=\alpha_1$ to $\sigma_{j_1}$, there are $j_1-\alpha_1$ zeros for $T_\ell v_{-\lambda_{\sigma_{j_1}}^1}(a)$, then, with maximal oscillation:
						\[\left\lbrace \begin{array}{cc}
						T_{\sigma_{j_1}}v_{-\lambda_{\sigma_{j_1}}^1}(a)> 0\,,&\text{if $\sigma_{j_1}-\alpha_1-(j_1-\alpha_1)=\sigma_{j_1}-j_1$ is even,}\\\\
						T_{\sigma_{j_1}}v_{-\lambda_{\sigma_{j_1}}^1}(a)< 0\,,&\text{if $\sigma_{j_1}-j_1$ is odd.}\end{array}\right. \]
						
						From the choice of $j_1\in\{\epsilon_1,\dots,\epsilon_\ell\}$, we can affirm that 
						\begin{equation}\label{Ec::se2}\left\lbrace \begin{array}{cc}
						v_{-\lambda_{\sigma_{j_1}}^1}^{(\sigma_{j_1})}(a)> 0\,,&\text{if $\sigma_{j_1}-j_1$ is even,}\\\\
						v_{-\lambda_{\sigma_{j_1}}^1}^{(\sigma_{j_1})}(a)< 0\,,&\text{if $\sigma_{j_1}-j_1$ is odd.}\end{array}\right. \end{equation}
						
						Hence, in this case, since we have been moving continuously on $M$, we can affirm that there exist $-\tilde \lambda_1$ between $0$ and ${-\lambda_{\sigma_{j_1}}^1}$ such that $v_{-\tilde \lambda_1}^{(\sigma_{j_1})}(a)=0$, i.e. we have proved the existence on an eigenvalue of $\tilde T_n[0]$ in $X_{\{\sigma_1,\dots,\sigma_k\}}^{\{\varepsilon_1,\dots,\varepsilon_{n-k}\}}$ between $0$ and ${-\lambda_{\sigma_{j_1}}^1}$, and the result is proved.
					\end{proof}
					
				In an analogous way, we can prove the following result for the eigenvalues of $\tilde{T}[0]$ in $X_{\{\sigma_1,\dots,\sigma_k\}}^{\{\varepsilon_1,\dots,\varepsilon_{\kappa_i-1},\varepsilon_{\kappa_i+1},\dots,\varepsilon_{n-k}| \beta\}}$, comparing them with the closest to zero eigenvalue in $X_{\{\sigma_1,\dots,\sigma_k\}}^{\{\varepsilon_1,\dots,\varepsilon_{n-k}\}}$.
						\begin{proposition}\label{P::ordaut2}
							Let ${{i_1}}\in \{{\kappa_1}\,,\dots,\,{\kappa_h}\}$ be such that $\varepsilon_{{i_1}}>\beta$, then the following assertions are true:
							\begin{itemize}
							
								\item If $n-k$ is even, then $0<\lambda_1<\lambda_{\varepsilon_{i_1}}^1$, where
								\begin{itemize}
									\item[*] $\lambda_{\varepsilon_{i_1}}^1>0$ is the least positive eigenvalue of $\tilde T_n[0]$ in $X_{\{\sigma_1,\dots,\sigma_k\}}^{\{\varepsilon_1,\dots,\varepsilon_{i_1-1},\varepsilon_{i_1+1},\dots,\varepsilon_{n-k}|\beta\}}$.
									\item[*] $\lambda_1>0$ is the least positive eigenvalue of $\tilde T_n[0]$ in $X_{\{\sigma_1,\dots,\sigma_k\}}^{\{\varepsilon_1,\dots,\varepsilon_{n-k}\}}$.
								\end{itemize}

								\item If $n-k$ is odd, then $0>\lambda_1>\lambda_{\varepsilon_{i_1}}^1$, where
								\begin{itemize}
									\item[*] $\lambda_{\varepsilon_{i_1}}^1<0$ is the biggest negative eigenvalue of $\tilde T_n[0]$ in $X_{\{\sigma_1,\dots,\sigma_k\}}^{\{\varepsilon_1,\dots,\varepsilon_{i_1-1},\varepsilon_{i_1+1},\dots,\varepsilon_{n-k}|\beta\}}$.
									\item[*] $\lambda_1<0$ is the biggest negative eigenvalue of $\tilde T_n[0]$ in $X_{\{\sigma_1,\dots,\sigma_k\}}^{\{\varepsilon_1,\dots,\varepsilon_{n-k}\}}$.
								\end{itemize}
							
							\end{itemize}	
						\end{proposition}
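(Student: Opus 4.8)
The plan is to imitate, almost line by line, the argument given for Proposition \ref{P::ordaut1}, but working at the right endpoint $t=b$ and with the eigenvalue condition in the ``$|\beta$'' space instead of the ``$|\alpha$'' one; Proposition \ref{P::nh2} will play the role that Proposition \ref{P::nh1} played there. Existence of all the eigenvalues in question is already available (apply Theorem \ref{L::5}, Corollary \ref{C::1} and the existence lemma preceding Proposition \ref{P::nh1} to $\tilde T_n[0]$, which satisfies property $(T_d)$ by Lemma \ref{L::Td}), so only the ordering has to be shown. For $M\in\mathbb{R}$ let $v_M\in C^n(I)$ be a nontrivial solution of $\tilde T_n[M]\,v_M(t)=0$ on $(a,b)$ subject to the $n-1$ homogeneous conditions
\begin{equation*}
v_M^{(\sigma_j)}(a)=0\,,\ j=1,\dots,k\,;\qquad v_M^{(\varepsilon_i)}(b)=0\,,\ i=1,\dots,n-k\,,\ i\neq i_1\,.
\end{equation*}
As in the proof of Lemma \ref{L::11}, $v_0$ has at most $n-1$ zeros on $I$ (endpoints counted), and the $n-1$ boundary conditions above force $v_0$ to have no zero in $(a,b)$ while realising the maximal--oscillation pattern. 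After normalising $v_0\geq0$ on $(a,b)$, Lemma \ref{L::2} and \eqref{Ec::Tab} give that $T_\beta v_0(b)$ has the sign of $v_0^{(\beta)}(b)$, which is $>0$ by positivity and maximal oscillation; tracking, through the maximal--oscillation rule, how the sign of $T_\ell v_0(b)$ alternates from $\ell=\beta$ up to $\ell=\varepsilon_{i_1}$, and inserting the parity bookkeeping dictated by the membership $i_1\in\{\kappa_1,\dots,\kappa_h\}$, one reads off the sign of $v_0^{(\varepsilon_{i_1})}(b)$ exactly as in \eqref{Ec::se1}.

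Next I would deform $M$ continuously from $0$ to $-\lambda^1_{\varepsilon_{i_1}}$. Proposition \ref{P::nh2}, applied to $v_M$ (the boundary conditions above are precisely those of \eqref{Ec::cfym1} with $\kappa_i=i_1$), shows that $v_M$ stays of constant sign on $(a,b)$ along this whole segment, since the segment lies inside the interval provided there ($\lambda^2_{\varepsilon_{i_1}}$ has the opposite sign to $\lambda^1_{\varepsilon_{i_1}}$). At $M=-\lambda^1_{\varepsilon_{i_1}}$ the eigenvalue property gives in addition $v_{-\lambda^1_{\varepsilon_{i_1}}}^{(\beta)}(b)=0$, so $v_{-\lambda^1_{\varepsilon_{i_1}}}$ belongs to $X_{\{\sigma_1,\dots,\sigma_k\}}^{\{\varepsilon_1,\dots,\varepsilon_{i_1-1},\varepsilon_{i_1+1},\dots,\varepsilon_{n-k}|\beta\}}$ and again attains maximal oscillation. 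Letting $\beta_1$ denote the lowest index that is free at $b$ for $v_{-\lambda^1_{\varepsilon_{i_1}}}$ (the analogue, in this space, of the index $\alpha_1$ used in the proof of Proposition \ref{P::ordaut1}), positivity forces $v_{-\lambda^1_{\varepsilon_{i_1}}}^{(\beta_1)}(b)>0$; recounting the alternation of $T_\ell v_{-\lambda^1_{\varepsilon_{i_1}}}(b)$ from $\ell=\beta_1$ to $\ell=\varepsilon_{i_1}$ one finds that the number of sign changes has dropped by exactly one relative to the $M=0$ case --- because the derivative at $\beta$, free at $M=0$, is now zero --- and hence, via \eqref{Ec::Tl} and Lemma \ref{L::2}, that $v_{-\lambda^1_{\varepsilon_{i_1}}}^{(\varepsilon_{i_1})}(b)$ has the sign \emph{opposite} to $v_0^{(\varepsilon_{i_1})}(b)$, just as \eqref{Ec::se2} is opposite to \eqref{Ec::se1}.

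Since $M\mapsto v_M^{(\varepsilon_{i_1})}(b)$ is continuous and changes sign strictly between $M=0$ and $M=-\lambda^1_{\varepsilon_{i_1}}$, there is $-\tilde\lambda_1$ strictly between them with $v_{-\tilde\lambda_1}^{(\varepsilon_{i_1})}(b)=0$; this $v_{-\tilde\lambda_1}$ then satisfies all of \eqref{Ec::cfa}--\eqref{Ec::cfb}, so it is an eigenfunction of $\tilde T_n[0]$ in $X_{\{\sigma_1,\dots,\sigma_k\}}^{\{\varepsilon_1,\dots,\varepsilon_{n-k}\}}$ with eigenvalue $-\tilde\lambda_1$ strictly between $0$ and $-\lambda^1_{\varepsilon_{i_1}}$. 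If $n-k$ is even this yields $0<\lambda_1\leq\tilde\lambda_1<\lambda^1_{\varepsilon_{i_1}}$ (recall $\lambda_1$ is the least positive eigenvalue), and if $n-k$ is odd the whole chain reverses; in both cases this is the asserted inequality. The step I expect to be the main obstacle is exactly the parity bookkeeping of the middle two paragraphs: one has to verify carefully that the alternation count of the quasi-derivatives $T_\ell v_M(b)$ between the lowest free index and $\varepsilon_{i_1}$, together with the signs imposed by maximal oscillation and by $i_1\in\{\kappa_1,\dots,\kappa_h\}$, genuinely produces opposite determinations of $v_M^{(\varepsilon_{i_1})}(b)$ at $M=0$ and at $M=-\lambda^1_{\varepsilon_{i_1}}$; once that is done, the continuity/intermediate--value conclusion is immediate.
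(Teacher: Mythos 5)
Your argument is correct and is exactly the route the paper intends: the paper disposes of Proposition \ref{P::ordaut2} by declaring it analogous to Proposition \ref{P::ordaut1}, and your proof is precisely that mirrored argument at $t=b$ (the boundary conditions \eqref{Ec::cfym1}, constant sign of $v_M$ via Proposition \ref{P::nh2}, maximal-oscillation sign bookkeeping of the quasi-derivatives at $b$ at $M=0$ and $M=-\lambda^1_{\varepsilon_{i_1}}$, then continuity in $M$ and an intermediate-value step). The only wrinkle is that positivity of $v_0$ gives $(-1)^\beta v_0^{(\beta)}(b)\geq 0$ rather than $v_0^{(\beta)}(b)>0$ outright, but this parity factor cancels when you compare the two determinations of $v_M^{(\varepsilon_{i_1})}(b)$, so the sign flip and hence the conclusion stand.
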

							\begin{proof}
								The proof is analogous to the one of Proposition \ref{P::ordaut1}.
							\end{proof}
						
						Now, let us establish a comparison between the eigenvalues in the different spaces $X_{\{\sigma_1,\dots,\sigma_{\epsilon_j-1},\sigma_{\epsilon_j+1},\dots,\sigma_k\}}^{\{\varepsilon_1,\dots,\varepsilon_{n-k}| \beta\}}$.
							\begin{proposition}\label{P::ordaut3}
								Let $\sigma_{{j_1}}\,,\ \sigma_{{j_2}}\in \{\sigma_{\epsilon_1},\dots,\sigma_{\epsilon_\ell}\}$ be such that $j_1<j_2$. Then the following assertions are fulfilled:
							\begin{itemize}
									\item If $n-k$ is even and $k>1$, then
									$0>\lambda_{\sigma_{j_1}}^2>\lambda_{\sigma_{j_2}}^2$, where
									\begin{itemize}
										\item[*] $\lambda_{\sigma_{j_1}}^2<0$ is the biggest negative eigenvalue of $\tilde T_n[0]$ in $X_{\{\sigma_1,\dots,\sigma_{j_1-1},\sigma_{j_1+1},\dots,\sigma_k\}}^{\{\varepsilon_1,\dots,\varepsilon_{n-k}|\beta\}}$.
										\item[*] $\lambda_{\sigma_{j_2}}^2<0$ is the biggest negative eigenvalue of $\tilde T_n[0]$ in $X_{\{\sigma_1,\dots,\sigma_{j_2-1},\sigma_{j_2+1},\dots,\sigma_k\}}^{\{\varepsilon_1,\dots,\varepsilon_{n-k}|\beta\}}$.
									\end{itemize}
									\item If $n-k$ is odd and $k>1$, then
									$0<\lambda_{\sigma_{j_1}}^2<\lambda_{\sigma_{j_2}}^2$, where
									\begin{itemize}
										\item[*] $\lambda_{\sigma_{j_1}}^2>0$ is the least positive eigenvalue of $\tilde T_n[0]$ in $X_{\{\sigma_1,\dots,\sigma_{j_1-1},\sigma_{j_1+1},\dots,\sigma_k\}}^{\{\varepsilon_1,\dots,\varepsilon_{n-k}|\beta\}}$.
										\item[*] $\lambda_{\sigma_{j_2}}^2>0$ is the least positive eigenvalue of $\tilde T_n[0]$ in $X_{\{\sigma_1,\dots,\sigma_{j_2-1},\sigma_{j_2+1},\dots,\sigma_k\}}^{\{\varepsilon_1,\dots,\varepsilon_{n-k}|\beta\}}$.
									\end{itemize}
								\end{itemize}	
							\end{proposition}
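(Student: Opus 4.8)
The plan is to mimic, \emph{mutatis mutandis}, the deformation-and-counting argument used in the proof of Proposition \ref{P::ordaut1}, now tracking two boundary functionals simultaneously. I would first reduce to the case $n-k$ even; the case $n-k$ odd is entirely analogous (equivalently, it follows by passing to $\widehat T_n[(-1)^nM]$ and replacing the use of Propositions \ref{P::nh1}--\ref{P::nh2} by Propositions \ref{P::5}--\ref{P::6}, since changing the parity of $n-k$ merely interchanges \textquotedblleft least positive\textquotedblright\ with \textquotedblleft biggest negative\textquotedblright). So fix $j_1<j_2$ in $\{\epsilon_1,\dots,\epsilon_\ell\}$ and let $v_M\in C^n(I)$ be a nontrivial solution of $\tilde T_n[M]\,v_M(t)=0$ on $(a,b)$ satisfying the $n-1$ boundary conditions common to the two spaces in the statement, namely $v_M^{(\sigma_j)}(a)=0$ for every $j\notin\{j_1,j_2\}$, $v_M^{(\varepsilon_i)}(b)=0$ for $i=1,\dots,n-k$, and $v_M^{(\beta)}(b)=0$. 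By Lemma \ref{L::Td}, $\tilde T_n[0]$ satisfies $(T_d)$ in the relevant spaces and $(N_a)$ still holds there, so this family is well defined, and the eigenvalues $\lambda^2_{\sigma_{j_1}},\lambda^2_{\sigma_{j_2}}$ exist.

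Next, arguing exactly as in the proofs of Lemma \ref{L::11} and Theorem \ref{L::5}: a solution of $\tilde T_n[0]\,v=0$ has at most $n-1$ zeros on $(a,b)$, and the $n-1$ imposed conditions force, via Lemmas \ref{L::1} and \ref{L::2}, the loss of all $n-1$ possible oscillations; hence $v_0$ has no zero on $(a,b)$, is of constant sign --- normalize $v_0\ge 0$ --- and realises the maximal oscillation. Deforming $M$ continuously and using the \textquotedblleft no double zero while $v_M$ keeps its sign\textquotedblright\ argument of Propositions \ref{P::nh1}--\ref{P::nh2}, the function $v_M$ stays positive on $(a,b)$ until the first of $v_M^{(\sigma_{j_1})}(a)$, $v_M^{(\sigma_{j_2})}(a)$ vanishes. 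When $v_{M}^{(\sigma_{j_2})}(a)=0$ the function $v_M$ lies in $X_{\{\sigma_1,\dots,\sigma_{j_1-1},\sigma_{j_1+1},\dots,\sigma_k\}}^{\{\varepsilon_1,\dots,\varepsilon_{n-k}|\beta\}}$, so $-M=\lambda^2_{\sigma_{j_1}}$ at the first such value of $M$; when $v_M^{(\sigma_{j_1})}(a)=0$ it lies in $X_{\{\sigma_1,\dots,\sigma_{j_2-1},\sigma_{j_2+1},\dots,\sigma_k\}}^{\{\varepsilon_1,\dots,\varepsilon_{n-k}|\beta\}}$, i.e.\ $-M=\lambda^2_{\sigma_{j_2}}$. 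It then remains to compare the signs of $v_M^{(\sigma_{j_2})}(a)$ at $M=0$ and at $M=-\lambda^2_{\sigma_{j_2}}$ (where $v_M$ is still of one sign on $(a,b)$, by the eigenfunction constant-sign property of Theorems \ref{T::6}--\ref{T::7}): using Lemma \ref{L::1} to pass from $\tilde T_{\sigma}\,v_M(a)$ to $v_M^{(\sigma)}(a)$ and counting the zeros of $\tilde T_h\,v_M(a)$ between consecutive indices exactly as in Proposition \ref{P::ordaut1} (with the index $\alpha_1$ of Notation \ref{Not::alpha1} and its analogue attached to $\sigma_{j_2}$), one checks the maximal-oscillation sign conditions flip parity between these two values of $M$; an intermediate value argument then produces $\tilde M\in(0,-\lambda^2_{\sigma_{j_2}})$ with $v_{\tilde M}^{(\sigma_{j_2})}(a)=0$, hence $\tilde M=-\lambda^2_{\sigma_{j_1}}$ and $0>\lambda^2_{\sigma_{j_1}}>\lambda^2_{\sigma_{j_2}}$.

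The main obstacle is the final sign/parity bookkeeping: one must verify that in \emph{every} configuration of $\{\sigma_i\}$ and $\{\varepsilon_i\}$ allowed by Definition \ref{Def::SE} the counts $\sigma_{j_1}-j_1$ and $\sigma_{j_2}-j_2$ yield the claimed strict ordering rather than the reverse, and one must treat separately the subcases $\sigma_{j_1}\lessgtr\alpha$ (which change the first non-imposed derivative at $a$, hence the starting index of the oscillation count, just as the cases $\sigma_{\epsilon_j}\lessgtr\alpha$ are split in Proposition \ref{P::nh1}). A secondary point requiring care is ensuring that $v_M$ indeed remains of constant sign on $(a,b)$ throughout the closed interval between $0$ and $-\lambda^2_{\sigma_{j_2}}$ --- including as it passes through $-\lambda^2_{\sigma_{j_1}}$, where only a boundary derivative vanishes --- so that the intermediate value theorem applies; this is exactly where Propositions \ref{P::nh1} and \ref{P::nh2} are invoked in full strength. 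Everything else is routine bookkeeping along the lines already carried out in Propositions \ref{P::ordaut1} and \ref{P::ordaut2}.
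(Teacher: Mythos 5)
Your plan coincides with the paper's own proof: the same auxiliary family $v_M$ determined by the same $n-1$ boundary conditions (all $\sigma_j(a)$ with $j\neq j_1,j_2$, all $\varepsilon_i(b)$, and $\beta$ at $b$), the same maximal-oscillation zero count combined with Lemma \ref{L::1} to fix the sign of $v_M^{(\sigma_{j_2})}(a)$ at $M=0$ and at $M=-\lambda_{\sigma_{j_2}}^2$ (where constant sign of the eigenfunction is invoked, in the paper via Proposition \ref{P::nh1}), and the same intermediate value step producing an eigenvalue of the space omitting $\sigma_{j_1}$ strictly between, the paper carrying out exactly the parity bookkeeping you defer in the three subcases $\sigma_{j_1}>\alpha$, $\sigma_{j_1}<\alpha<\sigma_{j_2}$ and $\sigma_{j_2}<\alpha$. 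Two minor remarks: the intermediate value argument only needs continuity of $v_M^{(\sigma_{j_2})}(a)$ in $M$, so constant sign of $v_M$ for all intermediate $M$ (your ``secondary point'') is not actually required; and the argument strictly yields an eigenvalue of the $\sigma_{j_1}$-omitted space lying in $\left(\lambda_{\sigma_{j_2}}^2,0\right)$, which gives the claimed inequality, rather than $\tilde M=-\lambda_{\sigma_{j_1}}^2$ itself.
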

							
							\begin{proof}	
								In order to prove this result, let us denote ${v_1}_M\in C^n(I)$ as a solution of $\tilde{T} [M]\,{v_1}_M(t)=0$ on $(a,b)$, coupled with the following boundary conditions:
								\begin{equation}\label{Ec::CFAB3}\left\lbrace \begin{array}{cc}
								{v_1}_M^{(\sigma_j)}(a)=0\,,&j=0,\dots,k,\quad \text{ if }j\neq j_1\,,\ j_2\,,\\\\
								{v_1}_M^{(\varepsilon_i)}(b)=0\,,&\text{ if } i=0,\dots,n-k\,,\\\\
								{v_1}_M^{(\beta)}(b)=0\,.\end{array}\right. 
								\end{equation}
								
								Again, from the boundary conditions \eqref{Ec::CFAB1}, to ensure that is is a nontrivial solution, ${v_1}_0$ satisfies the conditions of maximal oscillation at $t=a$ and $t=b$.
								
								\vspace{0.5cm}
								
								First, let us see what happens if $\sigma_{j_1}>\alpha$.
								
								Let us choose ${v_1}_0\geq 0$ (if ${v_1}_0\leq 0$, then the arguments are valid by multiplying by $-1$), then ${v_1}_0^{(\alpha)}(a)\geq 0$. From \eqref{Ec::Tl} we have $T_{\alpha}{v_1}_0(a)\geq0$.
								
								To study the sign of $v_0^{(\sigma_{j_2})}(a)$, realize that, to achieve the maximal oscillation, $T_\ell v_M(a)$ changes its sign each time that it is non null.
								
								From $\ell=\alpha$ to $\sigma_{j_2}$, there are $j_2-2-\alpha$ zeros for $T_\ell {v_1}_0(a)$, then, with maximal oscillation:
								\[\left\lbrace \begin{array}{cc}
								T_{\sigma_{j_2}}{v_1}_0(a)> 0\,,&\text{if $\sigma_{j_2}-\alpha-(j_2-2-\alpha)=\sigma_{j_2}-j_2+2$ is even,}\\\\
								T_{\sigma_{j_2}}{v_1}_0(a)< 0\,,&\text{if $\sigma_{j_2}-j_2+2$ is odd.}\end{array}\right. \]
								
								From the choice of $j_2\in\{\epsilon_1,\dots,\epsilon_\ell\}$, we can affirm that 
								\begin{equation}\label{Ec::se3}\left\lbrace \begin{array}{cc}
								{v_1}_0^{(\sigma_{j_2})}(a)> 0\,,&\text{if $\sigma_{j_2}-j_2$ is even,}\\\\
								{v_1}_0^{(\sigma_{j_2})}(a)<0\,,&\text{if $\sigma_{j_2}-j_2$ is odd.}\end{array}\right. \end{equation}
								
								Now, let us move with continuity on $M$ until $-\lambda_{\sigma_{j_2}}^2$ and analyze the sign of ${v_1}_{-\lambda_{\sigma_{j_2}}^2}^{(\sigma_{j_2})}(a)$. Let us denote $\bar{\lambda_2}=-\lambda_{\sigma_{j_2}}^2$, from Proposition \ref{P::nh1}, it is known that ${v_1}_{\bar{\lambda_2}}>0$ on $(a,b)$. Moreover, ${v_1}_{\bar{\lambda_2}}^{(\sigma_{j_1})}(a)=0$. Thus, since another possible zero on the boundary will imply that ${v_1}_{\bar{\lambda_2}}\equiv 0$, we conclude that the maximal oscillation is satisfied too. 
								
								So, we can study, in this case, the sign of ${v_1}_{\bar{\lambda_2}}(a)$.
								
							 Since ${v_1}_{\bar{\lambda_2}}\ge 0$ on $I$, we can affirm that, as for $M=0$, ${v_1}_{\bar{\lambda_2}}^{(\alpha)}(a)>0$.
								
								From $\ell=\alpha$ to $\sigma_{j_2}$, there are $j_2-1-\alpha$ zeros for $T_\ell {v_1}_{\bar{\lambda_2}}(a)$, then, with maximal oscillation:
								\[\left\lbrace \begin{array}{cc}
								T_{\sigma_{j_2}}{v_1}_{\bar{\lambda_2}}(a)> 0\,,&\text{if $\sigma_{j_2}-\alpha-(j_2-1-\alpha)=\sigma_{j_2}-j_2+1$ is even,}\\\\
								T_{\sigma_{j_2}}{v_1}_{\bar{\lambda_2}}(a)< 0\,,&\text{if $\sigma_{j_2}-j_2+1$ is odd.}\end{array}\right. \]
								
								From the choice of $j_2\in\{\epsilon_1,\dots,\epsilon_\ell\}$, we can affirm that 
								\begin{equation}\label{Ec::se4}\left\lbrace \begin{array}{cc}
								{v_1}_{-\lambda_{\sigma_{j_2}}^2}^{(\sigma_{j_2})}(a)< 0\,,&\text{if $\sigma_{j_2}-j_2$ is even,}\\\\
								{v_1}_{-\lambda_{\sigma_{j_2}}^2}^{(\sigma_{j_2})}(a)> 0\,,&\text{if $\sigma_{j_2}-j_2$ is odd.}\end{array}\right. \end{equation}
								
								\vspace{0.5cm}
								
								Now, let us see what happens if $\sigma_{j_1}<\alpha<\sigma_{j_2}$. In this case,  $\sigma_{j_1}=j_1-1$. 
								
								For $M=0$, since ${v_1}_0\geq 0$, we have that ${v_1}_0^{(\sigma_{j_1})}(a)\ge0$. From \eqref{Ec::Tl} we have that $T_{\sigma_{j_1}}{v_1}_0(a)\geq 0$.
								
								Let us study the sign of ${v_1}_0^{(\sigma_{j_2})}(a)$ in this case.
								
								From $\ell=\sigma_{j_1}$ to $\sigma_{j_2}$, there are $j_2-2-(j_1-1)=j_2-j_1-1$ zeros of $T_\ell {v_1}_0(a)$. Then, with maximal oscillation:
								\[\left\lbrace \begin{array}{cc}
								T_{\sigma_{j_2}}{v_1}_0(a)> 0\,,&\text{if $\sigma_{j_2}-j_1-1-(j_2-j_1-1)=\sigma_{j_2}-j_2$ is even,}\\\\
								T_{\sigma_{j_2}}{v_1}_0(a)< 0\,,&\text{if $\sigma_{j_2}-j_2$ is odd.}\end{array}\right. \]
								
								From the choice of $j_2\in\{\epsilon_1,\dots,\epsilon_\ell\}$, we can affirm that \eqref{Ec::se3} holds.
								
								Now, we study the sign of ${v_1}_{\bar{\lambda_2}}(a)$ if the conditions to allow the maximal oscillation hold.
								
								Since ${v_1}_{-\lambda_{\sigma_{j_2}}^2}\ge 0$ on $I$, we can affirm that ${v_1}_{-\lambda_{\sigma_{j_1}}}^{(\alpha)}(a)>0$.
								
								From $\ell=\alpha$ to $\sigma_{j_2}$, there are $j_2-1-\alpha$ zeros for $T_\ell {v_1}_{-\lambda_{\sigma_{j_2}}^2}(a)$, then, with maximal oscillation and repeating the previous arguments, we obtain that \eqref{Ec::se4} is satisfied.

								\vspace{0.5cm}
								
								Finally, let us study the case where $\sigma_{j_2}<\alpha$. In this situation,  $\sigma_{j_1}=j_1-1$ and $\sigma_{j_2}=j_2-1$.
								
								For $M=0$, since ${v_1}_0\geq 0$, we have that ${v_1}_0^{(\sigma_{j_1})}(a)\ge0$ and from \eqref{Ec::Tl}  $T_{\alpha}{v_1}_0(a)\geq 0$.
								
								Let us study the sign of ${v_1}_0^{(\sigma_{j_2})}(a)$ in this situation. Since $\alpha>\sigma_{j_2}$, for all $\ell =\sigma_{j_1}\,,\dots,\,,\sigma_{j_2}$, we have that $\tilde T_\ell {v_1}_0(a)=0$. So, to allow the maximal oscillation, it must be satisfied that $T_{\sigma_{j_2}}{v_1}_0(a)<0$. And this inequality also holds for ${v_1}_0^{(\sigma_{j_2})}(a)$.
								
								In this case, for $M=-\lambda_{\sigma_{j_2}}^2$, since ${v_1}_{-\lambda_{\sigma_{j_2}^2}}>0$ on $(a,b)$ and ${v_1}_{-\lambda_{\sigma_{j_2}^2}}^{(\sigma_{j_1})}(a)=0$, we have that ${v_1}_{-\lambda_{\sigma_{j_2}}^2}^{(\sigma_{j_2})}(a)>0$.
								\vspace{0.5cm}
								
								Hence, in all the cases, since we have been moving continuously on $M$, we can affirm that there exists $-\tilde \lambda_1$ lying between $0$ and ${-\lambda_{\sigma_{j_2}}^2}$, such that ${v_1}_{-\tilde \lambda_1}^{(\sigma_{j_2})}(a)=0$. As consequence, we have proved the existence on an eigenvalue of $\tilde T_n[0]$ in $X_{\{\sigma_1,\dots,\sigma_{j_1-1},\sigma_{j_1+1},\dots,\sigma_k\}}^{\{\varepsilon_1,\dots,\varepsilon_{n-k}\}}$ between $0$ and ${-\lambda_{\sigma_{j_2}}^2}$, and the result is proved.
							\end{proof}
							
					Before introducing the final result which characterizes the strongly inverse positive (negative) character in the different spaces $X_{\{\sigma_1,\dots,\sigma_k\}_{\{\sigma_{\epsilon_1},\dots,\sigma_{\epsilon_\ell}\}}}^{\{\varepsilon_1,\dots,\varepsilon_{n-k}\}_{\{\varepsilon_{\kappa_1},\dots,\varepsilon_{\kappa_h}\}}}$, we show a result which gives an order on the eigenvalues associated to different spaces $X_{\{\sigma_1,\dots,\sigma_k|\alpha\}}^{\{\varepsilon_1,\dots,\varepsilon_{\kappa_i-1},\varepsilon_{\kappa_i+1},\dots,\varepsilon_{n-k}\}}$.
					
					\begin{proposition}\label{P::ordaut4}
						Let ${{i_1}}\,,\ {{i_2}}\in \{{\kappa_1}\,,\dots,\,{\kappa_h}\}$ be such that $i_1<i_2$, then the following assertions are true:
						\begin{itemize}

							\item If $n-k$ is even, then
							$0>\lambda_{\varepsilon_{i_1}}^2>\lambda_{\varepsilon_{i_2}}^2$, where
							\begin{itemize}
								\item[*] $\lambda_{\varepsilon_{i_1}}^2<0$ is the biggest negative eigenvalue of $\tilde T_n[0]$ in $X_{\{\sigma_1,\dots,\sigma_k|\alpha\}}^{\{\varepsilon_1,\dots,\varepsilon_{i_1-1},\varepsilon_{i_1+1},\dots,\varepsilon_{n-k}\}}$.
								\item[*] $\lambda_{\varepsilon_{i_2}}^2<0$ is the biggest negative eigenvalue of $\tilde T_n[0]$ in $X_{\{\sigma_1,\dots,\sigma_k|\alpha\}}^{\{\varepsilon_1,\dots,\varepsilon_{i_2-1},\varepsilon_{i_2+1},\dots,\varepsilon_{n-k}\}}$.
							\end{itemize}
							\item If $n-k$ is odd and $k<n-1$, then
							$0<\lambda_{\varepsilon_{i_1}}^2<\lambda_{\varepsilon_{i_2}}^2$, where
							\begin{itemize}
								\item[*] $\lambda_{\varepsilon_{i_1}}^2>0$ is the least positive eigenvalue of $\tilde T_n[0]$ in $X_{\{\sigma_1,\dots,\sigma_k|\alpha\}}^{\{\varepsilon_1,\dots,\varepsilon_{i_1-1},\varepsilon_{i_1+1},\dots,\varepsilon_{n-k}\}}$.
								\item[*] $\lambda_{\varepsilon_{i_2}}^2>0$ is the least positive eigenvalue of $\tilde T_n[0]$ in $X_{\{\sigma_1,\dots,\sigma_k|\alpha\}}^{\{\varepsilon_1,\dots,\varepsilon_{i_2-1},\varepsilon_{i_2+1},\dots,\varepsilon_{n-k}\}}$.
							\end{itemize}
						\end{itemize}	
					\end{proposition}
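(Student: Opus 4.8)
The plan is to transcribe the argument of Proposition \ref{P::ordaut3}, interchanging the endpoints $a$ and $b$, replacing the index $\beta$ by $\alpha$ and the selected family $\{\sigma_{\epsilon_j}\}$ by $\{\varepsilon_{\kappa_i}\}$; the inequality between $\lambda_{\varepsilon_{i_1}}^2$ and $\lambda_{\varepsilon_{i_2}}^2$ will again be obtained by producing, through a continuity and intermediate value argument, an eigenvalue of $\tilde T_n[0]$ in $X_{\{\sigma_1,\dots,\sigma_k|\alpha\}}^{\{\varepsilon_1,\dots,\varepsilon_{i_1-1},\varepsilon_{i_1+1},\dots,\varepsilon_{n-k}\}}$ that is strictly closer to zero than $\lambda_{\varepsilon_{i_2}}^2$.

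First I would introduce ${v_1}_M\in C^n(I)$, a nontrivial solution of $\tilde T_n[M]\,{v_1}_M(t)=0$ on $(a,b)$ subject to the $n-1$ boundary conditions ${v_1}_M^{(\sigma_j)}(a)=0$ for $j=1,\dots,k$, ${v_1}_M^{(\alpha)}(a)=0$, and ${v_1}_M^{(\varepsilon_i)}(b)=0$ for $i\in\{1,\dots,n-k\}\setminus\{i_1,i_2\}$; such a function exists since only $n-1$ linear conditions are imposed. Because $\tilde T_n[0]$ satisfies property $(T_d)$ (Lemma \ref{L::Td}, Remark \ref{R::16}) and $\{\sigma_1,\dots,\sigma_k\}-\{\varepsilon_1,\dots,\varepsilon_{n-k}\}$ satisfies $(N_a)$, the counting argument of Lemma \ref{L::11} and Theorem \ref{L::5} shows that a solution of $\tilde T_n[0]\,u=0$ has at most $n-1$ zeros on $(a,b)$, and the $n-1$ imposed conditions destroy all of them, so ${v_1}_0$ has no zero on $(a,b)$ and realises the maximal oscillation prescribed by its boundary data; for $M\neq 0$, since $\tilde T_n[M]\,{v_1}_M=-M\,{v_1}_M$, the same counting allows at most one zero on $(a,b)$, which is incompatible with constant sign, so ${v_1}_M$ remains of constant sign as $M$ varies until a boundary condition degenerates.

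Next I would track the sign of ${v_1}_0^{(\varepsilon_{i_2})}(b)$ and of ${v_1}_{-\lambda_{\varepsilon_{i_2}}^2}^{(\varepsilon_{i_2})}(b)$, after normalising ${v_1}_0\ge 0$. Using Lemma \ref{L::2} together with expression \eqref{Ec::Tl} (and the fact that at $t=b$ the functions $T_{n-\ell}{v_1}_M(b)$ must change sign each time they are non-null, as in the proofs of Theorem \ref{L::5} and Proposition \ref{P::nh2}), the conditions for maximal oscillation translate into a definite sign for $T_{\varepsilon_{i_2}}{v_1}_0(b)$, hence for ${v_1}_0^{(\varepsilon_{i_2})}(b)$, after counting the zeros of $T_\ell {v_1}_0(b)$ lying between $\ell=\beta$ (or $\ell=\varepsilon_{i_1}$) and $\ell=\varepsilon_{i_2}$. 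At $M=-\lambda_{\varepsilon_{i_2}}^2$ the solution ${v_1}_{-\lambda_{\varepsilon_{i_2}}^2}$ is a multiple of the eigenfunction of $\tilde T_n[0]$ in $X_{\{\sigma_1,\dots,\sigma_k|\alpha\}}^{\{\varepsilon_1,\dots,\varepsilon_{i_2-1},\varepsilon_{i_2+1},\dots,\varepsilon_{n-k}\}}$ (it satisfies all the $n-1$ conditions of ${v_1}_M$ together with ${v_1}_{-\lambda_{\varepsilon_{i_2}}^2}^{(\varepsilon_{i_1})}(b)=0$), so by Proposition \ref{P::nh2} it is of constant sign on $(a,b)$ and again realises the maximal oscillation; the zero count now involves one fewer vanishing of $T_\ell$ between $\beta$ and $\varepsilon_{i_2}$, so the parity is reversed and ${v_1}_{-\lambda_{\varepsilon_{i_2}}^2}^{(\varepsilon_{i_2})}(b)$ has the sign opposite to that of ${v_1}_0^{(\varepsilon_{i_2})}(b)$ (the three subcases $\varepsilon_{i_1}>\beta$, $\varepsilon_{i_1}<\beta<\varepsilon_{i_2}$ and $\varepsilon_{i_2}<\beta$ being treated exactly as in Proposition \ref{P::ordaut3}, with $\varepsilon_i=i-1$ whenever $\varepsilon_i<\beta$). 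Moving $M$ continuously from $0$ to $-\lambda_{\varepsilon_{i_2}}^2$, the intermediate value theorem yields $-\tilde\lambda$ strictly between them with ${v_1}_{-\tilde\lambda}^{(\varepsilon_{i_2})}(b)=0$, i.e.\ an eigenvalue of $\tilde T_n[0]$ in $X_{\{\sigma_1,\dots,\sigma_k|\alpha\}}^{\{\varepsilon_1,\dots,\varepsilon_{i_1-1},\varepsilon_{i_1+1},\dots,\varepsilon_{n-k}\}}$ closer to $0$ than $\lambda_{\varepsilon_{i_2}}^2$; since $\lambda_{\varepsilon_{i_1}}^2$ is by definition the eigenvalue of that operator closest to $0$, this gives $0>\lambda_{\varepsilon_{i_1}}^2>\lambda_{\varepsilon_{i_2}}^2$ when $n-k$ is even, and the odd case (with $k<n-1$) follows by the symmetric sign reversals.

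The hard part will be the parity bookkeeping in the zero count of $T_{n-\ell}{v_1}_M(b)$: one must keep careful track of which of $\varepsilon_{i_1},\varepsilon_{i_2}$ is currently a boundary condition, of the position of $\beta$ relative to $\varepsilon_{i_1}$ and $\varepsilon_{i_2}$, and of the different behaviour of $T_{n-\ell}$ at $b$ versus at $a$ (a sign change at every non-null value rather than the loss of an oscillation at a zero). Everything else — the existence of ${v_1}_M$, its constant sign on the relevant $M$-interval via Proposition \ref{P::nh2}, and the final continuity-plus-intermediate-value step — is a direct transcription of the corresponding steps of Propositions \ref{P::nh2} and \ref{P::ordaut3}.
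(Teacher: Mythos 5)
Your proposal is correct and takes essentially the same route as the paper, whose own proof consists precisely of the remark that the argument of Proposition \ref{P::ordaut3} carries over with the endpoints $a$ and $b$, the indices $\alpha$ and $\beta$, and the families $\{\sigma_{\epsilon_j}\}$ and $\{\varepsilon_{\kappa_i}\}$ interchanged; your construction of ${v_1}_M$, the constant-sign/maximal-oscillation discussion via Proposition \ref{P::nh2}, and the final continuity plus intermediate-value step are exactly that transcription. (One minor slip: at $M=-\lambda_{\varepsilon_{i_2}}^2$ the count between $\beta$ and $\varepsilon_{i_2}$ gains one \emph{more} vanishing of $T_\ell$ at $t=b$, not one fewer, but the parity reversal you invoke is the same either way.)
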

					\begin{proof}
						The proof follows the same structure and arguments as Proposition \ref{P::ordaut3}.
					\end{proof}
					
					Once we have obtained the previous results, which allow us to characterize the constant sign of the functions $x_M^{\sigma_{\epsilon_j}}$ and $z_M^{\varepsilon_{\kappa_i}}$ for $j=1,\dots,\ell$ and $i=0,\dots,h$, respectively, we can obtain a characterization of the strongly inverse positive (negative) character of operator $\tilde T_n[M]$ in the spaces $X_{\{\sigma_1,\dots,\sigma_k\}_{\{\sigma_{\epsilon_1},\dots,\sigma_{\epsilon_\ell}\}}}^{\{\varepsilon_1,\dots,\varepsilon_{n-k}\}_{\{\varepsilon_{\kappa_1},\dots,\varepsilon_{\kappa_h}\}}}$ as follows.
					
					\begin{theorem}\label{T::IPNH2}
						If $n-k$ is even, then the operator $\tilde{T}[M]$ is strongly inverse positive in $X_{\{\sigma_1,\dots,\sigma_k\}_{\{\sigma_{\epsilon_1},\dots,\sigma_{\epsilon_\ell}\}}}^{\{\varepsilon_1,\dots,\varepsilon_{n-k}\}_{\{\varepsilon_{\kappa_1},\dots,\varepsilon_{\kappa_h}\}}}$ if, and only if, one of the following assertions is satisfied:
						\begin{itemize}
							\item If  $k>1$ and $M\in(-\lambda_1,-\lambda_2]$, where:
							\begin{itemize}
								\item[*] $\lambda_1>0$ is the least positive eigenvalue of $\tilde T_n[0]$ in $X_{\{\sigma_1,\dots,\sigma_k\}}^{\{\varepsilon_1,\dots,\varepsilon_{n-k}\}}$.
								\item[*] $\lambda_2<0$ is the maximum between,
								\begin{itemize}
								\item[·] $\lambda_{\sigma_{\epsilon_1}}^2<0$, the biggest negative eigenvalue of $\tilde T_n[0]$ in $X_{\{\sigma_1,\dots,\sigma_{\epsilon_1-1},\sigma_{\epsilon_2-1},\dots,\sigma_k\}}^{\{\varepsilon_1,\dots,\varepsilon_{n-k}|\beta\}}$.
							 \item[·] $\lambda_{\varepsilon_{\kappa_1}}^2<0$, the biggest negative eigenvalue of $\tilde T_n[0]$ in $X_{\{\sigma_1,\dots,\sigma_k|\alpha\}}^{\{\varepsilon_1,\dots,\varepsilon_{\kappa_1-1},\varepsilon_{\kappa_1+1},\dots,\varepsilon_{n-k}\}}$.
							\end{itemize}
							\end{itemize}
								\item If  $k=1$ and $M\in(-\lambda_1,-\lambda_2]$, where:
								\begin{itemize}
									\item[*] $\lambda_1>0$ is the least positive eigenvalue of $\tilde T_n[0]$ in $X_{\{\sigma_1\}}^{\{\varepsilon_1,\dots,\varepsilon_{n-1}\}}$.
								
									\item[*] $\lambda_2=\lambda_{\varepsilon_{\kappa_1}}^2<0$, the biggest negative eigenvalue of $\tilde T_n[0]$ in $X_{\{\sigma_1|\alpha\}}^{\{\varepsilon_1,\dots,\varepsilon_{\kappa_1-1},\varepsilon_{\kappa_1+1},\dots,\varepsilon_{n-1}\}}$.
								\end{itemize}	
						
						\end{itemize}
						
							If $n-k$ is odd, then the operator $\tilde{T}[M]$ is strongly inverse negative in $X_{\{\sigma_1,\dots,\sigma_k\}_{\{\sigma_{\epsilon_1},\dots,\sigma_{\epsilon_\ell}\}}}^{\{\varepsilon_1,\dots,\varepsilon_{n-k}\}_{\{\varepsilon_{\kappa_1},\dots,\varepsilon_{\kappa_h}\}}}$ if, and only if, one of the following assertions is satisfied:
							\begin{itemize}
								\item If  $1<k<n-1$ and $M\in[-\lambda_2,-\lambda_1)$, where:
								\begin{itemize}
									\item[*] $\lambda_1<0$ is the biggest negative eigenvalue of $\tilde T_n[0]$ in $X_{\{\sigma_1,\dots,\sigma_k\}}^{\{\varepsilon_1,\dots,\varepsilon_{n-k}\}}$.
									\item[*] $\lambda_2>0$ is the minimum between,
									\begin{itemize}
										\item[·] $\lambda_{\sigma_{\epsilon_1}}^2>0$, the least positive eigenvalue of $\tilde T_n[0]$ in $X_{\{\sigma_1,\dots,\sigma_{\epsilon_1-1},\sigma_{\epsilon_2-1},\dots,\sigma_k\}}^{\{\varepsilon_1,\dots,\varepsilon_{n-k}|\beta\}}$.
										\item[·] $\lambda_{\varepsilon_{\kappa_1}}^2>0$, the least positive eigenvalue of $\tilde T_n[0]$ in $X_{\{\sigma_1,\dots,\sigma_k|\alpha\}}^{\{\varepsilon_1,\dots,\varepsilon_{\kappa_1-1},\varepsilon_{\kappa_1+1},\dots,\varepsilon_{n-k}\}}$.
									\end{itemize}
								\end{itemize}
								\item If  $k=1<n-1$ and $M\in[-\lambda_2,-\lambda_1)$, where:
								\begin{itemize}
									\item[*] $\lambda_1<0$ is the biggest negative eigenvalue of $\tilde T_n[0]$ in $X_{\{\sigma_1\}}^{\{\varepsilon_1,\dots,\varepsilon_{n-1}\}}$.
									
									\item[*] $\lambda_2=\lambda_{\varepsilon_{\kappa_1}}^2>0$ is the least positive eigenvalue of $\tilde T_n[0]$ in $X_{\{\sigma_1|\alpha\}}^{\{\varepsilon_1,\dots,\varepsilon_{\kappa_1-1},\varepsilon_{\kappa_1+1},\dots,\varepsilon_{n-1}\}}$.
								\end{itemize}	
									\item If  $1<k=n-1$ and $M\in[-\lambda_2,-\lambda_1)$, where:
									\begin{itemize}
										\item[*] $\lambda_1<0$ is the biggest negative eigenvalue of $\tilde T_n[0]$ in $X_{\{\sigma_1,\dots,\sigma_{n-1}\}}^{\{\varepsilon_1\}}$.
										\item[*] $\lambda_2=\lambda_{\sigma_{\epsilon_1}}^2>0$, the least positive eigenvalue of $\tilde T_n[0]$ in $X_{\{\sigma_1,\dots,\sigma_{\epsilon_1-1},\sigma_{\epsilon_2-1},\dots,\sigma_{n-1}\}}^{\{\varepsilon_1|\beta\}}$.
										
									\end{itemize}
										\item If  $n=2$ and $M\in(-\infty,-\lambda_1)$, where:
										\begin{itemize}
											\item[*] $\lambda_1<0$ is the biggest negative eigenvalue of $\tilde T_n[0]$ in $X_{\{\sigma_1\}}^{\{\varepsilon_1\}}$.
										\end{itemize}	
							\end{itemize}
					\end{theorem}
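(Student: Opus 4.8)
The plan is to pass from the non‑homogeneous problem to the homogeneous one by means of the representation formula of Lemma~\ref{L::14-1}. A function $u$ belonging to the space of \eqref{Ec::xNH} with $\tilde T_n[M]\,u=h\gneqq 0$ is written, via \eqref{Ec::solnh}, as
\[
u(t)=\int_a^b g_M(t,s)\,h(s)\,ds+\sum_{j=1}^{\ell}c_{\epsilon_j}\,x_M^{\sigma_{\epsilon_j}}(t)+\sum_{i=1}^{h}d_{\kappa_i}\,z_M^{\varepsilon_{\kappa_i}}(t),
\]
where the admissibility conditions in \eqref{Ec::xNH} force $c_{\epsilon_j}=(-1)^{\,n-\sigma_{\epsilon_j}-(k-\epsilon_j)+1}\varphi_{\epsilon_j}$ and $d_{\kappa_i}=(-1)^{\,n-k+\kappa_i-1}\psi_{\kappa_i}$ with $\varphi_{\epsilon_j},\psi_{\kappa_i}\ge 0$. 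Thus $u$ is a superposition of the Green term (controlled by the strong‑sign theory of the homogeneous space) and the auxiliary solutions $x_M^{\sigma_{\epsilon_j}}$, $z_M^{\varepsilon_{\kappa_i}}$ defined in \eqref{Ec::xm1}--\eqref{Ec::ym1}, whose constant sign is exactly what Propositions~\ref{P::nh1} and \ref{P::nh2} provide. Throughout I would use that, by the Proposition establishing property $(T_d)$ for $\tilde T_n[0]$ together with Lemma~\ref{L::Td}, all of Theorem~\ref{T::IPN}, Theorems~\ref{T::in2}--\ref{T::in21} and Propositions~\ref{P::nh1}--\ref{P::ordaut4} apply to $\tilde T_n[M]$ with the reference value $\bar M=0$.

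\textbf{Sufficiency.} Take, say, $n-k$ even, $k>1$ and $M\in(-\lambda_1,-\lambda_2]$ with $\lambda_2=\max\{\lambda_{\sigma_{\epsilon_1}}^2,\lambda_{\varepsilon_{\kappa_1}}^2\}$. First I would observe, using Propositions~\ref{P::ordaut1}--\ref{P::ordaut4}, that $\lambda_{\sigma_{\epsilon_1}}^2$ and $\lambda_{\varepsilon_{\kappa_1}}^2$ are the eigenvalues closest to $0$ among those attached to the families $\{\sigma_{\epsilon_j}\}$ and $\{\varepsilon_{\kappa_i}\}$, that each $\lambda^1$‑type eigenvalue is farther from $0$ than $\lambda_1$, and that these eigenvalues are, in absolute value, at least as large as the corresponding $\lambda_2',\lambda_2''$ of Theorem~\ref{T::IPN}; hence $(-\lambda_1,-\lambda_2]$ is contained both in the interval of Theorem~\ref{T::IPN} for $X_{\{\sigma_1,\dots,\sigma_k\}}^{\{\varepsilon_1,\dots,\varepsilon_{n-k}\}}$ and in each of the intervals of Propositions~\ref{P::nh1} and \ref{P::nh2}. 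Consequently: $\tilde T_n[M]$ is strongly inverse positive in the homogeneous space, so by Theorem~\ref{T::in2} $g_M>0$ a.e.\ on $(a,b)\times(a,b)$ with the prescribed signs of its $t$‑derivatives at $t=a,b$; and each $x_M^{\sigma_{\epsilon_j}}$, $z_M^{\varepsilon_{\kappa_i}}$ is of constant sign on $(a,b)$, that sign being identified, by the same maximal‑oscillation bookkeeping used in the proof of Theorem~\ref{T::IPNH} (where $x_M=(-1)^{n-1-\sigma_k}w_M$ and $z_M=(-1)^{n-\varepsilon_{n-k}}y_M$), as precisely the one making $c_{\epsilon_j}\,x_M^{\sigma_{\epsilon_j}}\ge 0$ and $d_{\kappa_i}\,z_M^{\varepsilon_{\kappa_i}}\ge 0$. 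Adding the three contributions gives $u(t)\ge\int_a^b g_M(t,s)h(s)\,ds>0$ on $(a,b)$ (since $h\gneqq 0$), and differentiating \eqref{Ec::solnh} at the endpoints, together with the constant sign of the auxiliary solutions up to the boundary, yields $u^{(\alpha)}(a)>0$ and the required sign of $u^{(\beta)}(b)$; this is Definition~\ref{d-SIP}. The cases $k=1$ (and, when $n-k$ is odd, $k=n-1$ and $n=2$) are handled in the same way, invoking the corresponding branches of Theorem~\ref{T::IPN}, Propositions~\ref{P::nh1}, \ref{P::nh2} and Definition~\ref{d-SIN}.

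\textbf{Necessity.} If $\tilde T_n[M]$ is strongly inverse positive in the space of \eqref{Ec::xNH}, then it is so in particular on the subspace $X_{\{\sigma_1,\dots,\sigma_k\}}^{\{\varepsilon_1,\dots,\varepsilon_{n-k}\}}$, whence $M>-\lambda_1$ by Theorem~\ref{T::IPN}. For the upper bound, suppose $M>-\lambda_2$ and, say, $\lambda_2=\lambda_{\sigma_{\epsilon_1}}^2$ (the other branch symmetric). Since $-\lambda_{\sigma_{\epsilon_1}}^2$ is an eigenvalue of $\tilde T_n[0]$ in $X_{\{\sigma_1,\dots,\sigma_{\epsilon_1-1},\sigma_{\epsilon_1+1},\dots,\sigma_k\}}^{\{\varepsilon_1,\dots,\varepsilon_{n-k}|\beta\}}$ and, by Proposition~\ref{P::nh1}, $x_M^{\sigma_{\epsilon_1}}$ stays of constant sign only for $M\le -\lambda_{\sigma_{\epsilon_1}}^2$, a continuity argument (the zero that sits at $t=b$ for $M=-\lambda_{\sigma_{\epsilon_1}}^2$ enters $(a,b)$ as $M$ increases) gives $\bar t\in(a,b)$ at which $x_M^{\sigma_{\epsilon_1}}$ takes the sign opposite to the admissible sign of $c_{\epsilon_1}$. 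Choosing $u=c_{\epsilon_1}x_M^{\sigma_{\epsilon_1}}+\varepsilon\int_a^b g_M(\cdot,s)\,ds$ with the admissible $c_{\epsilon_1}$ large and $\varepsilon>0$ small, $u$ lies in the space of \eqref{Ec::xNH}, $\tilde T_n[M]\,u=\varepsilon\gneqq 0$, but $u(\bar t)<0$ — contradicting strong inverse positivity. Hence $M\le-\lambda_2$, i.e.\ $M\in(-\lambda_1,-\lambda_2]$, and the $n-k$ odd statements follow likewise with the orientations reversed.

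\textbf{Main difficulty.} The delicate part is not the reduction itself but the sign bookkeeping: checking that the constant sign of $x_M^{\sigma_{\epsilon_j}}$ and $z_M^{\varepsilon_{\kappa_i}}$ delivered (abstractly) by Propositions~\ref{P::nh1} and \ref{P::nh2} is exactly the one compatible with the exponents $(-1)^{\,n-\sigma_{\epsilon_j}-(k-\epsilon_j)+1}$ and $(-1)^{\,n-k+\kappa_i-1}$ fixed in \eqref{Ec::xNH}, and, on the necessity side, making rigorous the claim that just above $-\lambda_2$ the sign change of the binding auxiliary solution truly occurs inside $(a,b)$. Both are resolved by the maximal‑oscillation and continuity arguments already developed for Theorems~\ref{T::IPN}, \ref{T::IPNH} and Propositions~\ref{P::ordaut1}--\ref{P::ordaut4}, so no genuinely new tool is needed.
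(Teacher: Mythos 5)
Your sufficiency argument follows the paper's route: reduce via the representation formula of Lemma \ref{L::14-1}, control the Green term with Theorem \ref{T::IPN}, control the auxiliary solutions $x_M^{\sigma_{\epsilon_j}}$, $z_M^{\varepsilon_{\kappa_i}}$ with Propositions \ref{P::nh1} and \ref{P::nh2}, and use Propositions \ref{P::ordaut1}--\ref{P::ordaut4} to see that $(-\lambda_1,-\lambda_2]$ sits inside all the relevant constant--sign intervals; the sign bookkeeping matching $(-1)^{\,n-\sigma_{\epsilon_j}-(k-\epsilon_j)+1}$ and $(-1)^{\,n-k+\kappa_i-1}$ is exactly what the paper does. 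This half is fine.

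The necessity direction has a genuine gap. You claim that for $M>-\lambda_2$ (with, say, $\lambda_2=\lambda^2_{\sigma_{\epsilon_1}}$) the zero of $x_M^{\sigma_{\epsilon_1}}$ that sits on the boundary at $M=-\lambda_2$ ``enters $(a,b)$ as $M$ increases'', so that $x_M^{\sigma_{\epsilon_1}}$ takes the wrong sign at some interior point $\bar t$. Proposition \ref{P::nh1} does not give this: it is only a sufficient condition for constant sign on a parameter interval, not a characterization, and nothing in the maximal--oscillation machinery forces a sign change (interior or at the level of $u^{(\beta)}(b)$) immediately beyond the eigenvalue -- ``maximal oscillation becomes allowed'' is not ``maximal oscillation occurs''. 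Moreover, under the very hypothesis you are trying to contradict (strong inverse positivity at $M^*>-\lambda_2$), one expects $x_{M^*}^{\sigma_{\epsilon_1}}\geq 0$, so the counterexample function $c_{\epsilon_1}x_M^{\sigma_{\epsilon_1}}+\varepsilon\int_a^b g_M(\cdot,s)\,ds$ cannot be shown to go negative this way. The paper closes precisely this point by a different mechanism: assuming inverse positivity at some $M^*>-\lambda_2$, it builds the monotone iterative sequences $\alpha_0=x_0^1$, $\tilde T_n[M^*]\alpha_{n+1}=(M^*+\lambda_2)\alpha_n$ and $\beta_0=x_{M^*}^1$, $\tilde T_n[M^*]\beta_{n+1}=(M^*+\lambda_2)\beta_n$, both converging to $x^1_{-\lambda_2}$, which yields the sandwich $x^1_{-\lambda_2}\geq x^1_M\geq x^1_{M^*}\geq 0$ for all $M\in[-\lambda_2,M^*]$; since the derivatives of order $<\beta$ vanish at $b$ and $\bigl(x^1_{-\lambda_2}\bigr)^{(\beta)}(b)=0$, this pinches $\bigl(x^1_M\bigr)^{(\beta)}(b)=0$ for a whole interval of $M$, contradicting the discreteness of the spectrum of $\tilde T_n[0]$ in $X_{\{\sigma_1,\dots,\sigma_{\epsilon_1-1},\sigma_{\epsilon_1+1},\dots,\sigma_k\}}^{\{\varepsilon_1,\dots,\varepsilon_{n-k}|\beta\}}$ (and symmetrically for $\lambda_2=\lambda^2_{\varepsilon_{\kappa_1}}$, and with orientations reversed when $n-k$ is odd). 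Your proposal offers no substitute for this step, so as written the ``only if'' part is not proved.
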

					\begin{proof}
						From Lemma \ref{L::14-1}, we only have to study the sign of $g_M(t,s)$, $x_M^{\sigma_{\epsilon_j}}$ for $j=0,\dots,\ell$ and $z_M^{\varepsilon_{\kappa_i}}$ for $i=0,\dots,h$.
						
						First, let us see that if $M$ belongs to the given intervals, then the operator is strongly inverse positive or negative in each case. And, finally, we will see that this interval cannot be increased.
						
						Taking into account Theorem \ref{T::IPN}, $(-1)^{n-k}g_M(t,s)>0$ on the given intervals. Moreover, if either $n-k$ is even and $M<0$ or $n-k$ is odd and $M>0$, the intervals cannot be increased.
						
						\vspace{0.5cm}
						
						Now, let us study the sign of $x_0^{\sigma_{\epsilon_j}}$ and $z_0^{\sigma_{\kappa_i}}$.
						
						It is known that $x_M^{\sigma_{\epsilon_j}}$ satisfies the boundary conditions \eqref{Ec::CFAB1} introduced in the proof of Proposition \ref{P::ordaut1}. Then, for $M=0$, the maximal oscillation is satisfied. So, we can study the sign of ${x_0^{\sigma_{\epsilon_j}}}^{(\sigma_{\epsilon_j})}$ taking into account that ${x_0^{\sigma_{\epsilon_j}}}^{(\sigma_{\epsilon_j})}(a)=1$.
						 
						If $\sigma_{\epsilon_j}<\alpha$, then $x_0^{\sigma_{\epsilon_j}}>0$.
						
						If $\sigma_{\epsilon_j}>\alpha$, from $\ell = \alpha$ to $\sigma_{\epsilon_j}$, there are $\epsilon_j-1-\alpha$ zeros for $T_{\ell}x_0^{\sigma_{\epsilon_j}}(a)$.
						 
						   Realize that, from the choice of $\epsilon_j$, we have that $T_{\sigma_{\epsilon_j}}x_0^{\sigma_{\epsilon_j}}(a)>0$. So, to have maximal oscillation, we need
						   \[\left\lbrace \begin{array}{cc}
						   T_{\alpha}x_0^{\sigma_{\epsilon_j}}(a)>0\,,& \text{ if $\sigma_{\epsilon_j}-\alpha-(\epsilon_j-1-\alpha)=\sigma_{\epsilon_j}-\epsilon_j+1$ is even,}\\\\
						   	 T_{\alpha}x_0^{\sigma_{\epsilon_j}}(a)<0\,,& \text{ if $\sigma_{\epsilon_j}-\epsilon_j+1$ is odd.}\end{array}\right. \]
						   	 
						   	 These inequalities are also satisfied by ${x_0^{\sigma_{\epsilon_j}}}^{(\alpha)}(a)$, thus
						    \begin{equation}\label{Ec::x0sign}\left\lbrace \begin{array}{cc}
						    {x_0^{\sigma_{\epsilon_j}}}>0\ \text{on $I$,}& \text{ if $\sigma_{\epsilon_j}-\epsilon_j+1$ is even,}\\\\
						    {x_0^{\sigma_{\epsilon_j}}}<0\ \text{on $I$,}& \text{ if $\sigma_{\epsilon_j}-\epsilon_j+1$ is odd.}\end{array}\right. \end{equation}
						    
						    Note that if $\sigma_{\epsilon_j}<\alpha$, then $\sigma_{\epsilon_j}=\epsilon_j-1$. Hence, $\sigma_{\epsilon_j}-\epsilon_j+1=2$ is an even number. Thus, equation \eqref{Ec::x0sign} is satisfied for all $\sigma_{\epsilon_j}$, with $j=1,\dots,\ell$.
						    
						    Moreover, from Propositions \ref{P::nh1}, \ref{P::ordaut1} and \ref{P::ordaut3}, inequalities \eqref{Ec::x0sign} are satisfied on the whole intervals given in the result. Thus, for those $M$, we have						    
						     \begin{equation}\left\lbrace \begin{array}{cc}
						     (-1)^{n-\sigma_{\epsilon_j}-(k-j)+1}{x_M^{\sigma_{\epsilon_j}}}>0\ \text{on $I$,}& \text{ if $n-k$ is even,}\\\\
						     (-1)^{n-\sigma_{\epsilon_j}-(k-j)+1}{x_M^{\sigma_{\epsilon_j}}}<0\ \text{on $I$,}& \text{ if $n-k$ is odd.}\end{array}\right. \end{equation}
						 
						 \vspace{0.3cm}
						 
						 In an analogous way, we can study $z_M^{\varepsilon_{\kappa_i}}$ to conclude that for all $M$ on the intervals given on the result, it is satisfied:
						  \begin{equation}\left\lbrace \begin{array}{cc}
						 (-1)^{n-k-\kappa_i+1}{z_0^{\varepsilon_{\kappa_i}}}>0\,,& \text{ if $n-k$ is even,}\\\\
						 (-1)^{n-k-\kappa_i+1}{z_0^{\varepsilon_{\kappa_i}}}<0\,,& \text{ if $n-k$ is odd.}\end{array}\right. \end{equation}

						 So, we have proved that if $M$ belongs to those intervals, operator $\tilde T_n[M]$ is strongly inverse negative (positive). Moreover, we have also seen that if either $n-k$ is even and $M<0$ or $n-k$ is odd and $M>0$ the intervals cannot be increased, since $g_M$ is not of constant sign. So, we only need to prove that if $n-k$ is even and $M>0$ or $n-k$ is odd and $M<0$ the intervals cannot be increased too.
						 
						 \vspace{0.5cm}
						 
						 To this end, we study the functions $x_M^{\sigma_{\epsilon_1}}$ and $z_M^{\varepsilon_{\kappa_1}}$. In particular, we will verify that if either $k\neq 1$ or $k\neq n-1$, one of them must necessarily change its sign for $M>-\lambda_2$ if $n-k$ is even or for $M<-\lambda_2$ if $n-k$ is odd.
						 
						 If $\sigma_{\epsilon_1}=\sigma_k$ and $\varepsilon_{\kappa_1}=\varepsilon_{n-k}$ the result follows from Theorem \ref{T::IPNH}. Otherwise, either $\lambda_2=\lambda_{\sigma_{\epsilon_1}}$ or $\lambda_2=\lambda_{\varepsilon_{\kappa_1}}$.
						 
						 First, let us assume that  $n-k$ is even.  Suppose that there exists $M^*>-\lambda_2$ such that $\tilde T_n[M]$ is  inverse positive in $X_{\{\sigma_1,\dots,\sigma_k\}_{\{\sigma_{\epsilon_1},\dots,\sigma_{\epsilon_\ell}\}}}^{\{\varepsilon_1,\dots,\varepsilon_{n-k}\}_{\{\varepsilon_{\kappa_1},\dots,\varepsilon_{\kappa_h}\}}}$. We will arrive to a contradiction.
						 
						   If $\lambda_2=\lambda_{\sigma_{\epsilon_1}}$, let us consider the function $x_M^1(t)=(-1)^{n-\sigma_{\epsilon_j}-(k-j)+1}{x_M^{\sigma_{\epsilon_1}}}(t)$. 
						 
						 Trivially,  $x_M^1\in X_{\{\sigma_1,\dots,\sigma_k\}_{\{\sigma_{\epsilon_1},\dots,\sigma_{\epsilon_\ell}\}}}^{\{\varepsilon_1,\dots,\varepsilon_{n-k}\}_{\{\varepsilon_{\kappa_1},\dots,\varepsilon_{\kappa_h}\}}}$ and $\tilde{T} [M^*]x_{M^*}^1(t)=0$. Then, we have that $x_{M^*}^1\geq 0$ on $I$. 
						 
						 Let us see that necessarily $x_0^1\geq x_{-\lambda_2}^1\geq x_{M^*}^1$ on $I$.
						 
						 Indeed, let us construct the following sequence:						 
						 \[\alpha_0=x_0^1\,,\quad \tilde T_n[M^*]\,\alpha_{n+1}=(M^*+\lambda_2)\,\alpha_n\,,\ n\geq 0\,,\]
						 where $\alpha_n^{(\sigma_j)}(a)=0$, if $j\neq \epsilon_1$ for $j=1,\dots,k$, $\alpha_n^{(\sigma_{\epsilon_1})}(a)=(-1)^{n-\sigma_{\epsilon_1}-(k-\epsilon_1)+1}$ and $\alpha_n^{(\varepsilon_i)}(b)=0$ for $i=1,\dots,\,n-k$. In particular, $\alpha_n\in X_{\{\sigma_1,\dots,\sigma_k\}_{\{\sigma_{\epsilon_1},\dots,\sigma_{\epsilon_\ell}\}}}^{\{\varepsilon_1,\dots,\varepsilon_{n-k}\}_{\{\varepsilon_{\kappa_1},\dots,\varepsilon_{\kappa_h}\}}}$ for $n=0,1,\dots$
						 
						 Let us see that this sequence is non-increasing and bounded from below by zero clearly. \[\tilde T [M^*]\,\alpha_1=(M^*+\lambda_2)\,x_0^1\geq 0\,.\]
						 
						 Since  $\alpha_1\in X_{\{\sigma_1,\dots,\sigma_k\}_{\{\sigma_{\epsilon_1},\dots,\sigma_{\epsilon_\ell}\}}}^{\{\varepsilon_1,\dots,\varepsilon_{n-k}\}_{\{\varepsilon_{\kappa_1},\dots,\varepsilon_{\kappa_h}\}}}$ and we are working under the assumption that $\tilde T [M^*]$ is inverse positive in such set, we have that $\alpha_1\geq 0$.
						 
						 Now, $\tilde T_n[M^*](\alpha_0-\alpha_1)=-\lambda_2\,x_0^1\geq 0$. In this case $\dfrac{d^{\sigma_j}}{dt^{\sigma_j}}(\alpha_0-\alpha_1)_{\mid t=a}=0$  for $j=1,\dots,k$ and $\dfrac{d^{\varepsilon_i}}{dt^{\varepsilon_i}}(\alpha_0-\alpha_1)_{\mid t=b}=0$ for $i=1,\dots,n-k$, then $\alpha_0-\alpha_1\in X_{\{\sigma_1,\dots,\sigma_k\}_{\{\sigma_{\epsilon_1},\dots,\sigma_{\epsilon_\ell}\}}}^{\{\varepsilon_1,\dots,\varepsilon_{n-k}\}_{\{\varepsilon_{\kappa_1},\dots,\varepsilon_{\kappa_h}\}}}$. So, $\alpha_0\geq \alpha_1$.
						 
						 Proceeding analogously for $n\geq 1$, we obtain that $\{\alpha_n\}$ is a non-increasing and nonnegative sequence.
						 
						 \vspace{0.5cm}
						 
						 Now, let us consider the following sequence:						 
						 \[\beta_0=x_{M^*}^1\,,\quad \tilde T_n[M^*]\,\beta_{n+1}=(M^*+\lambda_2)\,\beta_n\,,\ n\geq 0\,,\]
						 where $\beta_n^{(\sigma_j)}(a)=0$, if $j\neq \epsilon_1$ for $j=1,\dots,\,k$, $\beta_n^{(\sigma_{\epsilon_1})}(a)=(-1)^{n-\sigma_{\epsilon_1}-(k-\epsilon_1)+1}$  and $\beta_n^{(\varepsilon_i)}(b)=0$ for $i=1,\dots,\,n-k$. As consequence, $\beta_n\in X_{\{\sigma_1,\dots,\sigma_k\}_{\{\sigma_{\epsilon_1},\dots,\sigma_{\epsilon_\ell}\}}}^{\{\varepsilon_1,\dots,\varepsilon_{n-k}\}_{\{\varepsilon_{\kappa_1},\dots,\varepsilon_{\kappa_h}\}}}$ for $n=0,1,\dots$.
						 
						 Let us see that this sequence is nondecreasing.
						 
						 By definition, $\tilde T_n[M^*](\beta_1-\beta_0)=(M^*+\lambda_2)x_{M^*}^1\geq 0$. In this case, $\dfrac{d^{\sigma_j}}{dt^{\sigma_j}}(\beta_1-\beta_0)_{\mid t=a}=0$  for $j=1,\dots,k$ and $\dfrac{d^{\varepsilon_j}}{dt^{\varepsilon_j}}(\beta_1-\beta_0)_{\mid t=b}=0$ for $i=1,\dots,n-k$, then  $\beta_1-\beta_0\in X_{\{\sigma_1,\dots,\sigma_k\}_{\{\sigma_{\epsilon_1},\dots,\sigma_{\epsilon_\ell}\}}}^{\{\varepsilon_1,\dots,\varepsilon_{n-k}\}_{\{\varepsilon_{\kappa_1},\dots,\varepsilon_{\kappa_h}\}}}$. So, $\beta_1\geq\beta_0$.
						 
						 Analogously, for $n\geq 1$, we conclude that $\{\beta_n\}$ is a nondecreasing sequence. Moreover, by properties of the related Green's function, which is  continuous on $I\times I$, it is bounded from above.
						 
						 Since $\tilde T_n[-\lambda_2]$ is strongly inverse positive in $X_{\{\sigma_1,\dots,\sigma_k\}_{\{\sigma_{\epsilon_1},\dots,\sigma_{\epsilon_\ell}\}}}^{\{\varepsilon_1,\dots,\varepsilon_{n-k}\}_{\{\varepsilon_{\kappa_1},\dots,\varepsilon_{\kappa_h}\}}}$, $x_{-\lambda_2}^1$ is the unique solution of $\tilde T_n[-\lambda_2]u(t)=0$, coupled with the boundary conditions imposed to $\alpha_n$ and $\beta_n$. Thus, we can affirm that 
						 \[\lim_{n\rightarrow \infty}\alpha_n=\lim_{n\rightarrow\infty}\beta_n=x_{-\lambda_2}^1\,,\]
						 and $\alpha_0=x_0^1\geq x_{-\lambda_2}^1\geq x_{M^*}^1= \beta_0\geq 0$ on $I$.
						 
						 Repeating the previous arguments, we can conclude that for all $M\in [-\lambda_2,M^*]$, we have:
						 \begin{equation}\label{Ec::x1d} x_{-\lambda_2}^1\geq x_M^1\geq x_{M^*}^1\geq 0\text{ on $I$}\,.\end{equation}
						 
						 On the other hand, it is known that ${x_{-\lambda_2}^1}^{(\beta)}(b)=0$. From inequality \eqref{Ec::x1d}, we have ${x_{M}^1}^{(\beta)}(b)=0$ for all $M\in [-\lambda_2,M^*]$, which contradicts the discrete character of the spectrum $\tilde T_n[0]$ in $X_{\{\sigma_1,\dots,\sigma_{\epsilon_1-1},\sigma_{\epsilon_2-1},\dots,\sigma_k\}}^{\{\varepsilon_1,\dots,\varepsilon_{n-k}|\beta\}}$. Thus, we arrive to a contradiction by supposing that there exists  $M^*>-\lambda_2$ such that $\tilde T_n[M^*]$ is inverse positive in $X_{\{\sigma_1,\dots,\sigma_k\}_{\{\sigma_{\epsilon_1},\dots,\sigma_{\epsilon_\ell}\}}}^{\{\varepsilon_1,\dots,\varepsilon_{n-k}\}_{\{\varepsilon_{\kappa_1},\dots,\varepsilon_{\kappa_h}\}}}$.

\vspace{0.5cm}

 Analogously, if $\lambda_2=\lambda_{\varepsilon_{\kappa_1}}$, it can be proved that it does not exist any $M^*>-\lambda_2$ such that $\tilde T_n[M^*]$ is inverse positive in $X_{\{\sigma_1,\dots,\sigma_k\}_{\{\sigma_{\epsilon_1},\dots,\sigma_{\epsilon_\ell}\}}}^{\{\varepsilon_1,\dots,\varepsilon_{n-k}\}_{\{\varepsilon_{\kappa_1},\dots,\varepsilon_{\kappa_h}\}}}$.

 \vspace{0.5cm}

Finally, we can proceed analogously when $n-k$ is odd to conclude that there is not any $M^*<-\lambda_2$ such that $\tilde T_n[M^*]$ is inverse negative in $X_{\{\sigma_1,\dots,\sigma_k\}_{\{\sigma_{\epsilon_1},\dots,\sigma_{\epsilon_\ell}\}}}^{\{\varepsilon_1,\dots,\varepsilon_{n-k}\}_{\{\varepsilon_{\kappa_1},\dots,\varepsilon_{\kappa_h}\}}}$.

			\end{proof}
					
					\subsection{Particular cases}
			
			This section is devoted to show the applicability of the previous results to some examples.
			
						Realize that most of the examples given in Section \ref{SS::Ex} follow the structure given on this section. So, we will be able to obtain the characterization of the strongly inverse positive (negative) character for those operators in different spaces with non homogeneous boundary conditions.

						\begin{itemize}
							\item  $n^{\mathrm{th}}-$order operators with $(k,n-k)$ boundary conditions. 
						\end{itemize}
				In this case $\mu=\max\{\alpha_2,\beta_2\}=-1$. So, since the biggest set where we can apply 	Theorem \ref{T::IPNH2} is $X_{\{0,\dots,k-1\}_{\{k-1\}}}^{\{0,\dots,n-k-1\}_{\{n-k-1\}}}$, Theorem \ref{T::IPNH2}	is equivalent to Theorem \ref{T::IPNH}. 
				
				However, in many cases,  we can be under the conditions of Remark \ref{R::16} which allows us to apply Theorem \ref{T::IPNH2} in bigger sets with more non homogeneous boundary conditions.

						\begin{itemize}
							\item Operator $T_4(p_1,p_2)[M]\,u(t)=u^{(4)}(t)+p_1(t)\,u^{(3)}(t)+p_2(t)\,u^{(2)}(t)+M\,u(t)$ in $X_{\{0,2\}}^{\{0,2\}}$.
						\end{itemize}
						The study of this kind of operators in $X_{\{0,2\}_{\{2\}}}^{\{0,2\}^{\{2\}}}$, which is obtained by applying Theorem \ref{T::IPNH}, has already been done in \cite{CabSaa3}.
								
								But, in such a case, since $\mu=\max\{\alpha_2,\beta_2\}=1$, by studying the different eigenvalues, we can characterize the strongly inverse positive character of $T_4(p_1,p_2)[M]$ in the different subsets of $X_{\{0,2\}_{\{0,2\}}}^{\{0,2\}_{\{0,2\}}}$.
								
							Let us consider, for instance, the operator
								\begin{equation*}
							T_4[p,M]\,u(t)\equiv u^{(4)}(t)-p\,u''(t)+M\,u(t)\,,\quad t\in I\equiv [a,b]\,,
							\end{equation*}	
							where $p\geq 0$.
							
							In \cite{CabSaa2}, there are obtained some of the related eigenvalues:
							\begin{itemize}
							\item	The least positive eigenvalue of $T_4[p,0]$ in $X_{\{0,2\}}^{\{0,2\}}$ is given by $\lambda_1^p=\left( \frac{\pi}{b-a}\right) ^4+p\,\left( \frac{\pi}{b-a}\right) ^2$. 
							
							\item The biggest negative eigenvalue of $T_4[p,0]$ in $X_{\{0\}}^{\{0,1,2\}}$ and in $X_{\{0,1,2\}}^{\{0\}}$ coincide and are given by $-\lambda_2^p$, where $\lambda_2^p$ is the least positive solution of						
							\[\frac{ \tan \left(\dfrac{b-a}{2}\,\sqrt{2\sqrt{\lambda}-p} \right)}{\sqrt{2\sqrt{\lambda}-p}}=\frac{ \tanh \left(\dfrac{b-a}{2}\,\sqrt{2\sqrt{\lambda}+p} \right)}{\sqrt{2\sqrt{\lambda}+p}}\,.\]
						\end{itemize}
					
					Now, let us obtain the missing eigenvalues:
					
					\begin{itemize}
						\item The biggest negative eigenvalues of $T_4[p,0]$ in $X_{\{2\}}^{\{0,1,2\}}$ and in $X_{\{0,1,2\}}^{\{2\}}$ coincide and are given by $-\lambda_{2_0}^p$, where $\lambda_{2_0}^p$ is the least positive solution of
						\[\frac{ \tan \left(\dfrac{b-a}{2}\,\sqrt{2\sqrt{\lambda}-p} \right)}{\sqrt{2\sqrt{\lambda}-p}}+\frac{ \tanh \left(\dfrac{b-a}{2}\,\sqrt{2\sqrt{\lambda}+p} \right)}{\sqrt{2\sqrt{\lambda}+p}}=0\,.\]
					\end{itemize}
				
				Thus, we obtain the following conclusions:
				\begin{itemize}
					\item $T_4[p,M]$ is strongly inverse positive  in  $X_{\{0,2\}_{\{2\}}}^{\{0,2\}_{\{2\}}}$ if, and only if, $M\in (-\lambda_1^p,\lambda_2^p]$.
					\item $T_4[p,M]$ is strongly inverse positive in $X_{\{0,2\}_{\{0,2\}}}^{\{0,2\}_{\{0,2\}}}$ if, and only if, $M\in (-\lambda_1^p,\lambda_{2_0}^p]$.
				\end{itemize}
							
						\begin{itemize}
							\item Operator $T_n^0[M]\,u(t)=u^{(n)}(t)+M\,u(t)$.
						\end{itemize}
						In the sequel, we treat some of this kind of problems which have been introduced in Section \ref{SS::Ex}.
						
						\begin{itemize}
							\item[]
							\begin{itemize}
								\item Second order
							\end{itemize}
							In second order, the only possibility is to consider $k=1$. Then, the characterization is obtained by applying Theorem \ref{T::IPNH} and the parameters set for the strongly inverse positive character is the same as in the homogeneous case which has been obtained in Section \ref{SS::Ex}.

							\begin{itemize}
								\item Third  order
							\end{itemize}
														
							Let us consider, for instance, $\{\sigma_1,\sigma_2\}=\{1,2\}$ and $\{\varepsilon_1\}=\{0\}$. In such a case, $\mu=\max\{\alpha_2,\beta_2\}=\max\{-1,0\}=0$. Then, we obtain the characterization in $X_{\{1,2\}_{\{2\}}}^{\{0\}_{\{0\}}}$ from  Theorem \ref{T::IPNH} or Theorem \ref{T::IPNH2} equivalently.
							
						But, from Remark \ref{R::16}, we are able to obtain the characterization in $X_{\{1,2\}_{\{1,2\}}}^{\{0\}^{\{0\}}}$ given as follows:
						
						$T_3^0[M]$ is strongly inverse negative in $X_{\{1,2\}_{\{1,2\}}}^{\{0\}^{\{0\}}}$ if, and only if, $M\in[-\lambda_2,-\lambda_1)$ where $\lambda_1=-m_4^3$, with $m_4\approxeq 1.85$ the least positive solution of \eqref{Ec::m4}, is the biggest negative eigenvalue of $T_3^0[0]$ in $X_{\{1,2\}}^{\{0\}}$ and $\lambda_2=m_7$, with $m_7\approxeq 1.84981$ the least positive solution of
						\[2 e^{3 m/2} \cos \left(\frac{\sqrt{3} m}{2}\right)+1=0\,,\] is the least positive eigenvalue of $T_3^0[0]$ in $X_{\{2\}}^{\{0,1\}}$.			
							\begin{itemize}
								\item Fourth order
							\end{itemize}
						Let us consider again  fourth order problems introduced in Section \ref{SS::Ex}, $X_{\{0\}}^{\{1,2,3\}}$ and $X_{\{0,2\}}^{\{1,3\}}$. In the first case we cannot apply directly Theorem \ref{T::IPNH2}, since $\mu=0$. However, with the same argument as in Remark \ref{R::16}, Theorem \ref{T::IPNH2} is still true for $\sigma_{\epsilon_{\ell-1}}\geq\mu$ or $\varepsilon_{\kappa_{h-1}}\geq\mu$.
							
								\begin{itemize}
								\item[$\bullet$]	The biggest negative eigenvalue of $T_4^0[0]$ in $X_{\{0\}}^{\{1,2,3\}}$ is $\lambda_1=-\dfrac{\pi^4}{4}$.
								
								\item[$\bullet$]	The least positive eigenvalue of $T_4^0[0]$ in $X_{\{0,1\}}^{\{0,3\}}$ is $\lambda_1^2=\pi^4$
								
								\item[$\bullet$]	The least positive eigenvalue of $T_4^0[0]$ in $X_{\{0,1\}}^{\{1,3\}}$ is $\lambda_0^2=m_1^4$, where $m_1\approxeq 2.36502$ is the least positive solution of \eqref{Ec::Ex61}.
							\end{itemize}
								
						Thus, we conclude that $T_4^0[M]$ is strongly inverse negative in $X_{\{0\}_{\{0\}}}^{\{1,2,3\}_{\{2,3\}}}$ if, and only if, $M\in \left[ -\pi^4,\dfrac{\pi^4}{4}\right) $.
						
						Moreover, $T_4^0[M]$ is strongly inverse negative in $X_{\{0\}_{\{0\}}}^{\{1,2,3\}_{\{1,2,3\}}}$ if, and only if, $M\in \left[ -m_1^4,\dfrac{\pi^4}{4}\right) $.
						
						\vspace{0.5cm}
						
						For  $X_{\{0,2\}}^{\{1,3\}}$, we have $\mu=\max\{1,2\}=2$. 	Let us study the strongly inverse positive character in $X_{\{0,2\}_{\{0,2\}}}^{\{1,3\}_{\{1,3\}}}$.
							
						\begin{itemize}
							\item[$\bullet$]	The least positive eigenvalue of $T_4^0[0]$ in $X_{\{0,2\}}^{\{1,3\}}$ is $\lambda_1=\dfrac{\pi^4}{16}$.
							
						\item[$\bullet$]	The biggest negative eigenvalue of $T_4^0[0]$ in $X_{\{2\}}^{\{0,1,3\}}$ is $\lambda_0^2=-\dfrac{\pi^4}{4}$
							
						\item[$\bullet$]	The biggest negative eigenvalue of $T_4^0[0]$ in $X_{\{0,1,2\}}^{\{1\}}$ is $\lambda_1^2=-4\,\pi^4$.
					\end{itemize}
							
							Thus, $\lambda_2=-\dfrac{\pi^4}{4}$ and we can conclude that 			
							$T_4^0[M]$ is strongly inverse positive in $X_{\{0,2\}_{\{0,2\}}}^{\{1,3\}_{\{1,3\}}}$ if, and only if, $M\in\left( -\dfrac{\pi^4}{16},\dfrac{\pi^4}{4}\right] $.
							
							\begin{itemize}
								\item []\begin{itemize}
									\item Higher order
								\end{itemize}
							\end{itemize}
							Now, let us analyze the sixth order operator given in Subsection \ref{SS::Ex}. That is, the operator $T_6^0[M]$ defined in $X_{\{0,2,4\}}^{\{0,2,4\}}$. In this case, $\mu=\max\{3,3\}=3$, so we can apply Theorem \ref{T::IPNH2} in different spaces.
							
							Let us obtain the different eigenvalues:

								\begin{itemize}
									\item[$\bullet$] The biggest negative eigenvalue of $T_6^0[0]$ in $X_{\{0,2,4\}}^{\{0,2,4\}}$ is $\lambda_1=-\pi^6$.
									\item [$\bullet$] The least positive eigenvalue of $T_6^0[0]$ in $X_{\{0,4\}}^{\{0,1,2,4\}}$ is $\lambda_2^2=m_8^6$, where $m_8\approxeq 4.14577$ is the least positive solution of
								{\small 	\[\begin{split}
									\sqrt{3} e^{m/2} \left(e^{2 m}+1\right)-3 \left(e^m+1\right)^2 \left(e^m-1\right) \sin
									\left(\frac{\sqrt{3} m}{2}\right)&\\+\sqrt{3} \left(e^m+1\right) \left(e^m-1\right)^2 \cos
									\left(\frac{\sqrt{3} m}{2}\right)-2 \sqrt{3} e^{3 m/2} \cos \left(\sqrt{3} m\right)&=0\,.
									\end{split}\]}
									\item [$\bullet$] The least positive eigenvalue of $T_6^0[0]$ in $X_{\{0,1,2,4\}}^{\{0,4\}}$ is $\lambda_2^2=m_8^6$
									\item[$\bullet$]  The least positive eigenvalue of $T_6^0[0]$ in $X_{\{2,4\}}^{\{0,1,2,4\}}$ is $\lambda_0^2=m_9^6$, where $m_9\approxeq 3.17334$ is the least positive solution of 
										{\small 	\[\begin{split}
											-\sqrt{3} e^{m/2} \left(e^{2 m}+1\right)-3 \left(e^m+1\right)^2 \left(e^m-1\right) \sin
											\left(\frac{\sqrt{3} m}{2}\right)&\\-\sqrt{3} \left(e^m+1\right) \left(e^m-1\right)^2 \cos
											\left(\frac{\sqrt{3} m}{2}\right)+2 \sqrt{3} e^{3 m/2} \cos \left(\sqrt{3} m\right)&=0\,.
											\end{split}\]}
									\item[$\bullet$]  The least positive eigenvalue of $T_6^0[0]$ in $X_{\{0,1,2,4\}}^{\{2,4\}}$ is $\lambda_0^2=m_9^6.$
								\end{itemize}
							
								Thus, we conclude that $T_6^0[M]$ is strongly inverse negative in $X_{\{0,2,4\}_{\{2,4\}}}^{\{0,2,4\}_{\{2,4\}}}$ if, and only if, $M\in [-m_8^6,\pi^6)$. Moreover, $T_6^0[M]$ is strongly inverse negative in $X_{\{0,2,4\}_{\{0,2,4\}}}^{\{0,2,4\}_{\{0,2,4\}}}$ if, and only if, $M\in [-m_9^6,\pi^6)$.
							\end{itemize}
							
							\begin{itemize}
								\item Operators with non constant coefficients.
							\end{itemize}
						To finish this work we show an example where a fourth order operator with non constant coefficients is considered.
						
						Let us define the operator
						\[T_4^{nc}[M]=u^{(4)}+e^{2\,t}\sin(2\,t)\,u'''(t)+M\,u(t)\,,\quad t\in [0,1]\]
						defined in $X_{\{0,2\}}^{\{1,2\}}$.
						
						In such a space, we have $\mu=\max\{1,0\}=1$, and the linear differential equation
						\[u''(t)+e^{2\,t}\sin(2\,t)\,u'(t)=0\,,\]
						is disconjugate on $[0,1]$, since it is a composition of two first order linear differential equations. Thus, we can apply all previous results to characterize the strongly inverse positive character of $T^{nc}_4[M]$ in $X_{\{0,2\}}^{\{1,2\}}$.
						
						First, we obtain numerically, by means of Mathematica program, the different eigenvalues of $T_4^{nc}[0]$.
						
						\begin{itemize}
							\item The least positive eigenvalue of $T_4^{nc}[0]$ in $X_{\{0,2\}}^{\{1,2\}}$ is given by $\lambda_1\approxeq 2.62355^4$.
							\item The biggest negative eigenvalue of $T_4^{nc}[0]$ in $X_{\{0,1,2\}}^{\{1\}}$ is, given by $\lambda_2''\approxeq -4.69621^4$.
							\item The biggest negative eigenvalue of $T_4^{nc}[0]$ in $X_{\{0\}}^{\{0,1,2\}}$ is, given by $\lambda_2'\approxeq -6.18170^4$.
							\item The biggest negative eigenvalue of $T_4^{nc}[0]$ in $X_{\{0,1,2\}}^{\{2\}}$ is, given by $\lambda_1^2\approxeq -3.45041^4$.
							\item The biggest negative eigenvalue of $T_4^{nc}[0]$ in $X_{\{2\}}^{\{0,1,2\}}$ is, given by $\lambda_0^2\approxeq -4.20409^4$.
						\end{itemize}
						
						Thus, by means of Theorems \ref{T::IPN} and \ref{T::IPNH2}, we conclude:
						\begin{itemize}
							\item $T_4^{nc}[M]$ is strongly inverse positive in $X_{\{0,2\}_{\{2\}}}^{\{1,2\}_{\{2\}}}$ if, and only if, $M\in \left( -2.62355^4,4.69621^4\right] $.
								\item $T_4^{nc}[M]$ is strongly inverse positive in $X_{\{0,2\}_{\{0,2\}}}^{\{1,2\}_{\{2\}}}$ if, and only if, $M\in \left( -2.62355^4,4.20409^4\right] $.
									\item $T_4^{nc}[M]$ is strongly inverse positive either in  $X_{\{0,2\}_{\{2\}}}^{\{1,2\}_{\{1,2\}}}$ or $X_{\{0,2\}_{\{0,2\}}}^{\{1,2\}_{\{1,2\}}}$ if, and only if, $M\in \left( -2.62355^4,3.45041^4\right] $.
						\end{itemize}
					
					Moreover, in order to use Theorem \ref{T::IPN1}, we can obtain the needed eigenvalues of $T_4^{nc}[0]$:
					
					\begin{itemize}
					\item The least positive eigenvalue of $T_4^{nc}[0]$ in $X_{\{0,1\}}^{\{1,2\}}$ is given by $\lambda_3'\approxeq 3.22872^4$.
					\item The least positive eigenvalue of $T_4^{nc}[0]$ in $X_{\{0,2\}}^{\{0,1\}}$ is given by $\lambda_3''\approxeq 4.33768^4$.
					\end{itemize}
						
						Thus, from Theorem \ref{T::IPN1}, if $T_4^{nc}[M]$ is a strongly inverse negative operator in $X_{\{0,2\}}^{\{1,2\}}$, then $M\in \left[-3.22872^4,-2.62355^4\right)$.

\end{document}